\numberwithin{equation}{section}
\definecolor{darkred}{rgb}{0.5,0,0}
\definecolor{darkgreen}{rgb}{0,0.5,0}
\definecolor{darkblue}{rgb}{0,0,0.5}
\numberwithin{equation}{section}
\newtheorem{thm}{Theorem}[section]
\newtheorem{cor}[thm]{Corollary}
\newtheorem{prop}[thm]{Proposition}
\newtheorem{lemma}[thm]{Lemma}
\newtheorem{def-lemma}[thm]{Definition-Lemma}
\theoremstyle{definition}
\newtheorem{defn}[thm]{Definition}
\theoremstyle{remark}
\newtheorem{rem}[thm]{Remark}
\newtheorem{hyp}[thm]{Hypothesis}
\newcommand{\beq}{\begin{equation}}
\newcommand{\eeq}{\end{equation}}
\newcommand{\beqn}{\begin{equation*}}
\newcommand{\eeqn}{\end{equation*}}
\newcommand{\ov}{\overline}
\newcommand{\mb}{\mathbb}
\newcommand{\mc}{\mathcal}
\newcommand{\RepOrbTop}{\operatorname{RepOrbTop}}
\newcommand{\pt}{\operatorname{pt}}
\newcommand{\scrC}{\EuScript C}
\newcommand{\scrE}{\EuScript E}
\newcommand{\scrF}{\EuScript F}
\newcommand{\scrH}{\EuScript H}
\newcommand{\scrL}{\EuScript L}
\newcommand{\scrM}{\EuScript M}
\newcommand{\scrT}{\EuScript T}
\newcommand{\bmu}{\bm{mu}}
\newcommand{\bMU}{\bm{MU}}
\newcommand{\ud}{\underline}
\newcommand{\ev}{{\rm ev}}
\newcommand{\colim@}[2]{%
  \vtop{\m@th\ialign{##\cr
    \hfil$#1\operator@font colim$\hfil\cr
    \noalign{\nointerlineskip\kern1.5\ex@}#2\cr
    \noalign{\nointerlineskip\kern-\ex@}\cr}}%
}
\newcommand{\colim}{%
  \mathop{\mathpalette\colim@{\rightarrowfill@\textstyle}}\nmlimits@
}
\title[Bordism-valued GW]{Bordism and resolution of singularities \\ \Small  with applications to Gromov-Witten theory}
\author{Mohammed Abouzaid}
\address{Department of Mathematics, Stanford University, Stanford, CA, 94305, USA}
\email{abouzaid@stanford.edu}
\author{Shaoyun Bai}
\address{Department of Mathematics, MIT, Cambridge, MA 02139, USA}
\email{shaoyunb@mit.edu}
\begin{document}

\thanks{The first and second authors were respectively supported by NSF grants DMS-2103805 and DMS-2404843}

\begin{abstract}
  We adapt algorithms for resolving the singularities of complex algebraic varieties to prove that the natural map of homology theories from complex bordism to the bordism theory of complex derived orbifolds splits. In equivariant stable homotopy theory, our techniques yield a splitting of homology theories for the map from bordism to the equivariant bordism theory of a finite group $\Gamma$, given by assigning to a manifold its product with $\Gamma$. In symplectic topology, and  using recent work of Abouzaid-McLean-Smith and Hirschi-Swaminathan, we conclude that one can define complex cobordism-valued Gromov-Witten invariant for arbitrary (closed) symplectic manifolds. We apply our results to constrain the topology of the space of Hamiltonian fibrations over $S^2$. The methods we develop apply to normally complex orbifolds, and will hence lead to applications in symplectic topology that rely on moduli spaces of holomorphic curves with Lagrangian boundary conditions.
\end{abstract}

\maketitle

\setcounter{tocdepth}{1}
\tableofcontents

\section{Introduction}

\subsection{Applications: equivariant stable homotopy theory}
\label{sec:appl-equiv-stable}

While our initial motivation for this work arose from symplectic topology (see Section \ref{subsec:intro-application} below), we begin with a discussion of its consequences for equivariant stable homotopy, as they may be familiar to a wider set of potential readers. The reader who is interested largely in the study of (derived) orbifolds and in the methods we use to study them may skip instead to Section \ref{sec:results:-bord-orbisp}.

Associated to each finite group $\Gamma$, and each theory of bordism (oriented, complex, spin, ...) is an equivariant homology theory which assigns to a topological space $X$, equipped with a $\Gamma$-action, the graded abelian group of bordism classes of $\Gamma$-manifolds equipped with a $\Gamma$-equivariant map to $X$.  This theory is naturally a module over the bordism ring of manifolds, arising from taking products. In the case where $X$ is a point, the first extensive investigation of these groups goes back to Conner and Floyd   \cite{ConnerFloyd1964}. In this paper, we specialize our discussion to complex bordism, which we denote by $\Omega^{U,\Gamma}_*(X)$, whose elements are represented by bordism classes of $\Gamma$-manifolds equipped with  an almost complex structure on the direct sum of the tangent bundle with ${\mb R}^k$ (with trivial $\Gamma$-action). 
\begin{rem}
We formulate all our results as statements about homology theories because this will not take us too far afield from the other goals of the paper which are described in the remainder of this introduction. There are two ways to refine the results to statements about spectra: one can either pass to the dual cohomology theories, following the method described in Section \ref{sec:revi-geom-bord}, or one can directly construct representing bordism spectra by extending our constructions from manifolds with boundary to those with corners \cite{Quinn1995,AbouzaidBlumberg2024}.
\end{rem}

\begin{rem}
  In homotopy theoretic literature, there are analogous groups called \emph{geometric bordism groups}, denoted by $\bmu^{\Gamma}_*$, that can be described using Thom spaces on equivariant Grassmanians \cite{wasserman}\footnote{Recently, Schwede has been advocating the use of the notation $\bmu^{\Gamma}_*(X)$ for these groups \cite{schwede}.}. The geometric description of these groups is in terms of  bordism classes of $\Gamma$-manifolds equipped with an almost complex structure on the direct sum of the tangent bundle with ${\mb R}^k \oplus V$, where $V$ is a \emph{complex representation} of $\Gamma$, so that there is a natural morphism of homology theories $ \Omega^{U,\Gamma}_* \to \bmu^{\Gamma}_*$. It follows from Theorem \ref{thm:inclusion_geometric_homotopical_splits} below that this map of equivariant homology theories splits.
\end{rem}

The product of a stably complex manifold $M$ with $\Gamma$ is tautologically equipped with a $\Gamma$-invariant stable complex structure. Assuming that $X$ is a space equipped with the trivial $\Gamma$-action, this construction yields a map $\Omega^{U}_*(X) \to  \Omega^{U,\Gamma}_*(X) $,  which can be readily seen to be multiplicative, and functorial with respect to maps of topological spaces.  In particular, this map is naturally a map of modules over the (ordinary) bordism ring $\Omega^{U}_*$. 
\begin{thm} \label{thm:inclusion_ordinary_geometric_splits} 
  For each finite group $\Gamma$, the map of homology theories $\Omega^{U} \to \Omega^{U,\Gamma} $ splits as  a map of $\Omega^{U}_*$-modules.
\end{thm}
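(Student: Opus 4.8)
The plan is to obtain the splitting by factoring through the bordism theory of complex derived orbifolds. Writing $\Omega^{U,\mathrm{orb}}$ for the latter homology theory (discussed in Section~\ref{sec:results:-bord-orbisp}), I would first construct a natural transformation of $\Omega^U_*$-modules $q\colon \Omega^{U,\Gamma}\to\Omega^{U,\mathrm{orb}}$, given by passing to the quotient orbifold, and then compose it with a retraction $s\colon \Omega^{U,\mathrm{orb}}\to\Omega^U$ of the canonical map $\iota\colon \Omega^U\to\Omega^{U,\mathrm{orb}}$. The map $s$ is precisely the splitting announced in the abstract, and its construction is where essentially all of the work lies; given $s$, the theorem should follow formally.

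\textbf{The transformation $q$.} Let $X$ carry the trivial $\Gamma$-action. Given a closed stably complex $\Gamma$-manifold $N$ with an equivariant map $f\colon N\to X$, the map $f$ is constant on orbits and so factors through the coarse space of the quotient orbifold $[N/\Gamma]$, producing $\bar f\colon [N/\Gamma]\to X$. A $\Gamma$-equivariant stable complex structure on $TN$ descends to one on the orbifold tangent bundle $T[N/\Gamma]$, so $[N/\Gamma]$ is a stably complex orbifold, hence a complex derived orbifold with vanishing obstruction bundle. Equivariant bordisms of $\Gamma$-manifolds quotient to bordisms of such derived orbifolds, so $(N,f)\mapsto([N/\Gamma],\bar f)$ is a natural transformation of homology theories; it is $\Omega^U_*$-linear because $[(P\times N)/\Gamma]\cong P\times[N/\Gamma]$ whenever $P$ is a stably complex manifold with trivial $\Gamma$-action.

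\textbf{Why this splits.} Taking $N=M\times\Gamma$, with $\Gamma$ acting by left translation on the second factor, one has $[N/\Gamma]\cong M$ as a complex orbifold, compatibly with the maps to $X$. Hence $q$ precomposed with the inclusion $\Omega^U\hookrightarrow\Omega^{U,\Gamma}$ is exactly the canonical transformation $\iota$ that regards a manifold as a derived orbifold with trivial isotropy. Therefore, for any $\Omega^U_*$-linear retraction $s$ with $s\circ\iota=\mathrm{id}$, the composite $r:=s\circ q$ satisfies $r\circ(\text{inclusion})=s\circ\iota=\mathrm{id}$, which is the asserted splitting of $\Omega^U_*$-modules.

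\textbf{The main obstacle.} What remains — and what the body of the paper is really about — is producing $s$ by resolving the singularities of a (normally) complex derived orbifold $\mathcal{N}$ over $X$ into a stably complex manifold $\widetilde{\mathcal{N}}$ over $X$, in such a way that the class $[\widetilde{\mathcal{N}}]\in\Omega^U_*(X)$ is well defined and $s$ is a morphism of homology theories. Concretely I would remove the isotropy strata of $\mathcal{N}$ one at a time, beginning with those of largest isotropy group, by a canonical sequence of (iterated, possibly weighted) blow-ups along smooth centers, adapting the functorial algorithmic resolution procedures for complex varieties (Hironaka; Bierstone--Milman; W\l odarczyk) to the normally complex setting, where the normal local models are quotients of complex $H$-representations by finite groups $H$ and the blow-up centers can be specified intrinsically. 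The hard part is not carrying out one resolution but showing that the cobordism class of $\widetilde{\mathcal{N}}$ is independent of all auxiliary choices, is natural enough to be performed on manifolds with boundary (hence compatible with restriction to the boundary, and so with the connecting homomorphisms of the homology theories), and descends correctly through bordisms of derived orbifolds. Assembling this package of compatibilities is the crux; once it is in place, the three steps above yield the theorem.
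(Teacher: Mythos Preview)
Your proposal is correct and follows essentially the same route as the paper: pass to the quotient orbifold $[N/\Gamma]$, then apply a resolution-of-singularities splitting back to manifold bordism, and observe that on $M\times\Gamma$ the quotient is $M$ itself so the composite is the identity.

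Two minor discrepancies are worth noting. First, the paper factors through \emph{orbifold} bordism rather than \emph{derived} orbifold bordism; since $[N/\Gamma]$ is already an honest orbifold (vanishing obstruction bundle), your detour through the derived theory is harmless but unnecessary, and the paper invokes Theorem~\ref{thm:split_manifold_to_orbifold} directly rather than the combined Theorem~\ref{thm:main}. Second, the specific resolution algorithm the paper uses for this step is \emph{not} a Hironaka/Bierstone--Milman/W\l odarczyk style embedded resolution. Instead, it is the two-step destackification of Sections~\ref{sec:abelianize}--\ref{sec:bergh}: first abelianize the stabilizers by blowing up the maximally non-abelian locus (following Bergh--Rydh), then apply Bergh's combinatorial destackification (ordinary blow-ups and root stacks) to make the coarse space smooth. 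The Abramovich--Temkin--W\l odarczyk functorial resolution does appear in the paper, but only for the separate problem of passing from \emph{derived} orbifolds to orbifolds (Theorem~\ref{thm:inclusion_geometric_homotopical_splits}), which is not needed here.
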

While the proof, based on the results of Sections \ref{sec:abelianize} and \ref{sec:bergh}, will be provided in Section \ref{sec:equivariant-bordisms-no-d}, the basic idea is the following: the map in the other direction assigns to any $\Gamma$-manifold a resolution of the singularities of its quotient orbifold.

\begin{rem}
There is an altogether different map $ \Omega^{U} \to \Omega^{U,\Gamma} $ obtained by equipping a manifold with the trivial $\Gamma$ action, and which has an obvious splitting given by forgetting the group action. These two maps are the special cases of a construction that assigns to each finite $\Gamma$ set $S$ the map $ \Omega^{U} \to \Omega^{U,\Gamma} $ which takes the product of a manifold with $S$. As we indicate in Remark \ref{rem:other_choices_of_splitting}, our results produce a splitting of the map from a direct sum of copies of $\Omega^U$ indexed by irreducible $\Gamma$-sets (i.e. conjugacy classes of subgroups) to $ \Omega^{U,\Gamma} $.

\end{rem}

\begin{rem}
The computation of the homology theories $ \Omega^{U,\Gamma}  $ has been largely open since their introduction by Conner and Floyd \cite{ConnerFloyd1964}. For the theory $ \bmu^{\Gamma}$, the bordism groups of a point, for cyclic groups, were computed in \cite{Kosniowski1983} for cyclic groups  (c.f. the survey \cite{May1996} for a discussion of more general abelian groups).
\end{rem}
\begin{rem}
  The naive generalizations of this result, to oriented bordism and to the Lie group setting, both fail. In the oriented bordism case, this is due to Rosenzweig \cite{Rosenzweig1972} who proved that every torsion element of the oriented bordism group lies in the kernel of the transfer map to the bordism group of oriented manifolds with orientation preserving involutions (the proof is completely explicit, relying on Anderson's presentation of the torsion subgroup of the oriented bordism groups in terms of projective bundles \cite{Anderson1966}).  On the other hand, for Lie groups, one immediately sees that the circle bounds equivariantly. We can, however, prove an analogue of the above result for equivariant bordism of \emph{almost-free} manifolds (i.e. those with the property that all stabilizer groups are finite).
\end{rem}
Most of the work on equivariant bordism theories has been concerned with the following more sophisticated notion \cite{tomdieck1970}: assign to a complex $\Gamma$-representation $V$ and a space $X$ the bordism group $ \Omega^{U,\Gamma}_{* + \dim V}(X \times V)$, generated by $\Gamma$-manifolds equipped with a proper equivariant map to $X \times V$, and define $d \Omega^{U,\Gamma}_*(X)$ to be the colimit of these groups with respect to the maps
\begin{equation}
    \Omega^{U,\Gamma}_{* + \dim V}(X \times V) \to   \Omega^{U,\Gamma}_{* + \dim W}(X \times W) ,
\end{equation}
associated to an inclusion $V \subset W$ of complex $\Gamma$-representations,  which are given by taking the product with a complementary subspace to $V$.  This theory is usually called \emph{homotopical bordism}, and following Schwede, the corresponding groups are denoted in some of the recent literature in equivariant homotopy theory by a variant of the symbol $ \bMU_*^{\Gamma}(X)$.
\begin{thm} \label{thm:inclusion_geometric_homotopical_splits} 
  For each finite group $\Gamma$, the map of equivariant homology theories $\Omega^{U,\Gamma} \to d \Omega^{U,\Gamma}  $ splits as a map of $\Omega^{U}_*$ modules.
\end{thm}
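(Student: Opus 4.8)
The plan is to construct the splitting as a \emph{resolution map}: a retraction $r\colon d\Omega^{U,\Gamma}\to\Omega^{U,\Gamma}$ that sends a homotopical bordism class to the genuine $\Gamma$-manifold extracted from a resolution of the normally complex derived $\Gamma$-orbifold it represents, arranged so that $r$ does nothing to classes that already come from $\Omega^{U,\Gamma}$. The basic idea is exactly the one flagged after Theorem \ref{thm:inclusion_ordinary_geometric_splits}, except that here ``resolution'' must resolve the derived (obstruction-bundle) structure rather than only the orbifold singularities.

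First I would give a geometric model for the target. An element of $\Omega^{U,\Gamma}_{*+\dim V}(X\times V)$ is represented by a stably complex $\Gamma$-manifold $M$ together with a proper equivariant map $(f,s)\colon M\to X\times V$; here $s$ is a proper equivariant section of the trivial complex $\Gamma$-bundle $\underline V$ over $M$, so its zero locus is compact, and $(M,s)$ cuts out a normally complex derived $\Gamma$-orbifold with obstruction bundle $\underline V$ and reference map $f$ to $X$. Passing to the colimit over inclusions $V\subset W$, under which $(M,s)$ is replaced by $(M\times(W/V),\,s\oplus\mathrm{id})$ while the associated derived orbifold is left unchanged up to equivalence, identifies $d\Omega^{U,\Gamma}_*(X)$ with the bordism group of global-quotient normally complex derived $\Gamma$-orbifolds over $X$ arising from such trivial-bundle presentations. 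Under this identification the canonical map $\iota\colon\Omega^{U,\Gamma}\to d\Omega^{U,\Gamma}$ sends $[M,f]$ to the class of $(M,0)$, i.e.\ to the derived orbifold whose obstruction bundle vanishes.

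Next I would invoke the resolution results of Sections \ref{sec:abelianize} and \ref{sec:bergh}. The abelianization step of Section \ref{sec:abelianize} reduces, at the level of bordism classes, to derived $\Gamma$-orbifolds with abelian isotropy groups, and the functorial resolution procedure of Section \ref{sec:bergh} (adapting Bergh's destackification algorithm) then supplies a canonical sequence of equivariant (stacky) blow-ups after which the section cutting out the derived zero locus becomes transverse; taking its zero locus yields a stably complex orbifold which, because the construction is canonical and compatible with the global-quotient structure, is re-presented as a genuine stably complex $\Gamma$-manifold $\widetilde N$ carrying a map $\widetilde f\colon\widetilde N\to X$. Set $r([M,s,f])=[\widetilde N,\widetilde f]\in\Omega^{U,\Gamma}_*(X)$. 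Since the construction never touches the reference map to $X$, and since the functorial resolution commutes with products by closed stably complex manifolds and with restriction to boundary components, $r$ is natural in $X$ and $\Omega^{U}_*$-linear, hence a morphism of homology theories over $\Omega^U_*$. Finally, when the obstruction bundle vanishes the defining section is already transverse, the algorithm does nothing, and $r\circ\iota=\mathrm{id}$, which is the asserted splitting.

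The main obstacle is to show that $r$ is well defined on bordism classes: it must be independent both of the trivial-bundle presentation $(M,s)$ of the derived orbifold and of the perturbation used to achieve transversality, and it must carry a bordism to a bordism compatibly with restriction to its two ends. This is precisely where the canonicity and functoriality of the resolution, together with the comparison statements of Sections \ref{sec:abelianize}--\ref{sec:bergh}, do the real work, and it requires in particular a version of the resolution that behaves well in one-parameter families, i.e.\ for normally complex derived orbifolds with boundary (or corners). Two further points need care: one must control the stacky modifications introduced by the algorithm so that the outcome is genuinely (bordant to) a $\Gamma$-manifold rather than an object with larger isotropy, for instance by keeping track of an enveloping global quotient and restricting its structure group back to $\Gamma$; and one must verify that the trivial-bundle, global-quotient presentations of the second paragraph actually compute $d\Omega^{U,\Gamma}$ and that enlarging the ambient $\Gamma$-representation leaves the resolution class unchanged, which again reduces to compatibility of the resolution with the stabilization maps.
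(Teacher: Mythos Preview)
Your overall architecture is right: model $d\Omega^{U,\Gamma}$ by proper derived $\Gamma$-manifolds $(M,E,s)$, build a retraction by resolving, and check naturality, $\Omega^U_*$-linearity, and $r\circ\iota=\mathrm{id}$. But the central step is misdirected, and as written the argument does not go through.

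The gap is your second paragraph. Sections \ref{sec:abelianize} and \ref{sec:bergh} (abelianization and Bergh destackification) are procedures that take a \emph{normally complex orbifold} and, by blow-ups and root stacks, produce an orbifold whose coarse space is a smooth manifold. They act on the ambient space and its stabilizers; they do nothing whatsoever to a section of a vector bundle, and in particular they do not ``make the section cutting out the derived zero locus transverse.'' Equivariant transversality genuinely fails: for a generic $\Gamma$-equivariant section of $E\to M$ the zero locus is singular along fixed-point strata, and no sequence of blow-ups of $M$ changes that, because the obstruction is representation-theoretic (the nontrivial part $\check E^{\Gamma'}$ cannot be hit by varying along $M^{\Gamma'}$). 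So your invocation of Sections \ref{sec:abelianize}--\ref{sec:bergh} supplies no mechanism to pass from a derived $\Gamma$-manifold to a genuine one.

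What the paper actually does for this theorem (see the paragraph after its statement and Section \ref{sec:equivariant-bordisms}) uses the machinery of Sections \ref{sec:ATW} and \ref{sec:derive-manifolds}, not Sections \ref{sec:abelianize}--\ref{sec:bergh}. The idea is: first perturb $s$ to a \emph{Fukaya--Ono--Parker section}, i.e.\ one whose behavior normal to each fixed locus $M^{\Gamma'}$ is an equivariant polynomial map $V\to W$; then the singularities of $s^{-1}(0)$ are modelled fiberwise on the universal zero locus $Z_d^{\Gamma'}(V,W)\subset V\times\mathrm{Poly}_d^{\Gamma'}(V,W)$, an explicit affine variety; finally one applies Abramovich--Temkin--W{\l}odarczyk's \emph{functorial embedded} resolution to these model varieties and pulls back, using the functoriality (Propositions \ref{prop:cartesian-degree}, \ref{prop:cartesian-group}, \ref{prop:stabilization-compatible}) to glue the local resolutions into a global $\Gamma$-manifold $\tilde Z$. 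The decomposition by isotropy type (Proposition \ref{prop:equivariant-resolve-bold-gamma}) is what ultimately gives a map landing in each summand of $\Omega^{U,\Gamma}$ indexed by conjugacy classes of subgroups, and the identity $r\circ\iota=\mathrm{id}$ holds because when $E=0$ the FOP section is already transverse and the resolution is the identity. Your well-definedness worries (independence of presentation, of cut-off degree, of perturbation; compatibility with bordism and stabilization) are exactly the content of Lemma \ref{lem:cut-degree}, Lemma \ref{lem:stable-regular}, and the proof of Theorem \ref{thm:main-derived-to-underived}, but they are answered by properties of the ATW resolution of the $Z$-varieties, not by destackification.
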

\begin{rem} \label{rem:external_not_internal_multiplicative}
  We suspect that the map we construct fails to be compatible with the module structure over the larger bordism ring $\Omega^{U, \Gamma}_*$. We could, however, prove compatiblity with external multiplication, i.e. the maps generalising the $\Omega^{U}_*$  module map
  \begin{equation}
    \Omega^{U,\Gamma_1} \times  \Omega^{U,\Gamma_2} \to \Omega^{U,\Gamma_1 \times \Gamma_2}. 
  \end{equation}
  The internal multiplication arises from the composition of the external multiplication with the restriction map along the diagonal of $\Gamma \times \Gamma$, and there is no reason for the splitting we construct to be compatible with restriction to subgroups.
\end{rem}
The proof of this result is given in Section \ref{sec:equivariant-bordisms}, building upon the results of Sections \ref{sec:ATW} and \ref{sec:derive-manifolds}. The basic idea is the following: the difference between geometric and homotopical bordism measures the failure of equivariant transversality. By considering higher jets, one may nonetheless formulate a notion of \emph{genericity} for equivariant maps which are modelled after holomorphic maps. Generic fibers of equivariant maps have singularities that are prescribed by representation theoretic data, and that are modelled by algebraic varieties. The splitting is provided by a resolution of singularities of such zero loci.
\begin{rem}
 In the case of abelian groups, the map of coefficients  $\bmu^{\Gamma}_* \to  \bMU_*^{\Gamma}$ is known to be injective \cite{Loffler1973,tomdieck1970}, but we are not aware of any result either at the level of homology theories, or for general groups.
\end{rem}

Our proofs of Theorems \ref{thm:inclusion_ordinary_geometric_splits} and \ref{thm:inclusion_geometric_homotopical_splits} are uniform for all groups $\Gamma$.  Instead of using the language of global homology theories (c.f. \cite{schwede}), we give explicit statements, formulating the compatibility of the second result with two maps on bordism groups associated to a group homomorphism $\Gamma \to \Gamma'$: the first assigns to a $\Gamma$-manifold $M$ the $\Gamma'$-manifold $M \times_{\Gamma} \Gamma'$, yielding, for each $\Gamma$-space $X$, a map from the $\Gamma$-equivariant bordism of $X$ to the $\Gamma'$-equivariant bordism of $X \times_\Gamma \Gamma'$, while the second pulls back the $\Gamma'$-action on a manifold to $\Gamma$,  yielding, for each $\Gamma'$-space $Y$, a map from $\Gamma'$-equivariant bordism to $\Gamma$-equivariant bordism:
\begin{thm}\label{thm:split-change-group-compatible}
There is a distinguished choice of splitting in Theorem \ref{thm:inclusion_geometric_homotopical_splits}, so that the following holds for a group  homomorphism $\Gamma \to \Gamma'$:
  \begin{enumerate}
  \item If the homomorphism is injective, and $(X,A)$ is a pair of $\Gamma$-spaces, the following diagram commutes:
  \begin{equation}
    \begin{tikzcd}
      d \Omega^{U,\Gamma} (X,A) \ar[r] \ar[d] &  d \Omega^{U,\Gamma'} (X \times_{\Gamma} \Gamma', A \times_{\Gamma} \Gamma') \ar[d] \\
       \Omega^{U,\Gamma} (X,A) \ar[r]  & \Omega^{U,\Gamma'} (X \times_{\Gamma} \Gamma', A \times_{\Gamma} \Gamma') .
    \end{tikzcd}
  \end{equation}
  \item If the homomorphism is surjective, and $(Y,B)$ is a $\Gamma'$-space, the following diagram commutes:
  \begin{equation}
    \begin{tikzcd}
      d \Omega^{U,\Gamma'} (Y,B) \ar[r] \ar[d] &  d \Omega^{U,\Gamma} (Y,B) \ar[d] \\
       \Omega^{U,\Gamma'} (Y,B) \ar[r]  & \Omega^{U,\Gamma} (Y,B) .
    \end{tikzcd}
  \end{equation}
  \end{enumerate}
\end{thm}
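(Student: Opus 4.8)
\medskip
\noindent\emph{Proof proposal.}
The plan is to realize the splitting of Theorem \ref{thm:inclusion_geometric_homotopical_splits} as the composite of three operations — pass to the quotient derived orbifold, replace the given representative by a normally complex one, and apply the \emph{functorial} (ATW-style) resolution of singularities of Section \ref{sec:ATW} — and then to check that each of the two change-of-group operations corresponds, at the level of quotient derived orbifolds, to a morphism with respect to which this composite is manifestly natural. Concretely, a class in $d\Omega^{U,\Gamma}(X,A)$ is represented by a derived $\Gamma$-orbifold $\mc{M}$ with a proper equivariant map to $X$; after passing to the quotient and choosing a normally complex perturbation as in Section \ref{sec:derive-manifolds}, its virtual zero locus is a normally complex orbifold $\mc{Z}\to [X/\Gamma]$, and its canonical resolution $\wt{\mc{Z}}\to\mc{Z}$ is a smooth complex orbifold which carries, by construction, a global quotient presentation $\wt{\mc{Z}}=[Z'/\Gamma]$ and hence defines a class in $\Omega^{U,\Gamma}(X,A)$. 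The \emph{distinguished} splitting is the one obtained by using the canonical, smooth-functorial resolution throughout, and — to remove the remaining ambiguity — by fixing at the outset a single contractible family of normally complex perturbation data as in Section \ref{sec:derive-manifolds}, so that the resulting bordism class is independent of the choice made within that family.

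For an injective homomorphism, let $\mc{M}\to X$ represent a class in $d\Omega^{U,\Gamma}(X,A)$. Its induced derived $\Gamma'$-orbifold is $\mc{M}\times_{\Gamma}\Gamma'\to X\times_{\Gamma}\Gamma'$, and the tautological Morita identification $[(\mc{M}\times_{\Gamma}\Gamma')/\Gamma']\cong[\mc{M}/\Gamma]$ (and likewise for $X$ and $A$) shows that the two quotient derived orbifolds, together with all auxiliary data — stable complex structures, normally complex perturbations from the fixed family — agree on the nose. Hence the normally complex zero loci agree, the canonical resolutions agree, and the two presentations $\wt{\mc{Z}}=[Z'/\Gamma']$ and $[Z'/\Gamma]$ differ only by inducing the $\Gamma$-manifold $Z'$ up along $\Gamma\hookrightarrow\Gamma'$. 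This is precisely the statement that the splitting intertwines $(-)\times_{\Gamma}\Gamma'$ with the corresponding induction map on geometric bordism, which gives commutativity of the square in part (1).

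For a surjective homomorphism, write $K=\ker(\Gamma\to\Gamma')$. Pulling back the $\Gamma'$-action on a representative $\mc{N}\to Y$ to $\Gamma$ leaves the underlying derived manifold unchanged but replaces the quotient $[\mc{N}/\Gamma']$ by $[\mc{N}/\Gamma]$, and the natural map $[\mc{N}/\Gamma]\to[\mc{N}/\Gamma']$ is a $K$-gerbe, in particular a smooth (indeed étale) morphism. A $\Gamma'$-equivariant normally complex perturbation pulls back to a $\Gamma$-equivariant one, and it remains generic in the $\Gamma$-equivariant sense because the genericity condition of Section \ref{sec:derive-manifolds} is checked on local models that are insensitive to the trivial gerbe factor; choosing the $\Gamma$-side perturbation data to be pulled back from the $\Gamma'$-side (permitted by contractibility of the fixed family), the virtual zero locus downstairs is the gerbe pullback of the one upstairs. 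The essential input is then that the canonical resolution of Sections \ref{sec:ATW}–\ref{sec:bergh} commutes with smooth morphisms, hence with the $K$-gerbe: $\widetilde{[\mc{N}/\Gamma]}\cong\widetilde{[\mc{N}/\Gamma']}\times_{[\mc{N}/\Gamma']}[\mc{N}/\Gamma]$. Unwinding the quotient presentations, the resolved $\Gamma'$-manifold $Z'$ and the resolved $\Gamma$-manifold (which is the same $Z'$ with $\Gamma$ acting through $\Gamma'$) are identified compatibly with the restriction map $\Omega^{U,\Gamma'}\to\Omega^{U,\Gamma}$, giving commutativity of the square in part (2).

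The main obstacle is the interface with the precise functoriality of the resolution procedure and with the perturbation scheme. First, one must verify that the version of the ATW/Bergh algorithm set up in Sections \ref{sec:ATW}–\ref{sec:bergh} is functorial not merely for open immersions and étale maps of orbifolds but for the smooth morphisms — $K$-gerbes and change-of-presentation maps $[Z/\Gamma]\to[Z/\Gamma']$ — that arise from change of group, so that the chosen blow-up centers descend along the gerbe. Second, one must check that normally complex genericity in the sense of Section \ref{sec:derive-manifolds} is stable under pullback along such morphisms (equivalently, that a $\Gamma'$-generic perturbation is $\Gamma$-generic). Finally, for the splitting to be well defined \emph{at all} one needs that distinct normally complex representatives of a fixed $d\Omega^{U,\Gamma}$-class have bordant resolutions; this is already part of Theorem \ref{thm:inclusion_geometric_homotopical_splits}, but here it must be upgraded to hold compatibly across $\Gamma$ and $\Gamma'$, which is exactly why fixing one ambient family of perturbation data — rather than choosing independently for each group — is required. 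Assembling these inputs, the verification of parts (1) and (2) reduces to the formal identifications of quotient orbifolds indicated above, and the rest of the argument is then routine.
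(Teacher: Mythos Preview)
Your overall strategy---realize the splitting via the perturbation--resolution algorithm and then check naturality under the two change-of-group maps---is the same as the paper's. But your execution takes a different, stackier route, and this introduces a genuine gap in the surjective case.

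For injections your Morita argument $[(\mc{M}\times_\Gamma\Gamma')/\Gamma']\cong[\mc{M}/\Gamma]$ is correct and essentially equivalent to what the paper does. The paper phrases it equivariantly: it observes that every fixed-point stratum $(M\times_\Gamma\Gamma')^{\Sigma'}$ is the $\Gamma'$-translate of some $M^{\Sigma}$, so a $\Gamma$-equivariant straightening (and then an FOP section) on $M$ freely induces a $\Gamma'$-equivariant one on the induced manifold, and the resolutions match step by step. Your formulation recovers this once you unwind the quotient presentation, so there is no real difference here.

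The surjective case is where your argument has a gap. You propose to use that $[\mc{N}/\Gamma]\to[\mc{N}/\Gamma']$ is a $K$-gerbe and that ``the canonical resolution of Sections \ref{sec:ATW}--\ref{sec:bergh} commutes with smooth morphisms.'' Two problems. First, Section \ref{sec:bergh} (Bergh destackification) is irrelevant to this theorem: the splitting $d\Omega^{U,\Gamma}\to\Omega^{U,\Gamma}$ uses only Algorithm II (FOP perturbation plus ATW resolution of the zero locus), not the orbifold-to-manifold Algorithm I. Second, and more seriously, while the ATW step is indeed functorial for smooth morphisms, the FOP machinery of Section \ref{sec:derive-manifolds} is set up for \emph{effective} orbifolds (faithful $\Gamma$-actions, cf.\ Remark \ref{rem:faithful} and Definition \ref{defn:orbi-regular}). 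The orbifold $[\mc{N}/\Gamma]$ is \emph{not} effective---$K$ acts trivially---so the straightenings, the notion of FOP regularity, and the very meaning of ``generic perturbation'' are not defined on it without first stabilizing, and after stabilization the simple gerbe picture is gone. Your sentence ``the genericity condition \ldots\ is checked on local models that are insensitive to the trivial gerbe factor'' is exactly the point that needs proof and is not supplied.

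The paper sidesteps this by never passing to the ineffective quotient: it stays in the equivariant world and simply observes that a $\Gamma'$-manifold with $\Gamma'$-equivariant straightening and FOP section is, via the surjection, a $\Gamma$-manifold with $\Gamma$-equivariant data of the same type (the $\Gamma'$-conditions being stronger than the $\Gamma$-ones, since conjugacy in $\Gamma'$ refines conjugacy in $\Gamma$). The resolved zero locus is then literally the same manifold, with the $\Gamma$-action pulled back from $\Gamma'$. This is the direct equivariant argument you should use instead of the gerbe formulation.
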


Returning to Theorem \ref{thm:inclusion_ordinary_geometric_splits}, we begin with the result that asserts the existence of a structure map which is not part of the usual axioms for equivariant homology theories:
\begin{thm}
Given an surjective group homomorphism $\Gamma \to \Gamma'$, there is a natural transformation of homology theories
  \begin{equation} \label{eq:natural_transformation-resolve-singularity}
         \Omega^{U,\Gamma}  \Rightarrow \Omega^{U,\Gamma'}
       \end{equation} 
on the category of pairs of $\Gamma'$-spaces.  This natural transformation is functorial with respect to group homomorphisms in the sense that, given a surjection $\Gamma' \to \Gamma''$, and a pair $(Y,B)$ of $\Gamma''$-spaces, the following diagram commutes:
    \begin{equation}
    \begin{tikzcd}
    \Omega^{U,\Gamma} (Y,B) \ar[dr] \ar[r] &   \Omega^{U,\Gamma'} (Y,B) \ar[d] \\
       & \Omega^{U,\Gamma''} (Y,B) .
    \end{tikzcd}
  \end{equation}       
\end{thm}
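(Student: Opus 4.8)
The plan is to define the desired natural transformation on the level of bordism cycles by the recipe \emph{``quotient by the kernel, then resolve''}, and to deduce every required property from the functoriality of the equivariant, normally-complex-structure-preserving resolution of singularities produced in Sections~\ref{sec:abelianize} and~\ref{sec:bergh}.

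Let $K=\ker(\Gamma\to\Gamma')$. A class in $\Omega^{U,\Gamma}_*(Y,B)$ is represented by a compact stably complex $\Gamma$-manifold $M$ with boundary together with a $\Gamma$-equivariant map $f\colon(M,\partial M)\to(Y,B)$, where $Y$ and $B$ carry the $\Gamma$-action inflated from $\Gamma'$, so that $K$ acts trivially on the target. Hence $f$ is $K$-invariant and descends to the quotient; the stack $[M/K]$ is then a compact normally complex orbifold carrying a residual $\Gamma'=\Gamma/K$ action together with a $\Gamma'$-equivariant map $\bar f\colon([M/K],[\partial M/K])\to(Y,B)$. Applying the functorial resolution $\mathcal R$ of Sections~\ref{sec:abelianize}--\ref{sec:bergh} yields a compact stably complex $\Gamma'$-manifold $\mathcal R([M/K])$ and a proper $\Gamma'$-map $\rho\colon\mathcal R([M/K])\to[M/K]$ which is an isomorphism of stably complex manifolds over the free locus. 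I would then send $[M,f]$ to $[\mathcal R([M/K]),\bar f\circ\rho]$.

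Next I would check that this passes to bordism and defines a natural transformation of homology theories. A bordism between two such cycles is a cycle of the same type with a corner structure, and since $\mathcal R$ is functorial for open embeddings and multiplicative (so that it respects collar neighbourhoods of boundary faces), $\mathcal R$ of the bordism is a bordism between $\mathcal R$ of the two ends; the same fact shows that resolving a manifold with boundary restricts on the boundary to the resolution of the boundary, giving compatibility with the connecting homomorphism of a pair. Naturality in $(Y,B)$ is immediate, since one only post-composes $\bar f\circ\rho$ with $\Gamma'$-equivariant maps of the target; $\Omega^U_*$-linearity follows from the multiplicativity of $\mathcal R$; and the remaining axioms (additivity, excision) are likewise routine consequences of functoriality of $\mathcal R$ for disjoint unions and open embeddings.

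Finally --- and this is where I expect the real work --- I would establish the functoriality under a further surjection $\Gamma'\to\Gamma''$. Writing $L=\ker(\Gamma\to\Gamma'')$, so $K\subseteq L$ and $L/K=\ker(\Gamma'\to\Gamma'')$, the commuting triangle unwinds to the assertion that
\begin{equation}
  \mathcal R\bigl([\mathcal R([M/K])/(L/K)]\bigr)\;\sim\;\mathcal R([M/L])
\end{equation}
as stably complex $\Gamma''$-manifolds over $(Y,B)$, i.e. that resolving the orbifold $[M/L]=[[M/K]/(L/K)]$ directly gives the same bordism class as first partially resolving by passing to the $(L/K)$-equivariant manifold $\mathcal R([M/K])$, quotienting, and then resolving the remaining singularities. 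Both of these smooth models lie over $[M/L]$, the left-hand one via the blow-up morphism $[\mathcal R([M/K])/(L/K)]\to[M/L]$. The strategy is to match the canonical sequence of blow-ups and root-stack operations out of which $\mathcal R$ is built on the two sides: the centres chosen on $[\mathcal R([M/K])/(L/K)]$ should be the total transforms of the centres chosen on $[M/L]$, so that each step alters the bordism class in the same controlled way and the two resolutions differ only by a further resolution along an already normal-crossings locus, which is cobordism-trivial. The main obstacle is precisely to verify that the abelianization/destackification algorithm of Sections~\ref{sec:abelianize}--\ref{sec:bergh} is natural enough with respect to iterated quotients by finite groups for this matching to hold; granting it, the triangle for $\Gamma\to\Gamma'\to\Gamma''$ follows, and the case of trivial $\Gamma''$ recovers compatibility with the forgetful maps.
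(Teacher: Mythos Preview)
Your construction of the natural transformation---quotient by the kernel $K$, then apply the resolution of Sections~\ref{sec:abelianize}--\ref{sec:bergh} to $M/\!/K$ in a $\Gamma'$-equivariant fashion---is exactly the paper's approach, as spelled out in the proof of Theorem~\ref{thm:splitting_compatible_surjections}. Your checks that this respects bordism, boundary maps, and the $\Omega^U_*$-module structure are also in line with what the paper does (the ``multiplicativity'' you invoke for collars is precisely the product-with-a-manifold compatibility proved in Theorem~\ref{thm:split_manifold_to_orbifold}).

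You are right that the commuting triangle for a composite surjection $\Gamma\to\Gamma'\to\Gamma''$ is where the real content lies, but your sketched argument for it has a genuine gap. The assertion that the centres chosen by Algorithm~I on $[\mathcal R([M/K])/(L/K)]$ are the total transforms of those on $[M/L]$ is not correct: the algorithm selects its centres via intrinsic numerical invariants of the \emph{full} isotropy representation at each point ($\mathrm{ord}^{na}$, the divisorial and toroidal indices, the divisorial type). Once the $K$-isotropy has been resolved away, the stabilizers of $[\mathcal R([M/K])/(L/K)]$ are strictly smaller than those of $[M/L]$ at the corresponding points, so the maximal loci of these invariants---and hence the centres---do not match under the blow-down. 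Your fallback, that any discrepancy is ``a further resolution along an already normal-crossings locus, which is cobordism-trivial'', is also false: blowing up along a smooth centre changes the complex bordism class in general (for instance $[\mathrm{Bl}_p\mathbb{CP}^2]\neq[\mathbb{CP}^2]$ in $\Omega^U_4$, as one sees from the Chern numbers), so such extra blow-ups cannot be discarded. The paper itself only sketches the construction of the map and does not supply a detailed argument for the triangle either, so there is no alternative proof here to compare against; but you should not regard the triangle as settled by the strategy you outline.
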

As one might guess from the discussion following Theorem \ref{thm:inclusion_ordinary_geometric_splits}, the map assigns to a $\Gamma$-manifold a resolution of the singularties of its quotient by the kernel of the homomorphism. The naturality of the resolution algorithm implies that the result carries a residual $\Gamma'$-action. Our methods do not seem well-adapted to extend this result to homotopical bordism.

The relationship of the above statements to Theorem \ref{thm:inclusion_ordinary_geometric_splits}, as well as to more familiar maps in equivariant homotopy theory, is most easily formulated if we assume that the surjection splits. In that case, given a pair $(Y,B)$ of $\Gamma'$-spaces, we consider the composition:
\begin{equation} \label{eq:composition_two_maps}
    \Omega^{U,\Gamma'} (Y,B) \to \Omega^{U,\Gamma} (Y \times_{\Gamma'} \Gamma, B\times_{\Gamma'} \Gamma) \to \Omega^{U,\Gamma} (Y , B), 
  \end{equation}
  where the first map is induced by the injection $\Gamma' \to \Gamma$, and the second by the surjection $\Gamma \to \Gamma'$. This defines a natural transformation of functors on the category of such pairs. 
\begin{thm} \label{thm:splitting_compatible_surjections}
The composition of the natural transformations of Equations \eqref{eq:composition_two_maps} and \eqref{eq:natural_transformation-resolve-singularity} is the identity on  the category of pairs of $\Gamma'$-spaces.
\end{thm}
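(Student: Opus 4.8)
The plan is to verify the identity directly on geometric representatives, exploiting that both natural transformations are defined by explicit operations on bordism cycles. Fix the split surjection $p \colon \Gamma \to \Gamma'$, let $N = \ker p$, and let $s \colon \Gamma' \to \Gamma$ be the section, so that $\Gamma = N \rtimes s(\Gamma')$; since $(Y,B)$ is pulled back from $\Gamma'$, the subgroup $N$ acts trivially on $Y$ and $B$. A class in $\Omega^{U,\Gamma'}(Y,B)$ is represented by a stably complex $\Gamma'$-manifold $M$ (with boundary) together with an equivariant map $f \colon (M,\partial M) \to (Y,B)$. Unwinding the definitions, the map \eqref{eq:composition_two_maps} carries this cycle to the $\Gamma$-manifold $\widetilde{M} := \Gamma \times_{s(\Gamma')} M$ together with the map $\widetilde{M} \to \Gamma \times_{s(\Gamma')} Y \to Y$ given by $[g,m] \mapsto p(g)\cdot f(m)$, and then \eqref{eq:natural_transformation-resolve-singularity} carries the result to a resolution of singularities of $\widetilde{M}/N$, equipped with its residual $\Gamma' = \Gamma/N$ action and the induced reference map to $Y$.

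The heart of the matter is a Mackey-type computation identifying $\widetilde{M}/N$. First I would observe that $N$ acts freely on $\widetilde{M}$: if $n\cdot[g,m]=[g,m]$ then $g^{-1}n^{-1}g \in N \cap s(\Gamma') = \{e\}$, whence $n=e$; consequently $\widetilde{M}/N$ is again a smooth manifold. Second, because $N\cdot s(\Gamma') = \Gamma$, in the double quotient $\widetilde{M}/N = (\Gamma\times M)/(N\times s(\Gamma'))$ the $\Gamma$-coordinate collapses, and one checks that $m\mapsto[e,m]$ defines a diffeomorphism $M \xrightarrow{\ \sim\ } \widetilde{M}/N$. Third, I would verify that this diffeomorphism is $\Gamma'$-equivariant for the original action on $M$ (the residual action of the class of $g$ sends $[e,m]$ to $[g,m]=[e,p(g)m]$), that it intertwines the (stably/normally) complex structures, and that it is compatible with the reference maps, since $[e,m]\mapsto p(e)f(m)=f(m)$, and likewise on the boundary. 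Thus after applying \eqref{eq:composition_two_maps} the resulting cycle has underlying normally complex orbifold an honest manifold, canonically identified with the original $(M,f)$.

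It then remains to invoke the normalization built into the resolution procedure of the earlier sections: the algorithm underlying \eqref{eq:natural_transformation-resolve-singularity} performs no modification when there is nothing to resolve, i.e. it restricts to the identity on cycles whose associated normally complex orbifold is already a smooth manifold. Hence the composite carries $[M,f]$ to $[M,f]$ for every representative and every pair $(Y,B)$; since both natural transformations are already known to be well-defined morphisms of homology theories, the composite is the identity natural transformation. I expect the only point requiring care to be this last invocation: one must ensure the resolution theorem is formulated so as to act as the \emph{strict} identity — not merely an isomorphism, nor only up to bordism — on inputs that are already smooth, and that the normally complex structure placed on $\widetilde{M}$ is chosen so that the induced structure on $\widetilde{M}/N$ is literally the given one on $M$. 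Everything else is routine bookkeeping with induced representations.
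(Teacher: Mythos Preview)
Your argument is correct and follows the same line as the paper's: the natural transformation \eqref{eq:natural_transformation-resolve-singularity} is ``quotient by the kernel, then apply Algorithm~I $\Gamma'$-equivariantly,'' and applied to the induced manifold $\Gamma\times_{s(\Gamma')}M$ the kernel acts freely with quotient $M$, on which Algorithm~I does nothing. The paper's proof in Section~\ref{sec:equivariant-bordisms-no-d} is in fact terser than yours---it records only the construction of the map and leaves the Mackey computation and the normalization check implicit---so your explicit verification that $N$ acts freely, that $\widetilde{M}/N\cong M$ $\Gamma'$-equivariantly over $(Y,B)$, and that the abelianization and destackification steps terminate immediately on a manifold (since $\mathrm{ord}^{na}\equiv 0$ and all independence indices vanish) is a welcome elaboration rather than a different route.
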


\begin{rem}
  There has been some recent work showing that the coefficients of both geometric and homotopical bordism are non-trivial in odd degree \cite{Samperton2022,Kriz2022,AngelSampertonSegoviaUribe2023}. The examples that they study arise from free actions, which can be identified with $\Omega^{U}_*(B\Gamma)$; the composition of the splitting constructed in Theorem \ref{thm:inclusion_ordinary_geometric_splits} (when evaluated on a point) with the natural map from free bordism to equivariant bordism agrees with the map induced by the projection from $B\Gamma$ to the point. In this sense,  Theorem \ref{thm:inclusion_ordinary_geometric_splits} can be seen as largely orthogonal to the existing literature, as it concerns the relationship between the free and non-free part of equivariant bordism.

On the other hand, Theorem \ref{thm:inclusion_geometric_homotopical_splits} implies that a counterexample to the evenness conjecture for $ \Omega^{U,\Gamma}$ directly yields a counterexample for homotopical bordism.
  
\end{rem}

\subsection{Applications: enumerative geometry and symplectic topology}\label{subsec:intro-application}
We now turn attention to our initial motivation for this work. The reader who is interested in understanding the methods of this paper may entirely skip this section, and proceed to Section \ref{sec:results:-bord-orbisp}, where we discuss bordism of orbifolds and derived orbifolds.

Classically, Gromov--Witten invariants of complex algebraic varieties or symplectic manifolds are constructed from virtual fundamental classes of moduli spaces of stable maps. Such a class lives in the rational Chow group or the rational homology group of the corresponding moduli space. It has been speculated since the beginning of Gromov--Witten theory \cite[Section 1.5]{kontsevich-94} that it should be possible to formulate variants of these virtual fundamental classes taking value in complex cobordism. On the symplectic side, early work in the subject \cite{mcduff-87} has already showcased the use of cobordism classes of moduli spaces of pseudo-holomorphic maps to distinguish symplectic structures.

The results of this paper lead to complex bordism valued Gromov--Witten invariants for general symplectic manifolds which can be formulated as follows: given a closed symplectic manifold $(X, \omega)$, let $\ov{\scrM}_{g,k}(X,J,A)$ be the moduli space of stable genus $g$ maps with $k$ marked points which represent a homology class $A$ on $X$, and which are pseudo-holomorphic with respect to an $\omega$-tamed almost complex structure $J$. The space $\ov{\scrM}_{g,k}(X,J,A)$ comes with a continuous map
\begin{equation}
    \ov{\scrM}_{g,k}(X,J,A) \to \ov{\scrM}_{g,k} \times X^k ,
\end{equation}
which is the product of the stabilization map to the  (coarse space of)  Deligne--Mumford space $\ov{\scrM}_{g,k}$ and the evaluation map at the $k$ marked points.

Whenever all points in $\ov{\scrM}_{g,k}(X,J,A) $ are regular, and have trivial isotropy, this moduli space is a closed manifold, and while it does not appear to admit a canonical smooth structure in general, there is an associated well-defined complex cobordism class, arising from a stable isomorphism between the tangent space and the kernel of the linearised $\bar{\partial}$ operator, and the fact that this operator is complex linear up to terms of order $0$. In this (extremely) limited context, which already fails, for example, when considering genus-$0$ Gromov--Witten theory with target $X = \mathbb{C} \mathbb{P}^1$, one obtains a Gromov--Witten invariant valued in the complex cobordism group of $ \ov{\scrM}_{g,k} \times X^k$, which is defined as the image of this class under evaluation. The following result asserts that we may generalize this to all symplectic manifolds:

\begin{thm}\label{thm:intro}
For any closed symplectic manifold $X$, any pair $(g,k)$ of non-negative integers, and any integral second homology class $A$ on $X$, there is a complex cobordism class on $ \ov{\scrM}_{g,k} \times X^k$ which is associated to the moduli space of genus $g$ stable maps with $k$ marked points in homology class $A$. This class is independent of the choice of almost complex structure $J$, and agrees with the class defined by 
$\ov{\scrM}_{g,k}(X,J,A)$ whenever it is a transversely cut out manifold.
\end{thm}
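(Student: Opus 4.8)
The plan is to assemble three inputs: the global Kuranishi chart --- equivalently, compact normally complex derived orbifold --- structure on $\ov{\scrM}_{g,k}(X,J,A)$ provided by Abouzaid--McLean--Smith and by Hirshi--Swaminathan, together with its map to $\ov{\scrM}_{g,k}\times X^k$; the resulting class in the bordism group of complex derived orbifolds over $\ov{\scrM}_{g,k}\times X^k$ (which we denote here $\Omega^{U}_{\mathrm{dorb},*}(\ov{\scrM}_{g,k}\times X^k)$); and the splitting $\Omega^{U}\Rightarrow \Omega^{U}_{\mathrm{dorb}}$ of that bordism theory off of complex bordism, which is the derived-orbifold counterpart, established in this paper, of Theorems \ref{thm:inclusion_ordinary_geometric_splits} and \ref{thm:inclusion_geometric_homotopical_splits}. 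Composing the derived-orbifold class of the moduli space with the splitting natural transformation produces the asserted complex cobordism class on $\ov{\scrM}_{g,k}\times X^k$, i.e. an element of $\Omega^{U}_*(\ov{\scrM}_{g,k}\times X^k)$.

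First I would recall that, after choosing auxiliary data (a relatively very ample line bundle on the domain curves rigidifying the automorphisms, and a finite-dimensional complex space of sections cutting down the cokernel of the linearised $\bar\partial$-operator), the moduli space is a compact derived orbifold whose thickening is a complex vector bundle and whose isotropy acts with complex normal structure --- precisely the normally complex structure required to lie in the source of the splitting, the complexity coming from the fact recalled above that the linearised operator is complex linear up to a zeroth-order term. I would then invoke the uniqueness part of the cited work, suitably refined to track these complex structures: any two systems of auxiliary data are joined by stabilisation and interpolation, yielding a normally complex derived orbifold cobordism over $\ov{\scrM}_{g,k}\times X^k$, so the class in $\Omega^{U}_{\mathrm{dorb},*}(\ov{\scrM}_{g,k}\times X^k)$ is independent of the choices, and applying the splitting gives a well-defined element of $\Omega^{U}_{*}(\ov{\scrM}_{g,k}\times X^k)$.

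Independence of the tamed almost complex structure $J$ is then formal. A path $(J_t)_{t\in[0,1]}$ joining two choices produces, by Gromov compactness for the compact family and the chart construction in families, a normally complex derived orbifold with boundary over $\ov{\scrM}_{g,k}\times X^k$ whose two ends are the moduli spaces for $J_0$ and $J_1$, hence an equality of classes in $\Omega^{U}_{\mathrm{dorb},*}$; since the splitting is a morphism of homology theories it carries this bordism to a bordism in $\Omega^{U}_*$, so the two complex cobordism classes coincide.

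The step I expect to demand the most care is the compatibility clause. When $\ov{\scrM}_{g,k}(X,J,A)$ is a transversely cut out closed manifold --- that is, every stable map is regular with trivial automorphism group --- I would show that the global Kuranishi chart is, up to the derived-orbifold cobordism relation, the trivial thickening of this manifold by a complex vector bundle together with its tautological complement; this again uses the complex linearity of $\bar\partial$ modulo order zero, which identifies the virtual tangent bundle of the chart with the tangent space of the moduli space equipped with its classical stable almost complex structure. Consequently the derived-orbifold class lies in the image of $\Omega^{U}_* \to \Omega^{U}_{\mathrm{dorb},*}$, and since the splitting is a one-sided inverse to this inclusion, applying it returns exactly the pushforward to $\ov{\scrM}_{g,k}\times X^k$ of the classical complex cobordism class of the moduli space. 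Matching the two stable complex structures on the nose rather than up to abstract isomorphism, and tracking the functoriality of the stabilisation and evaluation maps through the entire construction, is where the genuine content of this final assertion sits.
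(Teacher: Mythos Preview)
Your proposal is correct and follows essentially the same approach as the paper: construct a class in $d\Omega^{U}_*(\ov{\scrM}_{g,k}\times X^k)$ from the global Kuranishi chart (via Theorem~\ref{thm:global-charts} and the smoothing/orientation package summarized in Theorem~\ref{eqn:d-orb-GW}), then apply the splitting map ${\mc Z}$ of Theorem~\ref{thm:main}. The paper's proof (Theorem~\ref{thm:GW-bordism}) is in fact terser than yours---it does not spell out the compatibility clause with the transversely cut out case, which you correctly derive from the fact that ${\mc Z}$ is a one-sided inverse to the inclusion $\Omega^{U}_*\to d\Omega^{U}_*$.
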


Note that due to the failure of transversality and the nontriviality of automorphisms of stable maps, $\ov{\scrM}_{g,k}(X,J,A)$ is only a compact orbispace in general. The content of the above theorem is that we can construct a stably complex \emph{manifold} from a derived orbifold
/global Kuranishi chart presentation of $\ov{\scrM}_{g,k}(X,J,A)$ such that the resulting bordism class is independent of auxiliary choices. This is achieved by developing recent advances on \emph{normally complex polynomial perturbations} \cite{Bai_Xu_2022} and adapting various methods of \emph{resolution of singularities} in characteristic $0$ to our context. We refer the reader to Theorem \ref{thm:GW-bordism} for a more precise statement, and we will discuss our construction in later part of the introduction.

\begin{rem}
Coates--Givental \cite{Coates-Givental} proposed a definition of rational quantum cobordisms by studying the Chern--Dold character of the virtual tangent bundle of $\ov{\scrM}_{g,k}(X,J,A)$ and integrating it against the rational virtual fundamental class in the spirit of Hirzebruch--Riemann--Roch theorem. In contrast, our definition is integral, and directly constructs stably complex manifolds.
\end{rem}

  As a proof of principle, we use our construction to establish a family version of the homological splitting theorem from \cite{abouzaid2021complex}. For a closed symplectic manifold $(X, \omega)$, denote by $\Omega \mathrm{Ham}(M, \omega)$ the space of loops of Hamiltonian diffeomorphisms which are based at the identity. To each point in $\Omega \mathrm{Ham}(M, \omega)$, we have an associated sweep-out map $S^1 \times X \to X$, which we can formulate as a map of based spaces
\beq
\begin{aligned}
  \Omega \mathrm{Ham}(X, \omega) \wedge  S^1 & \to \mathrm{Aut}(X), 
\end{aligned}
\eeq
where $\mathrm{Aut}$ is the space of self-homotopy equivalences, with basepoint the identity.
Passing to spectra,  we obtain a map
\beq
 \Omega \mathrm{Ham}(X, \omega) \wedge  S^1 \wedge {\mb S}  \to F( X_+\wedge {\mb S},  X_+\wedge {\mb S}). 
 \eeq
 where the notation $F$ stands for the mapping spectrum, taking value in the component of self-homotopy equivalences.
  Our main result is formulated by further passing to complex cobordism:

\begin{thm}\label{thm:splitting-B} 
The map
\beq\label{eqn:B-sweepout}
 \Omega \mathrm{Ham}(X, \omega) \wedge  S^1 \to    F( X_+\wedge {\mb S},  X_+  \wedge MU) 
\eeq
is null-homotopic.
\end{thm}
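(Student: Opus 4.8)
The plan is to identify the map \eqref{eqn:B-sweepout} with the obstruction to a parametrized, $MU$-linear form of the Lalonde--McDuff--Polterovich c-splitting for Hamiltonian fibrations over $S^2$, and to kill that obstruction by means of a complex-cobordism-valued Seidel class built from the Gromov--Witten machinery of Theorem \ref{thm:GW-bordism}; in effect, this runs the argument of \cite{abouzaid2021complex} in families. First I would unwind the sweep-out construction: a based loop $\gamma$ in $\mathrm{Ham}(X,\omega)$ produces, by clutching two copies of $X \times D^2$ along their common boundary by $\gamma$, a Hamiltonian fibration $E_\gamma \to S^2$, and these assemble into a universal fibration $\mathcal{E}$ over $\Omega\mathrm{Ham}(X,\omega) \times S^2$. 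A Mayer--Vietoris computation identifies $\Sigma^\infty_+ E_\gamma$ with the cofiber of the reduced clutching map $\bar{\gamma} \colon \Sigma^\infty_+ X \wedge S^1 \to \Sigma^\infty_+ X$, and tracing the adjunctions shows that \eqref{eqn:B-sweepout} is precisely $\bar{\gamma}$, assembled over $\Omega\mathrm{Ham}(X,\omega)$ and smashed with $MU$. Hence it suffices to show that this parametrized map $\bar{\gamma} \wedge MU$ is nullhomotopic over $\Omega\mathrm{Ham}(X,\omega)$ --- equivalently, that $\Sigma^\infty_+ \mathcal{E} \wedge MU$ is c-split over $\Omega\mathrm{Ham}(X,\omega) \times S^2$. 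Since a nullhomotopy of a map out of the CW complex $\Omega\mathrm{Ham}(X,\omega)$ is detected on finite skeleta, it is enough to work over an arbitrary compact family $K$ of loops.

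The crux is the $MU$-valued Seidel class. Each $E_\gamma$ is a closed symplectic manifold, so for a single loop the relevant input is Theorem \ref{thm:intro} restricted to section classes; over the family $K$, fix a family of almost complex structures on $\mathcal{E}|_K$ tamed on the fibers, and note that the moduli spaces $\ov{\scrM}_A(\mathcal{E}/K)$ of pseudo-holomorphic sections of $E_\gamma \to S^2$ in a class $A$ with one marked point are compact derived orbifolds which are \emph{normally complex} --- the vertical linearized $\bar{\partial}$-operator being complex-linear up to order zero, exactly as for the closed curves in Theorem \ref{thm:intro}. A version of Theorem \ref{thm:GW-bordism} parametrized by $K$, whose input is a global Kuranishi chart and whose construction is insensitive to enlarging that chart by the compact parameter $K$, therefore equips these moduli spaces with $MU$-fundamental classes. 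Evaluating at the marked point and summing over $A$ with Novikov weights produces the Seidel class $\mathcal{S}(\gamma)$ in the quantum $MU$-homology ring $QH^{MU}_*(X)$, defined over a Novikov ring $\Lambda$ and continuous in $\gamma \in K$; and a neck-stretching cobordism --- itself a family of normally complex derived orbifolds --- that degenerates the sections of $E_\gamma$ into pairs of sections of $E_\gamma$ and $E_{\gamma^{-1}}$, using that $\gamma \cdot \gamma^{-1}$ is nullhomotopic, yields the relation $\mathcal{S}(\gamma) \ast \mathcal{S}(\gamma^{-1}) = \mathbf{1}$. Thus $\mathcal{S}$ is a map from $K$ to the units $\mathrm{GL}_1(QH^{MU}_*(X))$.

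Now I would invoke the argument of \cite{abouzaid2021complex}: for a single fibration, $\mathcal{S}(\gamma)$ being a unit produces the $MU^*(S^2)$-module generators that exhibit the $MU$-valued Leray--Hirsch splitting of $E_\gamma \to S^2$ --- the generators being obtained from the tautological fiberwise classes by quantum multiplication with $\mathcal{S}(\gamma)^{-1}$ --- equivalently, a nullhomotopy of $\bar{\gamma} \wedge MU$. This construction is functorial in $\mathcal{S}(\gamma)$ and its inverse, which vary continuously over $K$, so the nullhomotopy is defined over all of $K$. Letting $K$ exhaust $\Omega\mathrm{Ham}(X,\omega)$ and using the naturality of the virtual-class construction to make the choices compatible on overlapping skeleta, the nullhomotopies glue to a nullhomotopy of the parametrized clutching map over $\Omega\mathrm{Ham}(X,\omega)$, which by the first paragraph is the desired nullhomotopy of \eqref{eqn:B-sweepout}.

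I expect the main obstacle to be the passage from closed curves to sections in the framework of Theorem \ref{thm:GW-bordism}: one must construct $MU$-fundamental classes of section moduli for a compact family of fiberwise-symplectic fibrations, and realize the Gromov compactification, the neck-stretching degeneration, and the behaviour of the distinguished section class as cobordisms of normally complex derived orbifolds, so that the Seidel class and the resulting splitting live at the level of $MU$-module spectra rather than of $MU$-homology groups. A secondary difficulty is to make the c-splitting argument of \cite{abouzaid2021complex} manifestly functorial in the Seidel class, and to handle the Novikov bookkeeping --- completeness and $\lim^1$ issues, and the passage from the Novikov-completed splitting to an uncompleted one over $MU$ --- so that the family nullhomotopy is genuinely well defined.
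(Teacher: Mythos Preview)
Your high-level strategy --- use moduli of pseudo-holomorphic sections of the clutched fibrations, parametrized over compact families, and apply the perturbation--resolution algorithm --- matches the paper's, but the execution diverges in an essential way, and the divergence is precisely where your ``secondary difficulty'' lies.

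You propose to build a Seidel element $\mathcal{S}(\gamma)$ in a Novikov-completed quantum $MU$-homology ring, prove it is a unit via neck-stretching, and then run the Leray--Hirsch argument of \cite{abouzaid2021complex}. The paper explicitly avoids this route: as remarked after the statement of the theorem, the authors do not know how to pass from the chromatic/Novikov-completed argument of \cite{abouzaid2021complex} to an integral $MU$-statement, and your proposal does not supply that passage either. The issue you flag as secondary --- getting from a Novikov-completed splitting to an honest map $P_+\wedge\mathbb{S}\to X_+\wedge MU$ --- is in fact the main obstacle, and there is no indication in the literature that it can be resolved by ``bookkeeping.''

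The paper's approach sidesteps this entirely. Rather than summing over section classes and inverting a Seidel element, it works with a \emph{single} moduli space in the minimal section class $A$ and uses a geometric degeneration over the one-point blow-up $S$ of $\mathbb{CP}^1\times\mathbb{CP}^1$: the family $\tilde{P}_B\to S\times B$ interpolates between the trivial fibration (over $\infty\in\mathbb{CP}^1$) and the clutched fibrations $P_{\phi_b}\vee P_{\phi_b^{-1}}$ (over $0$). Applying the perturbation--resolution algorithm to the moduli space of $2$-pointed curves in class $A$, with one marked point constrained to a horizontal section, produces a genuine stably complex manifold $\overline{\mathbf{M}}^B_h$ over $\tilde{P}_B\times X$, proper over $\tilde{P}_B$. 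The null-homotopy is then obtained not via an algebraic inverse but via Pontryagin--Thom: this manifold is a geometric correspondence directly inducing a map of spectra $(\tilde{P}_B)_+\wedge\mathbb{S}\to X_+\wedge MU$ (Lemma~\ref{lem:correspondence_induces_map} and Corollary~\ref{cor:family_correspondence_map}), and the required compatibility with the fibre inclusion $X\times B\hookrightarrow\tilde{P}_B$ is verified by an explicit transversality computation over the region where the almost complex structure is a product (Lemma~\ref{lem:moduli-near-infty}), where the moduli space is literally the graph of the projection. No Novikov ring, no quantum product, no invertibility argument.

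What your approach would buy, if the Novikov passage could be made to work, is a more conceptual connection to the Seidel representation; what the paper's approach buys is that it actually closes.
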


Composing this result with the inclusion of any point in $ \Omega \mathrm{Ham}(X, \omega)$ recovers \cite[Theorem 1.3]{abouzaid2021complex} as a special case. However, unlike \emph{loc. cit.}, our proof does not depend on any chromatic homotopy theory. In particular, while it would not be too difficult to obtain such a null-homotopy \emph{after $p$-completion} for each prime $p$ from the techniques of \cite{abouzaid2021complex}, we do not know how to produce a compatible collection of null-homotopies from the chromatic construction.

\subsection{Results:  bordism for orbispaces}
\label{sec:results:-bord-orbisp}
The key notions that give rise to the results discussed in Sections \ref{sec:appl-equiv-stable} and \ref{subsec:intro-application} are best formulated in the language of bordism theories for orbispaces.

Following Pardon \cite{pardon20}, we consider the model $\RepOrbTop$ for the category of \emph{orbispaces}, modelled by orbi-CW complexes, whose objects are obtained by gluing products of discs with the orbifolds associated to $\pt/\Gamma$, for finite groups $\Gamma$, along maps which are representable, i.e. induce injections of stabilizer groups. The morphisms in  $\RepOrbTop$ are also assumed to be representable. The restriction to representable morphisms should be considered as a mild assumption, as discussed in the introduction to \cite{pardon20}.

The classical geometric approach to defining bordism as a homology theory (cf. \cite{Atiyah61}) can be imitated for derived orbifolds (cf.  \cite{joyce2007kuranishi, pardon20}), so that one obtains a bordism homology theory. We specifically consider the stably complex orbifold bordism group $\Omega^{U}_*(X) $ of an orbifold CW complex $X$ which is generated by representable maps $M \to X $ whose domain is a closed orbifold, equipped with a stable complex structure, modulo bordism over $X$. This is evidently functorial under representable maps, and can be considered as a homology theory on orbispaces (the precise formulation requires introducing a category of pairs, which we omit to simplify the discussion).

A more elaborate notion is that of a \emph{derived orbifold}, namely a triple $({\mc U}, {\mc E}, {\mc S})$ where ${\mc U}$ is an orbifold and  ${\mc S}: {\mc U} \to {\mc E}$ is a section of a vector bundle ${\mc E} \to {\mc U}$. A derived orbifold is called compact if ${\mc S}^{-1}(0)$ has compact coarse space, and a stable complex structure on $({\mc U}, {\mc E}, {\mc S})$ is a complex structure on the virtual vector bundle $T{\mc U} - {\mc E}$, which is exhibited by third vector bundle ${\mc E'} $ and complex structures on $ T{\mc U} \oplus {\mc E'}$ and $ T{\mc U} \oplus {\mc E'}$. The stably complex derived orbifold group $d \Omega^{U}(X)$ of an orbispace $X$ is defined to be group generated  by compact derived orbifolds with a choice of stable complex structure, whose underlying orbifold ${\mc U}$ is equipped with a representable map to $X$, modulo bordisms over $X$. This homology group is naturally graded by the virtual dimension $\dim {\mc U} - \dim {\mc E}$. The detailed definition is recalled in Section \ref{subsec:d-Omega-C}.

\begin{rem}
In our implementation the geometric applications discussed above, derived orbifolds are obtained as global quotients by compact Lie groups following \cite{abouzaid2021complex}: i.e.  ${\mc U}$ is presented as the quotient of a manifold by an action with finite isotropy. A non-trivial result of Pardon \cite{pardon19} implies that all derived orbifolds arise as global quotients. The earlier literature was built instead on the variants of the notion of a Kuranishi spaces \cite{Fukaya_Ono_integer,pardon-VFC,Abouzaid_Blumberg}, which have their own  associated bordism theories, which are not explicitly formulated in the literature. We expect all such theories to be equivalent, and to agree with derived orbifold bordism.
\end{rem}

Each closed stably complex orbifold $M$ may be considered as a derived orbifold with trivial underlying vector bundle ${\mc E}$. The analogue in the presence of boundary implies that there is a tautological map of homology theories from ordinary to derived bordism:
\beq
\Omega^{U} \to d\Omega^{U}.
\eeq

We can now state the orbispace versions of the equivariant results from Section \ref{sec:appl-equiv-stable}. We start with Theorem \ref{thm:inclusion_geometric_homotopical_splits}:
\begin{thm}\label{thm:main-derived-to-underived} 
There is a natural transformation
\beq\label{eqn:splitting-map-orbi}
{\mc Z}: d\Omega^{U} \to \Omega^{U} 
\eeq
of homology theories on orbispaces {respecting the $\Omega^{U}$-module structures}, which defines a splitting of the inclusion of orbifold bordism in its derived version.  
\end{thm}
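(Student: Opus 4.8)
The goal is to construct a natural transformation $\mc{Z} \colon d\Omega^U \to \Omega^U$ of homology theories splitting the tautological inclusion. The plan is to assign to a compact stably complex derived orbifold $({\mc U}, {\mc E}, {\mc S})$ with representable map to $X$ an honest closed stably complex orbifold together with a representable map to $X$, then resolve the singularities of its coarse space to produce a smooth representative, and finally check that the resulting bordism class is well-defined and natural.

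\textbf{Step 1: Reduce to a normally complex situation and perturb.} First I would replace $({\mc U},{\mc E},{\mc S})$ by a situation to which resolution of singularities applies. Using the normally complex polynomial perturbation technology (the input from \cite{Bai_Xu_2022} alluded to in the introduction, together with the machinery of Sections \ref{sec:ATW} and \ref{sec:derive-manifolds}), one perturbs ${\mc S}$ to a section ${\mc S}_t$ which is \emph{generic} among maps modelled on holomorphic ones. The zero locus ${\mc S}_t^{-1}(0)$ is then a compact orbifold with singularities that are \emph{prescribed by representation-theoretic data} and, crucially, \emph{modelled on complex algebraic varieties}: locally it looks like the zero set of a polynomial map equivariant for a linear action of the isotropy group. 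The virtual tangent bundle $T{\mc U} - {\mc E}$ restricts on this zero locus to a virtual bundle which carries a stable complex structure induced from the given one. The output of this step is a compact space $Z = {\mc S}_t^{-1}(0)$ stratified by orbifolds, with a representable map to $X$, a stable complex structure on its virtual tangent bundle, and all singularities of its coarse space of a controlled (normally complex) type.

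\textbf{Step 2: Resolve.} Now I would feed $Z$ into a functorial resolution-of-singularities algorithm adapted to the orbifold/normally-complex setting — this is the role of the abelianization results of Section \ref{sec:abelianize} and the Bergh-type destackification/functorial resolution of Section \ref{sec:bergh}. Because the algorithm is functorial (canonical, commuting with smooth base change and with the group actions present locally), it globalizes over the orbifold $Z$ and produces a closed stably complex orbifold $\wt{Z}$ equipped with a representable map $\wt{Z} \to X$. The resolution is a modification supported over the singular (lower-dimensional, higher-isotropy) locus, and on the smooth open dense stratum it is an isomorphism; the stable complex structure is transported along, using that resolution morphisms are projective/complex-analytic so that the discrepancy bundles are naturally complex. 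Setting $\mc{Z}({\mc U},{\mc E},{\mc S}) \coloneqq [\wt{Z} \to X] \in \Omega^U_*(X)$ gives the candidate map.

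\textbf{Step 3: Well-definedness, naturality, splitting.} It remains to check (a) independence of the perturbation and of the auxiliary choices in the resolution algorithm — here one runs the same perturb-then-resolve procedure on a derived orbifold bordism interpolating two choices, using that the resolution algorithm applied to a family is compatible with restriction to fibers; (b) compatibility with bordisms over $X$, which is the same argument; (c) functoriality in $X$ under representable maps and compatibility with the homology-theory structure maps (Mayer--Vietoris, suspension), which follows from the locality/functoriality of both the perturbation scheme and the resolution algorithm, together with the fact that these constructions are supported away from where they could interfere with a decomposition $X = X_1 \cup X_2$; and (d) that $\mc{Z}$ respects the $\Omega^U_*$-module structure, since taking products with a closed stably complex manifold commutes with both perturbation and resolution. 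Finally, the splitting property: if the derived orbifold already comes from a closed stably complex orbifold $M$ (trivial bundle ${\mc E}$, zero section), then ${\mc S}$ is already transverse, no perturbation is needed, $Z = M$ is already smooth, the resolution algorithm does nothing, and $\mc{Z}([M \to X]) = [M \to X]$. Hence $\mc{Z} \circ (\Omega^U \to d\Omega^U) = \mathrm{id}$.

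\textbf{Main obstacle.} The delicate point is Step 2 carried out \emph{canonically and in families with orbifold isotropy}: one needs a resolution procedure that (i) is functorial enough to globalize over an orbifold and to be insensitive to the perturbation up to bordism, (ii) interacts correctly with the stable \emph{complex} structure — producing genuinely complex, not merely stably complex, discrepancy data so the answer lands in $\Omega^U$ — and (iii) handles the normally complex singularity models, which are more general than complex algebraic varieties (they involve real parameters transverse to complex zero loci), so the classical ATW/Bergh algorithms must be extended to this hybrid setting. Keeping the map representable throughout (injectivity on isotropy never being destroyed by blow-ups) is a further constraint that must be tracked.
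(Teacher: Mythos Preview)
Your overall shape (perturb, resolve, check invariance) matches the paper, but two points are confused in a way that would break the argument.

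\textbf{Wrong resolution algorithm in Step 2.} You invoke the abelianization of Section~\ref{sec:abelianize} and the Bergh-type destackification of Section~\ref{sec:bergh} to resolve the zero locus $Z={\mc S}_t^{-1}(0)$. Those algorithms take as input a \emph{smooth} normally complex orbifold and modify its stacky structure (making stabilizers abelian, then making the coarse space a smooth manifold); they are not designed to, and cannot, desingularize a singular subspace of an orbifold. The zero locus of a regular FOP section is not an orbifold: locally it is modelled on the universal zero locus $Z_d^{\Gamma}(V,W)\subset V\times\mathrm{Poly}_d^{\Gamma}(V,W)$, which is a genuinely singular affine variety. What the paper actually uses here is the Abramovich--Temkin--W{\l}odarczyk \emph{embedded} resolution (Section~\ref{sec:ATW}), applied once and for all to the pairs $\bigl(\ov{Z}_d^{\Gamma}(V,W)^\circ\subset V\times\mathrm{Poly}_d^{\Gamma}(V,W)\bigr)$, and then pulled back along the graph of the FOP lift (Section~\ref{sec:derive-manifolds}). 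Sections~\ref{sec:abelianize}--\ref{sec:bergh} are used only in the \emph{other} splitting, Theorem~\ref{thm:split_manifold_to_orbifold}, and play no role in the present statement.

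\textbf{Missing isotropy-type decomposition.} The paper does not resolve a single object $Z$. For each stable isotropy type $[\tilde{\boldsymbol\gamma}]=[(\tilde\Gamma,\tilde V,\tilde W)]$ it resolves the closure of the stratum of $\mc S_\epsilon^{-1}(0)$ with that prescribed isotropy, obtaining a compact normally complex orbifold ${\mc Z}^{orb}_{[\tilde{\boldsymbol\gamma}]}$ whose generic stabilizer lies in the class of $\tilde\Gamma$. The map ${\mc Z}$ is then assembled \emph{only} from the types with $\tilde V\cong\tilde W$, i.e.\ $[\tilde{\boldsymbol\gamma}]=[(\tilde\Gamma,\{0\},\{0\})]$; these are exactly the strata of the expected virtual dimension, and exactly the ones occurring when a genuine orbifold is viewed as a derived orbifold via a tautological section after stabilization (this is why the splitting holds). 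Without isolating these strata the construction would mix pieces of different dimensions and the composite $\Omega^U\to d\Omega^U\to\Omega^U$ would not be the identity. This decomposition is also what makes the module statement work: the module here is over \emph{orbifold} bordism $\Omega^U_*$, not manifold bordism, and compatibility is proved isotropy-type by isotropy-type using Proposition~\ref{prop:module-gamma}.
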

\begin{rem}
  This result is apparently stronger than the one we formulated for equivariant bordism theories, as we are asserting linearity with respect to the orbifold bordism group, rather than the bordism group of manifolds as in Theorem \ref{thm:inclusion_geometric_homotopical_splits}. This is a consequence of the fact that the natural analogue of the product of orbifolds, in the equivariant setting, is the \emph{external} product, as discussed in Remark \ref{rem:external_not_internal_multiplicative}.
\end{rem}

In order to state the analogue of Theorem \ref{thm:inclusion_ordinary_geometric_splits}, we need to be able to define orbifold bordism as a homology theory on the category of topological spaces. This can be done by dropping the representability assumption, but, as noted in  \cite{pardon20}, it is more convenient to instead work with the functor $R : \mathrm{Top} \to  \RepOrbTop$ which assigns to each space its product with the universal orbispace, in such a way that the set of homotopy classes of maps $M \to X$ from an orbifold to $X$ is in bijective correspondence with homotopy classes of representable-maps $M \to R(X)$.
\begin{thm} \label{thm:split_manifold_to_orbifold}
  There is a natural transformation
  \begin{equation}
    \label{eq:splitting_orbifold_ordinary}
    \Omega^{U} \circ R \to \Omega^{U}
  \end{equation}
  of homology theories of topological spaces, splitting the inclusion of manifold bordism into its orbifold version, and which respects the module structure over $\Omega^U_* $.
\end{thm}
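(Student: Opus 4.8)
The plan is to take, for the splitting in \eqref{eq:splitting_orbifold_ordinary}, the operation that resolves the singularities of a closed stably complex orbifold to produce a closed stably complex manifold. By the defining property of the functor $R$ recalled above, the group $\Omega^U_*(R(X))$ is generated by pairs $(M,f)$, where $M$ is a closed stably complex orbifold and $f\colon |M|\to X$ is a map from its coarse space, and $\Omega^U_*(X)$ is generated by pairs $(P,g)$ with $P$ a closed stably complex manifold. I would first invoke the resolution algorithm developed in Sections \ref{sec:abelianize}--\ref{sec:bergh} --- the composite of the abelianization step with the (functorial) resolution of the resulting normally complex space --- to associate to $M$, possibly after a reduction to the normally complex case at the level of representatives, a proper map $\pi_M\colon \wt M\to |M|$ with $\wt M$ a closed stably complex manifold, the stably complex structure being canonically propagated through the sequence of (stacky) blow-ups. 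One then sets ${\mc Z}^{\mathrm{man}}[M,f]:=[\wt M,\, f\circ\pi_M]$.

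Next I would verify well-definedness. Because the resolution algorithm is canonical --- functorial for isomorphisms and, crucially, compatible both with restriction to open suborbifolds and with products by a smooth factor --- a bordism $W$ over $X$ with $\partial W=M_0\sqcup M_1$ resolves to a \emph{compact} stably complex manifold $\wt W$ whose boundary is $\wt{M_0}\sqcup\wt{M_1}$: the key point is that running the algorithm on a collar $\partial W\times[0,\varepsilon)$ of the boundary returns $\wt{\partial W}\times[0,\varepsilon)$, so that $\wt W$ genuinely has a boundary and that boundary is the resolution of $\partial W$. The same mechanism, applied to $M\times[0,1]$ carrying two different choices of auxiliary (normally complex) data at the two ends, gives independence of those choices, and likewise handles the reduction step. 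Hence if $(M_0,f_0)$ and $(M_1,f_1)$ are bordant over $X$ then $[\wt{M_0},f_0\circ\pi_{M_0}]=[\wt{M_1},f_1\circ\pi_{M_1}]$, so ${\mc Z}^{\mathrm{man}}$ descends to a group homomorphism.

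It remains to record the structural properties. Naturality in $X$ is immediate, since a map $X\to X'$ alters only the reference map $f$ and not $\wt M$ or $\pi_M$. Compatibility with the connecting homomorphisms of pairs is exactly the collar compatibility used above: for a pair $(X,A)$ and a generator $(M,\partial M,f)$ of the relative group one has $\partial\big({\mc Z}^{\mathrm{man}}[M,\partial M,f]\big)=[\wt{\partial M},\, f|_{\partial M}\circ\pi_{\partial M}]={\mc Z}^{\mathrm{man}}\big(\partial[M,\partial M,f]\big)$, so ${\mc Z}^{\mathrm{man}}$ is a natural transformation of homology theories. For the $\Omega^U_*$-module structure one uses that the algorithm does not touch a smooth factor: running it on $P\times M$, with $P$ a closed stably complex manifold, returns $P\times\wt M$, whence ${\mc Z}^{\mathrm{man}}([P]\cdot[M,f])=[P]\cdot{\mc Z}^{\mathrm{man}}[M,f]$. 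Finally ${\mc Z}^{\mathrm{man}}$ splits the inclusion: on a generator coming from a manifold $P$, viewed as an orbifold with trivial isotropy, the algorithm has nothing to do and returns $P$, so the composite with the inclusion is the identity.

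The main obstacle I anticipate lies not in this assembly but in the precise functoriality properties of the resolution algorithm that it repeatedly invokes --- compatibility with open immersions, with products by a smooth factor (hence with intervals and collars), and insensitivity to auxiliary choices --- in the smooth, \emph{non-algebraic} setting of normally complex orbifolds, together with the assertion that the stably complex structure is carried canonically through the (stacky) blow-ups. These are exactly the properties that the adaptation of the Abramovich--Temkin--W\l odarczyk and Bergh algorithms in Sections \ref{sec:abelianize}--\ref{sec:bergh} is designed to supply; granting them, the proof is the bookkeeping sketched above. A secondary point requiring care is the reduction, at the level of bordism representatives, from arbitrary stably complex orbifolds to normally complex ones, which should follow by a bordism argument of the same flavour as the independence-of-choices step.
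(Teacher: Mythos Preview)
Your proposal is correct and follows essentially the same approach as the paper: the splitting is exactly Algorithm~I (abelianization via Theorem~\ref{thm:abel} followed by deorbification via Theorem~\ref{thm:deorbi}), with bordism invariance coming from collar compatibility and functoriality under open embeddings, and the $\Omega^U_*$-module property from the fact that each blow-up or root stack leaves a smooth product factor untouched. One small clarification: your ``secondary obstacle'' --- reducing from stably complex to normally complex orbifolds --- is not actually an obstacle, since a stable complex structure on $T{\mc U}$ already induces compatible complex structures on the normal bundles of all isotropy strata, so every stably complex orbifold is automatically normally complex and no bordism argument is needed for this step (the paper notes this at the start of Section~\ref{subsec:splitting-mani}).
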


Combining Theorem \ref{thm:main-derived-to-underived} and Theorem \ref{thm:split_manifold_to_orbifold}, we conclude: 
\begin{thm}\label{thm:main}
There is a natural transformation
\beq\label{eqn:splitting-map}
{\mc Z}: d\Omega^{U} \circ R \to \Omega^{U}
\eeq
of homology theories of topological spaces, whose composite with the tautological map is the identity on manifold bordism.  \qed
\end{thm}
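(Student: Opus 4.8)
The plan is to obtain Theorem \ref{thm:main} as a purely formal consequence of Theorems \ref{thm:main-derived-to-underived} and \ref{thm:split_manifold_to_orbifold}; no new geometry is needed, and the only work is to assemble the two splittings and keep track of how the tautological maps compose. First I would record the general principle that precomposition with the functor $R \colon \mathrm{Top} \to \RepOrbTop$ of Section \ref{sec:results:-bord-orbisp} carries a homology theory on orbispaces to a homology theory on topological spaces, and a natural transformation of the former to a natural transformation of the latter; this is exactly the operation already invoked when $\Omega^{U}\circ R$ is called a homology theory of topological spaces in Theorem \ref{thm:split_manifold_to_orbifold}. Applying it to the transformation of Theorem \ref{thm:main-derived-to-underived}---which I will write $\mc{Z}_{0}\colon d\Omega^{U}\to\Omega^{U}$ to free up the symbol $\mc{Z}$---yields a natural transformation $\mc{Z}_{0}\circ R \colon d\Omega^{U}\circ R \to \Omega^{U}\circ R$ of homology theories on $\mathrm{Top}$.

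Second, I would define the desired transformation as the composite
\[
\mc{Z}\;:=\; s \circ (\mc{Z}_{0}\circ R)\colon\ d\Omega^{U}\circ R \;\xrightarrow{\ \mc{Z}_{0}\circ R\ }\; \Omega^{U}\circ R \;\xrightarrow{\ s\ }\; \Omega^{U},
\]
where $s\colon \Omega^{U}\circ R \to \Omega^{U}$ is the transformation supplied by Theorem \ref{thm:split_manifold_to_orbifold} (with $\Omega^{U}$ on the right now denoting manifold bordism). A composite of natural transformations of homology theories is again one, so $\mc{Z}$ is a natural transformation of homology theories of topological spaces; and since both factors respect the relevant $\Omega^{U}_{*}$-module structures, so does $\mc{Z}$, although this is not claimed in the statement.

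Third, I would verify the splitting property. The point is that the tautological map $\tau\colon \Omega^{U}\to d\Omega^{U}\circ R$ from manifold bordism factors as the inclusion $\iota\colon \Omega^{U}\to \Omega^{U}\circ R$ of manifold bordism into orbifold bordism---a manifold over a space $X$ being regarded as an orbifold mapping representably to $R(X)$---followed by $R$ applied to the tautological inclusion $j\colon \Omega^{U}\to d\Omega^{U}$ of orbifold bordism into its derived version, i.e. $\tau = (j\circ R)\circ \iota$. Using that whiskering by $R$ distributes over vertical composition, together with $\mc{Z}_{0}\circ j = \mathrm{id}$ from Theorem \ref{thm:main-derived-to-underived} and $s\circ \iota = \mathrm{id}$ from Theorem \ref{thm:split_manifold_to_orbifold}, one computes
\[
\mc{Z}\circ \tau \;=\; s\circ(\mc{Z}_{0}\circ R)\circ(j\circ R)\circ \iota \;=\; s\circ\bigl((\mc{Z}_{0}\circ j)\circ R\bigr)\circ \iota \;=\; s\circ \iota \;=\; \mathrm{id},
\]
as required.

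The main obstacle here is bookkeeping rather than mathematics: one must check that the identification ``orbifold bordism on $\mathrm{Top}$ is $\Omega^{U}\circ R$'' is used with the same conventions in the two cited theorems, that the factorization $\tau = (j\circ R)\circ \iota$ of the tautological map is literally the one built into those statements rather than merely homotopic to it, and that precomposition with $R$ is strictly compatible with the connecting homomorphisms and module structures of the homology theories involved. All of this is routine once the definitions of Section \ref{sec:results:-bord-orbisp} are fixed, and the substantive input---resolution of singularities for normally complex derived orbifolds, together with the normally complex perturbation package---is entirely carried by the proofs of Theorems \ref{thm:main-derived-to-underived} and \ref{thm:split_manifold_to_orbifold}.
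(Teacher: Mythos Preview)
Your proposal is correct and matches the paper's approach exactly: the paper itself presents Theorem~\ref{thm:main} as an immediate corollary of Theorems~\ref{thm:main-derived-to-underived} and~\ref{thm:split_manifold_to_orbifold}, with no proof beyond the phrase ``Combining Theorem~\ref{thm:main-derived-to-underived} and Theorem~\ref{thm:split_manifold_to_orbifold}, we conclude.'' Your careful bookkeeping of the composite and the factorization of the tautological map is more explicit than what the paper records, but the argument is the same.
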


\begin{rem}
This result generalizes the natural transformation of homology theories
\beq
\mc{FOP}: d\Omega^{U} \to H(-;{\mb Z})
\eeq
constructed in \cite[Theorem 1.4]{Bai_Xu_2022}, where $d\Omega^{U}$ is written as $\ov{\Omega}^{\mathrm{der}, {\mb C}}$.
\end{rem}

\begin{rem}
  Since there are orbifolds which cannot be represented as global quotients by finite groups, we cannot expect to describe derived orbifold bordism in terms of equivariant bordism theories for finite groups. However, given Pardon's result \cite{pardon20} relating geometric bordism theory $d\Omega^{U}$ to Schwede's global spectrum $\mathbf{MU}$ \cite{schwede}, from which all equivariant bordism spectra can be recovered, we expect that the above splitting theorem can be interpreted in terms of global homotopy theory, extending the results described in Section \ref{sec:appl-equiv-stable}.
  
  Regarding multiplicative structures, we note that the outcome of \cite{Abouzaid_Blumberg} is the existence of an $A_\infty$ ring spectrum representing derived orbifold bordism; forthcoming work will refine this to an $E_\infty$ ring structure, for which the natural map from complex bordism respects the $E_\infty$ ring map. A more homotopical model of this spectrum should be closely related to Pardon's constructions, leading to the following natural question: does this map split as $E_\infty$ spectra? We expect that the construction of such a map would greatly simplify the putative construction of (Hamiltonian) Floer homotopy type over the complex cobordism, see the discussion in Section \ref{subsec:future}.
\end{rem}

\subsection{Methods of proof}
To connect Theorem \ref{thm:main} with Theorem \ref{thm:intro}, we use recent constructions of global Kuranishi charts for $\ov{\scrM}_{g,k}(X,J,A)$ \cite{abouzaid2021complex, hirschi2022global, AbouzaidMcLeanSmith2023} that produce a well-defined class in $d\Omega_*^{U}(\ov{\scrM}_{g,k} \times X^k)$ from the corresponding derived orbifold presentations of $\ov{\scrM}_{g,k}(X,J,A)$. The complex cobordism class which is the subject of  Theorem \ref{thm:intro} is simply obtained from applying the splitting map ${\mc Z}$ to the aforementioned class. Accordingly, we focus on explaining the construction of the splitting.

As alluded to earlier, our construction uses geometric perturbations of the defining section ${\mc S}$ of a compact stably complex derived orbifold $({\mc U}, {\mc E}, {\mc S})$, which can be seen as following in the footsteps of the classical symplectic approach to Gromov--Witten theory \cite{Li-Tian, fukaya-ono, ruan, siebert1996gromov, HWZ-GW}.  This is in contrast to the recent constructions of virtual fundamental classes in ordinary and generalized homology theories \cite{pardon-VFC,Abouzaid_Blumberg, abouzaid2021complex}, which use Poincar\'e duality and Thom isomorphisms. The perturbation approach we take is a modification of the Fukaya--Ono--Parker (FOP) perturbation scheme developed by the second author and Xu \cite{Bai_Xu_2022}, which draws inspiration from \cite{Fukaya_Ono_integer, BParker_integer}. The main difference with the previous work of the second author with Xu is that we do not use the canonical Whitney stratifications, which is not refined enough for us to resolve the singularities of the zero loci of the perturbed sections.

The output of this perturbation will not in general be a manifold, and in fact not even an orbifold. Indeed, the local model is given by a choice of a finite group $\Gamma$, a pair $V$ and $W$ of finite-dimensional complex representations of $\Gamma$ with $V$ being faithful, and we have ${\mc U} = [V / \Gamma]$, ${\mc E} = [(V \times W) / \Gamma]$. In this situation, the section ${\mc S}$ is determined by a $\Gamma$-equivariant polynomial map $f: V \to W$, and while a  generic choice ensures that the zero-locus $f^{-1}(0)$ is smooth away from the point in $V$ with non-trivial stabilizer, there is no general procedure for guaranteeing that this variety is everywhere smooth. The main insight of this paper is that we can compatibly resolve the resulting singularities. We shall do so in three steps:

We first apply Abramovich--Temkin--W{\l}odarczyk's \emph{functorial embedded resolution of singularities} algorithm \cite{abramovich2019functorial} to build a \emph{compact stably complex orbifold} from a compact stably complex derived orbifold $({\mc U}, {\mc E}, {\mc S})$. In the local model, this amounts to resolving the singularities of  $f^{-1}(0)$. The functoriality of this algorithm ensures that the resolution is in fact $\Gamma$-equivariant, and the corresponding quotient orbifold is the output of this step. In reality, we need to patch such local constructions together, and figure out a way to deal with the non-algebraic nature of a general $({\mc U}, {\mc E}, {\mc S})$. The functoriality of Abramovich--Temkin--W{\l}odarczyk's method is of essential importance in achieving this.

\begin{rem}
Given that Hironaka's original construction of resolution of singularities was not sufficiently functorial, it seems implausible that we would be able to alternatively use it. On the other hand, we expect to be able to use later proofs of resolution of singularities in the lineage of Abhyankar \cite{Abhyankar1982}, at the cost of additional complexity in our arguments. 
\end{rem}

The second step is concerned with making all the stabilizers of a given compact stably complex orbifold \emph{abelian}. Our method follows the work of Bergh--Rydh \cite{bergh2019functorial} and we apply iterative blow-ups to abelianize the orbifold. Roughly speaking, one defines an integer-valued function on the coarse space of an orbifold to measure `how far' the orbifold is from being abelian, and blows up along the locus over which such a function achieves its maximum. 

In the third step, we modify an abelian stably complex orbifold so that the coarse space of the resulting orbifold is a \emph{smooth manifold}. This step heavily relies on Bergh's functorial destackification result \cite{bergh} for algebraic abelian orbifolds, which is rather technical, but the basic idea is simply that the coarse space of the orbifold obtained as the quotient of $\mathbb{C}$ by a cyclic group action is naturally identified again with $\mathbb{C}$, and hence is smooth.

Of course, one needs to ensure that each step is compatible with the equivalence relations entering the definition of $d\Omega_*^{U}$, and that invariance holds despite the various auxiliary choice.

\subsection{Outlook and further context}\label{subsec:future}
Although we state our main result for the homology theory $d\Omega_*^{U}$, our construction actually applies to \emph{normally complex orbifolds}. Such a structure on an orbifold ${\mc U}$ consists of the data of stable complex structures on the normal bundles of all strata, which are compatible with inclusions. If $({\mc U}, {\mc E}, {\mc S})$ is a compact derived orbifold with ${\mc U}$ normally complex and ${\mc E}$ complex, we obtain a cobordism class of smooth manifolds from the algorithms in Section \ref{subsec:splitting-mani} and Section \ref{subsec:splitting}.

The applicability of our results to normally complex orbifolds opens up much wider applications in symplectic topology. In fact, all the moduli spaces of pseudo-holomorphic curves appearing in Gromov--Witten theory, Hamiltonian Floer theory, and Lagrangian Floer theory are expected to be presented as the zero loci of the section from certain normally complex derived orbifolds. The essential observation is that all the nontrivial automorphisms of stable maps with at least one marked point  are caused by closed irreducible component of the domain, and accordingly, the nontrivial (virtual) representations produced from the equivariant Fredholm indices of the linear deformation operator are complex.

On the Hamiltonian Floer side, the availability of global Kuranishi charts and the package of derived orbifold flow categories together with the results in this paper allows us to envision a Hamiltonian Floer homotopy type as an $MU$-module for general symplectic manifolds. However, due to the failure of multiplicativity under Cartesian products of all the resolution methods we use, our result can not be directly applied to produce such a homotopy type. We expect that a more refined certain variants of our method, adapted to the specific stratifications appearing in Hamiltonian Floer theory, will allow us to obtain such a refinement, and we shall return to this task in the future.

Finally, it is an open problem to define complex bordism/algebraic cobordism valued Gromov--Witten invariants using algebraic geometry building on \cite{li-tian-algebraic, bf97}. There is a fundamental step in our approach (Lemma \ref{lem:local-FOP-regular}) which uses a general position argument in the smooth category, so it is unclear if the methods of this paper can be reproduced purely in the algebraic category. Nevertheless, our invariants $(\ov{\mathbf{M}}_{g.k}(X,A), \mathrm{st} \times \mathrm{ev})$ are invariants of deformation classes of smooth projective algebraic varieties. Developing calculation tools and exploring structural aspects of these invariants provides another set of open problems.

\subsection{Outline of the paper}
This paper is organized as follows. In Section \ref{sec:prelim}, we recall basic notions about orbifolds, including normal complex structures, and FOP perturbations using the topological stack point of view. In the subsequent sections, we discuss different steps of the resolution of singularities algorithm, ordered in what we perceive to be increasing complexity.
\begin{itemize}
    \item Following \cite{bergh2019functorial}, we present how to functorially make the stabilizers of a normally complex orbifold abelian by a sequence of ordinary blow-ups in Section \ref{sec:abelianize}.
    \item In Section \ref{sec:bergh}, we adapt the approach in \cite{bergh} to our setting to make the coarse space of an abelian normally complex orbifold smooth by a combination of ordinary blow-ups and root stack constructions.
    \item Before proceeding to the constructions involving derived orbifolds, we describe, in Section \ref{sec:equivariant-bordisms-no-d}, an algorithm using the preceding results to extract a smooth, stably complex manifold, from a stably complex orbifold.
    \item After recalling the results on functorial embedded resolution of singularities from \cite{abramovich2019functorial} and verifying various implications on the resolutions of the universal zero loci in Section \ref{sec:ATW}, we show how to combine the FOP perturbation scheme with resolution of singularities to produce a normally complex orbifold from a normally complex derived orbifold in Section~\ref{sec:derive-manifolds}.
    \item  Section \ref{sec:split} assembles the discussion of the preceding two section to associate a stably complex orbifold to a stably complex derived orbifold. The results stated in Section \ref{sec:appl-equiv-stable} follow as corollaries.
    \item Finally, in Section \ref{sec:applications}, we provide proofs of the applications stated in Section \ref{subsec:intro-application}.
\end{itemize}

\subsection*{Acknowledgement} The second author would like to express his most sincere gratitude to Guangbo Xu for the collaboration \cite{Bai_Xu_2022, bai2022arnold}, which made it possible to start thinking about this work. We are grateful to John Pardon for crystallizing the direct approach to resolve the singularities of normally complex orbifolds, which replaces an early cumbersome attempt. We thank Dan Abramovich, David Rydh, and Jaroslaw W{\l}odarczyk for helpful email correspondences which helped us better understand resolution of singularities in algebraic geometry, and John Greenlees for comments about an early draft. We are also grateful to the Simons Laufer Mathematical Sciences Institute for its hospitality in Fall 2022, while part of this work was conducted.

\section{Normally complex orbifolds}\label{sec:prelim}

\subsection{Orbifolds and vector bundles}
We recall basic notions about orbifolds using the language of topological stacks following the expositions in \cite{pardon19,pardon20}. We choose not to formulate our construction in terms of Satake and Thurston's simpler point of view because we cannot avoid introducing ineffective suborbifolds in our subsequent constructions. However, nearly all the orbifolds appearing as ambient spaces are effective, except for setting up different versions of orbifold bordism theory.

Let $\mathbf{Top}$ be the category with topological spaces as objects and continuous maps as morphisms. Denote by $\mathbf{Grpd}$ the $2$-category of essentially small groupoids. A \emph{stack} is a functor $F: \mathbf{Top}^{\mathrm{op}} \to \mathbf{Grpd}$ satisfying descent, namely, for a topological space $U$ with an open cover $\{ U_\alpha \}$, the functor
\beq
F(U) \to \mathrm{Eq}\big( F(U_{\alpha}) \rightrightarrows F(U_{\alpha} \cap U_{\beta}) \mathrel{\substack{\textstyle\rightarrow\\[-0.6ex]
                      \textstyle\rightarrow \\[-0.6ex]
                      \textstyle\rightarrow}} F(U_{\alpha} \cap U_{\beta} \cap U_{\gamma}) \big)
\eeq
is an equivalence. Stacks form a $2$-category denoted by $\mathrm{Shv}(\mathbf{Top}, \mathbf{Grpd})$. The Yoneda embedding $\mathbf{Top} \to \mathrm{Shv}(\mathbf{Top}, \mathbf{Grpd})$ defined by taking $U \in \mathbf{Top}$ to $\mathrm{Hom}_{\mathbf{Top}}(-,U)$ is a fully faithful embedding, with (essential) image called \emph{representable} stacks.

A morphism of stacks ${\mc X} \to {\mc Y}$ is called \emph{representable} if for every topological space $U$, viewed as a stack via Yoneda embedding, and any morphism $U \to {\mc Y}$, the fiber product ${\mc X} \times_{\mc Y} U$ is representable. Given a property $P$ of maps of topological spaces which is preserved under pullback, a representable morphism ${\mc X} \to {\mc Y}$ is said to satisfy property $P$ if for every topological space $U$ and any morphism $U \to {\mc Y}$, the map from the fiber product ${\mc X} \times_{\mc Y} U \to U$ has property $P$. Properties preserved under pullbacks include being injective, surjective, open, a (closed) embedding, separated, proper, admitting local sections, and \'etale (finite covering space). As an example, we say that $\{ {\mc X}_{\alpha} \to {\mc Y} \}$ is an \emph{open cover} if each for any topological space $U$ and morphism $U \to {\mc Y}$, each of the map ${\mc X}_\alpha \times_{\mc Y} U \to U$ is an open embedding, and the collection of  open subsets $\{ {\mc X}_\alpha \times_{\mc Y} U \}$ forms an open cover of $U$.

The \emph{coarse space} $|{\mc X}|$ of a stack ${\mc X}$,  is the topological space characterized as the initial object in the category of maps from ${\mc X}$ to the Yoneda image of $\mathbf{Top}$, which is equivalently understood as the the image of ${\mc X}$ under the left adjoint of the Yoneda embedding. The coarse space $|{\mc X}|$ is also called the set of \emph{points} on ${\mc X}$, because it indeed consists of isomorphism classes of morphisms of the form $* \to {\mc X}$. The \emph{isotropy group} or \emph{stabilizer} of a point $x \in |{\mc X}|$ is the automorphism group of the morphism $* \to {\mc X}$ induced by $x$. A stack is called \emph{compact} if its coarse space $|{\mc X}|$ is a compact topological space.

\begin{defn}
\begin{enumerate}
    \item A stack ${\mc X}$ is called \emph{topological} if there exists a topological space $U$ and a representable surjective morphism $U \to {\mc X}$ admitting local sections. Under such setting, $U$ is called an \emph{atlas}.
    \item A \emph{(separated) orbispace} is a topological stack ${\mc X}$ admitting an \emph{\'etale} atlas $U \to {\mc X}$ such that the diagonal morphism ${\mc X} \to {\mc X} \times {\mc X}$ is proper.
    \item An orbispace ${\mc X}$ is called a \emph{topological orbifold} if it admits an \'etale atlas $U \to {\mc X}$ such that $U$ is locally homeomorphic to ${\mb R}^n$. Here $n$ is defined as the dimension of ${\mc X}$.
    \item A \emph{(smooth) orbifold} is a  topological orbifold ${\mc X}$ equipped with a \emph{smooth structure}: i.e., a choice of \'etale atlas $U \to {\mc X}$ defining the topological orbifold structure together with a choice of smooth structure on $U$ such that the two smooth structures on $U \times_{\mc X} U$ defined by pulling back the smooth structure on $U$ via the two projections agree.
    \item A map of smooth orbifolds $f: {\mc X} \to {\mc Y}$ is called \emph{smooth} if there exist smooth \'etale atlases $U_{\mc X} \to {\mc X}$ and $U_{\mc Y} \to {\mc Y}$ such that $f$ can be lifted to a smooth map $U_{\mc X} \to U_{\mc Y}$.
\end{enumerate}
\end{defn}

Topological stacks can be constructed from topological groupoids. Recall that a topological groupoid consists of a pair of topological spaces $M$ and $O$, with source and target maps $s,t: M \to O$, composition morphism $m: M \times_{O} M \to M$ (here the fiber product is defined using $s$ and $t$), as well an an identity morphism $i: O \to M$ (the identity map), satisfying the axioms of a groupoid (in particular, the existence of an inverse $\iota: M \to M$). A topological groupoid $M \rightrightarrows O$ gives rise to a stack $[M \rightrightarrows O]$ as follows: for a topological space $U'$, the groupoid associated to $[M \rightrightarrows O](U')$ has objects given by open covers $\{U'_{\alpha}\}$ of $U'$ endowed with maps $U'_\alpha \to O$ and $U'_{\alpha} \cap U'_{\beta} \to M$ satisfying suitable compatibility conditions. The (iso)morphisms of two such $\{U'_{\alpha}\}$ and $\{U''_{\alpha'}\}$ are specified by maps $U'_{\alpha} \cap U''_{\alpha'}$ satisfying certain compatibility conditions. It is straightforward to check that $[M \rightrightarrows O]$ satisfies descent. The natural map $O \to [M \rightrightarrows O]$ is defined by evaluating at $U'$ with the trivial cover $\{ U'\}$, and the fiber product $O \times_{[M \rightrightarrows O]} O$ is identified with $M$. One can show that $O \to [M \rightrightarrows O]$ is representable by applying the local sections of $O \to [M \rightrightarrows O]$ to the representable morphism $M \cong O \times_{[M \rightrightarrows O]} O \to O$.

On the other hand, for a topological stack ${\mc X}$ with an atlas $U \to {\mc X}$, we can look at the topological space $U \times_{\mc X} U$. Let $s,t: U \times_{\mc X} U \to U$ be the projection map to be first and second component respectively. Let $m: (U \times_{\mc X} U) \times_{U} (U \times_{\mc X} U) = U \times_{\mc X} U \times_{\mc X} U \to U \times_{\mc X} U$ be the map forgetting the middle factor, and let $i: U \to U \times_{\mc X} U$ be the diagonal map.  This data defines a topological groupoid, with inverse given by the map $\iota: U \times_{\mc X} U \to U \times_{\mc X} U$ exchanging the two factors, and the topological stack associated to this groupoid $[U \times_{\mc X} U \rightrightarrows U]$ is isomorphic to ${\mc X}$. Indeed, there exists a natural map $[U \times_{\mc X} U \rightrightarrows U] \to {\mc X}$ by the descent property of ${\mc X}$ which defines an equivalence.

As a classical example, given a topological space $X$ acted on continuously by a topological group $G$, the \emph{action groupoid} $G \times X \rightrightarrows X$ has source and target map given by $(g,x) \mapsto x$ and $(g,x) \mapsto g \cdot x$ respectively, composition map given by  $m: (g_1, g_2, x) \mapsto (g_1 \cdot g_2, x)$, and identity map $i: x \mapsto (e,x)$. The resulting stack is denoted by $X // G$ coming with a morphism $X \to X//G$. The coarse space $|X // G|$ is the quotient $X/G$.

Using the equivalence exhibited as above, we see that:

\begin{lemma}\label{lem:orb-groupoid}
A topological groupoid $(M, O, s,t,i,\iota,m)$ gives rise an $n$-dimensional smooth orbifold if the following holds:
\begin{itemize}
    \item both $M$ and $O$ are second countable Hausdorff topogical spaces locally homeomorphic to ${\mb R}^n$, equipped with smooth structures, so that the transition functions between charts are smooth;
    \item both the source and target map $s,t$ are smooth and define local diffeomorphisms (the \emph{\'etale} property);
    \item all the continuous maps $i,\iota,m$ are smooth with respect to the above smooth structure;
    \item the map $s \times t: M \to O \times O$ is proper (the \emph{separatedness} property). \qed
\end{itemize}
\end{lemma}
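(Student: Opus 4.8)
The plan is to verify directly that the stack $\mc{X} := [M \rightrightarrows O]$ attached to the groupoid satisfies each clause of the definition of a smooth $n$-dimensional orbifold, feeding the four hypotheses into the correspondence between topological groupoids and topological stacks recalled just above. First I would record what that correspondence already supplies: the canonical map $O \to \mc{X}$ is representable, surjective, and admits local sections, so $\mc{X}$ is a topological stack with atlas $O$; moreover $O \times_{\mc{X}} O$ is canonically identified with $M$ in such a way that the two projections to $O$ become the source and target maps $s, t$, while $i$, $\iota$, $m$ become the diagonal, the exchange of the two factors, and the forgetful map. Since $O$ is second countable Hausdorff and locally homeomorphic to $\mb{R}^n$, once $\mc{X}$ is shown to be a separated orbispace it is automatically an $n$-dimensional topological orbifold.

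Next I would check that $\mc{X}$ is a separated orbispace. Properness of the diagonal $\mc{X} \to \mc{X} \times \mc{X}$ is immediate: pulling it back along the atlas $O \times O \to \mc{X} \times \mc{X}$ recovers exactly $s \times t \colon M \to O \times O$, which is proper by hypothesis, and properness of a representable morphism may be tested after pullback to an atlas. The substantive point is to exhibit an \emph{\'etale} atlas. The map $O \to \mc{X}$ is a local homeomorphism --- locally on any test space it is a pullback of the local diffeomorphism $s$ --- but its fibers are the $s$-fibers of $M$, which need not be finite, so $O \to \mc{X}$ itself need not be \'etale in the finite-covering sense demanded by the definition. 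To remedy this I would invoke the slice description of proper \'etale groupoids: around each point of $|\mc{X}|$, choosing a lift $\tilde{x} \in O$ and a sufficiently small relatively compact neighborhood $V$, the restricted groupoid $(s \times t)^{-1}(V \times V) \rightrightarrows V$ becomes the action groupoid of the finite group $(s \times t)^{-1}(\tilde{x}, \tilde{x})$ acting on $V$ --- finiteness of this group, and more generally of the sets $(s \times t)^{-1}(\{\tilde{x}\} \times \ov{V})$, being precisely where the properness of $s \times t$ enters, in combination with the fact that \'etale maps have discrete fibers. Running over a locally finite cover of $|\mc{X}|$ and taking the disjoint union of such $V$'s produces an atlas whose fiber product with itself over $\mc{X}$ has finite fibers over each chart, i.e.\ an \'etale atlas; hence $\mc{X}$ is a separated orbispace.

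Finally I would install the smooth structure. Equip the chosen atlas (either $O$ or its refinement from the previous step) with the given smooth structure on $O$. The compatibility condition required by the definition is that the two smooth structures on $O \times_{\mc{X}} O = M$ obtained by pulling back along the two projections agree; but those projections are $s$ and $t$, each a local diffeomorphism for the given smooth structure on $M$, so both pullbacks coincide with that structure. Hence $\mc{X}$ acquires a smooth structure and is a smooth $n$-dimensional orbifold. The hard part is the middle step: reconciling the hypothesis that $s$ and $t$ are merely local diffeomorphisms with the requirement that an orbifold atlas be \'etale in the strong (finite-covering) sense of the definition; the separatedness hypothesis does the essential work there, through the linearization/slice picture of proper \'etale groupoids, which forces finite isotropy and a locally finite orbit structure on the coarse space.
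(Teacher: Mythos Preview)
Your proposal is correct and is considerably more detailed than what the paper offers: the paper treats this lemma as immediate from the preceding discussion of the groupoid--stack correspondence and marks it with a \qed\ inside the statement, giving no separate proof.

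Your argument follows exactly the intended route---verify that $[M\rightrightarrows O]$ is a topological stack, check separatedness via the pullback of the diagonal to $s\times t$, and then install the smooth structure using the fact that both projections $O\times_{\mc X}O\cong M\to O$ are local diffeomorphisms. The one place where you go beyond the paper is your observation that, under the paper's convention that ``\'etale'' means \emph{finite covering space}, the tautological atlas $O\to[M\rightrightarrows O]$ need not itself be \'etale (the $s$-fibers may be infinite, as in the translation groupoid $\mb Z\times\mb R\rightrightarrows\mb R$ presenting the circle). You correctly supply the standard fix via the slice description of proper \'etale groupoids, using properness of $s\times t$ to obtain finite isotropy and then passing to a refinement by small slices on which the groupoid is a finite group action. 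This is a genuine point that the paper's one-line justification elides; it is exactly the mechanism behind the paper's subsequent Proposition~\ref{prop:orbi-chart}, which produces the refinement by finite quotient charts.
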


This result makes connection with the exposition in, e.g., \cite[Section 1.4]{ALR07}, in which this is referred to as a \emph{proper \'etale Lie groupoid}. Note that in \emph{loc. cit.}, an orbifold is defined to be a topological space $|{\mc X}|$ equipped with a \emph{(Morita) equivalence class} of orbifold structures, i.e., a proper \'etale Lie groupoid such that the coarse space of the associated topological stack is homeomorphic to $|{\mc X}|$. In the sequel, we will use the notation ${\mc U}$ to denote smooth orbifolds. We do not distinguish between isomorphic smooth structures on ${\mc U}$. In the special setting of the action groupoid from a smooth action of a finite group on a smooth manifold, we call the corresponding orbifold a \emph{finite quotient orbifold}.

\begin{prop}\label{prop:orbi-chart}
A stack ${\mc U}$ is an orbifold if and only if the coarse space $|{\mc U}|$ is Hausdorff and it admits an open cover by finite quotient orbifolds $\{ U_\alpha // \Gamma_{\alpha} \to {\mc U} \}$.

Accordingly, any smooth \'etale atlas $U \to {\mc U}$ admits a refinement by a covering of finite quotient orbifolds: there exists an open cover $\{ U_\alpha // \Gamma_{\alpha} \to {\mc U} \}$ such that the map $U_{\alpha} \to U_\alpha // \Gamma_{\alpha} \to {\mc U}$ factors through $U_\alpha \to U$.
\end{prop}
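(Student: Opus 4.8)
The plan is to treat the equivalence as three separate pieces --- the ``if'' direction, the ``only if'' direction, and the refinement clause --- with essentially all of the content concentrated in ``only if'', which amounts to the local slice theorem for proper \'etale Lie groupoids (cf. \cite[\S1.4]{ALR07}); everything else is formal bookkeeping with the notions of representable morphism, atlas and coarse space recalled above.

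For the ``if'' direction, assume $|{\mc U}|$ is Hausdorff and $\{U_\alpha // \Gamma_\alpha \to {\mc U}\}$ is an open cover by finite quotient orbifolds; I would take $U := \bigsqcup_\alpha U_\alpha$ (replacing the cover by a locally finite refinement if needed). Each $U_\alpha \to U_\alpha // \Gamma_\alpha$ is the projection of the total space of the tautological $\Gamma_\alpha$-torsor, hence representable, surjective, a local homeomorphism, and admits local sections; post-composing with the open embeddings $U_\alpha // \Gamma_\alpha \hookrightarrow {\mc U}$ and taking the disjoint union preserves all of these, and $U$ is locally homeomorphic to ${\mb R}^n$ since each $U_\alpha$ is, so $U \to {\mc U}$ is an \'etale atlas of the required local form. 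Properness of ${\mc U} \to {\mc U} \times {\mc U}$ I would check locally --- on $U_\alpha // \Gamma_\alpha$ the diagonal is the map $\Gamma_\alpha \times U_\alpha \to U_\alpha \times U_\alpha$, $(g,x) \mapsto (x, gx)$, proper because $\Gamma_\alpha$ is finite --- and then globalize using that $|{\mc U}|$ Hausdorff makes its diagonal closed; the local smooth structures glue by hypothesis, equipping ${\mc U}$ with a smooth orbifold structure. Conversely, if ${\mc U}$ is an orbifold then $|{\mc U}|$ is Hausdorff because the image of ${\mc U} \to {\mc U} \times {\mc U}$ under the coarse-space functor is the diagonal of $|{\mc U}|$, which is therefore closed.

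The substantive step is producing the cover when ${\mc U}$ is an orbifold. I would present ${\mc U} = [M \rightrightarrows O]$ via Lemma \ref{lem:orb-groupoid}, with $M, O$ smooth $n$-manifolds, $s,t$ \'etale and $s\times t\colon M\to O\times O$ proper, and argue as follows. For $\tilde x \in O$ set $\Gamma := (s\times t)^{-1}(\tilde x,\tilde x)$; it is discrete (it sits inside the fibre $s^{-1}(\tilde x)$, which is discrete since $s$ is \'etale) and compact (properness of $s\times t$), hence finite, and a group under $m$. Each $g\in\Gamma$ determines a germ $\tau_g := t\circ (s|_{W_g})^{-1}$ of a diffeomorphism of $(O,\tilde x)$ fixing $\tilde x$, and compatibility of $m$ with $s,t$ makes $g\mapsto\tau_g$ a homomorphism into such germs. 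Averaging a Riemannian metric over $\Gamma$ near $\tilde x$ and passing to geodesic normal coordinates turns the $\tau_g$ into an honest (indeed linear) smooth $\Gamma$-action on a $\Gamma$-invariant ball $B\cong{\mb R}^n$. Then, using properness once more --- since $(s\times t)^{-1}(\tilde x,\tilde x)=\Gamma$ is compact, shrinking $B$ forces $(s\times t)^{-1}(B\times B)$ into a disjoint union of small neighbourhoods $W_g$, $g\in\Gamma$, on each of which $s\times t$ embeds onto a piece of the graph of $\tau_g$ --- I obtain an isomorphism of topological groupoids $[M_B \rightrightarrows B]\cong B // \Gamma$ with $M_B = s^{-1}(B)\cap t^{-1}(B)$. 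Identifying this with the restriction of ${\mc U}$ over the open subset $B/\Gamma\subseteq|{\mc U}|$ (they agree up to the evident Morita equivalence, every point of the saturation of $B$ being groupoid-isomorphic to a point of $B$) gives an open embedding $B // \Gamma\hookrightarrow{\mc U}$ through the given point; letting $\tilde x$ vary produces the cover.

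For the refinement clause I would run this same argument on the groupoid $[U\times_{\mc U} U \rightrightarrows U]$ attached to the given smooth \'etale atlas $U\to{\mc U}$: its object space is $U$ itself, so the balls $B_{\tilde x}$ it produces are honest open subsets of $U$, and the composites $B_{\tilde x}\to B_{\tilde x} // \Gamma_{\tilde x}\to{\mc U}$ are by construction restrictions of the atlas map $U\to{\mc U}$, hence factor through $U$; they still cover ${\mc U}$ since the $B_{\tilde x}$ cover $U$ and $U\to{\mc U}$ is surjective. The only place where real work is needed is the ``only if'' step: upgrading the a priori only germinal local $\Gamma$-action $g\mapsto\tau_g$ to a genuine smooth action on a fixed ball (metric-averaging plus the exponential map), and invoking properness of $s\times t$ to be certain no extra arrows survive over $B$ --- alternatively one may simply cite the standard local structure theory of proper \'etale Lie groupoids \cite[\S1.4]{ALR07}.
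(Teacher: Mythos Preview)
Your proposal is correct and follows the standard local-slice argument for proper \'etale Lie groupoids; the paper itself does not give a proof but simply cites \cite[Proposition 3.3, Corollary 3.5]{pardon19}, so your sketch is precisely the content being deferred to that reference. One minor point: your deduction that $|{\mc U}|$ is Hausdorff from properness of the diagonal is slightly glib as written (the coarse-space functor does not obviously send proper maps to closed maps before you know the target is Hausdorff), but this is easily fixed by arguing via the atlas and properness of $s\times t$ directly, which is again standard.
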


\begin{proof}
This is \cite[Proposition 3.3, Corollary 3.5]{pardon19} in the setting of orbifolds.
\end{proof}

Using such an open cover, the stabilizer $\Gamma_x$  of a point $x \in |{\mc U}|$ is isomorphic to the stabilizer of any preimage of $x$ in $U_{\alpha}$ of the $\Gamma_\alpha$-action if $x \in U_{\alpha} / \Gamma_{\alpha}$. An orbifold ${\mc U}$ is called \emph{effective} if it admits an open cover by finite quotient orbifolds $\{ U_\alpha // \Gamma_{\alpha} \to {\mc U} \}$ such that the $\Gamma_{\alpha}$-action on $U_{\alpha}$ is faithful. If ${\mc U}$ is effective, we can further refine any \'etale atlas such that each $U_{\alpha}$ can be identified with a $\Gamma_{\alpha}$-invariant open subset of the Euclidean space ${\mb R}^n$ which is a faithful $\Gamma_{\alpha}$-representation. When this is the case, the open embedding $U_{\alpha} // \Gamma_{\alpha} \to {\mc U}$ is called an \emph{orbifold chart}.

In our later discussion of orbifold bordism theories, we need to encounter \emph{orbifolds with boundary}, which are locally modeled on $({\mb R}_{\geq 0} \times {\mb R}^{n-1}) // \Gamma$ such that $\Gamma$ acts linearly and the action of $\Gamma$ on the ${\mb R}_{\geq 0}$-factor is trivial. To avoid repeating explanations, we will mostly state definitions and constructions for orbifolds without mentioning the counterparts for orbifolds with boundary unless it is necessary.

Next, we discuss objects living on stacks or orbifolds. A \emph{vector bundle} over a stack ${\mc X}$ is a stack ${\mc E}$ with a representable morphism ${\mc E} \to {\mc X}$, which is further endowed with a pair of representable morphisms ${\mc E} \times_{\mc X} {\mc E} \to {\mc E}$ and  ${\mb R} \times {\mc E} \to {\mc E}$ such that for any topological space $X$ with a morphism $X \to {\mc X}$, there exists an open cover $\{U_\alpha\}$ of $X$ and integers $n_{\alpha} \geq 0$, with the property that for any $\alpha$, the pullback of ${\mc E} \times_{\mc X} U_{\alpha}$ is isomorphic to ${\mb R}^{n_{\alpha}} \times U_{\alpha} \to U_{\alpha}$ and the two morphisms are identified with fiberwise additions and scalar multiplications. A \emph{complex} vector bundle ${\mc E} \to {\mc X}$ is be defined by replacing ${
\mb R}$ by ${\mb C}$ in the above definition. We just need to encounter the cases when all the $n_{\alpha}$ are equal. In the case of an orbifold ${\mc U}$, a \emph{smooth vector bundle} over ${\mc U}$ of rank $k$ is defined to be a vector bundle ${\mc E} \to {\mc U}$ where ${\mc E}$ is a smooth orbifold with smooth projection map, so that for any smooth map from a smooth manifold $X \to {\mc U}$, there exists an open cover $\{ U_{\alpha} \}$ of $X$, such that the pullback ${\mc E} \times_{\mc U} U_{\alpha} \to U_{\alpha}$ is smoothly isomorphic to ${\mb R}^k \times U_{\alpha} \to U_{\alpha}$. Complex smooth vector bundles can be defined similarly. One can easily formulate operations on vector bundles, including pullback, direct sum, dual, and tensor products, and notions of subbundles, quotient bundles, and bundle morphisms, by ``evaluating" the notions along topological spaces. Unless otherwise stated, a vector bundle over an orbifold is always assumed to be smooth in our subsequent discussion.

Continuing our example of finite quotient orbifold, given a smooth manifold $X$ acted on smoothly by a finite group $\Gamma$, together with a $\Gamma$-representation $W$, we obtain a smooth vector bundle $(X \times W) // \Gamma \to X // \Gamma$. These are the building blocks of vector bundles. Indeed, given a vector bundle ${\mc E} \to {\mc U}$, for any \'etale atlas $U \to {\mc U}$, the pullback ${\mc E} \times_{\mc U} U$ is a locally trivial vector bundle under some open over $\{ U_{\alpha} \}$ of $U$. By refining the \'etale atlas $\coprod_{\alpha} U_{\alpha}$, using Proposition \ref{prop:orbi-chart}, we can assume that the restriction of the \'etale chart over $U_{\alpha}$ factors through $U_{\alpha} \to U_{\alpha} // \Gamma_{\alpha}$ for some finite group $\Gamma_{\alpha}$. By the local triviality of ${\mc E} \times_{\mc U} U$, we see that ${\mc E} \times_{\mc U} U |_{U_{\alpha}}$ is the product bundle $U_{\alpha} \times W_{\alpha}$ for some $\Gamma_{\alpha}$-representation $W_{\alpha}$. We will call the corresponding bundle map 
\beq
\begin{tikzcd}
(U_{\alpha} \times W_{\alpha}) // \Gamma_{\alpha} \arrow[d] \arrow[r] & {\mc E} \arrow[d] \\
U_{\alpha} // \Gamma_{\alpha} \arrow[r]                               & {\mc U}
\end{tikzcd}
\eeq
a \emph{bundle chart} in line with the more traditional convention.

\begin{lemma}
Any orbifold ${\mc U}$ admits a tangent bundle $T{\mc U}$, which is an orbifold with a smooth map $T{\mc U} \to {\mc U}$ defining a vector bundle over ${\mc U}$.
\end{lemma}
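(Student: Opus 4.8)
The plan is to construct $T{\mc U}$ by applying the tangent functor to a groupoid presentation of ${\mc U}$ and then to recover the claimed structure from Lemma~\ref{lem:orb-groupoid}. First I would fix a smooth \'etale atlas $U \to {\mc U}$, put $R := U \times_{\mc U} U$, and form the associated proper \'etale Lie groupoid $G = (R \rightrightarrows U)$ with source and target $s,t$ (local diffeomorphisms), unit $i$, inverse $\iota$, and composition $m\colon R \times_{s,U,t} R \to R$. Because $s$ is a submersion, the fiber product defining the domain of $m$ is transverse, so the tangent functor commutes with it, i.e. $T(R \times_{s,U,t} R) = TR \times_{Ts,TU,Tt} TR$. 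Applying $T$ to all the structure maps of $G$ therefore yields a topological groupoid $TG = (TR \rightrightarrows TU)$ whose axioms follow from those of $G$ by functoriality of $T$.

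Next I would check that $TG$ satisfies the hypotheses of Lemma~\ref{lem:orb-groupoid}. The spaces $TU$ and $TR$ are second countable Hausdorff smooth manifolds of dimension $2n$ with $n = \dim {\mc U}$; the maps $Ti$, $T\iota$, $Tm$ are smooth; and $Ts$, $Tt$ are local diffeomorphisms, since on a neighbourhood where $s$ (resp.\ $t$) is a diffeomorphism onto an open set, $Ts$ (resp.\ $Tt$) is the induced diffeomorphism of tangent bundles. The one step I expect to require genuine care is properness of $Ts \times Tt\colon TR \to TU \times TU \cong T(U\times U)$. Given a compact $K \subseteq TU \times TU$, let $K' \subseteq U \times U$ be its image under the bundle projection; then $(s\times t)^{-1}(K') \subseteq R$ is compact by separatedness of $G$, the restriction of $Ts\times Tt$ over this compact set is proper (it is pulled back, through $s$, from the restriction of $s$ to a compact set), and $(Ts\times Tt)^{-1}(K)$ lies over $(s\times t)^{-1}(K')$ and is closed, hence compact. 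With this, Lemma~\ref{lem:orb-groupoid} produces a smooth orbifold $T{\mc U} := [TR \rightrightarrows TU]$ of dimension $2n$.

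It then remains to exhibit the projection and the vector bundle structure. The bundle projections $TR \to R$ and $TU \to U$ assemble into a morphism of groupoids $TG \to G$, hence a smooth morphism of stacks $T{\mc U} \to {\mc U}$. For representability I would localize, via Proposition~\ref{prop:orbi-chart}, to an orbifold chart $U_\alpha // \Gamma_\alpha$ with $U_\alpha \subseteq {\mb R}^n$, over which the morphism is identified with $(U_\alpha \times {\mb R}^n)//\Gamma_\alpha \to U_\alpha // \Gamma_\alpha$, where $\Gamma_\alpha$ acts on the fiber ${\mb R}^n$ through the derivative of its linear action on $U_\alpha$; the pullback of this along any smooth map from a manifold is manifestly representable. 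The fiberwise addition $TU \oplus TU \to TU$ and scalar multiplication ${\mb R} \times TU \to TU$ are natural, so they commute with the differentials of the structure maps of $G$ and descend to morphisms $T{\mc U} \times_{\mc U} T{\mc U} \to T{\mc U}$ and ${\mb R} \times T{\mc U} \to T{\mc U}$; local triviality over test manifolds follows from the chart description together with local triviality of $TU_\alpha \to U_\alpha$. This realizes $T{\mc U} \to {\mc U}$ as a smooth rank-$n$ vector bundle.

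Finally I would remark that $T{\mc U}$ does not depend on the chosen atlas: passing to a common refinement of two atlases induces a Morita equivalence of the two groupoids, and applying $T$ gives a Morita equivalence of the two tangent groupoids, so the resulting orbifolds and bundles are canonically identified. (Alternatively one can bypass the groupoid language and define $T{\mc U}$ directly by gluing the bundles $(TU_\alpha)//\Gamma_\alpha$ over the orbifold charts of Proposition~\ref{prop:orbi-chart} along the derivatives of the transition data; this gives the same answer.) Apart from the properness verification highlighted above, the rest is routine checking against the stack-theoretic definitions recalled earlier in this section.
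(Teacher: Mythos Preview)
Your proposal is correct and follows essentially the same route as the paper: apply the tangent functor to a groupoid presentation $[U\times_{\mc U} U \rightrightarrows U]$, observe that the differentials of the structure maps again form a proper \'etale Lie groupoid, and then read off the projection and vector-bundle structure from the compatible bundle projections and fiberwise linear operations. Your write-up is more explicit than the paper's (you spell out the properness check for $Ts\times Tt$, the representability via orbifold charts, and the atlas-independence via Morita equivalence), whereas the paper simply asserts that ``it is easy to see'' the groupoid axioms hold and that the linear structure descends; but the underlying argument is the same.
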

\begin{proof}
Let $U \to {\mc U}$ be a smooth \'etale atlas, and denote by $[U \times_{\mc U} U \rightrightarrows U]$ the associated groupoid. Using the smooth structure on $U$, we can define its tangent bundle $TU$ and the projection map $TU \to U$. The fiber product $U \times_{\mc U} U$ acquires a unique smooth structure by the pullback along either factor, so we can define its tangent bundle $T (U \times_{\mc U} U)$. Then we claim that the following defines a groupoid structure on the pair $T (U \times_{\mc U} U), TU$. 
\begin{itemize}
    \item $s,t: T (U \times_{\mc U} U) \to TU$ defined by the differentials of the projection to the two factors respectively;
    \item $i,\iota$ are also defined by taking the differentials of the corresponding maps for the pair $U \times_{\mc U} U, U$.
    \item to define $m: T (U \times_{\mc U} U) \times_{TU} T (U \times_{\mc U} U) \to T (U \times_{\mc U} U)$, note that the domain can be identified with $T((U \times_{\mc U} U) \times_U (U \times_{\mc U} U))$, we can again take the differential.
\end{itemize}
It is easy to see that they satisfy the requirements of being a smooth orbifold provided that ${\mc U}$ is an orbifold. We denote the associated smooth orbifold by $T{\mc U}$.

The map of stacks $T{\mc U} \to {\mc U}$ is constructed by observing that the projection maps $T (U \times_{\mc U} U) \to U \times_{\mc U} U$ and $TU \to U$ is compatible with the groupoid structures. The fiberwise additions and scalar multiplications on the smooth manifolds in the above construction descend to representable maps of stacks. Finally, the smoothness of pullback can be checked \'etale locally.
\end{proof}

A smooth map of orbifolds $f: {\mc U}' \to {\mc U}$ induces a smooth bundle map $df: T{\mc U} \to T{\mc U}'$ covering $f$. This can be seen again by looking at the associated groupoids.

\begin{defn}
\begin{enumerate}
\item A smooth map of orbifolds is called an \emph{immersion} if its lift to some (equivalently any) \'etale atlas is an immersion.

\item Given an orbifold ${\mc U}$, a \emph{suborbifold} is an orbifold ${\mc U}'$ together with a smooth immersion ${\mc U}' \to {\mc U}$ which defines an isomorphism onto a closed substack of ${\mc U}$.

\item For a suborbifold ${\mc U}'$ in an orbifold ${\mc U}$, its \emph{normal bundle} $N_{\mc U} {\mc U}' \to {\mc U}'$ is defined to be the quotient vector bundle $T{\mc U}|_{{\mc U}'} / T{\mc U'}$.
\end{enumerate}
\end{defn}

We complete this section by introducing the main objects that we study:
\begin{defn}
  A \emph{derived orbifold (with boundary)}  is a triple $({\mc U}, {\mc E}, {\mc S})$ in which ${\mc U}$ is a orbifold (with boundary), ${\mc E} \to {\mc U}$ is a vector bundle, and ${\mc S}: {\mc U} \to {\mc E}$ is a smooth section. Such a derived orbifold is called \emph{compact} if ${\mc S}^{-1}(0)$ is compact. The \emph{dimension} of such $({\mc U}, {\mc E}, {\mc S})$ is $\dim {\mc U} - \dim {\mc E}$.
\end{defn}

Given a derived orbifold (with boundary) $({\mc U}, {\mc E}, {\mc S})$, we have the following two operations.
\begin{itemize}
    \item \emph{Restriction}: if ${\mc U}' \subset {\mc U}$ is an open suborbifold containing ${\mc S}^{-1}(0)$, we see that $({\mc U}', {\mc E}|_{{\mc U}'}, {\mc S}_{{\mc U}'})$ is also a derived orbifold (with boundary).
    \item \emph{Stabilization}: if $\pi_{{\mc E}'}: {\mc E}' \to {\mc U}$ is a vector bundle, denote by $\tau_{{\mc E}'}: {\mc E}' \to \pi_{{\mc E}'}^* {\mc E}'$ the tautological section. Then $({\mc E}', \pi_{{\mc E}'}^*{\mc E} \oplus \pi_{{\mc E}'}^* {\mc E}', \pi^*{\mc S} \oplus \tau_{{\mc E}'})$ is also a derived orbifold (with boundary).
\end{itemize}

Note that the zero locus is not changed by either restriction or stabilization. Therefore, restrictions and stabilizations of compact derived orbifold remain compact. For notational convenience, we will use ${\mc D} = ({\mc U}, {\mc E}, {\mc S})$ to denote a derived orbifold (with boundary).

\subsection{Differential geometry on orbifolds: metrics and normal complex structures} \label{sec:derived-orbifolds} 

We shall be using, within the setting of orbifolds, some basic notions of differential geometry. To begin, we introduce the following:

\begin{defn}
\begin{enumerate}
    \item A \emph{metric} on a vector bundle ${\mc E} \to {\mc X}$ is a map $g: {\mc E} \times_{\mc X} {\mc E} \to {\mb R}$ satisfying $g(w_1, w_2) = g(w_2, w_1)$, $g(w_1, r \cdot w_2) = r\cdot g(w_1, w_2)$ for $r \in {\mb R}$ and $g(w,w) \geq 0$ with equality if and only if $w=0$. 
    \item A \emph{hermitian metric} on a complex vector bundle ${\mc E} \to {\mc X}$ is a map $h: {\mc E} \times_{\mc X} {\mc E}$ satisfying $h(w_1, w_2) = \overline{h(w_2, w_1)}$, $h(w_1, c \cdot w_2) = c \cdot h(w_1, w_2)$ for $c \in {\mb C}$ and $h(w,w) \geq 0$ with equality if and only if $w=0$.
\end{enumerate}
\end{defn}

As usual, a metric on $T{\mc U}$ is called a \emph{Riemannian metric}. The exponential map of any smooth \'etale atlas descends to a unique smooth map $\exp: T{\mc U} \to {\mc U}$, also referred to as the \emph{exponential map}. Other differential-geometric notions, including connections, parallel transport, geodesics, etc, can also be defined for orbifolds in the natural way.

Next, we introduce the following structure which is particularly interesting from the point of view of symplectic topology, as it arises in the study of various (thickened) moduli spaces of $J$-holomorphic curves. From a technical point of view, this is the most natural class of orbifolds to which one can apply algebro-geometric methods to simplify the singularities of the coarse spaces, as we will see in later sections.

\begin{defn}\label{defn:normal-complex-manifold}
Let $M$ be a smooth manifold, and suppose that $\Gamma$ is a finite group acting smoothly on $M$. For any subgroup $\Gamma' \leq \Gamma$, denote by $M^{\Gamma'}$ the fixed point locus of the restricted $\Gamma'$-action. A \emph{normal complex structure} on the $\Gamma$-manifold $M$ is a collection of $\Gamma'$-equivariant complex structures on the normal bundle
\beq
I_{\Gamma'}: N M^{\Gamma'} \to N M^{\Gamma'}
\eeq
for all $\Gamma' \leq \Gamma$ satisfying the following compatibility condition: for any pair of subgroups $\Gamma_2 < \Gamma_1 \leq \Gamma$ the embedding of vector bundles
\begin{equation}
    N M^{\Gamma_2}|_{M^{\Gamma_1}} \to N M^{\Gamma_1} 
\end{equation}
along $M^{\Gamma_1}$ respects the complex structures, and the canonical splitting
\begin{equation}\label{eqn:normal-splitting}
    N M^{\Gamma_1}  \cong N M^{\Gamma_2}|_{M^{\Gamma_1}} \oplus (N M^{\Gamma_2}|_{M^{\Gamma_1}})^{\perp}
\end{equation}
as the fiberwise direct sum of $\Gamma_2$-representations is preserved by $I_{\Gamma_1}$.
\end{defn}

Using the splitting \eqref{eqn:normal-splitting}, the above definition asks that the two resulting complex structures from $I_{\Gamma_1}$ and $I_{\Gamma_2}$ on $N M^{\Gamma_2}|_{M^{\Gamma_1}}$ agree.

Given a group homomorphism $\Gamma_1 \to \Gamma_2$, and an equivariant smooth open embedding $f: M_1 \to M_2$ from a $\Gamma_1$-manifold $M_1$ to a $\Gamma_2$-manifold $M_2$, we may pull back a normal complex strucure on $M_2$ to a normal complex structure on $M_1$, assuming that $f$ defines an injection on the stabilizers:
\begin{defn}\label{defn:normal-complex}
Given a smooth orbifold ${\mc U}$, a \emph{normal complex structure} consists of:
\begin{enumerate}
    \item an \'etale atlas of the form $U = \coprod U_{\alpha}$ for which each $U_{\alpha}$ is a smooth manifold acted on smoothly by a finite group $\Gamma_{\alpha}$, such that $\{ U_{\alpha} // \Gamma_{\alpha} \}$ is a cover of ${\mc U}$ by open substacks;
    \item a normal complex structure on each $\Gamma_{\alpha}$-manifold $U_{\alpha}$ such that the two normal complex structures on $U \times_{\mc U} U$ obtained from the pullback of two projections maps are isomorphic.
\end{enumerate}
\end{defn}

\begin{rem}\label{rem:normal-complex}
  Normal complex structures and their appearance in symplectic topology were first discovered by Fukaya--Ono in \cite{Fukaya_Ono_integer}. The first systematic investigation appeared in \cite{Bai_Xu_2022}, and the definition from there is stated for effective orbifolds using the language of orbifold atlas, which we recall for the sake of concreteness. For an orbifold ${\mc U}$, viewed as a Hausdorff space $|{\mc U}|$ equiped with an equivalence class of orbifold atlases, a normal complex structure on ${\mc U}$ is a collection of $\Gamma'$-equivariant complex structures on the normal bundle of the $\Gamma'$-fixed point locus $U^{\Gamma'}$ respecting the fiberwise $\Gamma'$-action on each orbifold chart $\{(U, \Gamma, \psi)\}$ with $\Gamma' \leq \Gamma$
\beq
I_{\Gamma'}: NU^{\Gamma'} \to NU^{\Gamma'}
\eeq
subject to the following requirements:
\begin{enumerate}
    \item for each orbifold chart $(U, \Gamma)$ and $y \in U$ with stabilizer $\Gamma_y$, suppose we have a pair of subgroups $\Gamma_2 < \Gamma_1 \leq \Gamma_y$, then for the $I_{\Gamma_1}$-compatible decomposition
    \beq
    N_y U^{\Gamma_1} = N_y U^{\Gamma_2} \oplus (TU^{\Gamma_2} \cap N_y U^{\Gamma_1})
    \eeq
    of $\Gamma_1$-representations, the restriction of $I_{\Gamma_1}$ along the component $N_y U^{\Gamma_2}$ coincides with the complex structure $I_{\Gamma_2}$;
    \item if we have a chart embedding $\psi_{12}: (U_1, \Gamma_1) \to (U_2, \Gamma_2)$, and for $y \in U_1$, we identify the representatives of the stabilizer of $\psi_1(y) = \psi_2 \circ \psi_{12}(y)$ given by $\Gamma_y$ and $\Gamma_{\psi(y)}$ under the group injection $\Gamma_1 \to \Gamma_2$, then the differential $d \psi_{12}: NU_1^{\Gamma_y} \to NU_2^{\Gamma_{\psi(y)}}$ intertwines with the complex structures $I_{\Gamma_y}$ and $I_{\Gamma_{\psi(y)}}$.
\end{enumerate}
It is straightforward to see that this is equivalent to Definition \ref{defn:normal-complex}, as the compatibility condition (2) is encoded in the requirement (2) of Definition \ref{defn:normal-complex}.
\end{rem}

A normally complex orbifold is defined to be an orbifold equipped with an isomorphism class of normal complex structures. Note that \emph{any} manifold endowed with the trivial orbifold structure is automatically a normally complex orbifold. For another class of examples, recall that an orbifold ${\mc U}$ is called \emph{almost complex} if its tangent bundle $T{\mc U}$ is equipped with an isomorphism class of complex structures. Then any almost complex orbifold is also normally complex. Smooth Deligne--Mumford stacks over ${\mb C}$ produce examples of almost complex orbifolds.

A choice of normal complex structure provides a finer refinement of an orbifold than the standard stratification associated to the isotropy group, by including the choice of representation:
\begin{defn}\label{defn:iso-type}
  The set of \emph{isotropy types} is the set of equivalence classes of pairs
\beq
(\Gamma, V),
\eeq
where $\Gamma$ is a finite group and $V$ is a finite-dimensional complex $\Gamma$-representation which does not contain any trivial summand. Here $(\Gamma, V)$ is declared to be equivalent to $(\Gamma', V')$ if there is an isomorphism of groups $\Gamma \xrightarrow{\sim} \Gamma'$ under which $V$ and $V'$ are isomorphic representations.
\end{defn}
\begin{rem}
  The standard terminology is to use isotropy to refer only to the choice of group, but the more refine decomposition will be essential for out construction, and is consistent with the formulation in \cite{Bai_Xu_2022}.
\end{rem}
The set of isotropy types admits a natural partial order as follows. We say that $(\Gamma', V') \leq (\Gamma, V)$ if there exists an injective homomorphism  $\Gamma' \hookrightarrow \Gamma$ such that $V'$ is isomorphic to the quotient of $V$ by its $\Gamma'$-fixed subspace.

\begin{defn}\label{defn:normal-invariant}
Given a normally complex orbifold ${\mc U}$, the \emph{normal invariant} at $x \in |{\mc U}|$ is the isotropy type of the pair
\beq
(\Gamma_y, N_y U^{\Gamma_y})
\eeq
where $U // \Gamma \to {\mc U}$ is smoothly embedded as an open substack and $x$ is the image of $y$ under $U/\Gamma \to |{\mc U}|$.
\end{defn}

It is easy to see that different choices of \'etale charts covering $x$ and different choices of inverse images of $x$, define equivalent isotropy types. Therefore, the normal invariant of $x \in {\mc U}$ is well-defined.

In order to formulate the notion of normally complex derived orbifold, we need the notion of a normally complex vector bundle. We again start with the equivariant setting.

\begin{defn}\label{defn:bundle-equivariant-complex}
Let $M$ be a smooth $\Gamma$-manifold and suppose $E \to M$ is a $\Gamma$-equivariant vector bundle. Given $\Gamma' \leq \Gamma$, introduce the notation
\beq\label{eqn:bundle-E-split}
E|_{M^{\Gamma'}} = \mathring{E}^{\Gamma'} \oplus \check{E}^{\Gamma'}
\eeq
which decomposes each fiber of $E|_{M^{\Gamma'}}$ as a direct sum of trivial and nontrivial $\Gamma'$-representations. Then a \emph{normal complex structure} on $E$ is a collection of $\Gamma'$-equivariant complex structures
\beq
J_{\Gamma'}: \check{E}^{\Gamma'} \to \check{E}^{\Gamma'}
\eeq
for all $\Gamma' \leq \Gamma$ such that for any pair of subgroups $\Gamma_2 < \Gamma_1 \leq \Gamma$, the embedding of vector bundles 
\begin{equation}
    \check{E}^{\Gamma_1}|_{M^\Gamma_2}  \to  \check{E}^{\Gamma_2},
\end{equation}
along $ M^{\Gamma_2}  $ is a map of complex vector bundles, and the canonical decomposition $\check{E}^{\Gamma_2} = \check{E}^{\Gamma_1}|_{M^{\Gamma_2}} \oplus (\mathring{E}^{\Gamma_1} \cap \check{E}^{\Gamma_2})$ is preserved by the complex structure $J_{\Gamma_2}$.
\end{defn}

As before, given a group homomorphism $\Gamma_1 \to \Gamma_2$, the pullback of a normally complex vector bundle on under an equivariant smooth open embedding $f: M_1 \to M_2$ from a $\Gamma_1$-manifold $M_1$ to a $\Gamma_2$-manifold $M_2$, has a natural normally complex structure whenever $f$ induces an injection on the stabilizer groups.

\begin{rem}
    The fixed point locus $M^{\Gamma'}$ may have different connected components with different dimensions, and the objects $\mathring{E}^{\Gamma'}$ and $\check{E}^{\Gamma'}$ only define vector bundles over these connected components. To simplify the notations, we will stick to these slightly imprecise notations which should not cause confusions.
\end{rem}

\begin{defn}\label{defn:nc-bundle}
A \emph{normal complex structure} on a vector bundle ${\mc E} \to {\mc U}$ consists of:
\begin{enumerate}
    \item an \'etale atlas of ${\mc E}$ of the form $E = \coprod_{\alpha} E_{\alpha}$ covering an \'etale atlas of ${\mc U}$ of the form $\coprod_{\alpha} U_{\alpha}$ as a vector bundle such that:
    \begin{itemize}
    \item $U_{\alpha}$ is a smooth $\Gamma_{\alpha}$-manifold, $E_{\alpha} \to U_{\alpha}$ is a $\Gamma_{\alpha}$-equivariant vector bundle, $E_{\alpha} // \Gamma_{\alpha} \to {\mc E}$ is an open embedding which is further a bundle map covering an open embedding $U_{\alpha} // \Gamma_{\alpha} \to {\mc U}$;
        \item $E_{\alpha} \to U_{\alpha}$ is equipped with a normal complex structure;
    \end{itemize}
    \item for the vector bundle $E \times_{\mc E} E \to U \times_{\mc U} U$, the two normal complex structures induced from the pullback of two projection maps $U \times_{\mc U} U \to U$ agree.
\end{enumerate}
\end{defn}

\begin{rem}
  Continuing Remark \ref{rem:normal-complex}, we recall the arguably more concrete formulation of normal complex structures on vector bundles in terms of bundle charts as appeared in \cite{Bai_Xu_2022}.
For a vector bundle ${\mc E} \to {\mc U}$, a normal complex structure is a collection of $\Gamma'$-equivariant complex structures 
\beq
J_{\Gamma'}: \check{E}^{\Gamma'} \to \check{E}^{\Gamma'}
\eeq
for each bundle chart $(U, \Gamma, E)$ and $\Gamma' \leq \Gamma$ subject to the following conditions:
\begin{enumerate}
    \item for each bundle chart $(U, \Gamma, E)$ and $y \in U$ with stabilizer $\Gamma_y$, suppose we have a pair of subgroups $\Gamma_2 < \Gamma_1 \leq \Gamma_y$, then for the $J_{\Gamma_1}$-compatible decomposition
    \beq
    \check{E}_y^{\Gamma_1} = \check{E}_y^{\Gamma_2} \oplus (\mathring{E}_y^{\Gamma_2} \cap \check{E}_y^{\Gamma_1})
    \eeq
    of $\Gamma_1$-representations, the restriction of $J_{\Gamma_1}$ along the component $\check{E}_y^{\Gamma_2}$ coincides with the complex structure $J_{\Gamma_2}$;
    \item if we have a bundle chart embedding $\hat{\psi}_{12}: (U_1, \Gamma_1, E_1) \to (U_2, \Gamma_2, E_2)$, and for $y \in U_1$, we identify the representatives of the stabilizer of $\psi_1(y) = \psi_2 \circ \psi_{12}(y)$ given by $\Gamma_y$ and $\Gamma_{\psi(y)}$ under the group injection $\Gamma_1 \to \Gamma_2$, then the bundle map $\hat{\psi}_{12}: \check{E}_1^{\Gamma_y} \to \check{E}_2^{\Gamma_{\psi(y)}}$ intertwines with the complex structures $J_{\Gamma_y}$ and $J_{\Gamma_{\psi(y)}}$.
\end{enumerate}
\end{rem}

The constructions of this paper are formulated for the following class of derived orbifolds:

\begin{defn}
  A \emph{normal complex structure} on a derived orbifold ${\mc D} = ({\mc U}, {\mc E}, {\mc S})$ (with boundary) consists of normal complex structures on $\mc U$ and $\mc E$. 
\end{defn}
 Of course, restrictions of derived orbifolds preserves normal complex structure, so does the stabilization by complex vector bundles.

\subsection{Fukaya--Ono--Parker sections} 
In this subsection, we recall the definition of \emph{normally complex polynomial perturbations/sections} introduced by Fukaya--Ono \cite{Fukaya_Ono_integer} and Parker \cite{BParker_integer}. Following \cite{Bai_Xu_2022}, we call such sections the Fukaya--Ono--Parker (FOP) sections.

We need to discuss a distinguished class of metric and connection structures on orbifolds and vector bundles to get prepared for our discussion of the above distinguished class of sections.

Let $M$ be a smooth manifold, and suppose that $\Gamma$ is a finite group acting smoothly on $M$. For any subgroup $\Gamma' \leq \Gamma$, let $M^{\Gamma'}$ be the fixed point locus of the restricted $\Gamma'$-action. Choosing a $\Gamma$-equivariant Riemannian metric $g$ on $M$,  then the splitting
\beq
TM|_{M^{\Gamma'}} \cong TM^{\Gamma'} \oplus NM^{\Gamma'},
\eeq
equipps $\pi_{NM^{\Gamma'}}: NM^{\Gamma'} \to M^{\Gamma'}$ with a bundle metric and a connection from the Levi-Civita connection. Using the splitting $T(NM^{\Gamma'}) = (\pi_{NM^{\Gamma'}})^* TM^{\Gamma'} \oplus (\pi_{NM^{\Gamma'}})^* NM^{\Gamma'}$ induced by the connection, the bundle metric on $NM^{\Gamma'}$ and the induced Riemannian metric on $M^{\Gamma'}$ together equip $NM^{\Gamma'}$ with a Riemannian metric $g^{NM^{\Gamma'}}$. On the other hand, the normal exponential map
\beq
\exp: NM^{\Gamma'} \to M
\eeq
defines a diffeomorphism onto its image when restricted along a sufficiently small open neighborhood of the $0$-section $M^{\Gamma'} \subset M^{\Gamma'}$.

\begin{defn}\label{defn:manifold-straight-metric}
  A $\Gamma$-equivariant Riemannian metric $g$ on $M$ is called  (normally) \emph{straightened} if for any $\Gamma' \leq \Gamma$, the push-forward of the metric $g^{NM^{\Gamma'}}$ on $NM^{\Gamma'}$ under the normal exponential map agrees with the restriction of $g$ to an open neighborhood of $M^{\Gamma'}$ over which $\exp$ defines a diffeomorphism.
\end{defn}

A straightened metric has the following feature: given a group $\Gamma_1 \leq \Gamma $, consider the decomposition of vector bundles over $M^{\Gamma_1}$
\beq\label{eqn:decomposition-normal}
NM^{\Gamma_1} = (\mathring{NM^{\Gamma_1}})^{\Gamma_2} \oplus (\check{NM^{\Gamma_1}})^{\Gamma_2}
\eeq
associated to the restriction of the fiberwise $\Gamma_1$ action to a subgroup $\Gamma_2 < \Gamma_1$, where the first component is the $\Gamma_2$-trivial component. The normal exponential map defines
\beq
\exp: (\mathring{NM^{\Gamma_1}})^{\Gamma_2} \to M^{\Gamma_2}.
\eeq
Given $y \in M^{\Gamma_1}$ and a vector in $N_y M^{\Gamma_1}$ written as $(\mathring{v}_{\Gamma_2}, \check{v}_{\Gamma_2})$ under the decomposition of Equation \eqref{eqn:decomposition-normal}, one can check that
\beq
\exp_{x}(\mathring{v}_{\Gamma_2}, \check{v}_{\Gamma_2}) = \exp_{\exp_x(\mathring{v}_{\Gamma_2})}(\check{v}_{\Gamma_2}).
\eeq
In other words, if the metric $g$ is straightened, the induced metric along the normal directions behaves like an equivariant flat metric.

\begin{defn}\label{defn:straight-metric}
Given a smooth orbifold ${\mc U}$, a \emph{straightened Riemannian metric} on ${\mc U}$ is a Riemannian metric on ${\mc U}$ such that 
\begin{enumerate}
\item there exists an \'etale atlas of the form $U = \coprod U_{\alpha}$ for which each $U_{\alpha}$ is a smooth manifold acted on smoothly by a finite group $\Gamma_{\alpha}$, such that $\{ U_{\alpha} // \Gamma_{\alpha} \}$ is a cover of ${\mc U}$ by open substacks;
    \item the pullback of $g$ to each $U_{\alpha}$ defines a straightened $\Gamma_{\alpha}$-equivariant Riemannian metric.
\end{enumerate}
\end{defn}

The existence of straightened Riemannian metrics and a relative form for the inductive construction is established by \cite[Lemma 3.15]{Bai_Xu_2022}. The reader can refer to the cited reference for a full proof.

We need a similar structure for vector bundles. Let $M$ be a smooth manifold with a $\Gamma$-action, and let $E \to M$ be a $\Gamma$-equivariant vector bundle equipped with a $\Gamma$-equivariant connection $\nabla$. Suppose that $M$ is endowed with a $\Gamma$-equivariant straightened Riemannian metric $g$. For a subgroup $\Gamma' \leq \Gamma$, we have the projection $\pi_{NM^{\Gamma'}}: NM^{\Gamma'} \to M^{\Gamma'}$. Over a disc bundle $D_r(NM^{\Gamma'}) \subset NM^{\Gamma'}$, the parallel transport along normal geodesics induces a $\Gamma'$-equivariant bundle isomorphism
\beq\label{eqn:bundle-E-isomorphism}
\rho^{\Gamma'}: (\pi_{NM^{\Gamma'}})^* (E|_{M^{\Gamma'}}) \cong E|_{D_r(NM^{\Gamma'})}.
\eeq
The splitting of $E|_{M^{\Gamma'}}$ as a direct sum $E|_{M^{\Gamma'}} = \mathring{E}^{\Gamma'} \oplus \check{E}^{\Gamma'}$ from Equation\eqref{eqn:bundle-E-split},  extends to a splitting
\beq\label{eqn:bundle-restriction-split}
E|_{D_r(NM^{\Gamma'})} = \mathring{E}^{\Gamma'} \oplus \check{E}^{\Gamma'}.
\eeq
using the isomorphism of Equation  \eqref{eqn:bundle-E-isomorphism}. 

\begin{defn}\label{defn:straight-connection}
For a $\Gamma$-manifold $M$ endowed with a $\Gamma$-equivariant straightened Riemannian metric and a $\Gamma$-equivariant vector bundle $E \to M$ endowed with a $\Gamma$-equivariant connection $\nabla$, the connection is said to be \emph{straightened} if for any $\Gamma' \leq \Gamma$, after identifying a disc bundle $D_r(NM^{\Gamma'})$ with an open subset of $M^{\Gamma'}$ under the exponential map, we have 
\beq
(\rho^{\Gamma'})^* (\pi_{NM^{\Gamma'}})^* \nabla|_{E|_{M^{\Gamma'}}} = \nabla |_{E|_{D_r(NM^{\Gamma'})}}.
\eeq
\end{defn}

Again, straightened connections can be pulled back by equivariant smooth open maps which define an injection on the stabilizers. 

\begin{defn}\label{defn:vector bundle-straighting}
A \emph{straightening} on a vector bundle ${\mc E} \to {\mc U}$ consists of a straightend Riemannian metric $g$ on ${\mc U}$ and a connection $\nabla$ on ${\mc E}$ such that:
\begin{enumerate}
    \item we have an \'etale atlas of ${\mc E}$ of the form $E = \coprod_{\alpha} E_{\alpha}$ covering an \'etale atlas of ${\mc U}$ of the form $\coprod_{\alpha} U_{\alpha}$ as a vector bundle such that:
   \begin{itemize}
        \item $U_{\alpha}$ is a smooth $\Gamma_{\alpha}$-manifold, $E_{\alpha} \to U_{\alpha}$ is a $\Gamma_{\alpha}$-equivariant vector bundle, $E_{\alpha} // \Gamma_{\alpha} \to {\mc E}$ is an open embedding which is further a bundle map covering an open embedding $U_{\alpha} // \Gamma_{\alpha} \to {\mc U}$;
        \item the straightened Riemannian metric is defined with respect to the atlas $\coprod_{\alpha} U_{\alpha} \to {\mc U}$;
    \end{itemize}
    \item the pullback of $\nabla$ to each $E_{\alpha} \to U_{\alpha}$ defines a straightened $\Gamma_{\alpha}$-equivariant connection under the induced straightened $\Gamma_{\alpha}$-equivariant metric on $U_{\alpha}$.
\end{enumerate}
\end{defn}

Straightenings of ${\mc E} \to {\mc U}$ are used to obtain decompositions as in Equation \eqref{eqn:bundle-restriction-split}. Their existence and a relative form of the construction is the content of \cite[Lemma 3.20]{Bai_Xu_2022}.

If $\pi_{{\mc E}'}: {\mc E}' \to {\mc U}$ is another vector bundle, consider  the \emph{stabilization} $\pi_{{\mc E}'}^* {\mc E} \oplus \pi_{{\mc E}'}^* {\mc E}' \to {\mc E}'$. If $\pi_{{\mc E}'}: {\mc E}' \to {\mc U}$ comes with a straightening, \cite{Bai_Xu_2022} specifies a class of bundle metrics on ${\mc E}'$ which is also called straightened. Such a structure can be used to equip the stabilization $\pi_{{\mc E}'}^* {\mc E} \oplus \pi_{{\mc E}'}^* {\mc E}' \to {\mc E}'$ with a straightening if furthermore, ${\mc E} \to {\mc U}$ has a straightening, see \cite[Section 3.4]{Bai_Xu_2022}. This fact will be used in the study of derived orbifold bordisms.

With these preparations, we can proceed to define FOP sections following \cite[Section 3.6]{Bai_Xu_2022}. As usual, we discuss the corresponding notion in the equivariant setting first.

Let $\Gamma$ be a finite group and let $(V, W)$ be a pair of finite-dimensional complex $\Gamma$-representations. Given a positive integer $d$, denote by
\beq
\mathrm{Poly}_{d}^{\Gamma}(V,W)
\eeq
the space of $\Gamma$-equivariant polynomial maps from $V$ to $W$ of degree at most $d$. It admits an action of $GL(V)^{\Gamma} \times GL(W)^{\Gamma}$ by automorphisms, where $GL(V)^{\Gamma}$ resp. $GL(W)^{\Gamma}$ denotes elements in $GL(V)$ resp. $GL(W)$ invariant under the $\Gamma$-conjugation action.

Suppose that $M$ is a smooth $\Gamma$-manifold with a normal complex structure in the sense of Definition \ref{defn:normal-complex-manifold} and a straightened Riemannian metric in the sense of Definition \ref{defn:manifold-straight-metric}. Furthermore, suppose that we are given a $\Gamma$-equivariant vector bundle $E \to M$ equipped with a normal complex structure (cf. Definition \ref{defn:bundle-equivariant-complex}) and a straightened connection (cf. Definition \ref{defn:straight-connection}). Given $\Gamma' \leq \Gamma$ and a connected component of the fixed point locus $M^{\Gamma'}$, any fiber of $NM^{\Gamma'}$ defines the same complex $\Gamma'$-representation and we write it as $V'$. Similarly, using the decomposition of Equation \eqref{eqn:bundle-E-split}, we denote the fibers of $\check{E}^{\Gamma'}$ by $W'$ as a complex $\Gamma$-representation. Using the structure map of the complex vector bundles $NM^{\Gamma'}$ and $\check{E}^{\Gamma'}$, we can define a vector bundle
\beq
\mathrm{Poly}_d^{\Gamma'}(NM^{\Gamma'}, \check{E}^{\Gamma'})
\eeq
over $M^{\Gamma'}$ whose fibers are given by $\mathrm{Poly}_{d}^{\Gamma'}(V',W')$.

\begin{defn}\label{defn:FOP-lift-equivariant}
Let $E \to M$ be as above. Let $S: M \to E$ be a smooth $\Gamma$-equivariant section, which is decomposed as 
\beq
S = (\mathring{S}_{\Gamma'}, \check{S}_{\Gamma'}): D_r (NM^{\Gamma'}) \to \mathring{E}^{\Gamma'} \oplus \check{E}^{\Gamma'}
\eeq
over the disc bundle $D_r (NM^{\Gamma'})$. The section $S$ is said to have a \emph{normally complex lift} of degree at most $d$ near $M^{\Gamma'}$ if there exists a smooth bundle map from a disc bundle
\beq
\check{\mathfrak{s}}_{\Gamma'}: D_r(NM^{\Gamma'}) \to \mathrm{Poly}_d^{\Gamma'}(NM^{\Gamma'}, \check{E}^{\Gamma'})
\eeq
such that $\check{S}_{\Gamma'} = \mathrm{ev} \circ \check{\mathfrak{s}}_{\Gamma'}$, where $\mathrm{ev}$ is the bundle map
\beq
NM^{\Gamma'} \times_{M^{\Gamma'}} \mathrm{Poly}_d^{\Gamma'}(NM^{\Gamma'}, \check{E}^{\Gamma'}) \to \check{E}^{\Gamma'}
\eeq
defined by the fiberwise evaluation map. If the section $S$ admits such a lift near $M^{\Gamma'}$ for all $\Gamma' \leq \Gamma$, we say $S$ has a normally complex lift of degree at most $d$.
\end{defn}

In this definition, the straightenings are used to make various identifications, so the notion of normally complex lift depends on such structures. For a subgroup $\Gamma'' < \Gamma'$, it can be verified that a normally complex lift of degree at most $d$ near $M^{\Gamma'}$ defines a normally complex lift of degree at most $d$ near $M^{\Gamma'} \cap M^{\Gamma''}$, so lifts near different fixed point loci are compatible.

\begin{defn}\label{defn:FOP-section}
Let ${\mc E} \to {\mc U}$ be a vector bundle. Suppose that both ${\mc U}$ and ${\mc E}$ have a normal complex structure and that ${\mc E} \to {\mc U}$ is endowed with a straightening in the sense of Definition \ref{defn:vector bundle-straighting}. Assume that the normal complex structures and the straightening are both specified using \'etale atlases $E = \coprod_{\alpha} E_{\alpha}$ and $U = \coprod_{\alpha} U_{\alpha}$ as in Definition \ref{defn:vector bundle-straighting}. A section ${\mc S}: {\mc U} \to {\mc E}$ is said to have a \emph{normally complex lift} of degree at most $d$ if, for each chart $\alpha$, its pullback to the $\Gamma_{\alpha}$-equivariant vector bundle $E_{\alpha} \to U_{\alpha}$ admits a normally complex lift of degree at most $d$. If this is the case, we call ${\mc S}$ an \emph{FOP section}.
\end{defn}

\begin{rem}
Note that any two \'etale atlases of ${\mc U}$ depicted in Proposition \ref{prop:orbi-chart} can be refined to a common \'etale atlas of the same form, and a similar statement holds for \'etale atlases of ${\mc U}$ formed by bundle charts. From this, one notices that the notions of normal complex structures and straightenings do not really depend on the choices of \'etale atlas in the definition. A particular choice of \'etale atlas just makes the definition transparent. Similarly, the existence of normally complex lift of ${\mc S}: {\mc U} \to {\mc E}$ does not depend on the choice of \'etale atlases, though it is important to notice that it does depend on both the normal complex structures and the straightenings prescribed in the definition.
\end{rem}

We finally state some properties of FOP sections, whose proof can be found in \cite[Section 3.6]{Bai_Xu_2022}.

\begin{prop}\label{prop:FOP-prop}
Let ${\mc E} \to {\mc U}$ be a vector bundle such that both ${\mc U}$ and ${\mc E}$ are normally complex, and ${\mc E} \to {\mc U}$ is endowed with a straightening.
\begin{enumerate}
    \item  The space of FOP sections is a module over $C^{\infty}({\mc U})$.
    \item Suppose that we have an arbitrary continuous section ${\mc S}: {\mc U} \to {\mc E}$ such that $|{\mc S}^{-1}(0)|$ is compact. Let ${\mc U}' \subset {\mc U}$ be an open suborbifold containing ${\mc S}^{-1}(0)$ with $|{\mc U}'|$ being precompact. Then there exists $d \in {\mb Z}_{\geq 1}$ so that for any $\epsilon > 0$, there exists a smooth section ${\mc S}': {\mc U} \to {\mc E}$ which is an FOP section of degree at most $d$ near ${\mc U}'$ such that
    \beq
    \| {\mc S}' - {\mc S} \|_{C^0({\mc U}')} < \epsilon.
    \eeq
\end{enumerate} \qed
\end{prop}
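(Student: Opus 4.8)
The plan is to treat the two assertions separately, reducing the second — the only substantial one — to the local equivariant statement that an arbitrary continuous section of a normally complex equivariant bundle $E\to M$ can be $C^0$-approximated by an equivariant FOP section of some bounded degree, and then globalizing by a partition-of-unity argument that exploits the module structure established in part~(1).

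For part~(1), I would argue purely formally: fix the \'etale atlas $U=\coprod_\alpha U_\alpha$ together with the straightenings and normal complex structures. The defining condition of a normally complex lift of degree at most $d$ near each $M^{\Gamma'}$ is closed under addition of lifts and under multiplication by $\Gamma'$-invariant smooth functions pulled back from $M^{\Gamma'}$, hence, after the straightened identifications, under multiplication by arbitrary smooth functions on $U_\alpha$; since the evaluation map $\mathrm{ev}$ is linear in the polynomial variable, $\mathrm{ev}\circ(\check{\mathfrak s}_{\Gamma'}^1+\check{\mathfrak s}_{\Gamma'}^2)=\check S^1_{\Gamma'}+\check S^2_{\Gamma'}$ and $\mathrm{ev}\circ(h\cdot\check{\mathfrak s}_{\Gamma'})=h\cdot\check S_{\Gamma'}$ for $h\in C^\infty(U_\alpha)$. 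Compatibility of lifts near $M^{\Gamma'}$ with those near $M^{\Gamma''}$ for $\Gamma''<\Gamma'$ is preserved under these operations because the relevant restriction maps on $\mathrm{Poly}$-bundles are linear; this is the content already asserted after Definition~\ref{defn:FOP-lift-equivariant}. Hence the FOP sections of degree at most $d$ form a $C^\infty({\mc U})$-submodule, and taking the union over $d$ gives part~(1).

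For part~(2), I would first treat the local model. Over a single bundle chart $(U_\alpha,\Gamma_\alpha,E_\alpha)$ one works inductively on the poset of conjugacy classes of subgroups $\Gamma'\le\Gamma_\alpha$, from largest to smallest, using the straightened structures to reduce, near a component of $M^{\Gamma'}$, to a fibered problem: produce a smooth bundle map into $\mathrm{Poly}_d^{\Gamma'}(NM^{\Gamma'},\check E^{\Gamma'})$ whose evaluation $C^0$-approximates $\check S_{\Gamma'}$, subject to matching the already-chosen lift on the smaller strata $M^{\Gamma''}$, $\Gamma''<\Gamma'$, inside its collar. The key point is that the space of $\Gamma'$-equivariant polynomials of sufficiently high degree, evaluated on the normal slice $V'$, is $C^0$-dense among all $\Gamma'$-equivariant continuous maps $V'\to W'$ on any compact set — this is a Stone–Weierstrass/averaging statement — so a single degree $d$ suffices over the precompact $|{\mc U}'|$; then one extends the fiberwise choice smoothly in the base and equivariantly, using a relative/parametrized version of the approximation together with the relative existence of straightenings (\cite[Lemmas 3.15, 3.20]{Bai_Xu_2022}). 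This is essentially the construction carried out in \cite[Section 3.6]{Bai_Xu_2022}, and I would cite it for the local statement rather than reprove it.

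The globalization then proceeds by choosing a finite subcover of $|{\mc U}'|$ by bundle charts (possible by precompactness and Proposition~\ref{prop:orbi-chart}), a subordinate smooth partition of unity $\{\chi_\alpha\}$ on ${\mc U}$, and patching: on each chart pick a local FOP approximation ${\mc S}'_\alpha$ of degree at most the common $d$ with $\|{\mc S}'_\alpha-{\mc S}\|_{C^0}<\epsilon$, set ${\mc S}'=\sum_\alpha\chi_\alpha{\mc S}'_\alpha$, and use part~(1) to see each $\chi_\alpha{\mc S}'_\alpha$, hence the sum, is FOP of degree at most $d$ on a neighborhood of $\overline{{\mc U}'}$; the convex-combination estimate gives $\|{\mc S}'-{\mc S}\|_{C^0({\mc U}')}<\epsilon$. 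The main obstacle is the inductive local construction over the stratification: one must propagate the lift chosen on deeper strata outward through the collars while keeping it in polynomial form and equivariant, and verify the compatibility condition built into Definition~\ref{defn:FOP-lift-equivariant} relating $J_{\Gamma_1}$ and $J_{\Gamma_2}$ is respected by the approximation — both of which are handled by the straightening hypotheses but require care. Since this is exactly what \cite[Section 3.6]{Bai_Xu_2022} does, I expect the proof here to amount to a careful citation plus the partition-of-unity globalization just described.
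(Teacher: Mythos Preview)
Your proposal is correct and aligns with the paper's treatment: the paper does not give a proof but simply records that the result is established in \cite[Section~3.6]{Bai_Xu_2022}, ending the statement with a \qed. Your sketch accurately reconstructs the expected argument from that reference---the $C^\infty$-module structure from linearity of the evaluation map in the polynomial variable, local equivariant polynomial approximation via Stone--Weierstrass plus averaging, and globalization by a partition of unity using part~(1)---so you have in fact supplied more detail than the paper itself.
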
 

The importance of (1) is that we can use cut-off functions to define FOP sections in a local-to-global fashion. The item (2) allows us to approximate arbitrary sections by FOP sections, which are crucial for establishing equivariant transversality.

\section{Abelianizing normally complex orbifolds}\label{sec:abelianize}
In this section, we follow Bergh--Rydh \cite{bergh2019functorial} to perform blow-ups of normally complex orbifolds to obtain normally complex orbifolds so that the representation associated to each point is a direct sum of $1$-dimensional representations (in the terminology of \cite{bergh2019functorial}, this is formulated as abelianess of \emph{geometric stabilizers}). The situation in \cite{bergh2019functorial} is more complicated than ours, because the arguments in \emph{loc. cit.} are designed to handle families of possibly non-Deligne--Mumford stacks over a field of arbitrary characteristic.

The strategy of Bergh--Rydh is quite straightforward, and relies on measuring the `non-abelianness' of a normally complex orbifold as follows:  the stabilizer group $\Gamma_x$ of a point $x$ in an orbifold $ \mc U$  acts on the tangent space $T_x {\mc U}$. Associate to this point the dimension of the direct sum of all irreducible complex representations of complex dimension at least $2$.  Note that if the invariant takes value $0$ everywhere on ${\mc U}$, then if ${\mc U}$ is effective, and all the stabilizers inject into a product of $U(1)$, which means that all the stabilizers are abelian. More generally, assuming that ${\mc U}$ is connected with generic stabilizer $\Gamma_{\mc U}$, every stabilizer is an extension of an abelian group by $\Gamma_{\mc U} $.

\subsection{Blow-ups} 
Due to the absence of holomorphic structures, constructing blow-up in the almost complex category such that the blow-down map is pseudo-holomorphic is a delicate question, which indeed cannot be achieved in general. Our construction of blow-up simply specifies a contractible family of normally complex orbifolds, each of which admits a smooth map onto the normally complex orbifolds we started with. In particular, the cobordism class of our blow-up construction is canonically defined and this suffices for our purpose.

Given an orbifold ${\mc U}$ and a complex vector bundle ${\mc E} \to {\mc U}$, denote by ${\mb P}({\mc E}) \to {\mc U}$ the projectivization of ${\mc E}$. Then the blow-up of the total space ${\mc E}$ along the zero-section ${\mc U}$ can be described as the orbifold
\beq
\mathrm{Bl}_{\mc U} {\mc E} := \{ (v,l) \in {\mb P}({\mc E}) \times_{\mc U} {\mc E} | l \text{ is contained in the complex line defined by }v \},
\eeq
where the description can easily be understood by pulling back to \'etale charts. The projectivization ${\mb P}({\mc E})$ embeds into $\mathrm{Bl}_{\mc U} {\mc E}$ as the locus where $l=0$, known as the exceptional divisor. Moreover, there is a natural projection/blow-down map $\mathrm{Bl}_{\mc U} {\mc E} \to {\mc E}$ defined by $(v,l) \mapsto l$, which induces an identification $\mathrm{Bl}_{\mc U} {\mc E} \setminus {\mb P}({\mc E}) \xrightarrow{\sim} {\mc E} \setminus {\mc U}$. In general, if we would like to blow up an orbifold along a suborbifold, we need to require the normal bundle of the given suborbifold to have a complex structure. Additionally, in order to stay in the realm of smooth normally complex orbifolds, we need certain conditions to ensure that the blow-up admits a normal complex structure. This motivates the following definition.

\begin{defn}
Let $M$ be a smooth $\Gamma$-manifold and suppose that $g$ is a $\Gamma$-equivariant Riemannian metric on $M$. A normal complex structure (cf. Definition \ref{defn:normal-complex-manifold}) is called \emph{flattened} with respect to $g$ if there exists $\delta > 0$ such that for any subgroup $\Gamma' \leq \Gamma$, the (linearization of) normal exponential map over the $\delta$-disc bundle of the normal bundle of $M^{\Gamma'}$
\beq
\exp: D_{\delta} M^{\Gamma'} \to M
\eeq
intertwines with the normal complex structures under the group homomorphism $\Gamma' \to \Gamma$.

Similarly, given a smooth orbifold ${\mc U}$ and a Riemannian metric $g$ on ${\mc U}$, a normal complex structure on ${\mc U}$ (cf. Definition \ref{defn:normal-complex}) is called \emph{flattened} with respect to $g$ if its pullbacks to the defining \'etale charts are straightened with respect to the pullback equivariant Riemannian metric.
\end{defn}

\begin{lemma}\label{lem:flattening}
Suppose that $M$ is a smooth $\Gamma$-manifold equipped with a normal complex structure. Let $g$ be a $\Gamma$-invariant Riemannian metric on $M$ such that for any $\Gamma' \leq \Gamma$, the normal exponential map along $M^{\Gamma'}$ has positive injectivity radius. Then the given normal complex structure can be deformed to a flattened one with respect to $g$. Moreover, the space of such flattened Riemannian metrics are contractible.
\end{lemma}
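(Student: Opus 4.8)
The plan is to prove the statement by an iterative deformation across the poset of isotropy types, starting from the deepest strata and working outward, with a relative (parametrized) version built in so that the final contractibility claim falls out of the usual ``space of compatible choices built by an inductive cone construction is contractible'' principle. First I would fix the $\Gamma$-invariant metric $g$ and, for each $\Gamma' \leq \Gamma$, let $\delta_{\Gamma'}>0$ be smaller than the injectivity radius of the normal exponential map along $M^{\Gamma'}$; taking $\delta$ to be the minimum over the finitely many subgroups, we obtain a uniform radius on which all the relevant normal exponential maps $\exp\colon D_\delta M^{\Gamma'}\to M$ are diffeomorphisms onto their images. The key geometric point is then the following local normal form: near $M^{\Gamma'}$, after identifying a neighborhood with $D_\delta M^{\Gamma'}$ via $\exp$, the given normal complex structure $I_{\Gamma'}$ restricted to the zero section $M^{\Gamma'}$ is already a genuine $\Gamma'$-equivariant complex structure on the vector bundle $NM^{\Gamma'}$; what ``flattened'' demands is that the structure at a point $\exp_x(v)$ agree, under the linearization of $\exp$, with the structure at $x$ pulled back along the projection. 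So the construction is: over $D_\delta M^{\Gamma'}$, simply \emph{replace} the ambient normal complex data by the pullback $\pi^* I_{\Gamma'}$ of its restriction to the zero section.

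The main work is to carry this out compatibly over all subgroups at once, and here I would proceed by downward induction on the partial order of isotropy types (equivalently, by the dimension of the corresponding fixed locus, or the ``depth''). Suppose the normal complex structure has already been flattened on a $\Gamma$-invariant neighborhood of the union of all $M^{\Gamma''}$ with $\Gamma''$ strictly larger than $\Gamma'$; using the compatibility condition in Definition \ref{defn:normal-complex-manifold} — in particular that the splitting \eqref{eqn:normal-splitting} is respected — the pullback structure $\pi^*I_{\Gamma'}$ over $D_\delta M^{\Gamma'}$ already agrees with what has been constructed on the overlap with those deeper neighborhoods, because straightening the metric (via \cite[Lemma 3.15]{Bai_Xu_2022}, which we may assume has been applied first) makes the iterated normal exponential maps compose in the way recorded by $\exp_x(\mathring v_{\Gamma_2},\check v_{\Gamma_2})=\exp_{\exp_x(\mathring v_{\Gamma_2})}(\check v_{\Gamma_2})$. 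One then interpolates: choose a $\Gamma$-invariant cutoff supported in $D_\delta M^{\Gamma'}$ and equal to $1$ near $M^{\Gamma'}$ but vanishing near the already-flattened deeper strata, and use the standard fact that the space of complex structures on a fixed real vector bundle is (fiberwise) contractible — indeed a convex-like space via the Cayley-type parametrization against a background complex structure — so that a $\Gamma'$-equivariant path from the old structure to $\pi^*I_{\Gamma'}$ exists and can be glued in. Equivariance is automatic since every choice is made $\Gamma'$-invariantly and then transported around the $\Gamma$-orbit. Descending to the orbifold ${\mc U}$ is then a matter of checking that all choices can be made compatibly on the atlas $U=\coprod U_\alpha$ and on $U\times_{\mc U}U$, which is the content of the ``relative form'' of \cite[Lemma 3.15]{Bai_Xu_2022} and poses no new difficulty.

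For the contractibility of the space of flattened normal complex structures (for fixed $g$), I would observe that the inductive construction above actually produces, at each stage, an affine/convex bundle of choices over the previous stage: the set of admissible germs of complex structures near $M^{\Gamma'}$ that restrict to a given one on the zero section and agree with the deeper data is contractible (a twisted product of spaces of sections of a bundle with contractible fibers), and the restriction maps between successive stages are fibrations. A finite tower of fibrations with contractible fibers has contractible total space, which is exactly the assertion. The subtle point — and the step I expect to be the genuine obstacle — is the \emph{compatibility at the overlaps of strata}: one must verify that flattening near $M^{\Gamma'}$ does not destroy flattening near a smaller subgroup $M^{\Gamma''}$ sitting inside it, and conversely that the pullback prescriptions coming from $\Gamma'$ and from an incomparable $\Gamma''$ agree on $M^{\Gamma'}\cap M^{\Gamma''}$. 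This forces the order of the induction (from deep strata outward, never the reverse) and requires that the metric $g$ already be straightened so that the normal exponential maps for nested subgroups compose correctly; if $g$ is merely assumed to have positive injectivity radius along each $M^{\Gamma'}$, one should first invoke \cite[Lemma 3.15]{Bai_Xu_2022} to straighten it — but since straightening is itself done through a contractible space of choices, this does not affect the final contractibility statement. Everything else is a routine partition-of-unity and convexity argument.
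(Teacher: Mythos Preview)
Your approach is essentially the paper's: induct from the deepest strata outward, replace the normal complex data near each $M^{\Gamma'}$ by the pullback from the zero section, and interpolate via a cutoff using the contractibility of the space of equivariant complex structures on a fixed bundle. The paper makes one thing explicit that you only allude to: it writes down the Cayley-type bundle isomorphism $I_{J_0,J}=\mathrm{id}+\tfrac{1}{2}J_0 A_{J_0,J}$ with $A_{J_0,J}=-2J_0(\mathrm{id}+J_0J)(\mathrm{id}-J_0J)^{-1}$, and then conjugates $J_0$ by $\mathrm{id}+\tfrac{1}{2}\varphi J_0 A_{J_0,J}$ for a cutoff $\varphi$ to produce the interpolated structure. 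This is exactly the ``Cayley-type parametrization'' you invoke abstractly.

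Where you diverge from the paper is your proposed handling of the overlap compatibility: you suggest first replacing $g$ by a straightened metric via \cite[Lemma~3.15]{Bai_Xu_2022} so that the iterated normal exponential maps compose. But the lemma fixes $g$ and asks for a normal complex structure flattened \emph{with respect to that $g$}; straightening the metric changes $g$ and hence the target statement. The paper does not straighten $g$. Instead it runs, at each stage $\Gamma_1$, an inner induction over all $\Gamma_2$ with $\Gamma_1\preceq\Gamma_2$, modifying each $I_{\Gamma_2}$ near $M^{\Gamma_1}$ directly (using the $\Gamma_1$-exponential of the given $g$), and then defers the compatibility of the outer induction to the same inductive scheme as in \cite[Lemma~3.15]{Bai_Xu_2022}---the point being that the \emph{pattern} of that proof transfers, not that the metric must literally be straightened first. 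Your contractibility argument is fine once this is sorted out; the paper's version is simply that the only choices are cutoff functions, and different choices can be convexly interpolated after restricting to smaller neighborhoods of the fixed loci.
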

\begin{proof}
The construction is based on induction. We define a partial order on the set of subgroups of $\Gamma'$ as follows: for $\Gamma_1, \Gamma_2 \leq \Gamma$, we declare $\Gamma_1 \preceq \Gamma_2$ if and only if $M^{\Gamma_1} \subset M^{\Gamma_2}$. 

Staring from any $\Gamma_1 \leq \Gamma$ such that $\Gamma_1$ is minimal under the relation $\preceq$, we see that $M^{\Gamma_1}$ is a closed submanifold and the normal bundle $NM^{\Gamma_1}$ has a $\Gamma_1$-equivariant complex structure. Choose $\delta > 0$ so that $\exp: D_{\delta} M^{\Gamma_1} \to M$ defines a $\Gamma_1$-equivariant diffeomorphism onto its image. To construct a straightened normal complex structure near $M^{\Gamma_1}$, we need to make sure that for any $\Gamma_1 \preceq \Gamma_2$, the linearization of the exponential map along the normal directions of the $\Gamma_2$ fixed point loci
\beq
\exp: N_{D_{\delta} M^{\Gamma_1}} (D_{\delta} M^{\Gamma_1})^{\Gamma_2} \to NM^{\Gamma_2}
\eeq
intertwines with the fiberwise $\Gamma_2$-equivariant complex structures. For such a pair $\Gamma_1 \preceq \Gamma_2$, let us denote the complex structure on $N_{D_{\delta} M^{\Gamma_1}} (D_{\delta} M^{\Gamma_1})^{\Gamma_2}$ by $J_0$ and the pullback of the complex structure on $NM^{\Gamma_2}$ under $\exp$ by $J$. Here we view both $J_0$ and $J$ as $\Gamma_2$-equivariant endomorphisms of the vector bundle $N_{D_{\delta} M^{\Gamma_1}} (D_{\delta} M^{\Gamma_1})^{\Gamma_2}$ whose square is $-\mathrm{id}$. Then we claim that there exists a $\Gamma_2$-equivariant bundle isomorphism
\beq
I_{J_0, J}: (N_{D_{\delta} M^{\Gamma_1}} (D_{\delta} M^{\Gamma_1})^{\Gamma_2}, J_0) \to (N_{D_{\delta} M^{\Gamma_1}} (D_{\delta} M^{\Gamma_1})^{\Gamma_2}, J)
\eeq
of the form $I_{J_0, J} = \mathrm{id} + \frac12 J_0 A_{J_0, J}$ which pulls back $J$ to $J_0$, where $A_{J_0, J}$ is a $\Gamma_2$-equivariant endomorphism which anti-commutes with $J_0$. Indeed, explicitly, we can write down the formula
\beq
A_{J_0, J} = -2 J_0 (\mathrm{id} + J_0 J)(\mathrm{id}-J_0 J)^{-1}.
\eeq
Now if we choose a smooth $\Gamma_2$-invariant function
\beq
\varphi: N_{M^{\Gamma_2}} M^{\Gamma_1} \to {\mb R}
\eeq
which is equal to $0$ near the $0$-section and is equal to $1$ outside a small disc bundle over the $0$-section, we can define a $\Gamma_2$-equivariant complex structure on $N_{D_{\delta} M^{\Gamma_1}} (D_{\delta} M^{\Gamma_1})^{\Gamma_2}$ by the formula
\beq
(\mathrm{id} + \frac12 \varphi J_0 A_{J_0, J}) \circ J_0 \circ (\mathrm{id} + \frac12 \varphi J_0 A_{J_0, J})^{-1}.
\eeq
If we push forward such a complex structure to $NM^{\Gamma_2}$ and extend it by the original normal complex structure, based on the construction, we see that it is straightened with respect to $g$ along $M^{\Gamma_2} \cap \exp(D_{\delta} M^{\Gamma_1})$. If we range over all $\Gamma_1 \preceq \Gamma_2$, and perform such a cut-off construction using induction based on the induced partial orders on the set of all such $\Gamma_2$, the compatibility of different cut-off constructions makes sure that the normal complex structure can be straightened near $M^{\Gamma_1}$.

Then we can prove the existence by induction using the partial order $\preceq$. The details are similar to the proof of \cite[Lemma 3.15]{Bai_Xu_2022}, to which we refer the reader. By construction, such a newly constructed normal complex structure can be deformed to the original one by varying the cut-off functions. Note that the only choice we make in the construction is a sequence of cut-off functions. For different choices, we can convexly interpolate between the resulting straightened normal complex structures upon restricting to smaller open subsets containing the fixed point loci. The contractibility of the space of our straightened normal complex structures follows directly.
\end{proof}

\begin{cor}
If ${\mc U}$ is a compact orbifold with a normal complex structure, then given any Riemannian metric $g$ on ${\mc U}$, there exists a contractible family of flattened normal complex structures on ${\mc U}$ with respect to $g$.
\end{cor}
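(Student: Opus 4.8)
The plan is to deduce the statement from Lemma \ref{lem:flattening} by a patching argument over a finite atlas, using compactness of ${\mc U}$ and the contractibility already built into the local statement. First I would fix an \'etale atlas $U = \coprod_\alpha U_\alpha$ of the form appearing in Definition \ref{defn:normal-complex}, so that each $U_\alpha$ is a $\Gamma_\alpha$-manifold carrying the given normal complex structure and $\{U_\alpha // \Gamma_\alpha\}$ is an open cover of ${\mc U}$. Since $|{\mc U}|$ is compact, after passing to a finite subcover and shrinking each $U_\alpha$ to a $\Gamma_\alpha$-invariant precompact open subset (refining via Proposition \ref{prop:orbi-chart} as needed), we may assume the cover is finite and that on each $U_\alpha$ the normal exponential map along $U_\alpha^{\Gamma'}$ has positive injectivity radius for every $\Gamma' \leq \Gamma_\alpha$; this is the only place compactness enters. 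The pullback $g|_{U_\alpha}$ is then a $\Gamma_\alpha$-invariant Riemannian metric satisfying the hypotheses of Lemma \ref{lem:flattening}.

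Next I would run the inductive construction in the proof of Lemma \ref{lem:flattening} \emph{globally}, stratum by stratum rather than chart by chart. One orders the isotropy strata of ${\mc U}$ by the orbifold analogue of the relation $\preceq$, and at each stage modifies the normal complex structure on a neighbourhood of the relevant closed suborbifold; inside a chart $U_\alpha$ this is precisely the cut-off construction of Lemma \ref{lem:flattening}, building $A_{J_0,J}$ from the current and the exponentiated complex structures and interpolating via a $\Gamma_\alpha$-invariant cut-off $\varphi$. To make these local modifications glue to a genuine normal complex structure on ${\mc U}$ — i.e.\ to satisfy the compatibility condition on $U \times_{\mc U} U$ in Definition \ref{defn:normal-complex} — I would take the cut-off functions to be pulled back from a fixed partition of unity on $|{\mc U}|$ subordinate to $\{U_\alpha // \Gamma_\alpha\}$, and use that on a small enough neighbourhood of each fixed-point locus any two of the structures so produced can be convexly interpolated, as observed at the end of the proof of Lemma \ref{lem:flattening}. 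Since the only choices involved are a partition of unity and these convex interpolations, all of which parametrize contractible (indeed convex) spaces, the resulting space of flattened normal complex structures on ${\mc U}$ with respect to $g$ is nonempty and contractible, which also provides the asserted family.

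The main obstacle is exactly this gluing step: Lemma \ref{lem:flattening} is an equivariant statement about a single $\Gamma$-manifold, and the flattened structures it yields on different charts $U_\alpha$ need not be Morita-compatible on overlaps. Overcoming this requires carrying out the stratum-by-stratum induction with data defined on $|{\mc U}|$ rather than chart by chart — a relative/global form of the inductive argument of \cite[Lemma 3.15]{Bai_Xu_2022}, combined with the convex-interpolation patching above — which is the only point beyond routine bookkeeping.
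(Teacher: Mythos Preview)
Your approach is essentially the same as the paper's: globalize the inductive cut-off construction of Lemma \ref{lem:flattening} by working stratum by stratum over ${\mc U}$ rather than chart by chart, with compactness ensuring finiteness of the induction and positive injectivity radii. The paper is slightly more precise on one point where you are vague: the ``orbifold analogue of $\preceq$'' is taken to be the partial order on \emph{isotropy types} $(\Gamma, V)$ of Definition \ref{defn:iso-type} (namely $(\Gamma_1, V_1) \preceq (\Gamma_2, V_2)$ iff $\Gamma_2 \leq \Gamma_1$ and $V_2$ is the complement of the $\Gamma_2$-invariant part of $V_1$), rather than on groups alone, since in an orbifold there is no ambient group and the same abstract stabilizer can occur with different normal representations. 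This is exactly the filtration by closed substacks needed to make the induction run globally; your partition-of-unity and convex-interpolation details flesh out what the paper leaves implicit in ``can be globalized for orbifolds.''
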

\begin{proof}
The proof is similar to the proof of Lemma \ref{lem:flattening}. However, the induction is based on the following partial order on the set of isotropy types (cf. Definition \ref{defn:iso-type}): we declare $(\Gamma_1, V_1) \preceq (\Gamma_2, V_2)$ if and only if $\Gamma_2 \leq \Gamma_1$ and $V_2$ is the complement of $\Gamma_2$-invariant subspace of $V_1$. Such a partial order induces a partial order on the set of normal invariants of ${\mc U}$ (cf. Definition \ref{defn:normal-invariant}), which leads to a filtration of ${\mc U}$ by closed subspaces. Lemma \ref{lem:flattening} describes the construction over \'etale charts, which can be globalized for orbifolds.
\end{proof}

\begin{defn}\label{defn:suborbifold-normal-complex}
Let ${\mc U}$ be an orbifold with a normal complex structure. Suppose that ${\mc U}' \subset {\mc U}$ is a suborbifold such that $N_{\mc U} {\mc U}'$ admits a complex structure. The complex structure on $N_{\mc U} {\mc U}'$ is \emph{compatible} with the normal complex structure if for any $x \in |{\mc U}'|$ and an open embedding $U // \Gamma \to {\mc U}$ with $y \mapsto x$ under $U / \Gamma \to |{\mc U}|$, the normal space of the preimage of ${\mc U}'$ in $U$ at $y$ with the induced complex structure from $N_{\mc U} {\mc U}'$ is a complex subspace of $N_y U^{\Gamma_y}$, which is endowed with the complex structure from the normal complex structure on ${\mc U}$.
\end{defn}

In practice, we will construct complex structures on $N_{\mc U} {\mc U}'$ using the normal complex structure on ${\mc U}$ using concrete geometric information. Under the situation of Definition \ref{defn:suborbifold-normal-complex}, we will also say ${\mc U}'$ is a suborbifold \emph{with complex normal bundle}.

\begin{defn}
Let ${\mc U}$ be a compact orbifold with a normal complex structure, which is flattened with respect to a Riemannian metric $g$ on ${\mc U}$. Suppose that ${\mc U}' \subset {\mc U}$ is a suborbifold whose normal bundle $N_{\mc U} {\mc U}'$ admits a complex structure compatible with the normal complex structure on ${\mc U}$. The \emph{blow-up} of ${\mc U}$ along ${\mc U}'$, written as $\mathrm{Bl}_{{\mc U}'}{\mc U}$, is defined to be the pushout of the diagram
\beq
\begin{tikzcd}
                                                              &  & {\mc U}                                                        \\
{ D_{\delta} {\mc U}' \setminus {\mc U}'} \arrow[rru, "\exp"] \arrow[rrd, "\text{inclusion}"'] &  &                                                                \\
                                                              &  & \mathrm{Bl}_{\mc U'} (N_{\mc U} {\mc U}') |_{D_{\delta} {\mc U}'},
\end{tikzcd}
\eeq
which comes with a map $\mathrm{Bl}_{{\mc U}'}{\mc U} \to {\mc U}$ called the \emph{blow-down} map. Here $\delta > 0$ is chosen so that the normal exponential map over the disc bundle $D_{\delta} {\mc U}'$ defines an open embedding of normal complex orbifolds, and $\mathrm{Bl}_{\mc U'} (N_{\mc U} {\mc U}') |_{D_{\delta} {\mc U}'}$ denotes the preimage of $D_{\delta} {\mc U}' \subset N_{\mc U} {\mc U}'$ under the blow-down map $\mathrm{Bl}_{{\mc U}'} (N_{\mc U} {\mc U}') \to N_{\mc U} {\mc U}'$. The image of ${\mb P}(N_{\mc U} {\mc U}')$ is called the \emph{exceptional divisor}.
\end{defn}

According to the definition, $\mathrm{Bl}_{{\mc U}'}{\mc U}$ admits a normal complex structure, and the blow-down map is a smooth map. If we merely have a suborbifold ${\mc U}' \subset {\mc U}$ such that $N_{\mc U} {\mc U}'$ has a complex structure compatible with a given normal complex structure on ${\mc U}$, after choosing a Riemannian metric on ${\mc U}$, we can deform the normal complex structure on ${\mc U}$ to obtain a flattened normal complex structure so that we can perform the blow-up construction afterwards. Note that the space of auxiliary choices made in the construction is contractible. In particular, $\mathrm{Bl}_{{\mc U}'}{\mc U}$ is well-defined up to isomorphism of orbifolds, and the resulting normal complex structures are all isotopic to each other. From now on, we will refer to $\mathrm{Bl}_{{\mc U}'}{\mc U} \to {\mc U}$ as \emph{the} blow-up of ${\mc U}$ along ${\mc U}'$, though the reader should keep in mind that certain choices need to be made.

\begin{rem}
    Note that any open suborbifold ${\mc U}_1$ of a normally complex orbifold ${\mc U}_2$ acquires a natural normal complex structure. For a blow-up construction of ${\mc U}'$ whose center ${\mc U}_1'$ is the pullback of an suborbifold ${\mc U}_2'$ of ${\mc U}$ with complex normal bundle, we see that $\mathrm{Bl}_{{\mc U}_1'} {\mc U}_1$ is an open suborbifold of $\mathrm{Bl}_{{\mc U}_2'} {\mc U}_2$ provided that the flattened normal complex structures are compatible. When we state functoriality properties concerning resolutions of singularities under open embeddings, we always encounter this situation.
\end{rem}

\subsection{The abelianization algorithm}

For the statement of the next result, we say that an orbifold is \emph{geometrically abelian} if the representation of the stabilizer associated to each point is a direct sum of $1$-dimensional representations. This is equivalent to the requirement that this stabilizer group be an extension of an abelian group by the stabilizer at a generic point.

\begin{thm}[Abelianization]\label{thm:abel} 
Suppose that ${\mc U}$ is a compact normally complex orbifold. Then there exists a geometrically abelian compact normally complex orbifold ${\mc U}^{ab}$ with a smooth surjective map
\beq
\pi^{ab}: {\mc U}^{ab} \to {\mc U}
\eeq
which is a composition of a sequence of blow-down maps such that for each blow-up, the center is a suborbifold with complex normal bundle.

Given an open embedding of compact normally complex orbifolds ${\mc U}_1 \to {\mc U}_2$, we have an open embedding of compact normally complex orbifolds ${\mc U}_1^{ab} \to {\mc U}_2^{ab}$ such that the following diagram is commutative:
\beq
\begin{tikzcd}
{\mc U}_1^{ab} \arrow[r] \arrow[d, "\pi^{ab}_1"'] & {\mc U}_2^{ab} \arrow[d, "\pi^{ab}_2"] \\
{\mc U}_1 \arrow[r]                               & {\mc U}_2.                           
\end{tikzcd}
\eeq
\end{thm}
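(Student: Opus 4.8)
The plan is to follow the Bergh--Rydh algorithm \cite{bergh2019functorial}, but transcribed into the language of normally complex orbifolds, where the relevant "local models" are finite quotients of vector spaces by complex linear representations. The key invariant is the one indicated in the introduction to this section: at a point $x \in |{\mc U}|$ with normal invariant $(\Gamma_x, V_x)$, decompose $V_x$ into isotypic pieces and let $n(x)$ be the total complex dimension of the summands whose underlying irreducible representation has dimension $\geq 2$. Functoriality of the normal invariant (established after Definition \ref{defn:normal-invariant}) shows $n$ is a well-defined, upper semicontinuous, $\mathbb{Z}_{\geq 0}$-valued function on $|{\mc U}|$, and $n \equiv 0$ is precisely the geometrically abelian condition. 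One shows, as in the discussion opening the section, that the locus $\{n \geq c\}$ is closed and that its "top stratum" $Z_c = \{n = \max n\}$ is (the coarse space of) a closed suborbifold ${\mc Z}_c \subset {\mc U}$; the essential point, which is exactly where the representation-theoretic content enters, is that $N_{\mc U} {\mc Z}_c$ carries a canonical complex structure compatible with the normal complex structure on ${\mc U}$. This is because, \'etale-locally, ${\mc Z}_c$ sits inside the fixed locus of the sub-\emph{representation} spanned by the "small" (dimension $\leq 1$) isotypic components, and its normal directions in ${\mc U}$ are built out of the "large" isotypic components $\check{V}$, which are complex by hypothesis on the normal complex structure. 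The argument that this local recipe glues to a global suborbifold with complex normal bundle is the analogue of the patching arguments in \cite{bergh2019functorial}, and uses the canonicity (hence Morita-invariance) of the isotypic decomposition.

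Next I would run the iteration. Given ${\mc U}$ with $\max n = c > 0$, form ${\mc U}_1 := \mathrm{Bl}_{{\mc Z}_c}{\mc U}$ using the blow-up construction of the previous subsection (after deforming, via Lemma \ref{lem:flattening} and its Corollary, to a flattened normal complex structure, so the blow-up is a well-defined normally complex orbifold with a smooth blow-down map, canonically up to the contractible space of auxiliary choices). One then verifies the key numerical fact: on ${\mc U}_1$ the function $n$ satisfies $\max_{{\mc U}_1} n \leq c$, and moreover the locus $\{n = c\}$ in ${\mc U}_1$ is strictly "smaller" than in ${\mc U}$ in a suitable well-founded sense — either a drop in the maximal value, or, for fixed maximal value, a drop in the dimension of the top stratum (this is the content of the "tangent-weight" bookkeeping in \cite{bergh2019functorial}: blowing up a faithful 2-or-higher-dimensional isotypic summand replaces it, on each chart of the exceptional divisor, by a 1-dimensional summand plus lower-dimensional data, and an inductive examination of the projective bundle charts shows the invariant decreases). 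Since ${\mc U}$ is compact and the stabilizer representations that occur are finite in number, this descent terminates after finitely many blow-ups, yielding ${\mc U}^{ab}$ with $n \equiv 0$; composing the blow-down maps gives $\pi^{ab}: {\mc U}^{ab} \to {\mc U}$, which is smooth, surjective (each blow-down is), and each center has complex normal bundle by the construction above.

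Finally, for functoriality under an open embedding $\iota: {\mc U}_1 \hookrightarrow {\mc U}_2$: the invariant $n$ is local, so $n$ on ${\mc U}_1$ is the restriction of $n$ on ${\mc U}_2$, hence the top stratum of ${\mc U}_1$ is the preimage of the top stratum of ${\mc U}_2$ \emph{provided} the two maxima agree. This last caveat forces the same mild care as in \cite{bergh2019functorial}: one either restricts attention to the situation where the maxima coincide, or — as is cleaner and is what I would do — one observes that the algorithm can be run "relative to ${\mc U}_2$", i.e. the sequence of centers for ${\mc U}_1$ is declared to be the pullback of the sequence of centers for ${\mc U}_2$, which is legitimate because each center of ${\mc U}_2$ pulls back to a suborbifold of ${\mc U}_1$ with complex normal bundle (the Remark at the end of the preceding subsection), and because blow-ups commute with open restriction (same Remark). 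With this formulation the commuting square is immediate from functoriality of blow-up under open embeddings, applied one step at a time. \textbf{The main obstacle} I anticipate is not the iteration or the functoriality bookkeeping — those are faithful transcriptions of \cite{bergh2019functorial} — but rather verifying cleanly that the top stratum ${\mc Z}_c$ is a \emph{suborbifold with complex normal bundle compatible with the normal complex structure} in the precise sense of Definition \ref{defn:suborbifold-normal-complex}, and that this structure is preserved (with the invariant strictly decreasing) after blow-up, since the exceptional divisor charts involve projectivizing an equivariant complex representation and one must track how the isotypic decomposition, and hence $n$, transforms in each affine chart of that projective bundle.
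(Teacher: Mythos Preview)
Your proposal is correct and follows essentially the same route as the paper: define the non-abelianness invariant $n = \mathrm{ord}^{na}$, show its maximal locus ${\mc U}_{na}$ is a suborbifold with complex normal bundle, blow it up, and iterate. Two points where the paper is sharper than your outline: first, the ``main obstacle'' you flag---that ${\mc Z}_c$ is a genuine suborbifold---is dispatched by the single observation that $\ov{\Gamma} = \bigcap_i \ker(\chi_i)$ is \emph{normal} in $\Gamma$, so its fixed locus is $\Gamma$-invariant and gives \'etale charts directly (Lemma~\ref{lem:abelianize-center}); second, the paper shows that $\max \mathrm{ord}^{na}$ \emph{strictly drops} after a single blow-up (any nonzero vector in the exceptional fiber has a $1$-dimensional invariant subspace, namely its own span), so your secondary ``dimension of the top stratum'' invariant and the attendant well-foundedness bookkeeping are unnecessary. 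This also dissolves your functoriality caveat: when $\max_{{\mc U}_1} n < \max_{{\mc U}_2} n$ the center misses ${\mc U}_1$ entirely and the blow-up is the identity there, so no ``relative'' reformulation is needed.
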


The rest of this subsection is devoted to a proof of this theorem. Throughout this section, ${\mc U}$ stands for a compact normally complex orbifold.

Given a representative of an isotropy type $(\Gamma, V)$ (cf. Definition \ref{defn:iso-type}), decompose $V$ into a direct sum of irreducible representations
\beq\label{eqn:decompose-V}
V = \chi_1 \oplus \cdots \oplus \chi_r \oplus \tau_1 \oplus \cdots \oplus \tau_s,
\eeq
where $\chi_1, \dots, \chi_r$ are $1$-dimensional complex $\Gamma$-representations and $\tau_1, \dots, \tau_s$ are complex $\Gamma$-representations of dimension $\geq 2$. The $1$-dimensional $\Gamma$-representations define characters
\beq
\chi_i : \Gamma \to {\mb C}^*
\eeq
and we can define subgroup of $\Gamma$
\beq\label{eqn:Gamma-bar}
\ov{\Gamma} := \bigcap_{i} \ker(\chi_i),
\eeq
and construct a new isotropy type
\beq\label{eqn:iso-type-bar}
(\ov{\Gamma}, \ov{V}) := (\ov{\Gamma}, \tau_1 \oplus \cdots \oplus \tau_s).
\eeq
It follows from the definition that the equivalence class of $(\ov{\Gamma}, \ov{V})$ is independent of the choice of the representative $(\Gamma, V)$.

\begin{defn}
Define the \emph{non-abelianness} of an isotropy type $(\Gamma, V)$ to be $\dim_{\mb C} \ov{V}$, and we denote it by $\mathrm{ord}^{na}(\Gamma, V)$.
\end{defn}

For a normally complex orbifold ${\mc U}$, the non-abelianness function defines a map
\beq
\mathrm{ord}^{na}: |{\mc U}| \to {\mb Z}_{\geq 0},
\eeq
which takes $x \in |{\mc U}|$ to the value of $\mathrm{ord}^{na}$ at the normal invariant of $x$. We see that all the points in ${\mc U}$ have stabilizer given by an extension of the generic stabilizer group by an abelian group if and only if $\mathrm{ord}^{na} \equiv 0$ all over $|{\mc U}|$. In a nutshell, the proof of Theorem \ref{thm:abel} is based on iteratively blowing up the loci along which $\mathrm{ord}^{na}$ achieves its maximum.

\begin{lemma}\label{lem:ord-na-semi-continuous}
$\mathrm{ord}^{na}: |{\mc U}| \to {\mb Z}_{\geq 0}$ is an upper semi-continuous function.
\end{lemma}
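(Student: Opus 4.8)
The plan is to show upper semi-continuity by working locally in an orbifold chart and exhibiting, for each point $x_0 \in |\mc U|$, an open neighborhood on which $\mathrm{ord}^{na} \leq \mathrm{ord}^{na}(x_0)$. Choose an orbifold chart $U // \Gamma \to \mc U$ with $y_0 \in U$ mapping to $x_0$ and $\Gamma_{y_0}$ the stabilizer, so that the normal invariant at $x_0$ is the isotropy type of $(\Gamma_{y_0}, N_{y_0} U^{\Gamma_{y_0}})$. For a nearby point $x$ represented by $y \in U$, the stabilizer $\Gamma_y$ is (conjugate to) a subgroup of $\Gamma_{y_0}$, and by the partial order on isotropy types recorded just before Definition \ref{defn:normal-invariant} (and its infinitesimal manifestation via the splitting \eqref{eqn:normal-splitting}), the normal invariant at $x$ satisfies $(\Gamma_y, N_y U^{\Gamma_y}) \leq (\Gamma_{y_0}, N_{y_0}U^{\Gamma_{y_0}})$: concretely, $\Gamma_y \hookrightarrow \Gamma_{y_0}$ and $N_y U^{\Gamma_y}$ is isomorphic to the quotient of $N_{y_0} U^{\Gamma_{y_0}}$ by its $\Gamma_y$-fixed subspace (using that $U$ is, near $y_0$, equivariantly identified via the exponential map with a neighborhood of the origin in $N_{y_0}U^{\Gamma_{y_0}} \oplus T_{y_0}U^{\Gamma_{y_0}}$). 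Thus it suffices to prove the purely representation-theoretic statement: if $(\Gamma', V') \leq (\Gamma, V)$ then $\mathrm{ord}^{na}(\Gamma', V') \leq \mathrm{ord}^{na}(\Gamma, V)$.

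For that inequality, write $V = V^{\Gamma'} \oplus V'$ as $\Gamma'$-representations, where $V'$ carries no trivial $\Gamma'$-summand. Decompose $V = \bigoplus_i \chi_i \oplus \bigoplus_j \tau_j$ as in \eqref{eqn:decompose-V} with the $\chi_i$ one-dimensional and the $\tau_j$ of dimension $\geq 2$ over $\Gamma$. Restricting to $\Gamma'$: each $\chi_i$ restricts to a one-dimensional $\Gamma'$-representation, hence contributes only one-dimensional pieces to the decomposition of $V$ as a $\Gamma'$-representation, and a fortiori contributes nothing to $\ov{V'}$ (the sum of the $\Gamma'$-irreducible summands of dimension $\geq 2$ inside $V'$). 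Each $\tau_j$ restricts to some $\Gamma'$-representation; the sum of its $\Gamma'$-irreducible constituents of dimension $\geq 2$ has complex dimension at most $\dim_{\mb C}\tau_j$. Summing over $j$, the part of $V$ (as a $\Gamma'$-representation) built from irreducibles of dimension $\geq 2$ has dimension at most $\sum_j \dim_{\mb C}\tau_j = \mathrm{ord}^{na}(\Gamma, V)$. Since $\ov{V'}$ is a subrepresentation of this part — it lives inside $V'\subset V$, and consists of $\geq 2$-dimensional irreducibles — we get $\dim_{\mb C}\ov{V'} = \mathrm{ord}^{na}(\Gamma', V') \leq \mathrm{ord}^{na}(\Gamma, V)$, as desired.

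Finally I would assemble these: given $x_0$, the chart $U // \Gamma_{y_0} \to \mc U$ has image an open neighborhood of $x_0$, and on it every normal invariant is $\leq$ that of $x_0$ in the isotropy-type order, so by the representation-theoretic estimate $\mathrm{ord}^{na} \leq \mathrm{ord}^{na}(x_0)$ there. Hence $\{x : \mathrm{ord}^{na}(x) \leq c\}$ is open for every $c$, i.e. $\mathrm{ord}^{na}$ is upper semi-continuous. The only mildly delicate point — the main obstacle, such as it is — is checking that the normal invariant genuinely decreases in the isotropy-type order as one moves to a nearby point, which requires the equivariant slice/exponential description of the chart near $y_0$ together with the compatibility of complex structures along the splitting \eqref{eqn:normal-splitting}; once that local structure is in hand, the rest is the elementary representation theory above.
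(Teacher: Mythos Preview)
Your proof is correct and follows essentially the same approach as the paper: reduce to a local chart centered at the point, observe that nearby stabilizers are subgroups, and use that a one-dimensional $\Gamma$-irreducible restricts to a one-dimensional $\Gamma'$-irreducible, so the $\geq 2$-dimensional part of the normal representation can only shrink. The paper phrases the argument via the sequential criterion and writes the key containment as $\ov{V'} \subset (\tau_1 \oplus \cdots \oplus \tau_s) \cap \check{V}^{\Gamma_{x_\nu}}$, while you separate out the representation-theoretic monotonicity along the partial order on isotropy types as an explicit lemma; the content is the same.
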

\begin{proof}
It suffices to show that for a sequence $\{x_{\nu}\} \subset |{\mc U}|$ with $x_{\nu} \to x$, we have
\beq
\mathrm{ord}^{na}(x) \geq \limsup_{\nu} \mathrm{ord}^{na}(x_{\nu}).
\eeq
Without loss of generality, we can assume that ${\mc U} = \tilde{V} // \Gamma$ such that:
\begin{itemize}
    \item $\Gamma$ is a finite group;
    \item $\tilde{V}$ is a faithful $\Gamma$-representation with a decomposition $V = {\mb R}^k \oplus V$, where $\Gamma$ acts trivially on ${\mb R}^k$ and $V$ is a complex $\Gamma$-representation without trivial summand;
    \item $x = 0 \in \tilde{V}$.
\end{itemize}
Then the stabilizer of any point $x_{\nu}$ can be identified with a subgroup $\Gamma_{x_{\nu}} \leq \Gamma$. By restricting along $\Gamma_{x_{\nu}}$, the $\Gamma$-representation $V$ can be decomposed as
\beq
V = \mathring{V}^{\Gamma_{x_{\nu}}} \oplus \check{V}^{\Gamma_{x_{\nu}}},
\eeq
in which $\mathring{V}^{\Gamma_{x_{\nu}}}$ is the trivial isotypic piece. Using the decomposition \eqref{eqn:decompose-V}, we know that the direct sum of all irreducible $\Gamma_{x_{\nu}}$-representations of dimension $\geq 2$ in $\check{V}^{\Gamma_{x_{\nu}}}$ is a subspace of
\beq\label{eqn:nonabelian-intersect}
(\tau_1 \oplus \cdots \oplus \tau_s) \cap \check{V}^{\Gamma_{x_{\nu}}}.
\eeq
Because the normal invariant of $x_{\nu}$ is represented by $(\Gamma_{x_{\nu}}, \check{V}^{\Gamma_{x_{\nu}}})$, we see that $\mathrm{ord}^{na}(x_{\nu}) \leq \mathrm{ord}^{na}(x)$ and the assertion follows.
\end{proof}

Denote by $|{\mc U}_{na}|$ the subspace of $|{\mc U}|$ consists of all the points at which $\mathrm{ord}^{na}$ achieves its maximum, then we immediately have

\begin{cor}
$|{\mc U}_{na}|$ is a closed subset of $|{\mc U}|$. \qed
\end{cor}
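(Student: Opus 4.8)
The plan is to deduce the statement directly from Lemma~\ref{lem:ord-na-semi-continuous} together with the compactness hypothesis on ${\mc U}$, with essentially no additional work; this is why the author writes that it follows "immediately."

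First I would observe that $\mathrm{ord}^{na}$ is uniformly bounded on $|{\mc U}|$: at each point $x$ the space $\ov{V}$ appearing in Equation~\eqref{eqn:iso-type-bar} is a complex subspace of $N_x U^{\Gamma_x} \subseteq T_x{\mc U}$, so $\mathrm{ord}^{na}(x) = \dim_{\mb C}\ov{V} \le \lfloor \dim{\mc U}/2\rfloor$. Hence the image of $\mathrm{ord}^{na}\colon |{\mc U}| \to {\mb Z}_{\ge 0}$ is a finite subset of ${\mb Z}_{\ge 0}$; in particular it has a maximum, which we call $m$. Since $|{\mc U}|$ is compact and $\mathrm{ord}^{na}$ is upper semi-continuous by Lemma~\ref{lem:ord-na-semi-continuous}, the value $m$ is attained, and by definition $|{\mc U}_{na}|$ is precisely the superlevel set $\{x \in |{\mc U}| : \mathrm{ord}^{na}(x) \ge m\}$.

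It then remains to recall that for an upper semi-continuous function $f$ on a topological space, every superlevel set $\{f \ge a\}$ is closed, since its complement $\{f < a\}$ is open, which is exactly the defining property of upper semi-continuity. Applying this with $f = \mathrm{ord}^{na}$ and $a = m$ shows that $|{\mc U}_{na}|$ is closed in $|{\mc U}|$, completing the argument.

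There is no genuine obstacle here: the statement is a formal consequence of upper semi-continuity, and the only point that uses the compactness of ${\mc U}$ is the existence (as opposed to merely the supremum) of the maximal value $m$ — and even this also follows from the boundedness of $\mathrm{ord}^{na}$ noted above. All of the substantive content is contained in Lemma~\ref{lem:ord-na-semi-continuous}.
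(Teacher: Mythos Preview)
Your proposal is correct and takes essentially the same approach as the paper, which simply marks the corollary as immediate from Lemma~\ref{lem:ord-na-semi-continuous}. You have spelled out the one-line reason---superlevel sets of upper semi-continuous functions are closed---which is exactly the intended argument.
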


In fact, we can look at more refined structures.

\begin{lemma}\label{lem:abelianize-center}
The closed substack ${\mc U}_{na}$ which is defined to be the preimage of $|{\mc U}_{na}|$ of the coarse space map ${\mc U} \to |{\mc U}|$ is a suborbifold, which can be equipped with complex normal bundle.
\end{lemma}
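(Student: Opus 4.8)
The plan is to verify the claim étale-locally and then check that the local descriptions glue. First I would pass to an orbifold chart $U // \Gamma \to {\mc U}$ of the form considered in the proof of Lemma \ref{lem:ord-na-semi-continuous}: $\Gamma$ finite, $U$ a $\Gamma$-invariant open subset of a faithful representation $\tilde V = {\mb R}^k \oplus V$ (with $V$ a complex $\Gamma$-representation without trivial summand), and a point $x$ in the chart at which $\mathrm{ord}^{na}$ is maximal. For each subgroup $\Gamma' \le \Gamma$ the fixed locus $U^{\Gamma'}$ is a submanifold whose normal bundle is $\check V^{\Gamma'}$ (constant along connected components), and the normal complex structure endows $\check V^{\Gamma'}$ with a complex structure; decompose it into isotypic pieces and let $\check V^{\Gamma'}_{\ge 2}$ be the sum of the irreducible summands of complex dimension $\ge 2$. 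The upper semicontinuity argument already shows that the locus where $\mathrm{ord}^{na}$ is maximal, say equal to $m$, is, inside the chart, the union over those $\Gamma'$ with $\dim_{\mb C}\check V^{\Gamma'}_{\ge 2} = m$ of the $\Gamma'$-fixed loci $U^{\Gamma'}$ — and in fact, by the same inclusion \eqref{eqn:nonabelian-intersect}, a point of $U^{\Gamma'}$ has $\mathrm{ord}^{na} = m$ exactly when it does not lie on any strictly larger fixed locus whose ``$\ge 2$'' part is strictly smaller; so locally $|{\mc U}_{na}|$ is a disjoint union of (open pieces of) the manifolds $U^{\Gamma'}$. This shows $|{\mc U}_{na}|$ is locally a closed submanifold, hence ${\mc U}_{na}$ is a closed suborbifold in the sense of the earlier definition.

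Next I would produce the complex structure on the normal bundle $N_{\mc U}{\mc U}_{na}$. On the chart, along a component contained in $U^{\Gamma'}$, the normal bundle of $U^{\Gamma'}$ in $U$ is $\check V^{\Gamma'}$, which splits $\Gamma'$-equivariantly as $\check V^{\Gamma'}_{<2} \oplus \check V^{\Gamma'}_{\ge 2}$ (sum of $1$-dimensional summands, respectively the sum of the higher-dimensional ones). Since $\mathrm{ord}^{na}$ is constant equal to $m$ on this piece of ${\mc U}_{na}$, moving in the $\check V^{\Gamma'}_{\ge 2}$ directions keeps $\mathrm{ord}^{na}$ equal to $m$ (those directions stay inside the stratum of isotropy type $(\Gamma',\check V^{\Gamma'})$), whereas moving in a generic $\check V^{\Gamma'}_{<2}$ direction drops the isotropy and hence can only decrease $\mathrm{ord}^{na}$; more precisely the piece of $|{\mc U}_{na}|$ through a point of $U^{\Gamma'}$ is cut out, transversally, by the vanishing of the $\check V^{\Gamma'}_{<2}$-coordinates. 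Therefore $N_{\mc U}{\mc U}_{na}$ is identified, on this piece, with $\check V^{\Gamma'}_{<2}$, which carries the complex structure inherited from the normal complex structure of ${\mc U}$. This identification is manifestly compatible with the ambient normal complex structure in the sense of Definition \ref{defn:suborbifold-normal-complex}, because it is a summand of $\check V^{\Gamma'} = N_y U^{\Gamma'}$ as a complex subspace.

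Finally I would check independence of choices and gluing. Different étale charts covering the same point induce, after the usual refinement (Proposition \ref{prop:orbi-chart}), equivariant open embeddings that respect fixed loci and intertwine the normal complex structures, so the locally defined submanifolds and the locally defined complex structures on the normal directions agree on overlaps; this is exactly the compatibility encoded in Definition \ref{defn:normal-complex}(2) and Definition \ref{defn:nc-bundle}. Hence the local data glue to a global closed suborbifold ${\mc U}_{na} \subset {\mc U}$ together with a complex structure on $N_{\mc U}{\mc U}_{na}$ compatible with the normal complex structure of ${\mc U}$, i.e.\ ${\mc U}_{na}$ is a suborbifold with complex normal bundle. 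I expect the main subtlety to be the second step — correctly isolating which summands of $\check V^{\Gamma'}$ are normal to $|{\mc U}_{na}|$ and seeing that $|{\mc U}_{na}|$ meets each fixed locus $U^{\Gamma'}$ as the transverse vanishing of precisely the ``low-dimensional'' coordinates (and not, e.g., as something merely set-theoretically described this way) — since this is what makes $N_{\mc U}{\mc U}_{na}$ complex rather than just a real subquotient; once this is pinned down, semicontinuity (Lemma \ref{lem:ord-na-semi-continuous}) and the gluing are routine.
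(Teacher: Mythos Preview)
There is a genuine error in your second step: you have the tangent and normal directions to $\mc U_{na}$ swapped. You assert that ``moving in the $\check V^{\Gamma'}_{\ge 2}$ directions keeps $\mathrm{ord}^{na}$ equal to $m$ (those directions stay inside the stratum of isotropy type $(\Gamma',\check V^{\Gamma'})$)'', but $\check V^{\Gamma'}$ is by definition the \emph{normal} space to the fixed locus $U^{\Gamma'}$, so moving in any $\check V^{\Gamma'}$-direction leaves that stratum --- the parenthetical justification is simply false. Worse, a nonzero vector in a $\ge 2$-dimensional irreducible $\tau_j$ has a proper stabilizer along which $\tau_j$ typically decomposes into smaller pieces, so $\mathrm{ord}^{na}$ drops there, not stays constant. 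Hence your conclusion $N_{\mc U}\mc U_{na} \cong \check V^{\Gamma'}_{<2}$ is unsupported and in fact wrong.

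The paper does the opposite of what you propose. Working in a chart \emph{centered} at a maximal point (so the stabilizer there is the full $\Gamma$), it singles out the subgroup $\ov\Gamma = \bigcap_i \ker\chi_i$ and identifies $\mc U_{na}$ locally with the $\ov\Gamma$-fixed locus $\mathbb R^k \oplus \mathring V^{\ov\Gamma}$, i.e.\ with the vanishing locus of the $\ge 2$-dimensional coordinates $\tau_j$; the normal bundle is then $\check V^{\ov\Gamma} = \tau_1\oplus\cdots\oplus\tau_s$, precisely the $\ge 2$-dimensional part. The structural input you are missing is that $\ov\Gamma$ is \emph{normal} in $\Gamma$: this is what makes $\mathring V^{\ov\Gamma}$ a $\Gamma$-invariant linear subspace and hence gives a clean suborbifold chart. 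Your scheme of taking a union of $U^{\Gamma'}$'s over varying $\Gamma'$ never isolates a single linear model and never invokes any normality, so the claim that this union is ``locally a closed submanifold'' is also left unjustified.
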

\begin{proof}
We make a preliminary observation: given an isotropy type represented by $(\Gamma, V)$, then the subgroup  $\ov{\Gamma}$ of $\Gamma$ appearing in Equation \eqref{eqn:iso-type-bar}, is \emph{normal}.

Let $\coprod_{\alpha} U_{\alpha} \to {\mc U}$ be an \'etale atlas which equips ${\mc U}$ with a normal complex structure as in Definition \ref{defn:normal-complex}. Based on Proposition \ref{prop:orbi-chart}, passing to a refinement if necessary, we can assume that $U_{\alpha}$ is an open subset of an ${\mb R}$-linear faithful $\Gamma_{\alpha}$-representation $\tilde{V}_{\alpha}$ such that $\tilde{V}_{\alpha} = {\mb R}^{k_{\alpha}} \oplus V_{\alpha}$, where ${\mb R}^{k_{\alpha}}$ is the trivial isotypic piece and $V_{\alpha}$ is a complex $\Gamma_{\alpha}$-representation without any trivial summand.

Suppose that $x \in |{\mc U}_{na}|$ is contained in the image of $U_{\alpha} / \Gamma_{\alpha}$. Up to a further refinement, we can assume that all such \'etale charts are \emph{centered} at $x$, i.e. $0 \in U_{\alpha} \subset {\mb R}^{k_{\alpha}} \oplus V_{\alpha}$ and $\Gamma_{\alpha}$ is isomorphic to $\Gamma_x$. Using the subgroup $\ov{\Gamma}_{\alpha} \leq \Gamma_{\alpha}$ (cf. \eqref{eqn:Gamma-bar}), we obtain a decomposition
\beq\label{eqn:decompose-V-alpha}
V_{\alpha} = \mathring{V}_{\alpha}^{\ov{\Gamma}_{\alpha}} \oplus \check{V}_{\alpha}^{\ov{\Gamma}_{\alpha}},
\eeq
in which $\mathring{V}_{\alpha}^{\ov{\Gamma}_{\alpha}}$ is the trivial isotypic piece of the restricted $\ov{\Gamma}_{\alpha}$-representation. Then we know that $|{\mc U}_{na}|$ is covered by the union of images of
\beq\label{eqn:decompose-U-alpha}
\big( U_{\alpha} \cap ({\mb R}^{k_{\alpha}} \oplus \mathring{V}_{\alpha}^{\ov{\Gamma}_{\alpha}}) \big) / \Gamma_{\alpha},
\eeq
and any point in it is mapped into $|{\mc U}_{na}|$. Note that $\mathring{V}_{\alpha}^{\ov{\Gamma}_{\alpha}}$ is preserved by $\Gamma_{\alpha}$ because $\ov{\Gamma}_{\alpha} \leq \Gamma_{\alpha}$ is a normal subgroup. The disjoint union
\beq
\coprod_{\alpha}  U_{\alpha} \cap ({\mb R}^{k_{\alpha}} \oplus \mathring{V}_{\alpha}^{\ov{\Gamma}_{\alpha}})
\eeq
with $({\mc U}_{\alpha} / \Gamma_{\alpha}) \cap |{\mc U}_{na}| \neq \emptyset$  defines a smooth \'etale atlas of the closed substack ${\mc U}_{\alpha}$. Indeed, the compatibility is already encoded in the orbifold structure on ${\mc U}$ visualized through $\coprod_{\alpha} U_{\alpha} \to {\mc U}$. By construction, we see that ${\mc U}_{na} \to {\mc U}$ is a suborbifold.

Finally, using the decomposition in Equation \eqref{eqn:decompose-V-alpha}, the complex structure on $\check{V}_{\alpha}^{\ov{\Gamma}_{\alpha}}$, as part of the normal complex structure on ${\mc U}$, endows the normal bundle $N_{\mc U} {\mc U}_{na}$ with a complex structure.
\end{proof}

\begin{proof}[Proof of Theorem \ref{thm:abel}]
If the function $\mathrm{ord}^{na}$ identically vanishes on ${\mc U}$, the orbifold is already geometrically abelian, so we may assume that the maximally non-abelian locus  ${\mc U}_{na}$ as in Lemma \ref{lem:abelianize-center} is non-empty.

Blow up ${\mc U}$ along ${\mc U}_{na}$ to obtain $\mathrm{Bl}_{{\mc U}_{na}} {\mc U}$. Then we claim that
\beq
\max_{|\mathrm{Bl}_{{\mc U}_{na}} {\mc U}|} \mathrm{ord}^{na} < \max_{|{\mc U}|} \mathrm{ord}^{na}.
\eeq
Because the blow-down map $\mathrm{Bl}_{{\mc U}_{na}} {\mc U} \to {\mc U}$ is an isomorphism away from ${\mc U}_{na}$, by the maximality of $\mathrm{ord}^{na}$ along ${\mc U}_{na}$, we only need to verify the above inequality over the exceptional divisor ${\mb P}(N_{\mc U} {\mc U}_{na})$. It suffices to check things \'etale locally. Using the notations from \eqref{eqn:decompose-V-alpha} and \eqref{eqn:decompose-U-alpha},  we can locally identify ${\mc U}_{na} \subset {\mc U}$ as
\beq\label{eqn:subset-local}
({\mb R}^{k_{\alpha}} \oplus \mathring{V}_{\alpha}^{\ov{\Gamma}_{\alpha}}) // \Gamma_{\alpha} \subset V_{\alpha} // \Gamma_{\alpha}.
\eeq
Then the exceptional divisor ${\mb P}(N_{\mc U} {\mc U}_{na})$ can be locally identified with 
\beq
\big( ({\mb R}^{k_{\alpha}} \oplus \mathring{V}_{\alpha}^{\ov{\Gamma}_{\alpha}}) \times {\mb P}(\check{V}_{\alpha}^{\ov{\Gamma}_{\alpha}}) \big) // \Gamma_{\alpha},
\eeq
where $\Gamma_{\alpha}$ acts on ${\mb P}(\check{V}_{\alpha}^{\ov{\Gamma}_{\alpha}})$ as projective linear transformations. Then $\mathrm{Bl}_{{\mc U}_{na}} {\mc U}$ can be locally identified with the global finite quotient orbifold
\beq\label{eqn:blowup-local}
\big( ({\mb R}^{k_{\alpha}} \oplus \mathring{V}_{\alpha}^{\ov{\Gamma}_{\alpha}}) \times {\mc O}_{\mb P(\check{V}_{\alpha}^{\ov{\Gamma}_{\alpha}})}(-1) \big) // \Gamma_{\alpha},
\eeq
where ${\mc O}_{\mb P(\check{V}_{\alpha}^{\ov{\Gamma}_{\alpha}})}(-1) \to {\mb P}(\check{V}_{\alpha}^{\ov{\Gamma}_{\alpha}})$ is the tautological line bundle with the natural $\Gamma_{\alpha}$-action. Note that the space of ${\mc O}_{\mb P(\check{V}_{\alpha}^{\ov{\Gamma}_{\alpha}})}(-1) \setminus \mb P(\check{V}_{\alpha}^{\ov{\Gamma}_{\alpha}})$ can be identified with the complement of the origin of $\check{V}_{\alpha}^{\ov{\Gamma}_{\alpha}}$. On the other hand, the normal invariant of any point in the coarse space of the exceptional divisor is equal to the normal invariant of any nonzero point (in the coarse space) lying on the line specified by such a point in the projective space.
For any point in the coarse space of the left side of \eqref{eqn:subset-local}, we know the value of $\mathrm{ord}^{na}$ is equal to $\dim_{\mb C} \check{V}_{\alpha}^{\ov{\Gamma}_{\alpha}}$. On the other hand, {the isotropy group of any vector in $\check{V}_{\alpha}^{\ov{\Gamma}_{\alpha}} \setminus \{0\}$ has a nontrivial invariant subspace containing the span of this vector in $\check{V}_{\alpha}^{\ov{\Gamma}_{\alpha}}$}, which implies that the value of $\mathrm{ord}^{na}$ at any point in the exceptional divisor is strictly less than $\dim_{\mb C} \check{V}_{\alpha}^{\ov{\Gamma}_{\alpha}}$. As a result, $\max \mathrm{ord}^{na}$ gets decreased under the blow-up.

Due to the compactness of ${\mc U}$, after blowing up along ${\mc U}_{na}$ for finitely many times, we can guarantee that $\mathrm{ord}^{na} \equiv 0$. This final output is defined to be ${\mc U}^{ab}$, and the composition of blow-down maps defines the map $\pi^{ab}: {\mc U}^{ab} \to {\mc U}$ satisfying all the requirements. Moreover, it follows from the construction that abelianization is functorial under open embeddings.
\end{proof}

\subsection{Discussions}
Due to the elementary nature of the abelianization procedure, it deserves some further comments to make connections with results from the literature. The reader is invited to figure out methods to abelianize  normally complex orbifolds.

Our choice of invariant, which follows \cite{bergh2019functorial}, is referred to as the \emph{codimension of stackiness} in \emph{loc. cit.}. Alternatively, as pointed to the authors by David Rydh, one could use the order of $\ov{\Gamma}$ as an invariant.

If ${\mc U}$ is the finite global quotient orbifold constructed from a complex manifold acted on complex analytically by a finite group, there is an alternative construction due to Batyrev \cite{batyrev2000canonical}. There, the invariant measuring the `nonabelian-ness' is simply the maximal value of orders of non-abelian groups appearing as stabilizers of ${\mc U}$. There is a variant of ${\mc U}_{na}$ in this setting, which is constructed from the fixed point locus of $\ov{\Gamma}$ if $\Gamma$ achieves the maximum of the `nonabelian-ness' invariant here. However, because there may be multiple choices of $\Gamma$, the union of the corresponding fixed point loci of $\ov{\Gamma}$ may not be a smooth suborbifold. In the algebraic or complex analytic setting, this union is a subvariety with simple normal crossings. The crossings need to be removed using blow-ups before performing other blow-ups to reduce the `nonabelian-ness'. It should be possible to adapt a variant of Batyrev's algorithm in our setting, though the technicalities to be deployed may be more substantial.

As observed in \cite[Theorem 4.1]{RY00}, the functorial resolution in characteristic $0$ of Bierstone--Milman \cite{Bierstone-Milman} (or other approaches which make Hironaka's original method functorial) can be used to abelianize smooth Deligne--Mumford stacks over ${\mb C}$ (we learned this from \cite{bergh}). The idea is based on performing blow-ups such that any point with nontrivial stabilizer is contained in a (locally invariant) simple normal crossings divisor. If this is the case, the action of the stabilizer on the normal space of the divisor factors through ${\mb G}_m^k$. Such a method may also work in the almost complex category, but it has the disadvantage that the resolution procedure is rather implicit.

\section{Desingularizing the coarse spaces}\label{sec:bergh}

In this section, we follow Bergh's functorial destackification algorithm of Deligne--Mumford stacks with abelian stabilizers \cite{bergh} to perform blow-up and root stack constructions to obtain from any compact and geometrically abelian normally complex orbifold,  a compact and effective normally complex orbifold whose coarse space is a smooth manifold. As in the abelianization procedure, the fact that the algebro-geometric approach in \cite{bergh} can be applied to study normally complex orbifolds is eventually due to the fact that all the invariants which specify the center of blow-up or the divisor along which the root stack is taken are derived from the normal invariant Definition \ref{defn:normal-invariant}, and these centers are suborbifolds with complex normal bundle.

A minor difference with the setup in \cite{bergh} is that we do not assume effectiveness of the underlying orbifold. In our presentation, for a finite group $\Gamma$, we will use the same symbol to denote a $1$-dimensional complex $\Gamma$-representation and its associated character.

\subsection{Root stacks}
\label{sec:root-stacks}

We begin by defining the \emph{root stack} procedure to incorporate ideas of \emph{stacky blow-ups} from, e.g., \cite{bergh, bergh2019functorial}.

\begin{defn}
Suppose that ${\mc U}$ is a normally complex orbifold. A real codimension $2$ suborbifold ${\mc D} \subset {\mc U}$ is called a \emph{divisor} if $N_{\mc U} {\mc D}$ admits a complex structure which is compatible with the normal complex structure on ${\mc U}$ (cf. Definition \ref{defn:suborbifold-normal-complex}).
\end{defn}

A familiar construction from, e.g., \cite[Section 1.1]{GH78}, shows that given a divisor ${\mc D} \subset {\mc U}$, there exists a complex line bundle $L_{\mc D} \to {\mc U}$ characterized by:
\begin{enumerate}
    \item $L_{\mc D}$ is isomorphic to the product bundle over ${\mc U} \setminus {\mc D}$;
    \item identifying an open neighborhood of ${\mc D} \subset N_{\mc U} {\mc D}$ with an open subset in ${\mc U}$ containing ${\mc D}$, the restriction of $L_{\mc D}$ to such an open subset is isomorphic to the pullback of $N_{\mc U} {\mc D}$ under the projection $N_{\mc U} {\mc D} \to {\mc D}$.
\end{enumerate}
Moreover, there exists a section $\tau_{\mc D}: {\mc U} \to L_{\mc D}$ whose vanishing locus is exactly ${\mc D}$.

\begin{defn}
Suppose ${\mc D}$ is a divisor in a normally complex orbifold ${\mc U}$. Let $L_{\mc D} \to {\mc U}$ be the associated line bundle and write $L_{\mc D}^{\times}$ for the complement of the zero-section. Consider the bundle map
\beq
\begin{tikzcd}
L_{\mc D}^{\times} \times {\mb C} \arrow[rd] \arrow[rr] &         & L_{\mc D} \arrow[ld] \\
                                                        & {\mc U} &                     
\end{tikzcd}
\eeq
defined by mapping $(l, c) \in L_{\mc D}^{\times} \times {\mb C}$ to $c^d l$, where $d \in {\mb Z}_{\geq 1}$. Denote the preimage of the embedding $\tau_{D}: {\mc U} \to L_{\mc D}$ in $L_{\mc D}^{\times} \times {\mb C}$ by $V(\tau_{\mc D})$. Then the \emph{$d$-th root stack} of ${\mc U}$ along ${\mc D}$ is defined to be the quotient orbifold 
\beq
V(\tau_{\mc D}) / {\mb C}^*,
\eeq
where the group ${\mb C}^*$ acts on $V(\tau_{\mc D})$ by restricting the ${\mb C}^*$-action on $L_{\mc D}^{\times} \times {\mb C}$ given by 
\beq
(z, (l, c) ) \to (z^{-d} \cdot l, z \cdot c), \quad \quad \quad z \in {\mb C}^*.
\eeq
\end{defn}

\begin{rem}
Our construction of root stacks is simply an implementation of the construction from algebraic geometry, see, e.g., \cite[Appendix B]{GW-DM}.
\end{rem}

\subsection{Statements and definitions}
We now state the main theorem of this section. Then we introduce fundamental definitions and a collection of invariants which measure how far the coarse space of ${\mc U}$ is being smooth. 

\begin{defn}
A \emph{normally complex pair} $({\mc U}, \mathbf{D})$ consists of:
\begin{enumerate}
    \item a compact normally complex \emph{geometrically abelian} orbifold ${\mc U}$;
    \item an ordered collection $\mathbf{D} = \{ {\mc D}_1, \dots, {\mc D}_r \}$ of divisors of ${\mc U}$ which intersect transversely, such that for any $x \in |{\mc D}_i|$, the action of $\Gamma_x$ on the corresponding fiber of $N_{\mc U} {\mc D}_i$ is nontrivial.
\end{enumerate}
\end{defn}

This is the analogue of \emph{standard pair} from \cite[Definition 2.1]{bergh}. The requirement (2) is the counterpart of simple normal crossings divisors in our context, which we have strengthened by requiring the non-triviality of the action of stabilizer groups,  
{ since a divisor whose normal bundle carries the trivial representation can be neglected in our construction.}
Note that the components ${\mc D}_i$ are not assumed to be connected, and it is important to keep track of the orderings. For two normally complex pairs $({\mc U}', \mathbf{D}')$ and $({\mc U}, \mathbf{D})$, we use the notation $f: ({\mc U}', \mathbf{D}') \to ({\mc U}, \mathbf{D})$ to denote a smooth map $f: {\mc U}' \to {\mc U}$ which maps $\mathbf{D}'$ into $\mathbf{D}$ such that the orderings are respected.

\begin{defn}[c.f. {\cite[Definition 2.7]{bergh}}]\label{defn:pair-distinguish}
  A normally complex pair with \emph{distinguished structure} is a triple $({\mc U}, \mathbf{D}, \acute{\mathbf{D}})$ such that $({\mc U}, \mathbf{D})$ is a normally complex pair, $\acute{\mathbf{D}} \subset \mathbf{D}$ is a subcollection of divisors from $\mathbf{D}$ which are not preceded, under the ordering of $\mathbf{D}$, by any element of $\mathbf{D} \setminus \acute{\mathbf{D}}$.
\end{defn}

The notation $f: ({\mc U}', \mathbf{D}', \acute{\mathbf{D}}') \to ({\mc U}, \mathbf{D}, \acute{\mathbf{D}})$ should be self-explanatory based on the previous definition.

\begin{thm}\label{thm:deorbi}
If $({\mc U}, \mathbf{D})$ is a normally complex pair, then there exists a sequence of normally complex pairs 
\beq
\pi^M: (\tilde{\mc U}, \tilde{\mathbf{D}}) := ({\mc U}_n, \mathbf{D}_n) \xrightarrow{\pi_n} \cdots \xrightarrow{\pi_1} ({\mc U}_0, \mathbf{D}_0) = ({\mc U}, \mathbf{D})
\eeq
such that:
\begin{enumerate}
    \item each $\pi_i: {\mc U}_i \to {\mc U}_{i-1}$ is a smooth map induced from either the blow-up along a suborbifold with complex normal bundle in ${\mc U}_{i-1}$, or a root stack construction along a divisor in $\mathbf{D}_{i-1}$;
    \item the coarse space $|\tilde{\mc U}|$ is a smooth \emph{manifold}.
\end{enumerate}
If $f: ({\mc U}', \mathbf{D}') \to ({\mc U}, \mathbf{D})$ defines an open suborbifold ${\mc U}' \subset {\mc U}$ such that $\mathbf{D}' = f^* \mathbf{D}$ and the normal complex structures are compatible, then there exists a smooth map $\tilde{f}: (\tilde{\mc U}', \tilde{\mathbf{D}}') \to (\tilde{\mc U}, \tilde{\mathbf{D}})$ defining an open suborbifold $\tilde{f}: \tilde{\mc U}' \to \tilde{\mc U}$ such that the commutative diagram
\beq
\begin{tikzcd}
{(\tilde{\mc U}', \tilde{\mathbf{D}}')} \arrow[d] \arrow[r, "\tilde{f}"] & {(\tilde{\mc U}, \tilde{\mathbf{D}})} \arrow[d] \\
{({\mc U}', \mathbf{D}')} \arrow[r, "f"]                                 & {({\mc U}, \mathbf{D})}                        
\end{tikzcd}
\eeq
is Cartesian. Accordingly, by passing to coarse space, we obtain a smooth embedding 
\beq
|\tilde{f}|: |\tilde{\mc U}'| \to |\tilde{\mc U}|.
\eeq
\end{thm}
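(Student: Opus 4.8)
The plan is to transcribe Bergh's functorial destackification algorithm \cite{bergh} into the category of normally complex orbifolds, in the same spirit in which Section \ref{sec:abelianize} transcribed the abelianization algorithm of \cite{bergh2019functorial}. Throughout one carries the enriched data of a normally complex pair with distinguished structure $({\mc U}, \mathbf{D}, \acute{\mathbf{D}})$ as in Definition \ref{defn:pair-distinguish}, starting from an appropriate initial distinguished structure. The structural feature that makes this possible, exactly as in Section \ref{sec:abelianize}, is that every center of a blow-up and every divisor along which a root stack is taken will be \emph{canonically} extracted from the normal invariant data of Definition \ref{defn:normal-invariant} together with the combinatorics of the intersections of the ${\mc D}_i$; since these data are local on $|{\mc U}|$ and compatible with representable open embeddings, this is also the mechanism behind the functoriality half of the statement. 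Since $({\mc U}, \mathbf{D})$ is a normally complex pair, ${\mc U}$ is geometrically abelian, so at each $x \in |{\mc U}|$ the stabilizer $\Gamma_x$ acts diagonally, by a tuple of characters, on the complex normal space $N_x U^{\Gamma_x}$, and the divisors of $\mathbf{D}$ through $x$ single out a subset of the corresponding eigenlines. Following \cite{bergh}, one uses this picture to define a lexicographically ordered tuple of invariants on $|{\mc U}|$ measuring how far $\tilde{\mc U}$ is from having a toroidal presentation with smooth coarse space — roughly, the order of the part of $\Gamma_x$ not accounted for by divisor directions, together with the codimension of the locus where it is attained.

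The inductive step is then modelled on the proof of Theorem \ref{thm:abel}. One checks that the tuple of invariants is upper semi-continuous, so that the locus on which it is maximal is a closed substack, and that — because the stabilizers are abelian and the normal complex structure is available — this locus is étale-locally cut out by a subset of the character coordinates, hence is a suborbifold with complex normal bundle (and, when the prescribed step is a root stack, a divisor in the sense of Section \ref{sec:root-stacks}). One applies the corresponding operation — the blow-up of Section \ref{sec:abelianize} along that center, with $\mathbf{D}$ updated by adjoining the exceptional divisor in the correct position, or the root stack of Section \ref{sec:root-stacks} along that divisor — and verifies, by the local-model computations of \cite{bergh}, that the tuple drops strictly in the lexicographic order. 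Each operation preserves compactness, geometric abelianity, and, by Sections \ref{sec:abelianize} and \ref{sec:root-stacks}, the normally complex structure; since ${\mc U}$ is compact the process terminates after finitely many steps, producing a pair $(\tilde{\mc U}, \tilde{\mathbf{D}})$ on which all the invariants vanish.

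It remains to identify the coarse space of the terminal object and to record functoriality. Vanishing of the invariants means that every point of $\tilde{\mc U}$ admits a chart $({\mb R}^m \times {\mb C}^k) // \Gamma \to \tilde{\mc U}$ in which the $\Gamma$-action on ${\mb R}^m \times {\mb C}^k$ factors through $\mu_{d_1} \times \cdots \times \mu_{d_k}$, with $\mu_{d_i}$ acting by scalar multiplication on the $i$-th complex coordinate $z_i$ and with $\{z_i = 0\}$ a component of $\tilde{\mathbf{D}}$ whenever $d_i > 1$. The coarse space of such a chart is ${\mb R}^m \times {\mb C}^k$ via the coordinates $w_i = z_i^{d_i}$, and the collection of these charts is a smooth atlas on $|\tilde{\mc U}|$: on an overlap, each $w_i$ with $d_i > 1$ is, by the construction of the root stack in Section \ref{sec:root-stacks}, a local trivialization of the line bundle $L_{{\mc D}_i}$ attached to the corresponding divisor, so two such choices differ by a nowhere-vanishing smooth function, while the remaining coordinates are ordinary manifold coordinates; hence the transition maps are smooth and $|\tilde{\mc U}|$ is a smooth manifold. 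For the functoriality statement, an open embedding $f : ({\mc U}', \mathbf{D}') \to ({\mc U}, \mathbf{D})$ with $\mathbf{D}' = f^* \mathbf{D}$ pulls the normal invariant and the divisor combinatorics back step by step, so at each stage the maximal locus of the invariant tuple over ${\mc U}'$ is the preimage of the one over ${\mc U}$; since blow-up and root stack are local and compatible with restriction to open suborbifolds (cf. the remark after the definition of blow-up in Section \ref{sec:abelianize}, and the local nature of Section \ref{sec:root-stacks}), each stage yields a Cartesian square, and therefore so does the composite square for $\tilde{f}$. Passing to coarse spaces produces the asserted smooth open embedding $|\tilde{f}| : |\tilde{\mc U}'| \to |\tilde{\mc U}|$.

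The main obstacle lies in the second and third paragraphs. Bergh's invariants are formulated using equivariant line bundles and torus-quotient presentations of algebraic stacks, so one must verify that they descend to well-defined, upper semi-continuous functions on $|{\mc U}|$ expressed purely through the normal invariant of Definition \ref{defn:normal-invariant}, that their maximal loci are smooth suborbifolds carrying \emph{complex} normal bundles rather than the toroidal subvarieties of the algebraic story, which are typically singular — here geometric abelianity together with the normal complex structure is precisely what saves us — and that the root stack and blow-up constructions propagate the flattened normal complex structures of Section \ref{sec:abelianize}, so that the hypotheses required to perform the next step remain in force. Given \cite{bergh} and Sections \ref{sec:abelianize} and \ref{sec:root-stacks}, each of these is routine, but collectively they represent the bulk of the argument.
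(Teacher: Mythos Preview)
Your high-level strategy matches the paper's: both transcribe Bergh's destackification algorithm into the normally complex category, exploiting that all centers and root-stack divisors are determined by the normal invariant together with the combinatorics of $\mathbf{D}$, which gives functoriality for free. Your final paragraph correctly isolates where the real work lies.

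However, your description collapses what is in fact a multi-stage procedure into a single ``blow up the maximal locus of one invariant'' loop, and the invariant you sketch (``order of the part of $\Gamma_x$ not accounted for by divisor directions, together with the codimension'') is not the one actually used. The paper's proof first runs a separate \emph{divisorialification} pass (Proposition~\ref{prop:divisorial}, Bergh's Algorithm~C), blowing up along the locus where the divisorial index of Definition~\ref{defn:divisor-index} is maximal until $\chi(x)=\chi_{div}(x)$ everywhere. Only then does the main iteration begin, and each round involves \emph{three} distinct sub-algorithms: one reads off an orbifold fan from the divisorial type (Definition~\ref{defn:divisor-type}, Remark~\ref{rem:orbi-fan}) and applies the purely combinatorial toric Algorithm~A (Proposition~\ref{prop:algo-A}) to it, lifts the resulting star subdivisions and root constructions to the global pair, and then runs Algorithm~D (Proposition~\ref{prop:kill-D-div-index}) to annihilate the ${\mc D}$-divisorial index for the distinguished divisors just created. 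The controlling invariant is the lexicographic triple (independence index, toroidal index, divisorial type), and the termination argument is deferred to \cite{bergh}. The claim that each maximal locus is a suborbifold with complex normal bundle is not a single observation but the content of Lemmas~\ref{lem:divisorial-center} and~\ref{lem:distinguish-center}, proved separately for each invariant.

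Your terminal-state analysis is more explicit than the paper's (which simply appeals to the independence index discussion after Definition~\ref{defn:independence-ind}), and the local model you write down is morally the right one, but be careful: you are asserting a stronger normal form than what the algorithm literally guarantees, so this would need its own justification.
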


The above construction is referred to as a \emph{deorbification} process. If we start with $\mathbf{D} = \emptyset$, it provides a method to obtain an orbifold with smooth coarse space from an normally complex orbifold ${\mc U}$ with only abelian geometric stabilizers. Although the divisors could be eliminated in the final statement which asserts the smoothness of $|\tilde{\mc U}|$, they play a pivotal role in the proof. As can be inferred from the statement, such a deorbification is functorial in under open embeddings. This functoriality is important for the compatibility between local and global constructions. We will prove Theorem \ref{thm:deorbi} at the end of this section after finishing all the preparatory work.

Let $({\mc U}, \mathbf{D})$ be a normally complex pair. Given $x \in |{\mc U}|$, let $(\Gamma, V)$ be a representative of its normal invariant (cf. Definition \ref{defn:normal-invariant}). For $i \in \{ 1, \dots, r\}$, define
\beq
\chi_i =
\left\{
\begin{aligned}
\text{the character associated to $N_{{\mc D}_i} {\mc U}$} \subset V, \quad \quad \quad &\text{if } x \in |{\mc D}_i|,\\
 0 \quad \quad \quad & \text{if } x \notin |{\mc D}_i|.
\end{aligned}
\right.
\eeq
The characters $\{ \chi_i \}$ inherits an ordering from the ordering of the divisors. If $\dim_{\mb C} V = s$, we can decompose it into a direct sum of irreducible representations
\beq\label{eqn:decompose-char}
V \cong \chi_{i_1} \oplus \cdots \oplus \chi_{i_{r'}} \oplus \chi_{i_{r' +1}} \oplus \cdots \oplus \chi_s = \chi_1 \oplus \cdots \oplus \chi_s,
\eeq
in which the representations/characters $\chi_{i_1}, \dots \chi_{i_{r'}}$ come from the normal directions of the divisors and $\chi_{i_{r' +1}}, \dots, \chi_s$ are the `residual' directions.

All of our subsequent invariants are constructed out of the decomposition of the normal invariant \eqref{eqn:decompose-char}. Given a finite group $\Gamma$, we use the notation $\chi(\Gamma)$ to denote the group of characters of $\Gamma$, which is also known as the \emph{Cartier dual}. For a sequence of characters $\chi_1, \dots, \chi_l$, write $\langle \chi_1, \dots, \chi_l \rangle$ for the subgroup of $\chi(\Gamma)$ generated by these characters. Using this notation, define the subgroups of $\chi(\Gamma)$
\beq\label{eqn:chi-gamma-div}
\chi(x) := \langle \chi_1, \dots, \chi_s \rangle, \quad \quad \quad \chi_{div}(x) := \langle \chi_{i_1}, \dots, \chi_{i_{r'}} \rangle
\eeq
following \eqref{eqn:decompose-char}. Concretely, $\chi_{div}(x)$ is the group of $\Gamma$-characters generated by those associated to the normal representations of the divisors in $\mathbf{D}$ passing through $x$.

\begin{defn}[{\cite[Definition 7.1]{bergh}}]\label{defn:independence-ind}
  Let $({\mc U}, \mathbf{D})$ be a normally complex pair. Let $(\Gamma, V)$ be a representative of the normal invariant of $x \in |{\mc U}|$ with decomposition \eqref{eqn:decompose-char}. The \emph{independence index}
  of $x$ is defined to be the number of indices $i \in \{1, \dots, s\}$ such that
\beq\label{eqn:independent}
\langle \chi_i \rangle \cap \langle \chi_1, \dots, \hat{\chi_i}, \dots, \chi_s \rangle = \{0\}.
\eeq
\end{defn}
Note that if the independence index of $x$ is $0$, the coarse space $|{\mc U}|$ is naturally a smooth manifold near $x$ because locally the coarse space, which is a quotient space, is homeomorphic to ${\mb R}^k \times V$. A trivial but illustrative example is the following: for the cyclic group  $C_2$ of order $2$, let ${\mb C}(0)$ be the trivial representation and ${\mb C}(-1)$ be the sign representation. Then the coarse space $|({\mb C}(0) \oplus {\mb C}(-1)) // C_2|$ is smooth, but $|({\mb C}(-1) \oplus {\mb C}(-1)) // C_2|$ is not smooth. The independence index does not use any information from $\mathbf{D}$.

\begin{defn}[{\cite[Definition 7.4]{bergh}}]\label{defn:toroidal-ind}
Suppose $({\mc U}, \mathbf{D})$ is a normally complex pair. Let $(\Gamma, V)$ be a representative of the normal invariant of $x \in |{\mc U}|$ with decomposition \eqref{eqn:decompose-char}. The \emph{toroidal index} of $x$ is defined to be the number $s - r'$.
\end{defn}
If the toroidal index of $x$ is $0$, then the $\Gamma$-representation $V$ is decomposed into the direct sum of the normal representations from divisors in $\mathbf{D}$ passing through $x$. If this is the case, $x$ admits an orbifold chart modeled on \emph{toric orbifolds} along the normal directions, whose partial deorbification process will be recalled in Section \ref{subsec:toric-recall}. Reducing the local model to toric orbifolds allows us to use combinatorial methods to achieve the deorbification/destackification.

Abusing the notation, write $\mathbf{D}$ the totally ordered set consisting of the indices of the divisors in $\mathbf{D}$. For a normally complex pair $({\mc U}, \mathbf{D})$ and $x \in |{\mc U}|$, denote by $\chi(x)^{\mathbf D}$ the set of $|\mathbf{D}|$-tuples of characters in the group $\chi(x)$, where each component which comes from the normal representation of a divisor in $\mathbf{D}$ should be thought of as marked by the corresponding index.

\begin{defn}[{\cite[Definition 7.8]{bergh}}]\label{defn:divisor-type}
Given a point $x \in |{\mc U}|$ as above with decomposition \eqref{eqn:decompose-char}, define the tuple $\mathbf{v}(x) \in \chi(x)^{\mathbf D}$ by
\beq
\mathbf{v}(x)_i =
\left\{
\begin{aligned}
0 \quad \quad \quad & \text{if } x \notin |{\mc D}_i| \text{ or }x \in |{\mc D}_i| \text{ and } \langle \chi_i \rangle \cap \langle \chi_1, \dots, \hat{\chi_i}, \dots, \chi_s \rangle = \{0\},\\
\chi_i, \quad \quad \quad &\text{if } x \in |{\mc D}_i| \text{ and } \langle \chi_i \rangle \subset \langle \chi_1, \dots, \hat{\chi_i}, \dots, \chi_s \rangle,\\
\end{aligned}
\right.
\eeq
where $\chi_i$ is the character associated to $N_{{\mc D}_i} {\mc U}$. Denote by $A(x)$ the subgroup of $\chi(x)$ generated by characters in $\mathbf{v}(x)$. Then the \emph{divisorial type} of $x$ is defined to be the pair
\beq
(A(x), \mathbf{v}(x)).
\eeq
\end{defn}

Note that all the characters in $\mathbf{v}(x)$ are marked by a divisor from $\mathbf{D}$. We can introduce a well-ordering on the set of divisorial types of all points in $|{\mc U}|$. Let $j_1, \dots, j_m$ be all the indices in $\mathbf{D}$ such that $\mathbf{v}(x)_{j_1}, \dots, \mathbf{v}(x)_{j_m} \neq 0$, and we assume that $j_1, \dots, j_m$ is in the decreasing order. Then we have a short exact sequence of abelian groups
\beq\label{eqn:divisor-type-exact}
0 \to K(x) \to {\mb Z}^m \to A(x) \to 0
\eeq
where ${\mb Z}^m \to A(x)$ is defined by $\mathbf{v}(x)_i \mapsto \chi_i$ and $K(x)$ is the kernel. Because $A(x)$ is a finite group, $K(x)$ is a free ${\mb Z}$-module of rank $m$. Then we can write the map $K(x) \to {\mb Z}^m$ as a matrix with integer entries $H=(a_{ij})_{m \times m}$ using the Hermite normal form, which is characterized by:
\begin{enumerate}
    \item $H$ is upper triangular;
    \item $a_{ii} > 0$, and $a_{ii} > a_{ij} \geq 0$ for all $j > i$.
\end{enumerate}
Using the ordering on $\mathbf{D}$, the order on the set of divisorial types is induced from the lexicographic order on the set of corresponding $H$: here we first look at the lexicographic order of the sequence $(j_1, \dots, j_m)$, then the lexicographic order on entries of $H$, where the entries are first ordered by rows, with high row numbers being more significant, then ordered by columns, with lower column numbers being more significant. Such a well-order refines the following partial order on the set of divisorial types: $(A(x), \mathbf{v}(x)) \geq (A(x'), \mathbf{v}(x'))$ if there exists a surjective group homomorphism $A(x) \to A(x')$ taking $\mathbf{v}(x)$ to $\mathbf{v}(x')$. The reader should consult \cite[Section 7]{bergh} for more explanations and illustrations through examples.

\begin{defn}[{\cite[Definition 7.13]{bergh}}]\label{defn:divisor-index}
Given a normally complex pair $({\mc U}, \mathbf{D})$, let $(\Gamma, V)$ be a representative of the isotropy type of $x \in |{\mc U}|$. Using the decomposition \eqref{eqn:decompose-char}, define the \emph{divisorial index} of $x$ to be the number of characters from $\{ \chi_{i_{r' +1}}, \dots, \chi_s \}$ specified in \eqref{eqn:decompose-char} which does not lie in $\chi_{div}(x)$ from \eqref{eqn:chi-gamma-div}. $({\mc U}, \mathbf{D})$ is said to be \emph{divisorial} at $x$ if the divisorial index of $x$ is $0$.
\end{defn}

\begin{rem}
If $({\mc U}, \mathbf{D})$ is divisorial at all the points of $|{\mc U}|$, it has the following feature. For any ${\mc D}_i \in \mathbf{D}$, let $L_{{\mc D}_i}$ be the dual line bundle and denote by $P^{{\mb C}^*}_{{\mc D}_i}$ the associated principal ${\mb C}^*$-bundle. Then the fiber product
\beq
P^{{\mb C}^*}_{{\mc D}_1} \times_{\mc U} \cdots \times_{\mc U} P^{{\mb C}^*}_{{\mc D}_r}
\eeq
has pure isotropy, so that the coarse space is actually a \emph{smooth manifold}. It suffices to check this near each $x \in |{\mc U}|$, where the statement reduces to the claim that, in the effective case, the isotropy is trivial. Indeed, because $\chi_{div}(x) = \chi(x)$ at any $x \in |{\mc U}|$, the action of $\Gamma_x = \Gamma$ on the direct sum of the normal representations of the divisors in $\mathbf{D}$ passing through $x$, which is $\chi_{i_1} \oplus \cdots \oplus \chi_{i_{r'}}$ from \eqref{eqn:decompose-char}, is \emph{faithful} due to the effectiveness assumption of ${\mc U}$. As a result, the induced group homomorphism $\Gamma_x \to ({\mb C}^*)^r$, which is defined to be the product of $\Gamma_x$-representation which is the fiber of $L_{{\mc D}_i}$ over $x$, is injective.  In other words, the direct sum $L_{{\mc D}_1} \oplus \cdots \oplus L_{{\mc D}_r}$ is a \emph{faithful vector bundle} is the sense of \cite[Remark 1.6]{pardon19}. 
\end{rem}

\begin{defn}[{\cite[Definition 7.16]{bergh}}]\label{defn:D-div-index}
Let $({\mc U}, \mathbf{D})$ be   a normally complex pair, and let $x$ be a point at which it is divisorial, with decomposition of its normal invariant as in Equation \eqref{eqn:decompose-char}. 

If $x$ lies on a divisor ${\mc D} \in \mathbf{D}$, define the subgroup $\chi_{div}(x)^{\mc D}$ of $\chi_{div}(x)$ (cf. Equation \eqref{eqn:chi-gamma-div})  to be generated by all the characters from $\{ \chi_{i_1}, \dots, \chi_{i_{r'}} \}$ except the one corresponding to ${\mc D}$. Write
\beq\label{eqn:chi-D-x}
\chi_{\mc D}(x) := \chi_{div}(x) / \chi_{div}(x)^{\mc D},
\eeq
which is a cyclic group. Because all the characters from  Equation \eqref{eqn:decompose-char} are contained in $\chi_{div}(x)$, for any character $\chi_i$ from $\{\chi_{i_{r' + 1} }, \dots, \chi_s \}$, there exists a nonnegative integer $c_i$ such that the image of $\chi_i$ under the projection
\beq\label{eqn:char-proj-D}
\chi(x) = \chi_{div}(x) \to \chi_{\mc D}(x)
\eeq
is $c_i$ times the generator. Then the \emph{${\mc D}$-divisorial index} at $x$ is defined to be
\beq\label{eqn:d-div-index}
\sum_{i=i_{r'} + 1}^{s} c_i.
\eeq

If $x \notin |{\mc D}|$, its ${\mc D}$-divisorial index is defined to be $0$.
\end{defn}

Continuing using the notations as above, if for any $\chi_i \in \{ \chi_{i_1}, \dots, \chi_{i_{r'}} \}$ we have
\beq
\langle \chi_i \rangle \cap \langle \chi_{i_1}, \dots, \hat{\chi_i}, \dots \chi_{i_{r'}} \rangle = \{0\},
\eeq
one can see that if the ${\mc D}$-divisorial index vanishes at $x$ for all ${\mc D} \in \mathbf{D}$, the so does the independence index from Definition \ref{defn:independence-ind}. Accordingly, in this situation, the coarse space $|{\mc U}|$ is smooth near $x$.

We will make use of the invariants described in this subsection to achieve the functorial deorbification Theorem \ref{thm:deorbi} step by step.

\subsection{Toric local models}\label{subsec:toric-recall}
Suppose that $(\Gamma, V)$ is a representative of an isotropy type such that $V$ is decomposed as
\beq\label{eqn:decompse-V-another}
V = \chi_1 \oplus \cdots \oplus \chi_s
\eeq
in which each $\chi_i$ is a $1$-dimensional complex $\Gamma$-representation, and we use the same notation to denote the associated $\Gamma$-character. The goal of this subsection is to extend \cite[Algorithm A]{bergh}, which defines a ``partial" resolution in the case $\Gamma$ is abelian to our setting. We begin by recalling the basic notions of toric orbifolds:
\begin{defn}
An \emph{orbifold fan} is a triple $\mathbf{\Sigma} = (N, \Sigma, \beta)$ where $N$ is a finite rank free abelian group, $\Sigma$ is a simplicial fan in $N_{\mb R} = N \otimes_{\mb Z} {\mb R}$, and $\beta: {\mb Z}^{\Sigma(1)} \to N$ is a group homomorphism from the free abelian group generated by rays in $\Sigma$ which maps each generator $\rho \in \Sigma(1)$ to a non-zero lattice point on the ray $\rho$.
\end{defn}

Any orbifold fan gives rise to a toric orbifold $X_{\mathbf{\Sigma}}$: $\beta$ induces  an exact sequence
\begin{equation} \label{eq:exact_sequence_groups_toric}
1 \to  \Delta(\Sigma) \to    {\mb Z}^{\Sigma(1)} \otimes_{\mb Z} \mb C^* \to N \otimes_{\mb Z} \mb C^* \to 1.
\end{equation}
One way to express $X_{\mathbf{\Sigma}}$ is to start with affine space $\mb C^{ \Sigma(1)}$ , on which $ {\mb Z}^{\Sigma(1)} \otimes_{\mb Z} \mb C^*$ acts, remove the unstable locus $Z(\Sigma)$ with respect to the action of $\Delta(\Sigma) $, and take the corresponding quotient for this group.

In the special case of a $\Gamma$-representation $V$ equipped with a decomposition as in Equation \eqref{eqn:decompse-V-another}, if we write $\chi(V) := \langle \chi_1, \dots, \chi_s \rangle \subset \chi(\Gamma)$, we have a group homomorphism
\beq
{\mb Z}^s \to \chi(V)
\eeq
whose kernel $K$ is free of rank $s$. By applying the dual $\hom(-,{\mb Z})$ to the short exact sequence $0 \to K \to {\mb Z}^s \to \chi(V) \to 0$, we obtain a homomorphism
\beq\label{eqn:hom-dual}
\beta :  \hom({\mb Z}^s,{\mb Z}) \to \hom(K,{\mb Z}).
\eeq
Now take $N = \hom(K,{\mb Z})$ and let $\Sigma \subset N_{\mb R}$ be the fan associated to the cone corresponding to the `first quadrant'. In this case, we have that the group $\Delta(\Sigma)  $ in Equation \eqref{eq:exact_sequence_groups_toric} is exactly the image of $\Gamma$ in the torus. The Cox construction of such an orbifold fan recovers the orbifold quotient of $V$ by the image of $\Gamma$.

Toric orbifolds with an ordering on toric divisors are examples of normally complex pairs. Given an ordering on the rays of an orbifold fan, one can define a distinguished structure as in Definition \ref{defn:pair-distinguish} for a collection of toric divisors.

If $\mathbf{\Sigma} = (N, \Sigma, \beta)$ is an orbifold fan, for a cone $\sigma \subset \Sigma$, define $v_{\sigma} = \sum_{\rho \in \sigma(1)} \beta(\rho) \in N_{\mb R}$ and let $\rho_{v_{\sigma}}$ be the ray generated by $v_{\sigma}$. Then the \emph{star subdivision} of $\mathbf{\Sigma}$ along $\sigma$ is the orbifold fan $\mathbf{\Sigma}^*(\sigma) = (N, \Sigma^*(\sigma), \beta^*)$ where $\Sigma^*(\sigma)$ is obtained from $\Sigma$ by subdivision using $v_{\sigma}$, and $\beta^*$ extends $\beta$ by additionally mapping the generator associated to $\rho_{v_{\sigma}}$ to $v_{\sigma}$. This is the combinatorial counterpart of blow-up, as familiar from toric blow-ups of toric varieties.

There is also a reformulation of taking root stacks using orbifold fans. Namely, given an orbifold fan $\mathbf{\Sigma} = (N, \Sigma, \beta)$ and a map $\mathbf{d}: \Sigma(1) \to {\mb Z}_{\geq 1}$, we can define a homomorphism $\beta': {\mb Z}^{\Sigma(1)} \to N$ with $\beta'(\rho) = \mathbf{d}(\rho)\beta(\rho)$. Then we obtain a new orbifold fan $\Sigma_{\mathbf{d}^{-1} \rho} := (N, \Sigma, \beta')$. It is straightforward to see that the corresponding toric orbifold is constructed by iteratively taking $\mathbf{d}(\rho)$-th root stack along the toric divisor specified by $\rho \in \Sigma(1)$.

\begin{prop}[{\cite[Algorithm A]{bergh}}]\label{prop:algo-A}
Given an orbifold fan $\mathbf{\Sigma}_0$ with an ordered set of rays having a distinguished structure, there exists a sequence
\beq
\mathbf{\Sigma}_n \to \cdots \to \mathbf{\Sigma}_0
\eeq
of toric fans such that for any $0 \leq i \leq n-1$, the orbifold fan $\mathbf{\Sigma}_{i+1}$ is obtained from $\mathbf{\Sigma}_{i}$ by a star subdivision or a root stack construction. Moreover, the following properties hold.
\begin{enumerate}
    \item Each orbifold fan $\mathbf{\Sigma}_{i}$ has an ordered set of rays with a distinguished structure.
    \item for $\mathbf{\Sigma}_n$, let $(X_{\mathbf{\Sigma}_n}, \mathbf{D}_n, \acute{\mathbf{D}}_n)$ be the associated normally complex pair with a distinguished structure. Then for any $x \in |X_{\mathbf{\Sigma}_n}|$, if the normal invariant of $x$ is represented by $(\Gamma, V)$ with decomposition \eqref{eqn:decompose-char}, then \eqref{eqn:independent} holds for any $\chi_i$ from $N_{X_{\mathbf{\Sigma}_n}} {\mc D}_i$ with ${\mc D}_i \in \acute{\mathbf{D}}_n$.
\end{enumerate}
\end{prop}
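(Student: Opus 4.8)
The plan is to observe that this statement is purely combinatorial and to deduce it from \cite[Algorithm~A]{bergh}, once a faithful dictionary between our orbifold fans and Bergh's data is in place. First I would fix that dictionary. Given an orbifold fan $\mathbf{\Sigma} = (N,\Sigma,\beta)$ and a cone $\sigma \in \Sigma$ with rays $\rho_1,\dots,\rho_k$, the Cox construction \eqref{eq:exact_sequence_groups_toric} presents a neighbourhood of the distinguished point of the affine chart $X_\sigma$ as $[\mb C^{k}/\Delta_\sigma]$, where $\Delta_\sigma$ is the local analogue of $\Delta(\Sigma)$, namely the kernel of ${\mb Z}^{\sigma(1)}\otimes_{\mb Z}\mb C^{*} \to N_\sigma\otimes_{\mb Z}\mb C^{*}$ with $N_\sigma \subset N$ the saturation of the subgroup generated by $\beta(\rho_1),\dots,\beta(\rho_k)$. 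The normal invariant at that point is $(\Delta_\sigma, \bigoplus_i \chi_i)$ with $\chi_i \in \chi(\Delta_\sigma)$ the image of the $i$-th coordinate character, the toric divisor $\mc D_{\rho_i}$ corresponds to the ray $\rho_i$, and the condition \eqref{eqn:independent} for a character $\chi_i$ coming from $N_{X_{\mathbf{\Sigma}}}\mc D_{\rho_i}$ becomes exactly the lattice-theoretic ``independency'' condition on $\beta(\rho_1),\dots,\beta(\rho_k)$ inside $N_\sigma$ analysed in \cite[\S7]{bergh}. Finally, a star subdivision of $\mathbf{\Sigma}$ along $\sigma$ corresponds to the orbifold blow-up of the closure of the torus orbit of $\sigma$, the $\mathbf d$-twisted fan corresponds to an iterated root stack along toric divisors, and an ordered set of rays with a distinguished structure is precisely a ``standard pair with distinguished structure'' in Bergh's sense; the possible ineffectiveness of our orbifolds is harmless here, since the ineffective kernel is locally constant and acts trivially on the normal directions, so it does not enter \eqref{eqn:independent}.

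With the dictionary in hand I would run Bergh's Algorithm~A. One attaches to every cone of $\mathbf{\Sigma}_i$ an invariant — a tuple assembled from the number of distinguished rays of the cone that fail \eqref{eqn:independent}, refined by the divisorial data of \cite[\S7]{bergh} and by the dimension of the cone — and well-orders the set of such tuples. At each stage one locates the cone where this invariant is maximal, performs the star subdivision that Bergh's recipe prescribes along it together with the auxiliary root stacks needed to control the multiplicities of the newly created rays, inserts the new ray into the ordering at its canonical place, and declares it distinguished exactly when it is not preceded, in the new ordering, by a non-distinguished divisor, in accordance with Definition~\ref{defn:pair-distinguish}. A local computation in the cones that replace $\sigma$ shows that the offending characters there now satisfy \eqref{eqn:independent}, while cones disjoint from $\sigma$ are unaffected, so the maximal value of the invariant strictly drops; since $\Sigma$ is finite and the invariant is well-ordered, the process terminates at a fan $\mathbf{\Sigma}_n$ in which every distinguished ray obeys the independence condition, which is conclusion~(2). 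Conclusion~(1) is then immediate: root stacks leave $\Sigma(1)$, its ordering, and the distinguished subset untouched; for star subdivisions the bookkeeping rule above is tailored so that $\acute{\mathbf D}_i$ stays initial in the ordering; transversality of the toric divisors of a simplicial fan is automatic; and any divisor whose normal character becomes trivial is simply discarded as permitted by Definition~\ref{defn:pair-distinguish}.

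The hard part will be verifying that the local picture is \emph{literally} the one Bergh works with, so that his termination analysis transfers without modification. Two checks are needed: first, that passing from $\mathbf{\Sigma}$ to the reduced stacky fan — saturating $N_\sigma$ and recording the non-primitive multiplicities of the $\beta(\rho_i)$ as root-stack data — is compatible with the normal-complex bookkeeping, so that the two notions of ``star subdivision'' and ``root stack'' agree exactly; and second, that a star subdivision along the selected cone cannot raise the invariant at any newly created cone. The latter is the genuine content of Bergh's argument and reduces to an elementary-divisor computation in the sublattices $N_{\sigma'}$ of the new cones $\sigma'$, showing that the appended ray $v_\sigma = \sum_{\rho\in\sigma(1)}\beta(\rho)$ splits off the dependent character. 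Once these two points are settled, the remaining verifications — transversality, stability of the distinguished structure, and the fact that only finitely many moves occur between consecutive drops of the invariant — are routine.
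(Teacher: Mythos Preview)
The paper does not prove this proposition at all: immediately after the statement it writes ``We refer the reader to the original reference for full proof,'' and only adds the remark that $X_{\mathbf{\Sigma}_n}$ need not be compact but that the meaning should be clear. Your proposal --- setting up a dictionary between the orbifold-fan data and Bergh's combinatorial input, then invoking \cite[Algorithm~A]{bergh} --- is exactly the intended route, and you have supplied considerably more detail than the paper itself. In particular, your identification of the normal invariant at a torus-fixed point with the Cox-quotient data, and your observation that star subdivision and root stacks on fans match blow-up and root stacks on the associated toric orbifolds, are the substance of the dictionary the paper leaves implicit.
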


The content of the above statement can be summarized as ``making the distinguished divisors independent". We refer the reader to the original reference for full proof. We remark that our definition of normally complex pairs requires the ambient orbifold to be compact, which is not necessarily satisfied by $X_{\mathbf{\Sigma}_n}$, but we hope that the meaning of the above statement is transparent.

We shall use the above result in Section \ref{sec:proof-theor-refthm:d} to lower the independence the independence index of a possibly ineffective orbifold whose underlying effective orbifold is abelian, as all the steps in the construction of Proposition \ref{prop:algo-A} lift to the ineffective case.

\begin{rem}[{\cite[Remark 7.11]{bergh}}]\label{rem:orbi-fan}
    In the case of Definition \ref{defn:divisor-type}, for each divisorial type $(A(x), \mathbf{v}(x))$, using the short exact sequence \eqref{eqn:divisor-type-exact}, we can define an orbifold fan $\mathbf{\Sigma} = (N, \Sigma, \beta)$ with $N = \hom(K(x), {\mb Z})$ and $\beta$ being the dual of $K(x) \to {\mb Z}^m$. This orbifold fan has a unique maximal cone, and the rays in $\Sigma(1)$ are labeled by the corresponding indices in $\mathbf{D}$.
\end{rem}

\subsection{Divisorialification}
We establish the following analogue of \cite[Algorithm C]{bergh} for normally complex pairs.

\begin{prop}\label{prop:divisorial}
If $({\mc U}, \mathbf{D})$ is a normally complex pair then there exists a sequence of normally complex pairs
\beq
({\mc U}^{div}, \mathbf{D}^{div}) := ({\mc U}_n, \mathbf{D}_n) \xrightarrow{\pi_n} \cdots \xrightarrow{\pi_1} ({\mc U}_0, \mathbf{D}_0) = ({\mc U}, \mathbf{D})
\eeq
such that:
\begin{enumerate}
    \item each $\pi_i: {\mc U}_i \to {\mc U}_{i-1}$ is a smooth map induced from the blow-up along a suborbifold with complex normal bundle in ${\mc U}_{i-1}$;
    \item for any point $x \in |{\mc U}_n|$, the divisorial index (cf. Definition \ref{defn:divisor-index}) at $x$ defined using the pair $({\mc U}_n, \mathbf{D}_n)$ is $0$.
\end{enumerate}
If $f: ({\mc U}', \mathbf{D}') \to ({\mc U}, \mathbf{D})$ is an open embedding of a suborbifold ${\mc U}' \subset {\mc U}$ such that $\mathbf{D}' = f^* \mathbf{D}$ and the normal complex structures are compatible, then there exists a smooth map $\tilde{f}: (({\mc U}')^{div}, (\mathbf{D}')^{div}) \to ({\mc U}^{div}, \mathbf{D}^{div})$ which defines an open embedding of orbifolds $\tilde{f}: \tilde{\mc U}' \to \tilde{\mc U}$ with compatible normal complex structures such that the commutative diagram
\beq
\begin{tikzcd}
{(({\mc U}')^{div}, (\mathbf{D}')^{div})} \arrow[d] \arrow[r, "\tilde{f}"] & {({\mc U}^{div}, \mathbf{D}^{div})} \arrow[d] \\
{({\mc U}', \mathbf{D}')} \arrow[r, "f"]                                 & {({\mc U}, \mathbf{D})}                        
\end{tikzcd}
\eeq
is Cartesian.
\end{prop}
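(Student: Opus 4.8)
The plan is to adapt Bergh's divisorialification algorithm \cite[Algorithm C]{bergh} to normally complex pairs, in the same spirit that Section \ref{sec:abelianize} adapts the abelianization of \cite{bergh2019functorial}: run a resolution-style induction, blowing up at each stage the locus that carries the ``worst'' local divisorial data, and terminate using compactness of $|{\mc U}|$. Concretely, I would first combine the divisorial index of Definition \ref{defn:divisor-index} with the divisorial type of Definition \ref{defn:divisor-type} into a single invariant
\[
  \mathrm{inv}\colon |{\mc U}| \longrightarrow {\mb Z}_{\geq 0}\times\{\text{divisorial types}\},
\]
ordered lexicographically using the well-order on divisorial types recorded after Definition \ref{defn:divisor-type}, and prove that it is upper semicontinuous; the proof is the fixed-point-locus argument of Lemma \ref{lem:ord-na-semi-continuous}, since along a specialization the normal invariant of the special point dominates, under the relevant partial orders, that of the generic point, and both the divisorial index and the divisorial type are monotone under such domination. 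As $|{\mc U}|$ is compact, $\mathrm{inv}$ attains a maximum $m$ and the set $|{\mc U}_{\mathrm{bad}}|$ on which $\mathrm{inv}=m$ is closed; the induction is on $m$ for the well-order, and if $m$ already has divisorial index $0$ we are done.

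Next I would produce the blow-up center. At a point $x\in|{\mc U}_{\mathrm{bad}}|$, writing its normal invariant as $(\Gamma,V)$ with decomposition \eqref{eqn:decompose-char}, the divisorial type of $x$ singles out a canonical $\Gamma$-invariant complex subspace of $V$ — built from the residual summands $\chi_j\notin\chi_{div}(x)$ exactly as prescribed in \cite[Section 7]{bergh} — which is intrinsic to the normal invariant and not merely to the isotropy group. Hence, running the argument of Lemma \ref{lem:abelianize-center}, the local loci cut out by this subspace glue to a closed suborbifold ${\mc U}_{\mathrm{bad}}\subset{\mc U}$ whose normal bundle inherits a complex structure from the normal complex structure; it is precisely to guarantee that these local pieces have matching dimension and glue to a \emph{smooth} suborbifold that one must use the divisorial type and not just the index. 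Choosing, as in Section \ref{sec:abelianize}, a flattened normal complex structure, I would let $\pi_1$ be the blow-up of ${\mc U}$ along ${\mc U}_{\mathrm{bad}}$, insert the exceptional divisor into the ordered collection $\mathbf{D}$, and verify that $({\mc U}_1,\mathbf{D}_1)$ is again a normally complex pair: the new divisor is disjoint from, or transverse to, the strict transforms of the old ${\mc D}_i$, and its normal representation is nontrivial by construction.

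The crux — and the step I expect to be the main obstacle — is to verify that $\mathrm{inv}$ strictly decreases under $\pi_1$, so that the induction terminates. Off the exceptional divisor $\pi_1$ is an isomorphism and $\mathrm{inv}$ is unchanged; since $\mathrm{inv}$ was maximal only on ${\mc U}_{\mathrm{bad}}$, it suffices to examine points of ${\mb P}(N_{\mc U}{\mc U}_{\mathrm{bad}})$, and there the computation is toric: étale-locally $({\mc U},\mathbf{D})$ near $x$ is a toric orbifold with ordered divisors (Section \ref{subsec:toric-recall}), the blow-up is a star subdivision, and one reads off the new normal invariants from the modified fan. The point is that the character of the new exceptional divisor now lies in the divisorial subgroup $\chi_{div}$, so that either the divisorial index strictly drops, or it is preserved while the divisorial type strictly increases in the order under which ``more divisorial control'' is larger — and it is exactly this phenomenon (the raw index need not decrease pointwise) that forces the joint use of both invariants and careful tracking of the ordering on $\mathbf{D}$, via the combinatorics behind Proposition \ref{prop:algo-A}. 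Since $\mathrm{inv}$ takes values in a well-founded order and its maximum strictly decreases at each step, the process stops after finitely many blow-ups, and we take $({\mc U}^{div},\mathbf{D}^{div})$ to be the output, at which the divisorial index vanishes identically.

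Finally, functoriality under open embeddings is a formal consequence of the fact that every center ${\mc U}_{\mathrm{bad}}$ above is determined locally by the normal invariant (Definition \ref{defn:normal-invariant}), which agrees along $f\colon{\mc U}'\to{\mc U}$ with $\mathbf{D}'=f^{*}\mathbf{D}$ and compatible normal complex structures. Thus the sequence constructed for ${\mc U}$ base-changes, step by step, to a sequence of blow-ups along suborbifolds with complex normal bundle over ${\mc U}'$ — some of the centers may become empty, in which case the corresponding blow-up is trivial and is discarded — and because divisoriality is a pointwise condition the resulting pair $(({\mc U}')^{div},(\mathbf{D}')^{div})$ is identified with $({\mc U}^{div}\times_{\mc U}{\mc U}',\,f^{*}\mathbf{D}^{div})$. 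This makes the required square Cartesian, and passing to coarse spaces yields the smooth open embedding $|\tilde f|\colon|\tilde{\mc U}'|\to|\tilde{\mc U}|$, exactly as for the functoriality assertions in Theorem \ref{thm:abel} and Proposition \ref{prop:algo-A}.
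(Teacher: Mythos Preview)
Your proposal is overcomplicated and rests on two claims that the paper's proof directly refutes. The paper uses \emph{only} the divisorial index: Lemma~\ref{lem:divisorial-center} shows that the locus ${\mc U}_{div}$ on which the divisorial index attains its maximum $d$ is already a closed suborbifold with complex normal bundle, locally cut out by the vanishing of exactly the $d$ residual characters $\chi_j\notin\chi_{div}(x)$. The complex codimension is therefore $d$ at every maximal point, so there is no gluing mismatch, and your assertion that ``one must use the divisorial type and not just the index'' to guarantee smoothness of the center is incorrect. Likewise, your worry that ``the raw index need not decrease pointwise'' is unfounded here: the paper checks in one line that at any $\hat x$ on the exceptional divisor the normal character of that divisor lies in $\chi_{div}(\hat x)$ (the exceptional divisor having been appended to $\hat{\mathbf D}$), so at most $d-1$ residual characters at $\hat x$ can fail to lie in $\chi_{div}(\hat x)$, and the maximum of the divisorial index strictly drops after each blow-up. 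Termination then follows from compactness, and functoriality from the fact that the center is determined pointwise by the normal invariant.

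The divisorial type of Definition~\ref{defn:divisor-type} plays no role in this proposition; it records information about the \emph{divisorial} characters, not the residual ones, and in the paper it enters only in the main iteration of Section~\ref{sec:proof-theor-refthm:d} (the analogue of Bergh's Algorithm~E), where it is combined with the independence and toroidal indices to control the interleaving of Propositions~\ref{prop:algo-A} and~\ref{prop:kill-D-div-index}. You have conflated that later step with the present one, which is the analogue of Bergh's Algorithm~C and needs nothing beyond the divisorial index. There is also an internal inconsistency in your plan: you define the center as the locus where the combined invariant $(\text{index},\text{type})$ is maximal, but then describe it locally as the vanishing of the bad residual characters --- the latter is the maximal-\emph{index} locus ${\mc U}_{div}$, and the two need not coincide, since distinct points with maximal divisorial index can carry different divisorial types.
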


Note that we only need to blow up along suborbifolds with complex normal bundle to achieve divisorialification. The construction is based on blowing up along the locus at which the divisorial index from Definition \ref{defn:divisor-index} achieves its maximum. The following is a necessary preliminary result.

\begin{lemma}\label{lem:divisorial-center}
Let $({\mc U}, \mathbf{D})$ be a normally complex pair. Let $|{\mc U}_{div}| \subset |{\mc U}|$ be the subset which consists of points at which the divisorial index achieves its maximum. Then $|{\mc U}_{div}|$ is a closed subset of $|{\mc U}|$, and the closed substack ${\mc U}_{div} \subset {\mc U}$ which is defined to be the preimage of $|{\mc U}_{div}|$ under the coarse space map ${\mc U} \to |{\mc U}|$ is a suborbifold which is equipped with complex normal bundle.
\end{lemma}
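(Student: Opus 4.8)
The plan is to follow the template set by Lemma~\ref{lem:abelianize-center} together with its companion semicontinuity statement Lemma~\ref{lem:ord-na-semi-continuous}: first prove that the divisorial index (Definition~\ref{defn:divisor-index}), which I write $\mathrm{div}(\cdot)$, defines an upper semi-continuous function $|{\mc U}| \to {\mb Z}_{\geq 0}$, which immediately gives that $|{\mc U}_{div}|$ is closed, and then identify ${\mc U}_{div}$ in a suitable family of local charts as a linear (hence smooth) $\Gamma$-invariant locus whose normal bundle is manifestly complex and compatible with the normal complex structure. Because the divisors in $\mathbf{D}$ are finite in number and intersect transversely, around any $x \in |{\mc U}|$ I may choose an orbifold chart $\tilde V_\alpha // \Gamma_\alpha$ with $\tilde V_\alpha = {\mb R}^{k_\alpha} \oplus V_\alpha$, centered at $x$ and simultaneously adapted (via equivariant tubular neighbourhoods) to all ${\mc D}_i$ passing through $x$, so that the preimage of each such ${\mc D}_i$ is the linear slice where the corresponding $\chi_i$-coordinate vanishes, the decomposition $V_\alpha \cong \chi_1 \oplus \cdots \oplus \chi_s$ of \eqref{eqn:decompose-char} being in force with $\chi_{i_1},\dots,\chi_{i_{r'}}$ the characters of the divisors through $x$; write $D_x = \{i_1,\dots,i_{r'}\}$ for these divisorial indices and $R_x$ for the complementary (residual) indices.

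For the semicontinuity step I would argue exactly as in Lemma~\ref{lem:ord-na-semi-continuous}: let $x_\nu \to x$ inside such a chart and let $\Gamma' = \Gamma_{x_\nu} \leq \Gamma_\alpha$ be the stabilizer of $x_\nu$, so the normal representation at $x_\nu$ is the non-trivial-$\Gamma'$-isotypic part $\check V_\alpha^{\Gamma'}$. Two geometric inputs drive the estimate: since each $|{\mc D}_j|$ is closed, for $\nu$ large the divisors through $x_\nu$ form a \emph{subset} of those through $x$; and the definition of a normally complex pair forces $\Gamma'$ to act non-trivially on $N_{{\mc D}_j}{\mc U}$ at $x_\nu$, i.e. $\chi_j|_{\Gamma'} \neq 1$, for every divisor ${\mc D}_j$ through $x_\nu$. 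Restricting all the $\chi_i$ to $\Gamma'$ and using that $\Gamma_\alpha$ acts on $V_\alpha$ through an abelian quotient (geometric abelianness), one checks that a character which at $x$ was either a divisorial direction or a residual direction lying in $\chi_{div}(x)$ (cf.\ \eqref{eqn:chi-gamma-div}) can, at $x_\nu$, only either disappear from $\check V_\alpha^{\Gamma'}$ or land again in $\chi_{div}(x_\nu)$; hence the only characters that can contribute to $\mathrm{div}(x_\nu)$ are restrictions of residual characters at $x$ lying outside $\chi_{div}(x)$. This yields $\mathrm{div}(x_\nu) \leq \mathrm{div}(x)$, so $\mathrm{div}$ is upper semi-continuous, $|{\mc U}_{div}|$ is closed, and ${\mc U}_{div}$ is the substack it cuts out.

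The same local bookkeeping pins down ${\mc U}_{div}$. Fix $x \in |{\mc U}_{div}|$ and write $N = \mathrm{div}(x)$ for the maximal value of the divisorial index. The computation above shows that near $x$ the locus $\{\mathrm{div} = N\}$ is precisely the linear subspace $W_\alpha := {\mb R}^{k_\alpha} \oplus \bigoplus_{i \in D_x}\chi_i \oplus \bigoplus_{i \in R_x,\ \chi_i \in \chi_{div}(x)}\chi_i$ obtained by setting to zero exactly the residual directions whose character is \emph{not} in $\chi_{div}(x)$: one inclusion is the semicontinuity bound, and for the other one checks that along $W_\alpha$ all $N$ of the ``residual, not in $\chi_{div}$'' directions remain non-trivial for the relevant stabilizer (again using geometric abelianness), so each of them does contribute to the divisorial index there. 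Since $W_\alpha$ is a sum of one-dimensional $\Gamma_\alpha$-subrepresentations together with the trivial factor ${\mb R}^{k_\alpha}$, it is $\Gamma_\alpha$-invariant, so ${\mc U}_{div}$ is a suborbifold; its normal bundle is locally $\bigoplus_{i\in R_x,\ \chi_i\notin\chi_{div}(x)}\chi_i$, a complex vector space, and the non-triviality just mentioned guarantees that these summands lie inside $N_y U^{\Gamma_y}$ at every $y \in |{\mc U}_{div}|$, so the induced complex structure makes $N_{\mc U}{\mc U}_{div}$ a complex subbundle compatible with the normal complex structure in the sense of Definition~\ref{defn:suborbifold-normal-complex}. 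Patching the slices $W_\alpha$ over the \'etale atlas is automatic, exactly as in the proof of Lemma~\ref{lem:abelianize-center}, because the needed compatibilities are already encoded in the orbifold and normal complex structures on ${\mc U}$.

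The ineffective case adds nothing essential: the kernel of $\Gamma_\alpha \to GL(V_\alpha)$ is carried along passively and all the character-theoretic manipulations take place on the abelian effective quotient, exactly as in the remark preceding Proposition~\ref{prop:algo-A} that all the relevant steps lift to the ineffective case. I expect the only genuine work to be the semicontinuity bookkeeping in the second step --- keeping straight the interplay between restriction of characters to the smaller stabilizer $\Gamma'$, the fact that only a sub-collection of the divisors survives near a limit point, and membership in the subgroups $\chi_{div}$; everything after that is the same linear-algebra-plus-gluing pattern already used for abelianization.
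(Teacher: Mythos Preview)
Your proposal is correct and follows essentially the same approach as the paper: upper semi-continuity via restriction of characters to the stabilizer of a nearby point (reducing the potential contributors to the residual characters outside $\chi_{div}(x)$), followed by identifying the maximal locus in a centered chart as the linear subspace obtained by setting those ``bad'' residual coordinates to zero. Your write-up is in fact slightly more explicit than the paper's about why every point of $W_\alpha$ realizes the maximal divisorial index and why the normal bundle is compatible with the normal complex structure; the only point you leave implicit is that each surviving bad character also stays outside $\chi_{div}(y)$, but this follows immediately from $\Gamma_y^\perp \subset \chi_{div}(x)$, which your setup already gives.
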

\begin{proof}
If all the points in $|{\mc U}|$ have divisorial index $0$, we have ${\mc U}_{div} = {\mc U}$ and there is nothing to prove. So let us assume that there is at least one point in $|{\mc U}|$ at which the divisorial index function is nonzero.

To show that $|{\mc U}_{div}|$ is closed, it suffices to show that the divisorial index function is a upper semi-continuous function on $|{\mc U}|$. To this end, let $\{ x_{\nu} \} \subset |{\mc U}|$ be a sequence such that $x_{\nu} \to x$. Suppose the normal invariant of $x$ is represented by $(\Gamma, V)$ which comes with the decomposition \eqref{eqn:decompose-char}. Without loss of generality, we can assume that $x$ is covered by an orbifold chart ${\mb R}^k \times V$ such that $\Gamma$ acts on ${\mb R}^k$ trivially. Note that it suffices to figure out the argument for $k=0$ which we assume from now on.  Let the divisorial index of $x$ be $d$ and suppose that it is realized by characters $\chi_{s-d+1}, \dots, \chi_{s}$. By assumption, over the orbifold chart, we can identify the preimages of the divisors ${\mc D}_{i_1}, \dots, {\mc D}_{i_{r'}}$ with the complex codimension $1$ subspaces $V / \chi_{i_1}, \dots, V / \chi_{i_{r'}}$. Here the quotient should be understood as taking the complement. For $x_{\nu} = x'$, we can assume that $\Gamma_{x'} = \Gamma'$ is a subgroup of $\Gamma$. The interesting case is when $\Gamma'$ is a proper subgroup of $\Gamma$, otherwise the divisorial index of $x_{\nu}$ agrees with that of $x$, so let us assume $\Gamma' < \Gamma$. Denote the induced decomposition \eqref{eqn:decompose-char} of the restriction of $V$ to $\Gamma'$ by
\beq
\chi_{i_1}' \oplus \cdots \oplus \chi_{i_{r'}}' \oplus \chi_{i_{r' +1}}' \oplus \cdots \oplus \chi_s'.
\eeq
Then observe that
\beq
\chi_{div}(x') = \langle \chi_{i_1}', \dots, \chi_{i_{r'}}' \rangle.
\eeq
Consequently, the characters which are not contained in $\chi_{div}(x')$ form a subset of $\{ \chi_{s-d+1}', \dots, \chi_{s}' \}$, so we see that the divisorial index of $x'$ is bounded from above by that of $x$. Therefore, the divisorial index function is upper semi-continuous.

Next we investigate the closed substack ${\mc U}_{div}$. Using the notations introduced in the previous paragraph, we see that for $x \in |{\mc U}_{div}|$ covered by the orbifold chart $({\mb R}^k \times V) // \Gamma$, the preimage of $|{\mc U}_{div}|$ is locally given by
\beq
{\mb R}^k \times (\chi_1 \oplus \cdots \oplus \chi_{s-d}),
\eeq
which is preserved by the $\Gamma$-action. Arguing as in the proof of Lemma \ref{lem:abelianize-center}, we see that we can use these orbifold charts to endow ${\mc U}_{div} \to {\mc U}$ with a suborbifold structure because any subgroup of an abelian group is normal. The existence of complex structure on its normal bundle is manifest, as the $\Gamma$-representation $\chi_{s-d+1} \oplus \cdots \oplus \chi_s$ is complex.
\end{proof}

\begin{proof}[Proof of Proposition \ref{prop:divisorial}]
Given a normally complex pair $({\mc U}, \mathbf{D})$, if the divisorial index is constantly zero, the construction terminates.

Otherwise, we consider the effective and compact normally complex orbifold $\mathrm{Bl}_{{\mc U}_{div}} {\mc U}$ with divisors $\hat{\mathbf{D}} = \mathbf{D} \cup {\mb P}(N_{\mc U} {\mc U}_{div})$, in which $\mathbf{D}$ is the strict transform and the last component denotes the exceptional divisor that is endowed with an order which is greater than all other components from $\mathbf{D}$. Note that $(\mathrm{Bl}_{{\mc U}_{div}} {\mc U}, \hat{\mathbf{D}})$ is still a normally complex pair, which can be checked by looking at the local picture as exhibited in Lemma \ref{lem:divisorial-center}.

We wish to show that the maximum of the divisorial index for $(\mathrm{Bl}_{{\mc U}_{div}} {\mc U}, \hat{\mathbf{D}})$ is strictly smaller than that of the pair $({\mc U}, \mathbf{D})$. If this is the case, due to the compactness of ${\mc U}$, we just need finitely many steps to obtain $({\mc U}^{div}, \mathbf{D}^{div})$. Note that this construction is automatically functorial in the sense of the statement of Proposition \ref{prop:divisorial}: the order between divisorial indices is preserved by restricting to open suborbifolds and the blow-up is functorial for inclusion of open suborbifolds.

To finish the proof, we just need to check that for any $\hat{x} \in |\mathrm{Bl}_{{\mc U}_{div}}{\mc U}|$ on the exceptional divisor $|{\mb P}(N_{\mc U} {\mc U}_{div})|$, the divisorial index of $\hat{x}$ is strictly smaller than the divisorial index of its image $x$ under the blow-down map. Let's use the notations in the proof of Lemma \ref{lem:divisorial-center} to construct an orbifold chart $({\mb R}^k \times V) // \Gamma$ covering $x$. Then near $\hat{x}$, we can find a finite quotient orbifold embedded as a suborbifold of $\mathrm{Bl}_{{\mc U}_{div}}{\mc U}$ of the form
\beq
\big( {\mb R}^k \times (\chi_1 \oplus \cdots \oplus \chi_{s-d}) \times {\mc O}_{{\mb P}(\chi_{s-d+1} \oplus \cdots \oplus \chi_s)}(-1) \big) // \Gamma
\eeq
as exhibited in the proof of Theorem \ref{thm:abel}. For $\hat{x}$, its divisorial index must be bounded from above by $d-1$ because the normal direction of the exceptional divisor is certainly contained in $\chi_{div}(\hat{x})$.
\end{proof}

\subsection{Annihilating ${\mc D}$-divisorial index}
The aim here is to establish a counterpart of \cite[Algorithm D]{bergh} in the smooth category.

\begin{prop}\label{prop:kill-D-div-index}
Let $({\mc U}, \mathbf{D}, \acute{\mathbf{D}})$ be a normally complex pair with distinguished structure in the sense of Definition \ref{defn:pair-distinguish}. Then there exists a sequence of normally complex pairs with distinguished structure
\beq
({\mc U}^{dis}, \mathbf{D}^{dis}, \acute{\mathbf{D}}^{dis}) := ({\mc U}_n, \mathbf{D}_n, \acute{\mathbf{D}}_n) \xrightarrow{\pi_n} \cdots \xrightarrow{\pi_1} ({\mc U}_0, \mathbf{D}_0, \acute{\mathbf{D}}_0) = ({\mc U}, \mathbf{D}, \acute{\mathbf{D}})
\eeq
such that:
\begin{enumerate}
    \item each $\pi_i: {\mc U}_i \to {\mc U}_{i-1}$ is a smooth map induced from the blow-up along a suborbifold with complex normal bundle in ${\mc U}_{i-1}$;
    \item in the construction of (1), the locus along which each blow-up takes place is a closed substack of $\acute{\mathbf{D}}_{i-1}$;
    \item for any point $x \in |{\mc U}_n|$ and ${\mc D} \in \acute{\mathbf{D}}_n$, the ${\mc D}$-divisorial index (cf. Definition \ref{defn:D-div-index}) at $x$ defined using the pair $({\mc U}_n, \mathbf{D}_n)$ is $0$.
\end{enumerate}
If $f: ({\mc U}', \mathbf{D}', \acute{\mathbf{D}}') \to ({\mc U}, \mathbf{D}, \acute{\mathbf{D}})$ defines an open embedding of orbifolds ${\mc U}' \subset {\mc U}$ such that $\mathbf{D}' = f^* \mathbf{D}$ and $\acute{\mathbf{D}}' = f^* \acute{\mathbf{D}}$, then there exists a smooth map $\tilde{f}: (({\mc U}')^{dis}, (\mathbf{D}')^{dis}, (\acute{\mathbf{D}}')^{dis}) \to ({\mc U}^{dis}, \mathbf{D}^{dis}, \acute{\mathbf{D}}^{dis})$ inducing an open embedding of orbifolds $\tilde{f}: \tilde{\mc U}' \to \tilde{\mc U}$ such that the commutative diagram
\beq
\begin{tikzcd}
{(({\mc U}')^{dis}, (\mathbf{D}')^{dis}, (\acute{\mathbf{D}}')^{dis}) } \arrow[d] \arrow[r, "\tilde{f}"] & {({\mc U}^{dis}, \mathbf{D}^{dis}, \acute{\mathbf{D}}^{dis})} \arrow[d] \\
{({\mc U}', \mathbf{D}', \acute{\mathbf{D}}')} \arrow[r, "f"]                                 & {({\mc U}, \mathbf{D}, \acute{\mathbf{D}})}                        
\end{tikzcd}
\eeq
is Cartesian.
\end{prop}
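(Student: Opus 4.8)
\emph{Proof strategy.} The plan is to follow the template already used for Theorem~\ref{thm:abel} and Proposition~\ref{prop:divisorial}: attach to $({\mc U},\mathbf{D},\acute{\mathbf{D}})$ a well-founded numerical invariant on $|{\mc U}|$, show that its maximal locus is a closed suborbifold with complex normal bundle which moreover sits inside one of the distinguished divisors, blow up along it, and check that the invariant strictly decreases; compactness of ${\mc U}$ then forces termination after finitely many steps, and functoriality under open embeddings is automatic because all the invariants and centers are built from the pointwise normal invariants while the blow-up construction of Section~\ref{sec:abelianize} is functorial for open embeddings. Since Definition~\ref{defn:D-div-index} only makes sense at divisorial points, I would first invoke Proposition~\ref{prop:divisorial} (one checks that the distinguished subcollection survives, since its exceptional divisors sit at the top of the ordering) and assume throughout that the divisorial index vanishes identically, so that $\chi(x)=\chi_{div}(x)$ at every $x$.

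The reduction should be organized by processing the distinguished divisors $\acute{\mathbf{D}}=\{{\mc D}_1,\dots,{\mc D}_l\}$ in a suitable order, i.e.\ by decreasing a lexicographically ordered tuple assembled from the ${\mc D}_j$-divisorial index functions, which I write $\mathrm{ord}^{{\mc D}_j}_{div}$. Fixing ${\mc D}\in\acute{\mathbf{D}}$, the first technical point is that $x\mapsto\mathrm{ord}^{\mc D}_{div}(x)$ is upper semicontinuous: arguing exactly as in Lemma~\ref{lem:ord-na-semi-continuous} and Lemma~\ref{lem:divisorial-center}, on an orbifold chart $({\mb R}^k\times V)//\Gamma$ centred at $x$ one restricts the decomposition \eqref{eqn:decompose-char} along the stabilizer $\Gamma_{x'}\le\Gamma$ of a nearby point $x'$; since restriction of characters is compatible with the projections $\chi(x)=\chi_{div}(x)\to\chi_{\mc D}(x)$, the coefficients $c_i$ of the residual characters in the cyclic group $\chi_{\mc D}(x)$ can only drop, so $\mathrm{ord}^{\mc D}_{div}(x')\le\mathrm{ord}^{\mc D}_{div}(x)$. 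Consequently the locus where $\mathrm{ord}^{\mc D}_{div}$ attains its maximum $m_{\mc D}$ is closed, and — by the convention that the index vanishes off ${\mc D}$ — it is contained in $|{\mc D}|$ whenever $m_{\mc D}\ge 1$.

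Next I would identify the center. In a chart adapted to ${\mc D}$, writing $V=\chi_{i_1}\oplus\dots\oplus\chi_{i_{r'}}\oplus\chi_{i_{r'+1}}\oplus\dots\oplus\chi_s$ as in \eqref{eqn:decompose-char} with $\chi_{i_1},\dots,\chi_{i_{r'}}$ the divisorial directions, I expect this maximal locus to be ${\mc D}$ intersected with the vanishing locus of the residual directions $\chi_i$ whose image in $\chi_{\mc D}(x)$ is nonzero; that is a $\Gamma$-invariant complex subspace (invariance because subgroups of the abelian local structure group acting on $V$ are normal and the splitting into divisorial and residual parts relative to ${\mc D}$ is canonical, exactly as in the proofs of Lemma~\ref{lem:abelianize-center} and Lemma~\ref{lem:divisorial-center}), so the associated closed substack is a suborbifold of ${\mc D}$ whose normal bundle — the sum of the normal direction of ${\mc D}$ with those residual lines — inherits a complex structure from the normal complex structure on ${\mc U}$. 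I would then blow up along it, inserting the exceptional divisor into the ordered collection and into the distinguished subcollection at the place dictated by Bergh's bookkeeping (just after ${\mc D}$), check \'etale-locally — via the toric picture of Remark~\ref{rem:orbi-fan} — that the result is again a normally complex pair with distinguished structure which is still divisorial, and verify that the lexicographic invariant strictly decreases; the mechanism is that along the new exceptional divisor the residual contributions to $\chi_{\mc D}$ are absorbed into the normal direction of the exceptional divisor, so each such blow-up lowers the relevant $\mathrm{ord}^{\mc D}_{div}$ by at least one without raising the indices attached to the already-processed divisors. Compactness then gives termination, producing $({\mc U}^{dis},\mathbf{D}^{dis},\acute{\mathbf{D}}^{dis})$.

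The main obstacle is this last local computation: pinning down the center so that the ${\mc D}$-divisorial index genuinely decreases, correctly tracking the position of the exceptional divisor in the ordered collection (hence the distinguished structure), and ensuring that the blow-ups performed for later distinguished divisors never resurrect the index for earlier ones. This is precisely the content of Bergh's Algorithm~D \cite{bergh}, and I expect the transplantation to succeed because at a divisorial point with geometrically abelian stabilizer the local model along the normal directions is governed by the combinatorics of an orbifold fan, so the character bookkeeping reduces to the toric statements already isolated in Proposition~\ref{prop:algo-A}. Finally, for functoriality one notes that $\mathrm{ord}^{\mc D}_{div}$, its maximal locus, and the chosen center are all determined pointwise by normal invariants, hence restrict compatibly to any open suborbifold carrying a compatible normal complex structure, and that blow-ups along such centers commute with open embeddings (cf.\ the remark following Theorem~\ref{thm:abel}); the whole tower therefore restricts compatibly, yielding the asserted Cartesian square and, on coarse spaces, the smooth open embedding $|\tilde{f}|$.
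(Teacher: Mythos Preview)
Your outline matches the paper's: use the semi-continuity and center-identification of Lemma~\ref{lem:distinguish-center}, blow up the maximal locus of the ${\mc D}$-divisorial index for a chosen ${\mc D}\in\acute{\mathbf{D}}$, insert the exceptional divisor into both $\mathbf{D}$ and $\acute{\mathbf{D}}$, and iterate until all such indices vanish; functoriality then follows because the centers are determined by pointwise normal invariants. Two discrepancies are worth flagging. First, invoking Proposition~\ref{prop:divisorial} at the start is out of place: the paper reads Proposition~\ref{prop:kill-D-div-index} as applying to already-divisorial pairs (which is how it is used in the proof of Theorem~\ref{thm:deorbi}), and folding the divisorialification blow-ups into the present sequence would violate item~(2), since those centers need not lie in any distinguished divisor. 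Second, the paper does \emph{not} defer the key decrease to Proposition~\ref{prop:algo-A} or toric combinatorics; it verifies directly, in a local chart, three claims: for every strict transform $\tilde{\mc D}\in\mathbf{D}$ the $\tilde{\mc D}$-divisorial index of a point on the exceptional divisor is at most that of its image under blow-down; when $\tilde{\mc D}={\mc D}$ the inequality is strict; and the index attached to the new exceptional divisor ${\mc F}$ at any point is strictly smaller than the ${\mc D}$-divisorial index of the image point. These three claims together yield termination. As a bookkeeping matter, the paper inserts the exceptional divisor at the very end of the ordering (greater than every element of $\mathbf{D}$), not ``just after ${\mc D}$,'' and processes the distinguished divisors starting from the one of minimal order carrying a nonzero index.
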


As the reader could guess, before providing a proof of Proposition \ref{prop:kill-D-div-index}, we need to characterize the loci along which the ${\mc D}$-divisorial index achieves its maximum.

\begin{lemma}\label{lem:distinguish-center}
Let $({\mc U}, \mathbf{D})$ be a normally complex pair, and suppose ${\mc D} \in \mathbf{D}$ is a divisor. Let $|{\mc U}_{dis}| \subset |{\mc U}|$ be the subset which consists of points at which the ${\mc D}$-divisorial index achieves its maximum. Then $|{\mc U}_{dis}|$ is a closed subset of $|{\mc U}|$, and the closed substack ${\mc U}_{dis} \subset {\mc U}$ which is defined to be the preimage of $|{\mc U}_{dis}|$ under the coarse space map ${\mc U} \to |{\mc U}|$ is a suborbifold with complex normal bundle.
\end{lemma}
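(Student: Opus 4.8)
The plan is to follow the template already established in the proofs of Lemma \ref{lem:abelianize-center} and Lemma \ref{lem:divisorial-center}: first prove upper semi-continuity of the ${\mc D}$-divisorial index to conclude that $|{\mc U}_{dis}|$ is closed, then exhibit an explicit \'etale atlas of the preimage ${\mc U}_{dis}$ realizing it as a suborbifold, and finally read off the complex structure on its normal bundle from the ambient normal complex structure.

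For the semi-continuity, I would take a convergent sequence $x_\nu \to x$ in $|{\mc U}|$ and work in an orbifold chart $({\mb R}^k \times V)//\Gamma$ centered at $x$, where $V \cong \chi_1 \oplus \cdots \oplus \chi_s$ is the decomposition \eqref{eqn:decompose-char} of the normal invariant of $x$ with respect to $\mathbf{D}$ (reducing as usual to $k=0$). I may assume $\Gamma_{x_\nu} = \Gamma' \leq \Gamma$, with the interesting case $\Gamma' < \Gamma$. The key observations are: (i) the divisors of $\mathbf{D}$ passing through $x_\nu$ are a \emph{subset} of those passing through $x$, and their normal characters restrict to the corresponding $\chi_i'$; hence $\chi_{div}(x_\nu) = \langle \chi_{i_1}', \dots, \chi_{i_{r''}}' \rangle$ for some $r'' \le r'$, a \emph{quotient-and-restriction} of $\chi_{div}(x)$; (ii) the cyclic group $\chi_{\mc D}(x_\nu) = \chi_{div}(x_\nu)/\chi_{div}(x_\nu)^{\mc D}$ receives a natural surjection-type comparison map from (a subquotient of) $\chi_{\mc D}(x)$, so that the multiplicities $c_i$ computing the ${\mc D}$-divisorial index \eqref{eqn:d-div-index} can only drop under restriction; and (iii) the residual characters $\chi_{i_{r'+1}}', \dots, \chi_s'$ at $x_\nu$ arise by restriction from those at $x$, and restriction of a character that already lies in $\chi_{div}(x)$ continues to lie in $\chi_{div}(x_\nu)$. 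Together these give ${\mc D}$-divisorial index$(x_\nu) \le$ ${\mc D}$-divisorial index$(x)$; I should be slightly careful here since we are in the \emph{divisorial} setting where \eqref{eqn:decompose-char} has all characters in $\chi_{div}$, so the combinatorics is cleaner than in the general case.

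For the suborbifold statement, in the chart $({\mb R}^k \times V)//\Gamma$ covering a point $x \in |{\mc U}_{dis}|$ the locus where the ${\mc D}$-divisorial index is maximal is cut out by setting the residual coordinates realizing the maximal sum \eqref{eqn:d-div-index} to zero, i.e. it is locally ${\mb R}^k \times (\chi_{i_1} \oplus \cdots \oplus \chi_{i_{r'}} \oplus (\text{the $\chi_i$ with $c_i = 0$}))$, a linear subspace preserved by $\Gamma$ because any subgroup of the abelian group $\Gamma/\ker$ (recall ${\mc U}$ is geometrically abelian, so we may take the relevant quotient to be abelian) is normal, exactly as in the proofs of Lemma \ref{lem:abelianize-center} and Lemma \ref{lem:divisorial-center}. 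Patching these charts along $\coprod_\alpha$ uses the compatibility already built into the normal complex structure on ${\mc U}$, and the complement subspace $\bigoplus_{c_i \ne 0} \chi_i$ is a complex $\Gamma$-representation, endowing $N_{\mc U}{\mc U}_{dis}$ with a complex structure compatible with the normal complex structure. The main obstacle I anticipate is purely bookkeeping: correctly tracking how the cyclic quotient $\chi_{\mc D}(x)$ and the multiplicities $c_i$ behave under the passage to a subgroup $\Gamma' < \Gamma$ — in particular verifying that the image of a residual character under \eqref{eqn:char-proj-D} at $x_\nu$ is a nonnegative multiple bounded by the corresponding multiplicity at $x$ — and ensuring that the divisor ${\mc D}$ through $x$ either passes through $x_\nu$ (so its character survives) or does not (in which case the ${\mc D}$-divisorial index at $x_\nu$ is $0$ by definition, and there is nothing to prove). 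Once this is in hand the argument mirrors the two earlier lemmas essentially verbatim.
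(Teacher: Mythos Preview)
Your proposal is correct and follows essentially the same approach as the paper's proof. The paper makes your ``surjection-type comparison map'' in (ii) explicit via the commutative square
\[
\begin{tikzcd}
\chi(x) \arrow[r] \arrow[d] & \chi_{\mc D}(x) \arrow[d] \\
\chi(x') \arrow[r] & \chi_{\mc D}(x')
\end{tikzcd}
\]
with surjective vertical arrows induced by restriction of characters, and the right vertical arrow taking generator to generator; this is precisely the bookkeeping you anticipate in your last paragraph, and your local description of ${\mc U}_{dis}$ as ${\mb R}^k \times (\chi_{i_1} \oplus \cdots \oplus \chi_{i_{r'}} \oplus \bigoplus_{c_i = 0} \chi_i)$ matches the paper's $V_{dis}$ exactly.
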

\begin{proof}
If all the points in $|{\mc U}|$ have ${\mc D}$-divisorial index $0$, by definition we have ${\mc U}_{dis} = {\mc U}$. So let's assume that there exist one point in $|{\mc U}|$ whose ${\mc D}$-divisorial index is nonzero.

To prove that $|{\mc U}_{dis}|$ is closed, it suffices to show that the ${\mc D}$-divisorial function defined over $|{\mc U}|$ is upper semi-continuous. This property is local in nature. Without loss of generality, if the normal invariant of $x \in |{\mc U}|$ is realized by $(\Gamma, V)$, we can replace the orbifold ${\mc U}$ by the global quotient orbifold $({\mb R}^k \times V) // \Gamma$ where $\Gamma$ acts trivially on ${\mb R}^k$, and $x$ corresponds to the image of $0 \in {\mb R}^k \times V$ in the coarse space. Assuming $k=0$ does not affect the argument, so we choose to do so. Using the decomposition \eqref{eqn:decompose-char}, based on the assumption that $({\mc U}, \mathbf{D})$ is everywhere divisorial, we know that
\beq
\chi_{div}(x) = \chi(x) = \langle \chi_1, \dots, \chi_s \rangle.
\eeq
Using the same notation, we assume that ${\mc D}$ corresponds to the complex codimension $1$ subspace $\{ \chi_{i_j} = 0 \}$. It suffices to show that for any other point in $x' \in |V // \Gamma|$, its ${\mc D}$-divisorial index is bounded from above by the quantity \eqref{eqn:d-div-index}. We proceed by assuming that $x'$ is contained in the image of $|{\mc D}|$, otherwise the proof is already finished. Suppose that the stabilizer of $x'$ corresponds to the subgroup $\Gamma' \leq \Gamma$. Because the divisorial index of $x'$ is $0$, we know that $\chi(x') = \chi_{div}(x')$. By definition, $\chi(x') \subset \chi(\Gamma')$ is generated by the $\Gamma'$-characters from restricting the $\Gamma$-characters $\chi_{1}, \dots, \chi_{s}$ to $\Gamma'$, which we denote by $\chi_1', \dots, \chi_s'$. The character associated to the normal direction of ${\mc D}$ at $x'$ is then denoted by $\chi_{i_j}'$. The restriction of any divisorial character $\chi_{i_1}, \dots, \chi_{i_{r'}}$ of $x$ to the group $\Gamma'$ is trivial if the corresponding divisor does not pass through $x'$. As a result, we just need to focus on the contribution of $\chi_{i_{r' +1}}', \dots, \chi_s'$ to the formula \eqref{eqn:d-div-index}. The group homomorphism \eqref{eqn:char-proj-D} associated to $x'$ can be written as
\beq
\chi(x') = \chi_{div}(x') \to \chi_{div}(x') / \chi_{div}(x')^{\mc D} = \langle \chi_1', \dots, \chi_s' \rangle / \langle \chi_1', \dots, \hat{\chi_{i_j}'}, \dots, \chi_s' \rangle.
\eeq
It fits into a commutative diagram 
\beq
\begin{tikzcd}
\chi(x) \arrow[d] \arrow[rr] &  & {\langle \chi_1, \dots, \chi_s \rangle / \langle \chi_1, \dots, \hat{\chi_{i_j}}, \dots, \chi_s \rangle} \arrow[d] \\
\chi(x') \arrow[rr]          &  & {\langle \chi_1', \dots, \chi_s' \rangle / \langle \chi_1', \dots, \hat{\chi_{i_j}'}, \dots, \chi_s' \rangle}     
\end{tikzcd}
\eeq
in which the vertical arrows are induced by restricting a $\Gamma$-character to a $\Gamma'$-character so that they are surjective. For a character $\chi_j' \in \{ \chi_{i_{r' +1}}', \dots, \chi_s' \}$ coming from the restriction of the character $\chi_j$, because the right vertical arrow takes a generator to a generator and takes the image of $\chi_j$ to $\chi_j'$, we know that the contribution of $\chi_j'$ to the ${\mc D}$-divisorial index in exactly $c_j$ as in \eqref{eqn:d-div-index} if $\chi_j'$ is nontrivial. Therefore, the ${\mc D}$-divisorial index of $x'$ is bounded from above by \eqref{eqn:d-div-index}.

To construct the orbifold structure, consider the \'etale chart ${\mb R}^k \times V$, where the preimage of $|{\mc U}_{dis}|$ corresponds to ${\mb R}^k \times V_{dis}$, in which $V_{dis}$ is the direct sum of $\chi_{i_1}, \dots \chi_{i_{r'}}$ and $\chi_j \in \{ \chi_{i_{r' +1}}, \dots, \chi_s \}$ such that the image of $\chi_j$ under the map \eqref{eqn:char-proj-D} is $0$. Accordingly, the finite global quotient orbifold $({\mb R}^k \times V_{dis}) // \Gamma$ can be used to construct a suborbifold structure on ${\mc U}_{dis}$. The fiber of the normal bundle $N_{\mc U} {\mc U}_{dis}$ can be identified with $V / V_{dis}$, which has a natural complex structure.
\end{proof}

\begin{proof}[Proof of Proposition \ref{prop:kill-D-div-index}]
Given a normally complex pair with distinguished structure $({\mc U}, \mathbf{D}, \acute{\mathbf{D}})$, if the ${\mc D}$-divisorial index is constantly zero for any ${\mc D} \in \acute{\mathbf{D}}$, the construction terminates.

Otherwise, we choose ${\mc D} \in \acute{\mathbf{D}}$ with minimal order such that there exists at least a point in $|{\mc U}|$ at which the ${\mc D}$-divisorial index is nonzero. Using ${\mc U}_{dis}$ defined from Lemma \ref{lem:distinguish-center}, we consider the effective and compact normally complex orbifold $\mathrm{Bl}_{{\mc U}_{dis}} {\mc U}$. It is endowed with divisors $\hat{\mathbf{D}} = \mathbf{D} \cup {\mb P}(N_{\mc U} {\mc U}_{div})$, in which $\mathbf{D}$ is the strict transform and the last component denotes the exceptional divisor that is endowed with an order which is greater than all other components from $\mathbf{D}$. The distinguished structure $\acute{\mathbf{D}}$ can also be extended to $\acute{\hat{\mathbf{D}}}$ by making the exceptional divisor ${\mc F} = {\mb P}(N_{\mc U} {\mc U}_{div})$ distinguished. By construction, the ordering constraint in Definition \ref{defn:pair-distinguish} is satisfied.

We claim the following holds for the normally complex pair with distinguished structure $(\mathrm{Bl}_{{\mc U}_{dis}} {\mc U}, \hat{\mathbf{D}}, \acute{\hat{\mathbf{D}}})$:
\begin{enumerate}
    \item given any $\hat{x} \in |{\mc F}|$ mapped to $x \in |{\mc U}|$ under the blow-down map, its $\tilde{\mc D}$-divisorial index for any $\tilde{\mc D}$ coming from the strict transform of a divisor $\tilde{\mc D}$ (here we abuse the notation by using the same symbol for the divisor in the two spaces) in $\mathbf{D}$ is at most the $\tilde{\mc D}$-divisorial index of $x$;
    \item if ${\mc D} = \tilde{\mc D}$ in (1), the ${\mc D}$-divisorial index of $\hat{x}$ is strictly smaller than that of $x$;
    \item for any $\hat{x} \in |{\mc F}|$, its ${\mc F}$-divisorial index is strictly smaller than the ${\mc D}$-divisorial index of its image under the blow-down map $x$.
\end{enumerate}
If the claim holds, by ranging over all the divisors in $\acute{\mathbf{D}}$ according to the given ordering, we can strictly decrease the maximum of the ${\mc D}$-divisorial index. By the compactness of ${\mc U}$, the construction terminates after applying the above blow-up construction for finitely many times. The desired functoriality statement holds as the ordering of the ${\mc D}$-divisorial indices are preserved by restricting to suborbifolds.

It remains to prove the claim. As in the proof of Lemma \ref{lem:distinguish-center}, we can assume that the orbifold ${\mc U}$ is isomorphic to the finite global quotient orbifold $V // \Gamma$, and $V$ is decomposed as \eqref{eqn:decompose-char}. Using the notations introduced in the proof of Lemma \ref{lem:distinguish-center}, ${\mc D}$ corresponds to the subspace $\{ \chi_{i_j} = 0 \}$, and we can decompose $V$ as a direct sum $V_{dis} \oplus V_{dis}^{\perp}$ such that $V_{dis} \times \{0\}$ defines an \'etale chart of ${\mc U}_{dis}$. After applying the blow-up, we can locally identify $\mathrm{Bl}_{{\mc U}_{dis}} {\mc U}$ with the global quotient $({\mc O}_{{\mb P}(V_{dis})}(-1) \times V_{dis}^{\perp}) // \Gamma = ((V_{dis} \setminus \{0\}) \times V_{dis}^{\perp}) // \Gamma$. We can assume that $\hat{x}$ is contained in $|({\mb P}(V_{dis}) \times V_{dis}^{\perp}) // \Gamma|$ We prove the claims one after another.
\begin{enumerate}
    \item Suppose the coordinate along $\chi_{j}$ being $0$ specifies the divisor $\tilde{\mc D}$. Because we have presented the blow-up locally as $((V_{dis} \setminus \{0\}) \times V_{dis}^{\perp})$, the same argument in the proof of Lemma \ref{lem:distinguish-center} shows that the $\tilde{\mc D}$-divisorial index of $\hat{x}$ is bounded from above by $\tilde{\mc D}$-divisorial index of the image of $0 \in V$ in the coarse space.
    \item Keep using the notations in (1). In the special case of ${\mc D} = \tilde{\mc D}$, by assumption we know that $\chi_j \subset V_{dis}^{\perp}$, which mean that the image of $\chi_j \in \chi(x)$ under the map \eqref{eqn:char-proj-D} is nonzero. For the point $\hat{x}$ in the blow-up, the image of the character $\chi_j \in \chi(\hat{x})$ under the map \eqref{eqn:char-proj-D} is necessarily zero. Therefore, the quantity \eqref{eqn:d-div-index} strictly decreases.
    \item Now suppose that $\hat{x} \in |{\mc F}| = |({\mb P}(V_{dis}) \times V_{dis}^{\perp}) // \Gamma|$. If $\chi_{j'} \subset V_{dis}^{\perp}$, namely $\chi_{j'}  $ is a character which is mapped to a nonzero element under \eqref{eqn:char-proj-D}, then it contributes $c_{j'}$ to \eqref{eqn:d-div-index}. In other words, the image of $\chi_{j'}$ is $c_{j'}$ times the generator of $\chi_{\mc D}(x)$. For the group
    \beq
    \chi_{\mc F}(\hat{x}) = \chi_{div}(\hat{x}) / \chi_{div}(\hat{x})^{\mc F}
    \eeq
    as defined in \eqref{eqn:chi-D-x}, if we view $\hat{x} \in |((V_{dis} \setminus \{0\}) \times V_{dis}^{\perp}) // \Gamma|$, we see that the image of $\chi_{j'}$, which is now thought of as an element in $\chi(\Gamma_{\hat{x}})$ after restriction, is actually $c_{j'}-1$ times the generator of $\chi_{\mc F}(\hat{x})$. So claim (3) is also proven after ranging over all $\chi_{j'} \subset V_{dis}^{\perp}$.
\end{enumerate}
\end{proof}

\subsection{Proof of Theorem \ref{thm:deorbi}}
\label{sec:proof-theor-refthm:d}

Starting from a normally complex pair $({\mc U}, \mathbf{D})$, we first apply Proposition \ref{prop:divisorial} to obtain a divisorial normally complex pair $({\mc U}^{div}, \mathbf{D}^{div})$. Then the deorbification is realized by iteratively applying the following four steps.
\begin{enumerate}
    \item Define an invariant for points in $|{\mc U}^{div}|$ to be the lexicographical composition of the independence index (Definition \ref{defn:independence-ind}), the toroidal index (Definition \ref{defn:toroidal-ind}), and the divisorial type (Definition \ref{defn:divisor-type}). Denote the locus along which such an invariant achieves its maximum by $|{\mc U}^{div}_{lex}|$.
    \item The divisorial type is constant along $|{\mc U}^{div}_{lex}|$ by definition. For any $x \in |{\mc U}^{div}_{lex}|$, its divisorial type $(A(x), \mathbf{v}(x))$ defines an orbifold fan $\mathbf{\Sigma}$ by Remark \ref{rem:orbi-fan}. Let $\mathbf{\Sigma}_0 = \mathbf{\Sigma}$ and apply the construction in Proposition \ref{prop:algo-A} to obtain a sequence of orbifold fans
    \beq\label{eqn:sigma-0-n}
    \mathbf{\Sigma}_n \to \cdots \to \mathbf{\Sigma}_0.
    \eeq
    \item Because each ray in $\Sigma(1)$ of $\mathbf{\Sigma}$ is marked by certain index from $\mathbf{D}^{div}$, every star subdivision lifts to the corresponding (possibly) ineffective orbifold chart $V//\Gamma$ and can be globalized to the blow-up of ${\mc U}^{div}$ along the corresponding intersection of divisors from $\mathbf{D}^{div}$. Similarly, the root stack construction along toric divisors can be globalized to a root stack construction along the corresponding divisor in the ambient orbifold. As a result, the sequence \eqref{eqn:sigma-0-n} can be lifted to a sequence of blow-ups and root constructions of $({\mc U}^{div}, \mathbf{D}^{div})$, whose output is a normally complex pair with distinguished structure $(({\mc U}^{div})', (\mathbf{D}^{div})', (\acute{\mathbf{D}}^{div})')$.
    \item Applying Proposition \ref{prop:kill-D-div-index} to $(({\mc U}^{div})', (\mathbf{D}^{div})', (\acute{\mathbf{D}}^{div})')$, we obtain a normally complex pair with distinguished structure $({\mc U}'', \mathbf{D}'', \acute{\mathbf{D}}'')$ whose ${\mc D}$-divisorial index is $0$. Then we forget the distinguished structure to get a normally complex pair $({\mc U}'', \mathbf{D}'')$.
\end{enumerate}

Then we define $(\tilde{\mc U}, \tilde{\mathbf{D}})$ to be the final output of the above algorithm. The proof that the above lexicographical composition of invariants decreases strictly for each time of the iteration is exactly the same as the argument in \cite[Algorithm E]{bergh}, because everything can be checked on the toric local models described by orbifold fans. By compactness, we know that the above algorithm must terminate after finitely many steps.

By construction, we see that the independence index of each point in $|\tilde{\mc U}|$ is $0$, which implies that $|\tilde{\mc U}|$ is a smooth manifold. Because all the modifications are functorial under open embeddings, so is the final output. \qed

\section{From orbifolds to manifolds}
\label{sec:from-orbif-manif}

In this section, we assemble the proofs of all the results stated in the introduction, involving geometric or orbifold bordism.

\subsection{Complex orbifold bordism}\label{subsec:Omega-C}

If ${\mc U}$ is a orbifold with boundary and $(X,A)$ is a pair of topological spaces, we use $f: {\mc U} \to (X,A)$ to denote a continuous map $f: ({\mc U}, \partial {\mc U}) \to (X,A)$ and express such data as $({\mc U}, f)$ \emph{over} $(X,A)$. The following definition is a convenient way to bypass the need to formulate the notion of (derived) orbifold with corners:
\begin{defn}
  A pair $({\mc U}, f)$ and $({\mc U}', f')$ of orbifolds over $(X,A)$, are  \emph{bordant}  if there exist an orbifold with boundary $\hat{\mc U}$ and a continuous map $F: \hat{\mc U} \to X$ such that:
    \begin{enumerate}
        \item there exists a codimension $0$ embedding ${\mc U} \coprod {\mc U}' \to \partial \hat{\mc U}$ along which $F$ restricts to $f \coprod f'$;
        \item the map $F$ restricts a continuous map of pairs $(\hat{\mc U}, \partial \hat{\mc U}  \setminus {\mc U}^{\circ} \coprod ({\mc U}')^{\circ}) \to (X,A)$.
    \end{enumerate}
\end{defn}
The \emph{orbifold bordism group} of $(X,A)$ is then defined to be the free abelian group generated by isomorphism classes of $({\mc U}, f)$, with ${\mc U}$ compact, over $(X,A)$ modulo the equivalence relations generated by bordism. It is straightforward to see that this is a homology theory, with boundary homomorphism given by restricting $f$ to the boundary of $\mc U$.

One can impose tangential structures on ${\mc U}$ to define \emph{oriented} or \emph{framed} orbifold bordism groups. We focus our attention on the following case:
\begin{defn}
  A \emph{stable complex structure} on an orbifold ${\mc U}$ (with boundary) consists of a complex a complex structure on $T{\mc U} \oplus \underline{\mb R}^k$, for some integer $k$.
  \end{defn}
  The boundary of any stably complex orbifold admits a natural stable complex structure, given by identifying the choice of normal vector with an additional ${\mb R}$ stabilization factor. Then we say that any stably complex pair $({\mc U}, f)$ and $({\mc U}', f')$ over $(X,A)$ are stably complex bordant if there exists a bordism $(\hat{\mc U}, F)$ as above further so that
\begin{itemize}
    \item the stable complex structure on $\hat{\mc U}$ restricts to the stable complex structures of ${\mc U}$ and ${\mc U}'$ for some choice of normal vector field.
\end{itemize}

\begin{defn}
The \emph{complex orbifold bordism} group of a pair of topological spaces $(X,A)$
\beq
\Omega_*^{U}(X,A)
\eeq
is the abelian group (with additive structure coming from disjoint union) generated by compact orbifolds with boundary over $(X,A)$, equipped with a stable complex structure, modulo the  equivalence relation generated by stably complex bordisms. It is \emph{graded} by the (virtual) dimension.
\end{defn}

\subsection{Constructing manifolds from orbifolds}\label{subsec:splitting-mani}

The goal of this section is to prove Theorem  \ref{thm:split_manifold_to_orbifold}. Although this result is stated for stable complex orbifolds, our construction is in fact designed to derive a manifold from a normally complex orbifold. Since every step in the algorithm preserves the additional datum of the stable complex structure, working in the normally complex setting implies the desired result. In view of the applicability to concrete geometric problems from symplectic topology, we thus prefer to state our construction just using the normal complex structures. 

\begin{mdframed}
\textbf{ALGORITHM I: From orbifolds to manifolds}
\end{mdframed}
\vspace{0.2cm}

\noindent \underline{\it Input:} A compact normally complex orbifold ${\mc Z}$. 
\vspace{0.2cm}

\noindent \underline{\it Step 1}
Invoke Theorem \ref{thm:abel} to construct a smooth map from a compact normally complex orbifold 
\beq
\pi^{ab}: {\mc Z}^{ab} \to {\mc Z}
\eeq
such that ${\mc Z}^{ab}$ only has abelian stabilizers.

\vspace{0.2cm}

\noindent \underline{\it Step 2}
Conduct the construction from Theorem \ref{thm:deorbi} to the pair $({\mc Z}^{ab}, \emptyset)$ and forget the data of the divisors $\tilde{\mathbf{D}}$ to obtain an effective and compact normally complex compact orbifold $\tilde{\mc Z}^{ab}$ with a smooth map
\beq
\pi^{M}: \tilde{\mc Z}^{ab} \to {\mc Z}^{ab}
\eeq
such that $|\tilde{\mc Z}^{ab}|$ is a smooth manifold.

The upshot is that we obtain a compact orbifold $\tilde{\mc Z}^{ab}$ with a map $\tilde{\mc Z}^{ab} \to {\mc Z}$ such that $|\tilde{\mc Z}^{ab}|$ is a smooth manifold by applying the above two steps. \qed

We call the combination of the two steps above the \emph{destackification algorithm}.

\begin{rem}\label{rem:destack-algorithm}
\begin{enumerate}
    \item The input of the above algorithm has ambient orbifold possibly with boundary. As remarked before, although Theorem \ref{thm:abel} and Theorem \ref{thm:deorbi} are stated for (derived) orbifolds without boundary, the same constructions go through for (derived) orbifolds with boundary by observing that for local charts for the form $({\mb R}_{\geq 0} \times {\mb R}^{n-1}) // \Gamma$, the group $\Gamma$ acts trivially on the ${\mb R}_{\geq 0}$ direction, and all the arguments only concern the directions \emph{normal} to the fixed point loci.
    \item The only auxiliary choices made in the destackification algorithm is the choice of flattened Riemannian metric in the definition of blowups. As discussed in Section \ref{lem:flattening}, this is a contractible choice, and the result of the blowup procedure is thus well-defined up to bordism.
\end{enumerate}
\end{rem}

We now prove our main result concerning the relationship between manifold and orbifold bordism:

\begin{proof}[Proof of Theorem \ref{thm:split_manifold_to_orbifold}]
 To show that Algorithm I is compatible with bordism, let $\mc Z$ be a normally complex orbifold, which bounds a compact orbifold  $\hat{\mc Z} $. Because of the compatibility of normal complex structures and the functoriality of open embedding as stated in Theorem \ref{thm:abel} and Theorem \ref{thm:deorbi}, we can use the collar embedding of the boundary to make sure that 
        \beq
        \widetilde{\hat{\mc Z}}^{ab}
        \eeq
        defines a null bordism of $|\tilde{\mc Z}^{ab}|$. Assuming that $\mc Z$ is a normally complex orbifold with boundary, lying over a pair $(X,A)$, and that $ \hat{\mc Z}$ bounds over this pair, we obtain the desired natural transformation by composing the projection $\widetilde{\hat{\mc Z}}^{ab} \to  \hat{\mc Z}$ with the map to the target space.

        To show compatibility with the module action, it suffices to prove that, if $M$ s a manifold, then the resolution of singularities of $M \times \mc Z $ is the product of $M$ with the resolution of singularities of $\mc Z$. This follows by noting that every step in the Algorithm I, which involves taking blowups and root stacks, will have such a product decomposition.
        
\end{proof}

\subsection{Geometric bordisms}
\label{sec:equivariant-bordisms-no-d}
We begin with results concerning $\Gamma$-equivariant geometric bordism, for a finite group $\Gamma$, which assigns to a pair $(X,A)$ of topological spaces the graded group $\Omega^{U,\Gamma}_*(X,A)$ of bordism classes of maps of pairs $(M,\partial M) \to (X,A)$, where $M$ is a compact stably complex $\Gamma$-manifold.  
\begin{proof}[Proof of Theorem \ref{thm:inclusion_ordinary_geometric_splits}]
  Our goal is to construct a map
  \begin{equation} \label{eq:splitting_map}
        \Omega^{U,\Gamma}_*(X,A) \to \Omega^{U}_*(X,A),
      \end{equation}
      which assigns to a free $\Gamma$-manifold its quotient by $\Gamma$. Appealing to Theorem \ref{thm:split_manifold_to_orbifold}, we may define such a map as the composition of the inclusion of equivariant bordism in orbifold bordism which takes a $\Gamma$-manifold to the quotient orbifold, followed by the splitting we construct.
    \end{proof}

    Tracing through the construction shows that the map we produced  assigns to a general $\Gamma$-manifold the result of applying the resolution of singularities algorithm to the associated orbifold quotient, consisting of the two steps of abelianization of the fixed point locus and desingularization of the associated quotient space. The former replaces the manifold $M$, by blow-ups along fixed point strata, with a $\Gamma$-manifold with the property that the representations associated to all fixed points are diagonalizable, following the proof of Theorem \ref{thm:abel}, appearing at the end of Section \ref{thm:abel}, and the latter procedure is the desingularization procedure from Theorem \ref{thm:deorbi}. As both procedures are algorithmic, the construction is functorial, and compatible both with cobordisms and with boundary homomorphisms. 

    \begin{rem} \label{rem:other_choices_of_splitting}
      There is some flexibility in the properties satisfied by the spliting map. For example, we may alternately set it to vanish on all bordism classes represented by manifolds for which the isotropy at a generic point is non-trivial, and use resolution of singularities only on the component coming from bordism classes equipped with faithful actions. Alternatively, we may factor Equation \eqref{eq:splitting_map} through a map to a direct sum of copies of $\Omega^{U}_*(X,A) $, indexed by conjugacy classes of normal sugroups in $\Gamma$, and which record the generic isotropy.
    \end{rem}

    The next proof requires a small modification of the results of Section \ref{sec:abelianize} and \ref{sec:bergh}, by imposing an additional condition of equivariance:
    \begin{proof}[Proof of Theorem \ref{thm:splitting_compatible_surjections}]
      Given a surjection $\Gamma \to \Gamma'$ with kernel $K$, and pair $(Y,B)$ of $\Gamma'$ spaces, we construct a map
        \begin{equation}
        \Omega^{U,\Gamma}_*(Y,B) \to \Omega^{U,\Gamma'}_*(Y,B).
      \end{equation}
 Given a  $\Gamma$-manifold $M$, we consider the quotient orbifold $M // K$, which lies over $(Y,B)$ and carries a residual $\Gamma'$ action.  We then apply the constructions of of Section \ref{sec:abelianize} and \ref{sec:bergh}, $\Gamma'$-equivariantly, to obtain a manifold over the pair $(Y,B)$: equivariance can be achieved because the only choice is that of a flattening of the almost complex structure near the fixed point strata, which can be done $\Gamma$-equivariantly on $M$, hence descends to a $\Gamma'$-equivariant choice on $M // K$.
    \end{proof}

\section{Functorial embedded resolution of singularities and universal zero loci}\label{sec:ATW}

\subsection{Abramovich--Temkin--Wlodarczyk's functorial resolution} 
We recall the functorial embedded resolution of singularities in characteristic $0$ developed in \cite{abramovich2019functorial}. This approach is well-adapted to our local-to-global construction in Section \ref{sec:derive-manifolds}, though earlier results on functorial resolution of singularities, e.g., \cite{Bierstone-Milman, Wlo05, kollar07}, may also suffice for our purpose. Throughout our discussion, we work over ${\mb C}$, although the results from \cite{abramovich2019functorial} are stated for any field of characteristic $0$.


\begin{defn}
The \emph{category of pairs with smooth morphisms} over ${\mb C}$ is given by the following data:
\begin{itemize}
    \item the objects are \emph{pairs} $(X \subset Y)$ for which $Y$ is a smooth variety over ${\mb C}$, and $X$ is a reduced closed subscheme of $Y$;
    \item given two pairs $(X_1 \subset Y_1)$ and $(X_2 \subset Y_2)$, a morphism $(X_1 \subset Y_1) \to (X_2 \subset Y_2)$ is given by a smooth morphism $Y_1 \to Y_2$ so that $X_1 = X_2 \times_{Y_2} Y_1$ is the pullback of $X_2$.
\end{itemize}
\end{defn}

\begin{thm}[{\cite[Theorem 1.1.1]{abramovich2019functorial}}]\label{thm:ATW-resolution}
There is an endofunctor $F^{\circ \infty}_{er}$ of the category of pairs with smooth morphisms over ${\mb C}$ whose image consists of pairs $(X \subset Y)$ satisfying the property that $X$ is  a smooth closed subscheme.
\end{thm}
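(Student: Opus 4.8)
The statement is quoted essentially verbatim from \cite[Theorem 1.1.1]{abramovich2019functorial}, so at the level of this paper the proof is a citation; what follows is a sketch of the structure of the Abramovich--Temkin--W{\l}odarczyk argument, which is the part we will actually need to internalize in order to extract the functoriality and compatibility properties used in Section~\ref{sec:ATW} and Section~\ref{sec:derive-manifolds}. The plan is to reduce embedded resolution to a \emph{principalization} statement: for a pair $(X \subset Y)$ it suffices to produce, functorially in smooth morphisms, a canonical sequence of blow-ups $Y' \to \cdots \to Y$ after which the (weak) transform of the ideal sheaf $\mathcal{I}_X$ becomes locally monomial in exceptional coordinates; the strict transform of $X$ is then automatically smooth, and one obtains the endofunctor $F^{\circ \infty}_{er}$ by reading off this strict transform together with the ambient modification.

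First I would set up the bookkeeping: the category of pairs, the notion of an admissible sequence of blow-ups along \emph{regular centers} (allowing, in the ATW framework, weighted/stacky centers), and the elementary fact that smooth pullback commutes with blow-up along a center that is itself defined by a construction intrinsic to the pair. This is the formal glue that converts ``the center is canonical'' into ``the whole algorithm is a functor''.

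Second, and this is the heart of the matter, I would attach to each point $y \in Y$ a \emph{resolution invariant} $\mathrm{inv}(y)$ valued in a well-ordered set --- in the ATW approach a tuple recording the multiplicity of $\mathcal{I}_X$, the multiplicities of successive (derived) coefficient ideals obtained by maximal-contact/derivation, and bookkeeping of accumulated exceptional divisors --- and prove three things: (a) $\mathrm{inv}$ is upper semicontinuous and its maximum locus is a regular center $Z$; (b) $\mathrm{inv}$ is intrinsic, hence invariant under smooth morphisms, so $Z$ pulls back correctly; (c) after the (weighted) blow-up of $Z$, the maximum of $\mathrm{inv}$ drops strictly in the well-ordering. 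Granting (a)--(c), the induction is immediate: iterating the blow-up of the top stratum terminates by well-foundedness, yielding principalization, hence embedded resolution; functoriality for smooth morphisms follows from (b) plus the compatibility set up in the first step; and one checks by inspection that the output pair has smooth $X$-component and that the construction also respects open immersions and products with smooth varieties --- these last two being precisely the properties exploited in Section~\ref{sec:derive-manifolds}.

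The main obstacle is step two, specifically the simultaneous requirements that $\mathrm{inv}$ be intrinsic \emph{and} strictly decreasing: classical (unweighted) blow-ups do not in general force a naive multiplicity-type invariant down, which is exactly why the ATW algorithm is run over smooth \emph{stacks} with weighted blow-ups, and why tracking the stacky centers and their exceptional structure while preserving functoriality is the technically delicate point. Since later we will need the output to interact well with passage to coarse spaces and with the destackification of Sections~\ref{sec:abelianize}--\ref{sec:bergh}, the plan is to quote \cite{abramovich2019functorial} for the existence and canonicity of $F^{\circ \infty}_{er}$ and to devote Section~\ref{sec:ATW} to verifying the consequences of its functoriality that are not explicitly recorded there, rather than to reproduce the construction of the invariant.
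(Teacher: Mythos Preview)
Your proposal is correct and essentially aligned with the paper's treatment: both regard this theorem as a citation to \cite{abramovich2019functorial}, and your sketch of the ATW mechanism (an upper-semicontinuous invariant $\mathrm{inv}$, canonical weighted/stacky centers on the maximal locus, strict decrease after weighted blow-up, and functoriality in smooth morphisms) accurately reflects the structure of that argument as summarized in the paper's comment~(2) following the statement.

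The one point worth sharpening is the role of destackification. You treat the interaction with Bergh's destackification as something to be checked downstream; the paper instead \emph{defines} its $F^{\circ\infty}_{er}$ as the composite of the ATW functor (whose literal output is a pair of smooth Deligne--Mumford stacks) with Bergh's destackification functor \cite[Theorem~8.1.2]{abramovich2019functorial}, so that the output is again a pair of schemes. This is a packaging difference rather than a mathematical one, but it explains why the paper's later compatibility statements (Corollaries~\ref{cor:cube-diagram} and~\ref{cor:equivariance}, and the analytic version in Theorem~\ref{thm:ATW-resolution-analytic}) can be stated purely in the category of pairs of varieties without carrying stacky intermediaries.
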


More precisely, the above statement is a corollary of \cite[Theorem 1.1.1]{abramovich2019functorial}, so it deserves some further comments.
\begin{enumerate}
    \item The actual output of $F^{\circ \infty}_{er}$ as stated in \cite[Theorem 1.1.1]{abramovich2019functorial} is a pair of smooth \emph{Deligne--Mumford stacks}. One needs to appeal to Bergh's destackification theorem \cite{bergh2019functorial}, \cite[Theorem 8.1.2]{abramovich2019functorial} to obtain the statement involving only schemes, see the discussion in \cite[Section 1.5]{abramovich2019functorial}. In other words, our notation $F^{\circ \infty}_{er}$ is in fact a composition of $F^{\circ \infty}_{er}$ as in \cite[Theorem 1.1.1]{abramovich2019functorial} and Bergh's destackification functor.
    \item Before applying the destackification functor, the functor $F^{\circ \infty}_{er}$ is obtained as the stabilization of an endofunctor $F_{er}$ defined on the category of pairs with smooth \emph{surjective} morphisms. The functor $F^{\circ \infty}_{er}$ is constructed via \emph{weighted blow-ups}, and strictly reduces an invariant $\mathrm{maxinv}(X)$ associated to a pair $X \subset Y$, which measures the ``worst" singularities of $X$ via intersecting $X$ with hypersurfaces in $Y$.
\end{enumerate}

The following statement is a straightforward consequence of the functoriality property.

\begin{cor}\label{cor:cube-diagram}
Let $(X_1 \subset Y_1) \to (X_2 \subset Y_2)$ be a smooth morphisms between pairs. For $i=1,2$, define $(\tilde{X}_i, \tilde{Y}_i) := F^{\circ \infty}_{er}(X_i \subset Y_i)$. Then the diagram 
\begin{equation}
    \begin{tikzcd}
\tilde{X_1} \arrow[d] \arrow[rrd] \arrow[r] & \tilde{Y_1} \arrow[d] \arrow[rrd] &                                 &                       \\
X_1 \arrow[r] \arrow[rrd]                   & Y_1 \arrow[rrd]                   & \tilde{X}_2 \arrow[d] \arrow[r] & \tilde{Y}_2 \arrow[d] \\
                                            &                                   & X_2 \arrow[r]                   & Y_2                  
\end{tikzcd}
\end{equation}
is commumative. Moreover, the top and bottom faces are smooth morphisms of pairs, and the vertical arrows are compositions of destackifications and weighted blow-ups. \qed
\end{cor}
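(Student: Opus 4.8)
The plan is to deduce the corollary purely formally from the functoriality statement in Theorem~\ref{thm:ATW-resolution}, together with the observation that the commuting square it produces for a single morphism of pairs is natural. First I would unpack what "endofunctor of the category of pairs with smooth morphisms" means concretely: for a morphism $(X_1\subset Y_1)\to(X_2\subset Y_2)$ in this category (which by definition is a smooth morphism $Y_1\to Y_2$ with $X_1=X_2\times_{Y_2}Y_1$), functoriality of $F^{\circ\infty}_{er}$ yields a morphism $(\tilde X_1\subset\tilde Y_1)\to(\tilde X_2\subset\tilde Y_2)$ in the same category, i.e.\ a smooth morphism $\tilde Y_1\to\tilde Y_2$ with $\tilde X_1=\tilde X_2\times_{\tilde Y_2}\tilde Y_1$. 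This is exactly the top face of the cube. The two "resolution" arrows $\tilde X_i\to X_i$ and $\tilde Y_i\to Y_i$ are the natural transformation components relating the functor $F^{\circ\infty}_{er}$ to the forgetful/identity behaviour on the underlying pair; that the resolution square for $i$ fits compatibly with the morphism between $i=1$ and $i=2$ is precisely the statement that these resolution arrows assemble into a natural transformation, which is built into the construction of $F^{\circ\infty}_{er}$ in \cite{abramovich2019functorial} (and of Bergh's destackification functor, which is likewise functorial). So the four "vertical" faces of the cube commute by naturality applied to the given morphism of pairs, and the top and bottom faces commute because they are morphisms in the category of pairs.

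Concretely, the steps I would carry out are: (1) record that $(X_1\subset Y_1)\to(X_2\subset Y_2)$ is a morphism in the category of pairs with smooth morphisms, so that applying $F^{\circ\infty}_{er}$ gives a morphism $(\tilde X_1\subset\tilde Y_1)\to(\tilde X_2\subset\tilde Y_2)$, giving the top face and the identity $\tilde X_1=\tilde X_2\times_{\tilde Y_2}\tilde Y_1$; (2) for each $i$, invoke the natural transformation $F^{\circ\infty}_{er}\Rightarrow \mathrm{id}$ (the composite of weighted blow-down maps followed by the destackification blow-downs), whose component at $(X_i\subset Y_i)$ is the pair of arrows $\tilde X_i\to X_i$, $\tilde Y_i\to Y_i$; (3) naturality of this transformation with respect to the morphism in (1) gives the commutativity of the two squares with "horizontal" $X$-to-$Y$ arrows replaced by the vertical resolution arrows, i.e.\ the remaining faces of the cube; (4) assemble these into the displayed diagram and note that all the vertical maps are, by construction, finite compositions of weighted blow-ups and destackification steps. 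I would also remark that smoothness of the top-face maps is inherited because $F^{\circ\infty}_{er}$ is an endofunctor of the category \emph{of pairs with smooth morphisms}, so it sends smooth morphisms of pairs to smooth morphisms of pairs.

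I do not expect a genuine obstacle here; the content is entirely in the word "functorial," and the only thing to be careful about is bookkeeping — keeping straight that $F^{\circ\infty}_{er}$ as used in this paper is the composite of the Abramovich--Temkin--W\l odarczyk functor with Bergh's destackification functor, so that "natural transformation to the identity" must be checked to be compatible with that composition (it is, since a composite of natural transformations is natural). The mildest subtlety is that, strictly speaking, the arrows $\tilde X_i\to X_i$ are not literally a natural transformation of endofunctors on the nose but rather arise from a zig-zag through the intermediate Deligne--Mumford stacks; I would address this by working at the level of the underlying (coarse) schemes throughout, where the composite blow-down is a genuine morphism and the cube commutes on the level of schemes, which is all that is needed for the applications in Section~\ref{sec:derive-manifolds}. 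The proof is therefore a one-line appeal to functoriality together with this remark, and I would present it as such.
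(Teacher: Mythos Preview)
Your proposal is correct and matches the paper's approach: the corollary is marked with \qed\ and is preceded by the sentence ``The following statement is a straightforward consequence of the functoriality property,'' so the paper gives no argument beyond what you have spelled out. Your unpacking of functoriality plus naturality of the blow-down transformation is exactly the intended content, and the bookkeeping remark about the composite with Bergh's destackification is a reasonable gloss but not something the paper treats as a genuine subtlety.
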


Functorial resolutions of singularities interact perfectly with group actions. The next statement is well-known, see, e.g., \cite[Proposition 3.9.1]{kollar07}. We include a proof for completeness.

\begin{cor}\label{cor:equivariance}
Given a pair $(X \subset Y)$, write $(\tilde{X} \subset \tilde{Y}) \subset F^{\circ \infty}_{er}(X \subset Y)$. If  an algebraic group $G$ acts on $Y$, preserving $X$, then there is a unique $G$ action on $\tilde{Y}$  so that the projection $\tilde{Y} \to Y$ is equivariant. Such an action necessarily preserves $\tilde{X}$. 
\end{cor}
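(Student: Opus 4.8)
The plan is to deduce everything from the functoriality of $F^{\circ\infty}_{er}$ applied to the morphisms of pairs coming from the group action. First I would record the setup: let $G$ act on $Y$ preserving $X$, and write $a\colon G\times Y\to Y$ for the action and $p\colon G\times Y\to Y$ for the projection. The key observation is that both $a$ and $p$ are smooth morphisms, and since $G$ preserves $X$ we have $X\times_Y Y = X$ in both cases more precisely $a^{-1}(X)=G\times X=p^{-1}(X)$, so that
\beq
a,\,p\colon (G\times X\subset G\times Y)\to (X\subset Y)
\eeq
are both morphisms in the category of pairs with smooth morphisms. Here I would also use that $F^{\circ\infty}_{er}$ commutes with products by a smooth variety in the evident way namely $F^{\circ\infty}_{er}(G\times X\subset G\times Y)=(G\times \tilde X\subset G\times\tilde Y)$ which follows from functoriality applied to the two projections $G\times Y\to Y$ and $G\times Y\to G\times Y'$, exactly as in Corollary \ref{cor:cube-diagram}; alternatively one invokes this as a standard property of the ATW algorithm, which is local and étale-functorial.

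Next I would apply the functor $F^{\circ\infty}_{er}$ to the two morphisms above. Functoriality gives smooth morphisms of pairs
\beq
\tilde a,\,\tilde p\colon (G\times\tilde X\subset G\times\tilde Y)\to (\tilde X\subset\tilde Y)
\eeq
covering $a$ and $p$ respectively, where $\tilde p$ is simply the projection $G\times\tilde Y\to\tilde Y$ since $F^{\circ\infty}_{er}$ sends a projection to a projection. I would then \emph{define} the $G$-action on $\tilde Y$ to be $\tilde a$. The verification that this is an action proceeds by the usual argument: the associativity diagram and the unit axiom are both diagrams of smooth morphisms of pairs over $(X\subset Y)$, so applying $F^{\circ\infty}_{er}$ and using its functoriality (uniqueness of the induced arrows, i.e. that $F^{\circ\infty}_{er}$ is a functor, not just a map on objects) produces the corresponding commuting diagrams for $\tilde a$. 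Concretely, the two composites $G\times G\times Y\to Y$ given by $a\circ(\mathrm{id}\times a)$ and $a\circ(m\times\mathrm{id})$ agree, hence their images under $F^{\circ\infty}_{er}$ agree, which is precisely associativity of $\tilde a$; similarly for the unit. That $\tilde Y\to Y$ is equivariant is built into the construction, since $\tilde a$ covers $a$.

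For uniqueness of the action: any $G$-action on $\tilde Y$ making $\tilde Y\to Y$ equivariant restricts to an action on the open locus $\tilde Y\setminus E$ where the resolution is an isomorphism onto the smooth locus of $Y$ (or more precisely onto the complement of the singular locus, which is $G$-invariant), and this open locus is dense; since $\tilde Y$ is separated, an automorphism of $\tilde Y$ is determined by its restriction to a dense open, so the action is unique. Finally, that the $G$-action preserves $\tilde X$ follows because $\tilde a$ is a morphism of \emph{pairs}: it carries $G\times\tilde X$ into $\tilde X$, i.e. $\tilde a(G\times\tilde X)\subset\tilde X$, which says exactly that $\tilde X$ is $G$-invariant. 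I expect the only genuinely delicate point to be the commutation of $F^{\circ\infty}_{er}$ with the product $G\times(-)$; if one wants to avoid invoking it as a black box, the cleanest route is to observe that $G\times Y\to Y$ and the second projection $G\times Y\to G\times Y$ (to a fixed copy) together exhibit $G\times\tilde Y$ as the fiber product identifying $F^{\circ\infty}_{er}(G\times X\subset G\times Y)$, using the smoothness of $G$ and étale-locality of the ATW algorithm exactly as in the proof of Corollary \ref{cor:cube-diagram}. Everything else is formal manipulation of functoriality, so no hard estimates are involved.
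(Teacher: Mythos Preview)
Your proof is correct and follows essentially the same approach as the paper: both construct the action by applying functoriality of $F^{\circ\infty}_{er}$ to the action morphism $a$ (the paper writes it $m$) and to the projection $p$ (the paper's $\pi_2$), and both identify $F^{\circ\infty}_{er}(G\times X\subset G\times Y)$ with $(G\times\tilde X\subset G\times\tilde Y)$ via the projection. The one noteworthy difference is in verifying the action axioms: you push functoriality all the way, arguing that since $a\circ(\mathrm{id}\times a)=a\circ(m_G\times\mathrm{id})$ as morphisms of pairs, their images under the functor agree; the paper instead observes that the resolution $\tilde Y\to Y$ is birational, so associativity holds over a dense open and then everywhere by separatedness and reducedness. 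Your route is cleaner categorically but, as you note, requires tracking that $F^{\circ\infty}_{er}(\mathrm{id}_G\times a)=\mathrm{id}_G\times\tilde a$ under the product identifications; the paper's density argument sidesteps this bookkeeping entirely---and is in fact the same style of argument you yourself use for uniqueness (which the paper states but does not explicitly prove).
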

\begin{proof}
The action of $G$ on the pair $(X \subset Y)$ is given by a smooth morphism
\beq
m: G \times (X \subset Y) = (G \times X \subset G \times Y) \to (X \subset Y).
\eeq
By functoriality, the pair $(\widetilde{G \times X}, \widetilde{G \times Y}) := F^{\circ \infty}_{er}(G \times X \subset G \times Y)$ is given by the pull-back of $(\tilde{X} \subset \tilde{Y}) := F^{\circ \infty}_{er}(X \subset Y)$ via the action $m$.

On the other hand, the projection $\pi_2: G \times (X \subset Y) \to (X \subset Y)$ is smooth. Using the functoriality again, we see that $(\widetilde{G \times X}, \widetilde{G \times Y}) = (G \times \tilde{X}, G \times \tilde{Y})$. Accordingly, we get a commutative diagram
\beq
\begin{tikzcd}
(G \times \tilde{X} \subset G \times \tilde{Y}) \arrow[r, "\cong"] \arrow[d] & (\widetilde{G \times X} \subset \widetilde{G \times Y}) \arrow[r, "m'"] & (\tilde{X} \subset \tilde{Y}) \arrow[d] \\
(G \times X \subset G \times Y) \arrow[r, "="]                               & (G \times X \subset G \times Y) \arrow[r, "m"]                    & (X \subset Y)                          
\end{tikzcd}
\eeq
Here the morphism of pairs $m': (\widetilde{G \times X}, \widetilde{G \times Y}) \to (\tilde{X} \subset \tilde{Y})$ is induced by the functor $F^{\circ \infty}_{er}$. We define the morphism
\beq
\tilde{m}: G \times (\tilde{X} \subset \tilde{Y}) \to (\tilde{X} \subset \tilde{Y})
\eeq
to be the composition of the top arrow.

We claim that $\tilde{m}$ defines a $G$-action on the pair $(\tilde{X} \subset \tilde{Y})$. In other words, we need to verify the commutativity of the diagram
\beq
\begin{tikzcd}
G \times G \times (\tilde{X} \subset \tilde{Y}) \arrow[d, "m_{G} \times id"'] \arrow[r, "id_G \times \tilde{m}"] & G \times (\tilde{X} \subset \tilde{Y}) \arrow[d] \\
G \times (\tilde{X} \subset \tilde{Y}) \arrow[r, "\tilde{m}"]                                                    & (\tilde{X} \subset \tilde{Y})        .           
\end{tikzcd}
\eeq
The point is, as $G$ acts on the pair $(X \subset Y)$, and the resolution $(\tilde{X} \subset \tilde{Y}) \to (X \subset Y)$ is birational, we know that commutativity holds over a dense open set. The desired commutativity is a result of the separatedness and reducedness of all the schemes in our discussion.
\end{proof}

We will use actions of both finite groups and general reductive groups in our later discussions.

For the proof of an important technical statement (Lemma \ref{lemma:group-inverse}), we need a counterpart of Theorem \ref{thm:ATW-resolution} in the complex analytic setting.

\begin{defn}
The \emph{category of complex analytic pairs with smooth morphisms} is given by the following data:
\begin{itemize}
    \item the objects are \emph{pairs} $(X \subset Y)$ for which $Y$ is a smooth complex manifold, and $X$ is a reduced closed complex analytic space of $Y$;
    \item a morphism $(X_1 \subset Y_1) \to (X_2 \subset Y_2)$ is given by a holomorphic map $Y_1 \to Y_2$ which is a submersion (i.e., locally modeled on the projection map $Y_2 \times {\mb C}^r \to Y_2$) and so that $X_1 = X_2 \times_{Y_2} Y_1$ is the pullback of $X_2$.
\end{itemize}
\end{defn}

\begin{thm}[\cite{wlodarczyk}]\label{thm:ATW-resolution-analytic}
  There is an endofunctor $F^{\circ \infty}_{an}$ of category of complex analytic pairs with smooth morphisms whose image consists of pairs $(X \subset Y)$ satisfying the property that $X$ is  a complex submanifold. This endofunctor is compatible with the corresponding algebraic functor in the sense that, for an algebraic pair $(X \subset Y)$, the analytification of $F^{\circ \infty}_{er}(X \subset Y)$ is naturally isomorphic to the result of applying  $F^{\circ \infty}_{an}$ to the analytic pair underlying $(X \subset Y)$. 
\end{thm}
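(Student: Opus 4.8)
\textbf{Proof strategy for Theorem \ref{thm:ATW-resolution-analytic}.}
The plan is to adapt the construction of $F^{\circ\infty}_{er}$ from \cite{abramovich2019functorial} verbatim to the complex analytic category, checking at each point that the only algebro-geometric inputs used are ones that make sense analytically and are compatible with analytification. Recall that the algebraic functor proceeds by: (i) attaching to a pair $(X\subset Y)$ an invariant $\mathrm{maxinv}(X)$ which is computed from the ideal sheaf $\mathcal{I}_X$ via iterated maximal-order loci, coefficient ideals, and derivative ideals; (ii) locating the (smooth, locally closed) center $Z\subset Y$ along which $\mathrm{maxinv}$ is maximal; (iii) performing the weighted blow-up of $(X\subset Y)$ along that center; and (iv) iterating. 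Each of these data is \emph{local on $Y$ in the Zariski (hence analytic) topology} and is defined purely in terms of the sheaf of rings $\mathcal{O}_Y$, its derivations, and the coherent ideal $\mathcal{I}_X$ --- all of which have evident analytic counterparts (coherence of $\mathcal{I}_X$ for a reduced closed analytic subspace follows from Oka's theorem). So the first step is to record that $\mathrm{maxinv}$ and the center $Z$ are unchanged under analytification, which is where the stated compatibility will ultimately come from.

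The second step is to make sense of weighted (stacky) blow-ups of smooth analytic spaces along smooth centers with a weight function. This is where one must be slightly careful: the algebraic weighted blow-up produces a smooth Deligne--Mumford stack, so analytically one needs the notion of a weighted blow-up producing a smooth \emph{analytic} Deligne--Mumford stack (or, via analytic destackification, a smooth analytic space). The construction is again local: on a chart where the center is a coordinate subspace and the weights are given, the weighted blow-up is an explicit Proj-type construction which globalizes by the functoriality of the local charts. One invokes here the analytic version of Bergh's destackification (which is available by GAGA-type comparison, since destackification is a sequence of blow-ups and root stacks along snc divisors, and these are local analytic constructions), or one simply works with analytic DM stacks throughout and destackifies at the end --- mirroring precisely point (1) in the discussion following Theorem \ref{thm:ATW-resolution}. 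The termination of the iteration is the statement that $\mathrm{maxinv}$ strictly decreases under each weighted blow-up and is bounded below; this is a purely local statement about ideals, proved identically to the algebraic case.

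The third step is functoriality with respect to smooth (submersive) morphisms. The key fact in \cite{abramovich2019functorial} is that $\mathrm{maxinv}$, the center, and coefficient ideals all pull back correctly along smooth morphisms --- and the analytic analogue holds because a holomorphic submersion is, locally, a projection $Y_2\times\mathbb{C}^r\to Y_2$, under which derivations, order of vanishing, and coherent ideals behave in the obvious compatible way. Hence $\tilde m$, the resolution, commutes with restriction to opens and with products with polydiscs, which is exactly what functoriality on the category of analytic pairs with smooth morphisms demands. Finally, the compatibility with the algebraic functor follows by combining the identity of local invariants (Step 1) with the fact that analytification commutes with blow-ups along coherent centers and with taking Proj; since both $F^{\circ\infty}_{er}$ (after destackification) and $F^{\circ\infty}_{an}$ are built by the \emph{same} sequence of centers and weights, their outputs agree after analytification, naturally in $(X\subset Y)$.

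\textbf{Main obstacle.} The routine parts --- coherence, locality of $\mathrm{maxinv}$, termination --- are direct translations. The real work, which I would cite from \cite{wlodarczyk} rather than reprove, is setting up weighted blow-ups and destackification in the analytic category with enough functoriality to run the induction, and verifying the GAGA-type comparison that makes the analytification statement precise; the subtlety is that the intermediate objects are analytic DM stacks, so one needs the analytic analogues of the stacky operations of \cite{abramovich2019functorial} and \cite{bergh2019functorial} to be available and compatible, which is the content of the cited reference.
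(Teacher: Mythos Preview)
Your proposal is correct and follows essentially the same approach as the paper's sketch: both proceed by checking that each ingredient of the ATW algorithm (the invariant $\mathrm{maxinv}/\mathrm{inv}_p$ via coefficient ideals and maximal contact, the center, weighted blow-ups, destackification) is local and sheaf-theoretic, hence carries over to the analytic category and agrees with the algebraic version step by step under analytification. The only minor difference is sourcing: you appeal to GAGA-type arguments for analytic destackification, while the paper points to its own Sections \ref{sec:abelianize} and \ref{sec:bergh} (done in the almost complex setting) as the model that adapts to the complex analytic case.
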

\begin{proof}[Sketch of proof]
We list the important technical steps in \cite{abramovich2019functorial} and comment on how to adapt the arguments to the complex analytic setting.
\begin{enumerate}
    \item As in the algebraic setting, the final resolution functor $F^{\circ \infty}_{an}$ is obtained as the composition of Bergh's destackification and the stabilization of the counterpart of the endofunctor $F_{er}$ in the complex analytic setting. Bergh's destackification procedure has been reproduced in Section \ref{sec:abelianize} and Section \ref{sec:bergh} in the almost complex setting, which can be modified to work in the complex analytic setting by combining with the original reference \cite{bergh,bergh2019functorial}.
    \item The pair  $F^{\circ \infty}_{an}(X \subset Y)$ is obtained, as in the algebraic case, by applying the weighted blow-up to $Y$ along the center specified by the locus where the function $\mathrm{inv}_p({\mc I}_X)$ (cf. \cite[Section 5.1]{abramovich2019functorial}) achieves its maximum. Here ${\mc I}_X \subsetneq {\mc O}_Y$ is the ideal associated with $X$. The function $\mathrm{inv}_p({\mc I}_X)$ is constructed inductively by looking at the hypersurface of maximal contact (cf. \cite[Section 4.2]{abramovich2019functorial}) and the restricted coefficient ideals (cf. \cite[Section 4.1, 4.3, 5.1]{abramovich2019functorial}). Since the uniqueness of maximal contact hypersurfaces up to complex analytic isomorphism holds, and the arguments concerning coefficient ideals work in the same way for the rings of complex analytic functions, the center for the desired weighted blow-up is uniquely specified (cf. \cite[Theorem 5.3.1, Theorem 6.1.1]{abramovich2019functorial}). It follows from the local constructions that the centers from the algebraic arguments and complex analytic arguments agree.
    \item The final piece of this discussion is about weighted blow-ups. This is done in \cite[Section 3]{abramovich2019functorial} via a Rees algebra construction, but the equivalent constructions via local defining equation or toric models (cf. \cite[Section 3.5, 3.6]{abramovich2019functorial}) show that the same can be done in the complex analytic setting, working with complex orbifolds. 
\end{enumerate}
Therefore, the functor $F^{\circ \infty}_{an}$ can be defined via the above procedures. The desired functoriality follows from the functoriality of $\mathrm{inv}_p$ under smooth submersions (the conterpart of smooth morphisms in the complex analytic setting) (cf. \cite[Theorem 5.1.1]{abramovich2019functorial}). The coincidence of $F^{\circ \infty}_{er}$ and $F^{\circ \infty}_{an}$ on algebraic pairs holds because each step of the algorithm matches.
\end{proof}

\subsection{Resolving universal zero loci}\label{subsec:resolve-Z}
We revisit the crucial algebraic varieties $Z_d^{\Gamma}(V,W)$ constructed from the universal zero locus of equivariant polynomial maps between complex $\Gamma$-representations $V$ and $W$ where $\Gamma$ is a finite group.  The canonical equivariant Whitney stratifications on $Z_d^{\Gamma}(V,W)$ was studied extensively in \cite[Section 4]{Bai_Xu_2022}, but we study them instead through the lens of resolution of singularities.

\begin{defn}\label{defn:universal-zero-locus}
Let $\Gamma$ be a finite group. Suppose that $V$ and $W$ are finite-dimensional complex $\Gamma$-representations. For a positive integer $d$, recall that $\mathrm{Poly}_d^{\Gamma}(V,W)$ is the affine space of $\Gamma$-equivariant polynomial maps of degree at most $d$. Define the \emph{universal zero locus} to be  the $0$-locus of the evaluation map from $V \times \mathrm{Poly}_d^{\Gamma}(V,W)$ to $W$:
\beq
\begin{tikzcd}
  Z_d^{\Gamma}(V,W) := \ev^{-1}(0)\ar[r] \ar[d] & V \times \mathrm{Poly}_d^{\Gamma}(V,W) \ar[d,"\ev"] \\
  \{ 0\} \ar[r] & W 
\end{tikzcd}
\eeq
\end{defn}

The product $V \times \mathrm{Poly}_d^{\Gamma}(V,W)$ admits a $\Gamma$-action which acts on $V$ by the natural action and acts trivally on the second factor $\mathrm{Poly}_d^{\Gamma}(V,W)$. This action preserves the universal zero locus $Z_d^{\Gamma}(V,W)$. The proof of the following fact can be found at \cite[Proposition 4.9]{Bai_Xu_2022}.

\begin{lemma}[{Cf. \cite[Lemma 5]{Fukaya_Ono_integer}}]\label{lem:FO}
If $V$ is a faithful $\Gamma$-representation, there exists an integer $d_0 \geq 1$ such that for any $d \geq d_0$, the subvariety
\beq
Z_d^{\Gamma}(V,W)^{\circ} := \{(v,P) \in Z_d^{\Gamma}(V,W) \ | \ \text{the stabilizer of } (v, P) \text{ under the } \Gamma \text{-action is trivial} \}
\eeq
lies in the regular locus of the evaluation map $\ev$. In particular, it is smooth of dimension $\dim_{\mb C}\mathrm{Poly}_d^{\Gamma}(V,W) + \dim_{\mb C}V - \dim_{\mb C}W$. \qed
\end{lemma}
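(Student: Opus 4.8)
The statement asserts that the universal zero locus $Z_d^\Gamma(V,W)$ is smooth at every point with trivial stabilizer, provided $d$ is large enough and $V$ is faithful. The natural strategy is to show that the differential of the evaluation map $\ev\colon V\times\mathrm{Poly}_d^\Gamma(V,W)\to W$ is surjective at every point $(v,P)$ with trivial $\Gamma$-stabilizer, since then the implicit function theorem (in the algebraic category, smoothness of fibers of a submersion) gives that $\ev^{-1}(0)$ is smooth of the asserted dimension near such points. So the whole content is a transversality statement: I would fix a point $(v,P)\in Z_d^\Gamma(V,W)$ with $\Gamma_v$ trivial (note that if the stabilizer of the pair is trivial, then already the stabilizer $\Gamma_v$ of $v$ alone must be trivial, since $\Gamma$ acts trivially on the $\mathrm{Poly}$ factor), and show that the partial derivative $D_P\ev\colon \mathrm{Poly}_d^\Gamma(V,W)\to W$, $Q\mapsto Q(v)$, is already surjective.

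The key step is therefore: \emph{if $v\in V$ has trivial $\Gamma$-stabilizer and $d$ is large enough (depending only on $\Gamma, V, W$, not on $v$), then the evaluation-at-$v$ map $\mathrm{Poly}_d^\Gamma(V,W)\to W$ is surjective.} Concretely, given any target vector $w\in W$, I must produce a $\Gamma$-equivariant polynomial $Q\colon V\to W$ of degree $\le d$ with $Q(v)=w$. Since $v$ has trivial stabilizer, the orbit $\Gamma\cdot v$ consists of $|\Gamma|$ distinct points. Choose first a (non-equivariant) polynomial $p\colon V\to \mathbb{C}$ of some bounded degree with $p(v)=1$ and $p(\gamma v)=0$ for all $\gamma\neq e$ — this is a Lagrange-interpolation-type condition on the finite set $\Gamma\cdot v$, solvable because the points are distinct, and the required degree can be bounded purely in terms of $|\Gamma|$ and $\dim V$ by a standard separation-of-points argument (or by invoking that $V$ is faithful so the $\Gamma$-action separates the orbit). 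Then set
\beq
  Q(x) := \frac{1}{|\Gamma|}\sum_{\gamma\in\Gamma} p(\gamma^{-1} x)\,\gamma\cdot w .
\eeq
This $Q$ is manifestly $\Gamma$-equivariant, has degree $\le \deg p$, and $Q(v) = \frac{1}{|\Gamma|}\sum_\gamma p(\gamma^{-1}v)\,\gamma w = \frac{1}{|\Gamma|}\,p(v)\, e\cdot w = \frac{1}{|\Gamma|} w$, so $|\Gamma|Q(v)=w$. Hence $\ev$-at-$v$ is onto as soon as $d\ge \deg p =: d_0$, uniformly in $v$; this gives the required $d_0$.

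Finishing: with $D\ev$ surjective at every point of $Z_d^\Gamma(V,W)^\circ$, each such point is a smooth point of $\ev^{-1}(0)$, and the fiber dimension is $\dim_{\mathbb C}\big(V\times\mathrm{Poly}_d^\Gamma(V,W)\big) - \dim_{\mathbb C} W = \dim_{\mathbb C}\mathrm{Poly}_d^\Gamma(V,W)+\dim_{\mathbb C}V-\dim_{\mathbb C}W$, exactly as stated. Since this is exactly \cite[Proposition 4.9]{Bai_Xu_2022} / \cite[Lemma 5]{Fukaya_Ono_integer}, the honest thing is to cite those references for the proof, as the excerpt already does; the sketch above is the mechanism. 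The one point deserving care — and the place I'd expect a referee to push back — is the \emph{uniformity of $d_0$ in $v$}: one must check that the interpolation polynomial $p$ can be taken of degree bounded independently of which trivial-stabilizer point $v$ one picks. This follows because the construction only needs to separate the finitely many points of a single free orbit, and a degree bound for separating $|\Gamma|$ points in affine space $V$ depends only on $|\Gamma|$ and $\dim V$; alternatively one argues by Noetherianity/constructibility that the locus where $\ev$ fails to be submersive along $Z_d^\Gamma(V,W)^\circ$ is a closed $\Gamma$-invariant subset disjoint from the generic orbit for $d\gg 0$. That uniform-degree bookkeeping is the only genuine obstacle; everything else is the standard averaging trick.
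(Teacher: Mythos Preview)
Your proof is correct. The paper itself does not prove this lemma---it cites \cite[Proposition 4.9]{Bai_Xu_2022}---but the remark immediately following indicates that the intended mechanism is the representation-theoretic fact that, for $V$ faithful, every $\Gamma$-representation $W$ occurs as a subrepresentation of $\mathrm{Sym}^{d_0}(V)$ for some $d_0$ (a Burnside-type statement). Your Lagrange-interpolation-plus-averaging argument is a more elementary and direct route to the same surjectivity, and has the virtue of giving the explicit bound $d_0=|\Gamma|-1$: for any $v$ with trivial stabilizer, the product of $|\Gamma|-1$ affine functions, each vanishing at one nontrivial orbit point $\gamma v$ and nonzero at $v$, does the job, so the uniformity concern you flag is cleanly resolved with no dependence on $\dim V$. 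It is worth noting that the paper itself deploys exactly your interpolation idea later, in the proof of Lemma~\ref{lemma:group-inverse}, so your approach is entirely in the spirit of the surrounding arguments.
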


\begin{rem}
The above transversality result is eventually reduced to the following basic fact about the representation theory of finite groups: under the same assumption, there exists $d_0$ such that $W$ is included as a subrepresentation of the symmetric product $\mathrm{Sym}^{d_0}(V)$ with the induced $\Gamma$-action. The faithfulness of $V$ is crucial here.
\end{rem}

We will use Lemma \ref{lem:FO} to make sure that the resolution of the zero locus of the section is an orbifold 
which has the expected dimension. 



Denote by $\ov{Z}_d^{\Gamma}(V,W)^{\circ}$ the Zariski closure of $Z_d^{\Gamma}(V,W)^{\circ}$ in $Z_d^{\Gamma}(V,W)$. By construction, $\ov{Z}_d^{\Gamma}(V,W)^{\circ}$ is preserved under the $\Gamma$-action on $Z_d^{\Gamma}(V,W)$. Accordingly,
\beq\label{eqn:Zd-pair}
(\ov{Z}_d^{\Gamma}(V,W)^{\circ} \subset V \times \mathrm{Poly}_d^{\Gamma}(V,W))
\eeq
is pair over ${\mb C}$ acted on by the finite group $\Gamma$.

\begin{defn}\label{defn:Z-d-resolve}
Define $\tilde{Z}_d^{\Gamma}(V,W)^{\circ}$ to be the resolution of $\ov{Z}_d^{\Gamma}(V,W)^{\circ}$ by applying the functor $F_{er}^{\circ \infty}$ from Theorem \ref{thm:ATW-resolution} to the $\Gamma$-equivariant pair $\ov{Z}_d^{\Gamma}(V,W)^{\circ} \subset V \times \mathrm{Poly}_d^{\Gamma}(V,W)$, which comes with a $\Gamma$-equivariant morphism
\beq\label{eqn:blowdown-inclusion}
\tilde{Z}_d^{\Gamma}(V,W)^{\circ} \to V \times \mathrm{Poly}_d^{\Gamma}(V,W).
\eeq
\end{defn}

Next, we prove that when varying the cut-off degree $d$, the resolutions $\tilde{Z}_d^{\Gamma}(V,W)^{\circ}$ are compatible with each other in the following sense.

\begin{prop}\label{prop:cartesian-degree}
Given a sufficiently large integer $d \geq 1$ and any integer $d' \geq d$, the morphisms of Equation \eqref{eqn:blowdown-inclusion} fit into a $\Gamma$-equivariant Cartesian commutative diagram
\beq\label{eqn:change-of-degree}
\begin{tikzcd}
{\tilde{Z}_d^{\Gamma}(V,W)^{\circ}} \arrow[r] \arrow[d] & {V \times \mathrm{Poly}_d^{\Gamma}(V,W)} \arrow[d] \\
{\tilde{Z}_{d'}^{\Gamma}(V,W)^{\circ}} \arrow[r]      & {V \times \mathrm{Poly}_{d'}^{\Gamma}(V,W)}     
\end{tikzcd}
\eeq
where the right vertical arrow is the natural inclusion.
\end{prop}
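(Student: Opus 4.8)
The statement is a compatibility result for the ATW resolution functor under change of the truncation degree $d$, and the natural strategy is to reduce it entirely to the functoriality built into Theorem \ref{thm:ATW-resolution} (more precisely Corollary \ref{cor:cube-diagram}) together with the fact that the inclusion of polynomial spaces induces a \emph{smooth} morphism of pairs. First I would record the elementary observation that, for $d' \geq d$, the inclusion $\mathrm{Poly}_d^{\Gamma}(V,W) \hookrightarrow \mathrm{Poly}_{d'}^{\Gamma}(V,W)$ is a linear embedding of affine spaces, hence after crossing with $V$ we get a closed embedding $V \times \mathrm{Poly}_d^{\Gamma}(V,W) \hookrightarrow V \times \mathrm{Poly}_{d'}^{\Gamma}(V,W)$. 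This is \emph{not} smooth, so the point $(V \times \mathrm{Poly}_d^{\Gamma}(V,W))$ does not map to $(V \times \mathrm{Poly}_{d'}^{\Gamma}(V,W))$ as a morphism in the category of pairs with smooth morphisms in the naive way. The correct move, as in the commented-out lemma preceding the proposition, is to realize $V \times \mathrm{Poly}_d^{\Gamma}(V,W)$ as a \emph{fiber} over $V \times \mathrm{Poly}_{d'}^{\Gamma}(V,W)$: one instead uses the projection $V \times \mathrm{Poly}_{d'}^{\Gamma}(V,W) \to V \times \mathrm{Poly}_{d}^{\Gamma}(V,W)$ coming from a $\Gamma$-equivariant linear splitting $\mathrm{Poly}_{d'}^{\Gamma}(V,W) \cong \mathrm{Poly}_{d}^{\Gamma}(V,W) \oplus C$, which \emph{is} smooth (indeed a trivial vector bundle projection), and check that the universal zero locus in the larger space is the preimage of the one in the smaller space.

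\textbf{Key steps.} (1) Fix a $\Gamma$-equivariant linear complement $C$ so that $\mathrm{Poly}_{d'}^{\Gamma}(V,W) = \mathrm{Poly}_{d}^{\Gamma}(V,W) \oplus C$ as $\Gamma$-representations (with $\Gamma$ acting trivially on both summands, by the convention recalled after Definition \ref{defn:universal-zero-locus}). Then the evaluation map $\ev_{d'}$ on $V \times \mathrm{Poly}_{d'}^{\Gamma}(V,W)$, restricted to a point $(v,P,Q)$ with $P \in \mathrm{Poly}_d^{\Gamma}$, $Q \in C$, equals $P(v) + Q(v)$; one checks that along the fibers of the projection $p : V\times\mathrm{Poly}_{d'}^\Gamma(V,W) \to V\times\mathrm{Poly}_d^\Gamma(V,W)$ this does \emph{not} immediately exhibit $Z_d$ as a pullback, so instead I would take the more robust route: (2) observe that the degree function on equivariant polynomials is upper semi-continuous, so that $Z_d^{\Gamma}(V,W) = Z_{d'}^{\Gamma}(V,W) \cap \big(V \times \mathrm{Poly}_d^{\Gamma}(V,W)\big)$ scheme-theoretically, and moreover that the open locus of trivial stabilizer $Z_d^{\Gamma}(V,W)^{\circ}$ is exactly the intersection of $Z_{d'}^{\Gamma}(V,W)^{\circ}$ with $V\times\mathrm{Poly}_d^\Gamma(V,W)$ once $d \geq d_0$ (Lemma \ref{lem:FO}), hence the same holds for the Zariski closures: $\ov{Z}_d^{\Gamma}(V,W)^{\circ} = \ov{Z}_{d'}^{\Gamma}(V,W)^{\circ} \cap \big(V \times \mathrm{Poly}_d^{\Gamma}(V,W)\big)$. (3) Now the crucial input: the morphism $V\times\mathrm{Poly}_d^\Gamma(V,W) \to V\times\mathrm{Poly}_{d'}^\Gamma(V,W)$ needs to be presented as a composite of a section of the smooth projection $p$ with $p$ itself, or — cleaner — one uses Corollary \ref{cor:cube-diagram} applied to the smooth morphism of pairs $p : (\ov{Z}_{d'}^{\Gamma}(V,W)^{\circ}|_{\text{over } d} \subset V\times\mathrm{Poly}_{d'}^\Gamma) \to$ ... . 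The honest statement is that the \emph{closed embedding} $V\times\mathrm{Poly}_d^\Gamma \hookrightarrow V\times\mathrm{Poly}_{d'}^\Gamma$ is the zero section of the smooth (vector bundle) projection; functoriality of $F^{\circ\infty}_{er}$ is for smooth morphisms, and one applies it to the projection to conclude that $F^{\circ\infty}_{er}$ of the big pair pulls back to $F^{\circ\infty}_{er}$ of the small pair along that projection, then restricts to the zero section. (4) Finally, transport the $\Gamma$-equivariance through via Corollary \ref{cor:equivariance}, which is automatic since every morphism in sight is $\Gamma$-equivariant and $F^{\circ\infty}_{er}$ is a functor.

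\textbf{Main obstacle.} The subtle point — and the one I would spend the most care on — is the direction of the arrows: the inclusion of polynomial spaces is a closed embedding, not a smooth morphism, so one cannot directly feed it into the functoriality of Theorem \ref{thm:ATW-resolution}. The resolution is to exploit that this closed embedding is a \emph{regular} one realized as the zero section of a $\Gamma$-equivariant trivial vector bundle projection $p$, which \emph{is} a smooth morphism of pairs (here one must verify that $p$ pulls back the universal zero locus $\ov{Z}_d^\Gamma(V,W)^\circ$ to a pair whose ATW resolution is $\tilde Z_d^\Gamma(V,W)^\circ \times C$ — this uses Lemma \ref{lem:FO} to guarantee that for $d\geq d_0$ the generic-stabilizer locus and hence the Zariski closure behaves as expected, and it is here that the hypothesis ``$d$ sufficiently large'' in the proposition is consumed). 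Once $F^{\circ\infty}_{er}(\ov{Z}_d^\Gamma(V,W)^\circ) \times C \cong F^{\circ\infty}_{er}\big(p^{*}(\ov{Z}_d^\Gamma(V,W)^\circ)\big)$ is established by functoriality applied to $p$, restricting along the zero section of $p$ — which is a closed substack cut out transversally, compatible with weighted blow-ups and destackification because they are performed away from (or transversally to) the $C$-directions — yields the Cartesian square of Equation \eqref{eqn:change-of-degree}, and the commented-out preliminary lemma about upper semi-continuity of the degree function is exactly what certifies that the bottom horizontal map identifies $\ov{Z}_d$ with the restriction of $\ov{Z}_{d'}$. The $\Gamma$-equivariance is then a formality from Corollary \ref{cor:equivariance}.
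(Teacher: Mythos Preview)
Your approach has a genuine gap at the central step. The linear projection $p: V\times\mathrm{Poly}_{d'}^\Gamma(V,W)\to V\times\mathrm{Poly}_d^\Gamma(V,W)$ obtained from a splitting $\mathrm{Poly}_{d'}^\Gamma=\mathrm{Poly}_d^\Gamma\oplus C$ does \emph{not} define a morphism of pairs from $(\ov{Z}_{d'}^\Gamma(V,W)^\circ\subset V\times\mathrm{Poly}_{d'}^\Gamma)$ to $(\ov{Z}_d^\Gamma(V,W)^\circ\subset V\times\mathrm{Poly}_d^\Gamma)$: you yourself observe that $\ev_{d'}(v,P,Q)=P(v)+Q(v)$, so $p^{-1}(Z_d^\Gamma)=\{(v,P,Q):P(v)=0\}=Z_d^\Gamma\times C$, which is not $Z_{d'}^\Gamma=\{(v,P,Q):P(v)+Q(v)=0\}$. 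Consequently, applying the functoriality of $F^{\circ\infty}_{er}$ to $p$ only computes the resolution of $\ov{Z}_d^\Gamma\times C$, which tells you nothing about $\tilde{Z}_{d'}^\Gamma(V,W)^\circ$. Your final paragraph silently identifies these two resolutions when you ``restrict along the zero section''; but the singularities of $\ov{Z}_{d'}^\Gamma$ and of $\ov{Z}_d^\Gamma\times C$ are in general different, so the centers of the weighted blow-ups in the ATW algorithm differ as well, and there is no reason for their restrictions to the zero section to coincide.

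The paper's proof supplies exactly the missing ingredient: instead of the linear projection, one constructs (Proposition~\ref{prop:inverse-degree}) a \emph{non-linear} retraction $\phi_d':V\times\mathrm{Poly}_{d+1}^\Gamma\to V\times\mathrm{Poly}_d^\Gamma$ of the form $(v,P)\mapsto(v,P-P'+\sum_{i,j}a_ih_{ij}(v)Q_j)$, built from a finite generating set $Q_1,\dots,Q_n$ of $\mathrm{Poly}^\Gamma(V,W)$ as a $\mathbb{C}[V]^\Gamma$-module. This map is a left inverse of the inclusion \emph{and} preserves evaluation, i.e.\ $P(v)=\phi_d'(v,P)(v)$; hence $(\phi_d')^{-1}(\ov{Z}_d^\Gamma)=\ov{Z}_{d+1}^\Gamma$ on the nose, and $\phi_d'$ \emph{is} a smooth morphism of pairs to which Corollary~\ref{cor:cube-diagram} applies (Corollary~\ref{cor:degree-submersion}). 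The hypothesis ``$d$ sufficiently large'' enters here, to ensure $\deg Q_j\leq d$ for all $j$. The Cartesian square \eqref{eqn:change-of-degree} is then obtained by concatenating the reversed Cartesian square for $\phi_d'$ with the defining fiber product and using $\phi_d'\circ\phi_d=\mathrm{id}$.
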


Note that the morphism $V \times \mathrm{Poly}_d^{\Gamma}(V,W) \to V \times \mathrm{Poly}_{d'}^{\Gamma}(V,W)$ is \emph{not} smooth if $d' > d$. Therefore, Proposition \ref{prop:cartesian-degree} does not follow directly from the functoriality part of Theorem \ref{thm:ATW-resolution}. To establish the commutativity of Diagram \eqref{eqn:change-of-degree}, we need to appeal to the following statement:

\begin{prop}[{Cf. \cite[Lemma 4.14]{Bai_Xu_2022}}]\label{prop:inverse-degree}
Denote by $\phi_d: V \times \mathrm{Poly}_d^{\Gamma}(V,W) \to V \times \mathrm{Poly}_{d+1}^{\Gamma}(V,W)$ the natural inclusion map. Then there exists an integer $d_0 \geq 1$, which depends on $\Gamma, V$ and $W$ such that for any $d \geq d_0$, there exists a morphism
\beq
\ud{\phi}_d' : V \times \mathrm{Poly}_{d+1}^{\Gamma}(V,W) \to \mathrm{Poly}_d^{\Gamma}(V,W)
\eeq
so that the induced morphism
\beq\label{eqn:inverse-degree}
\begin{aligned}
\phi_d' : V \times \mathrm{Poly}_{d+1}^{\Gamma}(V,W) &\to V \times \mathrm{Poly}_{d}^{\Gamma}(V,W) \\
(v, P) &\mapsto (v, \ud{\phi}_d'(v, P))
\end{aligned}
\eeq
satisfies the following properties.
\begin{enumerate}
    \item The composition $\phi_d' \circ \phi_d$ is equal to the identity map on $V \times \mathrm{Poly}_{d}^{\Gamma}(V,W)$.
    \item The equality of evaluation 
    \beq
    P(v) = \ud{\phi}_d'(v, P) (v)
    \eeq
    holds for any pair $(v,P) \in V \times \mathrm{Poly}_{d+1}^{\Gamma}(V,W)$.
\end{enumerate}
\end{prop}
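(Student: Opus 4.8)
This proposition is the key technical input for Proposition~\ref{prop:cartesian-degree}, and it is a slight elaboration of \cite[Lemma 4.14]{Bai_Xu_2022}, so the plan is to adapt that argument. The underlying point is purely representation-theoretic: by the remark following Lemma~\ref{lem:FO}, once $d_0$ is large enough, $W$ occurs as a $\Gamma$-subrepresentation of $\mathrm{Sym}^{d_0}(V^\vee)$ (equivalently, the evaluation-at-$v$ maps are ``surjective enough''); I will use this to build, for each $v$, a canonical way to replace a degree-$(d+1)$ equivariant polynomial by a degree-$d$ one without changing its value at $v$.

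First I would make the construction of $\ud{\phi}_d'$ explicit. Fix a $\Gamma$-invariant Hermitian inner product on $V$ and the induced one on $\mathrm{Poly}_{d+1}^\Gamma(V,W)$. The idea is: given $(v,P)$, write $P = P_{\le d} + P_{d+1}$ where $P_{d+1}$ is the top-degree (homogeneous of degree $d+1$) part; we want to trade $P_{d+1}$ for a lower-degree equivariant polynomial agreeing with it at $v$. The homogeneous degree-$(d+1)$ equivariant polynomial $v' \mapsto P_{d+1}(v')$ can be evaluated at the specific point $v$ to give a vector $P_{d+1}(v) \in W$; what we need is a degree-$\le d$ equivariant polynomial $Q_v$ with $Q_v(v) = P_{d+1}(v)$ depending algebraically (in fact polynomially) on $(v,P)$. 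Here one uses that for $d \ge d_0$ there is a $\Gamma$-equivariant polynomial map $\Theta \colon V \to \mathrm{Poly}_d^\Gamma(V,W) \otimes (\text{something})$ realizing a right inverse to evaluation — concretely, from the inclusion $W \hookrightarrow \mathrm{Sym}^{d_0}(V^\vee)$ one produces, for each $v$ with a chosen nonzero coordinate functional, an equivariant polynomial of controlled degree taking the prescribed value at $v$; averaging over $\Gamma$ and using a partition-of-unity-free algebraic gluing (as in \emph{loc. cit.}) makes this globally defined and equivariant. Setting $\ud{\phi}_d'(v,P) := P_{\le d} + (\text{the part of } P_{d+1} \text{ of degree} \le d) + Q_v$ — or more precisely iterating the degree-reduction one degree at a time from $d+1$ down — gives a polynomial map in $(v,P)$, i.e. a morphism of varieties, and it is $\Gamma$-equivariant because every ingredient is.

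Then property~(2), $P(v) = \ud{\phi}_d'(v,P)(v)$, will hold by construction, since at each degree-reduction step we only replaced the top homogeneous part by something with the same value at $v$. Property~(1), $\phi_d' \circ \phi_d = \mathrm{id}$, is the statement that if $P$ already has degree $\le d$ then the procedure returns $P$ unchanged; this is immediate because then there is no top part of degree $d+1$ to reduce, so $\ud{\phi}_d'(v,\phi_d(P)) = P$ for all $v$. I expect the main obstacle to be the same as in \cite[Lemma 4.14]{Bai_Xu_2022}: producing the degree-reduction map $Q_v$ \emph{algebraically and $\Gamma$-equivariantly and globally in $v$}, rather than just pointwise — the naive pointwise construction divides by a coordinate of $v$ and so only works on an open set, and one must patch these together in the algebraic (not $C^\infty$) category, which is where the hypothesis that $V$ is faithful and $d$ is large (so that enough monomials are available simultaneously) gets used. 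Once that lemma from \emph{loc. cit.} is in hand, the only new content here is checking that the construction is a morphism of the \emph{pairs} in Definition~\ref{defn:universal-zero-locus}, i.e. that $\phi_d'$ and $\phi_d$ are genuinely algebraic and equivariant, which is routine.

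With Proposition~\ref{prop:inverse-degree} available, the deduction of Proposition~\ref{prop:cartesian-degree} goes as follows. The map $\phi_d$ is a smooth (in fact, a linear inclusion of a direct summand) $\Gamma$-equivariant morphism of pairs after one checks $\ov{Z}_d^\Gamma(V,W)^\circ = \ov{Z}_{d'}^\Gamma(V,W)^\circ \times_{V \times \mathrm{Poly}_{d'}^\Gamma} (V \times \mathrm{Poly}_d^\Gamma)$, which follows since the evaluation map is compatible with $\phi_d$ and the ``trivial stabilizer'' locus is cut out by the same condition; so functoriality of $F^{\circ\infty}_{er}$ (Theorem~\ref{thm:ATW-resolution}, Corollary~\ref{cor:cube-diagram}, Corollary~\ref{cor:equivariance}) gives the Cartesian square with $\phi_d$ upgraded to $\tilde\phi_d$. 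What is \emph{not} formal is that the resulting map $\tilde{Z}_d^\Gamma(V,W)^\circ \to \tilde{Z}_{d'}^\Gamma(V,W)^\circ$ is the one claimed, i.e. that it is compatible with the inclusion $V\times\mathrm{Poly}_d \hookrightarrow V\times\mathrm{Poly}_{d'}$ in the way asserted; this is where $\phi_d'$ enters, exactly as in the proof of \cite[Proposition 4.15]{Bai_Xu_2022} (or its analogue): $\phi_d'$ is now a \emph{smooth} $\Gamma$-equivariant morphism of pairs (it is a retraction, and submersivity follows from property~(1)), so $F^{\circ\infty}_{er}(\phi_d')$ provides a map in the reverse direction which by property~(1) and functoriality is a one-sided inverse, forcing the square to be Cartesian with the stated maps. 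Iterating $d' = d, d+1, d+2, \dots$ handles all $d' \ge d$.
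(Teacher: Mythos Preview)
Your overall strategy is correct --- split $P = P_{\le d} + P_{d+1}$ and replace the top homogeneous part by a degree-$\le d$ equivariant polynomial with the same value at $v$ --- but the mechanism you sketch for producing $Q_v$ (via $W \hookrightarrow \mathrm{Sym}^{d_0}(V^\vee)$, coordinate functionals, and an unspecified ``partition-of-unity-free algebraic gluing'') is exactly the obstacle you yourself flag as unresolved, and it is not how the cited lemma works. The paper's construction, which is also what \cite[Lemma 4.14]{Bai_Xu_2022} actually does, sidesteps all patching: use that $\mathrm{Poly}^\Gamma(V,W)$ is a finitely generated module over the invariant ring $\mathbb{C}[V]^\Gamma$, choose generators $Q_1,\dots,Q_n$ and set $d_0 = \max_j \deg Q_j$. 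For $d \ge d_0$, fix a linear basis $P_1,\dots,P_k$ of the homogeneous degree-$(d{+}1)$ piece and write $P_i = \sum_j h_{ij} Q_j$ with $h_{ij} \in \mathbb{C}[V]^\Gamma$; then for $P$ with top part $P_{d+1} = \sum_i a_i P_i$ define
\[
\ud{\phi}_d'(v,P) \;=\; P - P_{d+1} + \sum_{i,j} a_i\, h_{ij}(v)\, Q_j.
\]
Each $h_{ij}(v)$ is a scalar, so the last sum has degree $\le d_0 \le d$; the map is visibly polynomial in $(v,P)$ and $\Gamma$-equivariant, with no division or open-set gluing needed. Property~(2) is immediate from $P_i(v) = \sum_j h_{ij}(v) Q_j(v)$, and property~(1) from $P_{d+1}=0$ when $\deg P \le d$. (Your formula ``$P_{\le d} + (\text{the part of } P_{d+1} \text{ of degree} \le d) + Q_v$'' also contains a stray middle term: $P_{d+1}$ is homogeneous and has no lower-degree part.)

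A remark on your final paragraph: you call $\phi_d$ a smooth morphism of pairs, but a linear inclusion is a closed immersion, not smooth in the sense of Theorem~\ref{thm:ATW-resolution} --- this is precisely why Proposition~\ref{prop:inverse-degree} is needed at all. Functoriality applies only to $\phi_d'$, which \emph{is} smooth (it is a retraction; local right inverses are built explicitly in the paper), and the Cartesian square for $\phi_d$ is then deduced by composing the Cartesian square for $\phi_d'$ with the inclusion $\phi_d$ and using $\phi_d' \circ \phi_d = \mathrm{id}$, as in the paper's proof of Proposition~\ref{prop:cartesian-degree}.
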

\begin{proof}
We reproduce the proof here because it will be needed for the following discussions. 

Because the space of $\Gamma$-equivariant polynomial maps of all degrees $\mathrm{Poly}^{\Gamma}(V,W)$ is a finitely generated module over the ring of $\Gamma$-invariant functions ${\mb C}[V]^{\Gamma}$, we can choose generators $Q_1, \dots, Q_n$. Let $d_0$ be the maximum of the degrees of $Q_1, \dots, Q_n$. Then for any $d \geq d_0$, let $\{ P_1, \dots, P_k \}$ be a complex basis of homogeneous $\Gamma$-equivariant polynomial maps from $V$ to $W$ of degree $(d+1)$. Then we can find $h_{ij} \in {\mb C}[V]^{\Gamma}$ for $1 \leq i \leq k, 1 \leq j \leq n$ such that
\begin{equation}
    P_i = \sum_{j} h_{ij}Q_j.
\end{equation}
Using these choices, we can define 
\beq
\begin{aligned}
\phi_d' : V \times \mathrm{Poly}_{d+1}^{\Gamma}(V,W) &\to V \times \mathrm{Poly}_{d}^{\Gamma}(V,W) \\
(v, P) &\mapsto (v, P - P' + \sum_{i,j} a_i h_{ij}(v)Q_j),
\end{aligned}
\eeq
where $P'$ is the degree $d+1$ part of $P$ which can be written as $P' = \sum_{i} a_i P_i$ using the basis $\{ P_1, \dots, P_k \}$. This map satisfies the desired properties.
\end{proof}

\begin{cor}\label{cor:degree-submersion}
The morphism $\phi_d'$ from Equation \eqref{eqn:inverse-degree} is a smooth morphism. Moreover, there exists a $\Gamma$-equivariant Cartesian diagram
\beq\label{diag:degree-reversed}
\begin{tikzcd}
{\tilde{Z}_{d+1}^{\Gamma}(V,W)^{\circ}} \arrow[r] \arrow[d] & {V \times \mathrm{Poly}_{d+1}^{\Gamma}(V,W)} \arrow[d] \\
{\tilde{Z}_{d}^{\Gamma}(V,W)^{\circ}} \arrow[r]      & {V \times \mathrm{Poly}_{d}^{\Gamma}(V,W)}.    
\end{tikzcd}
\eeq
\end{cor}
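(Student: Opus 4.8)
The plan is to deduce Corollary \ref{cor:degree-submersion} from Proposition \ref{prop:inverse-degree} together with the functoriality of $F^{\circ\infty}_{er}$ (Theorem \ref{thm:ATW-resolution}) and its equivariant refinement (Corollary \ref{cor:equivariance}). First I would verify that $\phi_d'$ is smooth. By construction in the proof of Proposition \ref{prop:inverse-degree}, $\phi_d'$ is the identity on the $V$-factor and, on the polynomial factor, an affine-linear map in the coefficients of $P$ depending polynomially on $v$ (via the $h_{ij}(v)$); more conceptually, property (1) says $\phi_d'\circ\phi_d=\mathrm{id}$, so $\phi_d'$ is a retraction onto the linear subspace $V\times\mathrm{Poly}_d^\Gamma(V,W)$, and such a retraction splits the total space as a product of the target with the complementary affine space, hence is a smooth (even Zariski-locally trivial) morphism. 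I would spell this out by exhibiting the explicit section/complement: $V\times\mathrm{Poly}_{d+1}^\Gamma(V,W)\cong \big(V\times\mathrm{Poly}_d^\Gamma(V,W)\big)\times \mathbb{A}^k$ under which $\phi_d'$ becomes the projection, using the basis $\{P_1,\dots,P_k\}$ of degree-$(d+1)$ homogeneous equivariant maps.

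Next I would identify the pullback of the pair along $\phi_d'$. Property (2) of Proposition \ref{prop:inverse-degree}, $P(v)=\underline{\phi}_d'(v,P)(v)$, shows that $\ev\big(v,P\big)=0$ if and only if $\ev\big(\phi_d'(v,P)\big)=0$; equivalently, the evaluation morphism $V\times\mathrm{Poly}_{d+1}^\Gamma(V,W)\to W$ factors as $\ev\circ\phi_d'$, so the universal zero locus is a pullback: $Z_{d+1}^\Gamma(V,W)=\phi_d'^*\,Z_d^\Gamma(V,W)$ as closed subschemes. Since $\phi_d'$ restricts to the identity on $V$ (hence on stabilizers of pairs), it also matches the ``trivial stabilizer'' open loci, and therefore $\ov Z_{d+1}^\Gamma(V,W)^\circ=\phi_d'^*\,\ov Z_d^\Gamma(V,W)^\circ$ — here one uses that $\phi_d'$ is smooth, so that taking Zariski closure commutes with the flat pullback along $\phi_d'$ (closure of $\phi_d'^{-1}(U)$ in the fibered product is $\phi_d'^{-1}(\bar U)$ when $\phi_d'$ is smooth with irreducible fibers of constant dimension). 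Thus $\phi_d'$ defines a smooth, $\Gamma$-equivariant morphism of pairs $\big(\ov Z_{d+1}^\Gamma(V,W)^\circ\subset V\times\mathrm{Poly}_{d+1}^\Gamma(V,W)\big)\to\big(\ov Z_{d}^\Gamma(V,W)^\circ\subset V\times\mathrm{Poly}_{d}^\Gamma(V,W)\big)$.

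Now Corollary \ref{cor:cube-diagram}, applied to this smooth morphism of pairs, and the $\Gamma$-equivariance furnished by Corollary \ref{cor:equivariance} (the $\Gamma$-action on source and target being the one fixed in Definition \ref{defn:universal-zero-locus}, trivial on the polynomial factor, and $\phi_d'$ being equivariant), yield precisely the $\Gamma$-equivariant Cartesian square \eqref{diag:degree-reversed}: the functor $F^{\circ\infty}_{er}$ sends a smooth morphism of pairs to a Cartesian square, so $\tilde Z_{d+1}^\Gamma(V,W)^\circ=F^{\circ\infty}_{er}\big(\ov Z_{d+1}^\Gamma(V,W)^\circ\big)$ is the fibered product of $\tilde Z_{d}^\Gamma(V,W)^\circ$ with $V\times\mathrm{Poly}_{d+1}^\Gamma(V,W)$ over $V\times\mathrm{Poly}_{d}^\Gamma(V,W)$ via $\phi_d'$. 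One must take $d\ge d_0$ so that both Lemma \ref{lem:FO} (regularity of the $\circ$-locus, ensuring the pair behaves well) and Proposition \ref{prop:inverse-degree} apply.

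\textbf{Main obstacle.} The delicate point is the second step: confirming that $\ov Z_{d+1}^\Gamma(V,W)^\circ$ is genuinely the scheme-theoretic (reduced) pullback of $\ov Z_{d}^\Gamma(V,W)^\circ$ under $\phi_d'$ — i.e., that passing to the Zariski closure of the free locus commutes with this non-trivial pullback. Property (2) cleanly handles the zero loci themselves, but one needs the smoothness of $\phi_d'$ (from the product decomposition above) to know that $\phi_d'^{-1}$ of an irreducible closed subset is the closure of $\phi_d'^{-1}$ of a dense open subset, and that reducedness is preserved; this is where the explicit splitting from Proposition \ref{prop:inverse-degree}, rather than merely its abstract existence, is needed. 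Once that compatibility is in hand, everything else is a formal consequence of the functoriality statements already established.
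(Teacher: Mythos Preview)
Your proposal is correct and follows essentially the same strategy as the paper: verify that $\phi_d'$ is smooth, check that the pair $\ov Z_{d+1}^\Gamma(V,W)^\circ\subset V\times\mathrm{Poly}_{d+1}^\Gamma(V,W)$ is the pullback of the degree-$d$ pair along $\phi_d'$, then invoke Corollary~\ref{cor:cube-diagram} and Corollary~\ref{cor:equivariance}. The one notable difference is in how smoothness is established: the paper constructs, at each point $(v,\tilde P)$, an explicit local \emph{right} inverse of $\phi_d'$ (a section through that point), concludes that the induced map on K\"ahler differentials has a left inverse, and then appeals to the implicit function theorem; you instead observe that the explicit formula for $\phi_d'$ exhibits a \emph{global} isomorphism $V\times\mathrm{Poly}_{d+1}^\Gamma(V,W)\cong\big(V\times\mathrm{Poly}_d^\Gamma(V,W)\big)\times\mathbb{A}^k$ under which $\phi_d'$ becomes the projection. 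Your argument is slightly cleaner here and has the side benefit of making the closure step (your ``main obstacle'') transparent, since for a product projection with irreducible fiber the identity $\overline{\pi^{-1}(U)}=\pi^{-1}(\overline U)$ is immediate; the paper simply asserts the Cartesian square for $\ov Z^\circ$ without elaborating on this point.
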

\begin{proof}
By Proposition \ref{prop:inverse-degree}, the morphism $\phi_d'$ and its induced maps between K\"ahler differentials at each point admit a left inverse defined by modifying \eqref{eqn:inverse-degree} as follows. Given an element $(v, \tilde{P}) \in V \times \mathrm{Poly}_{d+1}^{\Gamma}(V,W)$, we denote the degree $(d+1)$ part of $\tilde{P}$ by $\tilde{P}'$, and write it as $\tilde{P}' = \sum_{i} \tilde{a}_i P_i$ using the basis from the proof of Proposition \ref{prop:inverse-degree}. Note that the image of $(v, \tilde{P})$ under $\phi_d'$ is $(v, \tilde{P} - \tilde{P}' + \sum_{i,j} \tilde{a}_i h_{ij}(v)Q_j)$. We produce a right inverse of $\phi_d'$ around this point by
\begin{equation}
   V \times \mathrm{Poly}_{d}^{\Gamma}(V,W) \ni (v, P) \mapsto (v, P + \tilde{P}' - \sum_{i,j} \tilde{a}_i h_{ij}(v)Q_j).
\end{equation}
Because the composition of this map with $\phi_d'$ on the right is the identity, we see that the induced map of $\phi_d'$ on K\"ahler differentials admits a left inverse at $(v, \tilde{P})$.

By the implicit function theorem \cite[Chapter 8.5, Proposition 10]{Bosch2013}, we conclude that $\phi_d'$ is smooth since the source and target of this map are both smooth. By item (2) of Proposition \ref{prop:inverse-degree}, we know that we have a Cartesian diagram
\beq
\begin{tikzcd}
{\ov{Z}_{d+1}^{\Gamma}(V,W)^{\circ}} \arrow[r] \arrow[d] & {V \times \mathrm{Poly}_{d+1}^{\Gamma}(V,W)} \arrow[d, "\phi_d'"] \\
{\ov{Z}_d^{\Gamma}(V,W)^{\circ}} \arrow[r]      & {V \times \mathrm{Poly}_{d}^{\Gamma}(V,W)}     
\end{tikzcd}
\eeq
such that all the arrows are $\Gamma$-equivariant, and it defines a smooth morphism of pairs
\beq
(\ov{Z}_{d+1}^{\Gamma}(V,W)^{\circ} \subset V \times \mathrm{Poly}_{d+1}^{\Gamma}(V,W)) \to (\ov{Z}_d^{\Gamma}(V,W)^{\circ} \subset V \times \mathrm{Poly}_{d}^{\Gamma}(V,W)).
\eeq
As a result, we can apply the resolution algorithm Theorem \ref{thm:ATW-resolution} and Corollary \ref{cor:cube-diagram} to obtain Diagram \eqref{diag:degree-reversed}. The $\Gamma$-equivariance follows from Corollary \ref{cor:equivariance}.
\end{proof}

\begin{proof}[Proof of Proposition \ref{prop:cartesian-degree}] 
We consider the ($\Gamma$-equivariant) scheme $Z$ defined as the fiber product
\beq
\begin{tikzcd}
{Z} \arrow[r] \arrow[d] & {V \times \mathrm{Poly}_d^{\Gamma}(V,W)} \arrow[d] \\
{\tilde{Z}_{d+1}^{\Gamma}(V,W)^{\circ}} \arrow[r]      & {V \times \mathrm{Poly}_{d+1}^{\Gamma}(V,W)}     
\end{tikzcd}
\eeq
where the bottom horizontal arrow is the composition of the blow-down map $\tilde{Z}_{d+1}^{\Gamma}(V,W)^{\circ} \to \ov{Z}_{d+1}^{\Gamma}(V,W)^{\circ}$ and the inclusion map $\ov{Z}_{d+1}^{\Gamma}(V,W)^{\circ} \to V \times \mathrm{Poly}_{d+1}^{\Gamma}(V,W)$. It can be concatenated with Diagram \eqref{diag:degree-reversed}, which gives us the commutative diagram
\beq
\begin{tikzcd}
{Z} \arrow[d] \arrow[r]                                & {\tilde{Z}_{d+1}^{\Gamma}(V,W)^{\circ}} \arrow[r] \arrow[d] & {\tilde{Z}_{d}^{\Gamma}(V,W)^{\circ}} \arrow[d] \\
{V \times \mathrm{Poly}_d^{\Gamma}(V,W)} \arrow[r] & {V \times \mathrm{Poly}_{d+1}^{\Gamma}(V,W)} \arrow[r]      & {V \times \mathrm{Poly}_d^{\Gamma}(V,W).}       
\end{tikzcd}
\eeq
Because the two ``inner" squares are Cartesian, we see that the ``outer" square
\beq
\begin{tikzcd}
{Z} \arrow[r] \arrow[d] & {\tilde{Z}_{d}^{\Gamma}(V,W)^{\circ}} \arrow[d] \\
{V \times \mathrm{Poly}_{d}^{\Gamma}(V,W)} \arrow[r]      & {V \times \mathrm{Poly}_{d}^{\Gamma}(V,W)}     
\end{tikzcd}
\eeq
is also Cartesian. Now we observe that the bottom arrow is the identity map by Proposition \ref{prop:inverse-degree}, which implies that the top arrow is an isomorphism. This in turn establishes Proposition \ref{prop:cartesian-degree} for the case $d'=1$. For general $d'$, we just need to concatenate diagrams of the form \eqref{eqn:change-of-degree}.
\end{proof}

We are also interested in restricted representations. Let $\Gamma_1$ be a finite group and let $V$ and $W$ be finite-dimensional $\Gamma_1$-representations. Suppose that $\Gamma_2 < \Gamma_1$ is a proper subgroup. By restricting the representations along $\Gamma_2$, we obtain a decomposition
\beq\label{eqn:decomp-v1-v2}
V = \mathring{V}^{\Gamma_2} \oplus \check{V}^{\Gamma_2},
\eeq
where $\mathring{V}^{\Gamma_2}$ is the direct sum of all $\Gamma_2$-trivial irreducible subrepresentations of $V$. Introduce the following notation
\beq\label{eqn: Gamma-2-open}
V_{+}^{\Gamma_2} := \{ v \in V \ | \text{ the stabilizer of } v \text{ is a subgroup of } \Gamma_2 \}.
\eeq

Consider $V$ and the product $ \mathring{V}^{\Gamma_2} \times W$  as the total spaces of $\Gamma_2$-equivariant vector bundles over the affine space $\mathring{V}^{\Gamma_2}$ equipped with the trivial $\Gamma_2$-action
\beq
\underline{\check{V}^{\Gamma_2}} = \mathring{V}^{\Gamma_2} \times \check{V}^{\Gamma_2}, \quad \quad \quad \underline{W} = \mathring{V}^{\Gamma_2} \times W.
\eeq
Any element $\check{P} \in \mathrm{Poly}_d^{\Gamma_2}(\mathring{V}^{\Gamma_2} \oplus \check{V}^{\Gamma_2}, W)$ can be viewed as a $\Gamma_2$-equivariant bundle morphism $\underline{\check{V}^{\Gamma_2}} \to \underline{W}$ which over for any $\mathring{v} \in \mathring{V}^{\Gamma_2}$ is given by $\check{P}(\mathring{v}, -) \in \mathrm{Poly}_d^{\Gamma_2}(\check{V}^{\Gamma_2}, W)$. Write
\beq\label{eqn:polynomial-bundle-map}
\mathrm{Poly}_d^{\Gamma_2}(\underline{\check{V}^{\Gamma_2}}, \underline{W})
\eeq
the bundle of $\Gamma_2$-equivariant fiberwise polynomial maps $\underline{\check{V}^{\Gamma_2}} \to \underline{W}$ of degree at most $d$.

Now given $P \in \mathrm{Poly}_d^{\Gamma_1}(V, W)$, the natural inclusion $\mathrm{Poly}_d^{\Gamma_1}(V, W) \to \mathrm{Poly}_d^{\Gamma_2}(V, W)$ by viewing a $\Gamma_1$-equivariant polynomial map as a $\Gamma_2$-equivariant polynomial map defines a morphism
\beq\label{eqn:partial-ev}
\begin{aligned}
V \times \mathrm{Poly}_d^{\Gamma_1}(V, W) &\to \underline{\check{V}^{\Gamma_2}} \times_{\mathring{V}^{\Gamma_2}} \mathrm{Poly}_d^{\Gamma_2}(\underline{\check{V}^{\Gamma_2}}, \underline{W}) \\
(v, P) &\mapsto \big( (\mathring{v}, \check{v}), P(\mathring{v}, -) \big),
\end{aligned}
\eeq
where for any $v \in V$, we use the decomposition in Equation \eqref{eqn:decomp-v1-v2} to write $v = (\mathring{v}, \check{v})$.

After this preliminary discussion, we can start investigating the universal zero loci. Continuing the above setting, define
\beq\label{eqn:fiberwise-closure}
\ov{Z}_d^{\Gamma_2}(\underline{\check{V}^{\Gamma_2}}, \underline{W})^\circ\subset \underline{\check{V}^{\Gamma_2}} \times_{\mathring{V}^{\Gamma_2}} \mathrm{Poly}_d^{\Gamma_2}(\underline{\check{V}^{\Gamma_2}}, \underline{W}),
\eeq
to be the Zariski closure of the subset of the fiberwise zero locus consisting of points with trivial stabilizer under the $\Gamma_2$-action, which should be thought of as a family version of \eqref{eqn:Zd-pair}. By construction, we have a Cartesian diagram
\beq
\begin{tikzcd}
{\ov{Z}_d^{\Gamma_1}(V, W)^\circ} \arrow[d] \arrow[rr]                                    &  & {V \times \mathrm{Poly}_d^{\Gamma_1}(V, W)} \arrow[d]                                                                               \\
{\ov{Z}_d^{\Gamma_2}(\underline{\check{V}^{\Gamma_2}}, \underline{W})^\circ} \arrow[rr] &  & \underline{\check{V}^{\Gamma_2}} \times_{\mathring{V}^{\Gamma_2}} \mathrm{Poly}_d^{\Gamma_2}(\underline{\check{V}^{\Gamma_2}}, \underline{W})
\end{tikzcd}
\eeq
\begin{rem}
We wish to show that by applying the functor $F^{\circ \infty}_{er}$ to the pairs defined by the horizontal arrows of the above diagram, we can lift the Cartesian diagram to the resolutions. Unfortunately, we do not know a proof within the algebraic category, and we prove the desired result in the complex analytic setting. Nevertheless, we expect that the Cartesian square \eqref{eqn:change-of-group} holds in the algebraic category.  
\end{rem}

Write $\tilde{Z}_d^{\Gamma_2}(\underline{\check{V}^{\Gamma_2}}, \underline{W})^\circ$ the resolution of $\ov{Z}_d^{\Gamma_2}(\underline{\check{V}^{\Gamma_2}}, \underline{W})^\circ$ by applying $F^{\circ \infty}_{an}$ from Theorem \ref{thm:ATW-resolution-analytic} to the pair $(\ov{Z}_d^{\Gamma_2}(\underline{\check{V}^{\Gamma_2}}, \underline{W})^\circ \subset \underline{\check{V}^{\Gamma_2}} \times_{\mathring{V}^{\Gamma_2}} \mathrm{Poly}_d^{\Gamma_2}(\underline{\check{V}^{\Gamma_2}}, \underline{W}) )$, and it admits a morphism
\begin{equation}\label{eqn:family-inclusion}
    \tilde{Z}_d^{\Gamma_2}(\underline{\check{V}^{\Gamma_2}}, \underline{W})^\circ \to \underline{\check{V}^{\Gamma_2}} \times_{\mathring{V}^{\Gamma_2}} \mathrm{Poly}_d^{\Gamma_2}(\underline{\check{V}^{\Gamma_2}}, \underline{W}).
\end{equation}
Here we abuse the notations because after passing to the underlying complex manifolds, the outputs of $F^{\circ \infty}_{an}$ and $F^{\circ \infty}_{er}$ agree in view of Theorem \ref{thm:ATW-resolution-analytic}.

\begin{prop}\label{prop:cartesian-group}
Denote by $\underline{\check{V}_{+}^{\Gamma_2}}$ the open subvariety of $\check{V}^{\Gamma_2}$ as in Equation \eqref{eqn: Gamma-2-open}. For $d$ sufficiently large, after restricting to $V_{+}^{\Gamma_2}$, the analogue of the morphism \eqref{eqn:blowdown-inclusion} from $F^{\circ \infty}_{an}$ and the morphism \eqref{eqn:family-inclusion} fit into a $\Gamma_2$-equivariant Cartesian commutative diagram
\beq\label{eqn:change-of-group}
\begin{tikzcd}
{\tilde{Z}_d^{\Gamma_1}(V_{+}^{\Gamma_2}, W)^\circ} \arrow[r] \arrow[d] & {V_{+}^{\Gamma_2} \times \mathrm{Poly}_d^{\Gamma_1}(V, W)} \arrow[d] \\
{\tilde{Z}_d^{\Gamma_2}(\underline{\check{V}_{+}^{\Gamma_2}}, \underline{W})^\circ} \arrow[r]      & {\underline{\check{V}_{+}^{\Gamma_2}} \times_{\mathring{V}^{\Gamma_2}} \mathrm{Poly}_d^{\Gamma_2}(\underline{\check{V}^{\Gamma_2}}, \underline{W}).}     
\end{tikzcd}
\eeq
where the right vertical arrow is Equation \eqref{eqn:partial-ev}.
\end{prop}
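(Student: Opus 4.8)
The plan is to reproduce, at one level of generality higher, the argument used in the proof of Proposition \ref{prop:cartesian-degree}, whose two essential ingredients were an auxiliary morphism that, unlike the tautological change--of--parameter map, genuinely is a submersion, and an evaluation--compatibility identity guaranteeing that this morphism pulls the universal zero locus back to the universal zero locus. Here the tautological map is \eqref{eqn:partial-ev}; since it need not be a submersion it is not a priori a morphism in the category of complex analytic pairs with smooth morphisms and Theorem \ref{thm:ATW-resolution-analytic} does not apply to it directly, so the first task is to show that, \emph{after restricting to $V_+^{\Gamma_2}$ and $\underline{\check V_+^{\Gamma_2}}$ and for $d$ sufficiently large} (depending only on $\Gamma_1,\Gamma_2,V,W$), the morphism \eqref{eqn:partial-ev} is a smooth morphism of complex analytic pairs. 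For the underlying map of total spaces this amounts, by the tangent--space computation, to the surjectivity, for each $\mathring v$ in the relevant locus, of the ``slice restriction''
\begin{equation*}
\mathrm{Poly}_d^{\Gamma_1}(V,W)\longrightarrow \mathrm{Poly}_d^{\Gamma_2}\bigl(\{\mathring v\}\times\check V^{\Gamma_2},W\bigr),\qquad P\longmapsto P(\mathring v,-),
\end{equation*}
onto $\Gamma_2$--equivariant polynomials of degree $\le d$ on the normal slice; and that \eqref{eqn:partial-ev} carries the closed subscheme cut out by the $\Gamma_1$--equivariant evaluation onto the one cut out by the $\Gamma_2$--fiberwise evaluation. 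I expect this surjectivity --- a relative/fibered avatar of the Fukaya--Ono lemma behind Lemma \ref{lem:FO}, ultimately resting on the faithfulness of $V$ and the inclusion $W\hookrightarrow \mathrm{Sym}^{d_0}(V)$ --- to be the main obstacle, and it is also precisely the point at which one is forced out of the algebraic category: the requisite local right inverses (analogous to those produced in the proof of Corollary \ref{cor:degree-submersion}) and the compatibility of Zariski closures with the base change are only available after passing to suitable analytic neighborhoods of the slices, which is the gap flagged in the remark preceding the statement.

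Before this, I would record the preliminary observation that explains why one restricts to $V_+^{\Gamma_2}$ in the first place: for $(v,P)$ with $v=(\mathring v,\check v)\in V_+^{\Gamma_2}$ the $\Gamma_1$--stabilizer of $(v,P)$ is contained in $\Gamma_2$, hence coincides with the $\Gamma_2$--stabilizer of $(v,P)$, which by \eqref{eqn:decomp-v1-v2} and the triviality of the $\Gamma_2$--action on $\mathring V^{\Gamma_2}$ equals the $\Gamma_2$--stabilizer of $\check v$. Consequently, over $V_+^{\Gamma_2}$ the ``free'' locus $Z_d^{\Gamma_1}(V_+^{\Gamma_2},W)^\circ$ is exactly the preimage under \eqref{eqn:partial-ev} of the fiberwise free locus of $Z_d^{\Gamma_2}(\underline{\check V^{\Gamma_2}},\underline W)$, and --- since \eqref{eqn:partial-ev} is flat, being a composition of a linear projection with an affine--bundle projection, hence open --- the same holds for their closures. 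This is the ``singular'' Cartesian square underlying \eqref{eqn:change-of-group}, and it exhibits the morphism \eqref{eqn:partial-ev} as a morphism of analytic pairs in the sense of Theorem \ref{thm:ATW-resolution-analytic} once smoothness is known.

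Granting the two points above, the assembly is formal. Applying the complex analytic resolution functor $F^{\circ\infty}_{an}$ and the analytic analogue of Corollary \ref{cor:cube-diagram} to the smooth morphism of pairs furnished by \eqref{eqn:partial-ev} yields that the resolution of $\ov Z_d^{\Gamma_1}(V_+^{\Gamma_2},W)^\circ$ is the base change along \eqref{eqn:partial-ev} of the resolution of $\ov Z_d^{\Gamma_2}(\underline{\check V_+^{\Gamma_2}},\underline W)^\circ$; since the former resolution is, by Definition \ref{defn:Z-d-resolve} together with the coincidence of $F^{\circ\infty}_{er}$ and $F^{\circ\infty}_{an}$ on algebraic pairs (Theorem \ref{thm:ATW-resolution-analytic}), precisely $\tilde Z_d^{\Gamma_1}(V_+^{\Gamma_2},W)^\circ$, and the latter is $\tilde Z_d^{\Gamma_2}(\underline{\check V_+^{\Gamma_2}},\underline W)^\circ$, this is exactly the asserted Cartesian square \eqref{eqn:change-of-group}, with the two blow--down/inclusion morphisms being the one of \eqref{eqn:blowdown-inclusion} (for $F^{\circ\infty}_{an}$) and the one of \eqref{eqn:family-inclusion}. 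Finally, the $\Gamma_2$--equivariance of every object and arrow in the diagram --- including the one produced by the functor --- follows from the complex analytic analogue of Corollary \ref{cor:equivariance}, since \eqref{eqn:partial-ev} and all the auxiliary maps are $\Gamma_2$--equivariant by construction. Should the direct smoothness of \eqref{eqn:partial-ev} over $V_+^{\Gamma_2}$ turn out to require a further harmless shrinking, the fallback is to mimic the end of the proof of Proposition \ref{prop:cartesian-degree} literally: construct, via the same large--degree representation--theoretic input, a $\Gamma_2$--equivariant smooth partial inverse in the spirit of Proposition \ref{prop:inverse-degree} (using Proposition \ref{prop:inverse-degree} itself to control degrees), apply functoriality to it, and concatenate the two resulting Cartesian squares, identifying the composite bottom map with the identity.
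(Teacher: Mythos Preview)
Your main approach---showing that \eqref{eqn:partial-ev} is itself a submersion over $V_+^{\Gamma_2}$---has a genuine gap: the slice restriction $\mathrm{Poly}_d^{\Gamma_1}(V,W)\to\mathrm{Poly}_d^{\Gamma_2}(\check V^{\Gamma_2},W)$, $P\mapsto P(\mathring v,-)$, need \emph{not} be surjective, even for arbitrarily large $d$. Take $\Gamma_1=\langle a,b\rangle\cong(\mathbb Z/2)^2$, $\Gamma_2=\langle a\rangle$, $V=\mathbb C_a\oplus\mathbb C_b$ (coordinates $(x,y)$ with $a\cdot(x,y)=(-x,y)$, $b\cdot(x,y)=(x,-y)$), and $W=\mathbb C_{ab}$. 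Then $\mathring V^{\Gamma_2}=\mathbb C_b$, $\check V^{\Gamma_2}=\mathbb C_a$; a $\Gamma_1$-equivariant polynomial is odd in both variables while a $\Gamma_2$-equivariant polynomial on $\check V^{\Gamma_2}$ is odd in $x$. For $y_0\ne0$ (forced by $v\in V_+^{\Gamma_2}$) the image of $P\mapsto P(x,y_0)$ is spanned by $x^i$ with $i$ odd and $i\le d-1$; when $d$ is odd this misses $x^d$. Since hitting a tangent vector of the form $(0,0,Q)$ in the target forces $\mathring v'=0$ and hence reduces to this slice restriction, \eqref{eqn:partial-ev} is not a submersion at such points. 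This is not a Fukaya--Ono--type obstruction that disappears for large $d$; it is a degree bookkeeping issue that recurs infinitely often.

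The paper avoids this by \emph{factoring} \eqref{eqn:partial-ev} through $V_+^{\Gamma_2}\times\mathrm{Poly}_d^{\Gamma_2}(V,W)$. The second leg, the $\Gamma_2$-version of the partial evaluation, \emph{is} an honest algebraic submersion (any $Q$ on $\check V^{\Gamma_2}$ extends $\Gamma_2$-equivariantly to $V$ by ignoring $\mathring v$), so Theorem~\ref{thm:ATW-resolution-analytic} applies directly. The first leg is the linear inclusion $\mathrm{Poly}_d^{\Gamma_1}\hookrightarrow\mathrm{Poly}_d^{\Gamma_2}$, for which the paper produces a local analytic left inverse preserving evaluation (Lemma~\ref{lemma:group-inverse}) by a Lagrange interpolation: a $\Gamma_2$-invariant polynomial $L_v$ separating the $\Gamma_2$-orbit of $v$ from its $(\Gamma_1\!\setminus\!\Gamma_2)$-translates lets one average the non-$\Gamma_1$-equivariant part back, at the cost of a degree increase corrected by Proposition~\ref{prop:inverse-degree}. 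This is also the true source of the passage to the analytic category---$L_v$ varies holomorphically but not algebraically in $v$. Your fallback points in this direction but does not isolate either ingredient; in particular, a direct left inverse to \eqref{eqn:partial-ev} would still have to manufacture a $\Gamma_1$-polynomial on $V$ from a $\Gamma_2$-polynomial on the slice with matching evaluation, which is exactly the composite of these two steps.
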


We need some preparation to prove the above Proposition.

\begin{lemma}\label{lemma:group-inverse}
    For $d$ sufficiently large, near any point $(v, P) \in \ov{Z}_d^{\Gamma_1}(V, W)^\circ \cap V_{+}^{\Gamma_2} \times \mathrm{Poly}_d^{\Gamma_1}(V, W)$, the natural inclusion map
    \beq
    V_{+}^{\Gamma_2} \times \mathrm{Poly}_d^{\Gamma_1}(V, W) \subset V_{+}^{\Gamma_2} \times \mathrm{Poly}_d^{\Gamma_2}(V, W)
    \eeq
    has a complex analytic left inverse $\theta_d$ which preserves the evaluation map $(v, P) \mapsto P(v)$.
\end{lemma}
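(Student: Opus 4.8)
\textbf{Proof proposal for Lemma \ref{lemma:group-inverse}.}

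The plan is to produce the left inverse $\theta_d$ by combining a \emph{fiberwise} projection onto $\Gamma_1$-equivariant polynomials with a correction term that restores the value $P(v)$, in the same spirit as the construction of $\phi_d'$ in Proposition \ref{prop:inverse-degree} but now with the group $\Gamma_1$ playing the role that the degree played there. First I would decompose the $\Gamma_1$-representation $\mathrm{Poly}_d^{\Gamma_2}(V,W)$ (on which $\Gamma_1/\Gamma_2$-worth of conjugation acts, more precisely $\Gamma_1$ acts by conjugation and $\Gamma_2$ acts trivially) into its trivial isotypic piece, which is exactly the image of $\mathrm{Poly}_d^{\Gamma_1}(V,W)$, and a complement $\mathrm{Poly}_d^{\Gamma_2}(V,W)^{\perp}_{\Gamma_1}$. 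Averaging over $\Gamma_1$ gives a $\Gamma_1$-equivariant linear projection $\mathrm{pr}: \mathrm{Poly}_d^{\Gamma_2}(V,W) \to \mathrm{Poly}_d^{\Gamma_1}(V,W)$, and the naive map $(v,P)\mapsto (v,\mathrm{pr}(P))$ is already a left inverse of the inclusion; the only defect is that in general $\mathrm{pr}(P)(v) \neq P(v)$, so the evaluation map is not preserved.

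The correction is where the hypothesis $v \in V_{+}^{\Gamma_2}$ and the largeness of $d$ enter. Since $v$ has stabilizer contained in $\Gamma_2$, evaluation at $v$ is surjective from $\mathrm{Poly}_d^{\Gamma_2}(V,W)$ onto $W$ for $d \geq d_0$ (this is the same representation-theoretic input underlying Lemma \ref{lem:FO}, applied to the free $\Gamma_2$-orbit of $v$; concretely $W$ embeds in $\mathrm{Sym}^{d_0}(V)$ as a $\Gamma_2$-representation, hence as a $\Gamma_1$-representation after averaging). One then chooses, locally and holomorphically in $v$ over a neighborhood of the given point in $V_+^{\Gamma_2}$, a $\Gamma_1$-equivariant family of splittings $\sigma_v : W \to \mathrm{Poly}_d^{\Gamma_1}(V,W)$ of the evaluation-at-$v$ map; such a family exists because the evaluation map has locally constant rank (it is surjective on the open set $V_+^{\Gamma_2}$ for $d$ large, by the argument of Lemma \ref{lem:FO}) and one can average any local smooth splitting over the finite group $\Gamma_1$. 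Then I would define
\beq
\theta_d(v, P) := \big( v,\ \mathrm{pr}(P) + \sigma_v\big( P(v) - \mathrm{pr}(P)(v) \big) \big).
\eeq
By construction $\theta_d$ is holomorphic near $(v,P)$, it sends a $\Gamma_1$-equivariant $P$ to itself (since then $\mathrm{pr}(P)=P$ and the correction term vanishes), so it is a left inverse of the inclusion, and the value at $v$ of the output equals $\mathrm{pr}(P)(v) + \big(P(v)-\mathrm{pr}(P)(v)\big) = P(v)$, so the evaluation map is preserved.

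The main obstacle I anticipate is the last clause — producing the family of splittings $\sigma_v$ holomorphically (not merely continuously or smoothly) in $v$, and ensuring it is defined on a genuine neighborhood of the prescribed point rather than only pointwise. The cleanest route is: fix a $\Gamma_1$-equivariant linear inclusion $\iota: W \hookrightarrow \mathrm{Poly}_{d_0}^{\Gamma_1}(V,W)$ with $\iota(w)(v') $ depending polynomially on $v'$ and with $\ev_{v}\circ\iota = \mathrm{id}_W$ for all $v$ in the dense open locus where $v\mapsto$ (the relevant symmetric-power pairing) is nondegenerate — this locus contains $V_+^{\Gamma_2}$ for $d_0$ large by the faithfulness argument in the remark after Lemma \ref{lem:FO} — and then take $\sigma_v$ to be the composition of $\iota$ with multiplication by a suitable power of an invariant, landing in degree $d$. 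This makes $\sigma_v$ manifestly polynomial (hence holomorphic) in $v$ on all of $V_+^{\Gamma_2}$, removing any patching issue. A secondary point to check is $\Gamma_2$-equivariance of $\theta_d$: this is automatic since $\mathrm{pr}$ is $\Gamma_1$-equivariant (a fortiori $\Gamma_2$-equivariant), $\ev$ is $\Gamma_2$-equivariant, and $\sigma_v$ can be chosen $\Gamma_1$-equivariant in $v$, so the whole formula intertwines the $\Gamma_2$-actions; this is what will later feed into the proof of Proposition \ref{prop:cartesian-group} via the resolution functor $F^{\circ\infty}_{an}$ exactly as $\phi_d'$ fed into Proposition \ref{prop:cartesian-degree}.
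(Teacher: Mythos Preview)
Your overall strategy---projecting $P$ onto its $\Gamma_1$-invariant part $\mathrm{pr}(P)$ and then adding a correction to restore the value at $v$---is exactly the shape of the paper's argument. The gap is in the correction term: the splitting $\sigma_v : W \to \mathrm{Poly}_d^{\Gamma_1}(V,W)$ of $\mathrm{ev}_v$ that you propose cannot exist holomorphically near points $v_0 \in V_+^{\Gamma_2}$ with nontrivial stabilizer $\Gamma_{v_0}$. The reason is that for any $\Gamma_1$-equivariant polynomial $Q$ one has $Q(v_0) \in W^{\Gamma_{v_0}}$, so the evaluation map $\mathrm{ev}_{v_0} : \mathrm{Poly}_d^{\Gamma_1}(V,W) \to W$ has image only $W^{\Gamma_{v_0}}$, whereas at generic nearby $v$ (trivial stabilizer) it surjects onto $W$. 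Your formula requires $\mathrm{ev}_v \circ \sigma_v = \mathrm{id}_W$ at such generic $v$ (since $P''(v)$ ranges over all of $W$ there), and by holomorphy this would force $\mathrm{ev}_{v_0} \circ \sigma_{v_0} = \mathrm{id}_W$, which is impossible when $W^{\Gamma_{v_0}} \subsetneq W$. The claim in your second paragraph that the nondegeneracy locus ``contains $V_+^{\Gamma_2}$'' is precisely what fails: $V_+^{\Gamma_2}$ only constrains the stabilizer to lie in $\Gamma_2$, not to be trivial. A concrete instance is $\Gamma_1 = \mathbb{Z}/4$, $\Gamma_2 = \mathbb{Z}/2$, with $V = \mathbb{C}^2$ on which the generator acts by $\mathrm{diag}(i,-1)$ and $W = \mathbb{C}$ with weight $i$: then $v_0 = (0,1) \in V_+^{\Gamma_2}$ has $\Gamma_{v_0} = \Gamma_2$ and $W^{\Gamma_{v_0}} = 0$.

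The paper's remedy is to avoid factoring the correction through $P''(v)$. Instead it builds, by Lagrange interpolation on the $\Gamma_1$-orbit of $v$, a $\Gamma_2$-invariant polynomial $L_v$ with $L_v(\gamma v) = 1$ for $\gamma \in \Gamma_2$ and $L_v(\gamma v) = 0$ for $\gamma \in \Gamma_1 \setminus \Gamma_2$ (this is well-defined precisely because $\Gamma_v \leq \Gamma_2$), and then takes the correction to be the $\Gamma_1$-average of $L_v \cdot P''$. This uses the entire polynomial $P''$, not just its value at $v$, and the degree overshoot is absorbed by composing with $\phi_d'$ from Proposition~\ref{prop:inverse-degree}. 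The Lagrange construction depends complex-analytically on $v$ near $v_0$ because the projected orbit points remain distinct on an open set.
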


\begin{proof}
The space $\mathrm{Poly}_{d}^{\Gamma_2}(V, W)$ is a finite-dimensional complex $\Gamma_1$-representation under the conjugation action whose trivial isotypic piece is identified with $\mathrm{Poly}_{d}^{\Gamma_1}(V, W)$.. Therefore, given any $P \in \mathrm{Poly}_{d}^{\Gamma_2}(V, W)$, it admits a canonical decomposition $P = P' + P''$ where $P'$ is $\Gamma_1$-equivariant.

By the Lagrange interpolation method, there exists an integer $d_0$ such that for any $v \in V_{+}^{\Gamma_2}$, there is a $\Gamma_2$-invariant polynomial $L_{v}: V \to {\mb C}$ of degree at most $d_0$ which satisfies $L_v(\gamma v) = 1$ for $\gamma \in \Gamma_2$ and $L_v(\gamma v) = 0$ for $\gamma \in \Gamma_1 \setminus \Gamma_2$. Locally, $L_v$ depends complex analytically on $v$. Indeed, fixing $v_0$, one may choose a $1$-dimensional subspace $l_{v_0} \subset V$ and a complex linear projection from $V$ to $l_{v_0}$ such that the $\Gamma_1$ orbits of $v_0$ have distinct images. This property holds for all $v$ sufficiently close to $v_0$. Then $L_v$ is obtained by composing the standard $1$-dimensional Lagrangian interpolation polynomial with this linear transformation followed by taking average over $\Gamma_2$. We define a map $V_{+}^{\Gamma_2} \times \mathrm{Poly}_d^{\Gamma_2}(V, W) \to V_{+}^{\Gamma_2} \times \mathrm{Poly}_{d+d_0}^{\Gamma_1}(V, W)$ by
\beq
(v, P) \mapsto (v, P' + \frac{1}{|\Gamma_2|} \sum_{\gamma \in \Gamma_1} \gamma^{-1} \circ (L_{v} \cdot P'') \circ \gamma).
\eeq

Note that this map may increase the degree by $d_0$, but we can compose it with the map $\phi_d'$ from Equation \eqref{eqn:inverse-degree} to bring the degree back in range. We denote such a map by $\theta_d$, and one can check from the definition that it satisfies the desired properties.
\end{proof}

\begin{lemma}
By applying $F^{\circ \infty}_{an}$ to the pairs $(\ov{Z}_d^{\Gamma_1}(V_{+}^{\Gamma_2}, W)^\circ \subset V_{+}^{\Gamma_2} \times \mathrm{Poly}_d^{\Gamma_1}(V, W))$ and $(\ov{Z}_d^{\Gamma_2}(V_{+}^{\Gamma_2}, W)^\circ \subset V_{+}^{\Gamma_2} \times \mathrm{Poly}_d^{\Gamma_2}(V, W))$, the morphisms from Equaion \eqref{eqn:blowdown-inclusion} fit into a $\Gamma_2$-equivariant Cartesian diagram
\beq\label{eqn:change-of-group-prelim}
\begin{tikzcd}
{\tilde{Z}_d^{\Gamma_1}(V_{+}^{\Gamma_2}, W)^\circ} \arrow[r] \arrow[d] & {V_{+}^{\Gamma_2} \times \mathrm{Poly}_d^{\Gamma_1}(V, W)} \arrow[d] \\
{\tilde{Z}_d^{\Gamma_2}(V_{+}^{\Gamma_2}, W)^\circ} \arrow[r]      & {V_{+}^{\Gamma_2} \times \mathrm{Poly}_d^{\Gamma_2}(V, W)}     
\end{tikzcd}
\eeq
\end{lemma}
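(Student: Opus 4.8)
The plan is to imitate the argument used to prove Proposition~\ref{prop:cartesian-degree}, using Lemma~\ref{lemma:group-inverse} as the substitute for Proposition~\ref{prop:inverse-degree}. First I would record the starting point: the commutative diagram
\beq
\begin{tikzcd}
{\ov{Z}_d^{\Gamma_1}(V_{+}^{\Gamma_2}, W)^\circ} \arrow[r] \arrow[d] & {V_{+}^{\Gamma_2} \times \mathrm{Poly}_d^{\Gamma_1}(V, W)} \arrow[d] \\
{\ov{Z}_d^{\Gamma_2}(V_{+}^{\Gamma_2}, W)^\circ} \arrow[r]      & {V_{+}^{\Gamma_2} \times \mathrm{Poly}_d^{\Gamma_2}(V, W)}
\end{tikzcd}
\eeq
is Cartesian, which follows because the right vertical inclusion is a linear inclusion of $\Gamma_1$-representations (the trivial isotypic piece) and property~(2) of Lemma~\ref{lemma:group-inverse} says the evaluation map is respected, so the fiberwise zero locus pulls back correctly; the Zariski closures of the isotropy-free loci then also match since they are determined by a common dense open subset (the locus with trivial $\Gamma_2$-stabilizer, which is nonempty in both). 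However, the right vertical inclusion is \emph{not} a smooth morphism, so one cannot apply $F^{\circ\infty}_{an}$ directly to conclude the diagram lifts.

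To circumvent this, I would use the left inverse $\theta_d: V_{+}^{\Gamma_2} \times \mathrm{Poly}_d^{\Gamma_2}(V, W) \to V_{+}^{\Gamma_2} \times \mathrm{Poly}_d^{\Gamma_1}(V, W)$ produced by Lemma~\ref{lemma:group-inverse}. The key points are: (i) $\theta_d$ is a complex analytic submersion near the relevant locus — this should be checked exactly as in the proof of Corollary~\ref{cor:degree-submersion}, by exhibiting, near each point of the image, a local section of $\theta_d$ whose composite on the appropriate side is the identity, so that the induced maps on holomorphic cotangent spaces admit left inverses, whence $\theta_d$ is a submersion by the holomorphic implicit function theorem; (ii) since $\theta_d$ preserves the evaluation map $(v,P)\mapsto P(v)$, the square
\beq
\begin{tikzcd}
{\ov{Z}_d^{\Gamma_2}(V_{+}^{\Gamma_2}, W)^\circ} \arrow[r] \arrow[d] & {V_{+}^{\Gamma_2} \times \mathrm{Poly}_d^{\Gamma_2}(V, W)} \arrow[d, "\theta_d"] \\
{\ov{Z}_d^{\Gamma_1}(V_{+}^{\Gamma_2}, W)^\circ} \arrow[r]      & {V_{+}^{\Gamma_2} \times \mathrm{Poly}_d^{\Gamma_1}(V, W)}
\end{tikzcd}
\eeq
is Cartesian. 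Because $\theta_d$ is a smooth submersion (the analytic analogue of a smooth morphism of pairs) and everything is $\Gamma_2$-equivariant, Theorem~\ref{thm:ATW-resolution-analytic} together with Corollary~\ref{cor:cube-diagram} and Corollary~\ref{cor:equivariance} yield a $\Gamma_2$-equivariant Cartesian diagram after applying $F^{\circ\infty}_{an}$:
\beq
\begin{tikzcd}
{\tilde{Z}_d^{\Gamma_2}(V_{+}^{\Gamma_2}, W)^\circ} \arrow[r] \arrow[d] & {V_{+}^{\Gamma_2} \times \mathrm{Poly}_d^{\Gamma_2}(V, W)} \arrow[d, "\theta_d"] \\
{\tilde{Z}_d^{\Gamma_1}(V_{+}^{\Gamma_2}, W)^\circ} \arrow[r]      & {V_{+}^{\Gamma_2} \times \mathrm{Poly}_d^{\Gamma_1}(V, W).}
\end{tikzcd}
\eeq

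Finally I would run the diagram-concatenation trick from the proof of Proposition~\ref{prop:cartesian-degree}: form the fiber product $Z$ of $\tilde{Z}_d^{\Gamma_2}(V_{+}^{\Gamma_2}, W)^\circ$ with $V_{+}^{\Gamma_2} \times \mathrm{Poly}_d^{\Gamma_1}(V, W)$ over $V_{+}^{\Gamma_2} \times \mathrm{Poly}_d^{\Gamma_2}(V, W)$ (using the blow-down followed by inclusion, and the natural inclusion $\phi$ on the other side), stack this on top of the $\theta_d$-Cartesian square, and observe that the outer rectangle is then Cartesian over the composite $\theta_d \circ \phi = \mathrm{id}$ (by part~(1) of Lemma~\ref{lemma:group-inverse}, noting $\theta_d$ restricts to the identity on the $\Gamma_1$-equivariant piece). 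Hence the top map $Z \to \tilde{Z}_d^{\Gamma_1}(V_{+}^{\Gamma_2}, W)^\circ$ is an isomorphism, which is precisely the asserted Cartesian square \eqref{eqn:change-of-group-prelim}; $\Gamma_2$-equivariance is inherited from Corollary~\ref{cor:equivariance} at each stage. I expect the main obstacle to be step~(i) above — verifying cleanly that $\theta_d$ (which involves the Lagrange-interpolation polynomials $L_v$, themselves only locally analytic in $v$, composed with the degree-reduction map $\phi_d'$) is genuinely a submersion with the local sections needed to invoke the holomorphic implicit function theorem; this requires being somewhat careful about the local nature of $L_v$ near a fixed $v_0\in V_+^{\Gamma_2}$ and about how the composition with $\phi_d'$ interacts with the cotangent-space computation, but it is a local, essentially linear-algebraic verification modeled on Corollary~\ref{cor:degree-submersion}.
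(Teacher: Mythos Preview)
Your proposal is correct and follows essentially the same route as the paper. The only refinement the paper makes explicit is that $\theta_d$ from Lemma~\ref{lemma:group-inverse} exists only \emph{locally} near each point of the image of the inclusion (because $L_v$ is only locally analytic in $v$), so the concatenation argument from Proposition~\ref{prop:cartesian-degree} yields the Cartesian square locally and one then patches via functoriality of $F^{\circ\infty}_{an}$ under open embeddings; your step~(i) is not a genuine obstacle, since $\theta_d\circ\iota=\mathrm{id}$ already forces $d\theta_d$ to be surjective along the image of $\iota$, hence on a neighborhood.
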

\begin{proof}
    By Lemma \ref{lemma:group-inverse}, locally near any given $(v, P) \in V_{+}^{\Gamma_1} \times \mathrm{Poly}_d^{\Gamma_1}(V, W) \subset V_{+}^{\Gamma_2} \times \mathrm{Poly}_d^{\Gamma_2}(V, W)$, there exists a complex analytic submersion $V_{+}^{\Gamma_2} \times \mathrm{Poly}_d^{\Gamma_2}(V, W) \to V_{+}^{\Gamma_1} \times \mathrm{Poly}_d^{\Gamma_1}(V, W)$ which preserves the evaluation map. Then the Cartesian-ness of Diagram \eqref{eqn:change-of-group-prelim} holds locally as in the proof of Proposition \ref{prop:cartesian-degree}. Using the functoriality again, we see that these local diagrams can be patched together to give the desired result. 
\end{proof}

\begin{proof}[Proof of Proposition \ref{prop:cartesian-group}]
Consider the following map
\begin{equation}
\begin{aligned}
    V_{+}^{\Gamma_2} \times \mathrm{Poly}_d^{\Gamma_2}(V, W) &\to \underline{\check{V}_{+}^{\Gamma_2}} \times_{\mathring{V}^{\Gamma_2}} \mathrm{Poly}_d^{\Gamma_2}(\underline{\check{V}^{\Gamma_2}}, \underline{W}) \\
    (v, P) &\mapsto \big( (\mathring{v}, \check{v}), P(\mathring{v}, -) \big)
\end{aligned}
\end{equation}
as in Equation \eqref{eqn:partial-ev}. By inspecting the definition, this is a complex analytic submersion (in fact, an algebraic one) such that the evaluation map is preserved. Therefore, we obtain a smooth morphism of complex analytic pairs
\begin{equation}
    (\ov{Z}_d^{\Gamma_2}(V_{+}^{\Gamma_2}, W)^\circ  \subset V_{+}^{\Gamma_2} \times \mathrm{Poly}_d^{\Gamma_2}(V, W)) \to (\ov{Z}_d^{\Gamma_2}(\underline{\check{V}_{+}^{\Gamma_2}}, \underline{W})^\circ \subset \underline{\check{V}_{+}^{\Gamma_2}} \times_{\mathring{V}^{\Gamma_2}} \mathrm{Poly}_d^{\Gamma_2}(\underline{\check{V}^{\Gamma_2}}, \underline{W}) ).
\end{equation}
Applying the resolution functor $F^{\circ \infty}_{an}$, we obtain a Cartesian diagram
\beq
\begin{tikzcd}
{\tilde{Z}_d^{\Gamma_2}(V_{+}^{\Gamma_2}, W)^\circ} \arrow[r] \arrow[d] & {V_{+}^{\Gamma_2} \times \mathrm{Poly}_d^{\Gamma_2}(V, W)} \arrow[d] \\
{\tilde{Z}_d^{\Gamma_2}(\underline{\check{V}_{+}^{\Gamma_2}}, \underline{W})^\circ} \arrow[r]      & {\underline{\check{V}_{+}^{\Gamma_2}} \times_{\mathring{V}^{\Gamma_2}} \mathrm{Poly}_d^{\Gamma_2}(\underline{\check{V}^{\Gamma_2}}, \underline{W}).}     
\end{tikzcd}
\eeq
By concatenating this diagram with \eqref{eqn:change-of-group-prelim}, we see that \eqref{eqn:change-of-group} is Cartesian. The $\Gamma_2$-equivariance follows from funtoriality.
\end{proof}

The above Proposition provides the key property which eventually permits the possibility of the local-to-global approach to the resolution of singularities. This replaces the role of the canonical Whitney stratification as developed in \cite[Section 4]{Bai_Xu_2022}. 

We need a result concerning the behavior of zero-loci under stabilizations.  Let $\Gamma$ be a finite group, and suppose that $V$, $W$, and $V'$ are finite-dimensional complex $\Gamma$-representations such that $V$ is a faithful $\Gamma$-representation. Then we can define a $\Gamma$-equivariant morphism
\beq\label{eqn:poly-stabilization}
\begin{aligned}
V \times \mathrm{Poly}_d^{\Gamma}(V, W) &\to (V \oplus V') \times \mathrm{Poly}_d^{\Gamma}(V \oplus V', W \oplus V') \\
(v, P) &\mapsto \big( (v, 0), \mathrm{diag}(P, \mathrm{Id}) \big),
\end{aligned}
\eeq
where $\mathrm{Id}: V' \to V'$ is the identity matrix, thought of as a $\Gamma$-equivariant polynomial map of degree $1$. This induces a morphism of pairs
\beq
\begin{aligned}
\big( \ov{Z}_d^{\Gamma}(V,W)^\circ &\subset V \times \mathrm{Poly}_d^{\Gamma}(V, W)  \big) \to \\
\big( \ov{Z}_d^{\Gamma}(V \oplus V',W \oplus V')^\circ &\subset (V \oplus V') \times \mathrm{Poly}_d^{\Gamma}(V \oplus V', W \oplus V')  \big).
\end{aligned}
\eeq

The following statement asserts the  compatibility of resolutions under such stabilization.
\begin{prop}\label{prop:stabilization-compatible}

We have a $\Gamma$-equivariant Cartesian commutative diagram
\beq\label{eqn:stabilization-Cartesian}
\begin{tikzcd}
{\tilde{Z}_d^{\Gamma}(V,W)^\circ} \arrow[rr] \arrow[d]           &  & {V \times \mathrm{Poly}_d^{\Gamma}(V, W)} \arrow[d]                       \\
{\tilde{Z}_d^{\Gamma}(V \oplus V',W \oplus V')^\circ} \arrow[rr] &  & {(V \oplus V') \times \mathrm{Poly}_d^{\Gamma}(V \oplus V', W \oplus V')},
\end{tikzcd}
\eeq
in which the horizontal arrows are the outcomes of Theorem \ref{thm:ATW-resolution} composed with the blow-down maps.
\end{prop}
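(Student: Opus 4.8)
The plan is to reduce the statement to the functoriality of the resolution functor $F^{\circ\infty}_{er}$ (Theorem \ref{thm:ATW-resolution}, Corollary \ref{cor:cube-diagram}) exactly in the spirit of the proof of Proposition \ref{prop:cartesian-degree}. The morphism of Equation \eqref{eqn:poly-stabilization} is visibly \emph{not} smooth (its image is a low-dimensional subvariety of the target), so the Cartesian-ness cannot be read off directly from functoriality; as in Proposition \ref{prop:cartesian-degree} we must exhibit a smooth left inverse and then ``reverse the square''. First I would construct the auxiliary $\Gamma$-equivariant map
\beq
\Psi: (V \oplus V') \times \mathrm{Poly}_d^{\Gamma}(V \oplus V', W \oplus V') \to V \times \mathrm{Poly}_d^{\Gamma}(V, W),
\eeq
sending $\big((v,v'),Q\big)$ to $(v, \pi_W \circ Q|_{V})$, where $Q|_{V}$ is the restriction of the polynomial map $Q$ to the linear subspace $V \subset V\oplus V'$ (set the $V'$-coordinate to zero) and $\pi_W$ is the $\Gamma$-equivariant projection $W\oplus V' \to W$. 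One checks from the formula \eqref{eqn:poly-stabilization} that $\Psi$ is a genuine $\Gamma$-equivariant submersion (it is linear in the polynomial variable and a projection in the vector variable, with obvious local sections), that $\Psi$ composed with \eqref{eqn:poly-stabilization} is the identity on $V \times \mathrm{Poly}_d^{\Gamma}(V, W)$, and — the key point — that $\Psi$ \emph{preserves the evaluation map}: if $Q = \mathrm{diag}(P,\mathrm{Id})$ then $\pi_W\circ Q|_V = P$, and more generally $\big(\pi_W\circ Q|_V\big)(v) = \pi_W\big(Q(v,0)\big)$, so the preimage of $0\in W$ under the evaluation for $\Psi(Q)$ pulls back correctly. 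Hence $\Psi$ induces a smooth morphism of $\Gamma$-equivariant pairs
\beq
\big( \ov{Z}_d^{\Gamma}(V \oplus V',W \oplus V')^\circ \subset (V \oplus V') \times \mathrm{Poly}_d^{\Gamma}(V \oplus V', W \oplus V') \big) \to \big( \ov{Z}_d^{\Gamma}(V,W)^\circ \subset V \times \mathrm{Poly}_d^{\Gamma}(V, W) \big),
\eeq
at least after checking that $\Psi$ carries $\ov{Z}^\circ$ into $\ov{Z}^\circ$ on the level of the isotropy-free loci and their Zariski closures — this follows because the stabilizer of $(v,0)$ agrees with that of $v$ (as $V$ is a faithful summand) and $\Psi$ restricts to a submersion on the zero loci by Lemma \ref{lem:FO} applied on both sides.

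With this smooth morphism in hand, I would run the ``double Cartesian'' argument of Proposition \ref{prop:cartesian-degree} verbatim. Applying $F^{\circ\infty}_{er}$ to $\Psi$ (Corollary \ref{cor:cube-diagram}) yields a $\Gamma$-equivariant Cartesian square with $\Psi$ along the bottom; concatenating it with the square obtained by applying $F^{\circ\infty}_{er}$ to \eqref{eqn:poly-stabilization} — whose bottom composite $\Psi\circ(\text{stabilization})$ is the identity — produces an outer square whose bottom arrow is the identity on $V \times \mathrm{Poly}_d^{\Gamma}(V, W)$. An identity along the bottom of a Cartesian square forces the top arrow to be an isomorphism, which is exactly the assertion that the square \eqref{eqn:stabilization-Cartesian} (the stabilization square applied to the resolutions) is Cartesian. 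The $\Gamma$-equivariance of all arrows is automatic from Corollary \ref{cor:equivariance}, since each of \eqref{eqn:poly-stabilization} and $\Psi$ is $\Gamma$-equivariant and the resolution functor is applied equivariantly.

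The main obstacle I anticipate is the verification that $\Psi$ genuinely defines a morphism of \emph{pairs} in the required sense — i.e. that $\ov{Z}_d^{\Gamma}(V,W)^\circ$ is the scheme-theoretic pullback of $\ov{Z}_d^{\Gamma}(V \oplus V',W \oplus V')^\circ$ under $\Psi$, not merely that set-theoretic containment holds. One needs that the extra direction $V'$ appears ``linearly and split'' so that the ideal of the stabilized universal zero locus pulls back to the ideal of the original one; here the point is that $\mathrm{diag}(P,\mathrm{Id})^{-1}(0) = P^{-1}(0) \times \{0\}$ as schemes because the $V'$-block is the identity, so there is no loss of reducedness or change of scheme structure. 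Once this scheme-theoretic identification and the submersivity of $\Psi$ on the ambient spaces are nailed down, everything else is the formal diagram chase above; in particular, unlike Proposition \ref{prop:cartesian-group}, no passage to the complex-analytic category is needed, since $\Psi$ is an \emph{algebraic} submersion with algebraic local sections. \qed
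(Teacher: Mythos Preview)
Your map $\Psi$ does \emph{not} define a morphism of pairs, and this is a genuine gap, not a technicality. For $\Psi$ to be a smooth morphism of pairs (so that functoriality of $F^{\circ\infty}_{er}$ applies), you need
\[
\Psi^{-1}\bigl(\ov{Z}_d^{\Gamma}(V,W)^\circ\bigr)=\ov{Z}_d^{\Gamma}(V\oplus V',W\oplus V')^\circ
\]
as subschemes of the big ambient space. But $\Psi\bigl((v,v'),Q\bigr)=(v,\pi_W\circ Q|_V)$ has evaluation $\pi_W\bigl(Q(v,0)\bigr)$, so the left-hand side is $\{((v,v'),Q):\pi_W(Q(v,0))=0\}$, whereas the right-hand side is the closure of $\{((v,v'),Q):Q(v,v')=0\}$. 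These are different varieties (the former does not see $v'$ or the $V'$-component of $Q$ at all). Your ``main obstacle'' paragraph only checks the identity $\mathrm{diag}(P,\mathrm{Id})^{-1}(0)=P^{-1}(0)\times\{0\}$ on the \emph{image} of the stabilization map \eqref{eqn:poly-stabilization}, which is irrelevant: the pullback condition must hold on the entire domain of $\Psi$. The same failure persists if you replace $Q|_V$ by the partial evaluation $Q(-,v')$: the pullback is then cut out by $\pi_W(Q(v,v'))=0$, still strictly larger than the full zero locus.

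The paper circumvents this by a two-step decomposition rather than a single retraction. First it writes $Z_d^{\Gamma}(V\oplus V',W\oplus V')=Z_d^{\Gamma}(V\oplus V',W)\times_{V\oplus V'}Z_d^{\Gamma}(V\oplus V',V')$ and observes that the second factor is \emph{smooth} near $((v,0),\mathrm{Id}_{V'})$; functoriality under the smooth projection then identifies the resolution near this locus with $\tilde{Z}_d^{\Gamma}(V\oplus V',W)^\circ\times_{V\oplus V'}Z_d^{\Gamma}(V\oplus V',V')$. Second, for the remaining step $V\hookrightarrow V\oplus V'$ with target $W$ fixed, the paper uses the left inverse $((v,v'),P)\mapsto(v,P(-,v'))$, which \emph{does} preserve the evaluation (since $P(-,v')(v)=P(v,v')$) and hence is a genuine smooth morphism of pairs; the Cartesian square \eqref{eqn:cartesian-stab-1} then follows by the same reversal argument as Proposition~\ref{prop:cartesian-degree}. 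Concatenating the two steps gives \eqref{eqn:stabilization-Cartesian}. The moral is that the $V'$-component of the target cannot be handled by projection alone---it requires the observation that the $V'$-block of the polynomial is invertible (equal to $\mathrm{Id}_{V'}$) on the image of stabilization, making the corresponding zero-locus factor smooth there.
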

\begin{proof}
We shall use the following description of the zero-locus as a fiber product:
\beq
Z_d^{\Gamma}(V \oplus V',W \oplus V') = Z_d^{\Gamma}(V \oplus V',W) \times_{V \oplus V'} Z_d^{\Gamma}(V \oplus V',V').
\eeq
For an element $(v,v') \in V \oplus V'$, note that if either $v$ or $v'$ has trivial stabilizer, then $(v,v')$ has trivial stabilizer under the diagonal $\Gamma$-action. On the other hand, note that the variety $Z_d^{\Gamma}(V \oplus V',V')$ is smooth near the point $((v, 0), \mathrm{Id}_{V'})$ for any $v \in V$, where the identity matrix $\mathrm{Id}_{V'}$ is treated as a $\Gamma$-equivariant polynomial map of degree $1$ from $V \oplus V'$ to $V'$ which is independent of $v$. As a result, for the subvariety $\ov{Z}_d^{\Gamma}(V \oplus V',W)^\circ \times_{V \oplus V'} Z_d^{\Gamma}(V \oplus V',V')$, near $V \times \{0\} \times \mathrm{Poly}_d^{\Gamma}(V \oplus V',W) \times \{ \mathrm{Id}_{V'} \}$, the resolution can be identified with 
\beq
\tilde{Z}_d^{\Gamma}(V \oplus V',W)^\circ \times_{V \oplus V'} Z_d^{\Gamma}(V \oplus V',V').
\eeq

Furthermore, we have an inclusion $Z_d^{\Gamma}(V, W) \to Z_d^{\Gamma}(V \oplus V',W)$ defined by $(v, P) \mapsto ((v,0), P)$, where $P \in \mathrm{Poly}_d^{\Gamma}(V ,W)$ is viewed as an element of $\mathrm{Poly}_d^{\Gamma}(V \oplus V' ,W)$ which is independent of $V'$. We claim that we have a Cartesian diagram
\beq\label{eqn:cartesian-stab-1}
\begin{tikzcd}
{\tilde{Z}_d^{\Gamma}(V,W)^\circ} \arrow[rr] \arrow[d]           &  & {V \times \mathrm{Poly}_d^{\Gamma}(V, W)} \arrow[d]                       \\
{\tilde{Z}_d^{\Gamma}(V \oplus V',W)^\circ} \arrow[rr] &  & {(V \oplus V') \times \mathrm{Poly}_d^{\Gamma}(V \oplus V', W)}.
\end{tikzcd}
\eeq
Indeed, we can construct a left inverse of the right vertical arrow by the formula
\beq
((v,v'), P) \mapsto (v, P(-, v'))
\eeq
which preserves the evaluation map. It defines a surjective smooth morphism of pairs
\beq
(\tilde{Z}_d^{\Gamma}(V \oplus V',W)^\circ \subset (V \oplus V') \times \mathrm{Poly}_d^{\Gamma}(V \oplus V', W)) \to (\tilde{Z}_d^{\Gamma}(V,W)^\circ \subset V \times \mathrm{Poly}_d^{\Gamma}(V, W)),
\eeq
so we can again argue as in the proof of Proposition \ref{prop:cartesian-degree} to show that we can construct Diagram \eqref{eqn:cartesian-stab-1}. Then we  concatenate Diagram \eqref{eqn:cartesian-stab-1} with the following diagram
\beq
\begin{tikzcd}
{\tilde{Z}_d^{\Gamma}(V \oplus V',W)^\circ} \arrow[d]    \arrow[rr]       &  & {(V \oplus V') \times \mathrm{Poly}_d^{\Gamma}(V \oplus V', W)} \arrow[d]                       \\
{\tilde{Z}_d^{\Gamma}(V \oplus V',W)^\circ \times_{V \oplus V'} Z_d^{\Gamma}(V \oplus V',V')} \arrow[rr] &  & {(V \oplus V') \times \mathrm{Poly}_d^{\Gamma}(V \oplus V', W \oplus V')}.
\end{tikzcd}
\eeq
By the smoothness of $Z_d^{\Gamma}(V \oplus V',V')$ near $((v, 0), \mathrm{Id}_{V'})$, we know the bottom horizontal arrow is identified with
\beq
\tilde{Z}_d^{\Gamma}(V \oplus V',W \oplus V')^\circ \to (V \oplus V') \times \mathrm{Poly}_d^{\Gamma}(V \oplus V', W \oplus V')
\eeq
near $((v, 0), \mathrm{Id}_{V'})$. Therefore, such a concatenation finishes the proof.
\end{proof}

Finally, we discuss the following situation which will be used to show that the splitting maps in Theorem \ref{thm:inclusion_geometric_homotopical_splits} and Theorem \ref{thm:main} are module map. Slightly differently from the above setting, let $\Gamma_1$ and $\Gamma_2$ be finite groups, and suppose that $V_1$ and $W_1$ are finite-dimensional complex $\Gamma_1$-representations such that $V_1$ is faithful, while $V_2$ is a finite-dimensional complex $\Gamma_2$-representation. Then there exists a $\Gamma_1 \times \Gamma_2$-equivariant morphism
\beq\label{eqn:poly-product}
\begin{aligned}
(V_1 \oplus V_2) \times \mathrm{Poly}_d^{\Gamma_1}(V_1, W_1) &\to (V_1 \oplus V_2) \times \mathrm{Poly}_d^{\Gamma_1 \times \Gamma_2}(V_1 \oplus V_2, W_1) \\
\big( (v_1, v_2), P_1) \big) &\mapsto \big( (v_1, v_2), P_1 \big),
\end{aligned}
\eeq
where we declare that the $\Gamma_2$-action on $W_1$ is trivial, and we view $P_1 \in \mathrm{Poly}_d^{\Gamma_1 \times \Gamma_2}(V_1 \oplus V_2, W_1)$ which does not depend on the $V_2$-factor. It induces a morphism of pairs
\beq\label{eqn:pair-product}
\begin{aligned}
\big( \ov{Z}_d^{\Gamma_1}(V_1,W_1)^\circ \times V_2 &\subset (V_1 \oplus V_2) \times \mathrm{Poly}_d^{\Gamma_1}(V_1, W_1)  \big) \to \\
\big( \ov{Z}_d^{\Gamma_1 \times \Gamma_2}(V_1 \oplus V_2,W_1)^\circ &\subset (V_1 \oplus V_2) \times \mathrm{Poly}_d^{\Gamma_1 \times \Gamma_2}(V_1 \oplus V_2, W_1)  \big).
\end{aligned}
\eeq

\begin{prop}\label{prop:product-compatible}
    The morphism \eqref{eqn:pair-product} lifts to a $\Gamma_1 \times \Gamma_2$-equivariant Cartesian commutative diagram
    \beq\label{eqn:product-Cartesian}
\begin{tikzcd}
{\tilde{Z}_d^{\Gamma_1}(V_1,W_1)^\circ \times V_2} \arrow[rr] \arrow[d]           &  & {(V_1 \oplus V_2 ) \times \mathrm{Poly}_d^{\Gamma_1}(V_1, W_1)} \arrow[d]                       \\
{\tilde{Z}_d^{\Gamma_1 \times \Gamma_2}(V_1 \oplus V_2,W_1)^\circ} \arrow[rr] &  & {(V_1 \oplus V_2) \times \mathrm{Poly}_d^{\Gamma_1 \times \Gamma_2}(V_1 \oplus V_2, W_1),}
\end{tikzcd}
\eeq
in which the horizontal arrows are the outcomes of Theorem \ref{thm:ATW-resolution} composed with the blow-down maps.
\end{prop}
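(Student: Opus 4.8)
The strategy mirrors the proof of Proposition \ref{prop:cartesian-degree} and Proposition \ref{prop:stabilization-compatible}: the morphism \eqref{eqn:poly-product} is an \emph{inclusion} which, while not smooth, admits an (essentially canonical) left inverse that \emph{is} smooth and preserves the evaluation map, so that one may apply the functoriality of $F^{\circ \infty}_{er}$ (Corollary \ref{cor:cube-diagram}, Corollary \ref{cor:equivariance}) to the left inverse and then ``reverse'' the resulting Cartesian diagram. First I would construct the left inverse. Using the decomposition $\mathrm{Poly}_d^{\Gamma_1 \times \Gamma_2}(V_1 \oplus V_2, W_1) $ as a $\Gamma_1 \times \Gamma_2$-representation, the image of the inclusion \eqref{eqn:poly-product} is the subspace of polynomials that are $\Gamma_2$-invariant (under conjugation, with $\Gamma_2$ acting trivially on $W_1$) and independent of the $V_2$ coordinate; more concretely, given $P \in \mathrm{Poly}_d^{\Gamma_1 \times \Gamma_2}(V_1 \oplus V_2, W_1)$, the assignment $(v_1, v_2, P) \mapsto (v_1, v_2, P(-, 0))$ defines a $\Gamma_1 \times \Gamma_2$-equivariant map $ (V_1 \oplus V_2) \times \mathrm{Poly}_d^{\Gamma_1 \times \Gamma_2}(V_1 \oplus V_2, W_1) \to (V_1 \oplus V_2) \times \mathrm{Poly}_d^{\Gamma_1}(V_1, W_1) $ which is a left inverse to \eqref{eqn:poly-product} and which preserves the evaluation $P \mapsto P(v_1,0)$ along the locus $v_2 = 0$; more carefully one should set $v_2$ to $0$ only in the evaluation direction, as in the formula $((v_1,v_2),P)\mapsto (v_1, P(-, v_2))$ used in the proof of Proposition \ref{prop:stabilization-compatible}, which genuinely preserves $P \mapsto P(v_1, v_2)$. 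I would then check this map is a smooth (surjective, algebraic) morphism exactly as in Corollary \ref{cor:degree-submersion}, by exhibiting local sections inducing left inverses on K\"ahler differentials and invoking the implicit function theorem.

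Next I would verify that the pullback of the pair $(\ov{Z}_d^{\Gamma_1 \times \Gamma_2}(V_1 \oplus V_2,W_1)^\circ \subset (V_1 \oplus V_2) \times \mathrm{Poly}_d^{\Gamma_1 \times \Gamma_2}(V_1 \oplus V_2, W_1))$ along this smooth left inverse is precisely the pair $(\ov{Z}_d^{\Gamma_1}(V_1,W_1)^\circ \times V_2 \subset (V_1 \oplus V_2) \times \mathrm{Poly}_d^{\Gamma_1}(V_1, W_1))$. The zero-locus identity $Z_d^{\Gamma_1}(V_1,W_1) \times V_2 = \ev^{-1}(0)$ for the composed evaluation map is immediate since $P_1$ does not depend on $v_2$; the only point requiring care is the identification of the closures of the isotropy-free loci. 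Here I would use Lemma \ref{lem:FO} (for $d$ large relative to $\Gamma_1, V_1, W_1$): the isotropy-free locus of $Z_d^{\Gamma_1}(V_1,W_1)$, which is Zariski-dense in $\ov{Z}_d^{\Gamma_1}(V_1,W_1)^\circ$, times the $\Gamma_2$-free locus of $V_2$ lands inside (and is dense in) $\ov{Z}_d^{\Gamma_1 \times \Gamma_2}(V_1 \oplus V_2, W_1)^\circ$, because a pair $(v_1, v_2)$ with $v_1$ having trivial $\Gamma_1$-stabilizer automatically has trivial $\Gamma_1 \times \Gamma_2$-stabilizer. Granting this, applying $F^{\circ \infty}_{er}$ to the smooth morphism of pairs and using Corollary \ref{cor:cube-diagram} produces a $\Gamma_1 \times \Gamma_2$-equivariant Cartesian square
\beq
\begin{tikzcd}
{\tilde{Z}_d^{\Gamma_1}(V_1,W_1)^\circ \times V_2} \arrow[rr] \arrow[d] &  & {(V_1 \oplus V_2) \times \mathrm{Poly}_d^{\Gamma_1}(V_1, W_1)} \arrow[d] \\
{\tilde{Z}_d^{\Gamma_1 \times \Gamma_2}(V_1 \oplus V_2,W_1)^\circ} \arrow[rr] &  & {(V_1 \oplus V_2) \times \mathrm{Poly}_d^{\Gamma_1 \times \Gamma_2}(V_1 \oplus V_2, W_1)}
\end{tikzcd}
\eeq
with the \emph{left inverse} (not the inclusion \eqref{eqn:poly-product}) along the bottom-to-top right vertical map. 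Finally, to get the diagram with \eqref{eqn:poly-product} itself, I would run the ``reversal'' trick of Proposition \ref{prop:cartesian-degree}: form the fiber product $Z$ of $\tilde{Z}_d^{\Gamma_1 \times \Gamma_2}(V_1 \oplus V_2,W_1)^\circ$ with $(V_1 \oplus V_2) \times \mathrm{Poly}_d^{\Gamma_1}(V_1, W_1)$ over the inclusion \eqref{eqn:poly-product}, concatenate with the Cartesian square above, observe the outer square has identity along the bottom (since left-inverse composed with inclusion is the identity on $(V_1\oplus V_2)\times\mathrm{Poly}_d^{\Gamma_1}(V_1,W_1)$, using property (1) of the analogue of Proposition \ref{prop:inverse-degree} if any degree bookkeeping is needed), hence the top map $Z \to \tilde{Z}_d^{\Gamma_1}(V_1,W_1)^\circ \times V_2$ is an isomorphism, which yields \eqref{eqn:product-Cartesian}. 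The $\Gamma_1 \times \Gamma_2$-equivariance throughout is supplied by Corollary \ref{cor:equivariance}.

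\textbf{Main obstacle.} As in the surrounding propositions, I expect the genuinely delicate point to be the compatibility of the \emph{resolution functor} with the fiber-product description — i.e.\ that $F^{\circ\infty}_{er}$ applied to the pullback pair agrees with the pullback of $F^{\circ\infty}_{er}$ applied downstairs, even though the relevant morphism \eqref{eqn:poly-product} is not smooth. The clean way around this is to never apply functoriality to \eqref{eqn:poly-product} directly but only to the smooth left inverse, which forces the above two-diagram concatenation/reversal argument; making sure the left inverse is genuinely smooth (with local sections splitting the map on differentials) and genuinely evaluation-preserving — so that the zero loci, and crucially the \emph{closures of their isotropy-free parts}, match on the nose after base change — is where the real work lies. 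A secondary subtlety worth flagging is that one must track the degree $d$: the naive left inverse $P\mapsto P(-,v_2)$ does not increase degree and is the right choice, whereas any construction that first kills the $\Gamma_2$-non-invariant part might raise the degree and then need a further application of $\phi'_d$ from Proposition \ref{prop:inverse-degree} to return to $\mathrm{Poly}_d$, exactly as in Lemma \ref{lemma:group-inverse}; I would use the degree-preserving version to keep the argument short.
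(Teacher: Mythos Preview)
Your proposal is correct and follows essentially the same route as the paper: construct the smooth, evaluation-preserving left inverse $((v_1,v_2),P)\mapsto((v_1,v_2),P(-,v_2))$, apply the functoriality of $F^{\circ\infty}_{er}$ to it, and then run the concatenation/reversal argument of Proposition~\ref{prop:cartesian-degree} to obtain \eqref{eqn:product-Cartesian}. The paper's proof is terser about the closure-of-isotropy-free-locus identification (it just asserts ``the universal zero loci are preserved''), so your more explicit discussion of that point is a welcome elaboration rather than a deviation.
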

\begin{proof}
    Consider the following $\Gamma_1 \times \Gamma_2$-equivariant morphism:
    \beq
    \begin{aligned}
     (V_1 \oplus V_2) \times \mathrm{Poly}_d^{\Gamma_1 \times \Gamma_2}(V_1 \oplus V_2, W_1) &\to ( V_1 \oplus V_2 ) \times \mathrm{Poly}_d^{\Gamma_1}(V_1, W_1) \\
    \big( (v_1,v_2), P_2 \big) &\mapsto \big( (v_1, v_2), P_2(-, v_2) \big).
    \end{aligned}
    \eeq
    This is a smooth morphism such that the universal zero loci are preserved. Then we have the commutative diagram
    \beq
    \begin{tikzcd}
{\ov{Z}_d^{\Gamma_1}(V_1,W_1)^\circ \times V_2} \arrow[rr] \arrow[d]                          &  & {(V_1 \oplus V_2) \times \mathrm{Poly}_d^{\Gamma_1}(V_1, W_1)} \arrow[d]                                      \\
{\ov{Z}_d^{\Gamma_1 \times \Gamma_2}(V_1 \oplus V_2,W_1)^\circ} \arrow[rr] \arrow[d] &  & {(V_1 \oplus V_2) \times \mathrm{Poly}_d^{\Gamma_1 \times \Gamma_2}(V_1 \oplus V_2, W_1)} \arrow[d] \\
{\ov{Z}_d^{\Gamma_1}(V_1,W_1)^\circ \times V_2} \arrow[rr]                                    &  & {(V_1 \oplus V_2) \times \mathrm{Poly}_d^{\Gamma_1}(V_1, W_1)}                                               
\end{tikzcd}
    \eeq
    where the upper square comes from \eqref{eqn:pair-product} and the lower square is induced from the aforementioned smooth morphism. By construction, the outer square has vertical arrows given by the identity map. Then applying the functoriality aspect of Theorem \ref{thm:ATW-resolution} to the lower square and the outer square, as in the proof of Proposition \ref{prop:cartesian-degree}, we conclude the desired result on the compatibility of the resolutions of singularities.
\end{proof}

\subsection{Smooth bundles of algebraic varieties}\label{subsec:variety-bundle}
We need to introduce a notion of \emph{smooth} families of \emph{algebraic varieties}. This is of course a standard notion when the varieties are smooth, but we need a generalization to the singular case, which is less commonly considered. . 

\begin{defn}
Let $X$ be a smooth algebraic variety over ${\mb C}$. Given a smooth manifold $M$, an \emph{$X$-bundle} over $M$ is a smooth fiber bundle with fibre $X$, together with a reduction of the structure group from $\mathrm{Diff}(X)$ to the  \emph{algebraic automorphism group} $ \mathrm{Aut}(X) $ of the variety $X$.
\end{defn}
As usual, such a structure may be presented by an open cover $\{ U_{\alpha} \}$, and transition functions
    \beq
    g_{\alpha \beta}: U_{\alpha} \cap U_{\beta} \to \mathrm{Aut}(X)
    \eeq
    for every pairs of elements of this cover, which satisfy the cocycle cocycle condition $g_{\alpha \beta} \cdot g_{\beta \gamma} = g_{\alpha \gamma}$ over each triple intersection $U_{\alpha} \cap U_{\beta} \cap U_{\gamma}$, such that $g_{\alpha \alpha}$ is the identity.

For example, a complex vector bundle $E \to M$ of rank $k$ defines a ${\mb C}^k$-bundle over $M$ in the above sense. Our primary example is the following:

\begin{lemma}\label{lemma:Z-d-bundle}
Suppose $\Gamma$ is a finite group. Let $M$ be a connected smooth manifold which is viewed as a $\Gamma$-manifold with the trivial $\Gamma$-action. Let $V$ and $W$ be $\Gamma$-representations, with $\Gamma$ assumed to be faithful, and let $\mathbf{V}$ and $ \mathbf{W}$ be  $\Gamma$-equivariant vector bundles over $M$ with fibers respectively isomorphic to $V$ and $W$. For a sufficiently large $d \geq 1$, let $\tilde{Z}_d^{\Gamma}(V,W)^{\circ}$ be the smooth $\Gamma$-variety from Definition \ref{defn:Z-d-resolve}. Then there exists a $\tilde{Z}_d^{\Gamma}(V,W)^{\circ}$-bundle over $M$
\beq
\tilde{Z}_d^{\Gamma}(\mathbf{V},\mathbf{W})^{\circ} \to M
\eeq
admitting a $\Gamma$-equivariant bundle map to
\beq
\mathbf{V} \times_{M} \mathrm{Poly}_{d}^{\Gamma}(\mathbf{V}, \mathbf{W}),
\eeq
where the later denotes the $\mathrm{Poly}_{d}^{\Gamma}(V, W)$-bundle over $M$ induced from the vector bundle structure.
\end{lemma}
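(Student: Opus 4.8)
The plan is to construct the bundle $\tilde{Z}_d^{\Gamma}(\mathbf{V},\mathbf{W})^{\circ} \to M$ by the standard device of associating a fiber bundle to a principal bundle via a structure group that acts on the resolved variety. First I would observe that the data of the pair $(\mathbf{V}, \mathbf{W})$ of $\Gamma$-equivariant vector bundles over $M$, with fibers modeled on the $\Gamma$-representations $(V,W)$, is equivalent to a principal $\bigl(GL(V)^{\Gamma} \times GL(W)^{\Gamma}\bigr)$-bundle $P \to M$: indeed the transition functions of $\mathbf{V}$ and $\mathbf{W}$ take values in $\Gamma$-equivariant linear automorphisms, so they assemble into a cocycle valued in $GL(V)^{\Gamma} \times GL(W)^{\Gamma}$. (Connectedness of $M$ guarantees the fiber type is constant.) Then I would recall, from the discussion preceding Definition~\ref{defn:universal-zero-locus}, that $GL(V)^{\Gamma} \times GL(W)^{\Gamma}$ acts algebraically on $V \times \mathrm{Poly}_d^{\Gamma}(V,W)$ preserving the evaluation map $\ev$ to $W$, and hence preserving the universal zero locus $Z_d^{\Gamma}(V,W)$, the subvariety $Z_d^{\Gamma}(V,W)^{\circ}$ where the $\Gamma$-stabilizer is trivial, and therefore its Zariski closure $\ov{Z}_d^{\Gamma}(V,W)^{\circ}$.

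The key step is then to promote this action to an action on the resolution $\tilde{Z}_d^{\Gamma}(V,W)^{\circ}$. This is exactly what Corollary~\ref{cor:equivariance} provides: since $G := GL(V)^{\Gamma} \times GL(W)^{\Gamma}$ is an algebraic group acting on $Y := V \times \mathrm{Poly}_d^{\Gamma}(V,W)$ preserving $X := \ov{Z}_d^{\Gamma}(V,W)^{\circ}$, there is a unique $G$-action on $\tilde{Z}_d^{\Gamma}(V,W)^{\circ}$ making the blow-down map $\tilde{Z}_d^{\Gamma}(V,W)^{\circ} \to V \times \mathrm{Poly}_d^{\Gamma}(V,W)$ equivariant, and this action preserves the resolved variety. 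Moreover $G$ acts by \emph{algebraic} automorphisms of $\tilde{Z}_d^{\Gamma}(V,W)^{\circ}$, so we obtain a homomorphism $G \to \mathrm{Aut}\bigl(\tilde{Z}_d^{\Gamma}(V,W)^{\circ}\bigr)$ into the algebraic automorphism group. Since the $\Gamma$-action on $\tilde{Z}_d^{\Gamma}(V,W)^{\circ}$ commutes with the $G$-action (both being induced functorially from commuting actions on $Y$ preserving $X$ — here one uses that $\Gamma$ acts trivially on the polynomial factor and $G$ consists of $\Gamma$-equivariant maps, so the two actions commute on $Y$, and uniqueness in Corollary~\ref{cor:equivariance} transports this to the resolution), the homomorphism $G \to \mathrm{Aut}(\tilde{Z}_d^{\Gamma}(V,W)^{\circ})$ lands in the $\Gamma$-equivariant automorphisms.

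I would then \emph{define}
\beq
\tilde{Z}_d^{\Gamma}(\mathbf{V},\mathbf{W})^{\circ} := P \times_{G} \tilde{Z}_d^{\Gamma}(V,W)^{\circ} \longrightarrow M,
\eeq
the fiber bundle associated to the principal $G$-bundle $P$ via the $G$-action on $\tilde{Z}_d^{\Gamma}(V,W)^{\circ}$; by construction this is a $\tilde{Z}_d^{\Gamma}(V,W)^{\circ}$-bundle in the sense of the definition, its transition functions being $g_{\alpha\beta}$ composed with $G \to \mathrm{Aut}(\tilde{Z}_d^{\Gamma}(V,W)^{\circ})$, and the residual $\Gamma$-action on the fibers (which commutes with $G$) descends to a fiberwise $\Gamma$-action on the total space. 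The same associated-bundle construction applied to the $G$-equivariant map $\tilde{Z}_d^{\Gamma}(V,W)^{\circ} \to V \times \mathrm{Poly}_d^{\Gamma}(V,W)$ yields a $\Gamma$-equivariant bundle map to $P \times_G \bigl(V \times \mathrm{Poly}_d^{\Gamma}(V,W)\bigr) \cong \mathbf{V} \times_M \mathrm{Poly}_d^{\Gamma}(\mathbf{V},\mathbf{W})$, where the last identification holds because $P \times_G V = \mathbf{V}$ (this recovers $\mathbf{V}$ by definition of $P$) and $P \times_G \mathrm{Poly}_d^{\Gamma}(V,W) = \mathrm{Poly}_d^{\Gamma}(\mathbf{V},\mathbf{W})$ is precisely the polynomial-map bundle. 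The main obstacle I anticipate is bookkeeping rather than substance: one must check that the $G$-action and the $\Gamma$-action genuinely commute on the resolution (invoking the uniqueness clause of Corollary~\ref{cor:equivariance} applied to the commuting actions of $G \times \Gamma$ on $Y$), and that the homomorphism $G \to \mathrm{Aut}(\tilde{Z}_d^{\Gamma}(V,W)^{\circ})$ is well-defined — i.e. that resolution-of-singularities automorphisms induced by algebraic automorphisms of the ambient pair are themselves algebraic, which follows since each weighted blow-up and destackification step in $F^{\circ\infty}_{er}$ is functorial and algebraic. Everything else is the routine clutching/associated-bundle formalism.
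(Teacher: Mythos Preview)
Your proposal is correct and follows essentially the same approach as the paper: both arguments reduce the $(\mathbf{V},\mathbf{W})$ data to a $GL(V)^{\Gamma}\times GL(W)^{\Gamma}$-cocycle, invoke Corollary~\ref{cor:equivariance} to lift this structure group action to the resolution $\tilde{Z}_d^{\Gamma}(V,W)^{\circ}$, and then build the desired bundle by the associated-bundle/clutching construction. The paper phrases this in terms of transition functions $g_{\alpha\beta}: U_\alpha \cap U_\beta \to GL(V)^{\Gamma}\times GL(W)^{\Gamma} \to \mathrm{Aut}(\tilde{Z}_d^{\Gamma}(V,W)^{\circ})$ satisfying the cocycle condition, while you package the same data as a principal $G$-bundle $P$ and form $P \times_G \tilde{Z}_d^{\Gamma}(V,W)^{\circ}$; these are equivalent formulations, and your explicit check that the $\Gamma$- and $G$-actions commute on the resolution (via the uniqueness clause of Corollary~\ref{cor:equivariance}) is a welcome detail that the paper leaves implicit.
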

\begin{proof}
Suppose $\{ U_{\alpha} \}$ is an open cover of $M$ such that over any $U_{\alpha}$, the fiber product $\mathbf{V} \times_{M} \mathrm{Poly}_{d}^{\Gamma}(\mathbf{V}, \mathbf{W})$ is isomorphic to the product $U_{\alpha} \times V \times \mathrm{Poly}_d^{\Gamma}(V,W)$. Furthermore, we observe that $\mathbf{V} \times_{M} \mathrm{Poly}_{d}^{\Gamma}(\mathbf{V}, \mathbf{W})$ can be reconstructed from these local pieces from smooth transition maps
\beq
g_{\alpha \beta}: U_{\alpha} \cap U_{\beta} \to GL(V)^{\Gamma} \times GL(W)^{\Gamma}.
\eeq
Indeed, given $(v, P) \in GL(V)^{\Gamma} \times GL(W)^{\Gamma}$ and $(g_1, g_2) \in GL(V)^{\Gamma} \times GL(W)^{\Gamma}$, the latter takes the former to $(g_1 v, g_2 P(g_1^{-1} v))$.
Observe that the pair
\beq
\ov{Z}_d^{\Gamma}(V,W)^{\circ} \subset V \times \mathrm{Poly}_d^{\Gamma}(V,W)
\eeq
from \eqref{eqn:Zd-pair} is preserved under the $GL(V)^{\Gamma} \times GL(W)^{\Gamma}$-action. By Corollary \ref{cor:equivariance}, the algebraic group $GL(V)^{\Gamma} \times GL(W)^{\Gamma}$ acts on the resolution $\tilde{Z}_d^{\Gamma}(V,W)^{\circ}$ as automorphisms, and the morphism $\tilde{Z}_d^{\Gamma}(V,W)^{\circ} \to V \times \mathrm{Poly}_d^{\Gamma}(V,W)$ is both $GL(V)^{\Gamma} \times GL(W)^{\Gamma}$-equivariant and $\Gamma$-equivariant. Accordingly, the transition maps $g_{\alpha \beta}$ define
\beq
g_{\alpha}: U_{\alpha} \cap U_{\beta} \to GL(V)^{\Gamma} \times GL(W)^{\Gamma} \to \mathrm{Aut}(\tilde{Z}_d^{\Gamma}(\mathbf{V},\mathbf{W})^{\circ})
\eeq
and they satisfy the cocycle condition by construction. The $\tilde{Z}_d^{\Gamma}(V,W)^{\circ}$-bundle $\tilde{Z}_d^{\Gamma}(\mathbf{V},\mathbf{W})^{\circ} \to M$ is constructed by globalizing, and the $\Gamma$-equivariant bundle map to $\mathbf{V} \times_{M} \mathrm{Poly}_{d}^{\Gamma}(\mathbf{V}, \mathbf{W})$ is just the fiberwise blow-down map composed with the inclusion.
\end{proof}

The total space $\tilde{Z}_d^{\Gamma}(\mathbf{V},\mathbf{W})^{\circ}$ is acted on by $\Gamma$ and it preserves the fibration structure over $M$. The $\Gamma$-equivariant complex structure on $\tilde{Z}_d^{\Gamma}(V,W)^{\circ}$ defines a normal complex structure on the $\Gamma$-manifold $\tilde{Z}_d^{\Gamma}(\mathbf{V},\mathbf{W})^{\circ}$ in the sense of Definition \ref{defn:normal-complex-manifold}. We will use bundles of the form $\tilde{Z}_d^{\Gamma}(\mathbf{V},\mathbf{W})^{\circ}$ to define a suitable version of equivariant transversality in the next section and resolve the singularities of certain orbispaces inside normally complex orbifolds.

\subsection{Zero loci with prescribed stabilizer}
\label{sec:zero-loci-with}

In the above discussions, we studied the resolution of singularities of $\ov{Z}_d^{\Gamma}(V,W)^{\circ}$ the compactification of the isotropy-free part of the universal zero locus. Following \cite{Bai_Xu_2022}, we study pieces of $Z_d^{\Gamma}(V,W)$ with prescribed isotropy groups. 

Let $\Gamma$, $V$, and $W$ be the same as in Definition \ref{defn:universal-zero-locus}. Let $\Gamma' \leq \Gamma$ be a subgroup. Define 
\beq
Z_d^{\Gamma}(V,W)^{\circ}_{\Gamma'} := \{(v,P) \in Z_d^{\Gamma}(V,W) \ | \ \text{the stabilizer of } (v, P) \text{ under the } \Gamma \text{-action is } \Gamma' \}.
\eeq

\begin{lemma}[{Cf. \cite[Proposition 4.9]{Bai_Xu_2022}}]\label{lem:FO-1}
If $V$ is a faithful $\Gamma$-representation, then for $d$ sufficiently large, $Z_d^{\Gamma}(V,W)^{\circ}_{\Gamma'}$ is a smooth complex algebraic variety of dimension $\dim_{\mb C}\mathrm{Poly}_d^{\Gamma}(V,W) + \dim_{\mb C} \mathring{V}_{\Gamma'} - \dim_{\mb C} \mathring{W}_{\Gamma'}$, where $\mathring{V}_{\Gamma'}$ and $\mathring{W}_{\Gamma'}$ are respectively the trivial isotypic piece of $V$ and $W$ under the restricted $\Gamma'$-action. \qed
\end{lemma}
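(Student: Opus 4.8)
The plan is to reduce Lemma \ref{lem:FO-1} to the isotropy-free case, Lemma \ref{lem:FO} (equivalently Lemma \ref{lem:FO-1} with $\Gamma' = \{e\}$), by a fixed-point/quotient argument.

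\medskip

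\textbf{Setup.} Fix a subgroup $\Gamma' \leq \Gamma$. A pair $(v, P) \in Z_d^\Gamma(V,W)$ has stabilizer exactly $\Gamma'$ iff $\gamma v = v$ for all $\gamma \in \Gamma'$ (note $P$ is fixed by all of $\Gamma$ already, being $\Gamma$-equivariant, so the stabilizer of $(v,P)$ is just the stabilizer of $v$) and no element of $\Gamma \setminus \Gamma'$ fixes $v$. Hence $v$ must lie in $\mathring V_{\Gamma'} \cap V_+^{\Gamma'}$ in the notation of Equation \eqref{eqn: Gamma-2-open}, i.e. in the fixed subspace $V^{\Gamma'} = \mathring V_{\Gamma'}$ but in the open locus where the stabilizer does not grow. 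Restricting a $\Gamma$-equivariant $P\colon V \to W$ to $V^{\Gamma'}$ lands inside $W^{\Gamma'} = \mathring W_{\Gamma'}$, since $P$ intertwines the actions. Thus $Z_d^\Gamma(V,W)^\circ_{\Gamma'}$ sits inside
\beq
\bigl(\mathring V_{\Gamma'} \cap V_+^{\Gamma'}\bigr) \times \mathrm{Poly}_d^\Gamma(V,W),
\eeq
cut out by the condition $P(v) = 0 \in \mathring W_{\Gamma'}$.

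\medskip

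\textbf{First I would} observe that the constraint $P(v) = 0$ only involves the restriction $P|_{V^{\Gamma'}}$, which is a $\Gamma'$-equivariant (indeed $N_\Gamma(\Gamma')/\Gamma'$-equivariant, but this is not needed) polynomial map $\mathring V_{\Gamma'} \to \mathring W_{\Gamma'}$ of degree $\leq d$. So consider the restriction map $r\colon \mathrm{Poly}_d^\Gamma(V,W) \to \mathrm{Poly}_d(\mathring V_{\Gamma'}, \mathring W_{\Gamma'})$. The fiber product description gives
\beq
Z_d^\Gamma(V,W)^\circ_{\Gamma'} = \bigl(\mathring V_{\Gamma'} \cap V_+^{\Gamma'}\bigr) \times_{\mathring V_{\Gamma'} \times \mathrm{Poly}_d(\mathring V_{\Gamma'}, \mathring W_{\Gamma'})} \wh{Z},
\eeq
where $\wh Z$ is the (stabilizer-free over $\mathring V_{\Gamma'}\cap V_+^{\Gamma'}$, but actually everything in $\mathring V_{\Gamma'}$ has trivial residual action on the relevant factor) universal zero locus of $\mathrm{ev}\colon \mathring V_{\Gamma'} \times \mathrm{Poly}_d(\mathring V_{\Gamma'}, \mathring W_{\Gamma'}) \to \mathring W_{\Gamma'}$. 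The trivial-group case of Lemma \ref{lem:FO} (applied with the ambient trivial-action space, which is automatically "faithful" for the trivial group, and here there is genuinely no isotropy to worry about on the $\mathring V_{\Gamma'}$ factor) shows that, for $d$ large enough, $\mathrm{ev}$ on this smaller space is a submersion, so $\wh Z$ is smooth of the expected codimension $\dim_{\mb C}\mathring W_{\Gamma'}$. The key input making the evaluation map a submersion is the representation-theoretic fact recalled in the remark after Lemma \ref{lem:FO}: for $d$ sufficiently large, $\mathring W_{\Gamma'}$ embeds into $\mathrm{Sym}^{\leq d}(\mathring V_{\Gamma'})$ as a trivial-summand-free complement — but since here we only need surjectivity of $dP_v(\cdot) + P'(v)$ at each point, it suffices that point-evaluation $\mathrm{Poly}_d(\mathring V_{\Gamma'}, \mathring W_{\Gamma'}) \to \mathring W_{\Gamma'}$, $P' \mapsto P'(v)$, is surjective for every $v \neq 0$ in $\mathring V_{\Gamma'}$, which is elementary (use linear functionals vanishing appropriately, or Lagrange interpolation as in Lemma \ref{lemma:group-inverse}).

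\medskip

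\textbf{Then} I would verify that the "forgotten" coordinates — the part of $P$ that is not its restriction to $V^{\Gamma'}$, i.e. the kernel of $r$ — contribute an affine-space factor with no further constraints, so that $Z_d^\Gamma(V,W)^\circ_{\Gamma'}$ is a $\ker(r)$-torsor bundle (trivial, since $\ker r$ is a vector space) over $\wh Z \cap \bigl((\mathring V_{\Gamma'}\cap V_+^{\Gamma'}) \times \cdots\bigr)$, hence smooth. Intersecting with the Zariski-open condition $v \in V_+^{\Gamma'}$ keeps smoothness. The dimension count is then bookkeeping: $\dim_{\mb C} \ker r + \dim_{\mb C}\wh Z\big|_{\text{restricted}} = (\dim_{\mb C}\mathrm{Poly}_d^\Gamma(V,W) - \dim_{\mb C}\mathrm{Poly}_d^{\Gamma'}(\mathring V_{\Gamma'},\mathring W_{\Gamma'})) + (\dim_{\mb C}\mathrm{Poly}_d^{\Gamma'}(\mathring V_{\Gamma'},\mathring W_{\Gamma'}) + \dim_{\mb C}\mathring V_{\Gamma'} - \dim_{\mb C}\mathring W_{\Gamma'})$, which telescopes to $\dim_{\mb C}\mathrm{Poly}_d^\Gamma(V,W) + \dim_{\mb C}\mathring V_{\Gamma'} - \dim_{\mb C}\mathring W_{\Gamma'}$, as claimed. (Here I have been slightly cavalier about whether the relevant space of restricted polynomials is all of $\mathrm{Poly}_d(\mathring V_{\Gamma'},\mathring W_{\Gamma'})$ or a $\Gamma'$-invariant/normalizer-invariant subspace; the honest statement is that $r$ surjects onto $\mathrm{Poly}_d^{?}(\mathring V_{\Gamma'},\mathring W_{\Gamma'})$ for the appropriate equivariance $?$, and the same dimension telescoping works regardless, so I would just track the relevant subspace carefully.)

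\medskip

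\textbf{The main obstacle} I anticipate is pinning down exactly which space of restricted polynomials $r$ surjects onto, and checking that the point-evaluation from that space is still surjective onto $\mathring W_{\Gamma'}$ for all nonzero $v$ — this is where the hypothesis that $d$ is large and that $V$ is a faithful $\Gamma$-representation genuinely enters (faithfulness of $V$ forces $\mathring V_{\Gamma'}$ to be a faithful representation of $N_\Gamma(\Gamma')/\Gamma'$, so the representation-theoretic input of Lemma \ref{lem:FO} applies at that level). Everything else — the fiber-product/torsor structure, smoothness by submersion, the dimension telescoping — is routine once that surjectivity is in hand, and indeed \cite[Proposition 4.9]{Bai_Xu_2022} already records the analogous statement, so I would largely cite that and only spell out the translation into the present notation.
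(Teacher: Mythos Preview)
The paper gives no proof of this lemma: the statement ends with \qed\ and simply cites \cite[Proposition 4.9]{Bai_Xu_2022} as the source. So there is no ``paper's own proof'' to compare against beyond that reference.

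Your argument is correct and is essentially how one proves this. One streamlining: rather than routing through the restriction map $r$ and worrying about its image, you can argue directly that the evaluation map
\beq
\{v \in V^{\Gamma'} : \mathrm{stab}(v) = \Gamma'\} \times \mathrm{Poly}_d^{\Gamma}(V,W) \to \mathring W_{\Gamma'}, \qquad (v, P) \mapsto P(v)
\eeq
is a submersion, by checking surjectivity of $P' \mapsto P'(v)$ alone. Given $w \in \mathring W_{\Gamma'}$ and $v$ with stabilizer exactly $\Gamma'$, take a polynomial $L_v$ of degree $\leq d_0$ with $L_v(v) = 1$ and $L_v(gv) = 0$ for $g \notin \Gamma'$ (Lagrange interpolation on the $\Gamma$-orbit, as in Lemma~\ref{lemma:group-inverse}), and set $P'(x) = |\Gamma'|^{-1}\sum_{g \in \Gamma} L_v(gx)\, g^{-1} w$; this is $\Gamma$-equivariant with $P'(v) = w$. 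This sidesteps the question of identifying $\mathrm{im}(r)$ and makes the role of faithfulness transparent (it guarantees the orbit points $gv$ are distinct across $\Gamma/\Gamma'$). Your flagged concern about whether $\mathring V_{\Gamma'}$ is faithful for $N_\Gamma(\Gamma')/\Gamma'$ is handled automatically: if some $g \in N_\Gamma(\Gamma') \setminus \Gamma'$ acted trivially on $V^{\Gamma'}$, there would be no $v$ with stabilizer exactly $\Gamma'$ and the variety would be empty.
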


Denote by $\gamma'$ the set of subgroups of $\Gamma$ which are conjugate to $\Gamma'$. Then the variety 
\beq
Z_d^{\Gamma}(V,W)^{\circ}_{\gamma'} := \bigcup_{\Gamma' \in \gamma'} Z_d^{\Gamma}(V,W)^{\circ}_{\Gamma'} = \coprod_{\Gamma' \in \gamma'} Z_d^{\Gamma}(V,W)^{\circ}_{\Gamma'}
\eeq
is invariant under the $\Gamma$-action, and it is smooth provided that $d$ is large enough. We always take such a $d$ for subsequent discussions. Following Definition \ref{defn:Z-d-resolve}, write $\tilde{Z}_d^{\Gamma}(V,W)^{\circ}_{\gamma'}$ the resolution of the Zariski closure $\ov{Z}_d^{\Gamma}(V,W)^{\circ}_{\gamma'}$ by applying the functor $F_{er}^{\circ \infty}$ from Theorem \ref{thm:ATW-resolution} to the $\Gamma$-equivariant pair $\ov{Z}_d^{\Gamma}(V,W)^{\circ}_{\gamma'} \subset V \times \mathrm{Poly}_d^{\Gamma}(V,W)$. Then we have a $\Gamma$-equivariant morphism
\beq\label{eqn:blowdown-inclusion-gamma}
\tilde{Z}_d^{\Gamma}(V,W)^{\circ}_{\gamma'} \to V \times \mathrm{Poly}_d^{\Gamma}(V,W).
\eeq
Then we have the following generalization of Proposition \ref{prop:cartesian-degree}, which follows from the same proof.
\begin{prop}\label{prop:cartesian-degree-gamma}
Given a sufficiently large integer $d \geq 1$ and any integer $d' \geq d$, the morphisms of Equation \eqref{eqn:blowdown-inclusion-gamma} fit into a $\Gamma$-equivariant Cartesian commutative diagram
\beq\label{eqn:change-of-degree-gamma}
\begin{tikzcd}
{\tilde{Z}_d^{\Gamma}(V,W)^{\circ}_{\gamma'}} \arrow[r] \arrow[d] & {V \times \mathrm{Poly}_d^{\Gamma}(V,W)} \arrow[d] \\
{\tilde{Z}_{d'}^{\Gamma}(V,W)^{\circ}_{\gamma'}} \arrow[r]      & {V \times \mathrm{Poly}_{d'}^{\Gamma}(V,W)}     
\end{tikzcd}
\eeq
where the right vertical arrow is the natural inclusion. \qed
\end{prop}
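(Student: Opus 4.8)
\textbf{Proof proposal for Proposition \ref{prop:cartesian-degree-gamma}.}

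The plan is to mimic, essentially verbatim, the argument used to establish Proposition \ref{prop:cartesian-degree}, checking at each stage that the presence of the conjugacy class $\gamma'$ of prescribed stabilizers causes no additional difficulty. First I would observe that, for $d$ sufficiently large, Lemma \ref{lem:FO-1} guarantees that $Z_d^{\Gamma}(V,W)^{\circ}_{\Gamma'}$ is smooth of the asserted dimension, hence so is the $\Gamma$-invariant union $Z_d^{\Gamma}(V,W)^{\circ}_{\gamma'}$, and the pair $(\ov{Z}_d^{\Gamma}(V,W)^{\circ}_{\gamma'} \subset V \times \mathrm{Poly}_d^{\Gamma}(V,W))$ is a $\Gamma$-equivariant pair over ${\mb C}$ to which Theorem \ref{thm:ATW-resolution} and Corollary \ref{cor:equivariance} apply, producing the $\Gamma$-equivariant resolution $\tilde{Z}_d^{\Gamma}(V,W)^{\circ}_{\gamma'}$ and the morphism \eqref{eqn:blowdown-inclusion-gamma}.

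The next step is to transport the inverse-degree construction. Proposition \ref{prop:inverse-degree} provides, for $d \geq d_0$, a smooth $\Gamma$-equivariant morphism $\phi_d' : V \times \mathrm{Poly}_{d+1}^{\Gamma}(V,W) \to V \times \mathrm{Poly}_d^{\Gamma}(V,W)$ with $\phi_d' \circ \phi_d = \mathrm{id}$ and $P(v) = \ud{\phi}_d'(v,P)(v)$; this map is manifestly $\Gamma$-equivariant since $\ud{\phi}_d'$ is built from $\Gamma$-invariant functions $h_{ij} \in {\mb C}[V]^{\Gamma}$ and $\Gamma$-equivariant generators $Q_j$. Crucially, because $\phi_d'$ preserves the evaluation map, it also preserves the stabilizer of any point $(v,P)$ under the diagonal $\Gamma$-action: the stabilizer of $(v,P)$ equals the stabilizer of $v$ (the second coordinate being fixed by conjugation is irrelevant to the point's stabilizer, and the equality $P(v) = \ud\phi_d'(v,P)(v)$ ensures $\phi_d'(v,P)$ still lies in the zero locus with the same $v$). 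Hence $\phi_d'$ restricts to a smooth morphism of pairs
\beq
(\ov{Z}_{d+1}^{\Gamma}(V,W)^{\circ}_{\gamma'} \subset V \times \mathrm{Poly}_{d+1}^{\Gamma}(V,W)) \to (\ov{Z}_d^{\Gamma}(V,W)^{\circ}_{\gamma'} \subset V \times \mathrm{Poly}_d^{\Gamma}(V,W)),
\eeq
and, exactly as in Corollary \ref{cor:degree-submersion} (using the implicit function theorem to see $\phi_d'$ is smooth, its left inverse on Kähler differentials, and item (2) of Proposition \ref{prop:inverse-degree}), Theorem \ref{thm:ATW-resolution} together with Corollary \ref{cor:cube-diagram} yields a $\Gamma$-equivariant Cartesian square with vertical arrows the blow-down maps and horizontal arrows $\phi_d'$ and its resolution.

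Finally I would run the fiber-product argument from the proof of Proposition \ref{prop:cartesian-degree}: form $Z := \tilde{Z}_{d+1}^{\Gamma}(V,W)^{\circ}_{\gamma'} \times_{V \times \mathrm{Poly}_{d+1}^{\Gamma}(V,W)} (V \times \mathrm{Poly}_d^{\Gamma}(V,W))$ using the composite of blow-down and inclusion, concatenate with the reversed square just produced, and conclude that the outer square has bottom arrow equal to $\phi_d' \circ \phi_d = \mathrm{id}$, so its top arrow $Z \to \tilde{Z}_d^{\Gamma}(V,W)^{\circ}_{\gamma'}$ is an isomorphism; this is precisely the $d' = d+1$ case of \eqref{eqn:change-of-degree-gamma}, and for general $d' \geq d$ one concatenates. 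I expect no genuine obstacle here: the only point requiring a word of justification — and the one I would flag — is that the degree-lowering map $\phi_d'$ is compatible with the \emph{refined} stratification by prescribed isotropy $\gamma'$, i.e. that it sends $\ov{Z}_{d+1}^{\Gamma}(V,W)^{\circ}_{\gamma'}$ into $\ov{Z}_d^{\Gamma}(V,W)^{\circ}_{\gamma'}$ rather than merely into the full zero locus. This follows from preservation of the evaluation map as explained above, but it is the step where the $\gamma'$-decoration, absent in Proposition \ref{prop:cartesian-degree}, must actually be tracked.
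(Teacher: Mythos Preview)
Your proposal is correct and matches the paper's approach exactly: the paper simply records that the statement ``follows from the same proof'' as Proposition \ref{prop:cartesian-degree} and marks it with a \qed. Your added observation that $\phi_d'$ preserves the $V$-coordinate (hence the stabilizer, since $\Gamma$ acts trivially on $\mathrm{Poly}_d^{\Gamma}(V,W)$) is precisely the one point that needs to be tracked in the $\gamma'$-decorated version, and you have identified it correctly.
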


Now consider a pair of groups $\Gamma_2 \leq \Gamma_1$ with a pair of finite-dimensional complex $\Gamma_1$-representations $V$ and $W$ such that $V$ is faithful. For a subgroup $\Gamma' \leq \Gamma_2$, write $\gamma_1'$ and $\gamma_2'$ the set of subgroups of $\Gamma_1$ and $\Gamma_2$ which are conjugate to $\Gamma'$ respectively. Using the same notation as before, write
\beq
\ov{Z}_d^{\Gamma_2}(\underline{\check{V}^{\Gamma_2}}, \underline{W})^\circ_{\gamma'_2} \subset \underline{\check{V}^{\Gamma_2}} \times_{\mathring{V}^{\Gamma_2}} \mathrm{Poly}_d^{\Gamma_2}(\underline{\check{V}^{\Gamma_2}}, \underline{W}), 
\eeq
the analogue of \eqref{eqn:fiberwise-closure}, which is the Zariski closure of $Z_d^{\Gamma_2}(\underline{\check{V}^{\Gamma_2}}, \underline{W})^\circ_{\gamma'_2}$, consisting of points whose stabilizer is conjugate to $\Gamma'$. Then its resolution under $F^{\circ \infty}_{an}$ is denoted by $\tilde{Z}_d^{\Gamma_2}(\underline{\check{V}^{\Gamma_2}}, \underline{W})^\circ_{\gamma'_2}$ and it comes with a $\Gamma_2$-equivariant morphism to $\underline{\check{V}^{\Gamma_2}} \times_{\mathring{V}^{\Gamma_2}} \mathrm{Poly}_d^{\Gamma_2}(\underline{\check{V}^{\Gamma_2}}, \underline{W})$. Abusing the notations, we have the following analogue of Proposition \ref{prop:cartesian-group}, which follows from the same proof.
\begin{prop}\label{prop:cartesian-group-gamma}
For $d$ sufficiently large, after restricting to $V_{+}^{\Gamma_2}$, using the resolution functor $F^{\circ \infty}_{an}$, we have a $\Gamma_2$-equivariant Cartesian commutative diagram
\beq\label{eqn:change-of-group-gamma}
\begin{tikzcd}
{\tilde{Z}_d^{\Gamma_1}(V_{+}^{\Gamma_2}, W)^\circ_{\gamma_1'}} \arrow[r] \arrow[d] & {V_{+}^{\Gamma_2} \times \mathrm{Poly}_d^{\Gamma_1}(V, W)} \arrow[d] \\
{\tilde{Z}_d^{\Gamma_2}(\underline{\check{V}_{+}^{\Gamma_2}}, \underline{W})^\circ_{\gamma_2'}} \arrow[r]      & {\underline{\check{V}_{+}^{\Gamma_2}} \times_{\mathring{V}^{\Gamma_2}} \mathrm{Poly}_d^{\Gamma_2}(\underline{\check{V}^{\Gamma_2}}, \underline{W}),}     
\end{tikzcd}
\eeq
where the right vertical arrow is Equation \eqref{eqn:partial-ev}. \qed
\end{prop}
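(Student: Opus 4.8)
\textbf{Proof plan for Proposition \ref{prop:cartesian-group-gamma}.}

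The plan is to follow exactly the template established in the proofs of Proposition \ref{prop:cartesian-group} and Proposition \ref{prop:cartesian-degree-gamma}, checking only that the isotropy-stratification bookkeeping is harmless. First I would record the evident fact that, since $Z_d^{\Gamma_1}(V,W)$ is the fiber product of $Z_d^{\Gamma_2}(\underline{\check V^{\Gamma_2}},\underline W)$-type data with the affine space of $\Gamma_1$-equivariant polynomials (via the inclusion $\mathrm{Poly}_d^{\Gamma_1}(V,W)\hookrightarrow \mathrm{Poly}_d^{\Gamma_2}(V,W)$), the locus $Z_d^{\Gamma_1}(V_+^{\Gamma_2},W)^\circ_{\gamma_1'}$ maps to $\ov Z_d^{\Gamma_2}(\underline{\check V_+^{\Gamma_2}},\underline W)^\circ_{\gamma_2'}$ under \eqref{eqn:partial-ev} in a way that on the underlying (not-yet-closed-up) strata is literally a pullback: a point $(v,P)$ of the left side, with stabilizer conjugate to $\Gamma'$, maps to $\bigl((\mathring v,\check v),P(\mathring v,-)\bigr)$, whose fiberwise stabilizer is again conjugate to $\Gamma'$ because $v\in V_+^{\Gamma_2}$ forces the stabilizer of $v$ to lie in $\Gamma_2$, so no $\Gamma_1$-conjugacy is lost or gained. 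This gives a $\Gamma_2$-equivariant Cartesian square of singular pairs before resolution; the key point (as in the remark preceding Proposition \ref{prop:cartesian-group}) is that the top horizontal map $V_+^{\Gamma_2}\times\mathrm{Poly}_d^{\Gamma_1}(V,W)\to \underline{\check V_+^{\Gamma_2}}\times_{\mathring V^{\Gamma_2}}\mathrm{Poly}_d^{\Gamma_2}(\underline{\check V^{\Gamma_2}},\underline W)$ is \emph{not} smooth, so functoriality of $F^{\circ\infty}_{an}$ does not apply directly.

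The workaround, exactly as in Proposition \ref{prop:cartesian-group}, is to split the map into the composite of a smooth submersion and the change-of-group inclusion, and to produce a complex-analytic left inverse to each. The smooth-submersion half is handled by the map $V_+^{\Gamma_2}\times\mathrm{Poly}_d^{\Gamma_2}(V,W)\to \underline{\check V_+^{\Gamma_2}}\times_{\mathring V^{\Gamma_2}}\mathrm{Poly}_d^{\Gamma_2}(\underline{\check V^{\Gamma_2}},\underline W)$ of \eqref{eqn:partial-ev}, which is an algebraic submersion preserving the evaluation map, hence preserves each isotropy stratum and its Zariski closure; applying $F^{\circ\infty}_{an}$ and invoking Corollary \ref{cor:cube-diagram}/Corollary \ref{cor:equivariance} yields a $\Gamma_2$-equivariant Cartesian square relating $\tilde Z_d^{\Gamma_2}(V_+^{\Gamma_2},W)^\circ_{\gamma_2'}$ to $\tilde Z_d^{\Gamma_2}(\underline{\check V_+^{\Gamma_2}},\underline W)^\circ_{\gamma_2'}$. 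The change-of-group half is handled by Lemma \ref{lemma:group-inverse}: near any point of $\ov Z_d^{\Gamma_1}(V,W)^\circ\cap V_+^{\Gamma_2}\times\mathrm{Poly}_d^{\Gamma_1}(V,W)$ the inclusion $V_+^{\Gamma_2}\times\mathrm{Poly}_d^{\Gamma_1}(V,W)\subset V_+^{\Gamma_2}\times\mathrm{Poly}_d^{\Gamma_2}(V,W)$ has a local complex-analytic left inverse $\theta_d$ preserving evaluation, hence preserving the isotropy strata $Z_d^{\cdot}(V_+^{\Gamma_2},W)^\circ_{\gamma_i'}$ and their closures; the implicit function theorem argument of Corollary \ref{cor:degree-submersion} then upgrades this to a local smooth submersion, so $F^{\circ\infty}_{an}$ produces local Cartesian squares which patch by functoriality into the analogue of \eqref{eqn:change-of-group-prelim} for the strata labelled by $\gamma_1',\gamma_2'$. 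Concatenating the two Cartesian squares gives \eqref{eqn:change-of-group-gamma}.

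I expect the only real content beyond transcription to be the verification that the stratifications by conjugacy class of stabilizer behave functorially under all the auxiliary maps ($\theta_d$, the submersion \eqref{eqn:partial-ev}, the blow-down maps of $F^{\circ\infty}_{an}$), i.e. that each of these maps carries $Z_d^{\cdot}(\cdot)^\circ_{\gamma'}$ into $Z_d^{\cdot}(\cdot)^\circ_{\gamma'}$ and is compatible with taking Zariski closures; this is where the hypothesis that we have restricted to $V_+^{\Gamma_2}$ (so all relevant stabilizers sit inside $\Gamma_2$, and $\Gamma_1$- vs. $\Gamma_2$-conjugacy of subgroups of $\Gamma'$ is controlled) does the work, and it is the one place a careless argument could go wrong. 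Since $\theta_d$ and the submersion preserve the evaluation map they preserve stabilizers pointwise, and blow-up maps are isomorphisms over the smooth open strata, so closures are matched; granting this, the rest is the formal diagram-chasing of Proposition \ref{prop:cartesian-degree}, and the $\Gamma_2$-equivariance is automatic from Corollary \ref{cor:equivariance}. Hence the proof is identical, mutatis mutandis, to that of Proposition \ref{prop:cartesian-group}, which is what the statement already asserts.
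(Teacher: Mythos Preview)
Your proposal is correct and follows exactly the approach the paper intends: the paper's own proof is simply the remark ``which follows from the same proof'' as Proposition~\ref{prop:cartesian-group}, together with a \qed. You have written out in detail what that transcription entails, including the one nontrivial check (that the auxiliary maps $\theta_d$ and the partial-evaluation submersion preserve stabilizers pointwise, hence respect the $\gamma'$-strata and their closures over $V_+^{\Gamma_2}$), which is indeed the only place any thought is required.
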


Under Equation \eqref{eqn:poly-stabilization}, the analogue of Proposition \ref{prop:stabilization-compatible} is the following:

\begin{prop}\label{prop:stabilization-compatible-gamma}
If $V'$ is a $\Gamma$-representation, we have a $\Gamma$-equivariant Cartesian commutative diagram
\beq\label{eqn:stabilization-Cartesian-gamma}
\begin{tikzcd}
{\tilde{Z}_d^{\Gamma}(V,W)^\circ_{\gamma'}} \arrow[rr] \arrow[d]           &  & {V \times \mathrm{Poly}_d^{\Gamma}(V, W)} \arrow[d]                       \\
{\tilde{Z}_d^{\Gamma}(V \oplus V',W \oplus V')^\circ_{\gamma'}} \arrow[rr] &  & {(V \oplus V') \times \mathrm{Poly}_d^{\Gamma}(V \oplus V', W \oplus V')}. 
\end{tikzcd}
\eeq  \qed
\end{prop}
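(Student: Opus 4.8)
\textbf{Proof plan for Proposition \ref{prop:stabilization-compatible-gamma}.} The claim is the exact analogue of Proposition \ref{prop:stabilization-compatible}, with the full closure $\ov{Z}_d^{\Gamma}(V,W)^\circ$ replaced by the stabilizer-type closure $\ov{Z}_d^{\Gamma}(V,W)^\circ_{\gamma'}$; the statement is that these resolutions behave Cartesian-ly under the stabilization morphism \eqref{eqn:poly-stabilization}. The plan is to follow the proof of Proposition \ref{prop:stabilization-compatible} line by line, checking that each step only used formal properties of the closure (invariance under the relevant group actions, smoothness of the relevant open stratum near the locus of interest, and the existence of suitable left inverses of the stabilization maps) that are equally available here.

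\textbf{Key steps, in order.} First, I would record the fiber-product description of the zero-locus, $Z_d^{\Gamma}(V\oplus V',W\oplus V') = Z_d^{\Gamma}(V\oplus V',W)\times_{V\oplus V'} Z_d^{\Gamma}(V\oplus V',V')$, exactly as in the proof of Proposition \ref{prop:stabilization-compatible}. The point is that if an element $(v,v')\in V\oplus V'$ has stabilizer conjugate to $\Gamma'$ then so does the corresponding point of $Z_d^{\Gamma}(V\oplus V',W\oplus V')$ obtained by adjoining $\mathrm{Id}_{V'}$; conversely, near $V\times\{0\}\times\mathrm{Poly}_d^{\Gamma}(V\oplus V',W)\times\{\mathrm{Id}_{V'}\}$ the second factor $Z_d^{\Gamma}(V\oplus V',V')$ is smooth and the diagonal $\Gamma$-action has stabilizers controlled by the first factor, so $\ov{Z}_d^{\Gamma}(V\oplus V',W\oplus V')^\circ_{\gamma'}$ is identified near this locus with $\ov{Z}_d^{\Gamma}(V\oplus V',W)^\circ_{\gamma'}\times_{V\oplus V'} Z_d^{\Gamma}(V\oplus V',V')$ after resolving the first factor. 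Second, I would establish the intermediate Cartesian square
\beq
\begin{tikzcd}
{\tilde{Z}_d^{\Gamma}(V,W)^\circ_{\gamma'}} \arrow[rr] \arrow[d] &  & {V \times \mathrm{Poly}_d^{\Gamma}(V, W)} \arrow[d] \\
{\tilde{Z}_d^{\Gamma}(V \oplus V',W)^\circ_{\gamma'}} \arrow[rr] &  & {(V \oplus V') \times \mathrm{Poly}_d^{\Gamma}(V \oplus V', W)}
\end{tikzcd}
\eeq
by exhibiting the left inverse $((v,v'),P)\mapsto (v,P(-,v'))$ of the right vertical arrow; this is a surjective smooth morphism of pairs preserving the evaluation map, hence preserving the stabilizer stratification, so by the same reversing-the-arrow argument used in the proof of Proposition \ref{prop:cartesian-degree} (and invoked in the proof of Proposition \ref{prop:stabilization-compatible}) the square lifts through $F_{er}^{\circ\infty}$. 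Third, I would concatenate this square with the square relating $\tilde{Z}_d^{\Gamma}(V\oplus V',W)^\circ_{\gamma'}$ to $\tilde{Z}_d^{\Gamma}(V\oplus V',W)^\circ_{\gamma'}\times_{V\oplus V'}Z_d^{\Gamma}(V\oplus V',V')$ and use the local identification from the first step to conclude that the composite equals the stabilization square \eqref{eqn:stabilization-Cartesian-gamma}; $\Gamma$-equivariance is automatic from Corollary \ref{cor:equivariance} and the functoriality of $F_{er}^{\circ\infty}$.

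\textbf{Main obstacle.} The one place requiring genuine care, rather than pure transcription, is verifying that the stabilizer stratum $\ov{Z}_d^{\Gamma}(V\oplus V',W\oplus V')^\circ_{\gamma'}$ really does decompose, near the relevant locus, as the stated fiber product with the \emph{already-resolved} first factor — i.e.\ that passing to the closure of the $\Gamma'$-stratum commutes with the fiber product over $V\oplus V'$ in a neighborhood of $V\times\{0\}\times\cdots\times\{\mathrm{Id}_{V'}\}$. This reduces to the observation that at such points the $V'$-component contributes only trivial isotropy and $Z_d^{\Gamma}(V\oplus V',V')$ is smooth there (by the argument of Lemma \ref{lem:FO-1} applied to the identity section), so the stabilizer of a point of the fiber product is computed entirely from the first factor, and taking Zariski closure of the stabilizer-$\gamma'$ locus is compatible with the (flat, near this locus) projection. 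Once this local structural statement is in hand, everything else is the formal manipulation of Cartesian squares under $F_{er}^{\circ\infty}$ that already appears verbatim in the proofs of Propositions \ref{prop:cartesian-degree} and \ref{prop:stabilization-compatible}, and the proof is complete.
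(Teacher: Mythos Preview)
Your proposal is correct and is essentially the same approach as the paper's: the paper gives no proof text at all beyond the \qed, indicating (as stated explicitly for Propositions \ref{prop:cartesian-degree-gamma} and \ref{prop:cartesian-group-gamma}) that the argument is identical to that of Proposition \ref{prop:stabilization-compatible} with the closure $\ov{Z}_d^{\Gamma}(V,W)^\circ$ replaced throughout by $\ov{Z}_d^{\Gamma}(V,W)^\circ_{\gamma'}$. Your careful verification of the one non-formal point---that near $V\times\{0\}\times\mathrm{Poly}_d^{\Gamma}(V\oplus V',W)\times\{\mathrm{Id}_{V'}\}$ the stabilizer-$\gamma'$ closure decomposes as the fiber product---is exactly the ingredient needed to transcribe the proof, and is implicit in the paper's assertion that the same argument applies.
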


Finally, the following is the generalization of Proposition \ref{prop:product-compatible}, for which we use the $\gamma$-version of Equation \eqref{eqn:pair-product}. Note that the conjugacy classes of subgroups of the product group $\Gamma_1 \times \Gamma_2$ are given by pairs of conjugacy classes of $\Gamma_1$ and $\Gamma_2$.

\begin{prop}\label{prop:product-compatible-gamma}
    The morphism \eqref{eqn:pair-product} can be lifted to a $\Gamma_1 \times \Gamma_2$-equivariant Cartesian commutative diagram
    \beq\label{eqn:product-Cartesian-gamma}
\begin{tikzcd}
{\tilde{Z}_d^{\Gamma_1}(V_1,W_1)^\circ_{\gamma_1'} \times V_2} \arrow[rr] \arrow[d]           &  & { (V_1 \oplus V_2) \times \mathrm{Poly}_d^{\Gamma_1}(V_1, W_1)} \arrow[d]                       \\
{\tilde{Z}_d^{\Gamma_1 \times \Gamma_2}(V_1 \oplus V_2,W_1)^\circ_{\gamma_1'}} \arrow[rr] &  & {(V_1 \oplus V_2) \times \mathrm{Poly}_d^{\Gamma_1 \times \Gamma_2}(V_1 \oplus V_2, W_1),}
\end{tikzcd}
\eeq
where for $\tilde{Z}_d^{\Gamma_1 \times \Gamma_2}(V_1 \oplus V_2,W_1)^\circ_{\gamma_1'}$, we only prescribe the conjugacy class along the $\Gamma_1$-factor. \qed
\end{prop}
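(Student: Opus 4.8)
The plan is to reduce Proposition \ref{prop:product-compatible-gamma} to the argument already used for Proposition \ref{prop:product-compatible}, keeping track of the additional decomposition by conjugacy classes of isotropy. First I would observe that the $\Gamma_1\times\Gamma_2$-equivariant morphism \eqref{eqn:poly-product}, restricted to the $\gamma_1'$-strata (where we only constrain the $\Gamma_1$-isotropy along $V_1$ and leave the $\Gamma_2$-action on $V_2$ unconstrained), still induces a well-defined morphism of pairs, since the map $(v_1,v_2)\mapsto (v_1,v_2)$ on the base and $P_1\mapsto P_1$ on polynomials clearly takes a point whose $\Gamma_1$-stabilizer along $V_1$ is conjugate to $\Gamma'$ to a point with the same property: the $\Gamma_1$-stabilizer of $(v_1,v_2,P_1)$ in $\Gamma_1\times\Gamma_2$ (projected to $\Gamma_1$) is exactly the $\Gamma_1$-stabilizer of $(v_1,P_1)$ because $\Gamma_1$ acts trivially on $V_2$. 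This is why only the $\Gamma_1$-factor conjugacy class is prescribed on the target; the $\Gamma_2$-component is free, matching the factor $V_2$ on the left.

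Next I would run the same proof strategy as in Proposition \ref{prop:product-compatible}: construct the ``reverse'' smooth morphism
\beq
(V_1\oplus V_2)\times \mathrm{Poly}_d^{\Gamma_1\times\Gamma_2}(V_1\oplus V_2, W_1) \to (V_1\oplus V_2)\times\mathrm{Poly}_d^{\Gamma_1}(V_1,W_1),\quad \big((v_1,v_2),P_2\big)\mapsto \big((v_1,v_2), P_2(-,v_2)\big),
\eeq
check it is a smooth morphism of pairs preserving the universal zero loci with prescribed $\gamma_1'$-isotropy (the evaluation $P_2(v_1,v_2)=P_2(-,v_2)(v_1)$ is unchanged, and restricting a $\Gamma_1\times\Gamma_2$-equivariant polynomial to the $V_1$-variable with $v_2$ fixed does not change the $\Gamma_1$-stabilizer type). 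Then I would assemble the three-tier commutative diagram exactly as in the proof of Proposition \ref{prop:product-compatible}, with the top square coming from \eqref{eqn:pair-product} (its $\gamma_1'$-version) and the bottom square from the reverse morphism, so that the outer square has identity vertical arrows. Applying the functoriality of $F^{\circ\infty}_{er}$ (Corollary \ref{cor:cube-diagram}) to the bottom and outer squares—both of which are smooth morphisms of pairs—forces the top square to lift to a Cartesian diagram of resolutions, and $\Gamma_1\times\Gamma_2$-equivariance of this lift follows from Corollary \ref{cor:equivariance}, exactly as in the unrefined case.

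The only genuinely new point, and the one I would be most careful about, is that the $F^{\circ\infty}_{er}$ applied to $\ov Z_d^{\Gamma_1\times\Gamma_2}(V_1\oplus V_2,W_1)^\circ_{\gamma_1'}$ must be compatible with the stratification; but since the whole family of propositions \ref{prop:cartesian-degree-gamma}--\ref{prop:product-compatible-gamma} is asserted in the excerpt to follow ``from the same proof'' as their unrefined counterparts, the substantive content here is simply verifying that every morphism of pairs appearing in the Proposition \ref{prop:product-compatible} argument respects the constraint ``$\Gamma_1$-stabilizer along $V_1$ lies in the conjugacy class $\gamma_1'$''. That verification is the routine but essential bookkeeping step: the triviality of the $\Gamma_1$-action on $V_2$ is what makes the stabilizer type insensitive to the $V_2$-coordinate and to restriction/extension in that variable, and this is precisely what allows the unrefined proof to go through verbatim. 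I therefore expect no real obstacle beyond this bookkeeping; the hard work was already done in establishing Theorem \ref{thm:ATW-resolution}, Corollary \ref{cor:equivariance}, and Proposition \ref{prop:product-compatible}.
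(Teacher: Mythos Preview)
Your proposal is correct and follows exactly the approach the paper intends: the paper states the proposition with a bare \qed, having remarked just before it that this is ``the generalization of Proposition \ref{prop:product-compatible}'' using the $\gamma$-version of \eqref{eqn:pair-product}, so your reduction to that earlier argument together with the stabilizer bookkeeping is precisely what is meant.
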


Similar to Section \ref{subsec:variety-bundle}, we can construct bundles
\beq\label{eqn:Z-bundle-gamma}
\tilde{Z}_d^{\Gamma}(\mathbf{V},\mathbf{W})^{\circ}_{\gamma'}
\eeq
whose fibres are given by this construction. We will use the discussions in this subsection in the study of structures of equivariant complex bordisms.

\section{Regular FOP sections and resolution of singularities}\label{sec:derive-manifolds}

In this section, we demonstrate how to construct a smooth normally complex orbifold from the zero locus of an FOP section satisfying certain regularity conditions using the machinery developed in Section \ref{sec:ATW}. Then we present our construction of manifolds from  normally complex derived orbifolds,  
leaving the proof of invariance to the next section after introducing various versions of orbifold bordisms. 

\subsection{Regular FOP sections}\label{subsec:regular-FOP}
We describe a form of equivariant transversality of FOP sections. To distinguish it from the notion of \emph{strong transversality} of FOP sections introduced in \cite[Definition 6.1]{Bai_Xu_2022}, we refer to our version as \emph{regular} FOP sections.

\begin{hyp}
    Unless otherwise stated, $M$ is a smooth manifold with a \emph{faithful} $\Gamma$-action.
\end{hyp}

\begin{rem}\label{rem:faithful}
    The attentive reader may worry that this somewhat restrictive assumption might preclude applications to $\Gamma$-manifolds with nontrivial generic isotropy groups, which inevitably show up in the definition of $\Gamma$-equivariant bordism. However, this does not affect the construction in this paper. Indeed, as in \cite{Bai_Xu_2022}, we rely on the result of Pardon \cite{pardon19} showing that after a stabilization, any derived orbifold can be assumed to have an effective ambient space (cf. Section \ref{subsec:d-Omega-C}). Using the geometric model of homotopical equivariant bordisms (cf. Section \ref{sec:equivariant-bordisms}), the same trick can be applied so that when discussing perturbations, so that any class in homotopical bordism can be represented by a derived manifold whose ambient space admits a faithful action. The faithfulness assumption is closely related to Lemma \ref{lem:FO} and Lemma \ref{lem:FO-1} which guarantee the generic smoothness of (strata of) the universal zero loci, following \cite{Bai_Xu_2022}. It is possible to develop a more general framework directly incorporating non-faithful actions, but one would have to modify these statements and the relevant discussion.
\end{rem}

For the next definition, we assume, as in Definition \ref{defn:FOP-lift-equivariant}, that $M$ is equipped with with a normal complex structure and a straightened connection, and $E \to M$ is a $\Gamma$-equivariant vector bundle with a normal complex structure and straightened connection. We choose in addition an integer $d$ which is sufficiently large so that the construction of  Lemma \ref{lemma:Z-d-bundle} results in a bundle
\beq
\tilde{Z}_d^{\Gamma'}(NM^{\Gamma'},\check{E}^{\Gamma'})^{\circ}
\eeq
over $M^{\Gamma'}$ with fibre  $\tilde{Z}_d^{\Gamma'}(-,-)^{\circ}$. More generally, applying the construction in Equation \eqref{eqn:Z-bundle-gamma}, for any conjugacy class $\gamma''$ of subgroups of $\Gamma'$, we can construct the bundle
\beq
\tilde{Z}_d^{\Gamma'}(NM^{\Gamma'},\check{E}^{\Gamma'})^{\circ}_{\gamma''}
\eeq
over $M^{\Gamma'}$ coming the part of of the universal zero locus whose isotropy group under the $\Gamma'$-action lies in the class $\gamma''$.

\begin{defn}\label{defn:equivariant-regular}
A smooth $\Gamma$-equivariant section $S: M \to E$ is said to be \emph{regular} at $p \in M$ if the following holds. Suppose the isotropy group of $p$ under the $\Gamma$-action is $\Gamma' \leq \Gamma$. Writing
\beq
S = (\mathring{S}_{\Gamma'}, \check{S}_{\Gamma'}): D_r (NM^{\Gamma'}) \to \mathring{E}^{\Gamma'} \oplus \check{E}^{\Gamma'},
\eeq
 we   then require  that the following condition hold near $p$:
\begin{enumerate}
\item $\check{S}_{\Gamma'}$ admits a normally complex lift (cf. Definition \ref{defn:FOP-lift-equivariant}) of degree at most $d$ 
\beq
\check{\mathfrak{s}}_{\Gamma'}: D_r(NM^{\Gamma'}) \to \mathrm{Poly}_d^{\Gamma'}(NM^{\Gamma'}, \check{E}^{\Gamma'}).
\eeq
\item The resulting $\Gamma'$-equivariant bundle map
\beq\label{eqn:gamma-prime-lift}
\begin{aligned}
D_r (NM^{\Gamma'}) &\to \mathring{E}^{\Gamma'} \oplus (NM^{\Gamma'} \times_{M^{\Gamma'}} \mathrm{Poly}_d^{\Gamma'}(NM^{\Gamma'},\check{E}^{\Gamma'})) \\
y &\mapsto (\mathring{S}_{\Gamma'}(y), (y, \check{\mathfrak{s}}_{\Gamma'}(y)))
\end{aligned}
\eeq
is transverse to the smooth map
\beq\label{eqn:inclusion-Z}
 \{0 \} \times \tilde{Z}_d^{\Gamma'}(NM^{\Gamma'},\check{E}^{\Gamma'})^{\circ}_{\gamma''} \to \mathring{E}^{\Gamma'} \oplus (NM^{\Gamma'} \times_{M^{\Gamma'}} \mathrm{Poly}_d^{\Gamma'}(NM^{\Gamma'},\check{E}^{\Gamma'}))
\eeq
\end{enumerate}
for all conjugacy classes $\gamma''$. If such a condition holds for all points $p \in M$, we say that $S$ is \emph{regular}.
\end{defn}

Note that the regularity condition particularly implies that $S$ is transverse to $0$ over the isotropy free locus of $M$. It may not be a priori clear whether the regularity condition for different subgroups are compatible, and in particular whether, given a pair of subgroups $\Gamma_2 < \Gamma_1 \leq \Gamma$, a choice of such regular section near $M^{\Gamma_1}$ yields a section which satifies the regularity condition for $\Gamma_2$ on a neighbourhood of $M^{\Gamma_1}$. This is the content of Proposition \ref{prop:open-ness} below, which requires some preliminary discussion.

Let $E \to M$ be as above. 
Using the restriction of the $\Gamma_1$ action on the normal bundle $NM^{\Gamma_1}$ to $\Gamma_2$, we obtain a decomposition
\beq\label{eqn:N-decomposition}
NM^{\Gamma_1} = \mathring{({NM^{\Gamma_1}})}^{\Gamma_2} \oplus \check{({NM^{\Gamma_1}})}^{\Gamma_2}
\eeq
where the first component denotes the $\Gamma_2$-trivial isotypic piece. Using the exponential map to identify the disc bundle $D_r(NM^{\Gamma_1})$ with an open neighborhood of $M^{\Gamma_1}$, we see that $M^{\Gamma_2}$ can be identified with the total space of $\mathring{({NM^{\Gamma_1}})}^{\Gamma_2}$ locally. As for the vector bundle $E|_{D_r(NM^{\Gamma_1})}$, we have the decomposition
\beq\label{eqn:E-decomposition}
\check{E}^{\Gamma_1} = (\mathring{E}^{\Gamma_2} \cap \check{E}^{\Gamma_1}) \oplus \check{E}^{\Gamma_2}.
\eeq
Let us look at a section $S = (\mathring{S}_{\Gamma_1}, \check{S}_{\Gamma_1}): D_r (NM^{\Gamma_1}) \to \mathring{E}^{\Gamma_1} \oplus \check{E}^{\Gamma_1}$ with a lift $\check{\mathfrak{s}}_{\Gamma_1}$ of degree at most $d$. Locally near $M^{\Gamma_1}$, for $v = (\mathring{v}, \check{v}) \in NM^{\Gamma_1}$ with respect to the decomposition in Equation \eqref{eqn:N-decomposition} where $NM^{\Gamma_1}$ is viewed as an open subset of $M$, if we write
\beq
\check{\mathfrak{s}}_{\Gamma_1} = (\mathring{\mathfrak{s}}_{\Gamma_1}, \check{\mathfrak{s}}_{\Gamma_2})
\eeq
using the decomposition of Equation \eqref{eqn:E-decomposition}, we obtain a bundle map
\beq\label{eqn:nc-lift-change-of-group}
\begin{aligned}
D_r(NM^{\Gamma_2}) &\to \mathrm{Poly}_d^{\Gamma_2}(NM^{\Gamma_2}, \check{E}^{\Gamma_2}) \\
(\mathring{v}, \check{v}) &\mapsto \check{\mathfrak{s}}_{\Gamma_2}(\mathring{v}, \check{v})(\mathring{v}, -)
\end{aligned}
\eeq
covering the identity map $\mathring{v} \mapsto \mathring{v}$ on $M^{\Gamma_2}$ which is locally identified with $\mathring{({NM^{\Gamma_1}})}^{\Gamma_2}$. A word on the notations: $\check{\mathfrak{s}}_{\Gamma_2}(\mathring{v}, \check{v})(-,-)$ stands for the polynomial determined by $\check{\mathfrak{s}}_{\Gamma_2}$ after evaluating at $(\mathring{v}, \check{v})$ in which $\check{v}$ is the bundle coordinate, and the symbol $(\mathring{v}, -)$ is the partial evaluation map so that we can get a polynomial map with the correct domain. This is just a global version of the construction in Equation \eqref{eqn:partial-ev}, with the exception that now we have a family of polynomials parametrized by $D_r(NM^{\Gamma_1})$. As a result, if we write $S = (\mathring{S}_{\Gamma_2}, \check{S}_{\Gamma_2}): D_r(NM^{\Gamma_2}) \to \mathring{E}^{\Gamma_2} \oplus \check{E}^{\Gamma_2}$, we have the expressions
\beq
\begin{aligned}
\mathring{S}_{\Gamma_2}(\mathring{v}, \check{v}) &= (\mathring{S}_{\Gamma_1}(\mathring{v}, \check{v}), \mathring{\mathfrak{s}}_{\Gamma_1}(\mathring{v}, \check{v})(\mathring{v}, \check{v})) \in \mathring{E}^{\Gamma_2} = \mathring{E}^{\Gamma_1} \oplus(\mathring{E}^{\Gamma_2} \cap \check{E}^{\Gamma_1}), \\
\check{S}_{\Gamma_2} &= \check{\mathfrak{s}}_{\Gamma_2}(\mathring{v}, \check{v})(\mathring{v}, \check{v}) \in \check{E}^{\Gamma_2},
\end{aligned}
\eeq
so we know that Equation \eqref{eqn:nc-lift-change-of-group} defines a normally complex lift of $\check{S}_{\Gamma_2}$.

\begin{prop}\label{prop:open-ness}
Using the notations introduced above, if $S$ is regular near $p \in M^{\Gamma_1}$, then near $p$, the $\Gamma_2$-equivariant smooth map 
\beq\label{eqn:gamma-2-lift}
\begin{aligned}
D_r (NM^{\Gamma_2}) &\to \mathring{E}^{\Gamma_2} \oplus (NM^{\Gamma_2} \times_{M^{\Gamma_2}} \mathrm{Poly}_d^{\Gamma_2}(NM^{\Gamma_2},\check{E}^{\Gamma_2})) \\
(\mathring{v}, \check{v}) &\mapsto (\mathring{S}_{\Gamma_2}(\mathring{v}, \check{v}), ((\mathring{v}, \check{v}), \check{\mathfrak{s}}_{\Gamma_2}(\mathring{v}, \check{v})(\mathring{v}, -)))
\end{aligned}
\eeq
is transverse to the smooth map
\beq
 \{ 0 \} \times \tilde{Z}_d^{\Gamma_2}(NM^{\Gamma_2},\check{E}^{\Gamma_2})^{\circ}_{\gamma_2'} \to \mathring{E}^{\Gamma_2} \oplus (NM^{\Gamma_2} \times_{M^{\Gamma_2}} \mathrm{Poly}_d^{\Gamma_2}(NM^{\Gamma_2},\check{E}^{\Gamma_2}),
\eeq
for any conjugacy class $\gamma_2'$ of subgroups of $\Gamma_2$.
\end{prop}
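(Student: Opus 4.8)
\textbf{Proof plan for Proposition \ref{prop:open-ness}.}

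The plan is to reduce the $\Gamma_2$-regularity of $S$ near $p \in M^{\Gamma_1}$ to the assumed $\Gamma_1$-regularity via the Cartesian diagram from Proposition \ref{prop:cartesian-group-gamma}, after identifying all the objects appearing in \eqref{eqn:gamma-2-lift} with their universal/bundle counterparts. First I would fix a $\Gamma_1$-invariant disc bundle neighborhood of $p$ in $M^{\Gamma_1}$ over which $NM^{\Gamma_1}$ and $\check{E}^{\Gamma_1}$ are equivariantly trivialized; pulling back, the situation becomes the local model of Section \ref{sec:ATW}, with $\Gamma_1$-representations $V = N_p M^{\Gamma_1}$ and $W = \check{E}^{\Gamma_1}_p$, and $M^{\Gamma_2}$ identified (via the straightened exponential map) with the open subvariety $V_{+}^{\Gamma_2}$ times the base $\mathring{V}^{\Gamma_2}$-factor coming from $\mathring{(NM^{\Gamma_1})}^{\Gamma_2}$. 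Here the straightened metric and connection enter precisely to ensure that the $\Gamma_2$-normal exponential map and parallel transport are compatible with the $\Gamma_1$-ones, so that the decompositions \eqref{eqn:N-decomposition}, \eqref{eqn:E-decomposition} and the partial-evaluation formula \eqref{eqn:nc-lift-change-of-group} hold on the nose; this is the step where the local-to-global formalism of the preceding sections pays off, and I would cite the discussion preceding the Proposition together with \cite[Section 3.6]{Bai_Xu_2022}.

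Next I would observe that the map \eqref{eqn:gamma-2-lift} factors through the $\Gamma_1$-regular map \eqref{eqn:gamma-prime-lift}: by the explicit formulas for $\mathring{S}_{\Gamma_2}$ and $\check{S}_{\Gamma_2}$ displayed just before the statement, the composite of \eqref{eqn:gamma-prime-lift} with the partial-evaluation morphism \eqref{eqn:partial-ev} (in its bundle form) is exactly \eqref{eqn:gamma-2-lift}. Likewise, the target $\tilde{Z}_d^{\Gamma_2}(NM^{\Gamma_2},\check{E}^{\Gamma_2})^{\circ}_{\gamma_2'}$ is, fibrewise over $\mathring{V}^{\Gamma_2}$, the resolved family $\tilde{Z}_d^{\Gamma_2}(\underline{\check{V}_{+}^{\Gamma_2}}, \underline{W})^{\circ}_{\gamma_2'}$, and the inclusion \eqref{eqn:inclusion-Z} for $\Gamma_1$ maps into this family precisely through the Cartesian square \eqref{eqn:change-of-group-gamma}. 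Thus, over $M^{\Gamma_2}$, the preimage of $\{0\}\times\tilde{Z}_d^{\Gamma_2}(\cdots)^{\circ}_{\gamma_2'}$ under \eqref{eqn:gamma-2-lift} coincides with the preimage of $\{0\}\times\tilde{Z}_d^{\Gamma_1}(V_{+}^{\Gamma_2},W)^{\circ}_{\gamma_1'}$ under \eqref{eqn:gamma-prime-lift}, where $\gamma_1'$ runs over those conjugacy classes of subgroups of $\Gamma_1$ that restrict to $\gamma_2'$ (using that the $\Gamma_1$-stabilizer of a point in $V_{+}^{\Gamma_2}$ already lies in a conjugate of $\Gamma_2$). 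The key input here is the Cartesian-ness in Proposition \ref{prop:cartesian-group-gamma}, which guarantees that transversality of \eqref{eqn:gamma-prime-lift} to the $\Gamma_1$-cycle is \emph{equivalent} to transversality of \eqref{eqn:gamma-2-lift} to the $\Gamma_2$-cycle: pullback of a transverse intersection along the two legs of a Cartesian square of smooth maps, one of which is a submersion, remains transverse, and conversely the vertical map $V_{+}^{\Gamma_2}\times\mathrm{Poly}_d^{\Gamma_1}(V,W)\to \underline{\check V_{+}^{\Gamma_2}}\times_{\mathring V^{\Gamma_2}}\mathrm{Poly}_d^{\Gamma_2}(\cdots)$ admits, locally, the analytic left inverse supplied by Lemma \ref{lemma:group-inverse}, so transversality descends as well.

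I would then assemble: since $S$ is regular at $p$ as a $\Gamma_1$-section, \eqref{eqn:gamma-prime-lift} is transverse to $\{0\}\times\tilde{Z}_d^{\Gamma_1}(NM^{\Gamma_1},\check{E}^{\Gamma_1})^{\circ}_{\gamma_1''}$ for every conjugacy class $\gamma_1''$ of subgroups of $\Gamma_1$ on a neighbourhood of $p$; restricting to $M^{\Gamma_2}\cap V_{+}^{\Gamma_2}$ and pushing through the Cartesian square yields transversality of \eqref{eqn:gamma-2-lift} to $\{0\}\times\tilde{Z}_d^{\Gamma_2}(NM^{\Gamma_2},\check{E}^{\Gamma_2})^{\circ}_{\gamma_2'}$ near $p$, for every $\gamma_2'$. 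Finally, one checks that the cut-off degree $d$ can be taken uniform: it only depends on the finitely many isotropy types $(\Gamma', V', W')$ occurring, so enlarging $d$ once makes all the relevant $\tilde Z$-bundles and the Lemma \ref{lemma:group-inverse} left inverses simultaneously available. The main obstacle I anticipate is purely bookkeeping: matching the \emph{family} (bundle-over-$M^{\Gamma'}$) versions of all the universal-zero-locus constructions with the pointwise representation-theoretic statements of Section \ref{sec:ATW}, and in particular keeping track of how a conjugacy class $\gamma_2'$ of subgroups of $\Gamma_2$ relates to the collection of conjugacy classes $\gamma_1'$ of subgroups of $\Gamma_1$ under restriction — but no new geometric idea beyond the Cartesian diagrams of Section \ref{sec:ATW} is needed. $\qed$
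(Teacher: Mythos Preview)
Your approach is essentially the paper's: reduce to the local representation-theoretic model, factor the $\Gamma_2$-lift through the $\Gamma_1$-lift via partial evaluation, invoke the Cartesian square of Proposition~\ref{prop:cartesian-group-gamma}, and transfer the assumed $\Gamma_1$-transversality to the required $\Gamma_2$-transversality. The paper isolates this last step as an explicit elementary Lemma~\ref{lemma:transverse-Cartesian} (in a pullback square with $g\pitchfork f_2$, transversality of $F:Q\to M_1$ to $f_1$ implies transversality of $g\circ F$ to $f_2$), which is what your informal ``transversality descends'' amounts to; note that Lemma~\ref{lemma:group-inverse} is used only upstream to establish the Cartesian property, not in the transversality transfer itself.
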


Before we proceed to the proof, we need a digression concerning transversality. The following elementary lemma is useful for showing the consistency of the regularity condition.

\begin{lemma}\label{lemma:transverse-Cartesian}
    Suppose the commutative diagram
    \begin{equation}\label{eqn:pullback-square}
\begin{tikzcd}
N_1 \arrow[d, "h"'] \arrow[rr, "f_1"] &  & M_1 \arrow[d, "g"] \\
N_2 \arrow[rr, "f_2"']                &  & M_2               
\end{tikzcd}
    \end{equation}
    is a pullback square of smooth manifolds such that $g$ and $f_2$ are transverse to each other. Suppose $F: Q \to M_1$ is a smooth map which is transverse to $f_1$. Then the composition $g \circ F: Q \to M$ is transverse to $f_2$.
\end{lemma}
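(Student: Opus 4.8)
\textbf{Proof plan for Lemma \ref{lemma:transverse-Cartesian}.}

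The plan is to argue pointwise, reducing everything to linear algebra of tangent spaces, and to exploit the defining property of a pullback square: that $N_1$ can be identified with the fiber product $N_2 \times_{M_2} M_1$, so that $T_{q_1} N_1 \cong T_{h(q_1)} N_2 \times_{T M_2} T_{f_1(q_1)} M_1$ (here we use that $g$ and $f_2$ are transverse, which guarantees the fiber product is a manifold and that its tangent space is the fiber product of tangent spaces). First I would fix a point $x \in Q$ with $g(F(x)) = f_2(y)$ for some $y \in N_2$, set $m_1 := F(x) \in M_1$, $m_2 := g(m_1) \in M_2$, and note that $(y, m_1)$ determines a point $n_1 \in N_1$ with $f_1(n_1) = m_1$. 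The goal is to show $d(g\circ F)_x (T_x Q) + df_2(T_y N_2) = T_{m_2} M_2$.

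The key step is the following diagram chase on tangent spaces. Take any $w \in T_{m_2} M_2$. By transversality of $g$ and $f_2$, write $w = dg(u) + df_2(v)$ with $u \in T_{m_1} M_1$ and $v \in T_y N_2$. Now I would use transversality of $F$ and $f_1$: the vector $u \in T_{m_1} M_1$ can be written as $u = dF_x(a) + df_1(b)$ for some $a \in T_x Q$ and $b \in T_{n_1} N_1$. Substituting, $w = dg(dF_x(a)) + dg(df_1(b)) + df_2(v)$. The middle term is $d(g\circ f_1)(b)$, and by commutativity of the square $g \circ f_1 = f_2 \circ h$, so $dg(df_1(b)) = df_2(dh(b))$. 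Hence $w = d(g\circ F)_x(a) + df_2(dh(b) + v)$, which exhibits $w$ in the required sum. This proves transversality of $g \circ F$ with $f_2$.

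I do not anticipate a genuine obstacle here: the only subtlety is making sure the pullback hypothesis is used in the right place, namely to know that $f_1(n_1) = m_1$ for a point $n_1$ lying over both $y$ and $m_1$ — but in fact the argument above does not even need to produce such an $n_1$ explicitly once we have $b \in T_{n_1} N_1$; what is genuinely needed is only that $F$ is transverse to $f_1$ and that $g\circ f_1 = f_2 \circ h$, together with transversality of $g$ and $f_2$. One should also remark that the statement is vacuous (or trivially true) at points not in the relevant image, and that the emptiness of the relevant intersection is itself an instance of transversality, so no separate case analysis is needed. I would write the proof in three short sentences following the tangent-space chase above, and invoke it in the proof of Proposition \ref{prop:open-ness} with $N_1, N_2, M_1, M_2$ the various resolved $Z$-bundles and $g$ the blow-down/inclusion maps, using the Cartesian diagrams established in Section \ref{sec:zero-loci-with} (e.g. \eqref{eqn:change-of-group-gamma}).
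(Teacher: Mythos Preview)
Your proof is correct and follows essentially the same tangent-space chase as the paper's own argument: both use the pullback to locate a point $n_1$ over the pair $(y,F(x))$, then combine the two transversality hypotheses with the relation $g_*\,(f_1)_* = (f_2)_*\,h_*$ to decompose an arbitrary tangent vector in $T_{m_2}M_2$. One small correction to your commentary: the pullback hypothesis \emph{is} genuinely needed, not just to know that some $n_1$ with $f_1(n_1)=m_1$ exists, but to ensure $h(n_1)=y$ as well, so that $dh(b)$ and $v$ live in the same tangent space $T_y N_2$ and the sum $dh(b)+v$ makes sense.
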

\begin{proof}
    Take $n_2 \in N_2$ and $q \in Q$ such that $f_2(n_2) = (g \circ Q) (m_2)$, we need to show that
    \beq
    (f_2)_* (T_{n_2} N_2) + g_*  F_* (T_q Q) = T_{m_2} M_2. 
    \eeq
    Because \eqref{eqn:pullback-square} is a pullback square, there exists unique $n_1 \in N_1$ such that $f_1(n_1) = F(q)$ and $h(n_1) = n_2$ becasue $F(q)$ and $n_2$ are respectively mapped by $g$ and $f_2$ to the same point $m_2$. Using the transversality of $f_1$ and $F$, we know that 
    \beq\label{eqn:transverse-sum}
    F_* (T_q Q) + (f_1)_* (T_{n_1} N_1) = T_{F(q)} M_1.
    \eeq
    Using the transversality of $g$ and $f_2$, we have
    \beq\label{eqn:sum-1}
    g_* (T_{F(q)} M_1) + (f_2)_* (T_{n_2} N_2) = T_{m_2} M_2.
    \eeq
    Applying the pushforward $g_*$ to Equation \eqref{eqn:transverse-sum}, we get
    \beq
    g_* F_* (T_q Q) + g_* (f_1)_* (T_{n_1} N_1) = g_* (T_{F(q)} M_1).
    \eeq
    Note that $g_* (f_1)_* = (f_2)_* h_*$, so the above equation implies
    \beq\label{eqn:sum-2}
    g_* F_* (T_q Q) + (f_2)_* h_* (T_{n_1} N_1) = g_* (T_{F(q)} M_1).
    \eeq
    Combining \eqref{eqn:sum-1} and \eqref{eqn:sum-2}, we obtain the desired result.
\end{proof}

\begin{proof}[Proof of Proposition \ref{prop:open-ness}]
To simply the notations in the proof, we first work out the details in the case when $M^{\Gamma_1}$ is a single point $p$ and $\mathring{E}^{\Gamma_1}$ is trivial, and the general case can be proven using exactly the same ideas. Also, we omit the subscript by $\gamma_2'$ in the bundle $\tilde{Z}_d^{\Gamma_2}(NM^{\Gamma_2},\check{E}^{\Gamma_2})^{\circ}_{\gamma_2'}$.

Recall the notations from Section \ref{subsec:resolve-Z}, and write $NM^{\Gamma_1}$ as a finite-dimensional complex and faithful $\Gamma_1$-representation $V$ and $\check{E}^{\Gamma_1}$ a finite-dimensional complex $\Gamma_1$-representation $W$. We identify $M^{\Gamma_2}$ with $\mathring{V}^{\Gamma_2}$, over which we have the $\Gamma_2$-equivariant vector bundles $\underline{\check{V}^{\Gamma_2}}$ and $\underline{W} = \underline{\mathring{W}^{\Gamma_2}} \oplus \underline{\check{W}^{\Gamma_2}}$. The data of a normally complex lift of a smooth $\Gamma$-equivariant section near $M^{\Gamma_1}$ is a smooth $\Gamma_1$-equivariant map
\begin{equation}
    f: V \to \mathrm{Poly}_d^{\Gamma_1}(V, W)
\end{equation}
near $0 \in V$ with associated section
\begin{equation}
\begin{aligned}
    S_f: V &\to W \\
    v &\mapsto f(v)(v).
\end{aligned}
\end{equation}
Under such a lift, $S_f$ is regular if and only if the graph map of $f$ is transverse to the map $\tilde{Z}_d^{\Gamma_1}(V,W)^{\circ} \to V \times \mathrm{Poly}_d^{\Gamma_1}(V, W)$. Now for a vector $v' \in \mathring{V}^{\Gamma_2}$ with isotropy group $\Gamma_2$ near $0$, write the induced normal complex lift of $S_f$ near $v'$ as
\begin{equation}
\begin{aligned}
    (\mathring{S}_f, \check{\mathfrak{s}}_{f}): \underline{\check{V}^{\Gamma_2}} &\to \underline{W} = \underline{\mathring{W}^{\Gamma_2}} \oplus (\check{V}^{\Gamma_2}_1 \times_{\mathring{V}^{\Gamma_2}} \mathrm{Poly}_d^{\Gamma_2}(\underline{\check{V}^{\Gamma_2}},\underline{\check{W}^{\Gamma_2}})) \\
    (\mathring{v}, \check{v}) &\mapsto (\mathring{f}(\mathring{v}, \check{v})(\mathring{v}, \check{v}), \check{f}(\mathring{v}, \check{v})(\mathring{v}, -)),
\end{aligned}
\end{equation}
where we have written $f = (\mathring{f}, \check{f})$ under the decomposition $W = \mathring{W}^{\Gamma_2} \oplus \check{W}^{\Gamma_2}$. We need to verify that the above map is transverse to  $\{ 0 \} \times \tilde{Z}_d^{\Gamma_2}(\underline{\check{V}^{\Gamma_2}},\underline{\check{W}^{\Gamma_2}})^{\circ} \to \underline{W} = \underline{\mathring{W}^{\Gamma_2}} \oplus (\check{V}^{\Gamma_2}_1 \times_{\mathring{V}^{\Gamma_2}} \mathrm{Poly}_d^{\Gamma_2}(\underline{\check{V}^{\Gamma_2}},\underline{\check{W}^{\Gamma_2}}))$.

Note that we have a $\Gamma_2$-equivariant morphism
\beq
\begin{aligned}
\check{V}^{\Gamma_2} \times_{\mathring{V}^{\Gamma_2}} \mathrm{Poly}_d^{\Gamma_2}(\underline{\check{V}^{\Gamma_2}},\underline{W}) &\to \underline{\mathring{W}^{\Gamma_2}} \oplus \check{V}^{\Gamma_2} \times_{\mathring{V}^{\Gamma_2}} \mathrm{Poly}_d^{\Gamma_2}(\underline{\check{V}^{\Gamma_2}},\underline{\check{W}^{\Gamma_2}}) \\
((\mathring{v}, \check{v}), \mathring{P}_{\mathring{v}}, \check{P}_{\mathring{v}}) &\mapsto ((\mathring{v}, \mathring{P}_{\mathring{v}}(\mathring{v})), \check{P}_{\mathring{v}})
\end{aligned}
\eeq
where we use the decomposition $\underline{W} = \underline{\mathring{W}^{\Gamma_2}} \oplus \underline{\check{W}^{\Gamma_2}}$ and $\mathring{P}_{\mathring{v}}, \check{P}_{\mathring{v}}$ are the fiberwise polynomials over $\mathring{v}$. This is a smooth morphism of algebraic varieties, thus by using the functorial resolution Theorem \ref{thm:ATW-resolution}, we obtain a $\Gamma_2$-equivariant Cartesian diagram
\begin{equation}
    \begin{tikzcd}
{\tilde{Z}_d^{\Gamma_2}(\underline{\check{V}^{\Gamma_2}},\underline{W})^{\circ}} \arrow[rr] \arrow[d]                           &  & {\check{V}^{\Gamma_2}_1 \times_{\mathring{V}^{\Gamma_2}} \mathrm{Poly}_d^{\Gamma_2}(\underline{\check{V}^{\Gamma_2}},\underline{W})} \arrow[d]                                                           \\
{\{ 0 \} \times \tilde{Z}_d^{\Gamma_2}(\underline{\check{V}^{\Gamma_2}},\underline{\check{W}^{\Gamma_2}})^{\circ}} \arrow[rr] &  & {\underline{\mathring{W}^{\Gamma_2}} \oplus \check{V}^{\Gamma_2} \times_{\mathring{V}^{\Gamma_2}} \mathrm{Poly}_d^{\Gamma_2}(\underline{\check{V}^{\Gamma_2}},\underline{\check{W}^{\Gamma_2}})}
\end{tikzcd}
\end{equation}
of compatible resolutions of the universal zero loci. By concatenating the above diagram with Diagram \ref{eqn:change-of-group}, we obtain a Cartesian diagram of smooth varieties
\begin{equation}\label{eqn:diagram-useful}
    \begin{tikzcd}
{\tilde{Z}_d^{\Gamma_1}(V_{+}^{\Gamma_2}, W)^\circ} \arrow[rr] \arrow[d]                           &  & {V_{+}^{\Gamma_2} \times \mathrm{Poly}_d^{\Gamma_1}(V, W)} \arrow[d]                                                           \\
{\{ 0 \} \times \tilde{Z}_d^{\Gamma_2}(\underline{\check{V}^{\Gamma_2}},\underline{\check{W}^{\Gamma_2}})^{\circ}} \arrow[rr] &  & {\underline{\mathring{W}^{\Gamma_2}} \oplus \check{V}^{\Gamma_2} \times_{\mathring{V}^{\Gamma_2}_{+}} \mathrm{Poly}_d^{\Gamma_2}(\underline{\check{V}^{\Gamma_2}},\underline{\check{W}^{\Gamma_2}})}.
\end{tikzcd}
\end{equation}
Now we can apply Lemma \ref{lemma:transverse-Cartesian} to the graph map of $f$, which establishes the desired transversality result for $(\mathring{S}_f, \check{\mathfrak{s}}_{f})$ for near $v'$ because $v'$ lies in the open subset $V^{\Gamma_2}_{+}$.

In general, using the bundles constructed in Section \ref{subsec:variety-bundle}, we can assemble the fiberwise diagram \eqref{eqn:diagram-useful}, we get
\beq\label{eqn:change-group-res-compatible}
\begin{tikzcd}
{\{0\} \times \tilde{Z}_d^{\Gamma_1}(NM^{\Gamma_1},\check{E}^{\Gamma_1})^{\circ}} \arrow[r] \arrow[d] & {\mathring{E}^{\Gamma_1} \oplus NM^{\Gamma_1} \times_{M^{\Gamma_1}} \mathrm{Poly}_d^{\Gamma_1}(NM^{\Gamma_1},\check{E}^{\Gamma_1})} \arrow[d] \\
{\{0\} \times \tilde{Z}_d^{\Gamma_2}(NM^{\Gamma_2},\check{E}^{\Gamma_1})^{\circ}} \arrow[r]      & {\mathring{E}^{\Gamma_2} \oplus (NM^{\Gamma_2} \times_{M^{\Gamma_2}} \mathrm{Poly}_d^{\Gamma_2}(NM^{\Gamma_2},\check{E}^{\Gamma_1}))}.  
\end{tikzcd}
\eeq
Then the same argument shows that regularity holds near points with isotropy group $\Gamma_2$.
\end{proof}

Proposition \ref{prop:open-ness} can be summarized as the \emph{openness} of FOP regularity. This ensures the compatibility of resolutions of singularities which will be used later. 

Because the regularity is a notion of equivariant transversality, it might be possible that it can never be achieved. However, in our situation, it exactly meets the sufficient condition to achieve equivariant transversality as observed by Wasserman \cite{wasserman} and discussed in detail in \cite[Section 5.4]{pardon20}.

\begin{lemma}\label{lem:local-FOP-regular}
Assume that we are in the situation of Definition \ref{defn:equivariant-regular}. Furthermore, assume that $M$ is compact (or has compact closure as a subspace in a topological space). Then a generic choice of FOP section is regular. 
\end{lemma}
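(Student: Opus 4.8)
The statement is the FOP analogue of the standard transversality argument in equivariant bordism theory, and the plan is to reduce it to the parametric transversality theorem applied in each stratum, then patch the strata together using the openness of FOP regularity (Proposition \ref{prop:open-ness}). The work is organized by an induction on the partial order on isotropy types appearing in the $\Gamma$-manifold $M$, from the deepest strata to the open stratum. Throughout, I fix the integer $d$ large enough that Lemma \ref{lem:FO-1} holds for all pairs $(\Gamma',V',W')$ occurring as normal data of fixed-point strata, so that every variety $Z_d^{\Gamma'}(NM^{\Gamma'},\check E^{\Gamma'})^{\circ}_{\gamma''}$ is smooth of the expected dimension and its resolution is well-defined.

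\textbf{Base case and inductive step.} For a minimal subgroup $\Gamma'\le\Gamma$ (in the $\preceq$ order, so that $M^{\Gamma'}$ is a closed submanifold), the fixed-point locus $M^{\Gamma'}$ is a genuine manifold and the normal bundle $NM^{\Gamma'}$ carries the complex structure from the normal complex structure. Using Proposition \ref{prop:FOP-prop}(2), the set of FOP lifts $\check{\mathfrak s}_{\Gamma'}$ of degree $d$ forms (after fixing a straightening) a space that surjects onto enough of the fibres of $\mathrm{Poly}_d^{\Gamma'}(NM^{\Gamma'},\check E^{\Gamma'})$: here the essential representation-theoretic input is the fact recalled after Lemma \ref{lem:FO} that $W'$ embeds in $\mathrm{Sym}^{d}(V')$, which is precisely what makes the universal family of polynomials ``large enough'' for the evaluation map to be submersive onto the smooth variety $\tilde Z_d^{\Gamma'}(-,-)^{\circ}_{\gamma''}$ away from the bad locus. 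Consequently the map that sends an FOP lift to the jet appearing in \eqref{eqn:gamma-prime-lift} is a submersion onto a neighbourhood of the relevant locus, and the standard parametric transversality theorem (Sard--Smale, or just finite-dimensional Sard since the space of degree-$d$ lifts is finite-dimensional fibrewise and one can work with a finite cover) yields that for a generic choice of FOP lift near $M^{\Gamma'}$, the map \eqref{eqn:gamma-prime-lift} is transverse to \eqref{eqn:inclusion-Z} for every conjugacy class $\gamma''$ of subgroups of $\Gamma'$. This gives regularity on a neighbourhood $U_{\Gamma'}$ of $M^{\Gamma'}$. For the inductive step, suppose regular FOP lifts have been chosen on neighbourhoods of all deeper strata; on the overlap region, Proposition \ref{prop:open-ness} guarantees that the already-chosen lift near a deeper stratum $M^{\Gamma_1}$ automatically satisfies the regularity condition for the shallower group $\Gamma_2$ on a neighbourhood of $M^{\Gamma_1}$, so there is no conflict. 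One then uses a $\Gamma$-invariant cutoff function (legitimate by Proposition \ref{prop:FOP-prop}(1), since FOP sections form a $C^\infty(\mc U)$-module) to interpolate between the fixed regular lift near the deeper strata and a new generic choice on the complement, keeping the result FOP of degree $d$; a generic such extension is transverse to all the relevant $\tilde Z$-loci over the shallower stratum by the parametric transversality argument applied relative to the already-fixed part, again because the cutoff region retains enough freedom in the degree-$d$ coefficients. Finally, after finitely many steps (finiteness of the poset of isotropy types, using compactness of $M$), one reaches the open stratum where regularity just means ordinary transversality to zero, achieved generically.

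\textbf{Main obstacle.} The genuinely delicate point is the inductive gluing: one must ensure that the transversality achieved near a deeper stratum is preserved, as an \emph{open} condition, while extending over the shallower stratum, and simultaneously that the extension has enough remaining freedom to be perturbed into transverse position. Openness of the transversality condition is not completely formal here because the target $\tilde Z_d^{\Gamma'}(-,-)^{\circ}_{\gamma''}$ is (the resolution of) a closed subvariety whose complement within the ambient zero-locus has positive codimension, so one must argue that transversality to this locally closed resolution is stable under $C^1$-small perturbations of the jet map — this follows once one knows, via Lemma \ref{lem:FO-1} and the properness of the resolution map, that $\tilde Z^{\circ}_{\gamma''}$ is a closed submanifold of the relevant ambient bundle after removing a fixed closed bad set, which is handled by the compactness hypothesis on $M$ together with a covering argument. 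The compatibility of the FOP straightening data across charts (needed so that ``FOP lift of degree $d$'' is a well-posed global notion) is the remaining bookkeeping, but this is exactly the content of \cite[Lemmas 3.15, 3.20]{Bai_Xu_2022} invoked earlier, so I would cite it rather than reprove it. Modulo these points, the conclusion that a generic FOP section is regular follows by the Baire category theorem applied to the countable intersection of the open-dense sets produced at each stratum (or simply a finite intersection, since there are finitely many strata).
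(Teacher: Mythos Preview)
Your overall strategy is sound in outline, but there is a real gap at the key step, and separately the architecture is more elaborate than the lemma warrants.

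The gap is in your justification for why parametric transversality applies. You claim the essential input is the embedding $W'\hookrightarrow\mathrm{Sym}^d(V')$ recalled after Lemma~\ref{lem:FO}, and that this is ``precisely what makes the universal family of polynomials large enough.'' That embedding is what makes the universal zero locus $Z_d^{\Gamma'}(V',W')^\circ_{\gamma''}$ smooth of the expected dimension (Lemmas~\ref{lem:FO} and~\ref{lem:FO-1}); it does \emph{not} explain why the space of $\Gamma'$-equivariant lifts $\check{\mathfrak s}_{\Gamma'}$ is rich enough to make the universal graph map a submersion. The actual reason, and the one the paper isolates, is much simpler: the fibre $\mathrm{Poly}_d^{\Gamma'}(V',W')$ carries the \emph{trivial} $\Gamma'$-action (its elements are $\Gamma'$-equivariant by definition), so every constant map $V'\to\mathrm{Poly}_d^{\Gamma'}(V',W')$ is $\Gamma'$-equivariant. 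Hence the space $C^\infty_{\Gamma'}\bigl(D_r(NM^{\Gamma'}),\mathrm{Poly}_d^{\Gamma'}(NM^{\Gamma'},\check E^{\Gamma'})\bigr)$ already surjects onto every fibre of the target; since the graph map automatically covers the $NM^{\Gamma'}$-direction, submersivity of the universal family is immediate and Sard--Smale applies. This is precisely the Wasserman criterion alluded to before the lemma: equivariant transversality is unobstructed because the isotropy representation on the target of the lift is trivial. Without this observation your ``Consequently the map \ldots\ is a submersion'' is an unjustified leap.

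Separately, the paper does not run an induction over isotropy strata for this lemma. Once the constant-map observation is in hand, for each fixed $\Gamma'\le\Gamma$ regularity along $M^{\Gamma'}$ is a residual condition in the space of FOP sections; one then intersects over the finitely many subgroups, with compactness of $M$ entering only through a covering argument on each stratum. Your two-layer induction with cutoff functions and Proposition~\ref{prop:open-ness} is the mechanism used later for the relative orbifold statement (Proposition~\ref{prop:FOP-existence}, following \cite[Proposition~6.4]{Bai_Xu_2022}); it is correct but overkill here, and your ``Main obstacle'' paragraph is worrying about a gluing difficulty that does not arise at this stage.
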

\begin{proof}
Note that there is no difficulty to achieve the transversality of $\mathring{S}_{\Gamma'}$, because this is a non-equivariant situation. Therefore, after choosing $\mathring{S}_{\Gamma'}$ generically, we are reduced to the toy case when $\mathring{E}^{\Gamma'} = \{0\}$. In other words, we want to show for the graph of a generic $\Gamma'$-equivariant smooth map
\beq
\check{\mathfrak{s}}_{\Gamma'}: D_r(NM^{\Gamma'}) \to \mathrm{Poly}_d^{\Gamma'}(NM^{\Gamma'}, \check{E}^{\Gamma'})
\eeq
is transverse to
\beq
\tilde{Z}_d^{\Gamma'}(NM^{\Gamma'},\check{E}^{\Gamma'})^{\circ}_{\gamma''} \to NM^{\Gamma'} \times_{M^{\Gamma'}} \mathrm{Poly}_d^{\Gamma'}(NM^{\Gamma'},\check{E}^{\Gamma'}).
\eeq
Because we are taking the graph of $\check{\mathfrak{s}}_{\Gamma'}$, for any $y \in D_r(NM^{\Gamma'})$, the pushforward of the tangent space of $D_r(NM^{\Gamma'})$ at $y$ by the differential $d(\mathrm{graph}(\check{\mathfrak{s}}_{\Gamma'}))$ must be surjective onto $T(NM^{\Gamma'}) \subset T(NM^{\Gamma'} \times_{M^{\Gamma'}} \mathrm{Poly}_d^{\Gamma'}(NM^{\Gamma'},\check{E}^{\Gamma'}))$. Therefore, in order to guarantee that $\mathrm{graph}(\check{\mathfrak{s}}_{\Gamma'})$ meets transversely with $\tilde{Z}_d^{\Gamma'}(NM^{\Gamma'},\check{E}^{\Gamma'})^{\circ}_{\gamma''}$ at $(v, P) = (\mathring{v}, \check{v}, P)$, we just need to make sure that $d(\mathrm{graph}(\check{\mathfrak{s}}_{\Gamma'}))$ is surjective onto the complement of the image of $T\tilde{Z}_d^{\Gamma'}(NM^{\Gamma'},\check{E}^{\Gamma'})^{\circ}_{\gamma''}$ in $T_{P}\mathrm{Poly}_d^{\Gamma'}(N_{\mathring{v}}M^{\Gamma'},\check{E}_{\mathring{v}}^{\Gamma'})$. Note that the space in which $\check{\mathfrak{s}}_{\Gamma'}$ lives is the Fr\'echet space of smooth $\Gamma'$-equivariant bundle maps
\beq
C^{\infty}_{\Gamma'}(D_r(NM^{\Gamma'}), \mathrm{Poly}_d^{\Gamma'}(NM^{\Gamma'}, \check{E}^{\Gamma'})).
\eeq
In the special case when $M^{\Gamma'}$ is a point, if the fibers of $NM^{\Gamma'}$ and $\check{E}^{\Gamma'}$ correspond to complex $\Gamma'$-representations $V'$ and $W'$ respectively, the above space is
\beq
C^{\infty}_{\Gamma'}(V', \mathrm{Poly}_d^{\Gamma'}(V', W')).
\eeq
Because constant maps from $V'$ to $\mathrm{Poly}_d^{\Gamma'}(V', W')$ are $\Gamma'$-equivariant, an application of the Sard--Smale theorem (one can either use Taubes' trick or Floer's $C_{\epsilon}^{\infty}$ space to set up the Banach spaces) shows that a generic $\check{\mathfrak{s}}_{\Gamma'}$ satisfies the desired transversality condition. In general, using the compactness assumption of $M^{\Gamma'}$, a covering argument then implies the result.
\end{proof}

Note that the notion of regularity \emph{a priori} depends on the choice of cut-off degree $d$. The next statement shows that such a dependence can be removed.

\begin{lemma}\label{lem:cut-degree}
We working is the same setting as in Definition \ref{defn:equivariant-regular}: for any pair of integers $d \leq d'$, if $S$ is a regular FOP section of degree at most $d$, it is also regular by viewing the lift as a normally complex lift of degree at most $d'$.
\end{lemma}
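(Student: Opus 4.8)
The plan is to reduce the statement to the transversality bookkeeping already packaged in Lemma \ref{lemma:transverse-Cartesian}, using the compatibility of resolutions under change of cut-off degree established in Proposition \ref{prop:cartesian-degree-gamma}.

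First I would fix $p\in M$ with isotropy group $\Gamma'\le\Gamma$ and observe that, having written the degree-$\le d$ normally complex lift $\check{\mathfrak s}_{\Gamma'}$ of $\check S_{\Gamma'}$, the combined $\Gamma'$-equivariant map attached to $\check{\mathfrak s}_{\Gamma'}$ \emph{viewed as a degree-$d'$ lift} is simply the composition of the degree-$d$ combined map \eqref{eqn:gamma-prime-lift} with the fibrewise linear inclusion
\[
\iota \colon \mathring E^{\Gamma'}\oplus\bigl(NM^{\Gamma'}\times_{M^{\Gamma'}}\mathrm{Poly}_d^{\Gamma'}(NM^{\Gamma'},\check E^{\Gamma'})\bigr)\hookrightarrow \mathring E^{\Gamma'}\oplus\bigl(NM^{\Gamma'}\times_{M^{\Gamma'}}\mathrm{Poly}_{d'}^{\Gamma'}(NM^{\Gamma'},\check E^{\Gamma'})\bigr),
\]
because $\check{\mathfrak s}_{\Gamma'}$ still takes values in polynomials of degree $\le d$ and the evaluation map does not depend on which degree bound is used. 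Thus what has to be shown is that this composition is transverse to the degree-$d'$ incarnation of \eqref{eqn:inclusion-Z}, given transversality of the degree-$d$ map to the degree-$d$ incarnation.

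Then I would set up the pullback square of Lemma \ref{lemma:transverse-Cartesian} with $g=\iota$, with $f_2$ the composite of the blow-down and inclusion $\{0\}\times\tilde Z_{d'}^{\Gamma'}(NM^{\Gamma'},\check E^{\Gamma'})^\circ_{\gamma''}\to\mathring E^{\Gamma'}\oplus(NM^{\Gamma'}\times_{M^{\Gamma'}}\mathrm{Poly}_{d'}^{\Gamma'})$, and with $f_1$ its degree-$d$ analogue. That this square is Cartesian, with top-left corner $\{0\}\times\tilde Z_d^{\Gamma'}(NM^{\Gamma'},\check E^{\Gamma'})^\circ_{\gamma''}$, is the bundle version of Proposition \ref{prop:cartesian-degree-gamma}, assembled from the fibrewise statement by the same transition-function argument used at the end of the proof of Proposition \ref{prop:open-ness} (the blow-down maps are $GL(V)^{\Gamma'}\times GL(W)^{\Gamma'}$-equivariant, so the Cartesian square over $\mathrm{Poly}_d^{\Gamma'}\hookrightarrow\mathrm{Poly}_{d'}^{\Gamma'}$ is preserved by the structure group; the trivial factor $\mathring E^{\Gamma'}$ and the base $M^{\Gamma'}$ are carried along untouched). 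The remaining hypothesis of Lemma \ref{lemma:transverse-Cartesian}, that $\iota$ be transverse to $f_2$, I would check fibrewise: iterating Proposition \ref{prop:inverse-degree} produces a retraction $\Psi$ of the inclusion $V\times\mathrm{Poly}_d^{\Gamma'}(V,W)\hookrightarrow V\times\mathrm{Poly}_{d'}^{\Gamma'}(V,W)$ which is a smooth submersion (Corollary \ref{cor:degree-submersion}) and preserves the evaluation map, so by functoriality of resolution under smooth morphisms (Corollary \ref{cor:cube-diagram}) one has $\tilde Z_{d'}^{\Gamma'}(V,W)^\circ_{\gamma''}=\Psi^{*}\tilde Z_d^{\Gamma'}(V,W)^\circ_{\gamma''}$ as pairs; a one-line tangent-space computation then shows that a submersion-pullback is transverse to any section of that submersion, and $\iota$ is such a section. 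Applying Lemma \ref{lemma:transverse-Cartesian} with $F$ the degree-$d$ combined map (transverse to $f_1$ by the regularity hypothesis at degree $d$) yields that $g\circ F$, which is the degree-$d'$ combined map, is transverse to $f_2$; since $p$ and $\gamma''$ are arbitrary, $S$ is a regular FOP section of degree $\le d'$. For general $d'\ge d$ one simply composes finitely many degree-raising inclusions, exactly as in the reduction to the case $d'=d+1$ in the proof of Proposition \ref{prop:cartesian-degree}.

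The main obstacle I anticipate is organizational rather than conceptual: making the passage from the fibrewise Proposition \ref{prop:cartesian-degree-gamma} to the bundle-level Cartesian square fully rigorous, together with the fact that the retraction $\Psi$ of Proposition \ref{prop:inverse-degree} is not manifestly $GL(V)^{\Gamma'}\times GL(W)^{\Gamma'}$-equivariant. The way around this is to exploit that transversality is an open and local condition, so a fibrewise (hence only locally defined) choice of $\Psi$ is enough to verify $\iota\pitchfork f_2$ near any given point, and the Cartesian square itself needs no equivariant $\Psi$, only the functoriality already recorded in Proposition \ref{prop:cartesian-degree-gamma}.
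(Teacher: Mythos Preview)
Your proposal is correct and follows essentially the same approach as the paper: set up the bundle-level Cartesian square relating the degree-$d$ and degree-$d'$ resolved zero loci (via Proposition~\ref{prop:cartesian-degree-gamma} and Lemma~\ref{lemma:Z-d-bundle}), then invoke Lemma~\ref{lemma:transverse-Cartesian} with $F$ the degree-$d$ combined map. Your treatment is in fact more careful than the paper's, which simply writes ``just as in the case of Proposition~\ref{prop:open-ness}'' without explicitly verifying the hypothesis $\iota\pitchfork f_2$; your argument via the section/retraction pair $(\iota,\Psi)$ and the observation that the $\Psi$-pullback of $\tilde Z_d$ contains the vertical tangent space fills this in cleanly, and your remark that a merely local (non-equivariant) $\Psi$ suffices for this transversality check is exactly right.
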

\begin{proof}
By definition, the map in Equation \eqref{eqn:gamma-prime-lift} is transverse to the map in Equation \eqref{eqn:inclusion-Z}. Using Corollary \ref{prop:cartesian-degree} and the construction in Lemma \ref{lemma:Z-d-bundle}, we have a Cartesian diagram
\beq
\begin{tikzcd}
{\{0\} \oplus \tilde{Z}_d^{\Gamma'}(NM^{\Gamma'},\check{E}^{\Gamma'})^{\circ}_{\gamma''}} \arrow[d] \arrow[rr] &  & {\mathring{E}^{\Gamma'} \oplus (NM^{\Gamma'} \times_{M^{\Gamma'}} \mathrm{Poly}_d^{\Gamma'}(NM^{\Gamma'},\check{E}^{\Gamma'}))} \arrow[d] \\
{\{0\} \oplus \tilde{Z}_{d'}^{\Gamma'}(NM^{\Gamma'},\check{E}^{\Gamma'})^{\circ}_{\gamma''}} \arrow[rr]      &  & {\mathring{E}^{\Gamma'} \oplus (NM^{\Gamma'} \times_{M^{\Gamma'}} \mathrm{Poly}_{d'}^{\Gamma'}(NM^{\Gamma'},\check{E}^{\Gamma'})).}     
\end{tikzcd}
\eeq
Just as in the case of Proposition \ref{prop:open-ness}, the fact that the Cartesian nature the above diagram and Lemma \ref{lemma:transverse-Cartesian} implies that \eqref{eqn:gamma-prime-lift} is transverse to the $d'$-version of \eqref{eqn:inclusion-Z}.
\end{proof}

After the discussions in the equivariant setting, we can go back to orbifolds.

\begin{defn}\label{defn:orbi-regular}
Let ${\mc E} \to {\mc U}$ be a vector bundle over an \emph{effective} orbifold, which are both endowed with normal complex structures and straightenings as in Definition \ref{defn:FOP-section}. Then an FOP section ${\mc S}: {\mc U} \to {\mc E}$ with a normally complex lift of degree at most $d$ is called \emph{regular} if its pullback as a section of $E_{\alpha} \to U_{\alpha}$ is regular in the sense of Definition \ref{defn:equivariant-regular}. 
\end{defn}

Note that to make sure that varieties of the form $\tilde{Z}_d^{\Gamma}(V,W)^{\circ}$ have the correct dimension, we need to make $d$ sufficiently large. In our applications, such a uniform choice can be made after introducing certain compactness assumptions.

\begin{prop}\label{prop:FOP-existence}
Let ${\mc E} \to {\mc U}$ be a vector bundle over an \emph{effective} orbifold, endowed with normal complex structures and straightenings as in Definition \ref{defn:FOP-section}. Suppose that ${\mc S}: {\mc U} \to {\mc E}$ is a smooth section such that the zero locus ${\mc S}^{-1}(0)$ is contained in an open suborbifold ${\mc U}' \subset {\mc U}$ with the property that $|{\mc U}'|$ is precompact. Then there exists an integer $d_0 \geq 1$ such that for any $d \geq d_0$, and any $\epsilon > 0$, there exists a smooth section ${\mc S}_{\epsilon}: {\mc U} \to {\mc E}$ satisfying the following properties:
\begin{enumerate}
    \item in an open neighborhood of ${\mc U}'$ whose image in $|{\mc U}|$ contains $|\ov{{\mc U}'}|$, the section ${\mc S}_{\epsilon}$ is a regular FOP section of degree at most $d$;
    \item $\|{\mc S}_{\epsilon} - {\mc S}\|_{C^0} < \epsilon$;
    \item for $\epsilon$ sufficiently small, the set $|{\mc S}_{\epsilon}^{-1}(0)|$ is compact.
\end{enumerate}
\end{prop}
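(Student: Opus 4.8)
\textbf{Proof strategy for Proposition \ref{prop:FOP-existence}.}

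The plan is to combine the existence of FOP approximations from Proposition \ref{prop:FOP-prop}(2) with the genericity result Lemma \ref{lem:local-FOP-regular}, using the openness of regularity (Proposition \ref{prop:open-ness}) and its independence from the cut-off degree (Lemma \ref{lem:cut-degree}) to patch local choices. First I would choose an auxiliary open suborbifold ${\mc U}''$ with ${\mc S}^{-1}(0) \subset {\mc U}'' \subset \ov{{\mc U}''} \subset {\mc U}'$ and $|{\mc U}''|$ precompact. Applying Proposition \ref{prop:FOP-prop}(2) to the triple $({\mc U}, {\mc E}, {\mc S})$ with the open set ${\mc U}''$ produces an integer $d_0$; after possibly enlarging $d_0$ we may also assume it is large enough that all the universal zero loci $\tilde{Z}_d^{\Gamma}(V,W)^{\circ}_{\gamma''}$ occurring over the (finitely many, by precompactness) isotropy types of ${\mc U}'$ have the expected dimension, as guaranteed by Lemma \ref{lem:FO} and Lemma \ref{lem:FO-1}. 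Fix $d \geq d_0$ from now on.

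Next I would upgrade an arbitrary FOP approximation to a \emph{regular} one. Proposition \ref{prop:FOP-prop}(2) gives, for any $\epsilon > 0$, a smooth section ${\mc S}'$ which is an FOP section of degree at most $d$ near ${\mc U}''$ with $\|{\mc S}' - {\mc S}\|_{C^0({\mc U}'')} < \epsilon/2$. Working on a relatively compact étale atlas $\coprod_\alpha U_\alpha$ refining ${\mc U}''$ as in Definition \ref{defn:FOP-section}, I would apply Lemma \ref{lem:local-FOP-regular} chart by chart: on each $U_\alpha$ (which has compact closure) a generic perturbation of the normally complex lift, through $\Gamma_\alpha$-equivariant FOP perturbations, yields a section regular in the sense of Definition \ref{defn:equivariant-regular}. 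Here the key point is that Proposition \ref{prop:FOP-prop}(1) — the $C^\infty({\mc U})$-module structure on FOP sections — lets me implement these local modifications via a partition of unity subordinate to $\{U_\alpha\}$ without destroying the FOP property. Proposition \ref{prop:open-ness} ensures that regularity achieved near each fixed-point stratum $M^{\Gamma_1}$ propagates to a neighbourhood where smaller subgroups $\Gamma_2 < \Gamma_1$ are relevant, so that after finitely many steps (ordering strata by the partial order on isotropy types as in the proof of Theorem \ref{thm:abel}) one obtains a globally regular FOP section ${\mc S}_\epsilon$ near ${\mc U}''$, still within $C^0$-distance $\epsilon$ of ${\mc S}$ after shrinking the perturbations. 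Lemma \ref{lem:cut-degree} guarantees that regularity at degree $d$ persists when we later view the lift at any larger degree, so the statement is uniform in $d \geq d_0$. This establishes (1) and (2).

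For (3), I would argue by compactness: since $|{\mc S}^{-1}(0)|$ is compact and contained in the open set $|{\mc U}''|$, and ${\mc S}$ is bounded away from zero on the compact set $|\ov{{\mc U}'}| \setminus |{\mc U}''|$, say $\|{\mc S}\| \geq c > 0$ there, choosing $\epsilon < c$ forces ${\mc S}_\epsilon^{-1}(0) \cap \ov{{\mc U}'} \subset {\mc U}''$, hence $|{\mc S}_\epsilon^{-1}(0) \cap \ov{{\mc U}'}|$ is a closed subset of the compact set $|\ov{{\mc U}''}|$ and is therefore compact; since ${\mc S}_\epsilon = {\mc S}$ outside a neighbourhood of ${\mc U}'$ (the perturbation has support there) and ${\mc S}$ has no zeros outside ${\mc U}'$, we conclude $|{\mc S}_\epsilon^{-1}(0)|$ is compact. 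The main obstacle is the patching in step two: one must verify that the chartwise generic perturbations from Lemma \ref{lem:local-FOP-regular} can be glued into a single global FOP section that is regular near \emph{all} strata simultaneously, which is exactly where Proposition \ref{prop:open-ness} (openness of regularity under passage to subgroups) and the module structure of Proposition \ref{prop:FOP-prop}(1) are indispensable, and where care is needed that later perturbations on lower strata do not spoil regularity already achieved on higher ones — this is handled by making successive perturbations supported away from the loci already treated, exploiting that regularity is an open condition.
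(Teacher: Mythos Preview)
Your approach is essentially the same as the paper's: the paper's proof simply cites \cite[Proposition 6.4]{Bai_Xu_2022} and notes that the local existence input is Lemma \ref{lem:local-FOP-regular} and the openness input is Proposition \ref{prop:open-ness}, which is exactly the two-ingredient inductive patching you describe. One minor slip: you chose ${\mc U}''$ with $\ov{{\mc U}''} \subset {\mc U}'$, but requirement (1) asks for regularity on a neighbourhood containing $|\ov{{\mc U}'}|$, so you should instead take a precompact ${\mc U}''$ with $\ov{{\mc U}'} \subset {\mc U}''$ and run the argument there.
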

\begin{proof}
The proof is exactly the same as the proof of \cite[Proposition 6.4]{Bai_Xu_2022}, with the following changes: (i) the existence over an \'etale chart as addressed by \cite[Lemma 6.5]{Bai_Xu_2022} is replaced by Lemma \ref{lem:local-FOP-regular}, and (ii) to apply the two-layer induction proof of \cite[Proposition 6.4]{Bai_Xu_2022}, the openness of strong transversality is replaced by the openness of regularity as proven in Proposition \ref{prop:open-ness}.
\end{proof}

For the next topic of this subsection, we discuss the behavior of such sections under stabilization. Namely, assume that we are in the situation of Definition \ref{defn:orbi-regular}. Let $\pi_{{\mc E}'} :{\mc E}' \to {\mc U}$ be another normally complex vector bundle and write $\tau_{{\mc E}'}: {\mc E}' \to \pi_{{\mc E}'}^* {\mc E}'$ for the tautological section. Assume that ${\mc E}'$ is endowed with a straightening, so we have a notion of FOP sections for $\pi_{{\mc E}'}^* {\mc E} \oplus \pi_{{\mc E}'}^* {\mc E}' \to {\mc E}'$.

\begin{lemma}\label{lem:stable-regular}
    If ${\mc S}: {\mc U} \to {\mc E}$ is regular, then so is $\pi_{{\mc E}'}^* {\mc S} \oplus \tau_{{\mc E}'}: {\mc E}' \to \pi_{{\mc E}'}^* {\mc E} \oplus \pi_{{\mc E}'}^* {\mc E}'$.
\end{lemma}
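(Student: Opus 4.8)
\textbf{Proof strategy for Lemma \ref{lem:stable-regular}.}
The plan is to reduce the claim to a purely local, equivariant statement and then invoke the stabilization compatibility of resolutions, Proposition \ref{prop:stabilization-compatible-gamma}. First I would fix an \'etale bundle chart, so that ${\mc U}$ is modelled on an open subset of a faithful complex $\Gamma$-representation $V$, the bundle ${\mc E}$ on a $\Gamma$-equivariant vector bundle with fibre $W$, and ${\mc E}'$ on a $\Gamma$-equivariant bundle with fibre $V'$. In this chart the total space of ${\mc E}'$ is modelled on (an open subset of) $V \oplus V'$ with the diagonal $\Gamma$-action, and the section $\pi_{{\mc E}'}^* {\mc S} \oplus \tau_{{\mc E}'}$ is, fibrewise over a point of ${\mc U}$, the map $(v,v') \mapsto (S(v), v')$. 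The key point is that a normally complex lift of ${\mc S}$ of degree at most $d$, given by $\check{\mathfrak{s}}_{\Gamma'}$ valued in $\mathrm{Poly}_d^{\Gamma'}(NM^{\Gamma'},\check{E}^{\Gamma'})$, induces a normally complex lift of $\pi_{{\mc E}'}^* {\mc S} \oplus \tau_{{\mc E}'}$ of the same degree: one simply takes the lift $\mathrm{diag}(\check{\mathfrak{s}}_{\Gamma'}, \mathrm{Id})$, using that the identity map $V' \to V'$ is an equivariant polynomial of degree $1$, exactly as in Equation \eqref{eqn:poly-stabilization}. The straightening on the stabilized bundle $\pi_{{\mc E}'}^* {\mc E} \oplus \pi_{{\mc E}'}^* {\mc E}' \to {\mc E}'$ is the one supplied by \cite[Section 3.4]{Bai_Xu_2022}, under which this diagonal lift is indeed a valid normally complex lift.

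The second step is to check the transversality condition in Definition \ref{defn:equivariant-regular}(2) for the stabilized section. For a point $p \in {\mc E}'$ with isotropy $\Gamma'$, the normal directions split: the normal bundle of the $\Gamma'$-fixed locus of the total space of ${\mc E}'$ is $NM^{\Gamma'} \oplus \check{({\mc E}')}^{\Gamma'}$ (fibrewise $V' $ decomposes into its $\Gamma'$-trivial and non-trivial parts, contributing the latter to the normal directions and the former to the fixed locus), and $\check{E}^{\Gamma'}$ is replaced by $\check{E}^{\Gamma'} \oplus \check{({\mc E}')}^{\Gamma'}$. The universal zero locus relevant to the stabilized problem is therefore $\tilde{Z}_d^{\Gamma'}(NM^{\Gamma'} \oplus (V')^{\Gamma'\text{-part}}, \check{E}^{\Gamma'} \oplus (V')^{\Gamma'\text{-part}})^{\circ}_{\gamma''}$, and Proposition \ref{prop:stabilization-compatible-gamma} provides a $\Gamma'$-equivariant Cartesian square relating its blow-down map to that of $\tilde{Z}_d^{\Gamma'}(NM^{\Gamma'},\check{E}^{\Gamma'})^{\circ}_{\gamma''}$. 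I would then apply Lemma \ref{lemma:transverse-Cartesian}: the graph map of the original lift $\check{\mathfrak{s}}_{\Gamma'}$ is, by the regularity hypothesis on ${\mc S}$, transverse to the top horizontal arrow of that Cartesian square, hence the composition with the vertical arrow — which is precisely the lift-graph of the stabilized section, up to identifying the evaluation maps — is transverse to the bottom arrow. This is the required transversality. One also needs to observe, as in Equation \eqref{eqn:poly-stabilization} and the discussion around Proposition \ref{prop:stabilization-compatible-gamma}, that a point $(v,v')$ in the stabilized representation has trivial (or prescribed-conjugacy) stabilizer whenever $v$ does, so the $\gamma''$-strata match up correctly and no new isotropy strata are introduced near the image of $V \times \{0\}$.

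The main obstacle I anticipate is bookkeeping rather than conceptual: one must carefully match the decomposition of the normal and bundle data on the stabilized side with the inputs of Proposition \ref{prop:stabilization-compatible-gamma} (in particular that the ``extra'' copy of $V'$ appears \emph{both} as a normal representation and as a bundle summand, which is exactly the shape of the stabilization map $\eqref{eqn:poly-stabilization}$), and one must verify that the straightenings are compatible so that the diagonal lift is genuinely a normally complex lift in the sense of Definition \ref{defn:FOP-lift-equivariant}. Since both of these were set up in \cite{Bai_Xu_2022} and the transversality is then a formal consequence of Lemma \ref{lemma:transverse-Cartesian} applied to the Cartesian square of Proposition \ref{prop:stabilization-compatible-gamma}, the argument closes. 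Finally, passing back from the \'etale bundle chart to the orbifold ${\mc U}$ is automatic, since regularity is defined chart-locally (Definition \ref{defn:orbi-regular}) and the construction above is manifestly compatible with the transition maps.
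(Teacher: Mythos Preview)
Your approach is essentially the paper's: reduce to an \'etale chart, exhibit the diagonal lift $\mathrm{diag}(\check{\mathfrak{s}}_{\Gamma'},\mathrm{Id})$ as in Equation \eqref{eqn:poly-stabilization}, and deduce regularity from the Cartesian square of Proposition \ref{prop:stabilization-compatible-gamma}. One point to tighten: the stabilized lift-graph is not literally the composition $g\circ F$ in the sense of Lemma \ref{lemma:transverse-Cartesian}, since its domain is the larger disc bundle $D_r(N(E')^{\Gamma'})$ over $(E')^{\Gamma'}$ rather than $D_r(NM^{\Gamma'})$; the $\Gamma'$-trivial part $\mathring{(E')}^{\Gamma'}$ enlarges both the base of the disc bundle and the $\mathring{E}$-summand of the target. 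So alongside the Cartesian square you also need the (obvious) fact that the tautological section is transverse to $0$ on $\mathring{(E')}^{\Gamma'}$, which the paper records explicitly as the second ingredient. With that addition your outline matches the paper's proof.
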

\begin{proof}
We just need to check the statement over \'etale charts, so let us assume that we are in the situation of Definition \ref{defn:equivariant-regular} and $\pi_{E'}: E' \to M$ is a $\Gamma$-equivariant vector bundle with a normal complex structure, and we have the tautological section $\tau_{E'}: E' \to \pi_{E'}^* E'$. Observe that for any $\Gamma' \leq \Gamma$, we have an inclusion $(E')^{\Gamma'} \subset \pi_{E'}^{-1}(M^{\Gamma'})$. As a result, the $\Gamma$-equivariant section $\pi_{E'}^* S \oplus \pi_{E'}^* \tau_{E'}$ admits a normally complex lift of degree at most $d$. 

For $\Gamma' \leq \Gamma$, the normal bundle $N(E')^{\Gamma'}$ admits a projection map to the normal bundle $NM^{\Gamma'}$. It induces a splitting
\beq
N(E')^{\Gamma'} = N(E')^{\Gamma'}_{\parallel} \oplus N(E')^{\Gamma'}_{\perp},
\eeq
where $N(E')^{\Gamma'}_{\perp}$ is the kernel of the projection map and $N(E')^{\Gamma'}_{\parallel}$ is the orthogonal complement under the induced metric. This is indeed a splitting of vector bundles when restricted to connected components. On the other hand, over $N(E')^{\Gamma'}$, we have splittings
\beq
\pi_{E'}^* E = \pi_{E'}^* \mathring{E}^{\Gamma'} \oplus \pi_{E'}^* \check{E}^{\Gamma'}, \textrm{and } \pi_{E'}^* E = \pi_{E'}^* \mathring{E'}^{\Gamma'} \oplus \pi_{E'}^* \check{E'}^{\Gamma'}.
\eeq
Then the section
\beq
\begin{aligned}
S^{\tau} := \pi_{E'}^* S \oplus \pi_{E'}^* \tau_{E'} &= (\mathring{S}^{\tau}_{\Gamma'}, \check{S}^{\tau}_{\Gamma'}): \\
D_r (N(E')^{\Gamma'}) &\to (\pi_{E'}^* \mathring{E}^{\Gamma'} \oplus \pi_{E'}^* \mathring{E'}^{\Gamma'}) \oplus (\pi_{E'}^* \check{E}^{\Gamma'} \oplus \pi_{E'}^* \check{E'}^{\Gamma'})
\end{aligned}
\eeq
has normally complex lift
\beq\label{eqn:stablized-lift}
\begin{aligned}
\check{\mathfrak{s}}_{\Gamma'}^{\tau}: D_{r/2} (N(E')^{\Gamma'}) &\subset D_r (N(E')^{\Gamma'}_{\parallel}) \oplus D_r (N(E')^{\Gamma'}_{\perp}) \\
&\to \mathrm{Poly}_d^{\Gamma'}(N(E')^{\Gamma'}_{\parallel}, \pi_{E'}^* \check{E}^{\Gamma'} ) \oplus \mathrm{Poly}_1^{\Gamma'}(N(E')^{\Gamma'}_{\perp}, \pi_{E'}^* \check{E'}^{\Gamma'} ) \\
&\subset \mathrm{Poly}_d^{\Gamma'}(N(E')^{\Gamma'}, \pi_{E'}^* \check{E}^{\Gamma'} ) \oplus \mathrm{Poly}_d^{\Gamma'}(N(E')^{\Gamma'}, \pi_{E'}^* \check{E'}^{\Gamma'} )
\end{aligned}
\eeq
written as the diagonal matrix $\mathrm{diag}(\pi_{E'}^* \check{\mathfrak{s}}_{\Gamma'}, \pi_{E'}^* \tau_{E'})$, in which we have identified $N(E')^{\Gamma'}_{\perp}$ with $\pi_{E'}^* \check{E'}^{\Gamma'}$. Then we see that $\check{\mathfrak{s}}_{\Gamma'}^{\tau}$ is obtained from $\check{\mathfrak{s}}_{\Gamma'}$ via a fiberwise/family version of the stabilization map in Equation \eqref{eqn:poly-stabilization}. Then the regularity of $S^{\tau}$ simply follows from the Cartesian property of Diagram \eqref{eqn:stabilization-Cartesian-gamma} and the fact that the tautogical section is transverse to $0$ along the $\Gamma'$-trivial part $\pi_{E'}^* \mathring{E'}^{\Gamma'}$.
\end{proof}

The final statements of this subsection are the following. 
\begin{lemma}\label{lem:regular-product}
    Let ${\mc S}_1: {\mc U}_1 \to {\mc E}_1$ be an FOP section which is regular. Suppose ${\mc U}_2$ is another effective normally complex orbifold. Then under the projection map
    \beqn
    \pi_1: {\mc U}_1 \times {\mc U_2} \to {\mc U}_1,
    \eeqn
    the pulled-back section $\pi_1^* {\mc S}_1$ is also FOP regular.
\end{lemma}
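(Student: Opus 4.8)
\textbf{Proof proposal for Lemma \ref{lem:regular-product}.}

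The plan is to verify the regularity condition of Definition \ref{defn:orbi-regular} étale-locally, reducing to the equivariant situation of Definition \ref{defn:equivariant-regular}, and then to observe that pulling back by the projection $\pi_1 \colon {\mc U}_1 \times {\mc U}_2 \to {\mc U}_1$ is, from the point of view of fixed-point loci and normally complex lifts, essentially the stabilization operation already analyzed in Lemma \ref{lem:stable-regular}, except that the extra directions come from a second orbifold rather than a vector bundle. First I would choose compatible étale atlases: a bundle chart $(U_1, \Gamma_1, E_1)$ for ${\mc E}_1 \to {\mc U}_1$ and a chart $(U_2, \Gamma_2)$ for ${\mc U}_2$, so that $(U_1 \times U_2, \Gamma_1 \times \Gamma_2)$ is a chart for ${\mc U}_1 \times {\mc U}_2$ and $\pi_1^* {\mc E}_1$ pulls back to the $\Gamma_1 \times \Gamma_2$-equivariant bundle $\mathrm{pr}_1^* E_1 \to U_1 \times U_2$, with $\Gamma_2$ acting trivially on the $E_1$-factor. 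The straightened metric and connection on the product chart are the products of the given ones, and the normal complex structure on $U_1 \times U_2$ is the product structure, so the pullback $\mathrm{pr}_1^* S_1$ indeed admits a normally complex lift of degree at most $d$, namely $\check{\mathfrak{s}}_{\Gamma_1'}$ pulled back along $\mathrm{pr}_1$ and regarded (via the inclusion $\mathrm{Poly}_d^{\Gamma_1'}(V_1', W_1') \hookrightarrow \mathrm{Poly}_d^{\Gamma_1' \times \Gamma_2'}(V_1' \oplus V_2', W_1')$) as independent of the $V_2'$-directions.

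Next I would analyze the fixed-point structure. For a subgroup $\Gamma' \leq \Gamma_1 \times \Gamma_2$, projecting to the two factors gives subgroups $\Gamma_1' \leq \Gamma_1$ and $\Gamma_2' \leq \Gamma_2$, and $(U_1 \times U_2)^{\Gamma'}$ decomposes compatibly so that the normal bundle $N(U_1 \times U_2)^{\Gamma'}$ splits as a sum of the normal bundle of $U_1^{\Gamma_1'}$ (pulled back) and directions normal to $U_2^{\Gamma_2'}$; crucially the $\check{E}$-part of $\mathrm{pr}_1^* E_1$ along this locus is just $\check{E_1}^{\Gamma_1'}$ pulled back, since $\Gamma_2$ acts trivially on $E_1$ and hence contributes no nontrivial isotypic summand to the bundle. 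Therefore the $\Gamma'$-equivariant bundle map of \eqref{eqn:gamma-prime-lift} for $\mathrm{pr}_1^* S_1$ factors through $\mathrm{pr}_1$ followed by the corresponding map for $S_1$ at the subgroup $\Gamma_1'$, composed with the stabilization morphism \eqref{eqn:poly-product}. Then I would invoke Proposition \ref{prop:product-compatible-gamma}: the Cartesian diagram \eqref{eqn:product-Cartesian-gamma} identifies $\tilde{Z}_d^{\Gamma_1' \times \Gamma_2'}(V_1' \oplus V_2', W_1')^\circ_{\gamma'}$ (for a conjugacy class $\gamma'$ determined by its $\Gamma_1'$-component — note that since $\Gamma_2'$ acts trivially on everything relevant, a point has isotropy $\Gamma'$ under $\Gamma_1' \times \Gamma_2'$ iff its $V_1'$-component has isotropy $\Gamma_1'$ under $\Gamma_1'$ and $\Gamma_2' = \Gamma_2'$) as the pullback of $\tilde{Z}_d^{\Gamma_1'}(V_1', W_1')^\circ_{\gamma_1'} \times V_2'$. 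Combined with Lemma \ref{lemma:transverse-Cartesian}, the transversality of the graph of $\check{\mathfrak{s}}_{\Gamma_1'}$ to $\tilde{Z}_d^{\Gamma_1'}(\cdots)^\circ_{\gamma_1'}$ (which holds by regularity of $S_1$) propagates to transversality of the graph of $\mathrm{pr}_1^*\check{\mathfrak{s}}_{\Gamma_1'}$ to $\tilde{Z}_d^{\Gamma_1' \times \Gamma_2'}(\cdots)^\circ_{\gamma'}$ — using that the differential of a pullback along a submersion together with the extra free $V_2'$-directions (where the section is constant, hence where transversality along those directions is automatic, exactly as in the tautological-section argument of Lemma \ref{lem:stable-regular}) surjects onto the required complement.

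The main obstacle I anticipate is bookkeeping the fixed-point and isotypic decompositions of the product chart carefully enough to match the hypotheses of Proposition \ref{prop:product-compatible-gamma} verbatim — in particular being precise that the "family over $M^{\Gamma'}$" structure of the $\tilde Z$-bundles, where now $M^{\Gamma'}$ includes directions coming from $U_2^{\Gamma_2'}$, is handled by the bundle construction of Section \ref{subsec:variety-bundle} (in the guise of \eqref{eqn:Z-bundle-gamma}), and that the transition functions on the $U_2$-side act only through $GL(V_1')^{\Gamma_1'} \times GL(W_1')^{\Gamma_1'}$ on the fiber, hence preserve everything. A secondary point requiring care is ensuring that the cut-off degree $d$ chosen for ${\mc S}_1$ remains valid for the product — this follows because $V_1' \oplus V_2'$ is a faithful $\Gamma_1' \times \Gamma_2'$-representation whenever $V_1'$ is faithful for $\Gamma_1'$ and $V_2'$ is faithful for $\Gamma_2'$ (after passing to effective charts), and the dimension estimates of Lemma \ref{lem:FO-1} only improve under adding free directions; alternatively one appeals to Lemma \ref{lem:cut-degree} to absorb any necessary increase. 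Once the local statement is established, functoriality of all constructions under the chart embeddings — already used throughout Sections \ref{sec:ATW} and this section — globalizes it to the orbifold statement, so no further argument is needed there.
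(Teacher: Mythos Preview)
Your proposal is correct and follows essentially the same route as the paper's proof: reduce to the equivariant setting on product charts $(U_1 \times U_2, \Gamma_1 \times \Gamma_2)$, recognize that the lift of $\mathrm{pr}_1^* S_1$ factors through the morphism \eqref{eqn:poly-product}, invoke the Cartesian diagram of Proposition \ref{prop:product-compatible-gamma} in its fiberwise/bundle form, and then apply Lemma \ref{lemma:transverse-Cartesian} exactly as in the proof of Proposition \ref{prop:open-ness}. The paper's proof is terser but the ingredients are identical; your additional bookkeeping remarks (that only product subgroups arise as stabilizers in $V_1' \oplus V_2'$, and that the degree $d$ remains adequate) are correct and fill in details the paper leaves implicit.
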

\begin{proof}
    Again, we can check the assertion in the equivariant setting for $\Gamma_1$-equivariant triples $(M_1, E_1, S_1)$ and the $\Gamma_2$-manifold $M_2$. Then the relevant Cartesian commutative diagram is
    \beq
    \begin{tikzcd}
{\{0\} \oplus (NM_2^{\Gamma_2'} \times \tilde{Z}_{d}^{\Gamma_1'}(NM_1^{\Gamma_1'} ,\check{E}^{\Gamma_1'})^{\circ})_{\gamma_1''}} \arrow[d] \arrow[rr] &  & {\begin{aligned} \pi_2^* \mathring{E}^{\Gamma_1'} |_{M_1^{\Gamma_1'} \times M_2^{\Gamma_2'}} \oplus ((NM_1^{\Gamma_1'} \times NM_2^{\Gamma_2'}) \times_{M_1^{\Gamma_1'} \times M_2^{\Gamma_2'}} \\ \mathrm{Poly}_d^{\Gamma_1'}(NM_1^{\Gamma_1'}, \pi_2^*\check{E}^{\Gamma_1'}|_{M_1^{\Gamma_1'} \times M_2^{\Gamma_2'}})) \end{aligned}} \arrow[d] \\
{\{0\} \oplus \tilde{Z}_d^{\Gamma_1' \times \Gamma_2'}(NM_1^{\Gamma_1'} \times NM_2^{\Gamma_2'},\pi_2^*\check{E}^{\Gamma_1'}|_{M_1^{\Gamma_1'} \times M_2^{\Gamma_2'}})^{\circ}_{\gamma_1'' \times \{1\} }} \arrow[rr]      &  & {\begin{aligned}\pi_2^* \mathring{E}^{\Gamma_1'}|_{M_1^{\Gamma_1'} \times M_2^{\Gamma_2'}} \oplus ((NM_1^{\Gamma_1'} \times NM_2^{\Gamma_2'}) \times_{M_1^{\Gamma_1'} \times M_2^{\Gamma_2'}} \\ \mathrm{Poly}_d^{\Gamma_1' \times \Gamma_2'}(NM_1^{\Gamma_1'} \times NM_2^{\Gamma_2'},\pi_2^*\check{E}^{\Gamma_1'}|_{M_1^{\Gamma_1'} \times M_2^{\Gamma_2'}})) \end{aligned}}     
\end{tikzcd}
\eeq
over the $\Gamma_1' \times \Gamma_2' \leq \Gamma_1 \times \Gamma_2$-invariant locus of $M_1 \times M_2$ under the product action, which is the fiberwise version of the diagram \eqref{eqn:product-Cartesian}. The right vertical map is constructed using the bundle vesion of \eqref{eqn:pair-product}. Then as in the case of Proposition \ref{prop:open-ness}, the Cartesian nature the above diagram and Lemma \ref{lemma:transverse-Cartesian} imply the desired claim.
\end{proof}

\begin{cor}\label{cor:product-regular}
Let ${\mc S}_1: {\mc U}_1 \to {\mc E}_1$ be an FOP section which is regular. Suppose ${\mc U}_2$ is a (not necessarily effective) normally complex orbifold. Let ${\mc F}_2 \xrightarrow{\pi_{{\mc F}_2}} {\mc U}_2$ be a complex vector bundle such that the total space ${\mc F}_2$ is an effective orbifold. Then the section
\beqn
\pi_1^* {\mc S}_1 \times \tau_{{\mc F}_2}: {\mc U}_1 \times {\mc F}_2 \to \pi_1^* {\mc E}_1 \oplus \pi_{{\mc F}_2}^* {\mc F}_2,
\eeqn
where $\pi_1 : {\mc U}_1 \times {\mc F}_2 \to {\mc U}_1$ denotes the projection, is FOP regular.
\end{cor}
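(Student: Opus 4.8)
The plan is to recognize $\pi_1^*{\mc S}_1\times\tau_{{\mc F}_2}$ as being simultaneously a pullback of the regular section ${\mc S}_1$ and a stabilization, and to run the arguments behind Lemma~\ref{lem:regular-product} and Lemma~\ref{lem:stable-regular} in tandem. Unwinding the stabilization construction for derived orbifolds, the data $({\mc U}_1\times{\mc F}_2,\ \pi_1^*{\mc E}_1\oplus\pi_{{\mc F}_2}^*{\mc F}_2,\ \pi_1^*{\mc S}_1\times\tau_{{\mc F}_2})$ is exactly the stabilization of $({\mc U}_1\times{\mc U}_2,\ \mathrm{pr}_1^*{\mc E}_1,\ \mathrm{pr}_1^*{\mc S}_1)$ by the bundle $\mathrm{pr}_2^*{\mc F}_2\to{\mc U}_1\times{\mc U}_2$, where $\mathrm{pr}_1,\mathrm{pr}_2$ are the projections off ${\mc U}_1\times{\mc U}_2$ and $\mathrm{pr}_1^*{\mc S}_1$ is the pullback of ${\mc S}_1$; so the statement is morally ``Lemma~\ref{lem:regular-product} followed by Lemma~\ref{lem:stable-regular}''. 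One should first enlarge the cut-off degree of the given lift of ${\mc S}_1$ using Lemma~\ref{lem:cut-degree}, so that it exceeds the finitely many thresholds that Propositions~\ref{prop:product-compatible-gamma} and~\ref{prop:stabilization-compatible-gamma} require for the representations appearing.

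The one subtlety is that ${\mc U}_2$, and hence ${\mc U}_1\times{\mc U}_2$, is not assumed effective, whereas regularity of an FOP section (Definition~\ref{defn:orbi-regular}) is set up over effective orbifolds and both lemmas are proved via $\Gamma$-manifolds with faithful action; so neither can be quoted verbatim. I would handle this exactly as in those proofs, by checking regularity étale-locally on the \emph{total space} of the section: over each bundle chart the total space is $M_1\times F_2$, which carries a \emph{faithful} action of $\Gamma_1\times\Gamma_2$ -- because $M_1$ is a faithful $\Gamma_1$-representation (${\mc U}_1$ effective) and $F_2$ carries a faithful $\Gamma_2$-action (${\mc F}_2$ effective), the non-effective base $M_2$ entering only through the bundle $F_2\to M_2$. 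At a point $p\in M_1\times F_2$ with isotropy $\Gamma'=\Gamma_1'\times\Gamma_2'$ (stabilizers of a product action are products), the normal representation $N(M_1\times F_2)^{\Gamma'}$ at $p$ splits as $NM_1^{\Gamma_1'}\oplus NF_2^{\Gamma_2'}$, with the first summand faithful over $\Gamma_1'$ and the second over $\Gamma_2'$ (using effectivity of ${\mc F}_2$ and, on a connected component, that an element fixing a neighbourhood of the fixed locus is trivial); hence the sum is faithful over $\Gamma'$, so that Lemma~\ref{lem:FO-1} and the constructions of Section~\ref{sec:zero-loci-with} apply to all the universal zero loci occurring in the local model at $p$.

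The heart of the proof is then a chart computation of the same shape as those in the proofs of Proposition~\ref{prop:open-ness}, Lemma~\ref{lem:stable-regular} and Lemma~\ref{lem:regular-product}. One writes out the induced normally complex lift of $\pi_1^*{\mc S}_1\times\tau_{{\mc F}_2}$ near $p$ and identifies it with the image of the lift of ${\mc S}_1$ under the composition of (i) a fibrewise form of the product morphism~\eqref{eqn:poly-product}, with groups $\Gamma_1',\Gamma_2'$ and extra representation $NM_2^{\Gamma_2'}$, which records the $M_2$-directions on which $\pi_1^*{\mc S}_1$ does not depend, and (ii) a fibrewise form of the stabilization morphism~\eqref{eqn:poly-stabilization} by the $\Gamma_2'$-nontrivial part of ${\mc F}_2$, whose ``$\mathrm{Id}$''-summand is precisely the tautological section $\tau_{{\mc F}_2}$ (so that $NM_2^{\Gamma_2'}$ together with this summand reassembles $NF_2^{\Gamma_2'}$). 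Concatenating the $\gamma$-refined Cartesian diagrams~\eqref{eqn:product-Cartesian-gamma} and~\eqref{eqn:stabilization-Cartesian-gamma}, in the fibrewise versions assembled over $M_1^{\Gamma_1'}\times F_2^{\Gamma_2'}$ via the bundle construction of Section~\ref{subsec:variety-bundle}, produces for each conjugacy class $\gamma''$ of subgroups of $\Gamma'$ a single $\Gamma'$-equivariant Cartesian square having $\tilde{Z}_d^{\Gamma_1'}(NM_1^{\Gamma_1'},\check{\mc E}_1^{\Gamma_1'})^{\circ}_{\gamma_1''}$, crossed with affine space, at one corner and the universal zero locus $\tilde{Z}_d^{\Gamma'}(N(M_1\times F_2)^{\Gamma'},\check{\mc E}^{\Gamma'})^{\circ}_{\gamma''}$ at the opposite corner. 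Since ${\mc S}_1$ is FOP regular the graph of its lift is transverse to the first corner, and since the tautological section is transverse to $0$ along the $\Gamma_2'$-trivial part of $\pi_{{\mc F}_2}^*{\mc F}_2$, Lemma~\ref{lemma:transverse-Cartesian} upgrades this to transversality of the combined lift to the universal zero locus. As $p$ and $\gamma''$ were arbitrary, $\pi_1^*{\mc S}_1\times\tau_{{\mc F}_2}$ is regular.

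I expect the main difficulty to be combinatorial rather than conceptual: one must match the decomposition of the normal and bundle data over $(M_1\times F_2)^{\Gamma'}$ into trivial and nontrivial isotypic summands -- now with two independent sources of isotropy, from $\Gamma_1'$ and from $\Gamma_2'$ -- precisely with the composition of the product and stabilization morphisms of the ambient spaces, and keep track of which conjugacy classes $\gamma''$ actually occur on the zero locus, where $\tau_{{\mc F}_2}=0$ already pins down the $\Gamma_2'$-part of the isotropy; only then do Propositions~\ref{prop:product-compatible-gamma} and~\ref{prop:stabilization-compatible-gamma} genuinely concatenate into one Cartesian diagram. This bookkeeping is parallel to, but heavier than, that in the proofs cited above.
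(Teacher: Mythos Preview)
Your proposal is correct and follows the same approach as the paper: the paper's proof is a brief three-line observation that the section in question is the stabilization of $\mathrm{pr}_1^*{\mc S}_1$ (pulled back to ${\mc U}_1\times{\mc U}_2$) by the bundle ${\mc F}_2$, and then appeals to the Cartesian diagram from the proof of Lemma~\ref{lem:regular-product} together with Lemma~\ref{lem:stable-regular}. You have spelled out in greater detail the chart-level argument needed to handle the non-effectiveness of ${\mc U}_2$---which the paper only indicates by referring to the ``Cartesian diagram in'' Lemma~\ref{lem:regular-product} rather than invoking the lemma itself---but the underlying strategy is identical.
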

\begin{proof}
    This follows from the Cartesian diagram in Lemma \ref{lem:regular-product} and Lemma \ref{lem:stable-regular} because the above section is a stabilization of the section $\pi_1^* {\mc S}$ (here we abuse the notation to use $\pi_1$ to denote the projection ${\mc U}_1 \times {\mc U_2} \to {\mc U}_1$) by the bundle ${\mc F}_2$. 
\end{proof}

\begin{rem}
    Following Remark \ref{rem:faithful}, we include the above corollary since we formulated FOP regularity only for effective orbifolds.
\end{rem}

\subsection{Resolution of zero loci: isotropy-free part}\label{subsec:resolve-free}
Now we describe how to construct a resolution of the closure of the isotropy-free part of the zero locus of a regular FOP section defined over an effective normally complex orbifold via pulling back the resolution produced from Theorem \ref{thm:ATW-resolution}. We explain the construction in the equivariant setting first, which we then globalize to the orbifold setting.

Let $E \to M$ be as in Definition \ref{defn:equivariant-regular} and suppose that $S: M \to E$ is a regular $\Gamma$-equivariant smooth section which admits a normally complex lift of degree at most $d$. By definition, $S$ comes with lifts defined over the disc bundles over the fixed point loci. If we range over all proper subgroups $\Gamma' < \Gamma$, we see that the collection of open subsets
\beq\label{eqn:disc-bundles}
\{ U(M, \Gamma', r):= D_r (NM^{\Gamma'}) \setminus \big( \bigcup_{\ \Gamma' < \Gamma''} D_{\leq r/2} (NM^{\Gamma''}) \big) \}_{\{1\} \neq \Gamma' \leq \Gamma}
\eeq
together with the open subset
forms an open cover of $M$. Then we can construct a new open cover as follows: observe that the set of subgroups of $\Gamma$ forms a poset under inclusion. We can form a new poset $2^{\Gamma}$ from it, whose elements are given by a chain of subgroups 
\beq
\Gamma_{\vec{k}} := (\Gamma_1 < \cdots < \Gamma_k), \text{ where } \Gamma_i \leq \Gamma \text{ for } 1 \leq i \leq k,
\eeq
and the partial order is induced by the inverse inclusion of chains. Define
\beq
U(M, \Gamma_{\vec{k}}, r) := \bigcap_{1 \leq i \leq k} U(M, \Gamma_i, r),
\eeq
we see that $U(M, \Gamma_{\vec{k}}, r) \subset U(M, \Gamma_{\vec{k}'}, r)$ if and only if $\Gamma_{\vec{k}} \leq \Gamma_{\vec{k}'}$. Moreover, if $\Gamma_k$ and $\Gamma_{k'}$ are respectively the maximal elements in $\Gamma_{\vec{k}}$ and $\Gamma_{\vec{k}'}$, the inclusion $U(M, \Gamma_{\vec{k}}, r) \subset U(M, \Gamma_{\vec{k}'}, r)$ is equivariant under the group embedding $\Gamma_{k'} \to \Gamma_k$. Therefore, 
\beq
\{ U(M, \Gamma_{\vec{k}}, r) \}_{\Gamma_{\vec{k}} \in 2^{\Gamma}}
\eeq
defines an open cover of $M$ and for any two members $U(M, \Gamma_{\vec{k}}, r)$ and we can present $M$ as 
\beq\label{eqn:colim-M}
\colim_{\Gamma_{\vec{k}} \in 2^{\Gamma}} U(M, \Gamma_{\vec{k}}, r) 
\eeq
in which the arrows are inclusion of open subsets, which are compatible with the partial order on $2^{\Gamma}$. For $\Gamma_{\vec{k}} = (\Gamma_1 < \cdots < \Gamma_k)$, because any $U(M, \Gamma', r)$ is invariant under the $\Gamma'$-action, we see that $U(M, \Gamma_{\vec{k}}, r)$ is invariant under the $\Gamma_1$-action. By restricting the $\Gamma_1$-version of the equivariant map \eqref{eqn:gamma-prime-lift} to $U(M, \Gamma_{\vec{k}}, r) \subset U(M, \Gamma_1, r)$, we obtain a smooth map
\beq\label{eqn:gamma-prime-lift-2}
\begin{aligned}
U(M, \Gamma_{\vec{k}}, r) &\to \mathring{E}^{\Gamma_1} \oplus (NM^{\Gamma_1} \times_{M^{\Gamma_1}} \mathrm{Poly}_d^{\Gamma_1}(NM^{\Gamma_1},\check{E}^{\Gamma_1})) \\
y &\mapsto (\mathring{S}_{\Gamma_1}(y), (y, \check{\mathfrak{s}}_{\Gamma_1}(y))),
\end{aligned}
\eeq
we define $\tilde{S}^{-1}(0)_{(M, \Gamma_{\vec{k}}, r)}$ to be the fiber product
\beq\label{eqn:fiber-prod-free}
\begin{tikzcd}
{\tilde{S}^{-1}(0)_{(M, \Gamma_{\vec{k}}, r)}} \arrow[rr] \arrow[d]                  &  & {\{0\} \oplus \tilde{Z}_d^{\Gamma_1}(NM^{\Gamma_1},\check{E}^{\Gamma_1})^{\circ}} \arrow[d, "\eqref{eqn:inclusion-Z}"]                      \\
{U(M, \Gamma_{\vec{k}}, r)} \arrow[rr, "\eqref{eqn:gamma-prime-lift-2}"'] &  & {\mathring{E}^{\Gamma_1} \oplus (NM^{\Gamma_1} \times_{M^{\Gamma_1}} \mathrm{Poly}_d^{\Gamma_1}(NM^{\Gamma_1},\check{E}^{\Gamma_1}))}.
\end{tikzcd}
\eeq
Because $S$ has been assumed to be regular, $\tilde{U}(M, \Gamma_{\vec{k}}, r)$ is a smooth $\Gamma_1$-manifold with an induced normal complex structure from the ones on $M$ and $E$.

\begin{lemma}\label{lem:local-S-resolve}
The equivariant inclusion map $U(M, \Gamma_{\vec{k}}, r) \subset U(M, \Gamma_{\vec{k}'}, r)$ for $\Gamma_{\vec{k}} \leq \Gamma_{\vec{k}'}$ induces an equivariant open embedding $\tilde{S}^{-1}(0)_{(M, \Gamma_{\vec{k}}, r)} \to \tilde{S}^{-1}(0)_{(M, \Gamma_{\vec{k}'}, r)}$ which respects the normal complex structures.
\end{lemma}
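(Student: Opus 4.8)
The plan is to trace how $\tilde{S}^{-1}(0)_{(M,\Gamma_{\vec{k}},r)}$ depends on the chain $\Gamma_{\vec{k}}$ and then feed everything into the change-of-group compatibility of resolutions already packaged in Proposition~\ref{prop:open-ness}. First I would record that $\tilde{S}^{-1}(0)_{(M,\Gamma_{\vec{k}},r)}$ depends on $\Gamma_{\vec{k}}$ only through the open set $U(M,\Gamma_{\vec{k}},r)$ and through its minimal element $\Gamma_1:=\min\Gamma_{\vec{k}}$ (which dictates that the $\Gamma_1$-graph map \eqref{eqn:gamma-prime-lift}, \eqref{eqn:gamma-prime-lift-2} and the resolved variety $\tilde{Z}_d^{\Gamma_1}(NM^{\Gamma_1},\check E^{\Gamma_1})^{\circ}$ enter the fiber product \eqref{eqn:fiber-prod-free}). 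Since $\Gamma_{\vec{k}}\leq\Gamma_{\vec{k}'}$ means that $\Gamma_{\vec{k}'}$ is a subchain of $\Gamma_{\vec{k}}$, we have $U(M,\Gamma_{\vec{k}},r)\subseteq U(M,\Gamma_{\vec{k}'},r)$ and $\Gamma_1\leq\Gamma_1':=\min\Gamma_{\vec{k}'}$, with $\Gamma_1'$ occurring among the groups of $\Gamma_{\vec{k}}$; in particular $U(M,\Gamma_{\vec{k}},r)\subseteq U(M,\Gamma_1,r)\cap U(M,\Gamma_1',r)$, so every point of $U(M,\Gamma_{\vec{k}},r)$ has isotropy contained in $\Gamma_1$ and lies in a neighbourhood of the pure stratum $M^{\Gamma_1'}$ of the kind on which the change-of-group constructions operate. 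I would then split into two cases according to whether $\Gamma_1=\Gamma_1'$.

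When $\Gamma_1=\Gamma_1'$ the claim is formal: fiber products commute with restriction to open subsets, so by the very definition \eqref{eqn:fiber-prod-free} the space $\tilde{S}^{-1}(0)_{(M,\Gamma_{\vec{k}},r)}$ is the preimage of the $\Gamma_1$-invariant open set $U(M,\Gamma_{\vec{k}},r)$ under the projection $\tilde{S}^{-1}(0)_{(M,\Gamma_{\vec{k}'},r)}\to U(M,\Gamma_{\vec{k}'},r)$. This is a $\Gamma_1$-equivariant open embedding, and it respects the normal complex structures because those are induced, via the bundles $\tilde{Z}_d^{\Gamma_1}(\mathbf V,\mathbf W)^{\circ}$ of Section~\ref{subsec:variety-bundle}, from data on $M$ and $E$ that is local and unaffected by restriction.

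When $\Gamma_1<\Gamma_1'$, the key point is that over $U(M,\Gamma_{\vec{k}},r)$ the $\Gamma_1$-version of the graph map \eqref{eqn:gamma-prime-lift-2} is obtained from the $\Gamma_1'$-version by the change-of-group recipe \eqref{eqn:nc-lift-change-of-group} — this is exactly the built-in compatibility of the normally complex lifts of $S$ near the various fixed loci (cf. the discussion following Definition~\ref{defn:FOP-lift-equivariant} and the paragraph preceding Proposition~\ref{prop:open-ness}). Concretely, on $U(M,\Gamma_{\vec{k}},r)$ the $\Gamma_1$-graph map factors as the $\Gamma_1'$-graph map (restricted from $U(M,\Gamma_1',r)\supseteq U(M,\Gamma_{\vec{k}},r)$) followed by the right-hand vertical arrow of the Cartesian square \eqref{eqn:change-group-res-compatible} associated with the pair $\Gamma_1<\Gamma_1'$; moreover, since all points of $U(M,\Gamma_{\vec{k}},r)$ have isotropy $\leq\Gamma_1$, the image of this restricted $\Gamma_1'$-graph map stays inside the open subvariety $V_{+}^{\Gamma_1}\subset NM^{\Gamma_1'}$ (cf.\ \eqref{eqn: Gamma-2-open}) on which \eqref{eqn:change-group-res-compatible} is valid and on which it pulls the resolution $\tilde{Z}_d^{\Gamma_1}(NM^{\Gamma_1},\check E^{\Gamma_1})^{\circ}$ back to $\tilde{Z}_d^{\Gamma_1'}(NM^{\Gamma_1'},\check E^{\Gamma_1'})^{\circ}$. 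Chasing the fiber product \eqref{eqn:fiber-prod-free} through this Cartesian square therefore identifies $\tilde{S}^{-1}(0)_{(M,\Gamma_{\vec{k}},r)}$ canonically with the fiber product of the $\Gamma_1'$-graph map with $\tilde{Z}_d^{\Gamma_1'}(\cdot,\cdot)^{\circ}$ over $U(M,\Gamma_{\vec{k}},r)$, i.e. with the preimage of $U(M,\Gamma_{\vec{k}},r)$ in $\tilde{S}^{-1}(0)_{(M,\Gamma_{\vec{k}'},r)}$. This identification is an open embedding; it is equivariant for the inclusion $\Gamma_1\hookrightarrow\Gamma_1'$ of minimal subgroups because all arrows in \eqref{eqn:change-group-res-compatible} and both graph maps are; and it respects the normal complex structures because the complex-analytic resolution functor $F^{\circ\infty}_{an}$ renders \eqref{eqn:change-group-res-compatible} a Cartesian square of complex-analytic data (Proposition~\ref{prop:cartesian-group}). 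Together, the two cases exhaust all inclusions $\Gamma_{\vec{k}}\leq\Gamma_{\vec{k}'}$.

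The main obstacle is not a new argument but getting the matching to hold \emph{on the nose}: that the $\Gamma_1$-graph map truly is the change-of-group composite of the $\Gamma_1'$-graph map over $U(M,\Gamma_{\vec{k}},r)$ (so the manipulation of fiber products is legitimate rather than merely up to homotopy), and that $U(M,\Gamma_{\vec{k}},r)$ indeed lands in the open locus over which \eqref{eqn:change-group-res-compatible} is Cartesian — so that one may pull back only the isotropy-free component $\tilde{Z}^{\circ}$. Both are consequences of the description of $U(M,\Gamma',r)$ as a neighbourhood of the pure stratum and of the lift-compatibility already used in the proof of Proposition~\ref{prop:open-ness}, but they are precisely the points that need to be verified carefully; everything else is bookkeeping in the poset $2^{\Gamma}$ and the standard behaviour of fiber products and induced normal complex structures under restriction to open substacks.
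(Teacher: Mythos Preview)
Your proposal is correct and follows the same route as the paper's proof: the paper's one-line argument simply cites the Cartesian square \eqref{eqn:change-group-res-compatible} together with the decomposition of the normally complex lift in the form \eqref{eqn:gamma-2-lift}, which is exactly the mechanism you unpack. Your case split ($\Gamma_1=\Gamma_1'$ versus $\Gamma_1<\Gamma_1'$) and the explicit verification that $U(M,\Gamma_{\vec{k}},r)$ lands in the open locus where \eqref{eqn:change-group-res-compatible} is valid are the details the paper leaves implicit, but the underlying idea is identical.
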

\begin{proof}
This is a straightforward consequence of the Cartesian nature of the diagram \eqref{eqn:change-group-res-compatible} after decomposing the local lifts of $S$ in the form of \eqref{eqn:gamma-2-lift}.
\end{proof}

Using the colimit presentation, we see that
\beq
S^{-1}(0) = \colim_{\Gamma_{\vec{k}} \in 2^{\Gamma}} \big( S^{-1}(0) \cap U(M, \Gamma_{\vec{k}}, r) \big).
\eeq
Based on Lemma \ref{lem:local-S-resolve}, we can define a resolution of the closure of the isotropy-free locus of $S^{-1}(0)$ to be
\beq\label{eqn:resolution-equivarint}
\tilde{S}^{-1}(0):= \colim_{\Gamma_{\vec{k}} \in 2^{\Gamma}} \tilde{S}^{-1}(0)_{(M, \Gamma_{\vec{k}}, r)}.
\eeq

\begin{prop}\label{prop:equivariant-resolve}
    The smooth manifold $\tilde{S}^{-1}(0)$ is a $\Gamma$-manifold with a normal complex structure which admits a $\Gamma$-equivariant map to $S^{-1}(0)$. This map defines a diffeomorphism over the isotropy-free part of $S^{-1}(0)$. Furthermore, $\tilde{S}^{-1}(0)$ is independent of the choice of sufficiently small $r > 0$. 
    
    Finally, for any open embedding $M' \to M$ which is equivariant under an injective group homomorphism $\Gamma' \to \Gamma$ such that $E' = E|_{M'}$ and $S' = S|_{M'}$ with compatible straightenings and normal complex structures, there is an equivariant open embedding $(\tilde{S}')^{-1}(0) \to \tilde{S}^{-1}(0)$ covering $(S')^{-1}(0) \to S^{-1}(0)$ which respects the normal complex structures.
\end{prop}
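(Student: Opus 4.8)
\textbf{Proof strategy for Proposition \ref{prop:equivariant-resolve}.}

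The plan is to verify each assertion by reducing it to properties already established for the local pieces $\tilde{S}^{-1}(0)_{(M, \Gamma_{\vec{k}}, r)}$ and then assembling along the colimit presentation \eqref{eqn:colim-M}. First I would observe that the $\Gamma$-action on $M$ permutes the open cover $\{U(M, \Gamma_{\vec{k}}, r)\}_{\Gamma_{\vec{k}} \in 2^\Gamma}$: a group element $g \in \Gamma$ sends $U(M, \Gamma', r)$ isomorphically onto $U(M, g\Gamma' g^{-1}, r)$, hence sends $U(M, \Gamma_{\vec{k}}, r)$ to $U(M, g\Gamma_{\vec{k}}g^{-1}, r)$, and the assignment $\Gamma_{\vec{k}} \mapsto g\Gamma_{\vec{k}}g^{-1}$ is a poset automorphism of $2^\Gamma$. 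Since the fiber product \eqref{eqn:fiber-prod-free} is built from data ($\mathring S_{\Gamma_1}$, $\check{\mathfrak s}_{\Gamma_1}$, the bundle $\tilde Z_d^{\Gamma_1}$) that transforms equivariantly under conjugation by $g$ — using that $F^{\circ\infty}_{er}$ is functorial for the $\Gamma$-action on the universal pair, as recorded in Corollary \ref{cor:equivariance} and incorporated into the bundle construction of Lemma \ref{lemma:Z-d-bundle} — each such $g$ induces an isomorphism $\tilde{S}^{-1}(0)_{(M, \Gamma_{\vec{k}}, r)} \xrightarrow{\sim} \tilde{S}^{-1}(0)_{(M, g\Gamma_{\vec{k}}g^{-1}, r)}$ compatible with the open embeddings of Lemma \ref{lem:local-S-resolve}. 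Passing to the colimit, this assembles into a $\Gamma$-action on $\tilde{S}^{-1}(0)$, and the evident projection maps $\tilde{S}^{-1}(0)_{(M, \Gamma_{\vec{k}}, r)} \to S^{-1}(0) \cap U(M, \Gamma_{\vec{k}}, r)$ are compatible with restriction maps on both sides, so they glue to a $\Gamma$-equivariant map $\tilde{S}^{-1}(0) \to S^{-1}(0)$. That this map is a diffeomorphism over the isotropy-free part follows chart-by-chart: over the locus where the $\Gamma_1$-stabilizer acts freely (i.e. the part coming from the $\gamma'' = \{1\}$ stratum), the morphism \eqref{eqn:blowdown-inclusion} is an isomorphism onto the smooth locus by Lemma \ref{lem:FO}, so the fiber product \eqref{eqn:fiber-prod-free} agrees with $S^{-1}(0)$ there.

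The normal complex structure on $\tilde{S}^{-1}(0)$ is produced as in Section \ref{subsec:variety-bundle}: the $\Gamma_1$-equivariant complex structure on $\tilde Z_d^{\Gamma_1}(NM^{\Gamma_1}, \check E^{\Gamma_1})^\circ$ gives a normal complex structure on the total space $\tilde Z_d^{\Gamma_1}(\mathbf V, \mathbf W)^\circ$-bundle, which pulls back along the transverse map \eqref{eqn:gamma-prime-lift-2} (regularity guarantees the relevant maps meet the stratified target transversely, so the fiber product is a manifold with an induced normal complex structure on each stratum-defined piece); these local structures are compatible under the open embeddings of Lemma \ref{lem:local-S-resolve} by the Cartesian property of \eqref{eqn:change-group-res-compatible}, hence descend to a normal complex structure on the colimit. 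Independence of the choice of small $r > 0$ is handled by a standard interpolation argument: for $r' < r$ both sufficiently small the collar/exponential identifications give an equivariant open inclusion of the $r'$-version into the $r$-version which is a deformation retract onto $S^{-1}(0)$, so the resulting normally complex $\Gamma$-manifolds are canonically identified (isotopic, which suffices since we only remember the isomorphism class of normal complex structure).

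For the last, functoriality assertion: an equivariant open embedding $M' \hookrightarrow M$ along $\Gamma' \hookrightarrow \Gamma$ with compatible data carries $U(M', \Gamma'_{\vec{k}}, r)$ into $U(M, \Gamma_{\vec{k}}, r)$ for the corresponding chains (after shrinking $r$ so the disc-bundle neighborhoods match up under the open embedding, which is possible since the embedding is open and the fixed-point loci of $M'$ are the preimages of those of $M$), and on each such piece the fiber product \eqref{eqn:fiber-prod-free} for $M'$ is the pullback of the one for $M$ — this is exactly the content of the Cartesian diagrams \eqref{eqn:change-of-group-gamma} and \eqref{eqn:change-group-res-compatible} together with the fact that the local lifts of $S' = S|_{M'}$ are the restrictions of those of $S$. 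Taking colimits over the (inclusion of) poset $2^{\Gamma'} \to 2^{\Gamma}$ yields the desired equivariant open embedding $(\tilde S')^{-1}(0) \to \tilde S^{-1}(0)$ over $(S')^{-1}(0) \to S^{-1}(0)$, respecting normal complex structures by the compatibility already noted.

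\textbf{Main obstacle.} The delicate point is the bookkeeping around the open cover: one must check that the neighborhoods $U(M, \Gamma_{\vec{k}}, r)$ genuinely form an open cover (the displayed formula \eqref{eqn:disc-bundles} refers to an omitted "large stratum" open set that must be reinstated), that the inclusion $U(M, \Gamma_{\vec{k}}, r) \subset U(M, \Gamma_{\vec{k}'}, r)$ is equivariant for the group embedding between the top groups of the chains, and — most importantly — that the parameter $r$ can be chosen uniformly small enough, simultaneously for $M$ and for all the open subsets $M'$ arising in the functoriality statement, so that the exponential-map identifications and the decompositions \eqref{eqn:gamma-2-lift} underlying diagram \eqref{eqn:change-group-res-compatible} are all valid. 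Compactness (or precompactness) of $M$, which is in force throughout, is what makes this uniform choice possible, but the argument that the colimit over $2^\Gamma$ is insensitive to the choice, and transforms correctly under open embeddings, is where the real care is needed.
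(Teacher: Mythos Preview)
Your proposal is correct and follows essentially the same approach as the paper's proof: the $\Gamma$-action is obtained from the conjugation-permutation of the cover $\{U(M,\Gamma_{\vec{k}},r)\}$ lifted to the local resolutions via functoriality, the map to $S^{-1}(0)$ and the normal complex structure are assembled chartwise, and functoriality under open embeddings follows from the Cartesian diagram \eqref{eqn:change-group-res-compatible}. The paper's treatment is terser (e.g.\ independence of $r$ is dismissed as ``by inspection'' rather than via your interpolation argument, and the functoriality is called ``a straightforward consequence of the construction''), but the underlying logic is the same; your observation about the missing open set after \eqref{eqn:disc-bundles} is accurate --- it should be the free-locus piece $U(M,\{1\},r)$.
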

\begin{proof}
Note that for any $\gamma \in \Gamma$, if we define 
\beq
\gamma \Gamma_{\vec{k}} \gamma^{-1} := (\gamma \Gamma_1 \gamma^{-1} < \cdots < \gamma \Gamma_k \gamma^{-1}),
\eeq
we see that $U(M, \Gamma_{\vec{k}}, r) $ is mapped isomorphically onto $U(M, \gamma \Gamma_{\vec{k}} \gamma^{-1}, r)$ by the $\gamma$-translation. Its compatibility with the arrows in \eqref{eqn:colim-M} recovers the $\Gamma$-action on $M$. Such an isomorphism can also be lifted to an isomorphism between the vector bundles over them, and it intertwines with the normal complex structures and straightenings. As a result, the corresponding (fiberwise) universal zero loci are identified, and so are their resolutions. Then we see that the colimit presentation of $\tilde{S}^{-1}(0)$ inherits a $\Gamma$-action, which defines a smooth $\Gamma$-action on $\tilde{S}^{-1}(0)$. The existence of normal complex structure can be checked locally. Indeed, in the diagram \eqref{eqn:fiber-prod-free}, the vertical arrow \eqref{eqn:inclusion-Z} comes from a complex algebraic map, therefore respects the complex, in particular, normal complex structure. For the horizontal arrow \eqref{eqn:gamma-prime-lift-2}, it suffices to check that its linearization respects the complex structures along the $\Gamma_1$-fixed point locus. This follows from the fact that the fibers of $NM^{\Gamma_1}$ does not contain any $\Gamma_1$-trivial summand because this implies that the differential of $\check{\mathfrak{s}}_{\Gamma_1}$ is necessarily trivial along the zero section $M^{\Gamma_1}$: we are only left with the identity map along the fibers of $NM^{\Gamma_1}$, which is evidently complex linear. By definition, $\tilde{S}^{-1}(0)$ maps diffeomorphically onto $S^{-1}(0)$ over the isotropy free locus, over which the resolution does nothing. The assertion that $\tilde{S}^{-1}(0)$ is independent of $r$ follows by inspection. The functoriality of the resolution under open embeddings is a straightforward consequence of the construction.
\end{proof}

With these preparations in the equivariant setting, we can discuss the situation for orbifolds. Let ${\mc U}, {\mc E}, {\mc S}$ be as in Definition \ref{defn:orbi-regular}. Denote by $S_{\alpha}: U_{\alpha} \to E_{\alpha}$ be the pullback of ${\mc S}$ to the \'etale chart $U_{\alpha} \to {\mc U}$. Writing $U = \coprod_{\alpha} U_{\alpha}$, from the discussions in Section \ref{sec:prelim}, we can present ${\mc U}$ using the topological groupoid $[U \times_{\mc U} U \rightrightarrows U]$. Similarly, if we write $E = \coprod_{\alpha} E_{\alpha}$, there is the associated groupoid $[E \times_{\mc E} E \rightrightarrows E]$ of ${\mc E}$. Then we can consider
\beq\label{eqn:orbi-resolve}
\coprod_{\alpha} \tilde{S}_{\alpha}^{-1}(0)
\eeq
the disjoint union of the equivariant resolutions of $S_{\alpha}^{-1}(0)$. By the definition of straightenings and normal complex structures, the functoriality part of Proposition \ref{prop:equivariant-resolve} implies that the topological groupoid $[U \times_{\mc U} U \rightrightarrows U]$ restricts to $\tilde{S}^{-1}(0) \subset U$ and it produces a normally complex orbifold, which we denote by $\tilde{\mc S}^{-1}(0)$. The following is the counterpart of Proposition \ref{prop:equivariant-resolve} in the orbifold setting. We state it for completeness and its proof should be evident.

\begin{prop}\label{prop:construct-orbifold}
The smooth normally complex orbifold $\tilde{\mc S}^{-1}(0)$  depends only on the straightenings, normal complex structures on ${\mc U}$ and ${\mc E}$, and the FOP section ${\mc S}$. It admits a map onto ${\mc S}^{-1}(0)$, which is an isomorphism of orbifolds over the isotropy-free part of ${\mc S}^{-1}(0)$. 

For any open embedding ${\mc U}' \to {\mc U}$ such that ${\mc E}' = {\mc E}|_{{\mc U}'}$ and ${\mc S}' = {\mc S}|_{{\mc U}'}$ with compatible straightenings and normal complex structures, there is an equivariant open embedding $(\tilde{\mc S}')^{-1}(0) \to \tilde{\mc S}^{-1}(0)$ covering $({\mc S}')^{-1}(0) \to {\mc S}^{-1}(0)$ which respects the normal complex structures. \qed
\end{prop}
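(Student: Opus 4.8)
Proposition \ref{prop:construct-orbifold} asserts that for a regular FOP section \({\mc S}:{\mc U}\to{\mc E}\) over an effective normally complex orbifold, the resolution \(\tilde{\mc S}^{-1}(0)\) is a well-defined smooth normally complex orbifold depending only on the straightening, the normal complex structures, and \({\mc S}\) itself, equipped with a map onto \({\mc S}^{-1}(0)\) which is an isomorphism over the isotropy-free locus, and that the whole construction is functorial under open embeddings. The plan is to reduce each assertion to the corresponding equivariant statement already proven in Proposition \ref{prop:equivariant-resolve}, using the groupoid presentation of \({\mc U}\) and \({\mc E}\).

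First I would spell out the descent argument that \(\tilde{\mc S}^{-1}(0)\) is a genuine orbifold and not merely a collection of charts. Choose the \'etale atlas \(U=\coprod_\alpha U_\alpha\) and \(E=\coprod_\alpha E_\alpha\) compatible with the straightening and normal complex structures as in Definition \ref{defn:vector bundle-straighting}, and let \(S_\alpha:U_\alpha\to E_\alpha\) be the pullbacks; by Definition \ref{defn:orbi-regular} each \(S_\alpha\) is a regular equivariant FOP section of degree at most \(d\). Applying Proposition \ref{prop:equivariant-resolve} chart by chart yields \(\coprod_\alpha \tilde S_\alpha^{-1}(0)\subset U\). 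The key point is that the two projections \(U\times_{\mc U}U\rightrightarrows U\) are \'etale maps that (by the definition of normal complex structures on \({\mc U}\) and \({\mc E}\), and of the straightening) identify the pulled-back data along the two factors; on each component of \(U\times_{\mc U}U\) this is precisely an open embedding equivariant under an isomorphism of finite groups as in the functoriality clause of Proposition \ref{prop:equivariant-resolve}, applied to both projections. Hence the groupoid \([U\times_{\mc U}U\rightrightarrows U]\) restricts to one on \(\coprod_\alpha \tilde S_\alpha^{-1}(0)\), and the source and target maps remain \'etale, \(i,\iota,m\) remain smooth, and \(s\times t\) remains proper (these all being pulled back from restrictions of the corresponding maps for \(U\times_{\mc U}U\)); invoking Lemma \ref{lem:orb-groupoid} we conclude \(\tilde{\mc S}^{-1}(0)\) is a smooth orbifold. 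The normal complex structure on \(\tilde{\mc S}^{-1}(0)\) is assembled from the equivariant normal complex structures on the \(\tilde S_\alpha^{-1}(0)\) furnished by Proposition \ref{prop:equivariant-resolve}, the compatibility on overlaps again being the content of the functoriality clause applied to the two projections; this verifies the hypotheses of Definition \ref{defn:normal-complex}.

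Next I would address independence of auxiliary choices. The construction of \(\tilde S_\alpha^{-1}(0)\) as a fiber product \eqref{eqn:fiber-prod-free} over a bundle \(\tilde Z_d^{\Gamma_1}(NM^{\Gamma_1},\check E^{\Gamma_1})^\circ\) depends a priori on the cutoff \(d\) and the collar radius \(r\); Proposition \ref{prop:equivariant-resolve} already records independence of small \(r\), and Lemma \ref{lem:cut-degree} together with the Cartesian diagram of Proposition \ref{prop:cartesian-degree-gamma} shows that enlarging \(d\) produces a canonically identified resolution, so the orbifold does not depend on \(d\). Independence of the \'etale atlas follows because any two atlases of the required form admit a common refinement (Remark after Definition \ref{defn:FOP-section} and Proposition \ref{prop:orbi-chart}), and the resolution is compatible with refinement by the functoriality clause. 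This leaves only the dependence on the straightening, the normal complex structures, and \({\mc S}\) — which is exactly what the statement claims. The map \(\tilde{\mc S}^{-1}(0)\to{\mc S}^{-1}(0)\) is obtained by descending the \(\Gamma_\alpha\)-equivariant maps \(\tilde S_\alpha^{-1}(0)\to S_\alpha^{-1}(0)\) of Proposition \ref{prop:equivariant-resolve}; that it is an isomorphism of orbifolds over the isotropy-free part of \({\mc S}^{-1}(0)\) follows from the same property in the equivariant setting, since over that locus the resolution functor acts trivially. Finally, the functoriality statement for an open embedding \({\mc U}'\to{\mc U}\) with \({\mc E}'={\mc E}|_{{\mc U}'}\), \({\mc S}'={\mc S}|_{{\mc U}'}\) and compatible structures: one chooses the atlas for \({\mc U}'\) to be the restriction of that for \({\mc U}\), so that each \(\tilde S'_{\alpha}{}^{-1}(0)\) is an equivariant open suborbifold of \(\tilde S_\alpha^{-1}(0)\) by the functoriality clause of Proposition \ref{prop:equivariant-resolve}, and these open embeddings are compatible with the restricted groupoid, hence descend to an open embedding \((\tilde{\mc S}')^{-1}(0)\to\tilde{\mc S}^{-1}(0)\) covering \(({\mc S}')^{-1}(0)\to{\mc S}^{-1}(0)\) and respecting normal complex structures.

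The only genuinely non-formal point, and hence the main obstacle, is verifying that the restricted groupoid structure on \(\coprod_\alpha\tilde S_\alpha^{-1}(0)\) is well-behaved — that is, that the transition data on \(U\times_{\mc U}U\) really does carry the resolution of \(S_\alpha^{-1}(0)\) on one factor to that on the other. This is where one must use that the normal complex structures and straightenings agree on \(U\times_{\mc U}U\) after pullback along the two projections (clause (2) of Definitions \ref{defn:normal-complex}, \ref{defn:nc-bundle}, \ref{defn:vector bundle-straighting}), so that the bundles \(\tilde Z_d^{\Gamma_\bullet}(N,\check E)^\circ\) entering \eqref{eqn:fiber-prod-free} are canonically identified over overlaps; once this is in place the remaining verifications are the bookkeeping described above. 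I do not anticipate needing anything beyond Proposition \ref{prop:equivariant-resolve}, Lemma \ref{lem:orb-groupoid}, and the definitions, so the proof can be kept short, essentially a pointer to the equivariant results together with the descent argument.
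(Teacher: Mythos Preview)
Your proposal is correct and follows exactly the approach the paper intends: the paper in fact omits the proof entirely (the \qed\ after the statement), declaring it ``evident'' from the preceding paragraph, which already sketches the descent of the chartwise resolutions $\coprod_\alpha \tilde S_\alpha^{-1}(0)$ along the groupoid $[U\times_{\mc U}U\rightrightarrows U]$ via the functoriality clause of Proposition~\ref{prop:equivariant-resolve}. Your write-up simply fills in the bookkeeping (Lemma~\ref{lem:orb-groupoid}, independence of $d$ via Lemma~\ref{lem:cut-degree}, independence of atlas via common refinement) that the paper leaves implicit.
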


Next, we need a statement concerning the resolution of the zero locus of a stabilization. 

\begin{prop}\label{prop:stabilization}
Suppose that we are in the situation of Lemma \ref{lem:stable-regular}. Then the resolution of the isotropy-free part of $({\mc S}^{\tau})^{-1}(0) := (\pi_{{\mc E}'}^* {\mc S} \oplus \tau_{{\mc E}'})^{-1}(0)$ agrees with $\tilde{\mc S}^{-1}(0)$.
\end{prop}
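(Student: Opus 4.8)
The plan is to reduce the claim to the equivariant setting and invoke the compatibility of resolutions under stabilization that was set up in Section \ref{sec:zero-loci-with}, namely Proposition \ref{prop:stabilization-compatible-gamma}. First I would unwind the definitions: by construction $\tilde{\mc S}^{-1}(0)$ and the resolution of $({\mc S}^\tau)^{-1}(0)$ are obtained by gluing the local equivariant resolutions $\tilde S_\alpha^{-1}(0)$ and $\widetilde{S^\tau_\alpha}{}^{-1}(0)$ along the topological groupoids presenting ${\mc U}$ and ${\mc E}'$. Since gluing along groupoids is functorial in open embeddings (Proposition \ref{prop:equivariant-resolve}, Proposition \ref{prop:construct-orbifold}), it suffices to produce, for each \'etale chart, a $\Gamma_\alpha$-equivariant isomorphism between $\widetilde{S^\tau_\alpha}{}^{-1}(0)$ and $\tilde S_\alpha^{-1}(0)$, compatible with open embeddings, normal complex structures and the maps down to the (common) zero locus ${\mc S}^{-1}(0)={\mc S}_\tau^{-1}(0)$.

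The core of the argument is therefore the following local statement: in the setting of Lemma \ref{lem:stable-regular}, with $E\to M$ a $\Gamma$-equivariant bundle, $\pi_{E'}\colon E'\to M$ a normally complex bundle, and $S^\tau=\pi_{E'}^*S\oplus \tau_{E'}\colon E'\to \pi_{E'}^*E\oplus\pi_{E'}^*E'$, the manifold $\widetilde{S^\tau}{}^{-1}(0)$ built from the regular section $S^\tau$ is $\Gamma$-equivariantly diffeomorphic to $\tilde S^{-1}(0)$. To see this I would fix a point $p\in E'$ with isotropy $\Gamma'$; writing $\check{\mathfrak s}^\tau_{\Gamma'}=\mathrm{diag}(\pi_{E'}^*\check{\mathfrak s}_{\Gamma'},\pi_{E'}^*\tau_{E'})$ as in Equation \eqref{eqn:stablized-lift}, the defining fiber product \eqref{eqn:fiber-prod-free} for $S^\tau$ pairs the graph of $\check{\mathfrak s}^\tau_{\Gamma'}$ against $\{0\}\oplus\tilde Z_d^{\Gamma'}(N(E')^{\Gamma'}, \check E^{\Gamma'}\oplus\check{E'}^{\Gamma'})^\circ_{\gamma''}$. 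Now Proposition \ref{prop:stabilization-compatible-gamma} gives a $\Gamma'$-equivariant Cartesian square identifying $\tilde Z_d^{\Gamma'}(V,W)^\circ_{\gamma'}$ with the pullback of $\tilde Z_d^{\Gamma'}(V\oplus V',W\oplus V')^\circ_{\gamma'}$ under the stabilization map \eqref{eqn:poly-stabilization}; in the family version over $M^{\Gamma'}$ (cf. Section \ref{subsec:variety-bundle}) this says the $Z$-bundle appearing for $S^\tau$ is the pullback of the one appearing for $S$ along the evident bundle map. Since the graph of $\check{\mathfrak s}^\tau_{\Gamma'}$ factors through this stabilization map composed with the graph of $\check{\mathfrak s}_{\Gamma'}$ (up to the contractible identification $N(E')^{\Gamma'}_\perp\cong\pi_{E'}^*\check{E'}^{\Gamma'}$ and the fact that $\tau_{E'}$ is transverse to $0$ along $\pi_{E'}^*\mathring{E'}^{\Gamma'}$), an application of the elementary diagram-chase Lemma \ref{lemma:transverse-Cartesian} shows the two fiber products agree. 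Concretely, the extra $E'$-directions in $E'=M$ (the base of $\pi_{E'}^*E'$) are cut out transversely by $\tau_{E'}$, so the zero locus sits over $M\subset E'$, and on that locus $S^\tau$ restricts to $S$; the resolution therefore does not see the stabilization. Patching these local isomorphisms over the groupoid, using that the identifications are the canonical ones coming from \eqref{eqn:poly-stabilization} and hence commute with the change-of-chart maps, yields the asserted isomorphism $(\tilde{\mc S}^\tau)^{-1}(0)\cong\tilde{\mc S}^{-1}(0)$ of normally complex orbifolds over ${\mc S}^{-1}(0)$.

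I expect the main obstacle to be bookkeeping rather than conceptual: one must carefully match the splittings $N(E')^{\Gamma'}=N(E')^{\Gamma'}_\parallel\oplus N(E')^{\Gamma'}_\perp$ and $E'=\pi_{E'}^*\mathring{E'}^{\Gamma'}\oplus\pi_{E'}^*\check{E'}^{\Gamma'}$ from the proof of Lemma \ref{lem:stable-regular} with the stabilizing representation $V'$ appearing in Proposition \ref{prop:stabilization-compatible-gamma}, and verify that the degree-$1$ tautological polynomial $\mathrm{Id}_{V'}$ lands exactly in the regular locus where the relevant $Z$-variety is smooth (this is the point where the Cartesian square \eqref{eqn:stabilization-Cartesian-gamma} is genuinely used, via the smoothness of $Z_d^\Gamma(V\oplus V',V')$ near $((v,0),\mathrm{Id}_{V'})$ noted in the proof of Proposition \ref{prop:stabilization-compatible}). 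A secondary point requiring care is that the open covers $\{U(M,\Gamma_{\vec k},r)\}$ used to build $\tilde S^{-1}(0)$ must be taken compatibly for $M$ and for the total space $E'$; since the fixed loci of $E'$ sit over those of $M$ (indeed $(E')^{\Gamma'}\subset\pi_{E'}^{-1}(M^{\Gamma'})$, as observed in the proof of Lemma \ref{lem:stable-regular}), the colimit presentation \eqref{eqn:colim-M} for $E'$ refines to one matching that of $M$, and the identification is compatible with all the structure maps. Once these identifications are in place the proof is a formal consequence of Lemma \ref{lemma:transverse-Cartesian} and the functoriality already established.
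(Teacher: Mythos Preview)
Your proposal is correct and follows essentially the same approach as the paper: reduce to the equivariant local model, use the explicit stabilized lift $\check{\mathfrak s}^\tau_{\Gamma'}=\mathrm{diag}(\pi_{E'}^*\check{\mathfrak s}_{\Gamma'},\pi_{E'}^*\tau_{E'})$ from \eqref{eqn:stablized-lift}, and invoke the Cartesian stabilization square to identify the two fiber products defining the resolutions. The paper's proof is a three-line summary of exactly this argument, citing Proposition \ref{prop:stabilization-compatible} (the isotropy-free version, which is the one directly relevant here, rather than the more general \ref{prop:stabilization-compatible-gamma} you cite) and leaving the bookkeeping you spell out implicit.
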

\begin{proof}
Lemma \ref{lem:stable-regular} proves that ${\mc S}^{\tau}$ is regular. The zero loci $({\mc S}^{\tau})^{-1}(0)$ and ${\mc S}^{-1}(0)$ agree with each other. Then the compatibility of resolutions as exhibited in Proposition \ref{prop:stabilization-compatible} implies that the resolutions also coincide, by following the information provided by the lift \eqref{eqn:stablized-lift}.
\end{proof}

Finally, we formulate a proposition about products.
\begin{prop}\label{prop:product}
    Suppose that we are in the situation of Lemma \ref{lem:regular-product}. Then the resolution $\tilde{(\pi_2^* {\mc S})}^{-1}(0)$ agrees with $\tilde{{\mc S}}^{-1}(0) \times {\mc U}_2$ if ${\mc U}_2$ is effective. 
\end{prop}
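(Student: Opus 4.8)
The statement is the product-compatibility analogue of Proposition \ref{prop:stabilization} and will be proved by the same local-to-global colimit argument, using the Cartesian diagram established in Lemma \ref{lem:regular-product} in place of the stabilization diagram \eqref{eqn:stabilization-Cartesian-gamma}. First I would reduce to the equivariant setting: fix \'etale charts $U_\alpha \to {\mc U}_1$ and $U_\beta' \to {\mc U}_2$ with associated finite groups $\Gamma_1$, $\Gamma_2$ and vector bundles $E_\alpha \to U_\alpha$, so that $U_\alpha \times U_\beta'$ is an \'etale chart of ${\mc U}_1 \times {\mc U}_2$ with the product $\Gamma_1 \times \Gamma_2$-action (the effectiveness of ${\mc U}_2$ guarantees this product action is faithful, so Hypothesis of Section \ref{subsec:regular-FOP} applies and the universal zero loci have the expected dimension). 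The section to analyze is $\pi_1^* S_\alpha$, whose fiberwise normally complex lift is just $\check{\mathfrak{s}}_{\Gamma_1'}$ pulled back along the projection, independent of the $NM_2^{\Gamma_2'}$-directions. Since $(\pi_1^* S_\alpha)^{-1}(0) = S_\alpha^{-1}(0) \times U_\beta'$ as sets, the only content is that the \emph{resolution} of the closure of the isotropy-free part is the product.

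The key step is to identify, for each chain $\Gamma_{1,\vec{k}} \times \Gamma_{2,\vec{l}}$ of subgroups of $\Gamma_1 \times \Gamma_2$ (equivalently a pair of chains), the local resolved piece $\tilde{(\pi_1^* S_\alpha)}^{-1}(0)_{(M_1\times M_2,\,\Gamma_{1,\vec k}\times\Gamma_{2,\vec l},\,r)}$ with the product $\tilde{S_\alpha}^{-1}(0)_{(M_1,\Gamma_{1,\vec k},r)} \times U(M_2,\Gamma_{2,\vec l},r)$. This follows by applying Lemma \ref{lemma:transverse-Cartesian} to the graph map of $\pi_1^*\check{\mathfrak{s}}$ together with the Cartesian square in the proof of Lemma \ref{lem:regular-product} (the fiberwise version of \eqref{eqn:product-Cartesian-gamma}): the pullback of the resolution bundle $\tilde{Z}_d^{\Gamma_1' \times \Gamma_2'}(NM_1^{\Gamma_1'}\times NM_2^{\Gamma_2'}, \pi_2^*\check E^{\Gamma_1'})^\circ_{\gamma_1''\times\{1\}}$ along the map \eqref{eqn:pair-product} is precisely $NM_2^{\Gamma_2'} \times \tilde{Z}_d^{\Gamma_1'}(NM_1^{\Gamma_1'},\check E^{\Gamma_1'})^\circ_{\gamma_1''}$, and the conjugacy classes realized on the product are exactly the pairs of conjugacy classes, so only the $\gamma_1'' \times \{1\}$-strata contribute to the closure of the isotropy-free locus of $\pi_1^* S_\alpha$ over the relevant open set. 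Matching the colimit presentations \eqref{eqn:colim-M} for $M_1 \times M_2$ against the product of the colimit presentation for $M_1$ with the cover $\{U(M_2,\Gamma_{2,\vec l},r)\}$ — these are cofinal — then yields $\tilde{(\pi_1^* S_\alpha)}^{-1}(0) \cong \tilde{S_\alpha}^{-1}(0) \times M_2$ equivariantly and compatibly with normal complex structures.

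Finally, I would globalize: the functoriality of the resolution under equivariant open embeddings (Proposition \ref{prop:equivariant-resolve}) shows these local product identifications are compatible with the groupoid structure $[(U\times_{{\mc U}_1}U)\times(U'\times_{{\mc U}_2}U') \rightrightarrows U\times U']$ presenting ${\mc U}_1 \times {\mc U}_2$, hence descend to an isomorphism $\tilde{(\pi_2^* {\mc S})}^{-1}(0) \cong \tilde{{\mc S}}^{-1}(0) \times {\mc U}_2$ of normally complex orbifolds over $(\pi_2^*{\mc S})^{-1}(0) = {\mc S}^{-1}(0) \times {\mc U}_2$. I expect the main obstacle to be purely bookkeeping: keeping track of which conjugacy-class strata $\gamma''$ of the product group contribute over each chart $U(M_1,\Gamma_{1,\vec k},r) \times U(M_2,\Gamma_{2,\vec l},r)$, and checking that the stabilizer of a point $(p_1,p_2)$ under the product action is the product of stabilizers so that the isotropy-free locus of the product really is the product of the respective isotropy data — but no new analytic input beyond Lemma \ref{lem:regular-product} and Lemma \ref{lemma:transverse-Cartesian} is needed.
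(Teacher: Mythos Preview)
Your proposal is correct and follows essentially the same approach as the paper: invoke the regularity from Lemma \ref{lem:regular-product}, use the Cartesian diagram established there (the bundle version of \eqref{eqn:product-Cartesian-gamma}) to match the local resolved pieces with products, and then observe that effectiveness of ${\mc U}_2$ forces the isotropy-free locus of the product to be the product of isotropy-free loci, so the closure recovers the full ${\mc U}_2$-factor. The paper's proof is only three sentences and leaves the colimit bookkeeping implicit, whereas you have spelled it out, but the content is the same.
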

\begin{proof}
    By Lemma \ref{lem:regular-product}, we see that the pullback $\pi_2^* {\mc S}$ is regular. The Cartesian nature of the diagram in the proof implies the compatibility of resolutions. When ${\mc U}_2$ is effective, the isotropy-free part of $(\pi_2^* {\mc S})^{-1}(0)$ agrees with the isotropy-free part of ${\mc S}^{-1}(0)$ with the isotropy-free locus of ${\mc U}_2$ so we get the whole ${\mc U}_2$-factor.
\end{proof}

\subsection{Resolution with prescribed isotropy}
Following the discussion in Section \ref{sec:zero-loci-with}, we explain how to extend the construction in the previous subsection to obtain a resolution of the (closure of) the zero locus of a regular FOP section with a prescribed isomorphism class of stabilizer groups. We will not use the results in this subsection in the final discussion, but we include it here to get the reader prepared with the construction in the next subsection which may seem confusing for the first reading.

Following the equivariant set up in Section \ref{subsec:resolve-free}, the additional choice to make here is a finite group $\tilde{\Gamma}$ which is isomorphic to a subgroup of $\Gamma$. Then we can consider the open subsets $U(M, \Gamma', r)$ from Equation \eqref{eqn:disc-bundles} with the requirement that $\tilde{\Gamma}$ is isomorphic to a subgroup of $\Gamma'$. Introducing a new notation to imply the dependence on $\tilde{\Gamma}$, we will denote such an open subset by
\beq
U(M, \Gamma', r)_{\tilde{\Gamma}}.
\eeq
The union of all such $U(M, \Gamma', r)_{\tilde{\Gamma}}$ defines an open subset of $M$ which contains the fixed point locus under any subgroup of $\Gamma$ which is isomorphic to $\tilde{\Gamma}$. We can similarly define 
\beq
U(M, \Gamma_{\vec{k}}, r)_{\tilde{\Gamma}} := \bigcap_{1 \leq i \leq k} U(M, \Gamma_i, r)_{\tilde{\Gamma}}
\eeq
for chains of subgroups $\Gamma_{\vec{k}} = (\Gamma_1 < \cdots < \Gamma_k)$ such that $\Gamma_1$ admits a subgroup isomorphic to $\tilde{\Gamma}$. Using the map \eqref{eqn:gamma-prime-lift-2}, instead of taking the fiber product as in Diagram \eqref{eqn:fiber-prod-free}, we can define $\tilde{S}^{-1}(0)_{(M, \Gamma_{\vec{k}}, r), \tilde{\gamma} }$ to be the fiber product
\beq\label{eqn:fiber-prod-gamma}
\begin{tikzcd}
{\tilde{S}^{-1}(0)_{(M, \Gamma_{\vec{k}}, r), \tilde{\gamma}}} \arrow[rr] \arrow[d]                  &  & {\{0\} \oplus \tilde{Z}_d^{\Gamma_1}(NM^{\Gamma_1},\check{E}^{\Gamma_1})^{\circ}_{\gamma'}} \arrow[d, "\eqref{eqn:inclusion-Z}"]                      \\
{U(M, \Gamma_{\vec{k}}, r)} \arrow[rr, "\eqref{eqn:gamma-prime-lift-2}"'] &  & {\mathring{E}^{\Gamma_1} \oplus (NM^{\Gamma_1} \times_{M^{\Gamma_1}} \mathrm{Poly}_d^{\Gamma_1}(NM^{\Gamma_1},\check{E}^{\Gamma_1}))},
\end{tikzcd}
\eeq
where $\gamma'$ stands for the conjugacy class of subgroups of $\Gamma_1$ conjugate to the chosen subgroup isomorphic to $\tilde{\Gamma}$, and $\tilde{Z}_d^{\Gamma_1}(NM^{\Gamma_1},\check{E}^{\Gamma_1})^{\circ}_{\gamma'}$ is the bundle from Equation \eqref{eqn:Z-bundle-gamma}. Similar to Lemma \ref{lem:local-S-resolve}, the compatibility of the resolutions (now we view $\gamma'$ representing a conjugacy class of subgroups in this chain of finite groups) ensures that the colimit
\beq\label{eqn:resolution-equivarint-gamma}
\tilde{S}^{-1}(0)_{\tilde{\gamma}}:= \colim_{\Gamma_{\vec{k}}} \tilde{S}^{-1}(0)_{(M, \Gamma_{\vec{k}}, r), \tilde{\gamma} }
\eeq
is well-defined and it admits a smooth map onto the closure of the subset of $S^{-1}(0)$ whose isotropy group under the $\Gamma$-action is isomorphic to $\tilde{\Gamma}$. Similar to Proposition \ref{prop:equivariant-resolve}, we conclude the following.

\begin{prop}\label{prop:equivariant-resolve-gamma}
    The smooth manifold $\tilde{S}^{-1}(0)_{\tilde{\gamma}}$ is a $\Gamma$-manifold with a normal complex structure which admits a $\Gamma$-equivariant map to $S^{-1}(0)$. This map defines a diffeomorphism over the part of $S^{-1}(0)$ whose isotropy group under the $\Gamma$-action is isomorphic to $\tilde{\Gamma}$. Furthermore, $\tilde{S}^{-1}(0)_{\tilde{\gamma}}$ is independent of the choice of sufficiently small $r > 0$. 
    
    For any open embedding $M' \to M$ which is equivariant under an injective group homomorphism $\Gamma' \to \Gamma$ such that $E' = E|_{M'}$ and $S' = S|_{M'}$ with compatible straightenings and normal complex structures, there is an equivariant open embedding $(\tilde{S}')^{-1}(0)_{\tilde{\gamma}} \to \tilde{S}^{-1}(0)_{\tilde{\gamma}}$ which respects the normal complex structures, provided that $\tilde{\Gamma}$ is isomorphic to a subgroup of $\Gamma'$. \qed
\end{prop}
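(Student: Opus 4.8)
The argument is a verbatim adaptation of the proof of Proposition \ref{prop:equivariant-resolve}, with the isotropy-free resolution $\tilde Z_d^{\Gamma_1}(NM^{\Gamma_1},\check E^{\Gamma_1})^{\circ}$ replaced throughout by its prescribed-isotropy variant $\tilde Z_d^{\Gamma_1}(NM^{\Gamma_1},\check E^{\Gamma_1})^{\circ}_{\gamma'}$ from Equation \eqref{eqn:Z-bundle-gamma}, and the colimit \eqref{eqn:resolution-equivarint-gamma} taken over the sub-poset of $2^{\Gamma}$ consisting of chains $\Gamma_{\vec k} = (\Gamma_1 < \cdots < \Gamma_k)$ for which $\Gamma_1$ admits a subgroup isomorphic to $\tilde\Gamma$. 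First I would record that this sub-poset is non-empty and that the open sets $U(M,\Gamma_{\vec k},r)_{\tilde\Gamma}$ still cover a neighbourhood of every fixed-point locus $M^{\Gamma'}$ with $\Gamma'$ isomorphic to a group containing a copy of $\tilde\Gamma$, so that the colimit computes the closure of the locus of $S^{-1}(0)$ with isotropy isomorphic to $\tilde\Gamma$, exactly as asserted after \eqref{eqn:resolution-equivarint-gamma}.

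\textbf{The $\Gamma$-action.} For $\gamma\in\Gamma$, translation by $\gamma$ carries $U(M,\Gamma_{\vec k},r)_{\tilde\Gamma}$ isomorphically onto $U(M,\gamma\Gamma_{\vec k}\gamma^{-1},r)_{\tilde\Gamma}$ (the condition that $\Gamma_1$ contain a copy of $\tilde\Gamma$ is conjugation-invariant), and this isomorphism lifts to an isomorphism of the $\Gamma_1$-equivariant bundles $NM^{\Gamma_1}$, $\mathring E^{\Gamma_1}$, $\check E^{\Gamma_1}$ onto their $\gamma\Gamma_1\gamma^{-1}$-counterparts, intertwining the normal complex structures and the straightenings. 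Consequently the induced isomorphism of the fiber products \eqref{eqn:fiber-prod-gamma} identifies $\tilde Z_d^{\Gamma_1}(NM^{\Gamma_1},\check E^{\Gamma_1})^{\circ}_{\gamma'}$ with $\tilde Z_d^{\gamma\Gamma_1\gamma^{-1}}(\cdots)^{\circ}_{\gamma\gamma'\gamma^{-1}}$; here one invokes Corollary \ref{cor:equivariance} so that the conjugation automorphism of the pair underlying the universal zero locus lifts canonically to its ATW (resp.\ $F^{\circ\infty}_{an}$) resolution. The compatibility of these identifications with the structure maps of \eqref{eqn:colim-M} then endows $\tilde S^{-1}(0)_{\tilde\gamma}$ with a smooth $\Gamma$-action whose restriction to each chart recovers the $\Gamma$-action on $M$ under the blow-down map; the normal complex structure is glued from the fiberwise complex structures provided by Lemma \ref{lemma:Z-d-bundle} applied with the variant \eqref{eqn:Z-bundle-gamma}, and is checked to be well-defined locally.

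\textbf{Diffeomorphism on the prescribed stratum and independence of $r$.} Over the open subset of $S^{-1}(0)$ on which the isotropy group under $\Gamma$ is already isomorphic to $\tilde\Gamma$, each relevant stratum $Z_d^{\Gamma_1}(NM^{\Gamma_1},\check E^{\Gamma_1})^{\circ}_{\gamma'}$ of the universal zero locus is smooth by Lemma \ref{lem:FO-1} (here the faithfulness hypothesis on the $\Gamma$-action and the sufficiently large choice of $d$ enter), so the resolution functor acts as the identity there; combined with the regularity of $S$ this shows the map $\tilde S^{-1}(0)_{\tilde\gamma}\to S^{-1}(0)$ is a $\Gamma$-equivariant diffeomorphism onto that stratum. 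Independence of the choice of sufficiently small $r>0$ follows by inspecting \eqref{eqn:disc-bundles}: shrinking $r$ replaces the colimit system by a cofinal subsystem with the same colimit, exactly as in Proposition \ref{prop:equivariant-resolve}.

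\textbf{Functoriality.} Given an equivariant open embedding $M'\to M$ along $\Gamma'\hookrightarrow\Gamma$ with $\tilde\Gamma$ isomorphic to a subgroup of $\Gamma'$, compatibility of straightenings and normal complex structures gives, chart by chart, open embeddings $U(M',\Gamma_{\vec k},r)_{\tilde\Gamma}\to U(M,\Gamma_{\vec k},r)_{\tilde\Gamma}$, and the induced maps on the fiber products \eqref{eqn:fiber-prod-gamma} are open embeddings respecting normal complex structures; passing to the colimit produces the asserted equivariant open embedding $(\tilde S')^{-1}(0)_{\tilde\gamma}\to\tilde S^{-1}(0)_{\tilde\gamma}$. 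The one point requiring genuine input — and the step I expect to be the main obstacle — is the analogue of Lemma \ref{lem:local-S-resolve}: that passing along a link $\Gamma_{\vec k}\le\Gamma_{\vec k'}$ in the poset, i.e.\ restricting from a subgroup $\Gamma_1$ to $\Gamma_2<\Gamma_1$, matches $\tilde S^{-1}(0)_{(M,\Gamma_{\vec k},r),\tilde\gamma}$ with an open piece of $\tilde S^{-1}(0)_{(M,\Gamma_{\vec k'},r),\tilde\gamma}$. This is precisely the content of the $\Gamma_2$-equivariant Cartesian square \eqref{eqn:change-of-group-gamma} of Proposition \ref{prop:cartesian-group-gamma}, once one observes that the $\Gamma_1$-conjugacy class $\gamma_1'$ of a subgroup isomorphic to $\tilde\Gamma$ restricts, along $\Gamma_2\hookrightarrow\Gamma_1$, to the corresponding $\Gamma_2$-conjugacy class $\gamma_2'$ over the open locus $V^{\Gamma_2}_+$ where the argument takes place; the verification is the bookkeeping of which conjugacy classes of subgroups survive under restriction, and then Lemma \ref{lemma:transverse-Cartesian} transports the regularity of $S$ along the Cartesian square, just as in the proof of Proposition \ref{prop:open-ness}.
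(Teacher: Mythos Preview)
Your proof is correct and follows exactly the approach the paper intends: the statement ends with \qed\ and the paper merely says ``Similar to Proposition \ref{prop:equivariant-resolve}, we conclude the following,'' so your detailed adaptation---replacing $\tilde Z_d^{\Gamma_1}(\cdots)^{\circ}$ by $\tilde Z_d^{\Gamma_1}(\cdots)^{\circ}_{\gamma'}$ and invoking Proposition~\ref{prop:cartesian-group-gamma} for the compatibility along chains---is precisely what the authors leave implicit. You have correctly identified the only nontrivial ingredient (the prescribed-isotropy Cartesian square \eqref{eqn:change-of-group-gamma}) and supplied more detail than the paper itself provides.
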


As before, we can globalize the discussion to the orbifold setting: given the finite group $\tilde{\Gamma}$, we can consider the analog of \eqref{eqn:orbi-resolve}
\beq\label{eqn:orbi-resolve-gamma}
\coprod_{\alpha} \tilde{S}_{\alpha}^{-1}(0)_{\tilde{\gamma}}
\eeq
which can be patched together to define an orbifold, denoted by $\tilde{\mc S}^{-1}(0)_{\tilde{\gamma}}$. Then the following statement holds for the same reasons as before.

\begin{prop}\label{prop:resolve-gamma}
    The smooth normally complex orbifold $\tilde{\mc S}^{-1}(0)_{\tilde{\gamma}}$  depends only on the straightenings, normal complex structures on ${\mc U}$ and ${\mc E}$, and the regular FOP section ${\mc S}$.

    For any open embedding ${\mc U}' \to {\mc U}$ such that ${\mc E}' = {\mc E}|_{{\mc U}'}$ and ${\mc S}' = {\mc S}|_{{\mc U}'}$ with compatible straightenings and normal complex structures, there is an equivariant open embedding $(\tilde{\mc S}')^{-1}(0)_{\tilde{\gamma}} \to \tilde{\mc S}^{-1}(0)_{\tilde{\gamma}}$ which respects the normal complex structures. 

    If we are in the situation of Lemma \ref{lem:stable-regular}, then the resolution $({\mc S}^{\tau})^{-1}(0)_{\tilde{\gamma}} := (\pi_{{\mc E}'}^* {\mc S} \oplus \tau_{{\mc E}'})^{-1}(0)_{\tilde{\gamma}}$ agrees with $\tilde{\mc S}^{-1}(0)_{\tilde{\gamma}}$. \qed
\end{prop}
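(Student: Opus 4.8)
The plan is to reduce every assertion to its equivariant counterpart, Proposition \ref{prop:equivariant-resolve-gamma}, exactly as Proposition \ref{prop:construct-orbifold} and Proposition \ref{prop:stabilization} were deduced from Proposition \ref{prop:equivariant-resolve} and Proposition \ref{prop:stabilization-compatible}. First I would fix an \'etale atlas $U = \coprod_\alpha U_\alpha \to {\mc U}$ of the form required in Definition \ref{defn:FOP-section}, together with the induced atlas $E = \coprod_\alpha E_\alpha \to {\mc E}$, and denote by $S_\alpha : U_\alpha \to E_\alpha$ the pullback of ${\mc S}$. Since ${\mc S}$ is a regular FOP section, each $S_\alpha$ is a regular $\Gamma_\alpha$-equivariant FOP section in the sense of Definition \ref{defn:equivariant-regular}, so Proposition \ref{prop:equivariant-resolve-gamma} produces the $\Gamma_\alpha$-manifolds $\tilde{S}_\alpha^{-1}(0)_{\tilde{\gamma}}$ with their normal complex structures. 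The functoriality clause of that proposition --- applied to the two projections $U \times_{\mc U} U \rightrightarrows U$, which are open embeddings equivariant for the relevant group injections and which carry $E$, $S$, the straightenings, and the normal complex structures to one another --- shows that the topological groupoid $[U \times_{\mc U} U \rightrightarrows U]$ restricts to $\coprod_\alpha \tilde{S}_\alpha^{-1}(0)_{\tilde{\gamma}}$, and the resulting stack is the normally complex orbifold $\tilde{\mc S}^{-1}(0)_{\tilde{\gamma}}$. Its independence of the remaining auxiliary data follows because, by Proposition \ref{prop:equivariant-resolve-gamma}, the equivariant construction does not depend on the radius $r$, and any two \'etale atlases of the form in Definition \ref{defn:FOP-section} admit a common refinement of the same form (cf. the remark following Definition \ref{defn:FOP-section}), over which the two groupoid presentations yield isomorphic orbifolds by the same functoriality.

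For the functoriality under open embeddings, an open embedding ${\mc U}' \to {\mc U}$ with ${\mc E}' = {\mc E}|_{{\mc U}'}$, ${\mc S}' = {\mc S}|_{{\mc U}'}$, and compatible straightenings and normal complex structures can, after refining atlases, be presented chart-by-chart by equivariant open embeddings of the defining charts of ${\mc U}'$ into suitable charts of ${\mc U}$, compatible with all auxiliary data. The functoriality clause of Proposition \ref{prop:equivariant-resolve-gamma} then provides equivariant open embeddings of the corresponding resolutions, compatible with the restricted groupoid structures; these patch to the required open embedding $(\tilde{\mc S}')^{-1}(0)_{\tilde{\gamma}} \to \tilde{\mc S}^{-1}(0)_{\tilde{\gamma}}$ respecting normal complex structures.

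For the stabilization statement, Lemma \ref{lem:stable-regular} first guarantees that ${\mc S}^{\tau} = \pi_{{\mc E}'}^* {\mc S} \oplus \tau_{{\mc E}'}$ is a regular FOP section, so $(\tilde{\mc S}^{\tau})^{-1}(0)_{\tilde{\gamma}}$ is defined. The zero loci $({\mc S}^{\tau})^{-1}(0)$ and ${\mc S}^{-1}(0)$ coincide together with their stratifications by isotropy type, so it suffices to compare the resolutions locally. Working over an \'etale chart and decomposing the normally complex lift of ${\mc S}^{\tau}$ as the diagonal matrix $\mathrm{diag}(\pi_{E'}^* \check{\mathfrak{s}}_{\Gamma'}, \pi_{E'}^* \tau_{E'})$ of \eqref{eqn:stablized-lift}, the $\gamma$-version of the stabilization square \eqref{eqn:stabilization-Cartesian-gamma} of Proposition \ref{prop:stabilization-compatible-gamma}, together with the transversality of the tautological section to $0$ along the $\Gamma'$-trivial part, identifies the defining fiber products \eqref{eqn:fiber-prod-gamma} for ${\mc S}^{\tau}$ and ${\mc S}$ via Lemma \ref{lemma:transverse-Cartesian}, exactly as in the proof of Proposition \ref{prop:stabilization}. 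Passing to the colimit over chains of subgroups in $2^{\Gamma}$ and then patching over the atlas yields $(\tilde{\mc S}^{\tau})^{-1}(0)_{\tilde{\gamma}} \cong \tilde{\mc S}^{-1}(0)_{\tilde{\gamma}}$.

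The only point requiring care --- and the one the surrounding discussion anticipates --- is the verification that the chart-local resolutions glue consistently along $U \times_{\mc U} U$ and that this gluing is stable under refining the cut-off degree. This is precisely what the prescribed-isotropy change-of-degree and change-of-group compatibilities, Proposition \ref{prop:cartesian-degree-gamma} and Proposition \ref{prop:cartesian-group-gamma}, supply, replacing the role played by the canonical Whitney stratification in \cite{Bai_Xu_2022}. Consequently the gluing argument is formally identical to the isotropy-free case treated in Section \ref{subsec:resolve-free}, and no genuinely new geometric input is needed beyond bookkeeping with conjugacy classes $\gamma'$, $\tilde{\gamma}$ of subgroups. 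I therefore expect the proof to be essentially a transcription of the arguments of Section \ref{subsec:resolve-free} with the prescribed-isotropy decomposition carried along throughout.
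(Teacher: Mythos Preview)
Your proposal is correct and matches the paper's approach exactly: the paper omits the proof entirely (marking the proposition with \qed), stating only that ``the following statement holds for the same reasons as before,'' i.e., by transcribing the isotropy-free arguments of Section \ref{subsec:resolve-free} and Proposition \ref{prop:stabilization} with the prescribed-isotropy inputs (Propositions \ref{prop:equivariant-resolve-gamma}, \ref{prop:cartesian-degree-gamma}, \ref{prop:cartesian-group-gamma}, \ref{prop:stabilization-compatible-gamma}) substituted throughout. Your write-up is in fact more detailed than what the paper provides.
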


\subsection{Resolution of prescribed isotropy type}
In the previous subsection, we discussed how to resolve the zero loci of FOP regular sections with prescribed stabilizers. We need a slight refinement, which allows us to further decompose such resolutions into pieces indexed by certain representation-theoretic data. This will be used to ensure the module map property of the splitting map to be described in the next section.

\begin{defn}
    An \emph{isotropy type} is a triple
    \beq
    \tilde{\boldsymbol{\gamma}} = (\tilde{\Gamma}, \tilde{V}, \tilde{W})
    \eeq
    where $\tilde{\Gamma}$ is a finite group, while $\tilde{V}$ and $\tilde{W}$ are 
    finite-dimensional complex $\tilde{\Gamma}$-representations which do not contain any trivial subrepresentation (of non-trivial rank).  
    An isomorphism of isotropy types means an isomorphism of groups which intertwine the corresponding representations.
\end{defn}

Given a finite group $\Gamma$, we can consider the set of isotropy types where the group $\tilde{\Gamma}$ is a subgroup of $\Gamma$. We say this is the set of isotropy types \emph{subordinate to} $\Gamma$. This set has three features.
\begin{enumerate}
    \item It admits a conjugation action by $\Gamma$: for any $\gamma \in \Gamma$ and an isotropy type $(\tilde{\Gamma}, \tilde{V}, \tilde{W})$ with $\tilde{\Gamma} \leq \Gamma$, we can form a new isotropy type with group given by $\gamma \tilde{\Gamma} \gamma^{-1}$ and the representations are induced by the conjugation action. Therefore, in this setting, it makes sense to talk about \emph{conjugacy classes} of isotropy types.
    \item This set has a natural partial order. Namely, we declare that 
    \beq
    (\tilde{\Gamma}_2, \tilde{V}_2, \tilde{W}_2) \leq (\tilde{\Gamma}_1, \tilde{V}_1, \tilde{W}_1)
    \eeq
    if $\tilde{\Gamma}_2 \leq \tilde{\Gamma}_1$, and if we restrict the representations $\tilde{V}_1$ and $\tilde{W}_1$ along the subgroup $\tilde{\Gamma}_2$ and write the decomposition we had before as
    \beq
    \tilde{V}_1 = \mathring{\tilde{V}}_1^{\tilde{\Gamma}_2} \oplus \check{\tilde{V}}_1^{\tilde{\Gamma}_2}, \quad \quad \quad \tilde{W}_1 = \mathring{\tilde{W}}_1^{\tilde{\Gamma}_2} \oplus \check{\tilde{W}}_1^{\tilde{\Gamma}_2},
    \eeq
    then we have $\tilde{V}_2 \cong \check{\tilde{V}}_1^{\tilde{\Gamma}_2}$ and $\tilde{W}_2 \cong \check{\tilde{W}}_1^{\tilde{\Gamma}_2}$.
    \item There is a \emph{stabilization} operation. Namely, given $(\tilde{\Gamma}, \tilde{V}, \tilde{W})$, if we have another finite-dimensional complex $\tilde{\Gamma}$-representation $\tilde{V}'$ without trivial summands, 
    \beq
    (\tilde{\Gamma}, \tilde{V} \oplus \tilde{V}', \tilde{W} \oplus \tilde{V}')
    \eeq
    is another well-defined isotropy type. 
\end{enumerate}

In the more abstract setting, where the group $\tilde{\Gamma}$ does not necessarily arise from a subgroup of a given group, we can consider the collection of all isomorphism classes of isotropy types. This set inherits the last two features listed above.
\begin{enumerate}
    \item We have that $\tilde{\boldsymbol{\gamma}}_2 \leq \tilde{\boldsymbol{\gamma}}_1 $, for isomorphism classes $\tilde{\boldsymbol{\gamma}}_2$ and $\tilde{\boldsymbol{\gamma}}_1$, if and only if the group in $\tilde{\boldsymbol{\gamma}}_2$ admits a embedding as a subgroup of the group in $\tilde{\boldsymbol{\gamma}}_1$, and the representations pull back as described above.
    \item We can stabilize an isomorphism class of isotropy types $\tilde{\boldsymbol{\gamma}} = [(\tilde{\Gamma}, \tilde{V}, \tilde{W})]$ by finite-dimensional complex $\tilde{\Gamma}$-representation $\tilde{V}'$ without trivial summands, which gives arise to a well-defined isomorphism classes.
\end{enumerate}

In either case, we call a class from the set of isotropy types modulo the equivalence relation generated by stabilizations a \emph{stable isotropy type}.

Geometrically, such consideration naturally shows up as follows. For the $\Gamma$-equivariant version, continuing the setup in Section \ref{subsec:regular-FOP}, we consider a $\Gamma$-equivariant $E \to M$. Given a subgroup $\Gamma' \leq \Gamma$ and any component of $M^{\Gamma'}$, the fibers of $NM^{\Gamma'}$ and $\check{E}^{\Gamma'}$ are naturally complex $\Gamma'$-representations without trivial summands. Therefore, we can further decompose the submanifold of $M$ whose isotropy group is $\Gamma'$ into a union indexed by isotropy types whose underlying group is $\Gamma'$. Accordingly, by shrinking $r$ if necessary, we can refine the open subsets from \eqref{eqn:disc-bundles} into 
\beq
U(M, \boldsymbol{\gamma}', r),
\eeq
arising from (punctured) disc bundles over points with prescribed isotropy type $\boldsymbol{\gamma}'$. Then given an isotropy type $\tilde{\boldsymbol{\gamma}}$ isomorphic to one which is subordinate to $\Gamma$, we can consider the open subsets $U(M, \boldsymbol{\gamma}', r)$ such that $\tilde{\boldsymbol{\gamma}} \leq \boldsymbol{\gamma}'$. We can similarly define 
\beq
U(M, \boldsymbol{\gamma}_{\vec{k}}, r)_{\tilde{\boldsymbol{\gamma}}} := \bigcap_{1 \leq i \leq k} U(M, \boldsymbol{\gamma}_i, r)
\eeq
for chains of isotropy types $\boldsymbol{\gamma}_{\vec{k}} = (\boldsymbol{\gamma}_1 < \cdots < \boldsymbol{\gamma}_k)$ such that $\tilde{\boldsymbol{\gamma}} \leq \boldsymbol{\gamma}_1$. Then we obtain the colimit
\beq\label{eqn:resolution-equivarint-isotropy}
\tilde{S}^{-1}(0)_{\tilde{\boldsymbol{\gamma}}}:= \colim_{\boldsymbol{\gamma}_{\vec{k}}} \tilde{S}^{-1}(0)_{(M, \boldsymbol{\gamma}_{\vec{k}}, r), \tilde{\boldsymbol{\gamma}} }
\eeq
where $\tilde{S}^{-1}(0)_{(M, \boldsymbol{\gamma}_{\vec{k}}, r), \tilde{\boldsymbol{\gamma}} }$ is obtained from the pullback diagram \eqref{eqn:fiber-prod-gamma} replacing the bottom left entry by the open subset $U(M, \boldsymbol{\gamma}_{\vec{k}}, r)_{\tilde{\boldsymbol{\gamma}}}$.
It is well-defined and it admits a smooth map onto the closure of the subset of $S^{-1}(0)$ whose isotropy type under the $\Gamma$-action is isomorphic to $\tilde{\boldsymbol{\gamma}}$.

\begin{prop}\label{prop:equivariant-resolve-bold-gamma}
    The smooth manifold $\tilde{S}^{-1}(0)_{\tilde{\boldsymbol{\gamma}}}$ is a $\Gamma$-manifold with a normal complex structure which admits a $\Gamma$-equivariant map to $S^{-1}(0)$. This map defines a diffeomorphism over the part of $S^{-1}(0)$ whose isotropy type under the $\Gamma$-action is isomorphic to $\tilde{\boldsymbol{\gamma}}$. Furthermore, $\tilde{S}^{-1}(0)_{\tilde{\gamma}}$ is independent of the choice of sufficiently small $r > 0$. 
    
    For any open embedding $M' \to M$ which is equivariant under an injective group homomorphism $\Gamma' \to \Gamma$ such that $E' = E|_{M'}$ and $S' = S|_{M'}$ with compatible straightenings and normal complex structures, there is an equivariant open embedding $(\tilde{S}')^{-1}(0)_{\tilde{\boldsymbol{\gamma}}} \to \tilde{S}^{-1}(0)_{\tilde{\boldsymbol{\gamma}}}$ which respects the normal complex structures, provided that $\tilde{\boldsymbol{\gamma}}$ is isomorphic to an isotropy type subordinate to $\Gamma'$. \qed
\end{prop}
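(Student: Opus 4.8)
\textbf{Proof strategy for Proposition \ref{prop:equivariant-resolve-bold-gamma}.}

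The plan is to follow essentially verbatim the argument used for Proposition \ref{prop:equivariant-resolve} and its refinement Proposition \ref{prop:equivariant-resolve-gamma}, tracking the extra bookkeeping needed to restrict attention to a fixed stable isotropy type $\tilde{\boldsymbol{\gamma}}$. First I would record that the colimit defining $\tilde{S}^{-1}(0)_{\tilde{\boldsymbol{\gamma}}}$ in Equation \eqref{eqn:resolution-equivarint-isotropy} is well-posed: this requires the analogue of Lemma \ref{lem:local-S-resolve}, namely that for a refinement $\boldsymbol{\gamma}_{\vec{k}} \leq \boldsymbol{\gamma}_{\vec{k}'}$ of chains of isotropy types, the inclusion $U(M,\boldsymbol{\gamma}_{\vec{k}},r)_{\tilde{\boldsymbol{\gamma}}} \subset U(M,\boldsymbol{\gamma}_{\vec{k}'},r)_{\tilde{\boldsymbol{\gamma}}}$ lifts to an equivariant open embedding on fiber products. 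Exactly as before, this follows from the Cartesian diagram \eqref{eqn:change-group-res-compatible}: when passing from a larger stabilizer $\Gamma_1$ to a smaller one $\Gamma_2$ along the chain, the local lifts of $S$ decompose via \eqref{eqn:gamma-2-lift}, and the relevant compatibility of resolutions for the bundle version of $\tilde{Z}_d$ with prescribed conjugacy class is precisely Proposition \ref{prop:cartesian-group-gamma}; the only new input is that the prescribed \emph{isotropy type} (not just the conjugacy class of subgroups) restricts correctly along $\Gamma_2 < \Gamma_1$, which is built into the definition of the partial order on isotropy types.

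Next, I would verify the $\Gamma$-action on $\tilde{S}^{-1}(0)_{\tilde{\boldsymbol{\gamma}}}$ exactly as in the proof of Proposition \ref{prop:equivariant-resolve}: for $\gamma \in \Gamma$ conjugation permutes chains of isotropy types $\boldsymbol{\gamma}_{\vec{k}} \mapsto \gamma \boldsymbol{\gamma}_{\vec{k}} \gamma^{-1}$, carries $U(M,\boldsymbol{\gamma}_{\vec{k}},r)_{\tilde{\boldsymbol{\gamma}}}$ isomorphically onto $U(M,\gamma\boldsymbol{\gamma}_{\vec{k}}\gamma^{-1},r)_{\gamma\tilde{\boldsymbol{\gamma}}\gamma^{-1}}$ and lifts to the vector bundle data preserving straightenings and normal complex structures; since $\tilde{\boldsymbol{\gamma}}$ is considered as an isomorphism class the target is again indexed by $\tilde{\boldsymbol{\gamma}}$, and hence these isomorphisms assemble to a smooth $\Gamma$-action on the colimit. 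The normal complex structure is checked locally, using that $\tilde{Z}_d^{\Gamma_1}(NM^{\Gamma_1},\check{E}^{\Gamma_1})^{\circ}_{\gamma'}$ carries its fibrewise $\Gamma_1$-equivariant complex structure as in Section \ref{subsec:variety-bundle}. The map to $S^{-1}(0)$ comes from the bottom horizontal arrows of \eqref{eqn:fiber-prod-gamma} composed with the blow-down; over the locus of points whose isotropy type is exactly $\tilde{\boldsymbol{\gamma}}$ the weighted blow-ups do nothing (this is where Lemma \ref{lem:FO-1} is used: that stratum already lies in the regular locus and agrees with the corresponding stratum of the universal zero locus), so the map is a diffeomorphism there. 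Independence of small $r>0$ is again by inspection, since shrinking $r$ only shrinks each $U(M,\boldsymbol{\gamma}_{\vec{k}},r)_{\tilde{\boldsymbol{\gamma}}}$ within the colimit system without changing the colimit.

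Finally, for functoriality under an open embedding $M' \to M$ equivariant along an injection $\Gamma' \hookrightarrow \Gamma$ with $E' = E|_{M'}$, $S' = S|_{M'}$ and compatible straightenings and normal complex structures: whenever $\tilde{\boldsymbol{\gamma}}$ is isomorphic to an isotropy type subordinate to $\Gamma'$, the preimage in $M'$ of each $U(M,\boldsymbol{\gamma}_{\vec{k}},r)_{\tilde{\boldsymbol{\gamma}}}$ is a union of opens of the same shape for $M'$, and Proposition \ref{prop:equivariant-resolve-gamma} already records the relevant open embedding on the pieces; passing to colimits yields the asserted equivariant open embedding $(\tilde{S}')^{-1}(0)_{\tilde{\boldsymbol{\gamma}}} \to \tilde{S}^{-1}(0)_{\tilde{\boldsymbol{\gamma}}}$ covering $(S')^{-1}(0) \to S^{-1}(0)$ and respecting normal complex structures. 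The main obstacle — really the only non-formal point — is confirming that the bundle-level Cartesian squares behind Lemma \ref{lem:local-S-resolve} remain Cartesian when one refines the indexing from conjugacy classes of subgroups to isotropy types: this is where one must use that the constructions $\tilde{Z}_d^{\Gamma}(\mathbf{V},\mathbf{W})^{\circ}_{\gamma'}$ can be further decomposed according to the fibrewise representations of $N M^{\Gamma'}$ and $\check{E}^{\Gamma'}$, and that Propositions \ref{prop:cartesian-group-gamma}, \ref{prop:stabilization-compatible-gamma} and \ref{prop:product-compatible-gamma} are all compatible with this finer decomposition — which holds because all of those Cartesian diagrams are $\Gamma$-equivariant and the representation-theoretic data is locally constant along the relevant strata. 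Everything else is a transcription of the proofs already given. \qed
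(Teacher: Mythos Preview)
Your proposal is correct and matches the paper's approach: the paper itself places a \qed\ directly after the statement, indicating that the proof is omitted because it is a straightforward transcription of the arguments for Propositions~\ref{prop:equivariant-resolve} and~\ref{prop:equivariant-resolve-gamma} with the indexing refined from conjugacy classes to isotropy types. Your write-up faithfully reconstructs exactly that transcription, including the one non-formal point about compatibility of the Cartesian squares with the finer decomposition.
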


Globalizing to the orbifold setting, the generalization of \eqref{eqn:orbi-resolve-gamma} is now given by
\beq\label{eqn:orbi-resolve-bold-gamma}
\coprod_{\alpha} \tilde{S}_{\alpha}^{-1}(0)_{\tilde{\boldsymbol{\gamma}}},
\eeq
and the resulting orbifold is written as $\tilde{\mc S}^{-1}(0)_{\tilde{\boldsymbol{\gamma}}}$, coming from all isotropy types lying in the isomorphism class $\tilde{\boldsymbol{\gamma}}$.

\begin{prop}\label{prop:resolve-bold-gamma}
    The smooth normally complex orbifold $\tilde{\mc S}^{-1}(0)_{\tilde{\boldsymbol{\gamma}}}$  depends only on the straightenings, normal complex structures on ${\mc U}$ and ${\mc E}$, and the regular FOP section ${\mc S}$.

    For any open embedding ${\mc U}' \to {\mc U}$ such that ${\mc E}' = {\mc E}|_{{\mc U}'}$ and ${\mc S}' = {\mc S}|_{{\mc U}'}$ with compatible straightenings and normal complex structures, there is an equivariant open embedding $(\tilde{\mc S}')^{-1}(0)_{\tilde{\boldsymbol{\gamma}}} \to \tilde{\mc S}^{-1}(0)_{\tilde{\boldsymbol{\gamma}}}$ which respects the normal complex structures. \qed
\end{prop}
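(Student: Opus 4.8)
The plan is to run the argument in complete parallel with the proofs of Proposition~\ref{prop:construct-orbifold} and Proposition~\ref{prop:resolve-gamma}, now taking Proposition~\ref{prop:equivariant-resolve-bold-gamma} as the equivariant input. First I would fix an étale bundle atlas $E = \coprod_\alpha E_\alpha \to U = \coprod_\alpha U_\alpha$ realizing the normal complex structures and straightenings of ${\mc E} \to {\mc U}$ as in Definition~\ref{defn:vector bundle-straighting}, and let $S_\alpha : U_\alpha \to E_\alpha$ be the pullback of ${\mc S}$. Since ${\mc S}$ is a regular FOP section (Definition~\ref{defn:orbi-regular}), each $S_\alpha$ is regular in the sense of Definition~\ref{defn:equivariant-regular}, so the colimit construction \eqref{eqn:resolution-equivarint-isotropy} produces a smooth $\Gamma_\alpha$-manifold $\tilde{S}_\alpha^{-1}(0)_{\tilde{\boldsymbol{\gamma}}}$ carrying an induced normal complex structure together with a $\Gamma_\alpha$-equivariant map onto $S_\alpha^{-1}(0)$.

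Next I would glue the disjoint union \eqref{eqn:orbi-resolve-bold-gamma} into an orbifold. The topological groupoid $[U \times_{\mc U} U \rightrightarrows U]$ presenting ${\mc U}$ restricts to a groupoid acting on $\coprod_\alpha \tilde{S}_\alpha^{-1}(0)_{\tilde{\boldsymbol{\gamma}}}$: a point of $U_\alpha \times_{\mc U} U_\beta$ over ${\mc S}^{-1}(0)$ is the germ of an open embedding between charts, equivariant for a group isomorphism and compatible with straightenings and normal complex structures, and by the functoriality clause of Proposition~\ref{prop:equivariant-resolve-bold-gamma} it lifts uniquely to such a germ between the resolutions, respecting the normal complex structures. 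Checking the cocycle identities and the étale and separatedness conditions via Lemma~\ref{lem:orb-groupoid} is then formal, exactly as in Proposition~\ref{prop:construct-orbifold}, and produces the normally complex orbifold $\tilde{\mc S}^{-1}(0)_{\tilde{\boldsymbol{\gamma}}}$ with its map onto ${\mc S}^{-1}(0)$, an isomorphism over the locus of isotropy type $\tilde{\boldsymbol{\gamma}}$. Independence of auxiliary data follows the usual pattern: any two atlases as in Definition~\ref{defn:vector bundle-straighting} admit a common refinement, a refinement induces an isomorphism of the glued orbifolds by Proposition~\ref{prop:equivariant-resolve-bold-gamma}, and the radius $r>0$ in \eqref{eqn:disc-bundles} is irrelevant by the last sentence of that proposition. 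For the functoriality statement I would choose for ${\mc U}' \to {\mc U}$ the restricted atlas, so that $(\tilde{\mc S}')^{-1}(0)_{\tilde{\boldsymbol{\gamma}}}$ is glued from $\Gamma_\alpha$-invariant open subsets of the pieces $\tilde{S}_\alpha^{-1}(0)_{\tilde{\boldsymbol{\gamma}}}$, and the equivariant open embeddings supplied by Proposition~\ref{prop:equivariant-resolve-bold-gamma} assemble to the desired equivariant open embedding $(\tilde{\mc S}')^{-1}(0)_{\tilde{\boldsymbol{\gamma}}} \to \tilde{\mc S}^{-1}(0)_{\tilde{\boldsymbol{\gamma}}}$ compatible with the normal complex structures.

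The one point I would spell out with care — and the main, if modest, obstacle — is that the refinement of the cover \eqref{eqn:disc-bundles} into the pieces $U(M,\boldsymbol{\gamma}',r)$ indexed by isotropy types is preserved by the gluing data, so that the chains $\boldsymbol{\gamma}_{\vec{k}}$, and hence the colimits \eqref{eqn:resolution-equivarint-isotropy}, match over overlaps $U_\alpha \times_{\mc U} U_\beta$. This uses that a chart embedding induces an injection of stabilizers carrying the normal complex structures on $NU^{\Gamma_y}$ and on $\check{E}^{\Gamma_y}$ onto those of the target (Definition~\ref{defn:normal-complex}, Definition~\ref{defn:nc-bundle}), so that the triple $(\Gamma_y, N_yU^{\Gamma_y}, \check{E}_y^{\Gamma_y})$ — hence its isotropy type and its position in the poset of isotropy types — is transported verbatim; once this is recorded, the remaining verifications are identical to those in Proposition~\ref{prop:resolve-gamma} and its equivariant counterpart Proposition~\ref{prop:equivariant-resolve-bold-gamma}.
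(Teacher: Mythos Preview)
Your proposal is correct and follows exactly the approach the paper intends: the paper states this proposition with a \qed and no proof, relying on the reader to transport the arguments of Proposition~\ref{prop:construct-orbifold} and Proposition~\ref{prop:resolve-gamma} verbatim with Proposition~\ref{prop:equivariant-resolve-bold-gamma} as the equivariant input. Your added care about the isotropy-type refinement of the cover being preserved under chart embeddings is a reasonable point to make explicit, though the paper evidently regards it as immediate from the definitions.
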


To discuss stabilization, it is more convenient to work with stable isotropy types. Given a stable isotropy type $[\tilde{\boldsymbol{\gamma}}]$, the notations 
\beq
\tilde{S}^{-1}(0)_{[\tilde{\boldsymbol{\gamma}}]} \quad \quad \quad \tilde{\mc S}^{-1}(0)_{[\tilde{\boldsymbol{\gamma}}]}
\eeq
simply denote the disjoint union of all $\tilde{S}^{-1}(0)_{\tilde{\boldsymbol{\gamma}}}$ and $\tilde{\mc S}^{-1}(0)_{\tilde{\boldsymbol{\gamma}}}$ such that $\tilde{\boldsymbol{\gamma}}$ lies in the class $[\tilde{\boldsymbol{\gamma}}]$. The following proposition follows from the reasoning behind Proposition \ref{prop:resolve-gamma}.

\begin{prop}\label{prop:bold-stable}
    In the of situation of Lemma \ref{lem:stable-regular}, then the resolution $({\mc S}^{\tau})^{-1}(0)_{[\tilde{\boldsymbol{\gamma}}]} := (\pi_{{\mc E}'}^* {\mc S} \oplus \tau_{{\mc E}'})^{-1}(0)_{[\tilde{\boldsymbol{\gamma}}]}$ agrees with $\tilde{\mc S}^{-1}(0)_{[\tilde{\boldsymbol{\gamma}}]}$. \qed
\end{prop}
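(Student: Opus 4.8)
The plan is to mirror, word for word, the structure already used for the analogous statements Proposition \ref{prop:stabilization}, Proposition \ref{prop:product} and especially Proposition \ref{prop:resolve-gamma}, since the only new ingredient relative to the latter is that we are now decomposing the resolution according to (stable) isotropy \emph{types} rather than isotropy \emph{groups}. Concretely, the first step will be to invoke Lemma \ref{lem:stable-regular}, which guarantees that the stabilized section ${\mc S}^{\tau} = \pi_{{\mc E}'}^* {\mc S} \oplus \tau_{{\mc E}'}$ is itself a regular FOP section, so that $\tilde{\mc S}^{-1}(0)_{[\tilde{\boldsymbol{\gamma}}]}$ and $({\mc S}^{\tau})^{-1}(0)_{[\tilde{\boldsymbol{\gamma}}]}$ are both defined by the colimit procedure of Equations \eqref{eqn:orbi-resolve-bold-gamma} and \eqref{eqn:resolution-equivarint-isotropy}. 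Since the underlying zero loci $({\mc S}^{\tau})^{-1}(0)$ and ${\mc S}^{-1}(0)$ coincide, it suffices to compare the two colimit presentations chart by chart, i.e. in the equivariant setting of a $\Gamma$-equivariant bundle $E \to M$.

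The heart of the argument is then the compatibility of resolutions under stabilization with prescribed isotropy, which is precisely the content of Proposition \ref{prop:stabilization-compatible-gamma}: the Cartesian diagram \eqref{eqn:stabilization-Cartesian-gamma} relating $\tilde{Z}_d^{\Gamma}(V,W)^\circ_{\gamma'}$ and $\tilde{Z}_d^{\Gamma}(V\oplus V', W\oplus V')^\circ_{\gamma'}$ is exactly what is needed. I would trace through the explicit form of the stabilized normally complex lift $\check{\mathfrak s}^{\tau}_{\Gamma'}$ given in Equation \eqref{eqn:stablized-lift}, which exhibits it as a fiberwise/family version of the stabilization map \eqref{eqn:poly-stabilization} applied to $\check{\mathfrak s}_{\Gamma'}$, with the extra factor $\pi_{E'}^*\tau_{E'}$ transverse to zero along $\pi_{E'}^* \mathring{E'}^{\Gamma'}$. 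Feeding this into the fiber-product defining $\tilde{S}^{-1}(0)_{(M,\boldsymbol\gamma_{\vec k},r),\tilde{\boldsymbol\gamma}}$ (the analogue of Diagram \eqref{eqn:fiber-prod-gamma} with open set $U(M,\boldsymbol\gamma_{\vec k},r)_{\tilde{\boldsymbol\gamma}}$), the Cartesian property of \eqref{eqn:stabilization-Cartesian-gamma} together with Lemma \ref{lemma:transverse-Cartesian} identifies the stabilized fiber product over $E'$ with the original fiber product over $M$; one also uses that the partial order and conjugation action on isotropy types are unchanged under tensoring by $V'$ and then passing to the class $[\tilde{\boldsymbol\gamma}]$, so the indexing posets $2^{\Gamma}$ of chains of isotropy types match up. Taking the colimit over $\boldsymbol\gamma_{\vec k}$ then yields the claimed identification $\tilde{S}^{-1}(0)_{[\tilde{\boldsymbol\gamma}]} \cong (S^{\tau})^{-1}(0)_{[\tilde{\boldsymbol\gamma}]}$ equivariantly and compatibly with normal complex structures, and globalizing over the \'etale atlas as in the proof of Proposition \ref{prop:construct-orbifold} gives the orbifold statement.

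The main obstacle, such as it is, is bookkeeping rather than anything conceptual: one must check that the decomposition by stable isotropy type behaves correctly under the fiberwise stabilization, i.e. that a point of $(S^{\tau})^{-1}(0)$ with isotropy group $\Gamma'$ acquires the isotropy type obtained from that of the corresponding point of $S^{-1}(0)$ precisely by the stabilization operation $(\tilde\Gamma,\tilde V,\tilde W)\mapsto(\tilde\Gamma,\tilde V\oplus \tilde V',\tilde W\oplus \tilde V')$ recorded in item (3) of the list preceding this proposition — and hence that after passing to stable isotropy types the two decompositions are indexed identically. This follows by inspecting the splitting $N(E')^{\Gamma'} = N(E')^{\Gamma'}_\parallel \oplus N(E')^{\Gamma'}_\perp$ and the identification $N(E')^{\Gamma'}_\perp \cong \pi_{E'}^*\check{E'}^{\Gamma'}$ used in the proof of Lemma \ref{lem:stable-regular}, which shows that the normal representation data at a stabilized point is exactly the original data direct summed with $\check{E'}^{\Gamma'}$ in both $\tilde V$ and $\tilde W$ slots. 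Given that, everything else is a mechanical transcription of the proof of Proposition \ref{prop:resolve-gamma}, and I would state the proof in one or two sentences referring to these inputs, as is done there. \qed
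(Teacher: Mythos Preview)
Your proposal is correct and follows exactly the approach the paper intends: the paper's own ``proof'' is simply the line ``The following proposition follows from the reasoning behind Proposition \ref{prop:resolve-gamma}'' together with a \qed, and what you have written is precisely a spelled-out version of that reasoning, invoking Lemma \ref{lem:stable-regular}, the Cartesian diagram \eqref{eqn:stabilization-Cartesian-gamma} of Proposition \ref{prop:stabilization-compatible-gamma}, and the bookkeeping that stabilization acts on isotropy types by $(\tilde\Gamma,\tilde V,\tilde W)\mapsto(\tilde\Gamma,\tilde V\oplus\tilde V',\tilde W\oplus\tilde V')$ and hence is invisible after passing to stable classes $[\tilde{\boldsymbol\gamma}]$.
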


Finally, note that isotropy admits a product structure, sending a pair $\boldsymbol{\gamma}_1 = (\tilde{\Gamma}_1, \tilde{V}_1, \tilde{W}_1)$ and $\boldsymbol{\gamma}_2 = (\tilde{\Gamma}_2, \tilde{V}_2, \tilde{W}_2)$ to
\beq
\boldsymbol{\gamma}_1 \times \boldsymbol{\gamma}_2 := (\tilde{\Gamma}_1 \times \tilde{\Gamma}_2, \tilde{V}_1 \times \tilde{V}_2, \tilde{W}_1 \times \tilde{W}_2),
\eeq
and it is well-defined after passing to isomorphism classes and stabilization. We have the following generalization of Proposition \ref{prop:product}.
\begin{prop}\label{prop:module-gamma}
    Let ${\mc S}_1: {\mc U}_1 \to {\mc E}_1$ be a regular FOP section. Suppose ${\mc U}_2$ is a (not necessarily effective) normally complex orbifold whose generic stabilizer is isomorphic to the finite group $\Gamma_2$. Let ${\mc F}_2 \xrightarrow{\pi_{{\mc F}_2}} {\mc U}_2$ be a complex vector bundle such that the total space ${\mc F}_2$ is an effective orbifold. Then the resolution (cf. Corollary \ref{cor:product-regular})
    \beqn
    (\pi_1^* {\mc S}_1 \times \tau_{{\mc F}_2})^{-1}_{[\boldsymbol{\gamma}_1] \times [\boldsymbol{\gamma}_2]}(0)
    \eeqn
    agrees with $\tilde{\mc S}_1^{-1}(0)_{[\boldsymbol{\gamma}_1]}$, where $[\boldsymbol{\gamma}_2]$ is the stable isotropy class represented by $(\Gamma_2, \{0\}, \{0\})$.
\end{prop}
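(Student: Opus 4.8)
\textbf{Proof plan for Proposition \ref{prop:module-gamma}.}

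The statement is the combination of two mechanisms that have already been isolated: stabilization (Proposition \ref{prop:bold-stable}) and external product with a trivial-isotropy orbifold (Proposition \ref{prop:product} and its refinement through the product Cartesian diagram \eqref{eqn:product-Cartesian-gamma}). The plan is first to reduce the assertion to the case ${\mc F}_2 = {\mc U}_2$, i.e. to a genuine external product, and then to isolate the effect on stable isotropy types of taking the product with a (possibly ineffective) orbifold whose generic stabilizer is $\Gamma_2$ and all of whose isotropy is of the trivial type. Concretely, I would begin by recalling from Corollary \ref{cor:product-regular} that $\pi_1^*{\mc S}_1 \times \tau_{{\mc F}_2}$ is a regular FOP section, obtained as the stabilization, by the bundle ${\mc F}_2$, of the pulled-back section $\pi_1^*{\mc S}_1$ on ${\mc U}_1 \times {\mc U}_2$ (here $\pi_1$ is the projection ${\mc U}_1 \times {\mc U}_2 \to {\mc U}_1$). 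Proposition \ref{prop:bold-stable} then identifies the resolution $(\pi_1^*{\mc S}_1 \times \tau_{{\mc F}_2})^{-1}_{[\tilde{\boldsymbol\gamma}]}(0)$, for any stable isotropy class $[\tilde{\boldsymbol\gamma}]$, with the resolution $\widetilde{(\pi_1^*{\mc S}_1)}^{-1}(0)_{[\tilde{\boldsymbol\gamma}]}$ computed on ${\mc U}_1 \times {\mc U}_2$. This reduces everything to understanding $\widetilde{(\pi_1^*{\mc S}_1)}^{-1}(0)_{[\boldsymbol\gamma_1] \times [\boldsymbol\gamma_2]}$, with $[\boldsymbol\gamma_2]$ the stable class of $(\Gamma_2, \{0\}, \{0\})$.

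The second step is to analyze the open cover of ${\mc U}_1 \times {\mc U}_2$ by punctured disc bundles over strata of prescribed isotropy type, following the constructions of Section \ref{subsec:regular-FOP} and the colimit \eqref{eqn:resolution-equivarint-isotropy}. The key observation — this is where the hypothesis on ${\mc U}_2$ enters — is that over an \'etale chart $U_1 \times U_2$, with $U_1$ a $\Gamma_1$-manifold and $U_2$ a $\Gamma_2$-manifold (using that the generic stabilizer of ${\mc U}_2$ is $\Gamma_2$, so after the Pardon stabilization trick of Remark \ref{rem:faithful} any ineffective ${\mc U}_2$ can be presented with the generic action), the isotropy type of a point $(p_1,p_2)$ is $\boldsymbol\gamma(p_1) \times \boldsymbol\gamma(p_2)$ where $\boldsymbol\gamma(p_2)$ has trivial normal representations. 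Hence selecting the isotropy class $[\boldsymbol\gamma_1]\times[\boldsymbol\gamma_2]$ amounts to selecting strata whose ${\mc U}_1$-component has stable type $[\boldsymbol\gamma_1]$ and placing no constraint (beyond triviality of normal type) on the ${\mc U}_2$-component. Then the fiberwise product Cartesian diagram \eqref{eqn:product-Cartesian-gamma} — resolving $\tilde{Z}_d^{\Gamma_1 \times \Gamma_2}(V_1 \oplus V_2, W_1)^\circ_{\gamma_1'}$ in terms of $\tilde{Z}_d^{\Gamma_1}(V_1, W_1)^\circ_{\gamma_1'} \times V_2$ — shows that the local resolution pieces are products of the ${\mc U}_1$-resolution pieces with the ${\mc U}_2$-factor, and Lemma \ref{lemma:transverse-Cartesian} applied to the graph of the normally complex lift guarantees that regularity on ${\mc U}_1$ transports to regularity on ${\mc U}_1 \times {\mc U}_2$ exactly as in the proof of Proposition \ref{prop:open-ness}. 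Passing to the colimit over chains of isotropy types identifies $\widetilde{(\pi_1^*{\mc S}_1)}^{-1}(0)_{[\boldsymbol\gamma_1]\times[\boldsymbol\gamma_2]}$ with $\tilde{\mc S}_1^{-1}(0)_{[\boldsymbol\gamma_1]} \times {\mc U}_2$; finally, since the isotropy-free locus of ${\mc U}_2$ (together with the ${\mc U}_2$-factor recovered at every ineffective stratum) is all of ${\mc U}_2$ as far as the normal data is concerned, the full ${\mc U}_2$-factor survives — this is the same phenomenon recorded in Proposition \ref{prop:product}. Combining with the Step-one reduction gives the claim.

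I expect the main obstacle to be bookkeeping rather than a genuine new difficulty: one has to carefully verify that the product decomposition of isotropy types behaves correctly when ${\mc U}_2$ is ineffective, so that the stable isotropy class $[\boldsymbol\gamma_1]\times[\boldsymbol\gamma_2]$ on ${\mc U}_1\times{\mc F}_2$ (after the stabilization by ${\mc F}_2$) really does match up, stratum by stratum, with $[\boldsymbol\gamma_1]$ on ${\mc U}_1$ once the trivial-type factor $[\boldsymbol\gamma_2]$ is accounted for. This requires invoking the effective-ambient reduction of Remark \ref{rem:faithful} to present ${\mc F}_2 \to {\mc U}_2$ with ${\mc F}_2$ effective while keeping track of the generic $\Gamma_2$ on ${\mc U}_2$, and then checking the compatibility of the two colimit presentations (the one for ${\mc U}_1 \times {\mc F}_2$ via \eqref{eqn:orbi-resolve-bold-gamma} and the product of the one for ${\mc U}_1$ with ${\mc U}_2$) is induced by the open embeddings of charts — a routine but slightly delicate application of the functoriality statements in Propositions \ref{prop:resolve-bold-gamma} and \ref{prop:bold-stable}. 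No additional algebro-geometric input beyond Propositions \ref{prop:product-compatible-gamma} and \ref{prop:stabilization-compatible-gamma} should be needed.
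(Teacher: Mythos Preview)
Your approach is essentially the same as the paper's: the paper's proof is a two-sentence sketch that invokes Corollary~\ref{cor:product-regular} for regularity and then cites Lemma~\ref{lem:regular-product} and Lemma~\ref{lem:stable-regular} for the agreement of resolutions---i.e., exactly your combination of the product Cartesian diagram \eqref{eqn:product-Cartesian-gamma} and the stabilization mechanism. Your proposal simply unpacks what those two citations mean at the level of the colimit presentation \eqref{eqn:resolution-equivarint-isotropy}, and your anticipated bookkeeping about effectiveness of ${\mc U}_1\times{\mc U}_2$ is real but, as you say, handled by the framework already in place. Note that both you and the paper's proof arrive at $\tilde{\mc S}_1^{-1}(0)_{[\boldsymbol\gamma_1]}\times{\mc U}_2$; the statement as written drops the ${\mc U}_2$ factor, but the product form is what the application to the module structure in Theorem~\ref{thm:main-derived-to-underived} actually uses.
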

\begin{proof}
    By Corollary \ref{cor:product-regular}, we know that $\pi_1^* {\mc S}_1 \times \tau_{{\mc F}_2}$ is FOP regular, and the generic part of its zero locus indexed by $[\boldsymbol{\gamma}_1] \times [\boldsymbol{\gamma}_2]$ is isomorphic to the product of ${\mc U}_2$ with the the part of ${\mc S}_1^{-1}(0)$ indexed by $[\boldsymbol{\gamma}_1]$. The agreement of resolutions is a consequence of Lemma \ref{lem:regular-product} and Lemma \ref{lem:stable-regular}.
\end{proof}

\section{From derived orbifolds to orbifolds}\label{sec:split}

The goal of this section is to prove the results from the introduction which describe the relationship between derived and underived orbifold bordism.

\subsection{Complex (derived) orbifold bordism}\label{subsec:d-Omega-C} 
Elaborating on the discussion from Section \ref{subsec:Omega-C}, we recall the definition of complex derived orbifold bordism following \cite[Section 5]{pardon20} and \cite[Section 7]{Bai_Xu_2022}. For the discussion of this section, let $(X,A)$ be a pair of orbispaces.

Firt, we introduce the notion of derived orbifold with boundary ${\mc D}$ over $(X,A)$: this consists of lifting the underlying orbifold ${\mc U}$ to an orbifold \emph{over} $(X,A)$.
\begin{defn}
  A pair $({\mc D}, f)$ and $({\mc D}', f')$ of derived orbifolds over $(X,A)$, are  \emph{bordant}  if there exist a derived orbifold with boundary $(\hat{\mc U}, \hat{\mc E}, \hat{\mc S})$ such that:
    \begin{enumerate}
        \item $  \hat{\mc U}$ is a bordism from $({\mc U}, f)$ to $({\mc U}', f')$, and
        \item using the embedding from (1), $(\hat{\mc E}, \hat{\mc S})$ restricts to $({\mc E}, {\mc S})$ and $({\mc E}', {\mc S}')$ respectively.
    \end{enumerate}
\end{defn}

The \emph{derived orbifold bordism group} of $(X,A)$ is then defined to be the free abelian group generated by $({\mc D}, f)$, with ${\mc D}$ compact, over $(X,A)$ modulo the equivalence relations generated by stabilization and bordism (the latter includes the notion of restriction to a neighbourhood of the zero-locus). The proof that this is a homology theory (i.e. that some version of the Mayer-Viteoris condition holds) is non-trivial, and is due to Pardon  \cite{pardon20}.

As in the underived case, one can impose tangential structures on ${\mc D}$ to define structured derived orbifold bordism groups.  In particular, one can define stable complex structures on a derived orbifold as stable complex structures on the underlying orbifold together with a stable complex structure on the defining vector bundle. We shall however use the following more general notion, which ends up being equivalent after stabilization:
\begin{defn}\label{defn:stable-complex-d-chart}
A \emph{stable complex structure} on a derived orbifold ${\mc D} = ({\mc U}, {\mc E}, {\mc S})$ (with boundary) is an isomorphism class of:
\begin{enumerate}
    \item a vector bundle ${\mc E}' \to {\mc U}$, together with
    \item a complex structure on $T{\mc U} \oplus {\mc E}' \oplus  \underline{\mb R}^k$, for some integer $k$, and a complex structure on ${\mc E} \oplus {\mc E}'$.
\end{enumerate}
The stable complex structures specified by $({\mc E}_1', k_1)$ and $({\mc E}_2', k_2)$ are isomorphic if there exist \emph{complex} vector bundles ${\mc E}_1'', {\mc E}_2'' \to {\mc U}$ such that we have isomorphisms of complex vector bundles
\begin{align}
  (T{\mc U} \oplus {\mc E}'_1 \oplus  \underline{\mb R}^{k_1}) \oplus {\mc E}_1'' & \cong (T{\mc U} \oplus {\mc E}'_2\oplus  \underline{\mb R}^{k_2}) \oplus {\mc E}_2''  \\
  ({\mc E} \oplus {\mc E}'_1) \oplus {\mc E}_1'' & \cong ({\mc E} \oplus {\mc E}'_2) \oplus {\mc E}_2''. 
\end{align}
\end{defn}

Returning to the discussion of Section \ref{sec:derived-orbifolds}, the restriction of any stably complex derived orbifold admits a natural stable complex structure,  so does any stabilization (this is a minor change from \cite{pardon20,Bai_Xu_2022} who only allow stabilisation by complex vector bundles, which is immaterial because any vector bundle can be further stabilised to a complex bundle). The bordism relation can be refined to incorporate stable complex structures. Namely, a choice of normal vector to the inclusion of a  derived orbifold ${\mc D} = ({\mc U}, {\mc E}, {\mc S})$ in the boundary of a stable complex derived orbifold $(\hat{\mc D}, F) = (\hat{\mc U}, \hat{\mc E}, \hat{\mc S}, F)$ induces a stable complex structure on ${\mc D}$, with respect to the restriction of the bundle $ \hat{\mc E}' $, with the normal direction identified with an additional $\underline{\mb R} $ stabilisation factor.

Then we say that any stably complex pair $({\mc D}, f)$ and $({\mc D}', f')$ over $(X,A)$ are stably complex bordant if there exists a bordism $(\hat{\mc D}, F) = (\hat{\mc U}, \hat{\mc E}, \hat{\mc S}, F)$ as above further satisfying:
\begin{itemize}
    \item the stable complex structure of the pair $(\hat{\mc U}, \hat{\mc E})$ restricts to the stable complex structures of $({\mc U}, {\mc E})$ and $({\mc U}', {\mc E}')$ for some choice of normal vector field.
\end{itemize}

\begin{defn}
The \emph{complex derived orbifold bordism} group of a pair of orbispaces $(X,A)$
\beq
d\Omega_*^{U}(X,A)
\eeq
is the abelian group (with additive structure coming from disjoint union) generated by compact derived orbifolds with boundary over $(X,A)$, equipped with a stable complex structure, modulo the the equivalence relation generated by stabilization and stably complex bordisms. It is \emph{graded} by the virtual dimension.
\end{defn}

We remark that the stable complex bordism relation is actually an equivalence relation, as shown in \cite[Proposition 5.1]{pardon20}. From \cite[Proposition 7.23]{Bai_Xu_2022}, we know that $d\Omega_*^{U}$ actually defines a generalized homology theory.

\subsection{Constructing orbifolds from derived orbifolds}\label{subsec:splitting}
We now prove Theorem \ref{thm:main-derived-to-underived}, which, then completes the proof of Theorem \ref{thm:main}. As in Section \ref{sec:from-orbif-manif}, we formulate all constructions in terms of normally complex orbifolds. The main difference from the constructions of that section is that many more choices are required for the algorithm we use:

\vspace{0.2cm}

\begin{mdframed}
\textbf{ALGORITHM II: From derived orbifolds to orbifolds}
\end{mdframed}

\vspace{0.2cm}

\noindent \underline{\it Input:} Suppose we are given an effective  normally complex derived orbifold ${\mc D}$: i.e. an effective normally complex orbifold ${\mc U}$ possibly with boundary, a normally complex vector bundle ${\mc E} \to {\mc U}$, and a continuous section ${\mc S}: {\mc U} \to {\mc E}$ such that $|{\mc S}^{-1}(0)|$ is compact. 


\vspace{0.2cm}

\noindent \underline{\it Step 1}
Apply \cite[Lemma 3.15, Lemma 3.20]{Bai_Xu_2022} to equip ${\mc E} \to {\mc U}$ with a straightening in the sense of Definition \ref{defn:straight-metric} and Definition \ref{defn:vector bundle-straighting}. 

\vspace{0.2cm}

\noindent \underline{\it Step 2}
For a sufficiently small $\epsilon > 0$, apply Proposition \ref{prop:FOP-prop} and Proposition \ref{prop:FOP-existence} to obtain a smooth section such that:
\begin{itemize}
    \item ${\mc S}_{\epsilon}$ is a regular FOP section over ${\mc U}'$ of degree at most $d \geq d_0$, where ${\mc U}' \subset {\mc U}$ is an open suborbifold with ${\mc S}^{-1}(0) \subset {\mc U}'$ and $|{\mc U}'|$ compact,
    \item $\|{\mc S}_{\epsilon} - {\mc S}\|_{C^0} < \epsilon$, and
    \item $|{\mc S}_{\epsilon}^{-1}(0)|$ is compact.
\end{itemize}

\vspace{0.2cm}

\noindent \underline{\it Step 3}
For each isomorphism class of stable isotropy types $[\tilde{\boldsymbol{\gamma}}]$, apply Proposition \ref{prop:resolve-bold-gamma}, to obtain a compact normally complex orbifold ${\mc Z}^{orb}_{[\tilde{\boldsymbol{\gamma}}]}$ whose generic stabilizer point is indexed by $[\tilde{\boldsymbol{\gamma}}]$, which is the resolution of singularities 
\beq
{\mc Z}^{orb}_{[\tilde{\boldsymbol{\gamma}}]} := \tilde{\mc S}^{-1}(0)_{[\tilde{\boldsymbol{\gamma}}]}.
\eeq
Moreover, ${\mc Z}^{orb}_{[\tilde{\boldsymbol{\gamma}}]}$ comes with a map to ${\mc U}$.
\qed

\vspace{0.2cm}

We call the combination of the three steps above the \emph{Perturbation-Resolution algorithm}.

\begin{rem}\label{rem:algorithm}
\begin{enumerate}
    \item As in Section \ref{sec:from-orbif-manif}, we can easily extend the results stated for derived manifolds to the case of derived manifolds with boundary, in particular, Proposition  \ref{prop:resolve-bold-gamma}.
    \item The following auxiliary choices are made in the perturbation-resolution algorithm:
    \begin{itemize}
        \item restrictions and stabilizations which make the ambient orbifold effective and induce genuine (normal) complex structures from the stable version;
        \item straightenings, in the form of a Riemannian metric on ${\mc U}$ and a connection on ${\mc E}$;
        \item a cut-off degree $d$ in the construction of the FOP sections;
        \item FOP sections which achieve regularity.
    \end{itemize} 
    \item After making ${\mc U}$ effective, if $T{\mc U}$ and ${\mc E}$ are both endowed with a genuine complex structure, it follows from the construction that the tangent bundle of ${\mc Z}^{orb}_{[\tilde{\boldsymbol{\gamma}}]}$ is equipped with a stable complex structure. 
\end{enumerate}
\end{rem}

\begin{proof}[Proof of Theorem \ref{thm:main-derived-to-underived}] 
  We first prove the following statement: given a compact derived orbifold (with boundary) $({\mc D}, f) = ({\mc U}, {\mc E}, {\mc S}, f)$ over a pair of orbispaces $(X,A)$ which represents a class in $d\Omega_*^{U}(X,A)$,  for each isomorphism class of groups $\gamma$, the output of applying Algorithm II with input datum $({\mc D}, f)$ is independent up to cobordism of the representative and various auxiliary data as listed in Remark \ref{rem:algorithm} (2). This relies on the following:

\vspace{0.2cm}

\noindent {\it Claim}
If $({\mc D}, f)$ and $({\mc D}', f') = ({\mc U}', {\mc E}', {\mc S}', f')$ are bordant via $(\hat{\mc D}, F) = (\hat{\mc U}, \hat{\mc E}, \hat{\mc S}, F)$ such that
\begin{itemize}
    \item the bordism $\hat{\mc U}$ is effective,
    \item $\hat{\mc U}$ is normally complex and the restriction of the normal complex structure along the collars of ${\mc U}$ and ${\mc U}'$ agrees with the normal complex structure on $({\mc U} \coprod {\mc U}') \times [0,1)$, and
    \item $\hat{\mc E}$ is normally complex and the restriction of the normal complex structure along the collars of ${\mc U}$ and ${\mc U}'$ agrees with the normal complex structure on the pullback of ${\mc E} \coprod {\mc E}'$ on $({\mc U} \coprod {\mc U}') \times [0,1)$;
\end{itemize}
then for any auxiliary choices, the outputs of Algorithm II
\beq
{\mc Z}^{orb}_{[\tilde{\boldsymbol{\gamma}}]}, \quad \quad \quad {\mc Z}^{\prime orb}_{[\tilde{\boldsymbol{\gamma}}]}
\eeq
are bordant over $(X,A)$ via the composition of the blow-down map and $f$ (or $f'$).

The proof of this claim goes as follows.
\begin{enumerate}
    \item[(a)] Given the straightenings on ${\mc E} \to {\mc U}$ and ${\mc E}' \to {\mc U}'$, using the collar $({\mc D} \coprod {\mc D}') \times [0,1) \hookrightarrow \hat{\mc D}$ and isomorphism between $\hat{\mc E}|_{({\mc D} \coprod {\mc D}') \times [0,1)}$ and the pullback of ${\mc E} \coprod {\mc E'}$ via the projection $({\mc D} \coprod {\mc D}') \times [0,1) \to {\mc D} \coprod {\mc D}'$, we can apply the constructions in \cite[Section 3.3, 3.4]{Bai_Xu_2022} to equip $\hat{E} \to \hat{\mc D}$ with a straightening, whose underlying Riemannian metric coincides with the product Riemannian metric on $({\mc D} \coprod {\mc D}') \times [0,1)$ (the interval $[0,1)$ is endowed with the flat $dt^2$ metric), and the connection coincides with the pullback connection.
    \item[(b)] If the FOP perturbations ${\mc S}_{\epsilon}$ on ${\mc D}$ and ${\mc S}_{\epsilon'}'$ on ${\mc D}'$ are chosen using cut-off degrees $d$ and $d'$ respectively, by shrinking $\epsilon$ and $\epsilon'$ if necessary, from a relative form of Proposition \ref{prop:FOP-existence} as stated in \cite[Proposition 6.7]{Bai_Xu_2022}, we can find $\hat{d} \geq \max\{d,d'\}$, $\hat{\epsilon} > 0$, and a regular FOP section ${\mc S}_{\hat{\epsilon}}: \hat{\mc U} \to \hat{\mc E}$ of cut-off degree $\hat{d}$ with respect to the straightenings chosen in (a) such that it is equal to the pullback of ${\mc S}_{\epsilon} \coprod {\mc S}_{\epsilon'}'$ on the collar $({\mc D} \coprod {\mc D}') \times [0,1)$. Note here we need to apply Lemma \ref{lem:cut-degree} to make sure that enlarging the degree  preserves the regularity of the FOP section.
    \item[(c)] After making the choices in (a) and (b), we observe that the functoriality as explored in Proposition \ref{prop:resolve-bold-gamma} (see Proposition \ref{prop:construct-orbifold} for the isotropy-free case) makes sure that ${\mc Z}^{orb}_{[\tilde{\boldsymbol{\gamma}}]}$ and ${\mc Z}^{\prime orb}_{[\tilde{\boldsymbol{\gamma}}]}$ are bordant through the resolution $\hat{\mc Z}^{orb}_{[\tilde{\boldsymbol{\gamma}}]}$, and we actually have a collar $({\mc Z}^{orb}_{[\tilde{\boldsymbol{\gamma}}]} \coprod {\mc Z}^{\prime orb}_{[\tilde{\boldsymbol{\gamma}}]} ) \times [0,1) \hookrightarrow \hat{\mc Z}^{orb}_{[\tilde{\boldsymbol{\gamma}}]}$.
 \end{enumerate}

Given the claim and item (3) in Remark \ref{rem:algorithm}, the only ambiguity that remains comes from restrictions and stabilizations. Indeed, just to remind the reader, different choices of straightenings and FOP sections on the same derived orbifold chart can be compared over the trivial bordism.

Note that taking restriction does not affect the output of Algorithm II because all the constructions only depend on the information over any arbitrarily small open suborbifold of ${\mc U}$ containing the zero locus ${\mc S}^{-1}(0)$. As for the invariance under stabilization, Lemma \ref{lem:stable-regular} shows that the regularity of FOP sections is preserved under stabilization, then so does the orbifold resolution of the zero loci by Proposition \ref{prop:stabilization} and Proposition \ref{prop:bold-stable}. Therefore, bordism invariance under stabilization is reduced to the case proven in the claim.

In order to prove the compatibility of this splitting with the module structure, we observe that the geometric orbifold bordism, as a homology theory, splits as the direct sum
\beq\label{eqn:orbi-bord-decompose}
\Omega^U = \bigoplus_{\gamma} \Omega^U_{\gamma},
\eeq
where $\gamma$ ranges over isomorphism classes of finite groups and $\Omega^U_{\gamma}$ consists of bordism classes of orbifolds whose ambient orbifold has generic stabilizer group lies in the isomorphism class $\gamma$. This decomposition, the index set respects the multiplicative structure in the sense that the Cartesian product induces a map
\beq
\Omega^U_{\gamma} \times \Omega^U_{\gamma'} \to \Omega^U_{\gamma \times \gamma'}.
\eeq

 With this in mind, the natural transformation in Theorem \ref{thm:main-derived-to-underived}, is defined in the following way: given a derived orbifold $({\mc U}, {\mc E}, {\mc S})$ (we omit the map to the pair $(X,A)$ for notational simplicity), we consider all the stable isotropy types of the form
 \beq\label{eqn:geoemtric-isotropy-type}
 [(\tilde{\Gamma}, \{0\}, \{0\})],
 \eeq
 namely, isomorphism classes of $(\tilde{\Gamma}, \tilde{V}, \tilde{W})$ where $\tilde{V}$ is isomorphic to $\tilde{W}$. Then for such a $\tilde{\boldsymbol{\gamma}} = (\tilde{\Gamma}, \{0\}, \{0\})$, the bordism class ${\mc Z}^{orb}_{[\tilde{\boldsymbol{\gamma}}]}$ lands in $\Omega^U_{\tilde{\gamma}}$, where $\tilde{\gamma}$ stands for the isomorphism class of the group $\tilde{\Gamma}$. Then we construct the transformation ${\mc Z}^{orb}$ to be the evident map defined by assembling such a construction over all such stable isotropy classes:
\beq
{\mc Z} := \bigoplus {\mc Z}^{orb}_{[\boldsymbol{\gamma}]}: d \Omega^{U} \to \bigoplus_{\gamma} \Omega^U_{\gamma}.
\eeq

It remains to see that ${\mc Z}$ does define a natural transformation of generalized homology theories which respect the module structure and splits the inclusion map $\Omega^U \to d \Omega^U$.

The functoriality follows from the fact that ${\mc Z}$ is defined by pushing forward the bordism class produced from the above algorithm. Therefore, it suffices to show the following diagram commutes:
\beq\label{eqn:naturality}
\vcenter{ 
\xymatrix{
{d\Omega_{*+1}^{U}(X,A)} \ar[d]^{\partial} \ar[rr]^{{\mc Z}} & & {\Omega^{U}_{*+1}(X,A)} \ar[d]^{\partial} \\
{d\Omega_*^{U}(A)} \ar[rr]^{{\mc Z}}                            & & \Omega_*^{U}(A).                            
}
}
\eeq
Consider a stably complex derived orbifold chart with boundary ${\mc D} = ({\mc U}, {\mc E}, {\mc S})$ with a map $f: ({\mc U}, \partial {\mc U}) \to (X, A)$. Assume that there is an embedding $\partial {\mc U} \times [0,1) \rightarrow {\mc U}$. After choosing a straightening on $(\partial {\mc U}, {\mc E}|_{\partial {\mc U}})$, we can extend it to a straightening on $({\mc U}, {\mc E})$ which coincides with the product structure on the collar $\partial {\mc U} \times [0,1)$. After choosing a regular FOP section ${\mc S}' |_{\partial {\mc U}}$ of ${\mc S}|_{\partial {\mc U}}$, we can extend it to an FOP perturbation $\tilde {\mc S}'$ of ${\mc S}$. The rest of the argument goes exactly like the discussion of bordisms.

For the module map property, given a bordism class in the summand $\Omega^U_{\gamma}$ and a class in $d\Omega^U$, we can appeal to Proposition \ref{prop:module-gamma} to see that applying ${\mc Z}^{orb}_{[\boldsymbol{\gamma}]}$ commutes with taking Cartesian product with bordism classes in $\Omega^U_{\gamma}$. For the splitting property, this follows from the observation that the tautological section of a stabilization of an orbifold by a faithful vector bundle, which represents the image of a geometric bordism class inside derived bordism, is FOP regular and the resolution does nothing to its zero locus.
\end{proof}

\subsection{Homotopical bordism}
\label{sec:equivariant-bordisms}

We now turn attention to our results comparing geometric and homotopical bordism.  Following tom Dieck \cite{tomdieck1970}, the standard formulation of the stably complex homotopical bordism of a $\Gamma$-pair $(X,A)$ is in terms of the stably complex bordism of the product with $(X,A)$ with the pair $(W,W \setminus{0})$ associated to a complex $\Gamma$-representation $W$. An inclusion $W \to W'$ of such representations induces, by taking the product with the disc bundle of the orthogonal complement (and rounding corners), a map
\begin{equation}
  \Omega^{U,\Gamma}_{*}((X,A) \times (W, W \setminus{0}))   \to  \Omega^{U,\Gamma}_{* + \dim W'/W }((X,A) \times (W', W' \setminus{0})).
\end{equation}
\begin{defn}
  The homotopical bordism group is the direct limit 
  \begin{equation}
h \Omega^{U,\Gamma}(X,A) \equiv \lim_{W}    \Omega^{U,\Gamma}_{* - \dim W}((X,A) \times (W, W \setminus{0}))
\end{equation}
taken over all inclusions of complex finite-dimensional sub-representations of a universal $\Gamma$-representation.
\end{defn}

An alternate definition is that of derived $\Gamma$-manifolds. This  consists of a $\Gamma$-manifold $M$, a $\Gamma$-vector bundle $E$, and an equivariant section $s$ of $E$. When $s^{-1}(0)$ is compact, we say that a derived $\Gamma$-manifold is proper. We can specialize the discussion of Section \ref{subsec:d-Omega-C} to obtain a notion of stably complex derived $\Gamma$-manifold. The derived stably complex bordism group
\begin{equation}
    d \Omega^{U,\Gamma}(X,A) 
\end{equation}
is then generated by proper derived $\Gamma$-manifolds equipped with an equivariant map $(M,\partial M) \to (X,A)$. The following is a standard consequence of the fact that vector bundles can be stabilized to representation (compatibly with complex structures).
\begin{prop}
  Homotopical and derived bordism are naturally isomorphic homology theories of $\Gamma$-spaces.
\end{prop}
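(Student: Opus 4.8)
The goal is to produce, naturally in the $\Gamma$-pair $(X,A)$, mutually inverse maps between $h\Omega^{U,\Gamma}(X,A)$ and $d\Omega^{U,\Gamma}(X,A)$. The plan is to first construct a map $d\Omega^{U,\Gamma}(X,A) \to h\Omega^{U,\Gamma}(X,A)$ by a Pontryagin--Thom type construction: given a proper stably complex derived $\Gamma$-manifold $(M,E,s)$ with equivariant map $g\colon(M,\partial M)\to(X,A)$, choose an equivariant embedding $M \hookrightarrow W$ into (the underlying space of) a finite-dimensional complex $\Gamma$-representation $W$, with equivariant tubular neighbourhood having normal bundle $\nu$. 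One then forms the derived $\Gamma$-manifold with underlying space a neighbourhood of $M$ in $W$, defining vector bundle the pullback of $\nu \oplus E$, and section the sum of the tautological normal section with $s$; projecting to $W$ and using $g$ gives a representative of a class in $\Omega^{U,\Gamma}_{*-\dim W}((X,A)\times(W,W\setminus 0))$, hence, passing to the colimit, a class in $h\Omega^{U,\Gamma}(X,A)$. The stable complex structure is carried along because $\nu$ can be stabilized to a complex bundle, compatibly with the given stable complex structure on $TM - E$; this is exactly the "standard consequence" alluded to in the text, already used in passing from geometric to homotopical bordism.

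The reverse map $h\Omega^{U,\Gamma}(X,A) \to d\Omega^{U,\Gamma}(X,A)$ is essentially the identity on representatives: a class in $\Omega^{U,\Gamma}_{*-\dim W}((X,A)\times(W,W\setminus 0))$ is represented by a stably complex $\Gamma$-manifold $N$ with a map $(N,\partial N) \to (X,A)\times(W,W\setminus 0)$, and the $W$-component of this map is a proper equivariant map $N \to W$; viewing $W$ as the total space of the trivial bundle $\underline{W} \to \{\mathrm{pt}\}$, one pulls back to present $N$ together with its map to $W$ as (a restriction of) a proper derived $\Gamma$-manifold whose zero locus carries the map to $(X,A)$. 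I would check that this assignment is compatible with the stabilization maps $W \subset W'$ in the colimit defining $h\Omega^{U,\Gamma}$ — taking the product with the disc bundle of $W'/W$ and rounding corners on the $h\Omega$ side corresponds to stabilizing the defining bundle by $\underline{W'/W}$ on the $d\Omega$ side, which is an allowed move — and that it descends to bordism classes.

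Then I would verify the two compositions are the identity. That $h\Omega^{U,\Gamma} \to d\Omega^{U,\Gamma} \to h\Omega^{U,\Gamma}$ is the identity is almost tautological: the Pontryagin--Thom construction applied to a manifold (derived manifold with trivial bundle) embedded in $W$ just recovers, up to the stabilization identifications, the class one started with. For the other composite, given $(M,E,s)$, the Pontryagin--Thom construction replaces it by a derived manifold supported near $M \subset W$ with bundle $\nu \oplus E$ and section (normal section)$\,\oplus\, s$; this is precisely a stabilization of $(M,E,s)$ in the sense of Section~\ref{subsec:d-Omega-C} (stabilization by $\nu$, which is a not-necessarily-complex bundle, and the text explicitly permits this), followed by a restriction to a neighbourhood of the zero locus — both of which are among the equivalence relations defining $d\Omega^{U,\Gamma}$. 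Hence the composite is the identity on $d\Omega^{U,\Gamma}(X,A)$.

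\textbf{Main obstacle.} The genuinely technical point is the bookkeeping of stable complex structures through the Pontryagin--Thom construction and through the stabilization maps in the colimit: one must check that the complex structure on $T M \oplus E' \oplus \underline{\mathbb{R}}^k$ and on $E \oplus E'$ (in the sense of Definition~\ref{defn:stable-complex-d-chart}) matches, after further stabilization, the one induced on the $W$-model, equivariantly and compatibly as $W$ grows, and that the isomorphism-class relation in that definition is exactly what is needed to make both composites well-defined on equivalence classes. Everything else — properness, the colimit compatibility, naturality in $(X,A)$, and the verification that we land in homology theories — is routine and parallel to the non-equivariant case and to \cite{pardon20}.
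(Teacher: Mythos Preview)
Your map $h\Omega^{U,\Gamma}\to d\Omega^{U,\Gamma}$ is correct and is exactly the paper's map: view the projection to $W$ as a section of the trivial bundle $\underline W$. The gap is in your map the other way. What you write down for $d\Omega^{U,\Gamma}\to h\Omega^{U,\Gamma}$ is: embed $M\hookrightarrow W$, take a tubular neighbourhood $U\subset W$, and form the derived $\Gamma$-manifold $(U,\ \nu\oplus E,\ \tau_\nu\oplus s)$. But this is still a \emph{derived} $\Gamma$-manifold; nothing you do removes the obstruction bundle $E$. ``Projecting to $W$ and using $g$'' then gives an honest manifold $U$ with a map to $X\times W$, but that map has nothing to do with $E$ or $s$ --- the preimage of $0\in W$ is a point, not $s^{-1}(0)$. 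Your own check of the composite $d\to h\to d$ exposes this: you compute the result to have obstruction bundle $\nu\oplus E$, whereas by your definition of $h\to d$ anything in the image must have obstruction bundle the \emph{trivial} bundle $\underline W$.

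The paper's fix is the missing idea: instead of embedding $M$, stabilise the bundle. Choose a $\Gamma$-complement $E'$ with $E\oplus E'\cong\underline W$ for some complex $\Gamma$-representation $W$ (this is the ``standard consequence'' the text refers to). The stabilised derived manifold has ambient space the total space of $E'$, obstruction bundle $\underline W$, and section $s\oplus\tau_{E'}$, which is now literally an equivariant map $E'\to W$, proper near $0$ since $s^{-1}(0)$ is compact. That is an honest representative in (the proper-near-origin model of) $\Omega^{U,\Gamma}_{*}((X,A)\times(W,W\setminus 0))$. With this in place both composites are visibly identities up to stabilisation and restriction, and the bookkeeping of stable complex structures that you flag as the main obstacle is indeed routine.
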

\begin{proof}[Sketch of proof]
  It is convenient for the proof to remove the requirement that the representative of a bordism class of the pair $ (X,A) \times (W, W \setminus{0})$ be compact with boundary. There is a reformulation of this bordism group as a bordism group of stably almost complex manifolds
  \begin{equation}
    (M,\partial M) \to \left( X \times W , A \times W \right)   
  \end{equation}
whose projection to $W$ is proper in a neighbourhood of the origin, and the comparison maps are given in one direction by passing to the interior of the manifold, and in the other by replacing $M$ by the inverse image of a small ball in $W$.

Proceeding with this model for homotopical bordism, we obtain a tautological map from homotopical to derived bordism by considering $W$ as a vector bundle over $M$, and the projection to $W$ as a section. The properness conditions on the two sides match up, as do the equivalence relations of bordism and stabilization. In the other direction, we stabilise a derived orbifold so that its total space $\mc U$ and the vector bundle $\mc E$ are both stably complex, then stabilize further (by a complex complement for $\mc E)$, so that $\mc E$ is obtained from a $\Gamma$-representation.
\end{proof}

\begin{proof}[Proof of Theorem \ref{thm:inclusion_geometric_homotopical_splits}:]
  For this statement, our goal is to construct a map
  \begin{align} \label{eq:resolution_derived_equivariant}
    d\Omega^{U,\Gamma}_*(X,A) & \to \Omega^{U,\Gamma}_*(X,A) \\ \notag
    (M,E,S) & \mapsto \tilde{Z}
      \end{align}
      which splits the inclusion of $\Gamma$-manifolds in derived $\Gamma$-manifolds. The right hand side decomposes as a direct sum indexed by the conjugacy classes of subgroups of $\Gamma$, corresponding to the generic stabilizer group.

      The proof is exactly the same as that of Theorem \ref{thm:main-derived-to-underived}, appealing directly to Proposition \ref{prop:equivariant-resolve}  in the isotropy-free part (instead of Proposition \ref{prop:construct-orbifold}), and to Proposition \ref{prop:equivariant-resolve-bold-gamma} in general, and splitting according to conjugacy classes of subgroups (and virtual representations) rather than abstract isomorphism classes: given a $\Gamma$-equivariant section $S$ of a $\Gamma$-vector bundle $E$ over an effective $\Gamma$-manifold $M$, the result of our construction is the union, over all subgroups of $\Gamma$ of the resolution algorithm applied to the intersection of a nearby FOP-regular section with the subset of the fixed point locus for groups in this conjugacy class, with the property that the normal representation and the obstruction representation are isomorphic (cf. the discussion after Equation \eqref{eqn:geoemtric-isotropy-type}).      The statement that this construction is compatible with the module structures follows from the compatibility of Algorithm II with taking products with manifolds.
\end{proof}

We complete this section with the proof of the last remaining result from Section \ref{sec:appl-equiv-stable}:
\begin{proof}[Proof of Theorem  \ref{thm:split-change-group-compatible}]
  For the statement arising from an injection $\Gamma \to \Gamma'$ of groups, we have to show that the induction functor which takes a $\Gamma$-manifold $M$ to $M \times_{\Gamma} \Gamma'$, a $\Gamma$-vector bundle $E$ to the induced bundle $E \times_{\Gamma} \Gamma'$, and acts on sections in the evident way is compatible with the perturbation-resolution algorithm. To establish this, we observe that the choices of $\Gamma$-equivariant straightenings of $M$ and $E$ that are required for the construction induce $\Gamma'$-equivariant straightenings: indeed, we have a $\Gamma$-equivariant inclusion map $M \to M \times_{\Gamma} \Gamma'$, and every non-empty fixed point stratum $\left( M \times \Gamma' \right)^{\Sigma'}$ for a subgroup $\Sigma'$ of $\Gamma$ is the image of a fixed point stratum $M^{\Sigma}$ for a subgroup $\Sigma$ of $\Gamma$, under an element of $\Gamma'$ which is unique up to left multiplication by appropriate elements of $\Gamma$, so that the choice made near $M^{\Sigma}$ freely induces such a choice for $ \left( M \times \Gamma' \right)^{\Sigma'}$ (in the sense that no additional conditions are imposed). Tracing through the construction then shows that, using at every step the induced choices yields a diffeomorphism of $\Gamma'$-manifold between $\tilde{Z} \times_{\Gamma} \Gamma$ (with $\tilde{Z}$ as in Equation \eqref{eq:resolution_derived_equivariant}) and the result of applying the algorithm to the induced group.

  In the case of a surjection $\Gamma \to \Gamma'$ of groups, the argument is even more straightforward: a $\Gamma'$-manifold gives rise to a $\Gamma$-manifold via the group homomorphism, and so are all the additional structures (of a vector, of a section, and of straightenings) in an evident way. The perturbation-resolution algorithm for $\Gamma'$ derived manifolds imposes more constraints (arising from conjugacy in $\Gamma'$), but these nonetheless restrict to data of the required type for the corresponding $\Gamma$-manifold, yielding the compatibility of resolutions.
\end{proof}

\section{Applications}\label{sec:applications}

\subsection{Complex bordism valued Gromov--Witten invariants}
Let $(X, \omega)$ be a closed symplectic manifold and denote by ${\mc J}(X, \omega)$ the space of $\omega$-tame almost complex structures on $TX$. For integers $g, k \geq 0$, a homology class $A \in H_2(X;{\mb Z})$, and $J \in {\mc J}(X, \omega)$, write
\beq
\ov{\scrM}_{g,k}(X,J,A)
\eeq
for the moduli space of stable genus $g$ $J$-holomorphic maps, representing the homology class $A$, equipped with $k$ marked points. It is a compact Hausdorff space with respect to the Gromov topology. The stabilization map to the Deligne--Mumford space $\ov{\scrM}_{g,k}$ (we just use the coarse space here) and the evaluation maps at the marked points define a continuous map
\beq\label{eqn:st-times-ev}
\mathrm{st} \times \mathrm{ev}: \ov{\scrM}_{g,k}(X,J,A) \to \ov{\scrM}_{g,k} \times X^k.
\eeq
We explain how to extract a complex bordism class 
\beq
\ov{\mathbf{M}}_{g.k}(X,A)
\eeq
over $\ov{\scrM}_{g,k} \times X^k$ from the moduli space $\ov{\scrM}_{g,k}(X,J,A)$ which is independent of $J \in {\mc J}(X, \omega)$, which thus gives rise to a \emph{complex bordism valued Gromov--Witten invariant}.

\subsubsection{Global Kuranishi charts}
A \emph{global Kuranishi chart} for a compact Hausdorff topological space $Z$ is a tuple
\beq
(G, M, E, \mathfrak{s}, \Psi)
\eeq
such that $G$ is a compact Lie group, $M$ is a topological $G$-manifold on which $G$ acts almost freely, $E \to M$ is a $G$-equivariant vector bundle, $\mathfrak{s}: M \to E$ is a $G$-equivariant section, and  $\Psi: \mathfrak{s}^{-1}(0)/G \xrightarrow{\sim} Z$ is a homeomorphism. We usually omit $\Psi$ for the notation. We can introduce the following operations on global Kuranishi charts.
\begin{enumerate}
    \item \emph{Shrinking}: if $M' \subset M$ is a $G$-invariant open subset containing $\mathfrak{s}^{-1}(0)$, then
    \beq
    (G, M', E|_{M'}, \mathfrak{s}|_{M'})
    \eeq
    also defines a global Kuranishi chart for $Z$.
    \item \emph{Stabilization}: if $\pi_{E'}: E' \to M$ is a $G$-equivariant vector bundle and denote by $\tau_{E'}: E' \to \pi_{E'}^* E$ the tautological section, then
    \beq
    (G, E', \pi_{E'}^* E \oplus \pi_{E'}^* E',\pi_{E'}^* \mathfrak{s} \oplus \tau_{E'})
    \eeq
    is a global Kuranishi chart for $Z$.
    \item \emph{Group enlargement}: if $G'$ is another compact Lie group and $\pi_{P}: P \to M$ is a principal $G'$-bundle, then
    \beq
    (G \times G', P, P \times_{M} E, \pi_{P}^* \mathfrak{s})
    \eeq
    defines a global Kuranishi chart for $Z$.
\end{enumerate}
Note that if $M$ comes with a continuous $G$-invariant map $f: M \to Y$ to a topological space $Y$, it induces $G$-invariant continuous maps from $M'$, $E'$, and $P$ from above by composing with the inclusion map, $\pi_{E'}$, and $\pi_{P}$ respectively.

A pair of global Kuranishi charts $(G, M, E, \mathfrak{s})$ and $(G, M', E', \mathfrak{s}')$ equipped with $G$-invariant continuous maps $f: M \to Y$, $f: M' \to Y$, are \emph{bordant} if there exists a tuple $(G, \hat{M}, \hat{E}, \hat{\mathfrak{s}}, \hat{F})$ such that $\hat{M}$ is a topological $G$-manifold with boundary with $\partial \hat{M} = M \coprod M'$, along which the $G$-equivariant vector bundle $\hat{E} \to \hat{M}$ and the $G$-equivariant section $\hat{\mathfrak{s}}: \hat{M} \to \hat{E}$ restricts to $E \coprod E'$ and $\mathfrak{s} \coprod \mathfrak{s}'$; furthermore, the restriction of the $G$-invariant $F$ along $\partial \hat{M}$ is required to agree with $f \coprod f'$.
 
Using this language, the main theorems of \cite{abouzaid2021complex, AbouzaidMcLeanSmith2023} and \cite{hirschi2022global} can be summarized as follows. 

\begin{thm}\label{thm:global-charts}
For any $g, k \in {\mb Z}_{\geq 0}$, $A \in H_2(M;{\mb Z})$, and $J \in {\mc J}(X,\omega)$, the moduli space $\ov{\scrM}_{g,k}(X,J,A)$ admits a global Kuranishi chart $(G, M, E, \mathfrak{s})$, that depends on auxiliary data, and which is equipped with a continuous map $f: M \to \ov{\scrM}_{g,k} \times X^k$ which restricts to \eqref{eqn:st-times-ev} over $\mathfrak{s}^{-1}(0)$. 

For different choices of auxiliary data, the global Kuranishi charts are related by shrinkings, stabilizations, and group enlargements. For any pair of choices of $J, J' \in {\mc J}(X,\omega)$, there exist auxiliary data such that the induced global Kuranishi charts $(G, M, E, \mathfrak{s})$ for $\ov{\scrM}_{g,k}(X,J,A)$ and $(G, M', E', \mathfrak{s}')$ for $\ov{\scrM}_{g,k}(X,J',A)$ together with the maps $f,f'$ are bordant. \qed
\end{thm}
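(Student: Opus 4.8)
The statement to prove (Theorem \ref{thm:global-charts}) is a summary of results from \cite{abouzaid2021complex, AbouzaidMcLeanSmith2023, hirschi2022global}, so rather than reprove those constructions I would treat this as an exposition that assembles them into the stated form, emphasizing the parts that are genuinely needed downstream (the existence of a global Kuranishi chart with a map to $\ov{\scrM}_{g,k}\times X^k$, and the bordism relating two choices). The plan is to recall the construction of the global Kuranishi chart from one of the cited references — say the Abouzaid--McLean--Smith model, which is the most uniform — and then check that the operations relating different charts are exactly the shrinkings, stabilizations, and group enlargements defined just above.

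First I would describe the chart: given $J\in{\mc J}(X,\omega)$ one chooses auxiliary data (a sufficiently positive line bundle on $X$ or its analogue, a Hermitian metric, an integer controlling the degree of an embedding into projective space, and a choice of stabilizing divisor) to rigidify stable maps, producing a finite-dimensional manifold $M$ of ``prestable maps with framing data'' carrying an almost-free action of a compact Lie group $G$ (a unitary group coming from the framing), a $G$-equivariant obstruction bundle $E\to M$ assembled from the Cauchy--Riemann data, and a $G$-equivariant section $\mathfrak{s}$ whose zero locus is $G$-equivariantly homeomorphic to $\ov{\scrM}_{g,k}(X,J,A)$. The map $f:M\to\ov{\scrM}_{g,k}\times X^k$ is built from the stabilization map on the domain curve (which is defined on all of $M$, not just the zero locus, since prestable domains still have a stabilization) and the evaluation at the $k$ marked points (likewise defined on $M$); one checks it restricts to $\mathrm{st}\times\mathrm{ev}$ on $\mathfrak{s}^{-1}(0)$ after passing to the quotient. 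This part is essentially a matter of citing \cite{AbouzaidMcLeanSmith2023} (or \cite{hirschi2022global}) and noting that their map to the target is natural in the auxiliary choices.

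Next I would address the comparison statements. For a single $J$, changing the auxiliary data changes $M$ by: enlarging the degree of the projective embedding (a group enlargement, replacing $G$ by a larger unitary group and $M$ by a principal bundle over it), changing the stabilizing divisor (a stabilization, adding a vector bundle and the tautological section which cuts out the locus meeting the divisor correctly), or restricting to a smaller invariant neighborhood of the zero locus (a shrinking); one must verify each of these is compatible with the map $f$, which it is because $f$ is pulled back along the relevant projections. For two almost complex structures $J,J'$, one uses a path $J_t$ and runs the same construction over the interval, with auxiliary data chosen (using a relative version of the genericity/positivity statements) so that the restrictions at the endpoints agree with the charts for $J$ and $J'$ up to the three operations, which yields the bordism $(G,\hat M,\hat E,\hat{\mathfrak{s}},\hat F)$; the compact Lie group $G$ can be taken constant along the path after a preliminary group enlargement at both ends.

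The main obstacle I expect is bookkeeping rather than conceptual: ensuring that the three operations (shrinking, stabilization, group enlargement) suffice to relate \emph{any} two choices of auxiliary data — in particular that the group $G$ can always be made to match, which requires the ``common refinement'' argument that two framings can be dominated by a third — and that the continuous map $f$ to $\ov{\scrM}_{g,k}\times X^k$ is genuinely defined on the thickening $M$ and not merely on $\mathfrak{s}^{-1}(0)$, since this is exactly what is needed for the downstream derived-orbifold bordism class in $d\Omega^U_*(\ov{\scrM}_{g,k}\times X^k)$ to be well-defined. I would keep this step honest by isolating it as a lemma and pointing to the precise statements in \cite{abouzaid2021complex, AbouzaidMcLeanSmith2023, hirschi2022global}, since reproving the transversality and properness inputs is outside the scope of this paper.
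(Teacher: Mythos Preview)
Your proposal is correct and matches the paper's approach: the theorem is stated with a \qed and no proof, being explicitly introduced as a summary of the main theorems of \cite{abouzaid2021complex, AbouzaidMcLeanSmith2023} and \cite{hirschi2022global}. Your expository plan is more detailed than what the paper provides, but the strategy---treat it as a citation rather than reprove the construction---is exactly what the paper does.
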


\subsubsection{Smoothing and tangential structures}\label{sec:hand-waving}
In fact, a global Kuranishi chart $(G, M, E, \mathfrak{s})$ for $\ov{\scrM}_{g,k}(X,J,A)$ from Theorem \ref{thm:global-charts} is endowed with extra structures. The ambient $G$-manifold $M$ comes with a continuous map $\pi$ to a smooth $G$-manifold $F$ such that $\pi: M \to F$ is a $G$-equivariant fiberwise submersion and a $C^1_{loc}$ G-bundle \cite[Section 4.4, 4.5]{abouzaid2021complex}. 
As a result, $\pi: M \to F$ has a well-defined vertical tangent bundle $T^v M$. Additionally, the $C^1_{loc}$ $G$-bundle structure guarantees that the $G$-equivariant vector bundle
\beq
T^v M \oplus \pi^* TF
\eeq
defines a $G$-equivariant \emph{vector bundle lift} of the tangent microbundle of $M$. Accordingly, Lashof's stable $G$-smoothing theory \cite{Lashof_1979} can be applied to $M$ to equip a stabilization of it with a smooth structure such that $G$ acts smoothly. For such a smooth structure, the corresponding stabilization of the vector bundle $E \to M$ is isomorphic to a smooth $G$-equivariant vector bundle, which can be seen through approximating the classifying map.

Geometrically, the $G$-manifold $M$ is constructed from the moduli space of certain perturbed $J$-holomorphic curves mapping into $X$, the base $F$ is a $G$-equivariant smooth quasi-projective variety parametrizing suitable classes of stable maps of the standard complex projective spaces. An index theoretic analysis shows that the real $G$-equivariant virtual vector bundle
\beq
(T^v M \oplus \mathfrak{g} \oplus \pi^* TF) \ominus E \cong TM \oplus \mathfrak{g}  \ominus E 
\eeq
admits a  $G$-equivariant \emph{stable complex} structure \cite[Section 6.8]{abouzaid2021complex}.

\subsubsection{The bordism-valued invariants} 
Suppose that we have a global Kuranishi chart $(G, M, E, \mathfrak{s})$ satisfying the following assumptions:
\begin{itemize}
    \item $M$ is a smooth manifold on which $G$ acts smoothly;
    \item $E \to M$ is a smooth $G$-equivariant vector bundle and $\mathfrak{s}: M \to E$ is a smooth section;
    \item the virtual $G$-equivariant vector bundle $(TM \oplus \mathfrak{g}) \ominus E$ is stably  complex. 
\end{itemize}
Then we see that the $G$-quotient
\beq\label{eqn:quotient-d-chart}
(M // G, E // G, \mathfrak{s}^G)
\eeq
defines a stably complex (cf. Definition \ref{defn:stable-complex-d-chart}) and compact derived orbifold. If $f: M \to Y$ is a $G$-invariant continuous map to a topological space $Y$, then \eqref{eqn:quotient-d-chart} and the induced map $f: M // G \to Y$ give rise to an element of $d\Omega_*^{U}(Y)$. Combined with the facts recalled in Section \ref{sec:hand-waving}, Theorem \ref{thm:global-charts} admits the following enhancement:

\begin{thm}\label{eqn:d-orb-GW}
After taking stabilization, any global Kuranishi chart $(G, M, E, \mathfrak{s})$ for $\ov{\scrM}_{g,k}(X,J,A)$ can be promoted to one for which $M$ is a smooth $G$-manifold and $E \to M$ is a smooth $G$-equivariant vector bundle. 

Approximating $\mathfrak{s}$ by a smooth $C^0$-close $G$-equivariant section $\mathfrak{s}'$, the class in $d\Omega_*^{U}(\ov{\scrM}_{g,k} \times X^k)$ defined by the derived orbifold chart
\beq
(M // G, E // G, (\mathfrak{s}')^G)
\eeq
coupled with the continuous map $M // G \to \ov{\scrM}_{g,k} \times X^k$ induced from $f: M \to \ov{\scrM}_{g,k} \times X^k$ is independent of the choice of auxiliary data in the construction of global charts and smoothing, the almost complex structure $J$, and the approximation $\mathfrak{s}'$. \qed
\end{thm}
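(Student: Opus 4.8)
The plan is to assemble Theorem~\ref{eqn:d-orb-GW} by combining the constructions recalled in Section~\ref{sec:hand-waving} with the bordism-theoretic machinery built up in Sections~\ref{subsec:d-Omega-C}--\ref{subsec:splitting}, reducing every claim of well-definedness to the fact that $d\Omega^U_*$ is a homology theory (Pardon \cite{pardon20}, cf.\ also \cite[Proposition 7.23]{Bai_Xu_2022}) in which the bordism relation is, after stabilization, genuinely an equivalence relation. First I would address the smoothability statement: by Theorem~\ref{thm:global-charts} together with the discussion of Section~\ref{sec:hand-waving}, the ambient $G$-manifold $M$ fibres over a smooth $G$-manifold $F$ as a $C^1_{loc}$ $G$-bundle, so $T^vM\oplus \pi^*TF$ is an honest $G$-equivariant vector bundle lift of the tangent microbundle; Lashof's equivariant smoothing theory \cite{Lashof_1979} then produces a smooth $G$-structure on a stabilization of $M$, and approximating the classifying map of $E$ replaces its stabilization by a smooth $G$-equivariant bundle. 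I would emphasize that stabilization of a global Kuranishi chart does not alter the element it represents in $d\Omega^U_*$, so no information is lost here.

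Second, I would extract the stable complex structure on the derived orbifold: the index-theoretic computation of \cite[Section 6.8]{abouzaid2021complex} gives a $G$-equivariant stable complex structure on $TM\oplus\mathfrak g\ominus E$, which is exactly the data of a stable complex structure on $T(M/\!\!/G)\ominus(E/\!\!/G)$ in the sense of Definition~\ref{defn:stable-complex-d-chart} (here the freedom to stabilize by an auxiliary bundle $\mathcal E'$ is what lets us pass between the ``genuine complex structure on a stabilization'' produced by index theory and the stable notion). Combined with the $C^0$-small smooth equivariant perturbation $\mathfrak s'$ of $\mathfrak s$ --- which exists because $M$ is now a smooth $G$-manifold with $E$ smooth --- and with the invariant continuous map $f\colon M\to \ov{\scrM}_{g,k}\times X^k$, the quotient $(M/\!\!/G,\,E/\!\!/G,\,(\mathfrak s')^G)$ is a compact stably complex derived orbifold over $\ov{\scrM}_{g,k}\times X^k$, hence defines a class in $d\Omega^U_*(\ov{\scrM}_{g,k}\times X^k)$.

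Third comes the independence statement, which I would break into four separate invariances, each handled by exhibiting an appropriate bordism of derived orbifolds over $\ov{\scrM}_{g,k}\times X^k$. (i) Independence of $\mathfrak s'$: any two $C^0$-small smooth $G$-equivariant perturbations are joined by a linear homotopy, which on the quotient gives a derived orbifold bordism (restrict to a neighbourhood of the zero locus, which stays compact for the interpolation by the compactness input of Theorem~\ref{thm:global-charts}); this also absorbs the small ambiguity in ``$C^0$-close.'' (ii) Independence of the auxiliary data in the global chart and in the smoothing: by Theorem~\ref{thm:global-charts} two such charts are related by a finite chain of shrinkings, stabilizations, and group enlargements, and I would check that each of these three operations, after passing to $G$-quotients, changes the derived orbifold only by a restriction or a stabilization in the sense of Section~\ref{sec:derived-orbifolds}, both of which are equivalences in $d\Omega^U_*$; group enlargement by a principal $G'$-bundle $P\to M$ replaces $M/\!\!/G$ by the isomorphic orbifold $P/\!\!/(G\times G')$ and is therefore invisible. (iii) Independence of $J$: for $J,J'$ Theorem~\ref{thm:global-charts} provides charts whose $G$-quotients are bordant as derived $G$-manifolds; taking the $G$-quotient of the bordism $(G,\hat M,\hat E,\hat{\mathfrak s},\hat F)$ and choosing a smooth structure and perturbation extending the chosen ones on the two ends (again using Lashof's theory relatively and the relative perturbation statement) yields the required derived orbifold bordism over $\ov{\scrM}_{g,k}\times X^k$. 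Compatibility of the stable complex structures along the bordism follows from the naturality of the index-theoretic construction, and the continuous map extends because $\hat F$ restricts to $f\coprod f'$.

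The main obstacle I anticipate is the \emph{relative} form of the smoothing-and-perturbation step used in invariance (iii): one must produce, on the bordism chart $\hat M$, a smooth $G$-structure and a smooth $G$-equivariant perturbation of $\hat{\mathfrak s}$ that are \emph{collared}, i.e.\ restrict on a neighbourhood of $\partial\hat M$ to the product of the already-fixed smooth structures and perturbations on $M$ and $M'$, together with a compatible stable complex structure on $T\hat M\oplus\mathfrak g\ominus\hat E$. This requires a relative version of Lashof's theorem (extending a smooth $G$-structure given near the boundary) and a relative version of the approximation of the classifying map of $\hat E$, and then the relative FOP/genericity input to perturb $\hat{\mathfrak s}$ rel boundary --- the analogue of \cite[Proposition 6.7]{Bai_Xu_2022} but at the level of smoothing theory rather than resolution. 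Everything else is bookkeeping in the homology theory $d\Omega^U_*$; this collaring compatibility is where the real work lies, and I would isolate it as a lemma before assembling the proof.
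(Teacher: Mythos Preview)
Your proposal is correct and follows essentially the same approach as the paper, which in fact defers the entire argument to \cite[Corollary 4.60]{AbouzaidMcLeanSmith2023}, \cite[Theorem 2.16]{hirschi2022global}, and \cite[Section 8]{Bai_Xu_2022} after noting that invariance under the smooth approximation $\mathfrak s'$ is manifest. Your outline is a faithful unpacking of what those references contain: Lashof smoothing after stabilization, the index-theoretic stable complex structure, and invariance via the shrinking/stabilization/group-enlargement moves of Theorem~\ref{thm:global-charts} together with the bordism for varying $J$; the relative smoothing-and-perturbation issue you flag is indeed the substantive point, and it is exactly what is handled in the cited references. One small remark: the rel-boundary perturbation of $\hat{\mathfrak s}$ needed here is just an ordinary smooth $G$-equivariant $C^0$-approximation, not an FOP perturbation---the FOP machinery only enters later when applying the splitting map $\mathcal Z$, not in producing the class in $d\Omega^U_*$.
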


The cobordism invariance of choosing differrent smooth approximations is manifest. The rest is a compact form of the results from \cite[Corollary 4.60]{AbouzaidMcLeanSmith2023} and \cite[Theorem 2.16]{hirschi2022global}. For a more detailed exposition, the reader could consult \cite[Section 8]{Bai_Xu_2022}.

\begin{thm}\label{thm:GW-bordism}
Given a closed symplectic manifold $(X,\omega)$, for $g,k \in {\mb Z}_{\geq 0}$ and $A \in H_2(X;{\mb Z})$, there exists a complex bordism valued Gromov--Witten invariant
\beq\label{eqn:MU-GW}
(\ov{\mathbf{M}}_{g.k}(X,A), \mathrm{st} \times \mathrm{ev}) \in \Omega_*^{U}(\ov{\scrM}_{g,k} \times X^k)
\eeq
whose underlying complex manifold $\ov{\mathbf{M}}_{g.k}(X,A)$ parametrizes a `resolution' of the moduli space of stable pseudo-holomorphic maps of genus $g$ with $k$ marked points representing the class $A$.
\end{thm}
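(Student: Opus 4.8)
The plan is to assemble Theorem \ref{thm:GW-bordism} as a formal consequence of the three preceding results: Theorem \ref{thm:main} (the splitting map ${\mc Z}$ from derived orbifold bordism over a topological space to orbifold bordism, which by Theorem \ref{thm:split_manifold_to_orbifold} lands in manifold bordism), Theorem \ref{eqn:d-orb-GW} (the construction of a well-defined class in $d\Omega_*^{U}(\ov{\scrM}_{g,k} \times X^k)$ from global Kuranishi charts for $\ov{\scrM}_{g,k}(X,J,A)$), and Theorem \ref{thm:global-charts} together with the smoothing and stable complex structure discussion of Section \ref{sec:hand-waving}. First I would recall from Theorem \ref{eqn:d-orb-GW} that the global Kuranishi chart presentation of $\ov{\scrM}_{g,k}(X,J,A)$, after stabilization to a smooth $G$-manifold $M$ with smooth $G$-equivariant bundle $E$ and a $C^0$-small smooth equivariant perturbation $\mathfrak{s}'$ of the section, yields a compact stably complex derived orbifold $(M//G, E//G, (\mathfrak{s}')^G)$ over $\ov{\scrM}_{g,k} \times X^k$, and that its class
\beq
[\ov{\scrM}_{g,k}(X,A)]^{\mathrm{der}} \in d\Omega_*^{U}(\ov{\scrM}_{g,k} \times X^k)
\eeq
is independent of all auxiliary choices, of the smoothing, of $J$, and of the approximating section $\mathfrak{s}'$.

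Next I would apply the natural transformation ${\mc Z} : d\Omega^{U} \circ R \to \Omega^{U}$ of Theorem \ref{thm:main} to this class; here the target space $\ov{\scrM}_{g,k} \times X^k$ is an honest topological space, so one applies the functor $R$ and invokes the bijection between representable maps to $R(\ov{\scrM}_{g,k}\times X^k)$ and maps from orbifolds to $\ov{\scrM}_{g,k}\times X^k$, exactly as in the setup preceding Theorem \ref{thm:split_manifold_to_orbifold}. We set
\beq
(\ov{\mathbf{M}}_{g.k}(X,A), \mathrm{st}\times\mathrm{ev}) := {\mc Z}\big( [\ov{\scrM}_{g,k}(X,A)]^{\mathrm{der}} \big) \in \Omega_*^{U}(\ov{\scrM}_{g,k}\times X^k),
\eeq
where $\ov{\mathbf{M}}_{g.k}(X,A)$ is the stably complex manifold produced by the destackification Algorithm I applied to the normally complex orbifold resolving the zero locus of the FOP-regular perturbation (Algorithm II, Step 3). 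Independence of $J$ then follows because ${\mc Z}$ is a natural transformation of homology theories, hence sends the well-defined derived class to a well-defined bordism class; the cobordism invariance bundled into Theorem \ref{eqn:d-orb-GW} is transported verbatim. That $\ov{\mathbf{M}}_{g,k}(X,A)$ "parametrizes a resolution" of $\ov{\scrM}_{g,k}(X,J,A)$ is a matter of tracing through Proposition \ref{prop:construct-orbifold} and Algorithm I: over the locus where the moduli space is already a transversely cut-out manifold with trivial isotropy, the FOP perturbation, the resolution of singularities, and the destackification all act as the identity, which also yields the final compatibility clause of Theorem \ref{thm:intro}.

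The main obstacle is not any single deep estimate — all the hard analysis (existence of global Kuranishi charts, the $C^1_{loc}$-structure, Lashof smoothing, the stable complex structure on $TM\oplus\mathfrak{g}\ominus E$) is imported from \cite{abouzaid2021complex, AbouzaidMcLeanSmith2023, hirschi2022global} via Theorem \ref{thm:global-charts} and Theorem \ref{eqn:d-orb-GW}, and all the resolution-theoretic content is imported from Sections \ref{sec:abelianize}--\ref{sec:derive-manifolds} — rather the bookkeeping of matching the \emph{normally complex} hypotheses of Algorithm II with the \emph{stably complex} data coming out of the Kuranishi package. Concretely, I must check that the ambient orbifold $M//G$, after the restrictions and stabilizations permitted in Remark \ref{rem:algorithm}(2), can be made effective (using Pardon's result \cite{pardon19} as in Remark \ref{rem:faithful}) and that its tangent bundle and the obstruction bundle acquire genuine complex structures from the stable ones, so that Remark \ref{rem:algorithm}(3) applies and the output carries a stable complex structure. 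One must also confirm that the $G$-invariant continuous map $f\colon M \to \ov{\scrM}_{g,k}\times X^k$ descends through every blow-up, root stack, and destackification step — this is exactly the functoriality-under-the-map-to-the-target built into Theorem \ref{thm:main-derived-to-underived} and Theorem \ref{thm:split_manifold_to_orbifold}, so the verification is routine but must be stated. Finally I would remark, as in the paper's outlook, that the invariant depends only on the deformation class of $(X,\omega)$, since a path of tame almost complex structures, or a symplectic deformation, produces a bordism of global Kuranishi charts and hence, after applying ${\mc Z}$, a bordism of the resulting stably complex manifolds.
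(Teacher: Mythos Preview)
Your proposal is correct and follows the same approach as the paper: apply the splitting map ${\mc Z}$ of Theorem \ref{thm:main} to the derived orbifold bordism class produced by Theorem \ref{eqn:d-orb-GW}, and observe that the map $\mathrm{st}\times\mathrm{ev}$ is the composite of the blow-down/resolution map with the ambient map on the Kuranishi chart. The paper's proof is in fact much terser than yours---two sentences---because the bookkeeping you flag (effectiveness, upgrading stable to genuine normal complex structures, functoriality of the map to the target) has already been absorbed into the proof of Theorem \ref{thm:main-derived-to-underived} and Remark \ref{rem:algorithm}, so at this point one simply invokes ${\mc Z}$.
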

\begin{proof}
We simply apply the splitting map ${\mc Z}: d\Omega_*^{U}(\ov{\scrM}_{g,k} \times X^k) \to \Omega_*^{U}(\ov{\scrM}_{g,k} \times X^k)$ in \eqref{eqn:splitting-map} to the class constructed in Theorem \ref{eqn:d-orb-GW}. The bordism map $\mathrm{st} \times \mathrm{ev}$ is just defined to be the composition of the blow-down map with the restriction of the bordism map on the ambient orbifold.
\end{proof}

\begin{rem}
\begin{enumerate}
    \item The class \eqref{eqn:MU-GW} should be viewed as a virtual fundamental class of the moduli space $\ov{\scrM}_{g,k}(X,J,A)$ with value in $\Omega_*^U$. One can either take the intersection of it with other classes from $\Omega_*^{U}(\ov{\scrM}_{g,k} \times X^k)$ or look at, e.g., Chern numbers of the complex manifold $\ov{\mathbf{M}}_{g.k}(X,A)$ to define numerical enumerative invariants.
    \item It is an open problem that to what extent the Kontsevich--Manin axioms of ordinary Gromov--Witten invariants are met by the $\Omega_{*}^{U}$-valued Gromov--Witten invariants. We expect it to be a subtle problem, because the resolution algorithm in \cite{abramovich2019functorial}, which is adapted in Section \ref{sec:derive-manifolds}, fails to respect the Cartesian product. This is in contrast with the multiplicativity property of the strong transversality of FOP sections established in \cite[Section 2]{bai2022arnold}.
\end{enumerate}
\end{rem}

\subsection{Hamiltonian fibrations over $S^2$}\label{subsec:split}
This subsection is devoted to the proof of Theorem \ref{thm:splitting-B}. This shall require a little bit of mental gymnastics, as the construction of spectra from geometric bordism is not quite standardized in the literature, and we shall use the Thom spectrum $MU$ for the rest of this section.

\subsubsection{Reformulation in terms of Hamiltonian fibrations}
\label{sec:reform-terms-hamilt}

Via the family version of clutching construction over $S^2$, one associates to the sweepout map the (universal) Hamiltonian fibration over $S^2$ with fiber $X$ over $0 \in S^2$, which is a fiber bundle
\begin{equation} \label{eq:universal_fibre_bundle}
  P \to   \Omega \mathrm{Ham}(M, \omega) \times S^2.
\end{equation}

\begin{lemma}\label{lem:homotopy-classes}
  There is a bijection between the set of homotopy classes of null homotopies for \eqref{eqn:B-sweepout}, and maps of spectra
  \begin{equation} \label{eq:map_from_total_space_to_X}
 P_+ \wedge  {\mb S} \to X_+ \wedge MU
  \end{equation}
  so that the composite 
  \begin{equation}
    \left( \Omega \mathrm{Ham}(M, \omega) \times X \right)_+ \wedge  {\mb S} \to P_+ \wedge  {\mb S} \to X_+ \wedge MU
  \end{equation}
 agrees with the projection map.
\end{lemma}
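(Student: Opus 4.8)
The plan is to reduce the statement to the classical dictionary between clutching functions and trivializations, applied to the $X$-fibration $P$ of \eqref{eq:universal_fibre_bundle} after smashing with $MU$. Write $B:=\Omega\mathrm{Ham}(X,\omega)$, based at the constant loop, and present $S^2$ as the reduced suspension $\Sigma S^1$ with hemispheres $D_N\cup_{S^1}D_S$ and north pole $N$ as basepoint. By construction $P$ is obtained by clutching the two trivial bundles $B\times D_N\times X$ and $B\times D_S\times X$ along $B\times S^1\times X$ via the sweepout; since the sweepout is the identity over the constant loop and over the basepoint of $S^1$ (where the bundle is trivial), it descends to the based map $B\wedge S^1\to\mathrm{Aut}(X)$ underlying \eqref{eqn:B-sweepout}, and $P$ carries tautological trivializations over $B\times\{N\}$ and over $\{*_B\}\times S^2$ agreeing on their overlap. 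First I would identify the fiber inclusion $(\Omega\mathrm{Ham}(X,\omega)\times X)_+\wedge\mathbb{S}\to P_+\wedge\mathbb{S}$ of the Lemma with the inclusion $\iota$ of the fiber over $N$ (the fiber over any point of $S^2$ being homotopic to this one), and the ``projection map'' with $\mathrm{proj}\colon(B\times X)_+\wedge\mathbb{S}\to X_+\wedge\mathbb{S}\to X_+\wedge MU$, collapse followed by the unit.

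The key step is to produce, from the Mayer--Vietoris decomposition of $P$, a cofiber sequence of spectra
\[(B\times X)_+\wedge\mathbb{S}\xrightarrow{\ \iota\ }P_+\wedge\mathbb{S}\xrightarrow{\ q\ }\operatorname{cofib}(\iota)\xrightarrow{\ \partial\ }\Sigma\big((B\times X)_+\wedge\mathbb{S}\big),\]
to compute $\operatorname{cofib}(\iota)$ by contracting $D_N$ and $D_S\simeq CS^1$, and --- this is the crux --- to identify the desuspended connecting map $\sigma:=\Sigma^{-1}\partial$, after postcomposition with $\mathrm{proj}$ and with the unit $\mathbb{S}\to MU$, with the adjoint $\overline{\nu}\colon B\wedge S^1\wedge X_+\wedge\mathbb{S}\to X_+\wedge MU$ of \eqref{eqn:B-sweepout}. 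Concretely, $\sigma$ is the stabilization of $(\gamma,t,x)\mapsto(\gamma,\mathrm{sw}_\gamma(t)x)$ taken relative to the untwisted inclusion $(\gamma,x)\mapsto(\gamma,x)$, so its component in the constant-loop direction ($S^0\subset B_+$, i.e.\ the basepoint of $S^1$) is null and it is detected on the reduced summand $B\wedge S^1\wedge X_+\wedge\mathbb{S}$. Granting this, applying $[-,X_+\wedge MU]$ to the cofiber sequence turns ``$\psi$ with $\psi\iota=\mathrm{proj}$'' into an extension problem for $\mathrm{proj}$ along $\iota$, whose obstruction is the composite $\mathrm{proj}\circ\sigma=\overline{\nu}$: such $\psi$ exists iff \eqref{eqn:B-sweepout} is null-homotopic, and the homotopy classes of $\psi$'s form a torsor under $\operatorname{coker}\big(\partial^*\colon[\Sigma((B\times X)_+\wedge\mathbb{S}),X_+\wedge MU]\to[\operatorname{cofib}(\iota),X_+\wedge MU]\big)$. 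On the other side, by the adjunctions $\Sigma^\infty\dashv\Omega^\infty$ and the smash--hom adjunction, a null-homotopy of \eqref{eqn:B-sweepout} is a null-homotopy of $\overline{\nu}$; since $\overline{\nu}=\mathrm{proj}\circ\sigma$ and $\sigma$ is the desuspended connecting map, the cone on $\sigma$ is identified with $q$ up to suspension, so the homotopy classes of null-homotopies form the same torsor, and matching the distinguished elements (the trivial null-homotopy, versus the $\psi$ obtained by collapsing $P$ onto its $N$-fiber, in the degenerate case where \eqref{eqn:B-sweepout} is constant) yields the asserted bijection.

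The main obstacle is the identification of the connecting map $\partial$ with the adjoint of the sweepout: this requires unwinding the construction of \eqref{eq:universal_fibre_bundle} together with careful orientation conventions for $S^2=\Sigma S^1$, and --- the more delicate point --- keeping track of reduced versus unreduced smash products, so that the constant-loop and basepoint-of-$S^1$ directions drop out on the nose. This bookkeeping is also what reconciles the ``$B_+$'' appearing in $\operatorname{cofib}(\iota)$ with the ``$B\wedge S^1$'' appearing in \eqref{eqn:B-sweepout}, and hence what makes the two torsors --- and not merely the two ``existence'' statements --- match; I would carry it out by choosing explicit cell-spectrum models for $B\times S^2$ relative to its trivial locus $B\vee S^2$ and tracking the attaching maps. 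Everything else is a routine diagram chase with the long exact sequence of a cofiber sequence together with the $\Sigma^\infty/\Omega^\infty$ and smash--hom adjunctions.
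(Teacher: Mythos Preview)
Your approach is genuinely different from the paper's. The paper works in parametrized homotopy theory: it interprets $P$ as a parametrized spectrum over $B\wedge S^2$, invokes the May--Sigurdsson adjunction between total spectra and constant parametrized spectra to translate the $\psi$-side, and then uses the Koszul-type equivalence between parametrized spectra over a base and modules over the suspension spectrum of its based loop space (together with the James splitting of $\Omega(B\wedge S^2)$) to translate the null-homotopy side. Your cofiber-sequence argument is more elementary and avoids this machinery entirely; it is a perfectly reasonable route, and your identification of the obstruction to extending $\mathrm{proj}$ along $\iota$ with the adjoint of the sweepout is the right idea.

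There is, however, a genuine gap in the torsor-matching step, and it is precisely the $B_+$ versus $B$ issue you flag but do not resolve. With $\iota$ the inclusion of the fiber over $N$, one computes $\operatorname{cofib}(\iota)\simeq (B\times X)_+\wedge S^2 = B_+\wedge S^2\wedge X_+$, so the desuspended connecting map $\sigma$ has domain $B_+\wedge S^1\wedge X_+$, not $B\wedge S^1\wedge X_+$. You correctly observe that $\mathrm{proj}\circ\sigma$ is null on the extra $S^1\wedge X_+$ wedge summand and hence \emph{factors through} $\bar\nu$, but this does not make their null-homotopies coincide: null-homotopies of $\mathrm{proj}\circ\sigma$ form a torsor under $[B_+\wedge S^2\wedge X_+, X_+\wedge MU]$, while null-homotopies of $\bar\nu$ form a torsor under $[B\wedge S^2\wedge X_+, X_+\wedge MU]$, and these differ by $MU^{-2}(X)\neq 0$. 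Likewise, the set of homotopy classes of $\psi$ with $\psi\iota\simeq\mathrm{proj}$ is a torsor under $\operatorname{coker}(\partial^*)$, and there is no reason for $\operatorname{im}(\partial^*)$ to be exactly the extra summand. The fix is to build the cofiber sequence not from the fiber over a single point $N\in S^2$, but from the full trivial locus $B\vee S^2\subset B\times S^2$ (where you already note $P$ is canonically trivialized); the cofiber then involves $(B\times S^2)/(B\vee S^2)=B\wedge S^2$ rather than $B_+\wedge S^2$, and the torsors match on the nose. As written, your ``careful bookkeeping'' claim does not carry this, since the cofiber sequence you set up is the wrong one for the bijection (though correct for the existence statement).
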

\begin{proof}
  The most efficient way to perform the comparison is to use parametrized spectra: using the trivialization of the fibers over $0$ as a section, the fibration $P$ determines a parametrized spectrum $P \barwedge  {\mb S} $ over $ \Omega \mathrm{Ham}(M, \omega) \wedge S^2$. We claim that both data correspond to a map from $P \barwedge  {\mb S} $  to the trivial parametrized spectrum with fiber $X_+ \wedge MU$, extending the identity map over the basepoint.

  The fact that Equation \eqref{eq:map_from_total_space_to_X} is equivalent to such a map of parametrized spectra is a standard adjunction statement \cite[Theorem 11.4.1]{MaySigurdsson2006}: the spectrum $P_+ \wedge {\mb S}$ is equivalent to the ``total spectrum'' of the parametrized spectrum $P \barwedge  {\mb S}  $, and this procedure is (left) adjoint to the construction which assigns to the spectrum $X_+ \wedge MU$ the associated trivial spectrum over $ \Omega \mathrm{Ham}(M, \omega) \wedge S^2 $. Our condition about the basepoint corresponds to the requirement that the restriction to $\Omega \mathrm{Ham}(M, \omega) \times X  $ agree with projection.

  For the other equivalence, we use the fact that a parametrized spectrum over a base $B$ is equivalent to a module spectrum over the suspension spectrum of the based loop space $\Omega B$; the standard reference for this fact has become \cite{AndoBlumbergGepner2018}, though the idea goes back at least to Waldhausen \cite[Section 2.1]{Waldhausen1983}. By the James construction, the based loop space of $  \Omega \mathrm{Ham}(M, \omega) \wedge S^2$ is a free algebra on $ \Omega \mathrm{Ham}(M, \omega) \wedge S^1$, so that such a module spectrum is determined by the map in Equation \eqref{eqn:B-sweepout}. A null-homotopy of this map exactly corresponds to a module spectrum over $\Omega \mathrm{Ham}(M, \omega) \wedge S^1 \wedge [0,1]_+$, which is trivial over $1$, and agrees with $P \barwedge  {\mb S} $ over $0$. Because the projection from $[0,1]$ to either endpoint is a homotopy equivalence, we conclude that such a null-homotopy gives rise to the desired equivalence of parametrized spectra. 
\end{proof}

\subsubsection{Some Pontryagin--Thom theory}
\label{sec:revi-geom-bord}

Given a topological space $X$,   the Pontryagin-Thom construction implies that the cohomology theory associated to the spectrum $  X_+ \wedge MU$ is given, on smooth manifolds $Y$, by the geometric bordism theory of manifolds
\begin{equation}
 N \to Y \times X 
\end{equation}
which are proper over $Y$, and are stably complex relative $TY$, in the sense that they are equipped with a complex vector bundle $I$, and an isoorphism
\begin{equation}
  TN \oplus {\mb R}^{k} \cong I \oplus TY  
\end{equation}
for some integer $k$. This determines the cohomology theory on spaces having the homotopy type of a finite CW complex. We can extend this to general CW complex by a standard strategy that bypasses the fact that we cannot define the cohomology of a space as an inverse limit of the cohomology of subcomplexes because of the possible presence of $\lim^1$ terms (c.f. \cite{Milnor1995}): each such space may be represented as an increasing union $Y = \colim_{i} Y_i$ with $Y_i$ smooth manifolds equipped with smooth embeddings $Y_{i} \to Y_{i+1}$. In this case, the cohomology theory associated to $  X_+ \wedge MU$, evaluation on $Y$, is given by bordism classes of the following data for each $i$:
\begin{enumerate}
\item A manifold $N_i \to Y_i \times X$ which is proper over $Y_i$, equipped with a stable complex structure relative $T Y_i$, and which is transverse to $Y_{i-1}$.
\item A cobordism $N_{i,i+1}$ over $Y_{i} \times X$ between $N_{i}$ and the fibre product $N_{i+1} \times_{Y_{i}} Y_{i+1}$, which is also proper over $Y_{i}$, and equipped with a stable complex structure relative $TY_{i}$ extending the stable complex structure on the two boundary components.
\end{enumerate}
In the bordism relation between such data, we impose as usual properness, and include the data of stable complex structures. A completely analogous description applies to other bordism theories, including framed bordism, as we shall presently use:


\begin{lemma} \label{lem:correspondence_induces_map}
  Let $P$ be a smooth manifold. 
  A manifold $M \to P \times X$ whose projection to $P$ is proper  and which is equipped with a stable almost complex structure relative $TP$
  induces a morphism of spectra
  \beq\label{eqn:M-morphism}
    P_+ \wedge    {\mb S} \to X_+ \wedge MU.   
  \eeq
\end{lemma}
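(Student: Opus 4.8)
The statement is essentially a relative (or parametrized) form of the Pontryagin--Thom construction: a manifold $M \to P \times X$ whose projection to $P$ is proper, together with a stable complex structure on $TM$ relative to $TP$, should produce a Thom-collapse map, which after composing with the classifying map to $MU$ and the evaluation to $X$ gives the desired morphism of spectra. The plan is to follow the model-independent recollection in Section \ref{sec:revi-geom-bord}, where the cohomology theory represented by $X_+ \wedge MU$ is described on a smooth manifold $P$ (and then on a general CW complex presented as a colimit $P = \colim_i P_i$ of smooth manifolds) in terms of bordism classes of manifolds $N \to P \times X$ that are proper over $P$ and carry a stable complex structure relative $TP$. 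Dualizing, a morphism of spectra $P_+ \wedge {\mb S} \to X_+ \wedge MU$ is precisely the datum of such a compatible system of classes in the cohomology theory $(X_+ \wedge MU)^*$ evaluated on the spaces whose suspension spectra build up $P_+ \wedge {\mb S}$; concretely it is a $\lim$ over $i$ of classes represented by manifolds $N_i \to P_i \times X$ proper over $P_i$, together with the coherence cobordisms $N_{i,i+1}$.

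\textbf{Key steps, in order.} First I would recall that, since $P$ is a smooth manifold, one may choose an exhaustion $P = \bigcup_i P_i$ by compact codimension-$0$ submanifolds with boundary (or simply take $P_i = P$ when $P$ is itself compact, which is the only case used in the applications); the embeddings $P_i \hookrightarrow P_{i+1}$ are the required smooth inclusions. Second, for each $i$ I would set $N_i := M \times_P P_i$, i.e. the restriction of the family $M$ over $P_i$. Properness of $M \to P$ over $P$ implies $N_i \to P_i$ is proper, and the relative stable complex structure $TM \oplus {\mb R}^k \cong I \oplus TP$ restricts to one on $N_i$ relative $TP_i$; this is exactly the data that represents a class in $(X_+\wedge MU)^*(P_i)$ under the description of Section \ref{sec:revi-geom-bord}. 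Third, for the coherence data I would take $N_{i,i+1}$ to be the trivial (product) cobordism $N_i \times [0,1]$, identified with $N_i$ at one end and with $N_{i+1} \times_{P_i} P_{i+1} = N_i$ at the other via the tautological identification — here one uses that $N_{i+1}\times_{P_i}P_{i+1}$ is canonically $N_i$ by construction, so no genuine cobordism is needed and the relative stable complex structures match on the nose. Fourth, I would assemble these into the morphism of spectra, invoking the Pontryagin--Thom/Thom-collapse recollection: the system $\{N_i, N_{i,i+1}\}$ defines an element of $\lim_i (X_+\wedge MU)^*(P_i) = [P_+ \wedge {\mb S},\, X_+\wedge MU]$ (the $\lim^1$ issues being handled exactly as in the passage just above the Lemma), which is the desired map \eqref{eqn:M-morphism}.

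\textbf{Main obstacle.} The substantive point is transversality/genericity: the description of the cohomology theory requires $N_i$ to be transverse to $P_{i-1}$ (equivalently, one needs the Pontryagin--Thom class to be represented by a manifold in good position), and in general $M$ need not be transverse over the exhausting submanifolds. The remedy is to perturb the embedding $P_{i}\hookrightarrow P$ (or equivalently to shrink/enlarge $P_i$ by an isotopy) so that $\partial P_i$ is transverse to the proper map $M\to P$; this is a standard general-position argument and does not affect the bordism class. A second, more bookkeeping-level obstacle is checking independence of all these choices — the exhaustion, the transversality perturbations — up to the bordism relation defining $[P_+\wedge {\mb S}, X_+\wedge MU]$; this follows by the usual argument of running the construction over $P \times [0,1]$ with a chosen isotopy between two systems of choices, which is exactly the kind of argument the paper has already used repeatedly (e.g. in the proofs in Section \ref{sec:from-orbif-manif}). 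In the form needed for Theorem \ref{thm:splitting-B}, where $P$ is the total space \eqref{eq:universal_fibre_bundle} of the universal Hamiltonian fibration and one wants compatibility with the projection to $X$ over a basepoint, I would additionally note that the construction is natural in $P$, so the compatibility statement of Lemma \ref{lem:homotopy-classes} is inherited automatically.
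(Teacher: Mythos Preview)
Your approach is correct but takes a different route from the paper. You construct the map by observing that $M$ itself represents a class in $(X_+ \wedge MU)^*(P)$ via the geometric bordism description recalled in Section~\ref{sec:revi-geom-bord}, and then use that such a class is by definition a homotopy class of maps $\Sigma^\infty P_+ \to X_+ \wedge MU$. The paper instead constructs a \emph{natural transformation} of cohomology theories: for each test manifold $Y$, it sends a framed bordism class $N \to P \times Y$ (representing a class in $(P_+\wedge{\mb S})^*(Y)$) to the fibre product $N \times_P M \to X \times Y$ (representing a class in $(X_+\wedge MU)^*(Y)$), and then invokes that natural transformations of cohomology theories are induced by maps of spectra.

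Two comments on your version. First, the exhaustion $P = \bigcup_i P_i$ is unnecessary: $P$ is already a smooth manifold, so the geometric model of $(X_+\wedge MU)^*(P)$ applies directly without any filtration, and $M$ is literally a representative. The filtration is only needed in Corollary~\ref{cor:family_correspondence_map}, where $P$ is a general CW complex. Second, and more substantively, the paper's fibre-product description is what makes Lemma~\ref{lem:composite_given_by_fibre_product} immediate: the composite with $(X\times B)_+\wedge{\mb S} \to P_+\wedge{\mb S}$ is computed by taking the fibre product $M \times_P (X \times B)$, which is exactly the hypothesis of that lemma. From your representability viewpoint this composite is the pullback of the cohomology class along $X\times B \to P$, which is of course the same thing, but you would still need to unwind that pullback as a fibre product to make contact with the hypothesis there.
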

\begin{proof}
  Since maps of spectra are determined by maps of the corresponding cohomology theories, let $Y$ be a topological space with the homotopy type of a CW complex which we present, as discussed above, as an increasing union of manifolds $Y_i$. The cohomology theory associated to $P_+ \wedge    {\mb S}$, when evaluated on $Y$, is given by bordism classes of collections $\{(N_i, N_{i-1,i})\}$ of manifolds mapping to $P \times Y_i$, which are proper over $Y_i$, and stably framed relative $TY_{i}$. Up to bordism, we map require that all such manifolds be transverse to $M$, under the evaluation map to $P$. This implies that the fibre products  $\{(N_i \times_{P} M , N_{i-1,i} \times_{P} M )\}$ are manifolds, which are again proper over $Y_i$ (because $M$ is proper over $P$), and are equipped with stable complex structures arising from the stable complex structure on $M$ relative $T P$. The fibre products rae equipped with evaluation maps to $X$, and hence represents cobordism spaces for the cohomology theory associated to $X_+ \wedge MU$. It is easy to see that this map depends only on the bordism class, as any bordism over $P$ may be assumed to be transverse to $M$.

  To prove that this construction defines a natural transformation, it is convenient to restrict attention to filtrations with the property that the $k$-skeleton of $Y$ factors through $Y_{k_i}$ for some integer $k_i$. Given any a map $Y' \to Y$ we may assume, possibly after passing to a subsequence, and reindexing, that $Y'_i$ maps by a smooth map to $Y_{i}$, commuting with the inclusions on both towers (this can be done inductively on $i$). Choosing the pairs $\{(N_i, N_{i-1,i})\}$ representing a cobordism on $Y$ to be transverse to these maps, the fibre product defines a cobordism class on $Y'$. 
\end{proof}
Using the exact same method to a filtration $P = \colim P_i$ of a topological space by smooth manifolds, we conclude:
\begin{cor}  \label{cor:family_correspondence_map}
  The data of a collection $\{(M_i, M_{i, i+1})\}$ of smooth manifolds over $P_i \times X$, which are proper over $P_i$, are equipped with stable complex structures relative $TP_i$, and such that $M_{i,i+1}$ is a cobordism from $M_{i}$ to $M_{i+1} \times_{P_i} P_{i+1}$ determines a morphism of spectra
  \begin{equation}
    P_+ \wedge    {\mb S} \to X_+ \wedge MU. 
  \end{equation} \qed
\end{cor}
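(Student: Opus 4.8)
The plan is to run the argument of Lemma \ref{lem:correspondence_induces_map} one filtration level at a time, interleaved with the colimit bookkeeping of Section \ref{sec:revi-geom-bord}, exactly as the statement indicates. A morphism of spectra $P_+ \wedge {\mb S} \to X_+ \wedge MU$ is the same datum as a natural transformation of the associated reduced cohomology theories, so it suffices to produce, for every space $Y$ with the homotopy type of a CW complex, a natural map $(P_+\wedge{\mb S})^{*}(Y) \to (X_+\wedge MU)^{*}(Y)$. Write $Y = \colim_j Y_j$ with the $Y_j$ compact smooth manifolds and $Y_j \hookrightarrow Y_{j+1}$ smooth embeddings, chosen so that each skeleton of $Y$ factors through some $Y_j$. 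As recalled in Section \ref{sec:revi-geom-bord}, a class on the left is represented by a compatible system $\{(N_j, N_{j-1,j})\}$: manifolds $N_j \to P \times Y_j$, proper (hence compact) over $Y_j$ and stably framed relative $TY_j$, together with cobordisms $N_{j-1,j}$ over $P \times Y_{j-1}$ from $N_{j-1}$ to $N_j \times_{Y_{j-1}} Y_j$.

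Since each $N_j$ and $N_{j-1,j}$ is compact, its image in $P = \colim_i P_i$ lies in a finite stage, so after reindexing I would fix a monotone function $j \mapsto i(j)$ with $N_j$ and $N_{j-1,j}$ factoring through $P_{i(j)} \times Y_j$ and $P_{i(j)} \times Y_{j-1}$ respectively. Inductively in $j$, I would perturb $N_j$ and $N_{j-1,j}$ — relative to their boundaries and to the perturbations already fixed at stage $j-1$ — to be transverse over $P_{i(j)}$ to $M_{i(j)}$ and, using $i(j-1)\le i(j)$ and the concatenated cobordism $M_{i(j-1),i(j)} := M_{i(j-1),i(j-1)+1}\cup\dots\cup M_{i(j)-1,i(j)}$, to this latter cobordism as well. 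Then $N_j \times_{P_{i(j)}} M_{i(j)}$ and $N_{j-1,j}\times_{P_{i(j)}}M_{i(j)}$ are manifolds, proper over $Y_j$ resp. $Y_{j-1}$, mapping to $X$, and carrying stable complex structures relative $TY_j$ obtained, exactly as in Lemma \ref{lem:correspondence_induces_map}, by combining the $TP_{i(j)}$-relative stable complex structure on $M_{i(j)}$ with the $TY_j$-relative framing; concatenating $N_{j-1,j}\times_{P_{i(j)}}M_{i(j)}$ with the fibre product of $N_{j-1}$ (over $P_{i(j-1)}$) with $M_{i(j-1),i(j)}$ supplies the comparison cobordism between stages $j-1$ and $j$. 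This is precisely the data of a class in $(X_+\wedge MU)^{*}(Y)$, and one checks as in Lemma \ref{lem:correspondence_induces_map} that it is independent of all the transversality choices.

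It remains to verify naturality in $Y$, which goes as in Lemma \ref{lem:correspondence_induces_map}: any bordism of input data, and any map $Y' \to Y$, can be made transverse to the relevant $M_i$ and $M_{i,i+1}$ after passing to suitable cofinal subsequences, and the fibre product then produces the required bordism, respectively commuting square. I expect the genuinely delicate point to be exactly this interleaving of the two colimit directions — the filtration of the base $P$ and the filtration of the test space — together with the cofinality and reindexing it forces; this is also the reason the statement must carry the honest cobordisms $M_{i,i+1}$ rather than merely the assertion that $M_i$ and $M_{i+1}\times_{P_i}P_{i+1}$ are cobordant, since retaining the $M_{i,i+1}$ is what replaces the (potentially nonzero) $\lim^1$-term obstructing a map of spectra by concrete gluing data, in line with the ``data for each $i$'' description of the cohomology theory on $\colim$-spaces in Section \ref{sec:revi-geom-bord}. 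Everything else is a transcription of the proof of Lemma \ref{lem:correspondence_induces_map}.
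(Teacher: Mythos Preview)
Your proposal is correct and takes essentially the same approach as the paper, which gives no proof beyond the sentence ``Using the exact same method to a filtration $P = \colim P_i$ of a topological space by smooth manifolds, we conclude'' and a \qed. You have carefully filled in the interleaving of the two filtrations and the role of the cobordisms $M_{i,i+1}$ that the paper leaves implicit.
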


Let us assume further that $X$ is a smooth manifold, and that we are given a map
\begin{equation}
  X \times B \to P  
\end{equation}
for some smooth manifold $B$. 

\begin{lemma} \label{lem:composite_given_by_fibre_product} 
  Let $M$ be manifold over $  P \times X$ as in Lemma \ref{lem:correspondence_induces_map}, so that the projection map $M \to P$ is transverse to $X \times B$. If the fiber product $M \times_{P}  (X \times B)$ agrees with the product of $B$ with diagonal in $X$, then the composite
  \begin{equation}
     (X \times B)_+ \wedge  {\mb S} \to   P_+ \wedge    {\mb S} \to X_+ \wedge MU    
   \end{equation}
   is given by the projection $X \times B \to X$ and the inclusion $ {\mb S} \to MU  $.
\end{lemma}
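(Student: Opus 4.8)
The plan is to trace through the Pontryagin--Thom construction of Lemma \ref{lem:correspondence_induces_map} and identify the composite explicitly. Recall that the morphism of spectra $P_+ \wedge {\mb S} \to X_+ \wedge MU$ associated to $M$ is, at the level of cohomology theories, the assignment $\{(N_i, N_{i-1,i})\} \mapsto \{(N_i \times_P M, N_{i-1,i} \times_P M)\}$, where one first makes the $N_i$ transverse to $M$ via the evaluation maps to $P$. Precomposing with the map of spectra $(X \times B)_+ \wedge {\mb S} \to P_+ \wedge {\mb S}$ induced by $X \times B \to P$ amounts, on the level of framed bordism classes over a test space $Y$, to first pulling back the framed manifolds $N_i \to Y_i \times (X\times B)$ to $Y_i \times P$ and then taking the fibre product with $M$.

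The key observation is that the composite operation sends a framed manifold $N \to Y \times X \times B$ (proper over $Y$, stably framed relative $TY$) to the fibre product $N \times_{P} M$, where $N$ maps to $P$ through $X \times B \to P$. Because fibre products are associative, this is the same as $N \times_{X \times B} \bigl( (X\times B) \times_P M \bigr)$. By hypothesis, $(X \times B) \times_P M$ is the product of $B$ with the diagonal $\Delta_X \subset X \times X$; as a manifold over $(X \times B) \times X$ this is just the graph of the projection $X \times B \to X$, carrying the trivial (zero-dimensional, hence empty) complex vector bundle, so its relative stable complex structure is the trivial one. Consequently $N \times_P M \cong N$, now regarded as a manifold over $Y \times X$ via the composite $N \to Y \times X \times B \to Y \times X$, and with its stable complex structure relative $TY$ being the image of the original framing under the inclusion of framed bordism into complex bordism. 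This is precisely the description of the Pontryagin--Thom map for the morphism of spectra given by the projection $X \times B \to X$ composed with the unit ${\mb S} \to MU$.

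The steps I would carry out, in order: (1) unwind the definitions of both maps of spectra in terms of cohomology theories evaluated on a filtered test space, as set up before Lemma \ref{lem:correspondence_induces_map}; (2) use transversality (achievable up to bordism) to reduce to the case where the relevant framed manifolds meet $M$ and $X \times B$ transversely, so all fibre products are manifolds; (3) apply associativity of fibre products to rewrite $N \times_P M$ as $N \times_{X\times B}\bigl((X\times B)\times_P M\bigr)$; (4) invoke the hypothesis that $(X\times B)\times_P M = B \times \Delta_X$ to simplify this fibre product to $N$ itself, tracking that the induced evaluation map is the projection to $X$ and that the relative stable complex structure is the one coming from the framing via ${\mb S}\to MU$; (5) recognize the resulting natural transformation as the one represented by the stated map of spectra, appealing as usual to the fact that maps of spectra are determined by maps of the associated cohomology theories.

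I expect the main obstacle to be bookkeeping the stable complex structures and the relative tangential data through the chain of fibre products: one must check that the complex vector bundle $I$ on $N \times_P M$ coming from the relative stable complex structure of $M$ over $TP$ genuinely reduces to the trivial bundle (hence the framed-to-complex comparison) once $(X\times B)\times_P M$ is identified with $B\times\Delta_X$, and that the integer stabilizations $k$ match up compatibly along the filtration and its cobordisms. This is the kind of argument that is conceptually transparent but requires care with the identifications; the topological content --- that pulling back along a correspondence whose relevant slice is a graph of a projection recovers that projection --- is straightforward once the transversality reduction in step (2) is in place.
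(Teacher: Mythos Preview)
Your proposal is correct and follows essentially the same approach as the paper: both identify the composite by pushing a framed manifold $N$ over $X\times B$ into $P$ and then taking the fibre product with $M$, using the hypothesis on $M\times_P(X\times B)$ to see that the result is just $N$ again, now viewed over $X$ via projection, with its framing regarded as a stable complex structure. The paper's proof is terser---it works only on homotopy groups and leaves the extension to cohomology theories as a remark---whereas you spell out the associativity of fibre products and flag the tangential bookkeeping explicitly, but the underlying argument is the same.
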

\begin{proof}
  We only discuss the map on homotopy groups, as the lift to the map of cohomology theories follows by modifying the argument below in exactly the same way as the proof of Lemma   \ref{lem:correspondence_induces_map}. The map $ (X \times B)_+ \wedge  {\mb S}  \to  P_+ \wedge    {\mb S}$ corresponds, on homotopy groups, to assigning to a stably framed manifold  over $X \times B $ its image under the inclusion in $ P$. 
  The image of a stably framed manifold  over $X \times B $ after composition with the map $  P_+ \wedge    {\mb S} \to X_+ \wedge MU    $ is given by fibre product as in Lemma \ref{lem:correspondence_induces_map}. Under the assumption that  the fiber product $M \times_{P}  (X \times B)$ agrees with the product of $B$ with diagonal in $X$, the composite is simply the image  under the projection map to $X$, considered as a stably complex manifold. This exactly corresponds to the map of spectra $ (X \times B)_+ \wedge  {\mb S}  \to   X_+ \wedge MU $.
\end{proof}
Assuming now that $B$ is a topological space, filtered by manifolds $B_i$, together with inclusions $X \times B_i \to P_i$ commuting with the maps $P_i \to P_{i+1}$, we may extend the above result as follows:
\begin{cor} \label{cor:composite_filtered_fibre_product}
   Let $\{(M_i, M_{i,i+1})\}$ be manifolds over $  P \times X$ as in Corollary \ref{cor:family_correspondence_map}, so that the projections maps from $M_i$ and $M_{i,i+1} $ to $P_i$ are transverse to $X \times B_i$. If the fiber product $M_i \times_{P_i}  (X \times B_i)$ agrees with the product of $B_i$ with diagonal in $X_i$, and the  fiber product $M_{i,i+1} \times_{P_i}  (X \times B_i)$ is the trivial cobordism, then the composite
  \begin{equation}
     (X \times B)_+ \wedge  {\mb S} \to   P_+ \wedge    {\mb S} \to X_+ \wedge MU    
   \end{equation}
   is given by the projection $X \times B \to X$ and the inclusion $ {\mb S} \to MU  $. \qed
\end{cor}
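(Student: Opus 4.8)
Looking at this final statement, Corollary~\ref{cor:composite_filtered_fibre_product}, it is the "filtered" or "parametrized" version of Lemma~\ref{lem:composite_given_by_fibre_product}, and the task is to deduce it from the unfiltered case in exactly the way that Corollary~\ref{cor:family_correspondence_map} was deduced from Lemma~\ref{lem:correspondence_induces_map} and Corollary~\ref{cor:composite_filtered_fibre_product}'s predecessor (i.e.\ the whole sequence of "take a CW complex, present it as an increasing union of manifolds, do the fibre-product construction skeleton by skeleton" arguments). So the proof should be short, and the statement itself even flags this with a \qed — so the "proof" I write is really a one-paragraph explanation of why the filtered case reduces to the unfiltered one.

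\medskip

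\noindent\textbf{Plan.} The plan is to run the same skeleton-by-skeleton argument used in the proof of Lemma~\ref{lem:correspondence_induces_map}, now carrying along the additional data of the maps $X\times B_i\to P_i$. First I would recall that a map of spectra $P_+\wedge{\mb S}\to X_+\wedge MU$ is, by Corollary~\ref{cor:family_correspondence_map}, already constructed from the collection $\{(M_i,M_{i,i+1})\}$; similarly the inclusion $(X\times B)_+\wedge{\mb S}\to P_+\wedge{\mb S}$ is constructed from the filtered maps $X\times B_i\to P_i$ (this is the functoriality discussed at the end of the proof of Lemma~\ref{lem:correspondence_induces_map}, applied to the tower $\{X\times B_i\}$ mapping into $\{P_i\}$). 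To identify the composite, it suffices — since a map of spectra is determined by the induced natural transformation of cohomology theories — to evaluate on a test space $Y$ presented as an increasing union of manifolds $Y_j$, and to check that the composite natural transformation agrees with the one induced by the projection $X\times B\to X$ composed with ${\mb S}\to MU$.

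\medskip

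\noindent Next I would spell out the composite on cohomology classes. A class for the cohomology theory of $(X\times B)_+\wedge{\mb S}$ evaluated on $Y$ is represented by a compatible system $\{(N_j,N_{j-1,j})\}$ of manifolds over $(X\times B)\times Y_j$, proper over $Y_j$ and stably framed relative $TY_j$; after a generic perturbation (which changes nothing up to bordism) we may assume each $N_j$ and each $N_{j-1,j}$ maps into $X\times B_i$ for a suitable index $i=i(j)$ and is transverse there to the projection from $M_i$ to $P_i$. Pushing forward to $P_i\times Y_j$ and taking the fibre product with $M_i$ produces the image under the second map; but by hypothesis $M_i\times_{P_i}(X\times B_i)$ is the product of $B_i$ with the diagonal in $X$, so this fibre product is canonically identified with $N_j$ itself, now viewed as a stably complex manifold over $X\times Y_j$ via the projection $X\times B\to X$, with the stable complex structure relative $TY_j$ obtained from the stable framing by the evident inclusion. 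The hypothesis that $M_{i,i+1}\times_{P_i}(X\times B_i)$ is the trivial cobordism guarantees that these identifications are compatible with the transition cobordisms $N_{j-1,j}$, so the whole system is carried to the system representing the image of $\{(N_j,N_{j-1,j})\}$ under the map of cohomology theories induced by $X\times B\to X$ and ${\mb S}\to MU$. This is precisely the content of the (unfiltered) Lemma~\ref{lem:composite_given_by_fibre_product}, applied one filtration level at a time and assembled.

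\medskip

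\noindent\textbf{Main obstacle.} The only genuinely non-formal point — and it is the same point that recurs throughout Section~\ref{sec:revi-geom-bord} — is the bookkeeping needed to make the "one filtration level at a time" argument honest: choosing the filtrations of $Y$, of $B$, and of $P$ compatibly (so that skeleta land in the right manifold stages and the transition maps are smooth), and arranging the transversality perturbations so that they respect the cobordisms $M_{i,i+1}$ and $N_{j-1,j}$ simultaneously. As in the proof of Lemma~\ref{lem:correspondence_induces_map}, this is handled by passing to subsequences and reindexing, and by performing the transversality perturbations inductively over the filtration, using that any bordism can be taken transverse to $M_i$ over $P_i$. Once this scaffolding is in place the identification of the composite is the purely formal fibre-product computation just described, and there is nothing further to prove; hence the \qed.
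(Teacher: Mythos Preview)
Your proposal is correct and matches the paper's intent: the paper gives no proof at all (the statement ends with \qed), treating the corollary as an immediate filtered analogue of Lemma~\ref{lem:composite_given_by_fibre_product} in the same way that Corollary~\ref{cor:family_correspondence_map} is the filtered analogue of Lemma~\ref{lem:correspondence_induces_map}. Your write-up simply spells out the bookkeeping that the paper leaves implicit, and the argument you give is exactly the one the reader is expected to supply.
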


\subsubsection{The topological part of the proof}

Consider a $k$-dimensional stably almost complex compact manifold $B$ with boundary, equipped with a smooth map to $\Omega  \mathrm{Ham}(X, \omega)$. We write $P_B$ for the pullback of $P$ to $B$ under this map. As explained in, e.g., \cite[Section 2]{savelyev}, the following holds:
\begin{enumerate}
    \item If $b \in B$ corresponds to a Hamiltonian loop $\phi_b = \phi_b(t) \in \Omega\mathrm{Ham}(X, \omega)$, the fiber of $P_B$ over $b$ is given by a symplectic manifold $(P_{\phi_b}, \omega_b)$, which is the clutching construction over $S^2$ using $\phi_b$. For any point in $S^2$, the symplectic connection of the Hamiltonian fibration $P_{\phi_b} \to S^2$ identifies the horizontal symplectic vector space with the tangent space of $S^2$ equipped with the symplectic form induced from the standard area form on $S^2$.
    \item There exists a globally defined closed $2$-form $\Omega_B$ on $P_B$ such that for any $b \in B$, the restriction $\Omega_B |_{P_{\phi_b}}$ agrees with $\omega_b$.
    \item The structure group of $P_B \to B$ is given by the group of bundle maps over $S^2$ which act on the fibers by Hamiltonian diffeomorphisms and cover the identity map on $S^2$. Furthermore, such bundle maps can be assumed to be the identity near the fibers over $0, \infty \in S^2$.
\end{enumerate}
The space of choices of the $2$-form $\Omega_B$ is weakly contractible. For each fiber $P_{\phi_b}$, there is also an affine space parametrizing the space of symplectic forms $\omega_b$ satisfying the prescribed condition. The choices here do not affect our discussion, except for an intermediate step in the construction of global Kuranishi charts, which will be implemented in Section \ref{sec:glob-kuran-charts}.



Let $S$ be the one-point blow-up of $\mb{CP}^1 \times \mb{CP}^1$. It admits a ruling $S \to \mb{CP}^1$ with a unique reducible fiber $\mb{CP}^1 \vee \mb{CP}^1$ over $0$ and in particular a smooth fiber over $\infty$. Taking the product with $B$, we have a smooth map
\beq\label{eqn:proj-S}
\pi_B: \tilde{S} = S \times B \to \mb{CP}^1 \times B
\eeq
with reducible fibers $\mb{CP}^1 \vee \mb{CP}^1$ over $\{0\} \times B$, and we have a family of smooth fibers over $\{\infty\} \times B$.

\begin{lemma}
There is a fibration 
\beq\label{eqn:B-family-fibration}
\pi_{\tilde{S}}: \tilde{P}_B \to \tilde{S}
\eeq
where $\tilde{P}_B$ admits a closed $2$-form $\tilde{\Omega}_B$ satisfying the following properties:
\begin{enumerate}
    \item  the restriction of $\tilde{\Omega}_B$ to each fiber is symplectomorphic to  $(X, \omega)$;
    \item there exists an open neighborhood $W_{\infty}$ of $\infty \in \mb{CP}^1$ such that the fibration $\tilde{P}_B |_{\pi_B^{-1}(W_{\infty} \times B)}$ with the $2$-form $\tilde{\Omega}_B$ is isomorphic to the trivial fibration
    \beq\label{eqn:trivial-region}
    (W_\infty \times B \times S^2 \times X, \omega_{S^2}|_{W_{\infty}} \oplus 0 \oplus \omega_{S^2} \oplus \omega)
    \eeq
    with projection $W_\infty \times B \times S^2 \times X \to W_\infty \times B \times S^2$, where $\omega_{S^2}$ is the standard area form on $S^2$.
    \item for any $b \in B$, the space $\tilde{P}_B |_{\pi_B^{-1}(\{0\} \times \{b\})} \to \mb{CP}^1 \vee \mb{CP}^1$ is isomorphic to $P_{\phi_b} \vee P_{\phi_b^{-1}} \to \mb{CP}^1 \vee \mb{CP}^1$, where each component of the reducible fiber carries the canonical defomation class of symplectic structures.
\end{enumerate}
\end{lemma}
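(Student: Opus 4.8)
The construction is a family version of the standard Gromov-type trick of encoding a Hamiltonian loop as a symplectic fibration over a nodal curve. I would proceed in three stages. First, I would construct $\tilde P_B$ fiberwise over $\mb{CP}^1$. For the fiber over $\infty\in\mb{CP}^1$ — the smooth fiber of $S\to\mb{CP}^1$, which is a copy of $S^2$ — I simply take the trivial product bundle $W_\infty\times B\times S^2\times X$ with the split form of \eqref{eqn:trivial-region}; this is the starting point and it automatically enforces requirement (2). For the fiber over $0$, which is the reducible curve $\mb{CP}^1\vee\mb{CP}^1$, I glue $P_{\phi_b}$ on one component and $P_{\phi_b^{-1}}$ on the other, which gives requirement (3); the two Hamiltonian loops $\phi_b$ and $\phi_b^{-1}$ appear because the two rulings of $\mb{CP}^1\times\mb{CP}^1$ meeting at the blown-up point induce opposite orientations of the ``equator'' along which the clutching takes place. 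Over a generic fiber I interpolate: the family of symplectic fibrations over $\mb{CP}^1$ is governed by a loop in $\mathrm{Ham}(X,\omega)$ which is contractible (it is the concatenation $\phi_b * \phi_b^{-1}$, null-homotopic in the based loop space), and this contractibility is exactly what allows the fibration to be filled in over the whole base $\mb{CP}^1\times B$.

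Second, I would promote this to the bundle over all of $\tilde S=S\times B$: the ruling $\tilde S\to\mb{CP}^1\times B$ is, away from $\{0\}\times B$, a trivial $\mb{CP}^1$-bundle, and over $\{0\}\times B$ it has the nodal fiber; since $\tilde P_B$ has been built as a bundle over $\mb{CP}^1\times B$ I pull back along $\pi_B$, and over the nodal locus the pullback restricts to $P_{\phi_b}\vee P_{\phi_b^{-1}}$ on the two components as required. The closed $2$-form $\tilde\Omega_B$ is built by the same patching: on the trivial region near $\infty$ it is the explicit split form of \eqref{eqn:trivial-region}; in general it is obtained from the globally defined closed $2$-form $\Omega_B$ on $P_B$ (property (2) of the list preceding the lemma), pulled back and fiberwise corrected so that its restriction to each fiber of $\pi_{\tilde S}$ is symplectomorphic to $(X,\omega)$; here I use that the space of such $2$-forms, as recalled after that list, is weakly contractible, so the choices can be made consistently over the parameter space $B$. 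This gives requirement (1).

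Third, I would verify the three enumerated properties hold simultaneously with the stated compatibilities, using the weak contractibility of the relevant spaces of auxiliary data (the fiberwise symplectic forms and the global $2$-form) to interpolate between the normalization near $\infty$, the prescribed form over the nodal fiber, and a generic fiber. The main obstacle I anticipate is the interpolation over a generic point of $\mb{CP}^1$: one must check that the clutching data for $\phi_b$ on one hemisphere and $\phi_b^{-1}$ on the other genuinely assemble into a single symplectic fibration over $\mb{CP}^1$ (for each fixed $b$) whose restriction degenerates correctly to the nodal fiber over $0$ and to the split trivial bundle over $\infty$ — in other words, that the degeneration of $S\to\mb{CP}^1$ from a smooth conic to a nodal conic is matched by a degeneration of the total space of the fibration in which the Hamiltonian monodromy ``splits'' as $\phi_b*\phi_b^{-1}$. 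This is where one genuinely uses that $\mb{CP}^1\vee\mb{CP}^1$ is the degenerate member of the ruling of the one-point blow-up of $\mb{CP}^1\times\mb{CP}^1$: the two rulings restrict to the two components, and the monodromy around the node is trivial, consistent with $\phi_b\circ\phi_b^{-1}=\mathrm{id}$. The remaining bookkeeping — that $\tilde\Omega_B$ is genuinely closed, that all identifications are through bundle maps covering the identity on the base and equal to the identity near the fibers over $0,\infty$, and that the construction is continuous in $b\in B$ — is routine given the contractibility statements already in hand.
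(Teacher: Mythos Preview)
Your proposal is correct and follows essentially the same approach as the paper: the paper's proof simply cites \cite[Lemma 3.3]{abouzaid2021complex} for the construction over each $b\in B$ and \cite[Section 2.2]{savelyev} for the patching of the closed $2$-form, and your three stages unpack exactly these two ingredients in the family setting. One small terminological slip: in your second stage you do not literally ``pull back along $\pi_B$'' (the fibers of $\pi_B$ are the curves $\mb{CP}^1$ or $\mb{CP}^1\vee\mb{CP}^1$, not $X$), but rather factor the composite $\tilde P_B\to\mb{CP}^1\times B$ through $\tilde S$ using that the Hamiltonian fibration you built over each point of $\mb{CP}^1\times B$ already lives over the corresponding fiber of $\pi_B$ --- your surrounding discussion makes clear you understand this.
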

\begin{proof}
This is a family version of \cite[Lemma 3.3]{abouzaid2021complex}: if we compose \eqref{eqn:B-family-fibration} with the projection $\tilde{S} \to B$, each fiber of this map recovers in construction in \emph{loc. cit.}. The existence of the closed $2$-form $\tilde{\Omega}_B$ follows easily from a patching argument as exhibited in \cite[Section 2.2]{savelyev}.
\end{proof}

Let $(S^2 \times B)_{hor}$ be a section of \eqref{eqn:B-family-fibration} such that for each $b \in B$, the image of $(0,b)$ lies in the projection of $P_{\phi_b} \setminus P_{\phi_b^{-1}}$. Define $(S^2 \times B \times X)_{hor}$ to be the preimage of $(S^2 \times B)_{hor}$ under $\pi_B$. By suitably choosing the section, we can assume that $(S^2 \times B \times X)_{hor}$ together with the restriction of $\tilde{\Omega}_B$ is isomorphic to the product $(S^2 \times B \times X, \omega_{S^2} \oplus 0 \oplus \omega)$. For $t \in S^2 \times B$, denote by $X_t$ the fiber of $(S^2 \times B \times X)_{hor} \to B \times X$, and define $P_t$ to be the preimage of $t$ under $\pi_B \circ \pi_{\tilde{S}}$. For $t \notin \{0\} \times B$, the space $(P_t, \tilde{\Omega}_b |_{P_t})$ with the projection to $S^2$ is isotopic as a Hamiltonian fibration to the product $(S^2 \times X, \omega_{S^2} \oplus \omega)$, which is further canonical when $t \in W_\infty \times B$.

Consider the projection
\beq\label{eqn:project-B}
p_B: \tilde{P}_B \xrightarrow{\pi_{\tilde{S}}} \tilde{S} \xrightarrow{\pi_B} \mb{CP}^1 \times B \to B,
\eeq
which is a smooth submersion such that the restriction of $\tilde{\Omega}_B$ to each fiber defines a symplectic form. Based on the discussion in \cite[Section 2.5.1]{savelyev}, there exists a smooth family of almost complex structures $\{J_b\}_{b \in B}$, varying smoothly in $b \in B$, such that:
\begin{enumerate}
    \item for each $b \in B$, the almost complex structure $J_b$ is compatible with $\tilde{\Omega}_B |_{p_B^{-1}(b)}$;
    \item endowing $S$ with the integrable complex structure, the projection map $\tilde{P}_B |_{p_B^{-1}(b)} \to S$ induced from $\pi_B$ is pseudo-holomorphic;
    \item over the region \eqref{eqn:trivial-region}, for any $b \in B$, the endomorphism $J_b$ is a product.
\end{enumerate}
We use such a family of almost complex structures to construct suitable moduli spaces of $J$-holomorphic curves. For any $b \in B$, let $A$ be the homology class represented by the sphere $\{\infty\} \times \{b\} \times S^2 \times \{pt\}$ in $W_{\infty} \times \{b\} \times S^2 \times X$ from \eqref{eqn:trivial-region}. Consider the moduli space
\beq\label{eqn:moduli-2-pointed}
\ov{\scrM}_{0,2}(\tilde{P}_B, \{J_b\}, A) := \{ (b, u) \ | \ b \in B, u \in \ov{\scrM}_{0,2}(p_B^{-1}(b), J_b, A) \}
\eeq
which comes with the evaluation map
\beq
\mathrm{ev}: \ov{\scrM}_{0,2}(\tilde{P}_B, \{J_b\}, A) \to \tilde{P}_B \times \tilde{P}_B.
\eeq

The virtual dimension of $\ov{\scrM}_{0,2}(\tilde{P}_B, \{J_b\}, A)$ is $k + 2n + 6$. Define the moduli space
\beq
\ov{\scrM}_h^B := \mathrm{ev}^{-1}(\tilde{P}_B \times (S^2 \times B \times X)_{hor})
\eeq
which has virtual dimension $k + 2n + 4$. Over the open subset \eqref{eqn:trivial-region}, we see that 
\beq
(S^2 \times B \times X)_{hor} \cap (W_\infty \times B \times S^2 \times X) = W_\infty \times B \times X.
\eeq
Then define the moduli space
\beq
\ov{\scrM}_h^B \supset \ov{\scrM}^B_{W_{\infty}} := \mathrm{ev}^{-1}((W_\infty \times B \times S^2 \times X) \times (W_\infty \times B \times X)).
\eeq
\begin{lemma}\label{lem:moduli-near-infty}
$\ov{\scrM}^B_{W_{\infty}}$ is regular, and the evaluation map defines a diffeomorphism onto the graph of the projection map
\beq
(W_\infty \times B \times S^2 \times X) \to (W_\infty \times B \times X).
\eeq
\end{lemma}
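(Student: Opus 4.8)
The plan is to reduce the claim to the well-understood structure of pseudo-holomorphic spheres in a product symplectic manifold, exploiting the fact that over $W_\infty \times B$ the whole geometric set-up is a product. First I would recall that, by the choice of almost complex structures $\{J_b\}$, over the region \eqref{eqn:trivial-region} the endomorphism $J_b$ is a product of the standard integrable complex structure $j_{S^2}$ on the $S^2$-factor and a $b$-dependent family of $\omega$-compatible almost complex structures $J_b^X$ on $X$. Thus for $(w,b) \in W_\infty \times B$ the fiber $P_{(w,b)}$ is the product $(S^2 \times X, j_{S^2} \times J_b^X)$, and a genus-$0$ stable map in class $A$ with two marked points into this product decomposes, by projecting to the two factors, into a stable map to $S^2$ in the fundamental class $[S^2]$ and a stable map to $(X,J_b^X)$ in the class $0$.

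The second step is to analyze these two factors. A stable genus-$0$ map to $X$ in the trivial homology class with domain of arithmetic genus $0$ is necessarily constant (any nonconstant $J$-holomorphic sphere component has positive $\omega$-area, so it cannot represent $0$), hence the whole $X$-component is a constant map to some point $x \in X$. A stable genus-$0$ map to $\mb{CP}^1$ in the degree-$1$ class is, after stabilization, a biholomorphism $\mb{CP}^1 \to \mb{CP}^1$; the presence of the two marked points together with the stability condition forces the domain to be irreducible, so the map is a global automorphism of $\mb{CP}^1$. Therefore a point of $\ov{\scrM}^B_{W_\infty}$ consists of: a parameter $(w,b) \in W_\infty \times B$, a point $x \in X$, and an element of $\mathrm{PSL}(2,\mb C)$ together with the positions of the two marked points. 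Imposing the incidence condition $\mathrm{ev}_1, \mathrm{ev}_2$ cutting out $\tilde P_B \times (S^2 \times B \times X)_{hor}$ — combined with the refined condition defining $\ov{\scrM}^B_{W_\infty}$, which forces the first marked point to land in $W_\infty \times B \times S^2 \times X$ and the second in $W_\infty \times B \times X$ — pins down the point $x$ to be the point prescribed by the second evaluation, and pins the sphere component to the unique horizontal section through that point. The residual freedom in the two marked points and in $\mathrm{PSL}(2,\mb C)$ is exactly the $3$-(complex-)dimensional automorphism group acting freely on configurations of two distinct marked points, so the quotient is trivial: the marked-point positions are determined by the evaluation images. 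This identifies $\ov{\scrM}^B_{W_\infty}$ set-theoretically with the graph of the projection $W_\infty \times B \times S^2 \times X \to W_\infty \times B \times X$.

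The third step is regularity. Since every curve in $\ov{\scrM}^B_{W_\infty}$ is a multiple cover-free embedded sphere in a product (the degree-$1$ sphere in $\mb{CP}^1$ times a constant), the linearized operator splits as a direct sum of the linearization for the $S^2$-factor and for the constant map to $(X,J_b^X)$. For the $\mb{CP}^1$-factor, the normal bundle of a degree-$1$ sphere in $\mb{CP}^1$ is just $T\mb{CP}^1$ restricted to a point, i.e. the operator is surjective by automaticity of transversality in dimension $2$ (positivity of the relevant Chern number, $c_1(T\mb{CP}^1)\cdot[\mb{CP}^1] = 2 > 0$); for the $X$-factor, a constant map has surjective linearized operator onto $H^0$ with $H^1 = 0$ since the domain is $\mb{CP}^1$. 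Adding the $B$-directions and the evaluation constraint, which is manifestly transverse because the horizontal section is a genuine submanifold cut out by the obvious product coordinates, one concludes $\ov{\scrM}^B_{W_\infty}$ is cut out transversally, hence is a smooth manifold, and the evaluation map is a diffeomorphism onto the stated graph (it is a smooth bijection between manifolds of the same dimension, and one checks the differential is an isomorphism using the explicit product description).

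The main obstacle I expect is not any single hard estimate but rather the bookkeeping of \emph{which} incidence conditions are being imposed and checking that the two evaluation constraints, together with the automorphism action, leave no residual moduli: one must be careful that the first marked point is allowed to move over all of $S^2$ (it is only constrained to lie in $\tilde P_B$, i.e. unconstrained in the fiber direction beyond landing in the trivial region), while the second is constrained to the horizontal section $(S^2\times B\times X)_{hor}$, so that after quotienting by $\mathrm{PSL}(2,\mb C)$ and the marked-point reparametrizations, the data that survives is exactly $(w,b)$, the image point in $X$, and the $S^2$-coordinate of the first marked point — precisely the coordinates on $W_\infty \times B \times S^2 \times X$. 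Once this combinatorial identification is made cleanly, the regularity statement is automatic from the product structure and the standard automatic-transversality inputs, so the rest is routine.
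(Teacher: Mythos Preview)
Your proposal is correct and follows essentially the same approach as the paper's proof: both use the product form of $J_b$ over the trivial region to split the curve into a degree-$1$ map to $S^2$ and a constant map to $X$, invoke automatic transversality for the former, and identify the resulting moduli space with the stated graph. The paper compresses all of this into three sentences, while you have carefully unpacked the bookkeeping of marked points, automorphisms, and evaluation constraints that the paper leaves implicit.
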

\begin{proof}
Recall that for any $b \in B$, we have asked $J_b$ to be of product form over \eqref{eqn:trivial-region}. Therefore, the regularity follows directly from the automatic transversality of the Cauchy--Riemann operator associated with the degree $1$ holomorphic map from $S^2$ to itself. As there is exactly one such curve after imposing the evaluation constraint, we see that $\mathrm{ev}$ defines a diffeomorphism onto to the manifold described above.
\end{proof}

\begin{rem}\label{rem:lazy}
  According to our assumption, $B$ has a stable complex structure. Therefore,  $\ov{\scrM}^B_{W_{\infty}}$ has a stable complex structure 
  because the symplectic structures on $S^2$ and $X$ induces a canonical class of stable complex structures on their tangent bundles. On the other hand, index theory equips $\ov{\scrM}^B_{W_{\infty}}$ with a stable complex structure because the linearized Cauchy--Riemann operator associated to each element in the moduli space is homotopic to a complex linear Fredholm operator. The diffeomorphism $\mathrm{ev}$ furthermore respects the stable complex structures.
\end{rem}

We similarly introduce
\beq
\ov{\scrM}_{\infty}^B := \mathrm{ev}^{-1}(\pi_B^{-1}(\{\infty\} \times B) \times (S^2 \times B \times X)_{hor}).
\eeq
Now we are ready to state the key geometric input from the theory of global charts \cite{abouzaid2021complex} of moduli spaces, whose proof is discussed in the next subsection.
\begin{thm}\label{thm:family-global-chart}
There exists a stably complex derived orbifold
\beq
{\mc D}_B = ({\mc U}_B, {\mc E}_B, {\mc S}_B),
\eeq
 whose section section is   ${\mc S}_B$ continuous \footnote{We require the section to be smooth in the definition of derived orbifold charts, so hopefully emphasizing the continuity here would not cause any confusion.}, 
together with a map
\beq
\widetilde{\mathrm{ev}}: {\mc U}_B \to \tilde{P}_B \times_{B} \tilde{P}_B
\eeq
satisfying the following properties.
\begin{enumerate}
    \item ${\mc U}_B$ is an effective orbifold, and there exists a homeomorphism ${\mc S}_B^{-1}(0) \cong \ov{\scrM}_{0,2}(\tilde{P}_B, \{J_b\}, A)$.
    \item The restriction of $\widetilde{\mathrm{ev}}$ along ${\mc S}_B^{-1}(0)$ agrees with $\mathrm{ev}: \ov{\scrM}_{0,2}(\tilde{P}_B, \{J_b\}, A) \to \tilde{P}_B \times_{B} \tilde{P}_B$.
      \item The projection map to either factor $ \tilde{P}_B$ is proper. 
\end{enumerate}
\end{thm}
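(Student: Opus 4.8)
The statement is a family version, parametrized by the stably complex manifold $B$, of the global Kuranishi chart construction for genus-$0$ moduli spaces of pseudo-holomorphic maps into the Hamiltonian fibration $\tilde P_B\to\tilde S$. The plan is to run the construction of \cite{abouzaid2021complex, AbouzaidMcLeanSmith2023} (or \cite{hirschi2022global}) with $B$ as a passive parameter space, tracking the $B$-family structure at every step. Concretely, the moduli space $\ov{\scrM}_{0,2}(\tilde P_B,\{J_b\},A)$ of \eqref{eqn:moduli-2-pointed} fibers over $B$ with fiber over $b$ the moduli space $\ov{\scrM}_{0,2}(p_B^{-1}(b),J_b,A)$ of stable maps into the single symplectic fiber; since the almost complex structures $J_b$ vary smoothly in $b$, the usual inputs (Gromov compactness, existence of a universal curve, the Grothendieck–Riemann–Roch estimate making the relevant index bundles complex up to homotopy) all hold in families, with all the auxiliary data — the $\mb{CP}^N$-framings, the base of framed stable maps of $S$, the thickening manifold, the obstruction bundle, the reductive group $G$ — chosen compatibly over $B$.

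First I would set up the parametrized version of the framing data: the target $\tilde P_B\to\tilde S=S\times B$ is projective over the fiber $p_B^{-1}(b)$ once one picks, as in the intermediate step flagged after \eqref{eqn:project-B}, a compatible family of almost complex structures $\{J_b\}$ and an ample line bundle; one then forms the $B$-family $\scrM^{fr}$ of stable maps equipped with a framing relative to the ample bundle on $\tilde P_B$ and a framing of the domain curve as a stable map into projective space, obtaining a smooth quasi-projective $B$-family $F_B$ of domain data together with a compact Lie group $G$ (independent of $b$) acting almost freely. Pulling back the universal thickened moduli space gives a topological $G$-manifold ${\mc M}_B$ with a $G$-equivariant obstruction bundle ${\mc E}^{fr}_B$ and a continuous $G$-equivariant section whose zero locus is $G$-equivariantly homeomorphic to $\ov{\scrM}_{0,2}(\tilde P_B,\{J_b\},A)$. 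Quotienting by $G$ yields the effective orbifold ${\mc U}_B=({\mc M}_B//G)$ — effectiveness being achieved, as usual, by enlarging $G$ via a principal bundle so that the action is effective after a further stabilization, cf.\ \cite{pardon19} — the vector bundle ${\mc E}_B={\mc E}^{fr}_B//G$, and the continuous section ${\mc S}_B$. The stable complex structure on ${\mc D}_B$ comes from the $B$-relative index theory: the virtual bundle $T{\mc M}_B\oplus\mf{g}\ominus{\mc E}^{fr}_B$ over ${\mc M}_B$ carries a $G$-equivariant stable complex structure exactly as in \cite[Section 6.8]{abouzaid2021complex}, now with the $TB$-summand contributing its given stable complex structure; this descends to the desired stable complex structure on $({\mc U}_B,{\mc E}_B)$ in the sense of Definition \ref{defn:stable-complex-d-chart}. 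The two marked points give the evaluation map $\widetilde{\mathrm{ev}}:{\mc U}_B\to\tilde P_B\times_B\tilde P_B$, and properties (1) and (2) are immediate from the construction of global charts.

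The one genuinely new point, and the main obstacle, is property (3): the projection from ${\mc U}_B$ to either copy of $\tilde P_B$ is proper. On the zero locus this is the statement that $\ov{\scrM}_{0,2}(\tilde P_B,\{J_b\},A)\to\tilde P_B$ is proper, which follows from Gromov compactness in families (a sequence of maps with one marked point converging in $\tilde P_B$ has domains of bounded $\tilde\Omega_B$-energy — the energy being a topological quantity determined by $A$ and the class of $\tilde\Omega_B$, uniformly in $b$ over the compact $B$ — and hence Gromov-converges to a stable map with the limiting marked point still in $\tilde P_B$). To extend this from the zero locus to the whole thickening ${\mc U}_B$, I would follow the standard device: arrange the thickening construction so that away from a compact neighborhood (in ${\mc U}_B$) of ${\mc S}_B^{-1}(0)$ the evaluation-to-$\tilde P_B$ map factors through the projection $\tilde P_B\to S\times B$ composed with a proper map, using that $S\times B$ is compact (recall $S$ is the one-point blow-up of $\mb{CP}^1\times\mb{CP}^1$ and $B$ is compact with boundary); equivalently, one replaces $\tilde P_B$ by a compact region and notes that the thickened moduli problem is built from framed stable maps whose domains have uniformly bounded complexity. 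The remaining verifications — that the homeomorphism in (1) is compatible with the orbifold structure, that $\widetilde{\mathrm{ev}}$ restricts correctly in (2), and that the stable complex data are well-defined up to the equivalence of Definition \ref{defn:stable-complex-d-chart} — are routine adaptations of the cited references and I would merely indicate them. I expect the bulk of the write-up to consist of pointing to \cite{abouzaid2021complex, AbouzaidMcLeanSmith2023, hirschi2022global} for the fiberwise construction and carefully checking that every choice can be made in a $B$-family, with the properness argument of (3) being the only part requiring a new (if short) compactness estimate.
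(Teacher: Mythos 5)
Your overall route --- run the global Kuranishi chart construction of \cite{abouzaid2021complex} with $B$ as a passive parameter, quotient the thickened $G$-space by the group, extract the stable complex structure from family index theory, and read off the evaluation map at the marked points --- is exactly the paper's. But there is a genuine gap: you never produce a \emph{smooth} structure on ${\mc U}_B$. You describe the thickening as a topological $G$-manifold ${\mc M}_B$, immediately pass to ``the effective orbifold ${\mc M}_B//G$'', and speak of $T{\mc M}_B$; but a derived orbifold in this paper requires a smooth orbifold and a smooth vector bundle, and the thickened moduli space carries a priori only a topological manifold structure. The paper's proof hinges on establishing the $C^1_{loc}$ $G$-bundle structure of the thickening over the smooth quasi-projective base $\scrF\times B$ (which rests on the family version of the topological gluing theorem) and then invoking Lashof's equivariant stable smoothing theory, which smooths only a stabilization and leaves the Kuranishi section merely continuous --- this is precisely the reason for the footnote about continuity in the statement, which your plan never engages with. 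Without this step neither the quotient orbifold nor the index-theoretic stable complex structure on its tangent bundle is defined.

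Relatedly, you identify properness (3) as ``the main obstacle'', but in the paper's construction this is the soft part: the zero locus is compact by Gromov compactness in families (the energy is the topological quantity $\langle \tilde{\Omega}_B', A\rangle$, uniform over the compact $B$), $\tilde P_B$ is compact, and the ambient thickening is cut down to a precompact neighborhood of the zero locus, so no new estimate is required and the paper's proof does not even comment on it. Your proposed mechanism --- arranging that away from a compact neighborhood of the zero locus the evaluation ``factors through a proper map'' --- is not an argument as stated. The genuinely family-specific work lies elsewhere: the integral taming form $\tilde{\Omega}_B'$ on $\tilde P_B$ (Lemma \ref{lem:integral-2-form}), fixing the degree $d$ and the group $PU(d+1)$, family transversality for $k\gg 1$ via H\"ormander-type estimates together with compactness of $B$, the $C^1_{loc}$ structure just mentioned, and (for the ensuing corollary) pulling back the unmarked chart along the forgetful map and stabilizing so that $\widetilde{\mathrm{ev}}$ becomes submersive.
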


\begin{cor}
The derived orbifold chart ${\mc D}_h = ({\mc U}_h, {\mc E}_h, {\mc S}_h)$ induced from the Cartesian square
\beq
\begin{tikzcd}
{\mc U}_{h} \arrow[d] \arrow[rr]                  &  & {\mc U}_B \arrow[d]            \\
\tilde{P}_B \times_{B} (S^2 \times B \times X)_{hor} \arrow[rr] &  & \tilde{P}_B \times_{B} \tilde{P}_B
\end{tikzcd}
\eeq
inherits a stable complex structure 
and ${\mc S}_{h}^{-1}(0)$ is homeomorphic to $\ov{\scrM}_{h}^B$.


Similarly, the derived orbifold chart ${\mc D}_{\infty} = ({\mc U}_{\infty}, {\mc E}_{\infty}, {\mc S}_{\infty})$ specified by the fiber product diagram
\beq
\begin{tikzcd}
{\mc U}_{\infty} \arrow[d] \arrow[rr]                                           &  & {\mc U}_B \arrow[d]            \\
\pi_B^{-1}(\{\infty\} \times B) \times_{B} (S^2 \times B \times X)_{hor} \arrow[rr] &  & \tilde{P}_B \times_{B} \tilde{P}_B
\end{tikzcd}
\eeq
is stably complex, and ${\mc S}_{\infty}^{-1}(0)$ is homeomorphic to $\ov{\scrM}_{\infty}^B$.
\end{cor}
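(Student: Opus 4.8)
The plan is to realise both Cartesian squares at the level of derived orbifolds by the standard device of \emph{cutting down a derived orbifold along a suborbifold with complex normal bundle}, which sidesteps any transversality requirement and is precisely the reason the defining section ${\mc S}_B$ is permitted to be merely continuous. I will treat ${\mc D}_h = ({\mc U}_h, {\mc E}_h, {\mc S}_h)$ in detail; ${\mc D}_\infty = ({\mc U}_\infty, {\mc E}_\infty, {\mc S}_\infty)$ is handled by the same recipe with one extra factor.

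First I would record that $Z := \tilde P_B \times_B (S^2\times B\times X)_{hor}$ is a closed suborbifold of $Y := \tilde P_B\times_B\tilde P_B$ with normal bundle $\mathrm{pr}_2^* N$, where $N$ is the normal bundle of $(S^2\times B\times X)_{hor}$ inside $\tilde P_B$ and $\mathrm{pr}_2$ is the second projection, and that $N$ carries a complex structure. The latter is where the actual content lies: over the trivialised region \eqref{eqn:trivial-region} the submanifold $(S^2\times B\times X)_{hor}$ is the locus $W_\infty\times B\times X$ of fixed $S^2$-coordinate inside $W_\infty\times B\times S^2\times X$, whose normal bundle is the tangent bundle of the $S^2$-factor, complex via $\omega_{S^2}$; away from that region the almost complex structures $J_b$ were chosen so that the projections $\tilde P_B|_{p_B^{-1}(b)}\to S$ are pseudo-holomorphic, which makes the fibrewise horizontal sections pseudo-holomorphic submanifolds and hence equips $N$ with a complex structure depending smoothly on $b\in B$. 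These two descriptions agree over the overlap (and the space of complex structures on a given real bundle is contractible), so $N$ acquires a well-defined complex structure; I would cite \cite{abouzaid2021complex, savelyev} for the pieces of this geometric input.

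Next I would carry out the cut-down: choose a tubular neighbourhood $\nu\subset Y$ of $Z$, identify it with a disc bundle in $\mathrm{pr}_2^* N$, and let $\sigma\colon\nu\to\mathrm{pr}_2^* N$ be the tautological section, so $\sigma^{-1}(0)=Z$. Set ${\mc U}_h := \widetilde{\mathrm{ev}}^{-1}(\nu)\subset{\mc U}_B$ (a \emph{Restriction} in the sense of Section \ref{sec:derived-orbifolds}, possibly further shrunk around the zero locus), ${\mc E}_h := {\mc E}_B|_{{\mc U}_h}\oplus\widetilde{\mathrm{ev}}^*\mathrm{pr}_2^* N$, and ${\mc S}_h := {\mc S}_B|_{{\mc U}_h}\oplus\widetilde{\mathrm{ev}}^*\sigma$. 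Then ${\mc S}_h^{-1}(0)={\mc S}_B^{-1}(0)\cap\widetilde{\mathrm{ev}}^{-1}(Z)$, which by Theorem \ref{thm:family-global-chart}(1)--(2) — together with the observation that $\mathrm{ev}$ in fact lands in $\tilde P_B\times_B\tilde P_B$, so that $\ov{\scrM}_h^B=\mathrm{ev}^{-1}(Z)$ — is canonically homeomorphic to $\ov{\scrM}_h^B$; the underlying orbifold ${\mc U}_h$, being open in the effective ${\mc U}_B$, is effective and realises the fibre product in the stated square, and the derived orbifold is compact since its zero locus is closed in the compact $\ov{\scrM}_{0,2}(\tilde P_B,\{J_b\},A)$. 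For the stable complex structure, if $({\mc E}'_B,k)$ exhibits the one on ${\mc D}_B$ (Definition \ref{defn:stable-complex-d-chart}), I would take ${\mc E}'_h := {\mc E}'_B|_{{\mc U}_h}\oplus\widetilde{\mathrm{ev}}^*\mathrm{pr}_2^* N$; since $T{\mc U}_h=T{\mc U}_B|_{{\mc U}_h}$, both $T{\mc U}_h\oplus{\mc E}'_h\oplus\underline{\mb R}^k$ and ${\mc E}_h\oplus{\mc E}'_h$ are the restriction of the corresponding complex bundle on ${\mc D}_B$ direct-summed with the complex bundle $\widetilde{\mathrm{ev}}^*\mathrm{pr}_2^* N$, hence complex, so $({\mc E}'_h,k)$ exhibits a stable complex structure on ${\mc D}_h$. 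For ${\mc D}_\infty$ the same argument applies with $Z$ replaced by $\pi_B^{-1}(\{\infty\}\times B)\times_B(S^2\times B\times X)_{hor}$: one additionally cuts along the real codimension-$2$ divisor $\pi_B^{-1}(\{\infty\}\times B)\subset\tilde P_B$, whose normal bundle is the pullback of the complex line $T_\infty\mb{CP}^1$, so the enlarged obstruction bundle stays complex, yielding ${\mc S}_\infty^{-1}(0)\cong\ov{\scrM}_\infty^B$ and a stable complex structure as before.

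The main obstacle I anticipate is the second step: showing that $N$ (and likewise the normal bundle of $\pi_B^{-1}(\{\infty\}\times B)$) is genuinely a \emph{complex} vector bundle, not merely an even-rank real one, and that the local complex structures assemble coherently. Everything else — the tubular-neighbourhood cut-down, the identification of zero loci, and the bookkeeping of stable complex structures via Definition \ref{defn:stable-complex-d-chart} — is formal once that geometric input is in place.
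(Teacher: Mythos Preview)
Your cut-down construction is correct and produces a derived orbifold chart with the stated zero locus and stable complex structure, but it differs from the paper's route. The paper's one-line proof relies on a fact established in the \emph{proof} of Theorem~\ref{thm:family-global-chart} rather than its statement: after a further stabilization (via \cite[Lemma~4.5]{abouzaid2021complex}) the map $\widetilde{\mathrm{ev}}$ is arranged to be \emph{submersive}, so the displayed Cartesian square is a transverse fibre product of smooth orbifolds and one simply restricts $({\mc E}_B, {\mc S}_B)$ to it. Your approach instead uses the tubular-neighbourhood trick, which works without any transversality hypothesis on $\widetilde{\mathrm{ev}}$, at the price of enlarging the obstruction bundle by $\widetilde{\mathrm{ev}}^* \mathrm{pr}_2^* N$. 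Both routes need $N$ to be complex, which you argue explicitly and the paper leaves implicit. One minor inaccuracy: your ${\mc U}_h = \widetilde{\mathrm{ev}}^{-1}(\nu)$ is an \emph{open subset} of ${\mc U}_B$, not the fibre product in the displayed diagram, so the claim that it ``realises the fibre product in the stated square'' is not literally true at the level of underlying orbifolds; what you have built is (up to restriction) the stabilization of the paper's literal fibre-product chart by $\widetilde{\mathrm{ev}}^* \mathrm{pr}_2^* N$, which represents the same derived bordism class and suffices for the application.
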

\begin{proof}
This is a direct consequence of Lemma \ref{lem:moduli-near-infty} and Theorem \ref{thm:family-global-chart}. 
\end{proof}

By applying the perturbation-resolution algorithm to the derived orbifold chart ${\mc D}_{h}$, we obtain a stably  almost complex manifold
\beq
\ov{\mathbf{M}}^{B}_{h}.
\eeq
The composition of $\widetilde{\mathrm{ev}}$ with the natural map $\ov{\mathbf{M}}^B_{h} \to {\mc U}_{h}$ produces a pair of maps
\beq
\begin{tikzcd}
    & \ov{\mathbf{M}}^B_{h} \arrow[ld, "\mathrm{ev}_1"'] \arrow[rd, "\mathrm{ev}_2"] &                               \\
\tilde{P}_B &                                                                                  & (S^2 \times B \times X)_{hor}.
\end{tikzcd}
\eeq
We compose with the projection from $ S^2 \times B \times X$ to $X$, to obtain a manifold which is proper over $\tilde{P}_B $.
Consider the inclusion $X \times B \to \tilde{P}_B$ associated to the point $(\infty,\infty)$ in $\mathbb{C} \mathbb{P}^1 \times \mathbb{C} \mathbb{P}^1 $. According to Lemma \ref{lem:moduli-near-infty}, the evaluation map from $\ov{\mathbf{M}}^B_{h}$ to $\tilde{P}_B  $  is transverse to this submanifold, and the associated submanifold of $X \times B \times X$ is therefore the product of $B$ with the diagonal.

Moreover, given an inclusion $B \to B'$ of bases, the perturbation-resolution algorithm yield a cobordism of almost complex manifold
\beq
\ov{\mathbf{M}}^{B,B'}_{h}.
\eeq
between $ \mathbf{M}^B_{h}$ and the fibre product of $\mathbf{M}^{B'}_{h}$ with the inclusion of $\tilde{P}_{B'} $ in $ \tilde{P}_{B'}$. Indeed, the space of data for defining the derived orbifold $ {\mc D}_B $ is contractible, and any choice of data for defining $ {\mc D}_{B'}$ thus pulls back to define a derived orbifold which is cobordant to $ {\mc D}_B $ after stabilization. The claim then follows form the compatibility of the perturbation-resolution algorithm with cobordisms and stabilizations.  This cobordism will again have the property that its fibre product with $X \times B$ gives the product of $B \times I$ with the diagonal of $X$.

We can now complete the proof of our application, modulo the geometric construction of parametric families of derived orbifolds: 
   \begin{proof}[Proof of Proposition \ref{thm:splitting-B}]
     Choose a sequence $B_1 \to B_{2} \to \cdots $ of embeddings of compact stably almost manifolds with boundary, lying over $ \Omega \mathrm{Ham}(M, \omega) \times S^2$, with the property that the map from $\colim B_i $ is a homotopy equivalence. Given the preceding discussion, the result now follows from Corollary \ref{cor:composite_filtered_fibre_product}. 
\end{proof}

\subsubsection{Global Kuranishi charts for families} \label{sec:glob-kuran-charts}
We move on to explain the constructions in Theorem \ref{thm:family-global-chart}. It is almost the same as the original construction described in \cite[Section 6]{abouzaid2021complex}, so we will be brief and refer the reader to \emph{loc. cit.} for more detailed arguments. As noted in \cite[Section 6.9]{abouzaid2021complex}, the global Kuranishi chart for a moduli space of $J$-holomorphic curves with marked points can be obtained by pulling back such a chart for a moduli space of $J$-holomorphic curves without marked points via the forgetful map. Therefore, we will focus on constructing global Kuranishi charts for $\ov{\scrM}_{0,0}(\tilde{P}_B, \{J_b\}, A)$, which is the unmarked version of \eqref{eqn:moduli-2-pointed}.

\begin{lemma}\label{lem:integral-2-form}
There exists a closed $2$-form $\tilde{\Omega}_B'$ defined on $\tilde{P}_B$ such that:
\begin{enumerate}
    \item the restriction of $\tilde{\Omega}_B'$ to each fiber of $p_B$ from \eqref{eqn:project-B} defines a symplectic form which is tamed by $J_b$;
    \item the cohomology class $[\tilde{\Omega}_B' |_{p_B^{-1}(b)}]$ lies in the image of the natural inclusion map of cohomology groups $H^2(p_B^{-1}(b);{\mb Z}) \to H^2(p_B^{-1}(b);{\mb R})$.
\end{enumerate}
\end{lemma}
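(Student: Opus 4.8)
The statement is the family (parametrized by $B$) version of the corresponding assertion in the one-parameter case, which in the excerpt is attributed to the analogous lemma in \cite{abouzaid2021complex}. The plan is to produce $\tilde{\Omega}_B'$ by perturbing the previously-constructed closed $2$-form $\tilde{\Omega}_B$ within its cohomology class on each fiber, using a partition of unity over $B$ together with the compactness of $B$ and the local triviality of the fibration $p_B \colon \tilde{P}_B \to B$.

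\emph{Step 1 (local integral approximants).} Fix $b_0 \in B$. Because $p_B$ is a smooth proper submersion (the fibers being the compact manifolds $\tilde P_{B}|_{p_B^{-1}(b)}$, each diffeomorphic to a fixed smooth model $M_A$ built from $S\times X$ via clutching), the Ehresmann theorem gives a neighbourhood $U_{b_0}\subset B$ of $b_0$ and a fiber-preserving diffeomorphism $p_B^{-1}(U_{b_0})\cong U_{b_0}\times M_A$. Under this trivialization the forms $\tilde\Omega_B|_{p_B^{-1}(b)}$ become a smooth family of symplectic forms on the fixed manifold $M_A$, all representing the \emph{same} de Rham cohomology class $\kappa \in H^2(M_A;\mb R)$ (the fibers are all deformation equivalent to the standard Hamiltonian fibration, so $[\tilde\Omega_B|_{\text{fiber}}]$ is locally constant; shrinking $U_{b_0}$ if necessary we may assume it is genuinely constant on $U_{b_0}$). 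Over the trivial region of \eqref{eqn:trivial-region} this class is manifestly integral, namely $[\omega_{S^2}\oplus 0\oplus\omega_{S^2}\oplus\omega]$ with $[\omega]$ integral after rescaling if needed; more generally, since the period lattice $H^2(M_A;\mb Z)/\text{torsion}\subset H^2(M_A;\mb R)$ is discrete and $\kappa$ is a limit of classes $[\tilde\Omega_B|_{p_B^{-1}(b)}]$ which we may, by a $C^0$-small perturbation of $\omega$ among tame forms (this is the "intermediate step" flagged after the list of choices in Section~\ref{subsec:split}), assume rational, we can choose a single integral class $\kappa_{b_0}'$ with $\kappa_{b_0}'$ arbitrarily close to $\kappa$; then $\tilde\Omega_B|_{p_B^{-1}(b)} + \eta_{b_0}$, for a fixed closed $2$-form $\eta_{b_0}$ on $M_A$ with $[\eta_{b_0}]=\kappa_{b_0}'-\kappa$ that is $C^0$-small, is an integral, still-nondegenerate, still-$J_b$-tame $2$-form on each fiber over $U_{b_0}$.

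\emph{Step 2 (patching over $B$).} Cover the compact $B$ by finitely many such $U_{b_1},\dots,U_{b_N}$ with a subordinate partition of unity $\{\rho_i\}$, each $U_{b_i}$ carrying the closed fiberwise $2$-form $\eta_{b_i}$ (extended to all of $p_B^{-1}(U_{b_i})$ as a fiberwise form via the trivialization, with no $dB$-components) and the local integral class $\kappa_{b_i}'$. On overlaps the classes $\kappa_{b_i}'$ need not agree, so one cannot simply average the $\eta_{b_i}$; instead, run the standard argument on the Čech–de Rham double complex of the cover pulled back to $\tilde P_B$: the discrepancies $\kappa_{b_i}'-\kappa_{b_j}'$ assemble to a Čech $1$-cocycle valued in the (locally constant, integral) sheaf of fiberwise $H^2$, and since we may choose all $\kappa_{b_i}'$ within a common small ball around $\kappa$ this cocycle can be arranged to vanish — i.e.\ we can choose the $\kappa_{b_i}'$ to be restrictions of a \emph{single} global integral section of $R^2(p_B)_*\mb Z$, using that $B$ is (homotopy equivalent to a) finite complex so this sheaf has no obstruction to such a global lift once we are close enough to $\kappa$. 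With the target classes globally consistent, set $\tilde\Omega_B' := \tilde\Omega_B + \sum_i \rho_i\,\eta_{b_i}$. This is a global closed $2$-form; its restriction to $p_B^{-1}(b)$ represents the integral class $\sum_i\rho_i(b)\kappa_{b_i}' = \kappa'(b)$, which is integral because it is a convex combination of the \emph{same} integral class (by the consistency just arranged); and since each summand $\rho_i\eta_{b_i}$ is $C^0$-small and fiberwise, $\tilde\Omega_B'|_{p_B^{-1}(b)}$ is still nondegenerate and $J_b$-tame, because tameness is an open condition and $J_b$-tameness of $\tilde\Omega_B|_{p_B^{-1}(b)}$ holds by construction (the $J_b$ are compatible with $\tilde\Omega_B$ on fibers, property (1) of the family of almost complex structures). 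Property (2) over the trivial region \eqref{eqn:trivial-region} is preserved since we may take all $\eta_{b_i}$ supported away from $W_\infty$.

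\emph{Main obstacle.} The only genuinely delicate point is Step 2: arranging that the finitely many local integral target classes $\kappa_{b_i}'$ glue to one global integral section of the local system $R^2(p_B)_*\mb Z$ while each staying $C^0$-close to $\kappa$. This requires knowing that the monodromy of the fibration $p_B$ acts trivially on the relevant cohomology class $\kappa$ — which holds because the structure group consists of Hamiltonian (hence cohomologically trivial) bundle automorphisms of the clutched fibration, per property (3) of $P_B$ recalled in Section~\ref{sec:reform-terms-hamilt} — and then invoking discreteness of the period lattice together with compactness of $B$. Everything else (Ehresmann trivialization, openness of tameness, partition of unity averaging, preservation near $W_\infty$) is routine and follows the model of \cite[Lemma 3.3 and its integral refinement]{abouzaid2021complex} verbatim, fiberwise over $B$.
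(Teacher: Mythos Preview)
Your formula $\tilde\Omega_B' = \tilde\Omega_B + \sum_i \rho_i\,\eta_{b_i}$ is \emph{not} a closed form on $\tilde P_B$. Each $\eta_{b_i}$ is closed, but $d(\rho_i\eta_{b_i}) = d\rho_i \wedge \eta_{b_i}$, a $3$-form with one leg in $B$ and two in the fiber; these terms do not cancel, because the $\eta_{b_i}$ are pulled back via \emph{different} local Ehresmann trivializations and hence are not equal on overlaps (only fiberwise cohomologous). Your form is fiberwise closed, symplectic, tamed, and represents the integral class $\kappa'$ fiberwise, so items (1) and (2) hold for its restrictions; but the lemma asks for a globally closed $2$-form, and that is what fails. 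A Čech--de Rham correction can repair this, but it is not the one-line partition-of-unity average you wrote, and you explicitly assert the formula is closed.

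The paper's argument sidesteps the patching entirely. It invokes the description of Hamiltonian fibrations in \cite[Section 2.2]{savelyev} to observe that the fiberwise class $[\tilde\Omega_B|_{p_B^{-1}(b)}]$ is \emph{globally} determined by the two input classes: the symplectic class on the surface $S$ and $[\omega]\in H^2(X;\mb R)$. Since both can be taken rational (the first already is; the second after a $C^0$-small perturbation), one simply reruns the Savelyev-type construction of the closed $2$-form $\tilde\Omega_B$ starting from the perturbed $\omega$, and then rescales by an integer. This directly produces a \emph{global} closed $2$-form with integral fiberwise class, with no local corrections and no Čech cocycle to kill. You actually touch on the key point (rational approximation of $[\omega]$, trivial monodromy on $H^2$) but then bury it under a local patching scheme that is both unnecessary and, as written, incorrect.
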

\begin{proof}
For point $b \in B$, using the description from \cite[Section 2.2]{savelyev}, we see that the cohomology class of $[\tilde{\Omega}_B |_{p_B^{-1}(b)}]$ is uniquely determined by the cohomology class of the symplectic form on the complex surface $S$ and the class $[\omega] \in H^2(X;{\mb R})$. By choosing a sufficiently fine rational approximation and applying rescaling, we can find the desired $\tilde{\Omega}_B' \in H^2(\tilde{P}_B; {\mb R})$.
\end{proof}

From now on, let us fix:
\begin{enumerate}
    \item a closed $2$-form $\tilde{\Omega}_B'$ as in Lemma \ref{lem:integral-2-form};
    \item a positive integer $d := \langle \tilde{\Omega}_B', A \rangle$.
\end{enumerate}

Denote by $\scrF \subset \ov{\scrM}_{0,0}(\mb{CP}^d, d)$ the Zariski open subset of the moduli space of genus $0$ degree $d$ holomorphic maps into $\mb{CP}^d$ which consists of maps whose image is not contained in any linear subspace, and let $\scrC \to \scrF$ be the universal curve. By \cite[Lemma 6.4]{abouzaid2021complex}, both $\scrF$ and $\scrC$ are smooth complex quasi-projective varieties.

Using the integral $2$-form $\tilde{\Omega}_B'$ defined over $\tilde{P}_B$, for any pair $(b,u) \in \ov{\scrM}_{0,0}(\tilde{P}_B, \{J_b\}, A)$ such that $u: \Sigma \to p^{-1}_B(b)$ is a $J_b$-holomorphic map defined over a nodal genus $0$ curve $\Sigma$, by \cite[Lemma 6.8]{abouzaid2021complex}, there exists a holomorphic Hermitian line bundle $L_{u^* \tilde{\Omega}_B'} \to \Sigma$ whose Chern connection has curvature $-2\pi i u^* \tilde{\Omega}_B'$, and it is uniquely defined up to isomorphism. Moreover, due to the positivity of $u^* \tilde{\Omega}_B'$, as implied by the fact that $\tilde{\Omega}_B'|_{p^{-1}_B(b)}$ is tamed by $J_b$, we know that $H^1(\Sigma, L_{u^* \tilde{\Omega}_B'}) = 0$ and $\dim_{\mb C} H^0(\Sigma, L_{u^* \tilde{\Omega}_B'}) = d+1$. Upon choosing a basis $\{s_0, \dots, s_d \}$ of $H^0(\Sigma, L_{u^* \tilde{\Omega}_B'})$, the projectivization $[s_0 : \cdots: s_d]$ defines a holomorphic map $\Sigma \to \mb{CP}^d$, which actually lies in $\scrF \subset \ov{\scrM}_{0,0}(\mb{CP}^d, d)$. This motivates the following definition.

\begin{defn}
A \emph{framed genus $0$ curve in $p_B: \tilde{P}_B \to B$} is a quadruple $(b,u,\Sigma, F)$ where
\begin{enumerate}
    \item $\Sigma$ is a genus $0$ nodal curve;
    \item $b \in B$, and $u \to p_B^{-1}(b)$ is a smooth map in the class $A$ such that $2$-form $u^* \tilde{\Omega}_B'$ is positive over any unstable component of $\Sigma$;
    \item $F = (s_0, \dots, s_d)$ forms a basis of $H^0(\Sigma, L_{u^* \tilde{\Omega}_B'})$ such that the Hermitian matrix
    \beq
    \scrH(b,u,\Sigma, F) = (\int_{\Sigma} \langle s_i, s_j \rangle u^* \tilde{\Omega}_B' )_{i,j}
    \eeq
    has positive eigenvalues.
\end{enumerate}
The holomorphic map $\iota_{F} := [s_0 :  \cdots: s_d]: \Sigma \to \mb{CP}^d$ is called the \emph{domain map}.
\end{defn}
We can define the isomorphism relation between a pair of framed curves in the standard way. 

Next, we choose
\begin{enumerate}
    \item a Hermitian line bundle $\scrL \to \scrC$ whose restriction to each fiber is ample and the $PU(d+1)$-action on $\scrC$ induced from the $PGL(d+1)$-action on $\mb{CP}^d$ is lifted to a $PU(d+1)$-action on $\scrL$ which preserves the Hermitian structure;
    \item a sufficiently large integer $k \gg 1$.
\end{enumerate}

\begin{defn}
The \emph{thickened moduli space} $\scrT$ (which is endowed with the Gromov topology after performing a graph construction) is the moduli space of tuples $(b,u,\Sigma, F, \eta)$ for which:
\begin{enumerate}
    \item $(b,u,\Sigma, F)$ is a framed genus $0$ curve;
    \item $\eta \in H^0(\ov{\mathrm{Hom}}(\iota_F^* T\scrC, u^* Tp_B^{-1}(b)) \otimes \iota_F^* \scrL^k) \otimes_{\mb C} \ov{H^0(\iota_F^* \scrL^k)}$;
    \item The equation
    \beq\label{eqn:perturbed-J-hol}
    \ov{\partial}_{J_b} u + \langle \eta \rangle \circ d\iota_F = 0 
    \eeq
    holds where $\langle \eta \rangle$ is the image of the Hermitian pairing
    \beq
    H^0(\ov{\mathrm{Hom}}(\iota_F^* T\scrC, u^* Tp_B^{-1}(b)) \otimes \iota_F^* \scrL^k) \otimes_{\mb C} \ov{H^0(\iota_F^* \scrL^k)} \to C^\infty(\ov{\mathrm{Hom}}(\iota_F^* T\scrC, u^* Tp_B^{-1}(b))).
    \eeq
\end{enumerate}

The \emph{obstruction bundle} $\scrE$ is defined to be the bundle over $\scrT$ whose fiber over $(b,u,\Sigma, F, \eta)$ is
\beq
H^0(\ov{\mathrm{Hom}}(\iota_F^* T\scrC, u^* Tp_B^{-1}(b)) \otimes \iota_F^* \scrL^k) \otimes_{\mb C} \ov{H^0(\iota_F^* \scrL^k)} \oplus \scrH_{d+1},
\eeq
where $\scrH_{d+1}$ is the space of $(d+1) \times (d+1)$ Hermitian matrices.

The \emph{Kuranishi map} is the section $\mathfrak{s}: \scrT \to \scrE$ defined by
\beq
\mathfrak{s}: (b,u,\Sigma, F, \eta) \mapsto (\eta, \exp^{-1}(\scrH(b,u,\Sigma, F))),
\eeq
where $\exp^{-1}$ is the inverse of the exponential map, which is well-defined over the space of positive definite Hermitian matrices.
\end{defn}

If $k \gg 1$, the bundle $\scrE$ is indeed a vector bundle by positivity and compactness of $B$. The thickened moduli space $\scrT$ admits a continuous map
\beq
\begin{aligned}
pr: \scrT &\to \scrF \times B \\
(b,u,\Sigma, F, \eta) &\mapsto (\iota_F, b).
\end{aligned}
\eeq
Introduce the notation $G = PU(d+1)$. Then $\scrT$ admits a $G$-action and $\scrE$ is a $G$-equivariant vector bundle. We summarize some salient features of $(G, \scrT, \scrE, \mathfrak{s})$ as follows.
\begin{prop}\label{prop:global-chart-property}
\begin{enumerate}
    \item The induced topology on $\ov{\scrM}_{0,0}(\tilde{P}_B, \{J_b\}, A)$ as the zero locus $\mathfrak{s}^{-1}(0)$ agrees with the Gromov topology.
    \item The $G$-action on $\scrT$ has finite stabilizers, and the orders of stabilizers are uniformly bounded.
    \item For $k$ sufficiently large, the linearization of \eqref{eqn:perturbed-J-hol} is surjective over a (precompact) open neighborhood of $\mathfrak{s}^{-1}(0)$. Therefore, up to a further shrinking, $(G, \scrT, \scrE, \mathfrak{s})$ defines a global Kuranishi chart for $\ov{\scrM}_{0,0}(\tilde{P}_B, \{J_b\}, A)$.
    \item The projection $pr: \scrT \to \scrF \times B$ is a topological submersion which defines a $C^1_{loc}$ $G$-bundle structure in the sense of \cite[Section 4.5]{abouzaid2021complex}.
    \item Using the structure described in (4), the virtual $G$-equivariant vector bundle 
    \beq
    T^v \scrT \oplus \mathfrak{g} \oplus pr^* T(\scrF \times B) - \scrE
    \eeq
    defines a class in the \emph{complex} G-equivariant $K$-theory.
\end{enumerate}
\end{prop}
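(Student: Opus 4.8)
The statement to prove is Proposition~\ref{prop:global-chart-property}, which collects five properties of the global Kuranishi chart $(G, \scrT, \scrE, \mathfrak{s})$ for the parametric moduli space $\ov{\scrM}_{0,0}(\tilde{P}_B, \{J_b\}, A)$. The plan is to reduce each item to its non-parametric counterpart in \cite[Section 6]{abouzaid2021complex}, carefully tracking where the base $B$ enters, and invoking the compactness of $B$ wherever a uniformity statement is needed.

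First I would address (1): the identification of $\mathfrak{s}^{-1}(0)$ with $\ov{\scrM}_{0,0}(\tilde{P}_B, \{J_b\}, A)$ as topological spaces. Fiberwise over $b \in B$, this is exactly \cite[Proposition 6.11]{abouzaid2021complex} applied to the symplectic fibration $(p_B^{-1}(b), \tilde{\Omega}_B'|_{p_B^{-1}(b)}, J_b)$; the point is that the Gromov topology on the parametric moduli space is the subspace topology from $\scrT$ with its graph-completion topology, and sequential convergence can be tested by first extracting a convergent sequence of base points (using that $B$ is a manifold, hence locally compact) and then applying the fiberwise statement. Items (2) and (3) follow similarly: the stabilizer bound in (2) comes from the fact that $G = PU(d+1)$ acting on the space of framings of genus $0$ curves has stabilizers controlled by automorphisms of the domain together with the ampleness of $\iota_F^*\scrL$, exactly as in \cite[Lemma 6.10]{abouzaid2021complex}, and the uniform bound over $B$ is automatic because $d$ is fixed. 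For the surjectivity of the linearized operator in (3), the key is that the perturbation term $\langle \eta \rangle \circ d\iota_F$ in Equation~\eqref{eqn:perturbed-J-hol} spans the relevant cokernels for $k$ large; this is a fiberwise statement, and the uniform choice of $k$ that works over all of $B$ exists by the compactness of $B$ (after covering $\ov{\scrM}_{0,0}(\tilde{P}_B, \{J_b\}, A)$, which is compact, by finitely many charts).

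Then I would turn to (4) and (5), which are the substantive differential-topological and index-theoretic points. For (4), the claim is that $pr: \scrT \to \scrF \times B$ is a topological submersion with a $C^1_{loc}$ $G$-bundle structure. The original argument in \cite[Section 4.5, Section 6.7]{abouzaid2021complex} constructs local trivializations of $\scrT$ over $\scrF$ using a family of gluing maps; here one carries out the same construction, but now the gluing parameters and the almost complex structures $J_b$ vary smoothly over $B$, so the trivializations acquire a dependence on $b$ which is as regular as the family $\{J_b\}$. Since $\{J_b\}$ is a smooth family, the resulting bundle charts over $\scrF \times B$ inherit the $C^1_{loc}$ property in the $\scrF$-directions and smoothness in the $B$-directions, which together give the $C^1_{loc}$ $G$-bundle structure over $\scrF \times B$. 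For (5), the virtual bundle $T^v\scrT \oplus \mathfrak{g} \oplus pr^*T(\scrF \times B) - \scrE$ is, fiberwise over $(b, u, \Sigma, F, \eta)$, the index of a Cauchy--Riemann-type operator twisted by bundles with complex structure; the standard argument (deforming the real linearized operator through Fredholm operators to a complex-linear one, using that the zeroth-order terms do not affect the index class) shows the index defines a complex $G$-equivariant $K$-theory class, and this deformation can be performed continuously in the family parameter $b \in B$, so it globalizes. The main obstacle I anticipate is making the family version of the $C^1_{loc}$ $G$-bundle structure in (4) precise, since the definition in \cite[Section 4.5]{abouzaid2021complex} is already technically involved and one must check that introducing the smooth parameter $b$ does not disturb the delicate regularity bookkeeping; however, since smoothness is a stronger condition than $C^1_{loc}$ and the dependence on $b$ enters only through the smooth family $\{J_b\}$ and the smooth structure of $\tilde{P}_B \to B$, this should go through by the same arguments, and I would simply indicate the modifications rather than reproduce the construction in full.
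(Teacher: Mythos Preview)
Your proposal is correct and follows essentially the same approach as the paper: reduce each item to the non-parametric results of \cite[Section~6]{abouzaid2021complex}, using compactness of $B$ to obtain the needed uniformity (the paper phrases this as ``the H\"ormander techniques used there can also work in the family setting due to the openness of transversality''), and handle (4) via a family version of the gluing construction (the paper additionally points to \cite[Appendix~B,~C]{pardon-VFC} for the family gluing). One small point you underplay in (5): the summand $pr^*T(\scrF \times B)$ is not itself captured by the index of a Cauchy--Riemann operator, and its stable complex structure uses that $B$ was assumed stably almost complex; the paper flags this explicitly.
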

\begin{proof}
These are all proven in \cite[Section 6]{abouzaid2021complex}. The compactness of $B$ makes sure that the H\"ormander techniques used there can also work in the family setting due to the openness of transversality. Moreover, because $TB$ is stably complex, the complex orientation can be constructed. As for the $C^1_{loc}$ $G$-bundle property, the topological gluing construction in \cite[Appendix B]{pardon-VFC} can be done without difficulties in the family setting, whose counterpart in the setting of Hamiltonian Floer theory can be found in \cite[Appendix C]{pardon-VFC}. Accordingly, the arguments of \cite[Theorem 6.1]{abouzaid2021complex} carry over here.
\end{proof}

\begin{proof}[Proof of Theorem \ref{thm:family-global-chart}]
This follows from the proof of \cite[Proposition 5.35]{abouzaid2021complex}, which shows that the Property (4) ensures that one can apply Lashof's $G$-equivariant stable smoothing theory to equip a stabilization of the global Kuranishi chart $(G, \scrT, \scrE, \mathfrak{s})$ with a smooth structure, except that the stabilized Kuranishi section may only be continuous. Pulling back such a global Kuranishi chart via the forgetful map
\beq
\ov{\scrM}_{0,2}(\tilde{P}_B, \{J_b\}, A) \to \ov{\scrM}_{0,0}(\tilde{P}_B, \{J_b\}, A),
\eeq
\cite[Lemma 4.5]{abouzaid2021complex} shows that a further stabilization and shrinking can make sure that the evaluation map on the thickened moduli space becomes submersive. By taking the corresponding derived orbifold chart as in \eqref{eqn:quotient-d-chart}, we obtain ${\mc D}_B = ({\mc U}_B, {\mc E}_B, {\mc S}_B)$.
\end{proof}

\bibliography{ref}

\bibliographystyle{amsalpha}

\end{document}